\newtheorem{prop}{Proposition}[section]
\newtheorem*{lem*}{Lemma}
\newtheorem{lem}[prop]{Lemma}
\newtheorem{cor}[prop]{Corollary}
\newtheorem{thm}[prop]{Theorem}
\theoremstyle{definition}
\newtheorem{defi}[prop]{Definition}
\theoremstyle{remark}
\newtheorem{examp}[prop]{Example}
\newtheorem{warning}[prop]{Warning}
\newtheorem{remar}[prop]{Remark}
\DeclareMathAlphabet{\mathpzc}{OT1}{pzc}{m}{it}
\DeclareMathOperator{\End}{End}
\DeclareMathOperator{\Hom}{Hom}
\DeclareMathOperator{\Res}{Res}
\DeclareMathOperator{\Sym}{Sym}
\DeclareMathOperator{\GL}{GL}
\DeclareMathOperator{\mrs}{mrs}
\DeclareMathOperator{\WD}{WD}
\DeclareMathOperator{\Gal}{Gal}
\DeclareMathOperator{\tr}{tr}
\DeclareMathOperator{\Irr}{Irr}
\DeclareMathOperator{\Spec}{Spec}
\DeclareMathOperator{\MaxSpec}{m-Spec}
\DeclareMathOperator{\id}{id}
\DeclareMathOperator{\Rep}{Rep}
\DeclareMathOperator{\coker}{coker}
\DeclareMathOperator{\Ext}{Ext}
\DeclareMathOperator{\Art}{Art}
\DeclareMathOperator{\Sets}{Sets}
\newcommand{\Q}{\mathbb{Q}}
\newcommand{\Qp}{\mathbb {Q}_p}
\newcommand{\Zp}{\mathbb{Z}_p}
\newcommand{\Qpbar}{\overline{\mathbb{Q}}_p}
\newcommand{\Eins}{\mathbf 1}
\newcommand{\PP}{\mathcal P}
\newcommand{\FF}{\mathcal F}
\newcommand{\LL}{\mathrm{LL}}
\newcommand{\ZZ}{\mathbb Z}
\newcommand{\QQ}{\mathbb Q}
\newcommand{\ab}{\mathrm{ab}}
\newcommand{\Fp}{\mathbb F_p}
\newcommand{\mm}{\mathfrak m}
\newcommand{\wt}{\widetilde}
\newcommand{\OO}{\mathcal O}
\DeclareMathOperator{\ad}{ad}
\DeclareMathOperator{\adbar}{\overline{\ad}}
\DeclareMathOperator{\adz}{\ad^0}
\DeclareMathOperator{\wtimes}{\widehat{\otimes}}
\DeclareMathOperator{\Supp}{Supp}
\newcommand{\nn}{\mathfrak n}
\newcommand{\pp}{\mathfrak p}
\newcommand{\br}[1]{\llbracket #1\rrbracket}
\newcommand{\gl}{\mathfrak{g}}
\newcommand{\lv}{\mathfrak l}
\newcommand{\qq}{\mathfrak{q}}
\newcommand{\cont}{\mathrm{cont}}
\newcommand{\HT}{\mathrm{HT}}
\newcommand{\set}[1]{\{#1\}}
\newcommand{\rhobar}{\overline{\rho}}
\newcommand{\Std}{\mathrm{Std}}
\newcommand{\Gm}{\mathbb G_m}
\newcommand{\ps}{\mathrm{ps}}
\newcommand{\CH}{\mathrm{CH}}
\newcommand{\gen}{\mathrm{gen}}
\DeclareMathOperator{\PGL}{PGL}
\DeclareMathOperator{\Spf}{Spf}
\newcommand{\rig}{\mathrm{rig}}
\newcommand{\univ}{\mathrm{univ}}
\newcommand{\Xbar}{\overline{X}}
\newcommand{\Dbar}{\overline{D}}
\newcommand{\irr}{\mathrm{irr}}
\newcommand{\diag}{\mathrm{diag}}
\newcommand{\zl}{\mathfrak z_L}
\DeclareMathOperator{\spcl}{spcl}
\DeclareMathOperator{\nspcl}{n-spcl}
\newcommand{\cris}{\mathrm{cris}}
\newcommand{\prnc}{\mathrm{prnc}}
\newcommand{\spcd}{\mathrm{spcd}}
\newcommand{\cyc}{\mathrm{cyc}}
\newcommand{\cha}{\mathrm{char}}
\newcommand{\trdeg}{\mathrm{trdeg}}
\newcommand{\Kirr}{\mathrm{Kirr}}
\newcommand{\Kred}{\mathrm{Kred}}
\newcommand{\cred}{\mathrm{Cred}}
\title{On local Galois deformation rings}
\author{Gebhard B\"ockle}
\address{Ruprecht-Karls-Universit\"{a}t Heidelberg}
\email{gebhard.boeckle@iwr.uni-heidelberg.de}
\author{Ashwin Iyengar}
\address{Johns Hopkins University}
\email{iyengar@jhu.edu}
\author{Vytautas Pa\v{s}k\={u}nas}
\address{Universit\"{a}t Duisburg-Essen}
\email{paskunas@uni-due.de}
\date{\today.}
\begin{document} 
\maketitle

\begin{abstract} We show that framed deformation
rings of mod $p$ representations of the absolute Galois group of a $p$-adic local field are complete
intersections of expected dimension. We determine 
their irreducible components and show that they and 
their special fibres are normal and  complete
intersection. As an application 
we prove density results of loci with prescribed 
$p$-adic Hodge theoretic properties.
\end{abstract}

\tableofcontents

\section{Introduction}

Let $p$ denote any prime number, let $F$ be a finite extension of $\Qp$, and let $G_F$ denote its absolute Galois group. Let $L$ be a another finite extension of $\Qp$ with ring of integers $\OO$, uniformizer $\varpi$ and residue field $k=\OO/\varpi$. Fix a continuous representation $\rhobar:G_F\rightarrow \GL_d(k)$ and denote by $D^{\square}_{\rhobar}: \mathfrak A_{\OO}\rightarrow \mathrm{Sets}$ the functor from the category $\mathfrak A_{\OO}$ of local Artinian $\OO$-algebras with residue field $k$ to the category of sets, such that for $(A,\mm_A)\in \mathfrak A_{\OO}$, $D^{\square}_{\rhobar}(A)$ is the set of continuous representations $\rho_A: G_F\rightarrow \GL_d(A)$, such that 
$\rho_A \pmod{\mm_A}=\rhobar$. The 
functor $D^{\square}_{\rhobar}$ of framed deformations of $\rhobar$ is pro-represented by a complete local 
Noetherian $\OO$-algebra $R^{\square}_{\rhobar}$ (with residue field~$k$). 

Our first main result completely settles a folklore conjecture on ring-theoretic properties of $R^{\square}_{\rhobar}$ that  can be traced  back to the foundational work of Mazur~\cite[Section 1.10]{Mazur_GQ}:
\begin{thm}[Corollary \ref{ci}]\label{thm_intro-1}
The ring $R^{\square}_{\rhobar}$ is a local complete intersection, flat over $\OO$ and of relative dimension $d^2+d^2[F:\Qp]$. In particular, every continuous representation $\rhobar: G_F\rightarrow \GL_d(k)$ has a lift to characteristic zero.
\end{thm}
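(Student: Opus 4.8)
The plan is to establish the ring-theoretic assertions --- that $R^{\square}_{\rhobar}$ is a complete intersection, flat over $\OO$, of relative dimension $d^2+d^2[F:\Qp]$ --- from which the existence of a characteristic-zero lift follows formally, and to do so by combining the standard obstruction-theoretic presentation of $R^{\square}_{\rhobar}$ with a dimension count. Write $h^i=\dim_k H^i(G_F,\ad\rhobar)$, where $\ad\rhobar=M_d(k)$ carries the adjoint action of $G_F$ through $\rhobar$. Deformation theory identifies the tangent space $D^{\square}_{\rhobar}(k[\epsilon])$ with the cocycle space $Z^1(G_F,\ad\rhobar)$, of dimension $g:=h^1+d^2-h^0$ (the summand $d^2-h^0$ being coboundaries, i.e.\ framing directions), and it places the obstructions to lifting framed deformations along small surjections in $H^2(G_F,\ad\rhobar)$; this yields a presentation $R^{\square}_{\rhobar}\cong\OO\br{x_1,\dots,x_g}/(f_1,\dots,f_r)$ with $r\le h^2$. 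Krull's principal ideal theorem then gives $\dim R^{\square}_{\rhobar}\ge(g+1)-r\ge(g+1)-h^2$, and the local Euler characteristic formula $h^0-h^1+h^2=-d^2[F:\Qp]$ rewrites this lower bound as $1+d^2+d^2[F:\Qp]$. So half of the dimension statement is automatic, and everything hinges on the reverse inequality.

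I would reduce that reverse inequality to a bound on the special fibre, namely that it suffices to prove
\[ \dim\bigl(R^{\square}_{\rhobar}/\varpi R^{\square}_{\rhobar}\bigr)\le d^2+d^2[F:\Qp]\ (=g-h^2). \]
Indeed, since $\dim R\le\dim(R/\varpi R)+1$ for every Noetherian local ring, this bound together with the lower bound above forces $\dim R^{\square}_{\rhobar}=g+1-h^2$; comparing with $\dim R^{\square}_{\rhobar}\ge g+1-r$ and $r\le h^2$ then forces $r=h^2$. Hence $(f_1,\dots,f_r)$ cuts the Krull dimension of the regular local ring $\OO\br{x_1,\dots,x_g}$ down by exactly $r$, so it is a regular sequence: $R^{\square}_{\rhobar}$ is a complete intersection, in particular Cohen--Macaulay. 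The displayed bound moreover gives $\dim(R^{\square}_{\rhobar}/\varpi)=\dim R^{\square}_{\rhobar}-1$, so $\varpi$ is part of a system of parameters and therefore a non-zero-divisor in the Cohen--Macaulay ring $R^{\square}_{\rhobar}$; thus $R^{\square}_{\rhobar}$ is $\varpi$-torsion free, i.e.\ flat over the discrete valuation ring $\OO$, of relative dimension $d^2+d^2[F:\Qp]$. The lifting assertion is then immediate: $R^{\square}_{\rhobar}\ne0$ and $\OO$-flat forces $R^{\square}_{\rhobar}[1/\varpi]\ne0$, and any closed point of $\Spec R^{\square}_{\rhobar}[1/\varpi]$ --- whose residue field is finite over $L$ --- yields a continuous lift $G_F\to\GL_d(\Qpbar)$ of $\rhobar$.

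The hard part will be the displayed bound on the special fibre $R^{\square}_{\rhobar}/\varpi$, which pro-represents framed deformations of $\rhobar$ to local Artinian $k$-algebras. When $H^2(G_F,\ad\rhobar)=0$ --- equivalently, by local Tate duality, when $\Hom_{G_F}(\rhobar,\rhobar(1))=0$ --- one has $r=0$ and the presentation already exhibits $R^{\square}_{\rhobar}$ as formally smooth over $\OO$ of relative dimension $g=d^2+d^2[F:\Qp]$, so there is nothing to do; the genuine difficulty is concentrated in the ``self-dual-up-to-twist'' cases. There my plan is to argue by dévissage along a $G_F$-stable filtration of $\rhobar$, bounding how much the dimension of the special fibre can grow when one passes from the associated graded to $\rhobar$ itself --- which amounts to controlling the relevant $\Ext^1$-groups between the graded pieces --- so as to reduce to the case where $\rhobar$ is semisimple. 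That case one analyzes more or less explicitly: restricting to $\ker\rhobar=G_{F'}$ (with $F'/F$ finite) turns the problem into the $\rhobar(G_F)$-equivariant deformation theory of the trivial representation of $G_{F'}$, which is governed by the structure of the maximal pro-$p$ quotient of $G_{F'}$ --- a free pro-$p$ group, or a Demushkin group with a single relation when $\mu_p\subset F'$ --- and it is precisely this Demushkin relation that, propagated back through the finite group $\rhobar(G_F)$, accounts for $H^2(G_F,\ad\rhobar)$ being nonzero. The crux is to show that this relation imposes exactly $\dim_k H^2(G_F,\ad\rhobar)$ independent equations on the framing variables. Making the dévissage quantitative at every step, and carrying out the semisimple base-case computation with the precise bookkeeping of dimensions through the (possibly present) Demushkin relation, is where I expect essentially all of the work to lie.
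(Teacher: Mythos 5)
Your first two paragraphs are exactly the paper's reduction: the presentation $R^{\square}_{\rhobar}\cong\OO\br{x_1,\dots,x_g}/(f_1,\dots,f_r)$ with $r\le h^2$, the Euler-characteristic lower bound, and the observation that everything follows once one proves $\dim R^{\square}_{\rhobar}/\varpi\le d^2+d^2[F:\Qp]$ (this is how Corollary \ref{ci} is deduced in the paper). But that displayed inequality \emph{is} the theorem, and your proposal does not prove it; the third paragraph is a plan, and the plan has two genuine gaps. First, the d\'evissage step has no mechanism: controlling $\Ext^1$-groups between the graded pieces of a $G_F$-stable filtration bounds tangent directions, not the Krull dimension of the special fibre. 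The dimension could a priori exceed the expected one exactly because the $h^2$ relations need not cut it down (they need not form a regular sequence), and nothing in the passage from the associated graded to $\rhobar$ addresses this; there is no known inequality relating $\dim R^{\square}_{\rhobar}/\varpi$ to the corresponding quantity for the semisimplification of $\rhobar$ of the kind you would need. Second, the semisimple base case as you describe it --- writing the pro-$p$ quotient of $G_{F'}$ as a Demushkin group and showing that the single relation imposes exactly $h^2$ independent equations on the framing variables --- is precisely the ``compute with equations'' strategy of the earlier partial results the paper cites (B\"ockle, Iyengar, etc.), which succeeds only when the deformation problem is essentially unobstructed or in small cases; in general, ``the relation imposes $h^2$ independent conditions'' is logically equivalent to the statement being proved, so as a plan it is circular rather than a reduction to something tractable.

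The paper's route around this is quite different and is worth contrasting with your plan. Instead of working with $R^{\square}_{\rhobar}$ and its equations directly, it introduces the finite-type $R^{\ps}$-algebra $A^{\gen}$ (Procesi's generic-matrix construction applied to the Cayley--Hamilton quotient $E$ of $R^{\ps}\br{G_F}$), whose completion at the point corresponding to $\rhobar$ recovers $R^{\square}_{\rhobar}$ (Proposition \ref{main_complete}); it then bounds $\dim \Xbar^{\gen}$ globally by stratifying the special fibre of the pseudo-deformation space $\Xbar^{\ps}$ according to how the semisimple pseudo-character decomposes, using the B\"ockle--Juschka dimension bounds on each stratum of $\Xbar^{\ps}$, and bounding the fibre dimensions of $\Xbar^{\gen}\to\Xbar^{\ps}$ by a GIT argument: the fibre over a point $y$ has a unique closed $G$-orbit (the semisimple representation), and its dimension is bounded by the tangent space at that orbit, which is computed via $\Ext$-groups for the Cayley--Hamilton algebra $E_y$ and the local Euler characteristic (Lemma \ref{bound}, Propositions \ref{tangent} and \ref{bound_dim_fibre}, Theorem \ref{main}). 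The key technical gain over your plan is that the fibres of $X^{\gen}\to X^{\ps}$ are of finite type over the residue field, so tangent-space bounds at the closed orbit really do bound dimensions --- which is exactly the step missing in your d\'evissage --- and the obstruction-counting you hoped to do with the Demushkin relation never has to be made explicit. If you want to salvage your approach, you would need to supply a substitute for that step, i.e.\ a proof that the special-fibre dimension of the framed deformation ring is controlled by the semisimple case together with fibre bounds; at that point you will have essentially reconstructed the paper's argument over $X^{\ps}$.
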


Obstruction theory provides a presentation $R^\square_{\rhobar} = \OO\br{x_1,\dots,x_r}/(f_1, \ldots, f_s)$ with $r$ equal to the dimension of the tangent space and $s$ equal to $\dim H^2(G_F, \ad\rhobar)$. The Euler--Poincar\'e characteristic formula from local class field theory gives
    \[ r-s=d^2+d^2[F:\Qp]. \]
Our theorem proves that $\dim R^\square_{\rhobar}/\varpi$ is given by this cohomological quantity, the \textit{expected dimension} in the spirit of the \textit{Dimension Conjecture} of Gouv\^{e}a from \cite[Lecture 4]{Gouvea}. Having the expected dimension implies that $\varpi,f_1, \ldots, f_s$ is a regular sequence and that $R^\square_{\rhobar}$ is a local complete intersection. It also implies
(see \cite[Lemma 7.5]{DDR_GV}) that the derived deformation ring of $\rhobar$ as introduced 
by Galatius and Venkatesh in \cite{DDR_GV} (see also \cite{DDR_Cai}) 
is homotopy discrete, which means the derived deformation theory 
of $\rhobar$ does not contain more information than
the usual deformation 
theory of $\rhobar$. 
Theorem \ref{thm_intro-1}  is  used in the  forthcoming work of 
Matthew Emerton, Toby Gee and Xinwen Zhu on derived stacks of global Galois representations. 

Our second main result completely describes the connected components of the space $\Spec R^{\square}_{\rhobar}[1/p]$ as envisioned in~\cite{bjpp}. Let $\mu:=\mu_{p^\infty}(F)\subset F^\times$ be the $p$-power torsion subgroup and suppose that $L$ is sufficiently large. Let $R_{\det\rhobar}$ denote the universal deformation ring of the one dimensional representation $\det\rhobar$.

\begin{thm}[Corollaries \ref{ci_chi},
\ref{sp_normal}, \ref{chi_normal}, \ref{BJ_conjecture}, Proposition \ref{prop_det_to_square}]\label{thm_intro-2}
The natural  map $R_{\det\rhobar}\to R^{\square}_{\rhobar}$, induced by sending 
a deformation of $\rhobar$ to its determinant, is flat and induces a bijection of connected components
\begin{equation}\label{eq_components}
\pi_0(\Spec R^\square_{\rhobar}[1/p])\to \pi_0(\Spec R_{\det\rhobar}[1/p] ).
\end{equation}

Labeling these components in a natural way by characters $\chi:\mu\to \OO^\times$,  the connected components of $\Spec R^\square_{\rhobar}[1/p] $ are in natural bijection with the irreducible components $\Spec R^{\square,\chi}_{\rhobar}$ of $\Spec R^{\square}_{\rhobar}$, and the rings $R^{\square,\chi}_{\rhobar}$ and  $R^{\square,\chi}_{\rhobar}/\varpi$ 
are normal domains and complete intersections.
\end{thm}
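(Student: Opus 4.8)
The plan is to partition $\Spec R^{\square}_{\rhobar}$ into pieces indexed by the restriction of the determinant to $\mu$, to identify these with the components predicted by the theorem, and to establish their ring-theoretic properties from the complete intersection results already available (Theorem~\ref{thm_intro-1} and, with fixed determinant, Corollary~\ref{ci_chi}) together with Serre's normality criterion.

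\emph{Step 1 (the target).} By local class field theory the maximal pro-$p$ quotient of $G_F^{\mathrm{ab}}$ is $\Zp^{[F:\Qp]+1}\times\mu$, so for $L$ sufficiently large $R_{\det\rhobar}\cong \OO[\mu]\br{T_1,\dots,T_{[F:\Qp]+1}}$, where $\OO[\mu]\cong\OO[u]/((1+u)^{|\mu|}-1)$ is local of Krull dimension one with $\OO[\mu][1/p]\cong\prod_{\chi\colon\mu\to\OO^\times}L$. This makes $\pi_0(\Spec R_{\det\rhobar}[1/p])$ the set of characters $\chi$, the component of $\chi$ being $\Spec R^{\chi}_{\det\rhobar}[1/p]$ with $R^{\chi}_{\det\rhobar}:=R_{\det\rhobar}\otimes_{\OO[\mu],\chi}\OO\cong\OO\br{T_1,\dots,T_{[F:\Qp]+1}}$. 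Fixing a generator $\gamma$ of $\mu$ and letting $s\in R^{\square}_{\rhobar}$ be the value at $\gamma$ of the determinant of the universal framed deformation, one has $s^{|\mu|}=1$, whence $\Spec R^{\square}_{\rhobar}=\bigcup_\chi V(s-\chi(\gamma))$, which is a disjoint union after inverting $p$ because $\chi(\gamma)-\chi'(\gamma)\in\varpi\OO\smallsetminus\{0\}$ then becomes a unit. Put $R^{\square,\chi}_{\rhobar}:=R^{\square}_{\rhobar}\otimes_{R_{\det\rhobar}}R^{\chi}_{\det\rhobar}=R^{\square}_{\rhobar}/(s-\chi(\gamma))$.

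\emph{Step 2 (flatness and complete intersection).} For Proposition~\ref{prop_det_to_square}, I would prove $R_{\det\rhobar}\to R^{\square}_{\rhobar}$ flat by the fibrewise flatness criterion relative to the regular subring $\OO\br{T_\bullet}\subset R_{\det\rhobar}$: $R^{\square}_{\rhobar}$ is flat over $\OO\br{T_\bullet}$ by miracle flatness (being Cohen--Macaulay of the correct relative dimension, Theorem~\ref{thm_intro-1}), and it remains to check that over each point of $\Spec\OO\br{T_\bullet}$ the fibre of $R^{\square}_{\rhobar}$ is flat over that of $R_{\det\rhobar}$; in characteristic $0$ this holds because $\OO[\mu]$ becomes \'etale, while in residue characteristic $p$ it reduces to the assertion that $R^{\square}_{\rhobar}/\varpi$ is flat, equivalently free, over $\OO[\mu]/\varpi$ via $\det|_\mu$, which one extracts from its complete intersection structure. (Alternatively, the twisting action $\rho\mapsto\rho\otimes\theta$ of characters $\theta\colon G_F\to1+\mm$ identifies $R^{\square}_{\rhobar}$, after base change along the $d$-th power map on $R_{\det\rhobar}$, with $R^{\square,\psi}_{\rhobar}\wtimes_{\OO}R_{\det\rhobar}$ for a fixed lift $\psi$ of $\det\rhobar$; the case $p\mid d$ requires extra care.) Flatness then gives, by base change along $R_{\det\rhobar}\to R^{\chi}_{\det\rhobar}$, that $R^{\square,\chi}_{\rhobar}$ is flat over the regular ring $R^{\chi}_{\det\rhobar}$ and over $\OO$, that $R^{\square}_{\rhobar}\hookrightarrow\prod_\chi R^{\square,\chi}_{\rhobar}$ with $\OO$-torsion cokernel, and --- combined with the complete intersection property of $R^{\square}_{\rhobar}$ and $R^{\square}_{\rhobar}/\varpi$ --- that $R^{\square,\chi}_{\rhobar}$ and $R^{\square,\chi}_{\rhobar}/\varpi$ are complete intersections of the expected dimensions (Corollary~\ref{ci_chi}). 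Granting that each $R^{\square,\chi}_{\rhobar}$ is a domain (Step 3), the pieces of $\Spec R^{\square}_{\rhobar}=\bigcup_\chi\Spec R^{\square,\chi}_{\rhobar}$ have no mutual inclusions (a common minimal prime would contain $\chi(\gamma)-\chi'(\gamma)$, hence $\varpi$, contradicting $\OO$-flatness of $R^{\square}_{\rhobar}$), so the $R^{\square,\chi}_{\rhobar}$ are exactly the irreducible components of $R^{\square}_{\rhobar}$; and $R^{\square,\chi}_{\rhobar}[1/p]$ being a domain, $\Spec R^{\square,\chi}_{\rhobar}[1/p]$ is connected, so these are the connected components of $\Spec R^{\square}_{\rhobar}[1/p]$, with~\eqref{eq_components} the evident bijection.

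\emph{Step 3 (normality).} It remains to prove $R^{\square,\chi}_{\rhobar}$ and $R^{\square,\chi}_{\rhobar}/\varpi$ normal; being complete intersections they are Cohen--Macaulay, hence satisfy $(S_2)$, and being local their normality forces them to be domains. By Serre's criterion the outstanding point is $(R_1)$: the non-regular locus of $\Spec R^{\square,\chi}_{\rhobar}$, and of its special fibre, must have codimension $\geq 2$. A framed deformation ring is formally smooth at a point $\rho$ exactly when $H^2(G_F,\ad\rho)=0$, so the non-regular locus lies inside $\{H^2(G_F,\ad\rho)\neq 0\}$, which by Tate local duality is $\{\Hom_{G_F}(\rho,\rho(1))\neq 0\}\subseteq\{\rho\text{ and }\rho(1)\text{ share a Jordan--H\"older constituent}\}$. \textbf{Bounding the codimension of this locus is the crux, and is hardest in the mod $p$ fibre.} I would stratify according to the semisimplification of $\rho$ and the pattern of isomorphisms $\sigma_i\cong\sigma_j\otimes\chi_{\cyc}$ among its Jordan--H\"older constituents $\sigma_i$, bounding each stratum by combining the dimension of the relevant reducibility locus --- accessible through the complete intersection property applied to the smaller framed deformation rings built from the constituents and their successive extensions --- with the extra codimension forced by the congruence $\rho(1)\sim\rho$; in characteristic $p$ one reduces, as in the proof of Theorem~\ref{thm_intro-1}, to the structure of the maximal pro-$p$ quotient of $G_F$ (a Demushkin group if $\mu\neq 1$, free pro-$p$ otherwise). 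With $(R_1)$ in hand for both fibres, normality follows, the $R^{\square,\chi}_{\rhobar}$ and $R^{\square,\chi}_{\rhobar}/\varpi$ are normal domains, and all assertions of the theorem are in place.
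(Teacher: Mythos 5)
Your overall skeleton (decompose by characters of $\mu$, get complete intersections from a presentation over $R_{\det\rhobar}$, then Serre's criterion with $(S_2)$ from CI and $(R_1)$ from an $H^2$-vanishing locus, flatness by miracle flatness) matches the paper, but the proposal has gaps at exactly the two points where the real work lies. First, flatness: your miracle-flatness step over $\OO\br{T_\bullet}$ requires knowing that the fibre over the closed point, namely $R^{\square}_{\rhobar}\otimes_{R_{\det\rhobar}}k[\mu]$, has dimension $(d^2-1)([F:\Qp]+1)$. This is not given by Theorem \ref{thm_intro-1} (which only controls the absolute dimension of $R^{\square}_{\rhobar}$); a quotient of $R^{\square}_{\rhobar}/\varpi$ by the $[F:\Qp]+1$ extra determinant parameters a priori only satisfies the lower bound, and the needed upper bound is precisely Corollary \ref{ci_psi}(2). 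In the paper this is obtained by the twisting isomorphism $R^{\square,\chi}_{\rhobar}\otimes_{\OO(\mathcal X),\varphi_d}\OO(\mathcal X)\cong R^{\square,\psi}_{\rhobar}\widehat\otimes_\OO\OO(\mathcal X)$ together with the fact that $\varphi_d$ modulo $\varpi$ is a universal homeomorphism (the $p\mid d$ case being handled by writing $\varphi_d=\varphi_{p^m}\circ\varphi_e$ and recognizing $\overline{\varphi}_{p^m}$ as a power of Frobenius). You mention this only as a parenthetical alternative and leave the case $p\mid d$ unresolved, while your main route (``$R^{\square}_{\rhobar}/\varpi$ is flat over $\OO[\mu]/\varpi$, which one extracts from its complete intersection structure'') is not an argument: being a complete intersection does not imply flatness over the chosen Artinian subring $k[\mu]$, and this flatness is essentially equivalent to the dimension statement you are trying to avoid.

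Second, the $(R_1)$ bound, which you correctly identify as the crux, is only sketched, and the sketch as stated would not go through on $\Spec R^{\square,\chi}_{\rhobar}/\varpi$ itself: this is a completed local ring, and one cannot directly ``stratify according to the semisimplification'' and bound strata by dimensions of smaller framed deformation rings, because the relevant one-dimensional points have finite or local-field residue fields and the loci in question are not accessible as finite-type subschemes of the formal spectrum. The paper's proof runs the entire estimate on the finite-type model $X^{\gen,\chi}=\Spec A^{\gen,\chi}$ over $R^{\ps}$: it needs the GIT bound on fibres (Lemma \ref{bound}, Proposition \ref{bound_dim_fibre}), the dimension results of \cite{bj} for the pseudo-deformation space including the special/Kummer-reducible locus (Lemma \ref{special}, Appendix \ref{appendix}), the Jacobson/dimension lemmas (Lemma \ref{jacobson}), and then the comparison of completions (Proposition \ref{main_complete}, Lemmas \ref{333}--\ref{335}) to transfer the codimension bound to $R^{\square,\chi}_{\rhobar}$; moreover the case $d=2$, $F=\Qp$ (and $F=\QQ_2$, via the maximally reducible semi-simple locus of Section \ref{bound_mrs}) needs a separate analysis of the reducible locus, and for $p\mid d$ one must work with $\adz\rho$ rather than $\ad\rho$ (your ``formally smooth exactly when $H^2(G_F,\ad\rho)=0$'' is the wrong criterion for the fixed-$\chi$ ring in that case). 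Without these ingredients the codimension-two estimate, and hence normality, is not established.
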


As a consequence we obtain the following useful Corollary. 

\begin{cor}[Corollary \ref{generic_fibre_normal}] $R^{\square}_{\rhobar}$ is reduced and $R^{\square}_{\rhobar}[1/p]$ is normal. 
\end{cor} 

We would like to highlight the following result for the amusement of the reader.
\begin{thm}[Corollary \ref{factorial_irr}] If $\rhobar$ is absolutely irreducible then $R^{\square, \chi}_{\rhobar}$ and $R^{\square, \chi}_{\rhobar}/\varpi$ are factorial, 
except in the case $d=2$, $F=\QQ_3$ and $\rhobar\cong \rhobar(1)$. 
\end{thm}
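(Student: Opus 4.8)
The plan is to deduce factoriality from a codimension bound on the non-regular locus, using that $R^{\square,\chi}_{\rhobar}$ is already a normal local complete intersection (Theorem~\ref{thm_intro-2}) together with a classical theorem of Grothendieck, and to settle the exceptional triple by an explicit analysis of the singularity. Throughout we fix $\chi$ and work with the irreducible component $R^{\square,\chi}_{\rhobar}$, a normal complete intersection domain; note that when $d=2$ and $F=\QQ_3$ one has $\mu_{p^\infty}(\QQ_3)=1$, so that there $R^{\square,\chi}_{\rhobar}=R^{\square}_{\rhobar}$.

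First I would locate the non-regular locus. Since $\rhobar$ is absolutely irreducible, so is every deformation of it to a field, and hence by Schur's lemma $H^2(G_F,\ad\rho)\neq 0$ exactly when $\rho\cong\rho(1)$. Over a $\Qp$-algebra this never occurs, as comparing determinants would force $\cyc^d=1$, impossible for the $p$-adic cyclotomic character; therefore $\Spec R^{\square,\chi}_{\rhobar}[1/p]$ is regular. As $R^{\square,\chi}_{\rhobar}$ is $\OO$-flat, its non-regular locus coincides with that of its special fibre, and --- using that $R^{\square,\chi}_{\rhobar}$ is flat over the regular ring $R^{\chi}_{\det\rhobar}$, whose fibres are the framed fixed-determinant deformation spaces --- this locus is contained in $\{\rho\cong\rho(1)\}$. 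That set is empty unless $\rhobar\cong\rhobar(1)$ (being induced from the extension cut out by the mod-$p$ cyclotomic character $\overline\cyc$ is a closed condition, and the closed point lies in the closure of every prime), and it is also empty when $\overline\cyc=1$ (then the fixed-determinant obstruction group vanishes for absolutely irreducible $\rho$, at least when $p\nmid d$). In these cases $R^{\square,\chi}_{\rhobar}$ and $R^{\square,\chi}_{\rhobar}/\varpi$ are regular, hence factorial by Auslander--Buchsbaum.

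Now suppose $\rhobar\cong\rhobar(1)$ and $\overline\cyc\neq 1$; let $e\geq 2$ be the order of $\overline\cyc$, so $e\mid d$ and $\rhobar=\Ind_{G_E}^{G_F}\sigma_0$ with $\sigma_0$ an absolutely irreducible $(d/e)$-dimensional representation of $G_E$, $E/F$ the cyclic extension of degree $e$ cut out by $\overline\cyc$. Computing the dimension of the induced locus $\{\rho\cong\rho(1)\}$ through the framed deformation ring of $\sigma_0$ over $G_E$ --- of relative dimension $(d/e)^2(1+[E:\Qp])$ over $k$ --- and tracking the change of framing ($\PGL_d$ versus $\PGL_{d/e}$), one finds that this locus has codimension $d^2[F:\Qp]\,(e-1)/e$ in $\Spec R^{\square,\chi}_{\rhobar}/\varpi$. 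Writing $d=em$, this is $e\,m^2\,[F:\Qp]\,(e-1)$, which is $\geq 4$ unless $e=2$, $m=1$ and $[F:\Qp]=1$; the latter two give $F=\Qp$, and then $e=[\Qp(\mu_p):\Qp]=2$ forces $p=3$, i.e. $d=2$ and $F=\QQ_3$. Thus, away from the exceptional triple, the non-regular locus of $R^{\square,\chi}_{\rhobar}$ (and of $R^{\square,\chi}_{\rhobar}/\varpi$) has codimension $\geq 4$; being complete intersections, both rings are then factorial by the Grothendieck--Samuel theorem (SGA~2, Exp.~XI).

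It remains to show that for $d=2$, $F=\QQ_3$ and $\rhobar\cong\rhobar(1)$ the rings $R^{\square}_{\rhobar}$ and $R^{\square}_{\rhobar}/\varpi$ are not factorial; this is the main obstacle. Here the induced locus has codimension exactly $2$, so Grothendieck's criterion is silent and one must compute divisor class groups by hand. I would do this by making the singularity explicit via the dihedral structure: writing $\rhobar=\Ind_{G_E}^{G_F}\overline\psi$ with $E=\QQ_3(\mu_3)$, the decomposition $\adz\rhobar=\overline\cyc\oplus\Ind_{G_E}^{G_F}(\overline\psi/\overline{\psi}^{\sigma})$ together with Shapiro's lemma and local Tate duality over $E$ pin down $H^\bullet(G_F,\ad\rhobar)$ and the cup-product obstruction map; one then presents $R^{\square}_{\rhobar}$ as a power series ring over a normal hypersurface singularity --- a single relation among $\dim_k Z^1(G_F,\ad\rhobar)=9$ variables --- which, once the leading higher-order term of the relation is determined, is a rational double point of the form $xy=z^{m}$ for some $m\geq 2$, with divisor class group $\ZZ/m\neq 0$; the same analysis applies to $R^{\square}_{\rhobar}/\varpi$. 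Hence neither ring is factorial. The delicate part --- and the genuine content of this amusing exception --- is the explicit identification of the singularity: the cup-product form has small rank, so its quadratic part alone does not pin down the ring, and one must control the leading higher-order term of the defining relation.
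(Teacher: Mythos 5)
Your generic-case strategy (bound the codimension of the non-regular locus, use that the rings are complete intersections, and invoke Grothendieck's theorem on factoriality of local complete intersections that are regular in codimension $3$) is the same as the paper's, but there is a genuine gap in how you locate and bound the singular locus when $\zeta_p\in F$, i.e.\ when the mod $p$ cyclotomic character is trivial. First, the relevant obstruction group for smoothness over the determinant ring is $H^2(G_F,\adz\rho_x)$, not $H^2(G_F,\ad\rho_x)$; in characteristic $p$ with $p\mid d$ these are genuinely different, and your Schur-lemma criterion "$H^2\neq 0$ iff $\rho\cong\rho(1)$" does not apply. Your parenthetical "the fixed-determinant obstruction group vanishes \dots at least when $p\nmid d$" concedes exactly the problematic case: for $p\mid d$ and $\zeta_p\in F$ there do exist absolutely irreducible $\rho_x$ in characteristic $p$ with $H^2(G_F,\adz\rho_x)\neq 0$ (those becoming reducible over a degree-$p$ Galois extension, the special/Kummer-reducible points of \cite{bj}), and this locus has positive dimension, so your assertion that the non-regular locus is then empty is false. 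The paper controls this locus via Lemma \ref{special} (codimension $\geq \tfrac12[F:\Qp]d^2$, coming from \cite[Theorem 5.3.1]{bj} combined with fibre-dimension estimates on $X^{\gen}$), and in the one surviving subcase $d=2$, $F=\QQ_2$ --- which your statement must cover, since it is not among the exceptions --- even that bound is only $2$, and the paper instead quotes \cite[Proposition 4.5]{che_unpublished} to get regularity outright. Your proposal gives no argument at all for $d=2$, $F=\QQ_2$, nor for the other $p\mid d$, $\zeta_p\in F$ cases. (Secondary: your codimension formula $d^2[F:\Qp](e-1)/e$ for the induced locus when $e\geq 2$ is asserted rather than proved; making it rigorous requires essentially the pseudo-deformation bound of \cite{bj} plus the passage from $\Xbar^{\gen,\chi}$ to the completed local ring, which is what the paper's Lemmas \ref{special} and \ref{Kummer_fix} do.)

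For the exceptional case $d=2$, $F=\QQ_3$, $\rhobar\cong\rhobar(1)$ you do not actually prove non-factoriality: you outline an identification of the singularity as a hypersurface of type $xy=z^m$ and a class-group computation, but you explicitly concede that determining the defining relation beyond its quadratic part is not carried out, and that is precisely the content of the case. The paper avoids any class-group or singularity-type analysis: it quotes the explicit presentation of \cite[Theorem 5.1]{boeckle}, in which $R^{\square,\chi}_{\rhobar}$ is formally smooth over $\OO\br{b,c,d}/(r)$ with $r=(1+d)^6(1+bcu)-(1+bcv)$, and shows directly that the height-one prime $(b,d)$ cannot be principal by reducing a putative equality of ideals modulo $(\varpi,c)$ and comparing with $(b,d)/(b,d)^2$. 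So even granting your generic-case bounds, the argument as written neither establishes the $\QQ_2$ case nor the failure of factoriality at the exceptional triple; both require additional input (Chenevier's regularity result, respectively Böckle's explicit equation or an equivalent computation of the obstruction relation).
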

Let $\psi: G_F\rightarrow \OO^{\times}$ be a continuous character lifting $\det \rhobar$. Let $R^{\square,\psi}_{\rhobar}$ be the quotient of $R^{\square}_{\rhobar}$ parameterizing deformations with determinant equal to $\psi$. 

\begin{thm}[Corollary \ref{ci_psi}, Theorem \ref{psi_normal}]\label{thm_intro_fix_det} The rings 
$R^{\square, \psi}_{\rhobar}$, $R^{\square, \psi}_{\rhobar}/\varpi$ are normal domains and complete intersections of dimension $\dim R^\square_{\rhobar} - \dim R_{\det\rhobar}+1$ and $\dim R^\square_{\rhobar} - \dim R_{\det\rhobar}$, respectively. Moreover, $R^{\square, \psi}_{\rhobar}$ is $\OO$-flat. 
\end{thm}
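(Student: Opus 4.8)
\emph{Proof strategy.} Let $\chi:=\psi|_\mu\colon\mu\to\OO^\times$ be the restriction of $\psi$ along $\mu\subset F^\times\cong G_F^\ab$; since $\det\rhobar$ is trivial on the pro-$p$ part of $G_F^\ab$, this is one of the characters indexing the components in Theorem~\ref{thm_intro-2}. Write $R_{\det\rhobar}=\OO\br{x_1,\dots,x_n}\wtimes_\OO\OO[\mu]$ with $n=[F:\Qp]+1$ (the completed group algebra of the pro-$p$ completion of $G_F^\ab$, with the Teichm\"uller part of $\det\rhobar$ split off), so that the $\chi$-component of the character deformation ring is the formally smooth $\OO$-algebra $R^\chi_{\det\rhobar}:=R_{\det\rhobar}\wtimes_{\OO[\mu],\chi}\OO=\OO\br{x_1,\dots,x_n}$, and $\psi$ factors as $R_{\det\rhobar}\to R^\chi_{\det\rhobar}\to\OO$ with the second map sending $x_i\mapsto c_i$, $c_i\in\varpi\OO$, so its kernel is the regular sequence $\underline f:=(x_1-c_1,\dots,x_n-c_n)$. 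The plan is to identify $R^{\square,\psi}_{\rhobar}$ with $R^{\square,\chi}_{\rhobar}/\underline f R^{\square,\chi}_{\rhobar}$ and to deduce all assertions from Theorems~\ref{thm_intro-1} and \ref{thm_intro-2} by standard commutative algebra.

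First I would check that $R^{\square,\chi}_{\rhobar}=R^{\square}_{\rhobar}\wtimes_{R_{\det\rhobar}}R^\chi_{\det\rhobar}$, so that $R^{\square,\chi}_{\rhobar}$ is flat over the regular ring $R^\chi_{\det\rhobar}$. Since $R_{\det\rhobar}\to R^{\square}_{\rhobar}$ is flat (Theorem~\ref{thm_intro-2}), the base change $A:=R^{\square}_{\rhobar}\wtimes_{R_{\det\rhobar}}R^\chi_{\det\rhobar}$ — a quotient of $R^{\square}_{\rhobar}$ — is flat over $R^\chi_{\det\rhobar}=\OO\br{x}$, hence $\OO$-flat, and $A[1/p]$ is the connected component $R^{\square,\chi}_{\rhobar}[1/p]$ of $\Spec R^{\square}_{\rhobar}[1/p]$, a normal domain. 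As an $\OO$-flat ring with domain generic fibre embeds into that fibre, $A$ is a domain; and $R^{\square,\chi}_{\rhobar}$, being a domain in which $\varpi\neq0$, is also $\OO$-flat, so the natural surjection $A\twoheadrightarrow R^{\square,\chi}_{\rhobar}$, which is bijective after inverting $p$, is an isomorphism. Now $\underline f$ is a regular sequence on $R^{\square,\chi}_{\rhobar}$ (being one on $R^\chi_{\det\rhobar}$, over which $R^{\square,\chi}_{\rhobar}$ is flat), so $R^{\square,\psi}_{\rhobar}=R^{\square,\chi}_{\rhobar}/\underline f R^{\square,\chi}_{\rhobar}$, being a quotient of the complete intersection $R^{\square,\chi}_{\rhobar}$ (Theorem~\ref{thm_intro-2}) by a regular sequence, is a complete intersection; likewise $(x_1-c_1,\dots,x_n-c_n,\varpi)$ is a regular sequence on $R^{\square,\chi}_{\rhobar}$, so $\varpi$ is a nonzerodivisor on $R^{\square,\psi}_{\rhobar}$ (whence $\OO$-flatness) and $R^{\square,\psi}_{\rhobar}/\varpi=R^{\square,\chi}_{\rhobar}/(\varpi,x_1-c_1,\dots,x_n-c_n)$ is again a complete intersection. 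Finally $R^{\square}_{\rhobar}$ is Cohen--Macaulay, hence equidimensional, so $\dim R^{\square,\chi}_{\rhobar}=\dim R^{\square}_{\rhobar}$, and $\dim R^{\square,\psi}_{\rhobar}=\dim R^{\square,\chi}_{\rhobar}-n=\dim R^{\square}_{\rhobar}-\dim R_{\det\rhobar}+1$, with $\dim R^{\square,\psi}_{\rhobar}/\varpi$ one less by $\OO$-flatness.

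It remains to prove normality; once proven it forces both rings to be domains, being normal and local. As complete intersections they are Cohen--Macaulay, so $S_2$ holds and by Serre's criterion one must establish $R_1$: that the non-regular loci of $\Spec R^{\square,\psi}_{\rhobar}$ and of $\Spec R^{\square,\psi}_{\rhobar}/\varpi$ have codimension $\geq2$. Here I would rerun, with the determinant fixed, the Galois-cohomological codimension estimates used in the body of the paper for $R^{\square}_{\rhobar}$ and $R^{\square,\chi}_{\rhobar}$: the regular locus contains the open set of primes $\qq$ at which the attached representation $\rho_\qq$ has $H^0(G_F,\adz\rho_\qq)=0$ — at such $\qq$ the fixed-determinant framed deformation problem is formally smooth of the expected dimension — and one bounds below by $2$ the codimension of the complement, where $H^0(G_F,\adz\rho)\neq0$ (equivalently, by local Tate duality, $H^0(G_F,\adz\rho(1))\neq0$), on both $\Spec R^{\square,\psi}_{\rhobar}$ and its special fibre. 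I expect this $R_1$ estimate on the special fibre $R^{\square,\psi}_{\rhobar}/\varpi$ to be the main obstacle: unlike the ring-theoretic assertions above it does not follow formally from Theorems~\ref{thm_intro-1}--\ref{thm_intro-2}, and its proof requires a genuine re-examination of the cohomological dimension bounds. On the generic fibre the bound can alternatively be imported from the known normality of $R^{\square,\chi}_{\rhobar}[1/p]$ via the twisting action $\rho\mapsto\rho\otimes\eta$ by deformations $\eta$ of the trivial character.
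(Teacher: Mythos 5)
There is a genuine gap, in fact two. First, your route to the complete intersection, dimension and $\OO$-flatness statements is circular relative to the paper's logical structure: you invoke the flatness of $R_{\det\rhobar}\to R^{\square}_{\rhobar}$ from Theorem~\ref{thm_intro-2}, but that flatness (Proposition \ref{prop_det_to_square}) is proved in the paper by miracle flatness, and the input needed there is precisely the dimension of the closed fibre $R^{\square}_{\rhobar}\otimes_{R_{\det\rhobar}}k\cong R^{\square,\psi}_{\rhobar}/\varpi$, i.e.\ Corollary \ref{ci_psi} --- part of the very statement you are proving. Nor can flatness be sidestepped cheaply: the presentation of Proposition \ref{Prop-RelCI} exhibits $R^{\square}_{\rhobar}$ as a quotient of $R_{\det\rhobar}\br{x_1,\dots,x_r}$ by a regular sequence, but that does not imply flatness over $R_{\det\rhobar}$ (compare $\OO\br{z,x}/(z)$ over $\OO\br{z}$), so without an independent bound on $\dim R^{\square,\psi}_{\rhobar}/\varpi$ your first paragraph does not get off the ground. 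The paper instead obtains that bound directly, with no flatness input, from the twisting isomorphism $R^{\square,\chi}_{\rhobar}\otimes_{\OO(\mathcal X),\varphi_d}\OO(\mathcal X)\cong R^{\square,\psi}_{\rhobar}\wtimes_{\OO}\OO(\mathcal X)$ (Proposition \ref{prop_TwistFunctors}, Corollary \ref{cor_TwistedIsom}), together with the fact that the $d$-power map $\varphi_d$ on $\OO(\mathcal X)$ is finite flat, \'etale after inverting $p$, and a universal homeomorphism on special fibres (Lemma \ref{lem_R1Basics}); the homeomorphism forces $\dim\bigl((R^{\square,\psi}_{\rhobar}\wtimes_\OO\OO(\mathcal X))/\varpi\bigr)=\dim R^{\square,\chi}_{\rhobar}/\varpi$, and the CI presentation then does the rest. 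Flatness of $R_{\det\rhobar}\to R^{\square}_{\rhobar}$ is a corollary of this, not an ingredient.

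Second, for normality you correctly reduce to Serre's criterion and correctly identify the $(R1)$ bound --- above all on the special fibre --- as the crux, but you then leave it open: ``rerunning the cohomological codimension estimates with fixed determinant'' is not routine, since the paper's estimates (Lemma \ref{Kummer_fix}, Lemma \ref{Kummer_fix_char0}) live on $X^{\gen,\chi}$ and $\Xbar^{\gen,\chi}$, and there is no fixed-determinant analogue of the Section~\ref{sec_Rsquare} apparatus in the paper. The actual proof of Theorem \ref{psi_normal} again uses the same twisting device: a codimension-$\le 1$ locus of points with $H^2(G_F,\adz\rho)\neq 0$ in $\Spec R^{\square,\psi}_{\rhobar}/\varpi$ (resp.\ $[1/p]$) is transported through the isomorphism of Corollary \ref{cor_TwistedIsom} and through the homeomorphism (resp.\ finite \'etale cover) induced by $\varphi_d$ to such a locus in $\Spec R^{\square,\chi}_{\rhobar}/\varpi$ (resp.\ $[1/p]$), contradicting Lemma \ref{Kummer_fix} (resp.\ Lemma \ref{Kummer_fix_char0}). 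Your remark that the generic fibre could be handled ``via the twisting action $\rho\mapsto\rho\otimes\eta$'' is exactly the right idea; what you are missing is that the same twist, organized through $\varphi_d$ and its universal-homeomorphism property mod $\varpi$, also settles the special fibre and, as noted above, already yields the dimension and CI statements without any appeal to flatness.
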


Our work builds in an essential way on the work of 
GB--Juschka \cite{bj} on the special fibres of the deformation rings 
of pseudo-characters (i.e. pseudo-representations) of $G_F$.
The paper \cite{bj} draws its inspiration from the work of
Chenevier \cite{che_unpublished}, who studied rigid analytic generic 
fibres of these rings.
Our results in turn imply that the 
rigid analytic spaces appearing in \cite{che_unpublished} are normal 
(Corollaries \ref{Rps_normal}, \ref{Rpspsi_normal}).

The knowledge of irreducible 
components of 
$R^{\square}_{\rhobar}$ allows us to refine the existing results 
on the Zariski density of the locus with prescribed $p$-adic Hodge theoretic properties. 

\begin{thm}[Theorem \ref{density}]\label{thm_intro-3}
Suppose that $p$ does not divide $2d$.
Let $\Sigma$ be a subset of the maximal spectrum of  $R^{\square}_{\rhobar}[1/p]$
parameterizing any of the following sets of lifts of $\rhobar$ to characteristic 
zero: 
\begin{enumerate}
\item crystalline lifts with regular Hodge--Tate weights;
\item potentially crystabelline lifts with fixed regular Hodge--Tate weights;
\item potentially crystalline supercuspidal lifts with fixed regular Hodge--Tate
weights.
\end{enumerate} 
Then $\Sigma$  is Zariski dense in $\Spec R^\square_{\rhobar}[1/p]$.
\end{thm}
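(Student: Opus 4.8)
The plan is to use Theorem~\ref{thm_intro-2} to reduce to an irreducible situation, to check non-emptiness of the relevant locus on each irreducible piece, and then to feed in the known accumulation of (potentially) crystalline points. Since $\Spec R^\square_{\rhobar}[1/p]$ is the disjoint union of its finitely many connected components $\Spec R^{\square,\chi}_{\rhobar}[1/p]$, each \emph{irreducible} (being the spectrum of a localization of the normal domain $R^{\square,\chi}_{\rhobar}$) of dimension $d^2+d^2[F:\Qp]$, the claim is equivalent to: for every $\chi\colon\mu\to\OO^\times$, the set $\Sigma_\chi:=\Sigma\cap\Spec R^{\square,\chi}_{\rhobar}[1/p]$ is Zariski dense in $\Spec R^{\square,\chi}_{\rhobar}[1/p]$. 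First I would check that $\Sigma_\chi\ne\varnothing$. One starts from the existence of a single lift of the prescribed kind on \emph{some} component — the content of the known existence theorems for crystalline and potentially crystalline lifts — and then moves it between components: twisting a lift $\rho$ by a ramified character $\nu$ of $G_F$ of $p$-power order changes neither $\rhobar$ nor the Hodge type nor the supercuspidality of the inertial type, while $\det(\rho\otimes\nu)=(\det\rho)\nu^d$ alters the restriction of $\det$ to $\mu$ by $(\nu|_\mu)^d$; since $p\nmid d$, the map $\zeta\mapsto\zeta^d$ is an automorphism of the $p$-group $\mu$, so varying $\nu$ realises every character of $\mu$, hence reaches every component.

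Next I would organise $\Sigma_\chi$ into families. For an inertial type $\tau$ (trivial in case~(1), a sum of characters in case~(2), supercuspidal in case~(3)) and a regular Hodge type $\lambda$, Kisin's theory produces a quotient $R^{\square,\tau,\lambda}_{\rhobar}$ of $R^\square_{\rhobar}$ whose $\Qpbar$-points are precisely the corresponding potentially crystalline lifts of $\rhobar$, and whose generic fibre, when nonzero, is equidimensional of dimension $d^2+\tfrac{d(d-1)}{2}[F:\Qp]$. For $d\ge 2$ this is strictly smaller than $\dim\Spec R^{\square,\chi}_{\rhobar}[1/p]$ (the case $d=1$ being a direct computation with characters of $G_F^{\mathrm{ab}}$), so no single such closed subscheme is dense, and one must control the Zariski closure of the \emph{union} of infinitely many of them: over all regular $\lambda$ in case~(1), and, with $\lambda$ fixed, over the infinitely many $\tau$ lying over $\chi$ furnished by the twisting argument in cases~(2) and~(3). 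Since $L$ is infinite, the closure of such an infinite union of lower-dimensional subvarieties of an irreducible variety may well be the whole variety — and this is what has to happen — but a pure dimension count can neither prove nor disprove it, so one genuinely needs a family interpolating the individual loci.

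The engine for this accumulation is the Zariski density of crystalline representations. In case~(1) this density is known in $\Spec R^\square_{\rhobar}[1/p]$ itself (Colmez for $\GL_2/\Qp$; Nakamura for general $F$; alternatively via the Emerton--Gee moduli stack of $(\varphi,\Gamma)$-modules), and since $\Spec R^\square_{\rhobar}[1/p]$ is the disjoint union of the components $\Spec R^{\square,\chi}_{\rhobar}[1/p]$, density on each component is then automatic. For case~(2) I would reduce to case~(1) by base change along a finite abelian extension $F'/F$ splitting $\tau$ — this being exactly where "crystabelline" rather than merely "potentially crystalline" is used — comparing $R^{\square,\tau,\lambda}_{\rhobar}$ with a crystalline deformation ring of $\rhobar|_{G_{F'}}$ and descending the density. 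Case~(3) is the hardest: triangulations are unavailable, so one works directly with Kisin's potentially crystalline deformation rings of supercuspidal type, using the inertial local Langlands correspondence and Bushnell--Kutzko type theory to produce, for infinitely many supercuspidal $\tau$ of the appropriate central character lying over $\chi$, rings $R^{\square,\tau,\lambda}_{\rhobar}$ with nonzero generic fibre of the expected dimension, and then runs the accumulation. The hypothesis $p\nmid 2d$ enters in full strength here, both for the existence of supercuspidal types of $\GL_d(\oF)$ of prescribed central character and for the expected ring-theoretic behaviour of the associated deformation rings.

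The main obstacle is precisely this accumulation statement in the generality needed — above all in case~(3), where the trianguline machinery does not apply, one must substitute type theory and the fine structure of potentially crystalline deformation rings, and where the hypothesis $p\nmid 2d$ concentrates. The indispensable new input from the present paper is Theorem~\ref{thm_intro-2}: the identification of the connected/irreducible components of $\Spec R^\square_{\rhobar}[1/p]$ is what lets the accumulation be organised and transported component by component — without it one would only obtain density inside an a priori unidentified union of components — and it is this, together with the non-emptiness check, that refines the earlier density results into the statement above.
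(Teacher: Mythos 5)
Your reduction to the components $\Spec R^{\square,\chi}_{\rhobar}[1/p]$ via Theorem \ref{thm_intro-2} matches the spirit of the paper, but the proposal has a genuine gap at its core: the ``engine for the accumulation'' is never supplied. For case (1) you appeal to Zariski density of crystalline points in $\Spec R^{\square}_{\rhobar}[1/p]$ as ``known'' (Colmez, Nakamura, the Emerton--Gee stack); this is circular, since that density for arbitrary $d$, $F$ and $\rhobar$ is precisely what is being proved. What the trianguline-method papers (Colmez, Chenevier, Nakamura, Hellmann--Schraen) and \cite{EP} actually give is that the closure of $\Sigma$ is a union of irreducible components, and the paper stresses (Section \ref{sec_dense}) that even knowing the components, producing a point of $\Sigma$ on each one directly ``seems impossible'' outside special cases. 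In cases (2) and (3) you say one ``runs the accumulation'' using types and inertial Langlands, but give no mechanism; for supercuspidal types the trianguline machinery is unavailable. The paper's actual engine, absent from your proposal, is the patched module $M_{\infty}$ of \cite{6auth} together with the new Theorem \ref{faithful_square}: the $R^{\square}_{\rhobar}$-action on $M_{\infty}$ is faithful. That theorem is proved by combining the component classification (Corollary \ref{BJ_conjecture}) with a crystalline character $\psi$ lifting $\det\rhobar$ with prescribed restriction to $\mu$ (Lemma \ref{cris_char}), projectivity of $M_{\infty}\otimes_{R_{\det\rhobar},x}\OO$ over $\OO\br{K}$ and capture by algebraic representations (Lemmas \ref{delta}, \ref{proj}), and \cite[Theorem 6.12]{EP}; density then follows by the \cite{EP}-style argument that an element killing all $M_{\infty}(\xi)$ (resp.\ $M_{\infty}(\xi\otimes V)$ for types $V$) kills $M_{\infty}[1/p]$ by density of locally algebraic vectors, hence is zero. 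Relatedly, the hypothesis $p\nmid 2d$ enters through the patching construction of \cite{6auth}, not through the existence of supercuspidal types as you assert.

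A secondary but concrete flaw: your non-emptiness step for case (1) twists a crystalline lift by a ramified character $\nu$ of $p$-power order to move between components, but such a twist is no longer crystalline (only crystabelline), so this does not produce crystalline points on the other components; and for cases (2)--(3), where finite-order twisting does preserve the class, you still need the unproved input that a potentially crystalline (resp.\ supercuspidal) lift with the fixed regular Hodge--Tate weights exists on some component. The paper avoids both issues: it never constructs lifts component by component, but instead shows each component $R^{\square,\chi}_{\rhobar}$ lies in the support of $M_{\infty}$ by choosing a crystalline character with the right restriction to $\mu$ at the level of determinants, and lets the faithfulness theorem do the rest.
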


The assumption $p\nmid 2d$ enters
via our use of the patched module $M_{\infty}$ constructed in \cite{6auth}. The paper \cite{6auth} is applicable whenever $\rhobar$ has a potentially diagonalisable lift. It has been proved recently by  Emerton--Gee \cite{EG_stack}, using the Emerton--Gee stack, that this holds for all 
$\rhobar$. The rest of our paper is independent of \cite{EG_stack}.
We show that the action of $R^{\square}_{\rhobar}$ on $M_{\infty}$ is faithful (Theorem
\ref{faithful_square}), which allows us to deduce Theorem \ref{thm_intro-3} from \cite{EP}.

Partial results towards Theorem~\ref{thm_intro-1} and also towards the more recent question solved by Theorem~\ref{thm_intro-2} appear in many places, e.g.~\cite{babnik}, \cite{bjpp}, \cite{boeckle}, \cite{CDP2}, \cite{Iy}, \cite{nakamura}, in special cases.  However, these papers either compute with equations defining 
the rings, or impose assumptions on $\rhobar$ so that the deformation theory of $\rhobar$ is essentially unobstructed which leads to only one irreducible component. Although there is some overlap in ideas with \cite{Iy}, the argument 
in our paper is rather different as we don't compute with equations. We refer 
the reader to Section \ref{sec_dense} for a more detailed discussion of
the previous results on Zariski density of specific loci in $\Spec R^{\square}_{\rhobar}$ and 
to Remark \ref{rel_to_BIP2} for a detailed explanation of the relation between Theorem \ref{thm_intro-3} 
and our more recent results in \cite{BIP2}.

\begin{remar}
In the theorems above we work with framed deformation rings. Our results 
also carry over to the versal deformation rings (which coincide with the 
universal deformation rings if $\rhobar$ has only scalar endomorphisms), 
by exploiting the fact that framed deformation rings are formally smooth 
over versal deformation rings (see e.g.~\cite[Lemma 2.1]{Iy}) and using \cite[Theorem 2.3.6, Corollary 2.2.23 (a)]{BH}. 
\end{remar}

\subsection{Complete intersection}
We now give an overview of the proof of Theorem \ref{thm_intro-1}. To do so, we introduce two further key players. The first are determinant laws, which we refer to as \textit{pseudo-characters} throughout the paper, and their deformations. Let $\overline{D}: k[G_F] \rightarrow k$ be the pseudo-character attached to $\rhobar$ as defined in \cite{che_durham}. Let $D^{\ps}: \mathfrak A_{\OO}\rightarrow \mathrm{Sets}$ be the functor mapping $(A,\mm_A)\in \mathfrak A_{\OO}$ to  the set $D^{\ps}(A)$ of continuous $A$-valued $d$-dimensional pseudo-characters $D:A[G_F]\to A$ with $\Dbar= D\pmod{\mm_A}$. The functor $D^{\ps}$ is pro-representable by a complete local Noetherian $\OO$-algebra $(R^{\ps}, \mm_{R^{\ps}})$, see \cite[Section 3.1]{che_durham}. The ring $R^{\ps}$ has been well understood in the recent work of GB--Juschka \cite{bj}, who have determined the dimension of its special fiber and showed that the absolutely irreducible locus is dense in the special fibre. In particular, they show the following:
\begin{thm}[GB--Juschka {\cite[Theorem 5.5.1(a)]{bj}}] \label{thm_GB}
The ring $R^{\ps}/\varpi$ is equi-di\-men\-sio\-nal of dimension $1+d^2[F:\Qp]$.
\end{thm}

Mapping a lifting of $\rhobar$ to its associated pseudo-character induces a natural transformation $D^{\square}_{\rhobar}\rightarrow D^{\ps}$ and thus 
a map of local $\OO$-algebras $R^{\ps}\rightarrow R^{\square}_{\rhobar}$. Our basic idea is to study $R^{\square}_{\rhobar}$ by studying the fibres of this
map. Our initial observation was that the difference between the expected dimension of $R^{\square}_{\rhobar}/\varpi$ and the dimension computed in Theorem \ref{thm_GB} is $d^2-1$, which 
is the dimension of $\PGL_d$. On the other hand a fibre at a point corresponding to an 
absolutely irreducible pseudo-character can be shown to be isomorphic to $\PGL_d$. This led 
us naturally to study fibres at other points. 
In fact it is technically more convenient to introduce an intermediate ring $R^{\ps}\rightarrow A^{\gen}\rightarrow R^{\square}_{\rhobar}$, depending on $\Dbar$ and not on $\rhobar$ itself, such that $A^{\gen}$ is of finite type over $R^{\ps}$ and $R^{\square}_{\rhobar}$ is a completion of $A^{\gen}$ 
at a maximal ideal. This is our second key player. 

To describe $A^\gen$, let $D^u: R^{\ps}\br{G_F}\rightarrow R^{\ps}$ be the universal pseudo-character lifting $\overline{D}$ and let $\CH(D^u)$ be the closed two-sided ideal of $R^{\ps}\br{G_F}$ defined in \cite[Section 1.17]{che_durham}, so that
    \[ E:= R^{\ps}\br{G_F}/ \CH(D^u) \]
is the largest quotient of $R^{\ps}\br{G_F}$ for which the Cayley--Hamilton theorem for $D^u$ holds. Following \cite[Section 1.17]{che_durham} we will call such algebras Cayley--Hamilton $R^{\ps}$-algebras of degree $d$. By \cite[Proposition 3.6]{WE_alg} the ring $E$ is a finitely generated $R^{\ps}$-module. Now a construction of Procesi \cite{Pro87} gives a commutative $R^{\ps}$-algebra $A^{\gen}$ together with a homomorphism
    \[ j: E\rightarrow M_d(A^{\gen}) \]
of Cayley--Hamilton $R^{\ps}$-algebras satisfying the following \textit{universal property}: if $f: E\rightarrow M_d(B)$ is a map of Cayley--Hamilton $R^{\ps}$-algebras for a commutative $R^{\ps}$-algebra $B$, then there is a unique map $\tilde{f}: A^{\gen}\rightarrow B$ of $R^{\ps}$-algebras such that $f= M_d(\tilde{f})\circ j$. We give further details in Lemma \ref{existence_Agen} in the main text. The superscript \textit{gen} in $A^{\gen}$ stands for \textit{generic matrices}. 

Since $E$ is finitely generated as $R^{\ps}$-module, the construction of Procesi shows that $A^{\gen}$ is of finite type over $R^{\ps}$. Moreover one has an algebraic action of $\GL_d$ on $X^{\gen}:= \Spec A^{\gen}$, which for every a $R^{\ps}$-algebra $B$ and point $f: E\rightarrow M_d(B)$ in $X^{\gen}(B)$ is simply given by conjugation of matrices. Wang-Erickson has studied the quotient stack $[X^{\gen}/\GL_d]$ in his thesis 
\cite{WE_thesis}, \cite{WE_alg} and $X^{\gen}$ is isomorphic to $\mathrm{Rep}^\square_{\Dbar} = \mathrm{\Rep}^\square_{E,D^u}$ as defined in \cite[Theorem 3.8]{WE_alg}. It is an important observation that to $\pi:X^\gen\to X^\ps:=\Spec R^\ps $ we can apply geometric invariant theory (GIT). As shown in  \cite[Theorem 2.20]{WE_alg}, the induced morphism $  X^{\gen}\sslash G \rightarrow X^{\ps}$ is an adequate homeomorphism in the sense of \cite[Definition 3.3.1]{alper}. 

Our first important result on dimensions is for $ \Xbar^{\gen} :=\Spec A^\gen/\varpi$.
\begin{thm}[{Theorem~\ref{main}, Lemma~\ref{dim_gen_sp}}]\label{main-intro}
We have $$\dim X^{\gen}[1/p] \le \dim \Xbar^{\gen} \le d^2+ d^2[F:\Qp].$$
 \end{thm}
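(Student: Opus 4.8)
The plan is to estimate $\dim\Xbar^\gen$ by relating it, via the GIT quotient $\pi\colon X^\gen\to X^\ps$, to the dimension of $X^\ps$ together with the dimensions of the fibres of $\pi$. First I would recall that by \cite{WE_alg} the induced map $X^\gen\sslash G\to X^\ps$ is an adequate homeomorphism, hence in particular a bijection on points and dimension-preserving; so it suffices to bound $\dim(X^\gen\sslash G)$ plus the maximal fibre dimension of $X^\gen\to X^\gen\sslash G$. Passing to the special fibre $\Xbar^\gen$ throughout, the first input is GB--Juschka \cite{bj}: their analysis of the special fibre of $R^\ps$ gives an explicit bound on $\dim \Xbar^\ps$. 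The second input is a dimension count for the conjugation action of $\GL_d$ on the fibres: over a pseudo-character $D$ the fibre of $X^\gen$ is (up to the adequate homeomorphism) a variety of representations with that pseudo-character, and $\GL_d$ acts with stabilisers containing the centre $\Gm$, so the generic orbit dimension is at most $d^2-1$; more precisely one stratifies $\Xbar^\ps$ by the ``type'' of the semisimple representation (its decomposition into irreducible constituents), and on each stratum bounds the fibre dimension in terms of the sizes of the constituents.

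The key steps, in order: (1) reduce to the special fibre, noting $\dim X^\gen[1/p]\le\dim\Xbar^\gen$ since $X^\gen$ is a finite-type $R^\ps$-scheme and $R^\ps$ is $\OO$-flat of the appropriate dimension — or more directly, since the generic fibre of a finite-type scheme over a complete local Noetherian ring has dimension bounded by the special fibre dimension (this is Lemma~\ref{dim_gen_sp}); (2) use the adequate homeomorphism to replace $\dim\Xbar^\gen$ by $\dim(\Xbar^\gen\sslash G) + \max_{x}\dim(\text{fibre through }x)$, and identify $\dim(\Xbar^\gen\sslash G)=\dim\Xbar^\ps$; (3) invoke \cite{bj} for a bound on $\dim\Xbar^\ps$; (4) stratify $\Xbar^\ps$ according to the partition $d=d_1+\cdots+d_r$ recording the dimensions of the Jordan--Hölder constituents of the associated semisimple representation, and on the stratum of type $(d_1,\dots,d_r)$ bound both the base dimension (the product of the pseudo-deformation spaces of the irreducible pieces, cut down by relations) and the relative dimension of $X^\gen$ over it (the $\GL_d$-orbit dimension, which is $d^2-\sum_i d_i^2$ for a representation that is a direct sum of distinct irreducibles, plus the dimension of the space of extensions deforming the direct sum, governed by $H^1$ of the relevant $\ad$-modules); (5) sum the two contributions and check that on every stratum the total is $\le d^2+d^2[F:\Qp]$, the worst case being the split one where all constituents are $1$-dimensional, and conclude by taking the maximum over strata.

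The local Euler characteristic formula $\dim H^1(G_F,\ad\rho) - \dim H^0 - \dim H^2 = d^2[F:\Qp]$ (with the $H^0$ term producing the $d^2$ when one adds back the frame) is what makes the bound come out to exactly $d^2+d^2[F:\Qp]$: on a stratum of type $(d_1,\dots,d_r)$ the dimension of the pseudo-deformation base contributes roughly $\sum_i(d_i^2[F:\Qp]+1)$ from the constituents' deformation rings, the extension directions contribute the off-diagonal $H^1$'s summing over pairs $i\ne j$, and the orbit contributes $d^2-\sum_i d_i^2$; reassembling these via the Euler characteristic formula applied to $\rho=\bigoplus\rho_i$ yields the clean bound. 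The main obstacle I expect is Step (4): controlling the dimension of the stratum of $\Xbar^\ps$ corresponding to a given constituent type, since this requires knowing the dimensions of the (special fibres of) pseudo-deformation rings of the irreducible constituents and how they glue — this is exactly where the detailed results of \cite{bj} on the geometry of $\Xbar^\ps$ are indispensable — together with the bookkeeping needed to handle non-generic strata (repeated constituents, self-extensions) without the bound degrading. A secondary subtlety is making the orbit-dimension estimate uniform: stabilisers jump on the locus where the representation is not multiplicity-free, but there the base of the stratum drops in dimension by a compensating amount, so one must argue the trade-off carefully rather than bounding the two quantities separately by their individual maxima.
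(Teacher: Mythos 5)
Your overall architecture matches the paper's: stratify $\Xbar^{\ps}$ by the decomposition type of the associated semisimple representation, bound the strata using \cite{bj}, bound the fibres by a GIT tangent-space argument at the closed orbit combined with an $\Ext$/Euler-characteristic count (the orbit contributing $d^2-r$ and the off-diagonal extension and duality terms the rest), and assemble base and fibre contributions by commutative algebra over the punctured spectrum. The genuine gap is in your step (1). The principle you invoke --- that the generic fibre of a finite-type scheme over a complete local Noetherian ring has dimension bounded by that of its special fibre --- is false in general: for $R=\Zp$ and $S=\Zp[x]/(px-1)$ the generic fibre is a point while the special fibre is empty, and the paper's Remark \ref{caution} and Example \ref{rem:concrete-ex} are there precisely to warn that closed points of $X^{\gen}$ can have residue field a finite extension of $L$ or a characteristic-$p$ local field, so an irreducible component could a priori fail to meet the fibre over $\mm_{R^{\ps}}$. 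The actual content of Lemma \ref{dim_gen_sp} is that this cannot happen for $\GL_d$-invariant closed subschemes: Lemma \ref{fix} takes a closed point $x$ on a component $Z$, conjugates $\rho_x$ into $\GL_d(\OO_{\kappa(x)})$, uses $\GL_d$-invariance and irreducibility of $\GL_d$ to see that the conjugated point $x'$ lies on the same component, and then reduces modulo the maximal ideal of $\OO_{\kappa(x)}$ to produce a point of $Z$ above $\mm_{R^{\ps}}$. Only then does Lemma \ref{jacobson}(5) give $\dim Z[1/p]=\dim Z-1$ and $\dim\overline{Z}=\dim Z-1$, whence the first inequality. Your alternative justification (``$X^{\gen}$ is finite type over $R^{\ps}$ and $R^{\ps}$ is $\OO$-flat of the appropriate dimension'') does not repair this: neither flatness nor the dimension of $R^{\ps}$ is known or relevant at this stage, and the needed input is the invariance argument.

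Two smaller calibration points. First, the estimate ``$\dim \Xbar^{\gen} \le \dim(\Xbar^{\gen}\sslash G)+\max(\text{fibre dimension})$'' cannot be applied in one stroke: $R^{\ps}$ is local, so its only closed point is $\mm_{R^{\ps}}$, and the paper instead works over the Jacobson scheme $U_{\max}=\Xbar^{\ps}\setminus\{\mm_{R^{\ps}}\}$, applies Lemma \ref{jacobson}(4),(6) at its closed points (which lie in $P_1(R^{\ps}/\varpi)$), adds $1$ when passing to closures via Lemma \ref{jacobson}(5), and treats the fibre $Y$ over $\mm_{R^{\ps}}$ separately (Lemma \ref{bound_Y}); your step (2) should be understood in this stratified form rather than as a global base-plus-fibre bound. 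Second, the extremal stratum is not the totally split one: with all $d_i=1$ the base contributes roughly $d+d[F:\Qp]$ and the fibre at most $d^2-d+\tfrac{d(d-1)}{2}[F:\Qp]$ plus correction terms, which is strictly below $d^2+d^2[F:\Qp]$; the maximum comes from the absolutely irreducible stratum $\PP_{\min}$ (Proposition \ref{dim_Pmin}), with all finer partitions strictly smaller (Lemma \ref{bound_diff}). This does not invalidate your plan, since you propose checking every stratum, but it is worth recalibrating, and the trade-off you flag between jumping stabilisers and shrinking base is exactly what the correction terms $\sum_i\binom{n_i'}{2}$ and the special loci $U^{ij}_{\PP}$ in the paper are designed to control.
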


To prove the second inequality of Theorem~\ref{main-intro} we decompose the base of the finite type morphism 
$\bar\pi : \Xbar^\gen\to \Xbar^\ps=\Spec R^\ps/\varpi$ 
as a finite union $\Xbar^\ps=\bigcup_i U_i$ of locally closed subschemes $U_i$. 
The points of the $U_i$ correspond to semi-simple degree $d$ representations of $G_F$ with certain (degree) conditions on the irreducible constituents. The work \cite{bj} gives dimension estimates on the $U_i$. We  combine them with bounds on the  dimensions of the fibres at the closed points of $U_i$, obtained using GIT, and with results on $\bar\pi^{-1}(U_i)\to U_i$ from commutative algebra. In Subsection~\ref{sec_fibre} we  analyze in detail the dimensions of the fibres of $\pi$ at points $y$ of $X^\ps$ valued either in finite fields containing $k$ or local fields whose residue fields contain $k$. The analysis at such points suffices for all results in this paper. The commutative algebra results, used to analyze $\bar\pi^{-1}(U_i)\to U_i$ and to give the first inequality, are proved in Subsection~\ref{sec_dim_sp}. The key technical improvement 
working with $X^{\gen}$ instead of $\Spec R^{\square}_{\rhobar}$ directly, is that the fibres are of finite type over $\kappa(y)$. 

We apply the bounds from Theorem~\ref{main-intro} to the study of lifting rings of continuous residual representations $\rho_x:G_F\to \GL_d(\kappa(x))$ where $x$ is a point of $X^\gen$ whose residue field $\kappa(x)$ is a finite or a local field. We distinguish three cases:
\begin{enumerate}
    \item If $\kappa(x)$ is a finite extension $k'$ of $k$ then we set $\Lambda$ to be the ring of integers $\OO'$ of the unramified extension $L'$ of $L$ with residue field $k'$.
    \item If $\kappa(x)$ is a finite extension of $L$ then we set $\Lambda$ to be $\kappa(x)$.
    \item If $\kappa(x)$ is a local field that contains $k$ and if $k'$ denotes its residue field, then we take as $\Lambda$ a Cohen ring of $\kappa(x)$ (with the natural topology) tensored over the Witt vector ring $W(k')$ with $\OO'$.
\end{enumerate}

Let $\mathfrak A_{\Lambda}$ be the category of local Artinian $\Lambda$-algebras $(A,\mm_A)$ with residue field $\kappa(x)$. We equip the rings $A$ with a natural topology, and we consider the functor $D^{\square}_{\rho_x}:\mathfrak A_\Lambda\to\mathrm{Sets}$ such that  $D^{\square}_{\rho_x}(A)$ is the set of continuous group homomorphisms $\rho: G_F\rightarrow 
\GL_d(A)$, such that $\rho \pmod{\mm_A}=\rho_x$. 
In cases (1) and (2) such functors occur in work of Mazur and Kisin, respectively. The formulation in case (3) appears to be new. 
In all cases, the functor $D^{\square}_{\rho_x}$ is pro-represented by a complete local Noetherian $\Lambda$-algebra $R^{\square}_{\rho_x}$ 
with residue field $\kappa(x)$. The arguments of Mazur and Kisin 
carry over to the case when $\kappa(x)$ is a local field of characteristic 
$p$ and yield a presentation
\begin{equation}\label{intro_present}
R^{\square}_{\rho_x}\cong \Lambda\br{x_1,\ldots, x_r}/(f_1, \ldots, f_s) 
\end{equation}
with $r = \dim_{\kappa(x)} Z^1(G_F, \ad \rho_x)$ and $s = \dim_{\kappa(x)} H^2(G_F, \ad \rho_x)$; here $\ad \rho_x$ is the adjoint representation of $G_F$ on $\End_{\kappa(x)}(\rho_x)$ by conjugation. By a suitable version of Tate local duality results, one finds $r-s=d^2+d^2[F:\Qp]$. From this, Theorem~\ref{main-intro} and some commutative algebra results that relate the completion of $A^\gen$ at $x$ to the ring $R^{\square}_{\rho_x}$, we deduce the following result.
\begin{cor}[{Corollaries~\ref{ci} and~\ref{cor_RforLocalK}}]\label{intro-ci}
For $x$ as above the following hold:
\begin{enumerate}
\item $R^{\square}_{\rho_x}$ is a flat
$\Lambda$-algebra of relative  dimension $d^2+ d^2[F:\Qp]$ and is complete intersection;
\item if $\cha(\kappa(x))=p$ then $R^{\square}_{\rho_x}/\varpi$ is complete intersection of dimension $d^2+ d^2[F:\Qp]$.
\end{enumerate}
\end{cor}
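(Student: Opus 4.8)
The plan is to deduce Corollary~\ref{intro-ci} by transferring the global dimension bound of Theorem~\ref{main-intro} to the complete local ring $R^\square_{\rho_x}$, using the presentation~\eqref{intro_present} together with the Euler characteristic identity $r-s = d^2 + d^2[F:\Qp]$, and then invoking a standard ``expected dimension forces complete intersection'' argument. First I would establish the relationship between $R^\square_{\rho_x}$ and the completion $\widehat{A^\gen_x}$ of $A^\gen$ at the point $x$. Since $\GL_d$ acts on $X^\gen$ by conjugation and the stabilizer of a point corresponding to $\rho_x$ is (the $\kappa(x)$-points of) the centralizer of $\rho_x$, the orbit map gives a formally smooth morphism whose fibres have dimension $d^2 - \dim_{\kappa(x)} \End_{\kappa(x)}(\rho_x)$; concretely, $\widehat{A^\gen_x}$ should be a power series ring in $d^2 - \dim_{\kappa(x)}\End_{\kappa(x)}(\rho_x)$ variables over $R^\square_{\rho_x}$, matching the well-known relation between framed and versal deformation rings. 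Hence $\dim \widehat{A^\gen_x} = \dim R^\square_{\rho_x} + d^2 - \dim_{\kappa(x)}\End_{\kappa(x)}(\rho_x)$, and likewise after reduction mod $\varpi$ in case $\cha(\kappa(x)) = p$.

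Next I would run the dimension bookkeeping. From the presentation~\eqref{intro_present} one always has $\dim R^\square_{\rho_x} \ge \dim \Lambda + r - s = \dim\Lambda + d^2 + d^2[F:\Qp]$, because cutting out $s$ equations drops the Krull dimension by at most $s$; in the equal-characteristic-$p$ case (3) one instead gets $\dim R^\square_{\rho_x}/\varpi \ge r - s = d^2 + d^2[F:\Qp]$ directly from a presentation of $R^\square_{\rho_x}/\varpi$ over $\kappa(x)$ (or by noting $\varpi$ lies in the defining ideal). For the reverse inequality I would use Theorem~\ref{main-intro}: $\Xbar^\gen$ has dimension at most $d^2 + d^2[F:\Qp]$, and since completion at a closed point (after possibly a residue field extension, which does not change relative dimension) does not increase dimension, $\dim (\widehat{A^\gen_x}/\varpi) \le d^2 + d^2[F:\Qp] + (d^2 - \dim_{\kappa(x)}\End(\rho_x))$ when $\cha(\kappa(x)) = p$ — here one must be slightly careful, using that $\widehat{A^\gen_x}/\varpi$ is a localization-completion of $\Xbar^\gen$ and the fibre dimension of the orbit map is exactly $d^2 - \dim\End(\rho_x)$. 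Combining with the power series relation yields $\dim R^\square_{\rho_x}/\varpi \le d^2 + d^2[F:\Qp]$, hence equality; and a similar argument using $\dim X^\gen[1/p] \le \dim\Xbar^\gen$ plus $\OO$-flatness considerations handles the characteristic-zero statement, giving $\dim R^\square_{\rho_x} = \dim\Lambda + d^2 + d^2[F:\Qp]$.

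Once the dimensions are pinned down, the complete intersection property is formal: in the presentation $R^\square_{\rho_x} = \Lambda\br{x_1,\dots,x_r}/(f_1,\dots,f_s)$ the quotient has dimension equal to $(1 + r) - s = \dim\Lambda\br{x_1,\dots,x_r} - s$ (using $\dim\Lambda = 1$ in cases (1),(2) and adjusting in case (3) where $\Lambda\br{x_1,\dots,x_r}$ has dimension $1 + r$ as well since $\Lambda$ is a DVR or a field depending on the subcase — I would treat each subcase), so $f_1,\dots,f_s$ is a regular sequence in the regular local ring $\Lambda\br{x_1,\dots,x_r}$ by the standard fact that a quotient of a Cohen--Macaulay (here regular) local ring by $s$ elements that has the minimal possible dimension $\dim - s$ is cut out by a regular sequence. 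This gives that $R^\square_{\rho_x}$ is a complete intersection. For $\OO$-flatness, $\varpi$ together with $f_1,\dots,f_s$ must then also be a regular sequence in $\OO\br{x_1,\dots,x_r}$ (the codimension-$(s{+}1)$ count works out by part~(2)), so $\varpi$ is a nonzerodivisor on $R^\square_{\rho_x}$, i.e. $R^\square_{\rho_x}$ is $\OO$-flat; and then $R^\square_{\rho_x}/\varpi = \OO\br{x_1,\dots,x_r}/(\varpi,f_1,\dots,f_s)$ is visibly a complete intersection of the asserted dimension. The main obstacle I anticipate is the careful control of the passage between the finite-type ring $A^\gen$ and the complete local ring $R^\square_{\rho_x}$ — in particular verifying that the orbit map is formally smooth of the expected fibre dimension at points with possibly large endomorphism algebra and over residue fields that are local of characteristic $p$ (case (3)), and checking that completion does not lose dimension (which needs that $\Xbar^\gen$ is, say, excellent or at least that we may reduce to its irreducible components passing through $x$). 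Everything downstream is routine commutative algebra about regular sequences in regular local rings.
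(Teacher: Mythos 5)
There is a genuine gap, and it sits exactly where you yourself locate the main difficulty: the passage between $A^{\gen}$ and $R^{\square}_{\rho_x}$. You assert that the completion $\widehat{A^{\gen}_x}$ is a power series ring over $R^{\square}_{\rho_x}$ in $d^2-\dim\End_{\kappa(x)}(\rho_x)$ variables, ``matching the well-known relation between framed and versal deformation rings''. That relation is between the framed ring and the \emph{versal} ring (and the relevant endomorphism algebra there is $\End_{G_F}(\rho_x)$, not all of $\End_{\kappa(x)}(\rho_x)$), and it does not apply here: $X^{\gen}$ already parameterizes framed objects --- its $B$-points are homomorphisms $E\to M_d(B)$, i.e.\ representations with a chosen basis, and no quotient by conjugation has been taken --- so the completion of $\Lambda\otimes_{\OO}A^{\gen}$ at the maximal ideal corresponding to $x$ is naturally isomorphic to $R^{\square}_{\rho_x}$ itself; this is Proposition \ref{main_complete} combined with Lemmas \ref{333}--\ref{335}, the only correction being a single power-series variable $T$, appearing on the $R^{\square}_{\rho_x}$ side, when $\kappa(x)$ is a local field of characteristic $p$. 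With your relation the bookkeeping breaks: the honest bound from Theorem \ref{main} is $\dim(\widehat{A^{\gen}_x}/\varpi)\le\dim\Xbar^{\gen}\le d^2+d^2[F:\Qp]$, so your power-series claim would force $\dim R^{\square}_{\rho_x}/\varpi\le d^2+d^2[F:\Qp]-(d^2-\dim\End_{\kappa(x)}(\rho_x))$, contradicting the lower bound $r-s$ coming from \eqref{intro_present}. Your way out --- adding the putative fibre dimension to the upper bound --- has no justification: completing at a closed point gives dimension at most $\dim\Xbar^{\gen}$, and the finer statement actually needed is Lemma \ref{fix}: at closed points whose residue field is a local field (equivalently, points not lying over $\mm_{R^{\ps}}$) the local ring has dimension at most $\dim\Xbar^{\gen}-1$, and this loss of one is precisely what absorbs the extra variable $T$ in case (3). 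Your sketch never confronts this, and Remark \ref{caution} shows it is a genuine issue rather than a formality.

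Even granting the correct identification $\widehat{\OO}_{X^{\gen},x}\cong R^{\square}_{\rho_x}$ (resp.\ $R^{\square}_{\rho_x}\cong\widehat{\OO}_{X^{\gen},x}\br{T}$), proving it is the technical core of the argument (Proposition \ref{main_complete}): one needs the continuity statements of Lemma \ref{topology}, compatibility of the Cayley--Hamilton quotient with the coefficient change $\OO\to\Lambda$, and in case (3) a Kisin-style exhaustion by open bounded $\Lambda^0$-subalgebras to produce the comparison map and a uniqueness argument via \cite[Corollary 1.14]{che_durham}; none of this is supplied by an ``orbit map is formally smooth'' heuristic, which in any case pertains to the map down to $X^{\ps}$ (or to a versal ring), not to the relation between $A^{\gen}$ and $R^{\square}_{\rho_x}$. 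Once the identification and the dimension bound are in place, your final paragraph (minimal dimension forces $\varpi,f_1,\dots,f_s$ to be a regular sequence, hence complete intersection and $\OO$-flatness) does agree with the endgame of the paper's proof of Corollary \ref{ci}, and the case distinction you propose over $\Lambda$ is handled there by exactly that argument.
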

At first glance one might expect that for closed points $x$ of $X^\gen$ the residue field $\kappa(x)$ is always finite. However, as we show in Example~\ref{rem:concrete-ex}, $\kappa(x)$ can also 
be a local field of characteristic $0$ or $p$. In Subsection \ref{sec_completions} we show that this exhausts all possibilities. 

Corollary \ref{intro-ci} gives us a handle on the completions of the local rings $\OO_{X^{\gen},x}$ 
(resp.~$\OO_{\Xbar^{\gen}, x}$) at closed points $x\in X^{\gen}$ (resp.~$x\in \Xbar^{\gen}$), which allows us to deduce the following result.
\begin{cor}[{Corollaries~\ref{ci_gen} and~\ref{char0_lift}}] The following hold:
\begin{enumerate}
\item $A^{\gen}$ is $\OO$-torsion free, equi-dimensional of dimension $1+ d^2+ d^2[F:\Qp]$ and is locally complete intersection;
\item $A^{\gen}/\varpi$ is  equi-dimensional of dimension $d^2+d^2[F:\Qp]$ and is locally complete intersection. 
\end{enumerate}
\end{cor}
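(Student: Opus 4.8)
The plan is to deduce all four assertions by passing to the completions of the local rings of $X^\gen$ and $\Xbar^\gen$ at closed points, where everything is controlled by Corollary~\ref{intro-ci}. Write $N:=d^2+d^2[F:\Qp]$. First I would recall from Subsection~\ref{sec_completions} that, since $A^\gen$ is of finite type over the \emph{complete local} ring $R^\ps$, it is not Jacobson, so a closed point $x$ of $X^\gen$ may have residue field $\kappa(x)$ either a finite extension of $k$ or a local field of characteristic $0$ or $p$; and that, with $\Lambda$ the coefficient ring attached to $x$ in the three cases, the natural map identifies $\widehat{\OO_{X^\gen,x}}$ with the framed lifting ring $R^\square_{\rho_x}$, and $\widehat{\OO_{\Xbar^\gen,x}}$ with $R^\square_{\rho_x}/\varpi$ when $x\in\Xbar^\gen$ (in which case $\cha\kappa(x)=p$). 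By Corollary~\ref{intro-ci}, $R^\square_{\rho_x}$ is a complete intersection, flat over $\Lambda$ of relative dimension $N$, and $R^\square_{\rho_x}/\varpi$ is a complete intersection of dimension $N$.

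For the local-complete-intersection statements: being a complete intersection is detected after completion and is inherited by all further localisations (Avramov), so the above gives that $(A^\gen)_\mm$ and $(A^\gen/\varpi)_\mm$ are complete intersections for every maximal ideal $\mm$; hence $A^\gen$ and $A^\gen/\varpi$ are locally complete intersection, and in particular Cohen--Macaulay, so they have no embedded associated primes. For $\OO$-torsion-freeness, I would note that $\Lambda$ is $\OO$-flat in each of the three cases, so every $\widehat{\OO_{X^\gen,x}}=R^\square_{\rho_x}$ is $\OO$-flat, i.e.\ $\varpi$ is a non-zerodivisor on it; since a Noetherian ring embeds into the product of the completions of its localisations at maximal ideals, $\varpi$ is a non-zerodivisor on $A^\gen$, which is therefore $\OO$-torsion-free.

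For the dimensions I would argue as follows. Theorem~\ref{main-intro} gives $\dim\Xbar^\gen\le N$; for every closed point $x\in\Xbar^\gen$ the ring $R^\square_{\rho_x}/\varpi$ is a complete intersection, hence Cohen--Macaulay and equidimensional, of dimension $N$, and since $\OO_{\Xbar^\gen,x}$ is excellent (so its completion modulo a minimal prime stays equidimensional) this forces $\dim\big(\OO_{\Xbar^\gen,x}/\pp\big)=N$ for every minimal prime $\pp$ of $\OO_{\Xbar^\gen,x}$; as every irreducible component of $\Xbar^\gen$ contains a closed point, $\Xbar^\gen$ is equidimensional of dimension $N$. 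For $X^\gen$: the same argument applied at closed points with $\cha\kappa(x)=p$, where $R^\square_{\rho_x}$ is a complete intersection of dimension $N+1$, shows that every minimal prime $\pp$ of $A^\gen$ with $x\in V(\pp)$ satisfies $\dim A^\gen/\pp\ge N+1$; and $\dim A^\gen\le N+1$ since $\dim\OO_{X^\gen,x}=\dim R^\square_{\rho_x}\le N+1$ at every closed point. It remains to see that every component of $X^\gen$ meets $\Xbar^\gen$, and here I would invoke geometric invariant theory: a component $V(\pp)$ is $\GL_d$-stable (as $\GL_d$ is connected), and under the adequate homeomorphism $X^\gen\sslash\GL_d\iso X^\ps$ of~\cite[Theorem~2.20]{WE_alg} its image has closure $V(\qq)$ for $\qq$ the preimage of $\pp$ in $R^\ps$; since $R^\ps$ is local, $\varpi$ lies in its maximal ideal, hence is a non-unit in $R^\ps/\qq$, so $V(\qq)$ meets $\Xbar^\ps$, and surjectivity of $V(\pp)\to V(\pp)\sslash\GL_d\to V(\qq)$ then produces a point of $V(\pp)\cap\Xbar^\gen$. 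Combining, every minimal prime of $A^\gen$ has coheight exactly $N+1$, so $A^\gen$ is equidimensional of dimension $N+1$.

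The genuinely hard inputs are Theorem~\ref{main-intro} (the bound $\dim\Xbar^\gen\le N$) and Corollary~\ref{intro-ci} (the complete-intersection and flatness statements for the $R^\square_{\rho_x}$), both of which we may assume here; the rest is commutative-algebra bookkeeping. The step that needs the most care is the passage from the local information at closed points to the \emph{global} equidimensionality of $A^\gen$: one must simultaneously use that $A^\gen$ is excellent (so the local rings at closed points have equidimensional completions modulo minimal primes), that it is Cohen--Macaulay (no embedded primes), and, crucially, that every component meets the special fibre — which is where the GIT identification $X^\gen\sslash\GL_d\iso X^\ps$ and the locality of $R^\ps$ are indispensable, since $A^\gen$ itself is not Jacobson and $X^\gen[1/p]$ is not obviously dense.
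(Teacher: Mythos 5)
Your overall strategy (pass to completions at closed points, import the complete intersection and dimension statements from Corollary~\ref{ci}, then globalize) is the paper's, but the equidimensionality step has a genuine gap, and it sits exactly where the paper invokes Lemma~\ref{fix}. The problem is that you treat all closed points alike, whereas closed points with local-field residue field behave differently from those with finite residue field: by Corollary~\ref{local_rings} (via Lemmas~\ref{333} and~\ref{335}), when $\kappa(x)$ is a local field of characteristic $p$ one has $R^{\square}_{\rho_x}\cong \widehat{\OO}_{X^{\gen},x}\br{T}$, not $\widehat{\OO}_{X^{\gen},x}\cong R^{\square}_{\rho_x}$, so at such points $\dim \OO_{X^{\gen},x}=d^2+d^2[F:\Qp]$ and $\dim\OO_{\Xbar^{\gen},x}=d^2+d^2[F:\Qp]-1$. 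Hence your assertion that $\dim(\OO_{\Xbar^{\gen},x}/\pp)=d^2+d^2[F:\Qp]$ for every minimal prime $\pp$ at \emph{every} closed point of $\Xbar^{\gen}$ is false, and the conclusion ``every irreducible component contains a closed point, hence has dimension $d^2+d^2[F:\Qp]$'' does not follow. What is actually needed is that every irreducible component of $\Xbar^{\gen}$ (and of $X^{\gen}$) contains a closed point with \emph{finite} residue field, equivalently a point above $\mm_{R^{\ps}}$, since only at such points is the local dimension $d^2+d^2[F:\Qp]$ (resp.\ $1+d^2+d^2[F:\Qp]$ on $X^{\gen}$). This is not automatic for schemes of finite type over a complete local ring: a nonempty closed subset may consist entirely of points with local-field residue fields (compare Remark~\ref{caution} and Example~\ref{rem:concrete-ex}), and ruling this out for $\GL_d$-invariant closed subsets is precisely the content of Lemma~\ref{fix}, which your proposal never supplies for $\Xbar^{\gen}$.

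For $X^{\gen}$ you do see that an input of this kind is required, but you aim at the wrong target: proving that a component $V(\pp)$ meets $\Xbar^{\gen}$ is strictly weaker than proving it meets the fibre $Y$ over $\mm_{R^{\ps}}$, and a point of $V(\pp)\cap\Xbar^{\gen}$ need not yield a closed point with finite residue field on $V(\pp)$; moreover your lower bound $\dim A^{\gen}/\pp\ge 1+d^2+d^2[F:\Qp]$ is only justified at finite-residue-field points (at characteristic $p$ local-field points the local dimension is one less, because of the extra $\br{T}$). The good news is that your GIT idea repairs easily and then recovers Lemma~\ref{fix}: granting that the image of the $\GL_d$-invariant closed set $V(\pp)$ under $X^{\gen}\to X^{\gen}\sslash\GL_d$ is closed and that $X^{\gen}\sslash\GL_d\to X^{\ps}$ is a homeomorphism, the image of $V(\pp)$ in $X^{\ps}=\Spec R^{\ps}$ is a nonempty closed subset of the spectrum of a local ring and hence contains $\mm_{R^{\ps}}$; thus $V(\pp)\cap Y\neq\emptyset$, and closed points of the finite type $k$-scheme $V(\pp)\cap Y$ are closed in $X^{\gen}$ with finite residue fields, which is exactly what your dimension count needs. (The paper proves the same statement without appealing to closedness of images under the adequate quotient, by conjugating $\rho_x$ into $\GL_d(\OO_{\kappa(x)})$ and specializing to the residue field.) With that fix, and with the corrected completion statement at local-field points — which affects neither the locally complete intersection nor the $\OO$-torsion-freeness conclusions, by the ``in particular'' clauses of Lemmas~\ref{333} and~\ref{335} — your argument becomes essentially the paper's proof of Corollary~\ref{ci_gen}.
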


We end Section~\ref{sec_Rsquare} with a result on the density of (certain) absolutely irreducible points in $R^\square_{\rhobar}$ and in $R^\square_{\rhobar}/\varpi$. This is motivated by and relies on similar results for $R^\ps$. A point $x$ in $X^\ps=\Spec R^\ps$ is called \textit{absolutely irreducible} if the associated semisimple representation $\rho_x:G_F\to \GL_d(\overline{\kappa(x)})$ (which is unique up to isomorphism) is irreducible. It follows from the main theorem of \cite{che_unpublished} that the locus of absolutely irreducible points is dense open in the generic fibre $X^\ps[1/p]=\Spec R^\ps[1/p]$, and this is extremely useful because such points are regular on $X^\ps[1/p]$. 

A key role in the study of the 
regular locus in the special fibre $\Xbar^\ps=\Spec R^\ps/\varpi$
in \cite{bj} is played by a class 
of absolutely irreducible points, 
which are called \textit{non-special}.
We  extend this notion slightly in Appendix \ref{appendix}. We say that an absolutely irreducible point $x$ in $\Xbar^\ps$ with finite or local residue field is \textit{Kummer-reducible} if there exists a degree $p$ Galois extension $F'$ of $F(\zeta_p)$ such that $\rho_x|_{G_{F'}}$ is reducible, and \textit{Kummer-irreducible} if not. If $\zeta_p \in F$ then $x \in \Xbar^{\ps}$ is Kummer-irreducible if and only if it is \textit{non-special} in the sense of \cite[Section 5]{bj}. We show that if $x$ is Kummer-irreducible then 
$H^2(G_F,\ad^0 \rho_x) = 0$ where $\ad^0\rho_x$ is the subrepresentation of $\ad \rho_x$ of trace zero matrices. 
Much more importantly for us, we also show that the locus of Kummer-irreducible $x\in \Xbar^\ps$ is dense open. At these points $\Xbar^\ps$ is not necessarily smooth but it is relatively smooth over $\Spec R_{\det\rhobar}$. Here we prove the following:
\begin{prop}[{Proposition~\ref{nspcl_formal} and Corollaries \ref{irr_gen_dense} and~\ref{cor_irrlift}}]
We have:
\begin{enumerate}
\item The set of absolutely irreducible points $x\in 
\Spec R^{\square}_{\rhobar}[1/p]$ with $\kappa(x)$ finite over $L$ is dense in 
$\Spec R^{\square}_{\rhobar}[1/p]$.
\item The set of Kummer-irreducible points $x\in 
\Spec R^{\square}_{\rhobar}/\varpi$ with $\kappa(x)$ a local field is dense in 
$\Spec R^{\square}_{\rhobar}/\varpi$.
\end{enumerate}
In particular, every continuous representation $\rhobar: G_F\rightarrow \GL_d(k)$ has an absolutely irreducible lift to characteristic zero.
\end{prop}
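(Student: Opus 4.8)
The plan is to deduce both density statements from the known density of the absolutely irreducible locus $V^{\irr}\subseteq X^{\ps}[1/p]$ (after \cite{che_unpublished}) and of the Kummer-irreducible locus $V^{\Kirr}\subseteq\Xbar^{\ps}$ (Appendix~\ref{appendix}), by pushing these loci forward along $\pi\colon X^{\gen}\to X^{\ps}$ and then along the completion map $\Spec R^{\square}_{\rhobar}\to X^{\gen}$ at the maximal ideal $\mm_{\rhobar}$ corresponding to $\rhobar$ with its standard framing. First I would observe that, since absolute irreducibility (resp.\ Kummer-irreducibility) of $\rho_x$ depends only on the associated semisimple representation, hence only on $\pi(x)$, the preimage $\pi^{-1}(V^{\irr})$ is precisely the absolutely irreducible locus of $X^{\gen}[1/p]$ and $\pi^{-1}(V^{\Kirr})$ the Kummer-irreducible locus of $\Xbar^{\gen}$; both are open. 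Since $X^{\gen}$ and $\Xbar^{\gen}$ are equidimensional of dimensions $1+d^2+d^2[F:\Qp]$ and $d^2+d^2[F:\Qp]$, to see that these open loci are dense it suffices to show that no irreducible component of $X^{\gen}$ (resp.\ $\Xbar^{\gen}$) lies over the complementary, reducible, locus of $X^{\ps}$ (resp.\ $\Xbar^{\ps}$); equivalently that the latter has strictly smaller dimension.

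This last point is the heart of the matter, and I would establish it by refining the dimension count behind Theorem~\ref{main-intro}. Identifying $X^{\ps}$ with $X^{\gen}\sslash G$ via the adequate homeomorphism of \cite{WE_alg}, decompose the reducible locus of $X^{\ps}$ (resp.\ $\Xbar^{\ps}$) into the locally closed strata parametrising semisimple representations with prescribed constituent degrees, as in the proof of Theorem~\ref{main-intro}, and bound $\dim\pi^{-1}(U)$ for each stratum $U$ by $\dim U$ plus the dimension of the fibres of $\pi$ over the closed points of $U$, the latter controlled by GIT together with the results on pseudo-deformation strata from \cite{bj}. Over the absolutely irreducible stratum $\pi$ is a $\PGL_d$-torsor and the full dimension is attained; over every properly reducible stratum the combined bound is strictly smaller (and for the Kummer-irreducible case one also treats the stratum of absolutely irreducible but induced representations, by applying the same bound to $\GL_{d/p}$). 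Hence the absolutely irreducible locus of $X^{\gen}[1/p]$ and the Kummer-irreducible locus of $\Xbar^{\gen}$ are dense open.

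Next I would transfer this to $R^{\square}_{\rhobar}$. Writing $R^{\square}_{\rhobar}$ as the completion of $(A^{\gen})_{\mm_{\rhobar}}$, the map $\Spec R^{\square}_{\rhobar}\to\Spec (A^{\gen})_{\mm_{\rhobar}}$ is faithfully flat, so by going-down every minimal prime of $R^{\square}_{\rhobar}$, resp.\ of $R^{\square}_{\rhobar}/\varpi$, contracts to a minimal prime of $(A^{\gen})_{\mm_{\rhobar}}$, resp.\ $(A^{\gen}/\varpi)_{\mm_{\rhobar}}$, i.e.\ to an irreducible component of $X^{\gen}$, resp.\ $\Xbar^{\gen}$, through $\mm_{\rhobar}$. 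By equidimensionality each such component meets the dense open locus of the previous step in a dense open subset, and pulling that subset back through the faithfully flat — hence componentwise dominant — map shows that the absolutely irreducible locus of $\Spec R^{\square}_{\rhobar}$ and the Kummer-irreducible locus of $\Spec R^{\square}_{\rhobar}/\varpi$ are dense open; $\OO$-flatness of $R^{\square}_{\rhobar}$ then ensures the former is already dense in $\Spec R^{\square}_{\rhobar}[1/p]$. It remains to exhibit points of the prescribed residue-field type: $R^{\square}_{\rhobar}[1/p]$ is Jacobson with residue fields at closed points finite over $L$ (Weierstrass preparation in $\OO\br{x_1,\dots,x_r}[1/p]$), so the closed points of its dense open absolutely irreducible locus are dense and of the right type, giving~(1); and in the complete local ring $R^{\square}_{\rhobar}/\varpi$, of dimension $d^2+d^2[F:\Qp]\ge 1$, the points $\mathfrak P$ with $\dim(R^{\square}_{\rhobar}/\varpi)/\mathfrak P=1$ are dense (reduce to the domain case and produce, by prime avoidance and induction on the dimension, such a $\mathfrak P$ missing any given nonzero element) and have residue field the fraction field of a one-dimensional complete local domain of characteristic $p$, i.e.\ a local field; intersecting with the dense open Kummer-irreducible locus gives~(2). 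Finally $\Spec R^{\square}_{\rhobar}[1/p]$ is nonempty because $R^{\square}_{\rhobar}$ is $\OO$-flat and nonzero (Theorem~\ref{thm_intro-1}), so its dense absolutely irreducible locus contains a closed point, yielding an absolutely irreducible lift of $\rhobar$ to the ring of integers of a finite extension of $L$.

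The main obstacle is the strict dimension inequality over the reducible strata in the second paragraph: this is precisely where \cite{bj}'s fine description of the pseudo-deformation strata and the GIT fibre-dimension bounds have to be combined carefully, and where the fact that $\pi$ is a $\PGL_d$-torsor over the irreducible locus does the real work. By comparison, the commutative-algebra descent in the third paragraph — the behaviour of minimal primes under faithfully flat completion, together with the equidimensional, $\OO$-torsion-free, locally complete intersection structure of $A^{\gen}$ established earlier — is routine.
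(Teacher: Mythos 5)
Your proposal is correct and takes essentially the same route as the paper: the stratification of $X^{\ps}$ combined with the GIT tangent-space bounds on the fibres and the dimension estimates of \cite{bj} shows that the complement of the (Kummer-)irreducible locus in $\Xbar^{\gen}$ and in $X^{\gen}[1/p]$ has strictly smaller dimension (Lemmas \ref{bound_diff}, \ref{bound_Y}, \ref{special}, Propositions \ref{nspcl}, \ref{Virr}), equidimensionality from Corollary \ref{ci_gen} upgrades this to density, and flatness of the localization-plus-completion $A^{\gen}\to R^{\square}_{\rhobar}$ together with going-down (Lemma \ref{going_down}) transfers it to the deformation ring, with $\OO$-flatness and nonemptiness giving the absolutely irreducible characteristic-zero lift. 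The only cosmetic differences are that the paper bounds the characteristic-zero reducible loci by transferring special-fibre bounds (Lemmas \ref{dim_gen_sp} and \ref{dim_generic}) rather than redoing the stratification in characteristic zero, and it leaves implicit (in Lemma \ref{jacobson}) the Jacobson/$P_1$-point bookkeeping that you spell out to produce points with the prescribed residue fields.
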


\subsection{Irreducible components}

From here on, we assume that $L$ contains $F$, so that in particular $L$ contains all roots of unity contained in $F$. We now give a more detailed overview of Theorem~\ref{thm_intro-2} on components of $R^\square_{\rhobar}$. The homomorphism $R_{\det\rhobar}\to R^\square_{\rhobar}$ from that theorem is induced by the natural transformation $D^\square_{\rhobar}\to D_{\det\rhobar}$ that to a representation assigns its determinant, and it induces the map \eqref{eq_components} on components. 

Via the Artin map $F^\times\to G_F^\ab$ from local class field theory, the inclusion $\mu\subset F^\times$ and the identification of $R_{\det\rhobar}$ with the completed group ring of 
the pro-$p$ completion of $G_F^\ab$, the ring $R_{\det\rhobar}$ becomes an $\OO[\mu]$-algebra. It is well-known that $R_{\det\rhobar}$ is a power series ring over $\OO[\mu]$ in $[F:\Qp]+1$ formal variables. The components of the \'etale $L$-algebra $\OO[\mu][1/p]=L[\mu]$ are in bijection with the characters $\chi: \mu\to \OO^\times$. Setting $R^{\square,\chi}_{\rhobar}=R^{\square}_{\rhobar}\otimes_{\OO[\mu],\chi}\OO$ we obtain a decomposition $\Spec R^{\square}_{\rhobar}[1/p]=\bigsqcup_\chi \Spec R^{\square,\chi}_{\rhobar}[1/p]$, where $\chi$ ranges over the characters $\mu\to\OO^\times$. 

The main step in the proof of the bijectivity of the map \eqref{eq_components} in Theorem \ref{thm_intro-2} is to show that
the rings   $R^{\square,\chi}_{\rhobar}$ are normal by verifying Serre's criterion for normality. We first present $R^{\square}_{\rhobar}$ over $R_{\det\rhobar}$
(Proposition \ref{Prop-RelCI})
by imitating Kisin's method of presenting global rings
over local rings. Since $R^{\chi}_{\det\rhobar}:=R_{\det \rhobar}\otimes_{\OO[\mu], \chi}\OO$ 
is formally smooth, by applying $\otimes_{\OO[\mu], \chi}\OO$ we obtain a presentation of $R^{\square,\chi}_{\rhobar}$ over $R^{\chi}_{\det\rhobar}$ analogous to the presentation 
\eqref{intro_present}. Since $R^{\square, \chi}_{\rhobar}$ has
the same dimension as $R^{\square}_{\rhobar}$, the
presentation yields that $R^{\square, \chi}_{\rhobar}$ 
is complete intersection of expected dimension, and hence satisfies Serre's
condition (S2). We then show that $X^{\gen, \chi}:=\Spec (A^{\gen}\otimes_{\OO[\mu], \chi}\OO)$ and its special fibre
$\Xbar^{\gen, \chi}$ are regular in codimension $1$ by
showing that the Kummer-irreducible locus 
in $\Xbar^{\gen, \chi}$ (resp.~absolutely irreducible 
locus in $X^{\gen, \chi}[1/p]$) is regular, and its complement 
has codimension at least $2$ if either $F\neq \Qp$, or 
$d>2$ or $\Dbar$ is absolutely irreducible. The case 
$F=\Qp$, $d=2$ and $\Dbar$ reducible requires 
an extra analysis of the reducible locus. Since 
$R^{\square, \chi}_{\rhobar}$ is a completion of
a local ring at 
a closed point of $X^{\gen,\chi}$, we deduce that 
$R^{\square, \chi}_{\rhobar}$ is regular in 
codimension $1$. We thus deduce that 
$R^{\square, \chi}_{\rhobar}$ is normal. 
Since $R^{\square, \chi}_{\rhobar}$ is 
a local ring it is an integral domain. 
A similar argument works for the special fibre. 

Theorem \ref{thm_intro_fix_det} on $R^{\square, \psi}_{\rhobar}$ is proved by reduction to the results on $R^{\square,\chi}_{\rhobar}$ where $\chi:\OO[\mu]\to\OO^\times$ is the restriction of $\psi$ to $\mu$ via the Artin map. To give an idea of the proof, let $\mathcal X : \mathfrak{A}_{\OO}\rightarrow \Sets$ be the functor, which sends $(A, \mm_A)$ to the group  $\mathcal X(A)$ of continuous characters $\theta: G_F\rightarrow 1+\mm_A$ such that $\theta$ restricted to $\mu$ is trivial, and let $\OO(\mathcal X)$ be the complete local Noetherian $\OO$-algebra pro-representing $\mathcal X$. Local class field theory gives an isomorphism 
$\OO(\mathcal X)\cong \OO\br{y_1, \ldots, y_{[F:\Qp]+1}}$. Let  $\varphi_d:\OO(\mathcal X)\to \OO(\mathcal X)$ be the morphism induced by the $d$-power map  $\mathcal X\to \mathcal X$, $\theta\mapsto \theta^d$. Our key technical result is Proposition~\ref{prop_TwistFunctors} which yields a natural isomorphism 
\[  R_{\rhobar}^{\square,\chi}\otimes_{\OO(\mathcal X), \varphi_d}\OO(\mathcal X) \cong   R_{\rhobar}^{\square,\psi} \widehat\otimes_\OO\OO(\mathcal X) \]
that comes from an analogous isomorphism of functors.  It allows us to compare the sets of points $x$ with $\kappa(x)$ finite or local at which $H^2(G_F,\adz\rho_x)$ is non-zero on both sides. We also show that the map $ \Spec R_{\rhobar}^{\square,\chi}\otimes_{\OO(\mathcal X), \varphi_d}\OO(\mathcal X) \to \Spec R_{\rhobar}^{\square,\chi}$ induces a homeomorphism on special fibres, and a finite covering on generic fibres. Then we use topological arguments to obtain the dimension of $R^{\square, \psi}_{\rhobar}$ and bound the codimension of its singular locus from the analogous results on $R^{\square,\chi}_{\rhobar}$.

We also prove analogs of Theorem \ref{thm_intro-2} 
(resp.~Theorem \ref{thm_intro-3}) for spaces $X^{\gen, \chi}$ and $\Xbar^{\gen, \chi}$ for characters $\chi: \mu\rightarrow \OO^{\times}$ (resp. $X^{\gen, \psi}$, $\Xbar^{\gen, \psi}$ for characters $\psi: G_F \rightarrow \OO^{\times}$ lifting $\det \rhobar$), see Corollaries \ref{ci_gen_chi}, \ref{Xgen_chi_normal}, \ref{Agen_chi_domain} (resp.~Corollaries \ref{ci_gen_psi},
\ref{Agen_psi_domain}). We expect that our results will
be useful in the study of geometry of  Emerton--Gee stack and its derived versions.

It is natural to ask whether our results generalize 
to deformations valued in reductive groups other than $\GL_d$. This question will be addressed in
the forthcoming joint work of VP and Julian Quast.

\subsection{Overview by section}
In Section~\ref{sec_GIT} we briefly review GIT. A key result that gets used later on is Lemma \ref{bound}. In Section~\ref{sec_Rsquare} we introduce 
$X^{\gen}$ and its special fibre $\Xbar^{\gen}$. In Subsection~\ref{sec_fibre} we bound the dimensions of the fibres of the 
map $X^{\gen}\rightarrow X^{\ps}$. In Subsection~\ref{sec_dim_sp} we combine this with results of \cite{bj} to 
bound the dimension of $X^{\gen}$ and $\Xbar^{\gen}$.  In Subsection~\ref{sec_completions} we relate the completions of local rings at closed points $x$ of $X^{\gen}$, $\Xbar^{\gen}$
to the deformation theory of Galois representations $\rho_x: G_F\rightarrow 
\GL_d(\kappa(x))$ and prove Theorem \ref{thm_intro-1}. In Section~\ref{bound_mrs} we bound the maximally reducible semi-simple locus in $X^{\gen}$ and $\Xbar^{\gen}$. This computation later
on gets used only in the case $d=2$, $F=\QQ_2$ and $\Dbar$ is reducible. In Subsection~\ref{sec_dens_irr} we prove the Zariski
density of the Kummer-irreducible locus in $\Xbar^{\gen}$ and 
absolutely irreducible locus in $X^{\gen}[1/p]$ and also establish lower 
bounds for the dimension of their complements. These bounds are used to establish normality later on. In Section~\ref{sec_irrcomp} we present $R^{\square}_{\rhobar}$ over $R_{\det \rhobar}$ and prove Theorem \ref{thm_intro-2}. In Section~\ref{fixed_det} we prove Theorem 
\ref{thm_intro_fix_det}. In Section~\ref{sec_dense} 
we prove Theorem \ref{thm_intro-3}. In Appendix \ref{appendix} we introduce the notion of Kummer-irreducible points in
$\Spec R^{\ps}/\varpi$, which slightly generalizes the notion of non-special points defined in  \cite{bj}. This technical improvement is needed 
in Section~\ref{fixed_det} when 
$\zeta_p\not\in F$.

\subsection{Notation}
Let $F$ be a finite extension of $\Qp$ and let $G_F$ be its absolute Galois group. Let $L$ be another finite extension of $\mathbb Q_p$, such that $\Hom_{\Qp\text{-}\mathrm{alg}}(F, L)$ has cardinality $[F:\Qp]$. Let $\OO$ be the ring of integers in $L$, $\varpi$ a uniformiser, and $k$ the residue field.  We will 
denote by $\zeta_p$ a primitive 
$p$-th root of unity in a fixed 
algebraic closure of $F$. 
For a commutative ring $R$ we let 
$P_1R=\{\pp \in \Spec R: \dim R/\pp =1\}$.

We fix a representation $\rhobar:G_F\rightarrow \GL_d(k)$ and assume that all its irreducible subquotients are absolutely irreducible. We note that we may always achieve that after enlarging $k$, since the image of $\rhobar$ is a finite group. Let $\ad \rhobar$ be the adjoint representation of $G_F$ and $\ad^0\rhobar$ the subspace of trace zero endomorphisms, so that $G_F$ acts on $\End_k(\rhobar)$ by conjugation. We will denote the dimension as a $k$-vector space of cohomology groups $H^i(G_F, \ad \rhobar)$ by $h^i$. 
\subsection{Acknowledgements} AI would like to thank Carl Wang-Erickson for a discussion of \cite{WE_alg}. VP would like to thank Daniel Greb for a discussion on Geometric Invariant Theory and Toby Gee for stimulating correspondence. The authors would like to thank Toby Gee, James Newton, Julian Quast and Carl Wang-Erickson 
for their comments on an earlier draft as well as 
Frank Calegari, S{\o}ren Galatius and Akshay Venkatesh for organizing Oberwolfach Arbeitsgemeinschaft \textit{Derived Galois Deformation Rings and Cohomology of Arithmetic Groups} in April 2021,
which served as an impetus for this collaboration.

We thank the anonymous referee for a very thorough and careful reading of this article, and for making a number of helpful suggestions to improve the exposition.

GB acknowledges support by Deutsche Forschungsgemeinschaft  (DFG) through CRC-TR 326 \textit{Geometry and Arithmetic of Uniformized Structures}, project number 444845124.

\section{Geometric invariant theory}\label{sec_GIT}

We first recall the set up of \cite{seshadri}. Let $R$ be a Noetherian ring and let $S=\Spec R$. Let $G$ be a reductive group scheme over $S$, so that $G$ is an affine group scheme over $S$, $G\rightarrow S$ is smooth and the geometric fibres are connected reductive groups. In the application $G= S\times_{\Spec \ZZ} \GL_d$ and $G= S\times_{\Spec \ZZ} \mathbb G_m^r$ so that these conditions hold.
 
Let $V$ be a free $R$-module of finite rank $r$ endowed with a $G$-module structure, let $\check{V}=\Hom_R(V, R)$ and let $\Sym(\check{V})$ be the symmetric algebra. The $G$-module structure on $V$ induces an action of $G$ on $\Spec(\Sym(\check{V})) = \mathbb A^r_S.$ Let $X$ be a closed $G$-invariant subscheme of $\Spec (\Sym(\check{V}))$. The $G$-action on $X$ induces an action on $B$, the ring of functions on $X$. The GIT quotient $X\sslash G$ is represented by the ring of invariants $B^G$.

\begin{lem}\label{IrredCompInvariant}
Every irreducible component $Z$ of $X$, equipped with its reduced subscheme structure, is $G$-invariant.
\end{lem}
\begin{proof} This fact is mentioned (and a proof is sketched) in \cite[Section 4]{seshadri}, but we give a full proof for the convenience of the reader. We have to show that $\varphi( G\times_S Z) \subset Z$, where $\varphi: G\times_S X \rightarrow X$ is the action map.
In terms of rings, this amounts to showing 
that the kernel of 
$\varphi^{\sharp}: 
B\rightarrow \mathcal O(G)\otimes_R B/\pp$ 
is equal to $\pp$, where $\mathcal O(G)$ is the ring of functions of $G$ and $\pp$ is 
a prime of $B$ such that $Z=V(\pp)$. Since the identity element in $G$ maps $Z$ to itself, $\ker \varphi^{\sharp}$
is contained in $\pp$. Since $Z$ is an irreducible component 
of $X$, it is enough to show that $\mathcal O(G)\otimes_R B/\pp$ is an integral domain, as then $\ker \varphi^{\sharp}$ 
is a prime of $B$ and therefore has to equal to $\pp$.

Since $G\rightarrow S$ is geometrically connected and smooth,
$G\times_S \eta$ is integral for every geometric point $\eta$ of $S$. Thus $\mathcal O(G)\otimes_R \overline{\kappa(\pp)}$ 
is an integral domain, where $\overline{\kappa(\pp)}$ is an algebraic closure of the fraction field of $B/\pp$. Since
$G\rightarrow S$ is smooth, it is also flat. Thus 
$\OO(G)\otimes_R B/\pp$ is a subring of $\mathcal O(G)\otimes_R \overline{\kappa(\pp)}$, and hence is an integral domain. 
\end{proof}

Let $y=\Spec \kappa$ be a geometric point of $X\sslash G$. We may identify the fibre $X_y$ with a closed 
$G$-invariant subscheme of $X$.

\begin{lem}\label{bound}
Let $x\in X_y(\kappa)$ be such that the orbit $G \cdot x$ is closed in $X_y$ then $$\dim X_y \le \dim_{\kappa} T_x(X_y).$$
\end{lem}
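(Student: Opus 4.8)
The statement to prove is that if $x \in X_y(\kappa)$ has closed $G$-orbit in the fibre $X_y$, then $\dim X_y \le \dim_\kappa T_x(X_y)$. The key input is that the closed orbit through $x$ is precisely the unique closed orbit in its fibre — this is the standard fact from GIT that each fibre of the quotient map $X \sslash G$ contains a unique closed orbit, and since $x$ lies over the geometric point $y = \Spec\kappa$ of $X\sslash G$, the orbit $G\cdot x$ is that unique closed orbit. The plan is: first, reduce to a statement about a single orbit closure; second, use closedness of $G\cdot x$ together with the fact that $X_y$ (minus the closed orbit, if nonempty) has strictly smaller dimension; third, relate $\dim_\kappa T_x(X_y)$ to $\dim X_y$ via the structure of the orbit and its normal directions.

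First I would argue that since $X_y$ is a closed $G$-invariant subscheme of $X$ over $\kappa$, and $G$ is smooth (as a reductive group scheme, so $G_\kappa$ is smooth of some dimension $\dim G$), the orbit $G\cdot x$ is a smooth locally closed subscheme of $X_y$ of dimension $\dim G - \dim \Stab_G(x)$. Because $G\cdot x$ is assumed closed in $X_y$, it is an irreducible component-sized piece: in fact every irreducible component of $X_y$ must meet $G\cdot x$ — here is where I'd invoke that $G\cdot x$ is the \emph{unique} closed orbit in the fibre $X_y$ (coming from the GIT quotient), so by the usual orbit-closure partial order every orbit in $X_y$ has $G\cdot x$ in its closure, hence every irreducible component of $X_y$ contains $G\cdot x$. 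Therefore $\dim X_y = \dim_x X_y$, the local dimension at $x$.

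Next, the tangent space computation: the tangent space $T_x(G\cdot x)$ has dimension $\dim G\cdot x = \dim G - \dim\Stab_G(x)$ since the orbit is smooth at $x$. I would then bound $\dim X_y = \dim_x X_y$ above by $\dim_\kappa T_x(X_y)$ — but this is immediate and requires no orbit theory at all: for \emph{any} point $x$ on \emph{any} scheme of finite type over a field, $\dim_x X_y \le \dim_\kappa T_x(X_y)$ because the Krull dimension of a Noetherian local ring is at most its embedding dimension. So actually the only content is the first step: ensuring $\dim X_y = \dim_x X_y$, i.e. that no irreducible component of $X_y$ is disjoint from (the closed orbit through) $x$.

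\textbf{The main obstacle.} The crux, then, is justifying that the closed orbit $G\cdot x$ meets every irreducible component of $X_y$. The cleanest route: $X_y$ is the fibre of $X \sslash G \to $ over a geometric point, so $X_y$ is an affine $G$-scheme whose ring of invariants is (a nilpotent thickening of) $\kappa$; then the GIT fact that every nonempty closed $G$-invariant subscheme of $X_y$ contains a closed orbit, combined with uniqueness of the closed orbit in a fibre over a geometric point, forces every component's closure to contain $G\cdot x$. I would make this precise by noting each irreducible component $Z$ of $X_y$ is $G$-invariant (as $G_\kappa$ is connected, it preserves components), so $\overline{Z}=Z$ is closed $G$-invariant, hence contains a closed orbit, which must be $G\cdot x$ by uniqueness; thus $x \in Z$. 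This handles all components, giving $\dim X_y = \dim_x X_y \le \dim_\kappa T_x(X_y)$, completing the argument. The subtlety to watch is the possibly non-reduced structure on $X_y$ and on the invariants $\kappa$, but since dimension and tangent spaces only see the reduced structure / the local ring, and the uniqueness-of-closed-orbit statement is insensitive to nilpotents, this causes no real trouble.
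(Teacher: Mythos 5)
Your proposal is correct and follows essentially the same route as the paper: use connectedness of $G$ to see that the irreducible components of $X_y$ are $G$-invariant, apply the GIT fact (Seshadri) that a closed $G$-invariant subscheme has a closed orbit which, by uniqueness of the closed orbit in the fibre, must be $G\cdot x$, hence $x$ lies on a component of maximal dimension, and conclude with $\dim \le \dim$ of the tangent space. The only cosmetic difference is that you place $x$ on every component while the paper only needs it on one of maximal dimension.
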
 
\begin{proof}
Let $Z$ be an irreducible component of $X_y$ such that $\dim Z = \dim X_y$. By Lemma \ref{IrredCompInvariant} $Z$ is closed in $X_y$ and $G$-invariant. Then by \cite[Theorem 3]{seshadri} both $Z$ and $X_y$ have a unique closed $G$-orbit, hence those orbits must be equal. Therefore $x \in Z$ so since $Z$ is irreducible,
	\[ \dim X_y = \dim Z \leq \dim_\kappa T_x(Z) \leq \dim_\kappa T_x(X_y).\qedhere \]
\end{proof}

\section{\texorpdfstring{$R^{\square}_{\rhobar}$}{Rsquare} is complete intersection}\label{sec_Rsquare}

Let $\rhobar:G_F\rightarrow \GL_d(k)$
be a continuous representation. Let $D^{\square}_{\rhobar}: \mathfrak A_{\OO}\rightarrow \mathrm{Sets}$ be the functor from 
the category of local Artinian $\OO$-algebras with residue field $k$ to the category of sets, such that for $(A,\mm_A)\in \mathfrak A_{\OO}$, 
$D^{\square}_{\rhobar}(A)$ is the set 
of continuous representations $\rho_A: G_F\rightarrow \GL_d(A)$ such that 
$\rho_A \pmod{\mm_A}=\rhobar$. The 
functor $D^{\square}_{\rhobar}$ is pro-represented by a complete local 
Noetherian $\OO$-algebra $R^{\square}_{\rhobar}$. The main goal of this section is to establish inequalities
\begin{equation}\label{goal_bounds}
\dim R^\square_{\rhobar} \le 1 + d^2 + d^2[F:\Qp], \quad \dim R^\square_{\rhobar}/\varpi\le d^2 + d^2[F:\Qp]. 
\end{equation}

It is well known to the experts that these bounds imply that the rings $R^\square_{\rhobar}$ and $R^\square_{\rhobar}/\varpi$ are complete intersection. 
Namely, the proof of \cite[Proposition 21.1]{Mazur} shows that the tangent space to $D^{\square}_{\rhobar}$ 
is $Z^1(G_F, \ad\rhobar)$ and it follows from the proof of Proposition 2 in
\cite[Section 1.6]{Mazur_GQ}
that there
is a presentation
\begin{equation}\label{presentation}
    R^{\square}_{\rhobar}\cong \OO\br{x_1, \ldots, x_r}/(f_1,\ldots, f_s),
\end{equation}
where $r = \dim_k Z^1(G_F, \ad \rhobar) = d^2-h^0+h^1$ and $s = h^2$ and $h^i=\dim_k H^i(G_F, \ad \rhobar)$. The Euler--Poincar\'e characteristic formula implies that $r - s =  d^2 + d^2[F:\Qp]$. 
Thus $\varpi, f_1, \ldots, f_s$ can be extended to a system of parameters in a regular 
ring $\OO\br{x_1, \ldots, x_r}$ and hence form a regular sequence. 

Let $\overline{D}: k[G_F] \rightarrow k$ be the determinant law attached to $\rhobar$ in the sense of \cite{che_durham}, so that $\overline{D}$ is equal to the composition of the polynomial laws induced by $k[G_F]\overset{\rhobar}{\longrightarrow} M_d(k)$ and $M_d(k) \overset{\det}{\longrightarrow} k$.
In this paper, we will refer to 
determinant laws as pseudo-characters. 
Let $D^{\ps}: \mathfrak A_{\OO}\rightarrow \mathrm{Sets}$ be the functor sending an object $(A,\mm_A)\in \mathfrak A_{\OO}$ to the set $D^{\ps}(A)$ of continuous $A$-valued $d$-dimensional pseudo-characters of $A[G_F]$ which reduce to $\Dbar$ modulo $\mm_A$. 
The functor $D^{\ps}$ is pro-representable by a complete local Noetherian $\OO$-algebra $(R^{\ps}, \mm_{R^{\ps}})$ by \cite[Proposition 3.3]{che_durham}.

Mapping a deformation of $\rhobar$ to its determinant
induces a natural transformation $D^{\square}_{\rhobar}\rightarrow D^{\ps}$ and thus 
a map of local $\OO$-algebras $R^{\ps}\rightarrow R^{\square}_{\rhobar}$. The ring $R^{\ps}$ has been well understood in the recent work of GB--Juschka \cite{bj}, who have determined its dimension and showed that the absolutely irreducible locus is dense in the special fibre.  Our basic idea is to study $R^{\square}_{\rhobar}$ by studying the fibres of this 
map. In fact it is technically more convenient to introduce an intermediate ring $R^{\ps}\rightarrow A^{\gen}\rightarrow R^{\square}_{\rhobar}$ (see the next subsection), depending on $\Dbar$ and not on $\rhobar$ itself, such that 
$A^{\gen}$ is of finite type over $R^{\ps}$ and 
$R^{\square}_{\rhobar}$ is a completion of $A^{\gen}$ 
at a maximal ideal. Since $\dim R^{\square}_{\rhobar}\le \dim A^{\gen}$, it is enough to bound the dimension of  $A^{\gen}$. In fact we first bound the dimension of its special fibre (Theorem \ref{main}).

\subsection{Generic matrices} 
Let $D^u: R^{\ps}\br{G_F}\rightarrow R^{\ps}$ be the universal pseudo-character lifting $\overline{D}$. Let $\CH(D^u)$ be the Cayley--Hamilton ideal, which is a closed two-sided ideal of $R^{\ps}\br{G_F}$ defined in \cite[Section 1.17]{che_durham} in such a way that $$E:= R^{\ps}\br{G_F}/ \CH(D^u)$$ is the largest quotient of $R^{\ps}\br{G_F}$ for which the Cayley--Hamilton theorem for $D^u$ holds. Following \cite[Section 1.17]{che_durham} we will call such algebras \textit{Cayley--Hamilton $R^{\ps}$-algebras of degree $d$}. Then $E$ is a finitely generated $R^{\ps}$-module, \cite[Proposition 3.6]{WE_alg}. If $f: E\rightarrow M_d(B)$ is a homomorphism of $R^{\ps}$-algebras for a commutative $R^{\ps}$-algebra $B$ then we say $f$ is a
\textit{homomorphism of Cayley--Hamilton algebras} 
if $\det\circ f: E\rightarrow B$ is equal 
to the specialization of $D^u$ along $R^{\ps}\rightarrow B$. 

The superscript \textit{gen} in $A^{\gen}$ stands for 
\textit{generic matrices}, and the following construction appears in the work of Procesi \cite{Pro87}; Lemmas \ref{existence_Agen}, \ref{topology}, \ref{closed_pts} are contained in
\cite[Theorem 3.8]{WE_alg}, but one needs to translate from the language of groupoids and stacks used in \textit{op.~cit.}\ to access them.
\begin{lem}\label{existence_Agen} 
There is a finitely generated commutative $R^{\ps}$-algebra  $A^{\gen}$ together 
 with a homomorphism of Cayley--Hamilton $R^{\ps}$-algebras $j: E\rightarrow M_d(A^{\gen})$, satisfying the following \textit{universal property}: if $f: E\rightarrow M_d(B)$ is a map of Cayley--Hamilton $R^{\ps}$-algebras for a commutative $R^{\ps}$-algebra $B$ then there is a unique map $\tilde{f}: A^{\gen}\rightarrow B$ of $R^{\ps}$-algebras such that $f= M_d(\tilde{f})\circ j$. 
\end{lem}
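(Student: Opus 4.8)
The plan is to exhibit $A^{\gen}$ as the commutative $R^{\ps}$-algebra representing the functor on commutative $R^{\ps}$-algebras
\[ B\ \longmapsto\ \bigl\{\, f\colon E\to M_d(B)\ \bigm|\ f \text{ is a homomorphism of Cayley--Hamilton } R^{\ps}\text{-algebras}\,\bigr\}. \]
The stated universal property is precisely the assertion that this functor is representable, with $j$ the universal element. First I would record a finite presentation of $E$ as an $R^{\ps}$-algebra: since $E$ is a finitely generated $R^{\ps}$-module and $R^{\ps}$ is Noetherian, one may choose module generators $e_1=1,e_2,\dots,e_n$, finitely many generators $\sum_i a^{(t)}_i e_i=0$ (with $a^{(t)}_i\in R^{\ps}$) of the module of relations, and structure constants $c^l_{ij}\in R^{\ps}$ with $e_ie_j=\sum_l c^l_{ij}e_l$. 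A routine verification using the associativity of $E$ then shows that an $R^{\ps}$-algebra homomorphism $E\to M_d(B)$ is the same datum as a tuple $(N_1,\dots,N_n)$ of matrices in $M_d(B)$ with $N_1=\mathrm{Id}$, $N_iN_j=\sum_l c^l_{ij}N_l$ and $\sum_i a^{(t)}_i N_i=0$.

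Next I would write down the representing ring by Procesi's method of generic matrices, following \cite{Pro87}. Put $N_1=\mathrm{Id}$ and, for $2\le i\le n$, let $N_i=(X_{i,k,l})_{1\le k,l\le d}$ be the generic matrix whose entries are independent variables $X_{i,k,l}$ over $R^{\ps}$. In the polynomial ring $P=R^{\ps}[X_{i,k,l}\colon 2\le i\le n,\ 1\le k,l\le d]$, let $I_0$ be the ideal generated by all matrix entries of $N_iN_j-\sum_l c^l_{ij}N_l$ and of $\sum_i a^{(t)}_i N_i$; by the previous paragraph, $P/I_0$ represents the functor of ordinary $R^{\ps}$-algebra homomorphisms $E\to M_d(-)$, with universal homomorphism $e_i\mapsto N_i$. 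To cut out the Cayley--Hamilton condition I would introduce auxiliary variables $u_1,\dots,u_n$ and compare, inside $(P/I_0)[u_1,\dots,u_n]$, the polynomial $\det(\sum_i u_i N_i)$ with the image of $D_E(\sum_i u_i e_i)\in R^{\ps}[u_1,\dots,u_n]$, where $D_E$ denotes the determinant law on $E$ induced by $D^u$; equating the coefficients of the finitely many monomials in the $u_i$ produces finitely many elements of $P/I_0$, and $A^{\gen}$ is defined as the quotient of $P/I_0$ by the ideal they generate. By construction $e_i\mapsto N_i$ defines a homomorphism $j\colon E\to M_d(A^{\gen})$ of Cayley--Hamilton $R^{\ps}$-algebras, and $A^{\gen}$ is visibly of finite type over $R^{\ps}$.

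It then remains to verify the universal property. Given a homomorphism of Cayley--Hamilton $R^{\ps}$-algebras $f\colon E\to M_d(B)$, write $f(e_i)=(\beta_{i,k,l})$ and let $\tilde f\colon A^{\gen}\to B$ be the $R^{\ps}$-algebra homomorphism with $X_{i,k,l}\mapsto\beta_{i,k,l}$: it is well defined because $f$ being an algebra homomorphism annihilates $I_0$ and $f$ being Cayley--Hamilton forces the coefficient identities used to cut out $A^{\gen}$. One checks $M_d(\tilde f)\circ j=f$ on the generators $e_i$, hence everywhere, and $\tilde f$ is unique because $A^{\gen}$ is generated over $R^{\ps}$ by the $X_{i,k,l}$, whose images under any valid $\tilde f$ must be the entries of $f(e_i)$. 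The step I expect to require the most care is the equivalence invoked in the second paragraph: that a homomorphism of $R^{\ps}$-algebras $f\colon E\to M_d(B)$ is Cayley--Hamilton exactly when the single identity $\det(\sum_i u_i f(e_i))=D_E(\sum_i u_i e_i)$ holds in $B[u_1,\dots,u_n]$, i.e.\ that a $d$-dimensional determinant law on the finitely generated $R^{\ps}$-module $E$ is determined by its value on the universal element $\sum_i u_i e_i$. This is the one genuinely non-formal ingredient, and it rests on the finite-determinacy properties of polynomial laws over finitely generated modules (cf.\ \cite{che_durham}, \cite{WE_alg}). Alternatively, one may bypass the explicit construction entirely, identify $\Spec A^{\gen}$ with $\Rep^\square_{E,D^u}$, and deduce the lemma directly from \cite[Theorem 3.8]{WE_alg}.
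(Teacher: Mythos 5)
Your proposal is correct and follows essentially the same route as the paper: Procesi's generic-matrix construction, first representing plain $R^{\ps}$-algebra homomorphisms $E\to M_d(-)$ and then cutting out the Cayley--Hamilton condition by equations, with the non-formal input being a determinacy statement for determinant laws on the finitely generated $R^{\ps}$-module $E$. The only cosmetic difference is that the paper imposes equality of all characteristic-polynomial coefficients $\Lambda_i(a)=c_i(j(a))$ for every $a\in E$, citing \cite[Corollary 1.14]{che_durham}, whereas you impose the single identity at the generic element $\sum_i u_i e_i$; both rest on the same finite-determinacy facts for polynomial laws.
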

\begin{proof}
By writing down a generic $d\times d$-matrix for  each $R^{\ps}$-generator of $E$ and quotienting out by the relations the generators satisfy in $E$, one obtains a commutative $R^{\ps}$-algebra $C$ and 
a homomorphism of $R^{\ps}$-algebras 
$j: E\rightarrow M_d(C)$. More formally, 
$C$ is a quotient of $R^{\ps}\otimes_{\ZZ} \Sym(W)$, where $W$ is a direct sum of $n$ copies of $\End(\Std)^*$, where $\Std$ is the standard representation of $\GL_d$ over $\ZZ$, $n$ is the size of a generating set of $E$ as an $R^{\ps}$-module and $\Sym(W)$ is the symmetric algebra over $\ZZ$. If we were to only require the maps to be $R^{\ps}$-algebras homomorphisms (i.e.\ if we did not impose the Cayley--Hamilton condition) then the map
$j: E\rightarrow M_d(C)$ would satisfy the required universal property. To ensure that the Cayley--Hamilton 
condition is satisfied we have to consider 
the quotient of $C$ constructed as follows. 
Let $\Lambda_i: E\rightarrow R^{\ps}$, $0\le i\le d$ be the coefficients of the characteristic polynomial of $D^u$; these are homogeneous polynomial laws satisfying 
$D^{u}(t-a)= \sum_{i=0}^n (-1)^i \Lambda_i(a) t^{d-i}$ in $R^{\ps}[t]$ as explained in \cite[Section 1.10]{che_durham}. For each 
$a\in E$ let $c_i(j(a))$ be the $i$-th coefficient of the characteristic polynomial of the matrix $j(a)\in M_d(C)$. Let $I$ be the ideal of $C$ generated by $ \Lambda_i(a)-c_i(j(a))$ for all $a\in E$ and $0\le i\le d$  and 
let $A^{\gen}:= C/I$. Since \cite[Corollary 1.14]{che_durham} and \cite[1.1.9.15]{WE_thesis} imply that the coefficients of the characteristic polynomial determine
pseudo-characters uniquely, the composition $E\rightarrow M_d(C)\rightarrow M_d(A^{\gen})$ 
is a map of Cayley--Hamilton algebras and the universal property of $j: E\rightarrow M_d(C)$ implies the universal property for 
$j:E\rightarrow M_d(A^{\gen})$.
Since $E$ is finitely generated as $R^{\ps}$-module, $C$ and hence $A^{\gen}$ are of finite type over $R^{\ps}$. 
\end{proof}

Let us make a connection to GIT as described in Section \ref{sec_GIT}.
If $E$ is generated by $n$ generators as an $R^{\ps}$-module then as explained in the proof of 
Lemma \ref{existence_Agen},
$A^{\gen}$ is a quotient of $R^{\ps}\otimes_{\ZZ} \Sym(W)$.
The group $G:=\GL_d$ acts on $W$ by conjugation and this induces an action of $\GL_d$ on $X^{\gen}:= \Spec A^{\gen}$. For every $R^{\ps}$-algebra $B$, a point in $X^{\gen}(B)$ corresponds to an $n$-tuple of $d\times d$-matrices with entries in $B$ satisfying certain relations, and $\GL_d(B)$ acts on $X^{\gen}(B)$ by conjugating the matrices. The scheme $X^{\gen}$ is isomorphic to $\mathrm{Rep}^\square_{\Dbar} = \mathrm{\Rep}^\square_{E,D^u}$ as defined in \cite[Theorem 3.8]{WE_alg}.

The GIT quotient $X^{\gen}\sslash G$ is represented by the ring of invariants $(A^{\gen})^{G}$. The map $R^{\ps}\rightarrow A^{\gen}$ is $G$-invariant and induces a homomorphism $R^{\ps}\rightarrow (A^{\gen})^{G}$.
It follows from  \cite[Theorem 2.20]{WE_alg} that the induced map
\begin{equation}\label{adequate0}
     X^{\gen}\sslash G \rightarrow X^{\ps}:= \Spec R^{\ps} 
\end{equation}
is an adequate homeomorphism in the sense of \cite[Definition 3.3.1]{alper}, i.e. an integral, universal homeomorphism which is a local isomorphism around points with characteristic zero residue field.
We denote by $\overline{X}^{\gen}$ and $\overline{X}^{\ps}$ the special fibres of $X^{\gen}$ and $X^{\ps}$, respectively.
The same argument shows that
    \[ \overline{X}^{\gen}\sslash G \rightarrow \overline{X}^{\ps} \]
is an adequate homeomorphism. 

We equip $R^{\ps} $ with the $\mm_{R^{\ps}}$-adic topology. Since the ring is Noetherian and the residue field is finite, $R^{\ps}$
is a compact ring with respect to this topology. 
\begin{lem}\label{topology} Let $B$ be a topological $R^{\ps}$-algebra. If $f: E\rightarrow M_d(B)$ 
is  any (not a priori continuous) homomorphism of $R^{\ps}$-algebras then 
the composition $G_F \rightarrow E^{\times}\overset{f}{\longrightarrow} \GL_d(B)$ defines a continuous 
representation of $G_F$. 
\end{lem}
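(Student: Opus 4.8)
The plan is to carry out everything using the $\mm_{R^{\ps}}$-adic topology on $E$, and to rely on the one structural input recalled above: $E$ is a \emph{finitely generated} $R^{\ps}$-module \cite[Proposition 3.6]{WE_alg}. Since $R^{\ps}$ is complete Noetherian local with finite residue field, every $R^{\ps}/\mm_{R^{\ps}}^n$ is finite, hence every $E/\mm_{R^{\ps}}^nE$ is finite, and $E=\varprojlim_n E/\mm_{R^{\ps}}^nE$ is a profinite (in particular compact Hausdorff) topological ring and a topological $R^{\ps}$-module. I would first record that this topology agrees with the quotient topology coming from $R^{\ps}\br{G_F}$. For this it suffices to check that $\CH(D^u)+\mm_{R^{\ps}}^nR^{\ps}\br{G_F}$ is open in $R^{\ps}\br{G_F}$ for each $n$: it is closed, being the sum of the closed ideal $\CH(D^u)$ and the compact set $\mm_{R^{\ps}}^nR^{\ps}\br{G_F}$ (a continuous image of a finite power of $R^{\ps}\br{G_F}$ under a multiply-and-add map), and it has finite index, namely $\#(E/\mm_{R^{\ps}}^nE)$; a closed finite-index subgroup of a profinite group is open. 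Hence $R^{\ps}\br{G_F}\to E$ is continuous, so the quotient topology is finer than the $\mm_{R^{\ps}}$-adic one, and since both are compact Hausdorff they coincide.

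Granting this, the continuity of $G_F\to E^\times$ is immediate: the map $G_F\to R^{\ps}\br{G_F}^\times$ is continuous by construction of the completed group algebra, and we post-compose with the continuous surjection $R^{\ps}\br{G_F}\to E$.

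Next I would show that \emph{any} homomorphism of $R^{\ps}$-algebras $f\colon E\to M_d(B)$ is continuous, where $M_d(B)$ carries the product topology (which makes it a topological ring because $B$ is one). Choose $R^{\ps}$-module generators $e_1,\dots,e_m$ of $E$ and let $q\colon (R^{\ps})^m\to E$, $(a_i)\mapsto\sum_i a_ie_i$, be the induced $R^{\ps}$-linear surjection; $q$ is continuous, and since $(R^{\ps})^m$ is compact and $E$ is Hausdorff, $q$ is a closed map, hence a topological quotient map. Thus $f$ is continuous if and only if $f\circ q$ is, and $f\circ q$ sends $(a_i)$ to $\sum_i \phi(a_i)f(e_i)$, where $\phi\colon R^{\ps}\to B$ is the structure morphism (which is continuous, $B$ being a topological $R^{\ps}$-algebra); this is a finite sum of composites of $\phi$ with left multiplication by the fixed matrix $f(e_i)$ in $M_d(B)$, hence continuous.

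Composing, $G_F\to E^\times\hookrightarrow E\xrightarrow{f}M_d(B)$ is a continuous group homomorphism with image in $\GL_d(B)=M_d(B)^\times$ (a unital ring homomorphism carries units to units); and since inversion is continuous on the profinite group $G_F$, the map $g\mapsto f(g)^{-1}$ is continuous too, so we obtain a continuous representation of $G_F$ in the strong sense. The only delicate point is the topological bookkeeping on $E$ in the first paragraph — reconciling the group-algebra quotient topology, the module-finite topology, and the $\mm_{R^{\ps}}$-adic topology — but this is a routine compactness argument once one knows that $E$ is module-finite over $R^{\ps}$; the rest is formal.
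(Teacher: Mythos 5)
Your proof is correct, and it reaches the same two milestones as the paper — continuity of $G_F\to E$ and continuity of an arbitrary $R^{\ps}$-algebra map $f:E\to M_d(B)$, both resting on the finite generation of $E$ over the compact ring $R^{\ps}$ — but it executes the second step by a genuinely different, more self-contained mechanism. The paper's proof invokes a black-box uniqueness statement (for a compact ring, a finitely generated module carries a unique topology making it a topological module) and applies it twice: once to identify the quotient topology on $E$ with the $\mm_{R^{\ps}}$-adic one, and once to the image $M=f(E)$, whose quotient topology must then agree with the subspace topology from $M_d(B)$, giving continuity of $f$. You instead prove the needed topological facts by hand: the identification of the quotient and adic topologies via the observation that $\CH(D^u)+\mm_{R^{\ps}}^nR^{\ps}\br{G_F}$ is closed of finite index, hence open; and continuity of $f$ by pulling back along the surjection $q:(R^{\ps})^m\to E$, which is a quotient map by the compact-to-Hausdorff argument, so that continuity of $f$ reduces to continuity of $(a_i)\mapsto\sum_i\phi(a_i)f(e_i)$, which uses only continuity of the structure map $R^{\ps}\to B$ and of multiplication in $B$ — the same hypotheses the paper's argument uses implicitly. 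Your route buys independence from the cited uniqueness theorem at the cost of some extra bookkeeping (Krull separatedness and completeness of $E$, closedness of sums of closed and compact sets); the paper's route is shorter given the reference. Your handling of the unit/inversion issue (transferring inversion to $G_F$ rather than topologizing $E^\times\hookrightarrow E\times E$ as the paper does) is also fine and equivalent in content.
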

\begin{proof} Since $R^{\ps}$ is a compact ring 
\cite[Corollary 1.10]{unique_top} implies that for every finitely generated $R^{\ps}$-module $M$
there is a unique Hausdorff topology on $M$ making $M$ 
into a topological $R^{\ps}$-module.

We equip $R^{\ps}\br{G_F}$ with its projective 
limit topology, $E$ with the quotient topology, and its group of units $E^{\times}$ with the subspace topology via the embedding $E^{\times}\hookrightarrow E\times E$, $x\mapsto (x, x^{-1})$. Since the map 
$G_F \rightarrow R^{\ps}\br{G_F}$ is continuous, the map $G_F\rightarrow E^{\times}$ is also continuous. Moreover, 
since $\CH(D^u)$ is a closed ideal, the topology on $E$ is 
Hausdorff.

Since $E$ is a finitely generated $R^{\ps}$-module, 
its topology coincides with $\mm_{R^{\ps}}$-adic 
topology (this is also proved in \cite[Proposition 3.6]{WE_alg}). Let $M:=f(E)\subset M_d(B)$, let $\tau_1$ 
be the subspace topology on $M$, and let $\tau_2$ be 
be the unique Hausdorff topology on $M$ such that
the action of $R^{\ps}$ is continuous. We claim that the 
identity map $(M, \tau_2)\rightarrow (M, \tau_1)$ is continuous. 
We will now prove the claim. Since $B$ is a topological 
$R^{\ps}$-algebra, the action of $R^{\ps}$ on $M_d(B)$, and 
hence on $M$, is continuous with respect to $\tau_1$. 
Since $M$ is a finitely generated 
$R^{\ps}$-module, we may pick a continuous surjection 
$\varphi: (R^{\ps})^n \twoheadrightarrow M$ for some $n\ge 1$.  Since $R^{\ps}$ is Noetherian the kernel of $\varphi$ is finitely generated and hence a closed submodule of $(R^{\ps})^n$. 
Thus the quotient topology on $M$ induced via $\varphi$ is 
Hausdorff and therefore must coincide with $\tau_2$, which proves the claim.

The same argument shows that $\tau_2$ coincides with the quotient topology via $E\twoheadrightarrow M$ and  the claim implies that 
the map  $f: E\rightarrow M_d(B)$ is continuous and hence
induces a continuous group homomorphism $E^{\times}\rightarrow M_d(B)^{\times}=\GL_d(B)$. 
\end{proof}

\begin{lem}\label{group_alg} The composition  $R^{\ps}[G_F]\rightarrow R^{\ps}\br{G_F}\twoheadrightarrow E$
is surjective. 
\end{lem}
\begin{proof} Since $R^{\ps}[G_F]$ 
is dense in $R^{\ps}\br{G_F}$ its 
image will be dense in $E$ for the 
topologies introduced in the proof of 
Lemma \ref{topology}. The image is also closed, as it is an $R^{\ps}$-submodule of $E$. Hence the map is surjective. 
\end{proof} 

The representation $\rhobar: G_F \rightarrow \GL_d(k)$ induces a map of $R^{\ps}$-algebras $E\rightarrow M_d(k)$ and thus a homomorphism of $R^{\ps}$-algebras $A^{\gen}\rightarrow k$. It follows from the universal property of $A^{\gen}$ that $R^{\square}_{\rhobar}$ is isomorphic to 
the completion of $A^{\gen}$ with respect to the kernel of this map; see Proposition \ref{main_complete} for a more precise statement. Conversely, 
we have the following Lemma. 
\begin{lem}\label{closed_pts} Let $x\in X^{\gen}$ be a closed point above 
the unique  closed point of $X^{\ps}$ and let $\rho_x: G_F\rightarrow \GL_d(\kappa(x))$ be the representation 
obtained by composing $$G_F\rightarrow R^{\ps}\br{G_F}\rightarrow E\overset{j}{\longrightarrow} 
M_d(A^{\gen})\rightarrow M_d(\kappa(x)).$$
Then the pseudo-character associated to $\rho_x$ is equal 
to $\Dbar\otimes_k \kappa(x)$. In particular, $\rho_x$ and $\rhobar\otimes_k \kappa(x)$ have the same semi-simplification. 
\end{lem}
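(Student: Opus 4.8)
The plan is to unwind the definitions and track how the Cayley--Hamilton condition propagates through the chain of maps. First I would recall that by construction (Lemma~\ref{existence_Agen}) the map $j\colon E\to M_d(A^{\gen})$ is a homomorphism of Cayley--Hamilton $R^{\ps}$-algebras, meaning that $\det\circ j\colon E\to A^{\gen}$ coincides with the specialization of the universal pseudo-character $D^u$ along the structure map $R^{\ps}\to A^{\gen}$. Composing with the ring homomorphism $A^{\gen}\to \kappa(x)$ associated to the point $x$, one gets that $\det\circ (M_d(\kappa(x))\text{-valued map}) = $ the specialization of $D^u$ along $R^{\ps}\to A^{\gen}\to \kappa(x)$. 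Since $x$ lies above the closed point of $X^{\ps}=\Spec R^{\ps}$, the composite $R^{\ps}\to \kappa(x)$ factors through the residue field $k\hookrightarrow \kappa(x)$, and the specialization of $D^u$ along $R^{\ps}\to k$ is by definition $\Dbar$; hence the specialization along $R^{\ps}\to \kappa(x)$ is $\Dbar\otimes_k\kappa(x)$.

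The key step is then to identify the pseudo-character of $\rho_x$ with $\det\circ(\text{the composite }E\to M_d(\kappa(x)))$. This is essentially a matter of matching up the polynomial laws: the representation $\rho_x\colon G_F\to\GL_d(\kappa(x))$ factors as $G_F\to E^\times\to \GL_d(\kappa(x))$ where the second arrow is induced by $j$ followed by $A^{\gen}\to\kappa(x)$, and the pseudo-character attached to a representation is by definition the composition of the induced polynomial law $\kappa(x)[G_F]\to M_d(\kappa(x))$ with $\det$. So the pseudo-character of $\rho_x$ is the composite $\kappa(x)[G_F]\to \kappa(x)$ obtained from $G_F\to E^\times$, extended multilinearly, and then passed through $\det$; this agrees with the specialization of $D^u$ to $\kappa(x)$ computed above, which is $\Dbar\otimes_k\kappa(x)$. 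For the final sentence, I would invoke the standard fact (e.g.\ \cite{che_durham}) that a $\kappa(x)$-valued $d$-dimensional pseudo-character determines a unique semisimple representation up to isomorphism over an algebraic closure; since $\rho_x$ and $\rhobar\otimes_k\kappa(x)$ have the same pseudo-character $\Dbar\otimes_k\kappa(x)$, their semisimplifications coincide.

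I expect the main obstacle to be purely bookkeeping: making sure that ``the pseudo-character attached to a representation'' and ``the specialization of $D^u$'' are being compared as the same type of object (polynomial law / determinant law on the group algebra), and that the continuity issues are handled — but continuity of $G_F\to E^\times\to\GL_d(\kappa(x))$ is already guaranteed by Lemma~\ref{topology}. There is also a small point that one should check the composite $R^{\ps}\to\kappa(x)$ genuinely factors through $k$, which follows since $x$ is above the closed point of $X^{\ps}$ and $R^{\ps}$ is local with residue field $k$; thus the image of $\mm_{R^{\ps}}$ is zero in $\kappa(x)$ and the map factors as $R^{\ps}\twoheadrightarrow k\hookrightarrow\kappa(x)$. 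Once these identifications are in place the statement is immediate, so the real content is just organizing the diagram of polynomial laws correctly.
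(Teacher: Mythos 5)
Your proposal is correct and follows essentially the same route as the paper: the first assertion is exactly the unwinding of the Cayley--Hamilton condition built into the definition of $A^{\gen}$ together with the fact that $R^{\ps}\to\kappa(x)$ factors through $k$ (so the specialization of $D^u$ is $\Dbar\otimes_k\kappa(x)$), and the second assertion is Chenevier's theorem that a pseudo-character over a field determines the semisimple representation. The one point you gloss is that this uniqueness holds only over an algebraic closure, so to conclude that $\rho_x$ and $\rhobar\otimes_k\kappa(x)$ have the same semisimplification over $\kappa(x)$ itself one must invoke, as the paper does, the standing assumption that all irreducible subquotients of $\rhobar$ are absolutely irreducible.
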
 
\begin{proof} Since $D^u\otimes_{R^{\ps}} k = \Dbar$
the first part follows immediately from the definition of $A^{\gen}$. 
The second part follows from 
\cite[Theorem 2.12]{che_durham}. 
Note that since we have assumed 
that all irreducible subquotients of $\rhobar$ are absolutely irreducible, it is enough to 
prove that $\rho_x$ and 
$\rhobar$ have the same semi-simplification after extending  scalars to the algebraic closure of $k$.
\end{proof} 
\begin{remar} We note that one needs to impose the Cayley--Hamilton condition in the definition of $A^{\gen}$ for Lemma \ref{closed_pts} to hold. For example, if $\Dbar=\chi_1+\chi_2$, where 
$\chi_1, \chi_2: G_F\rightarrow k^{\times}$ are distinct characters, then $E\otimes_{R^{\ps}} k\cong k\times k$ by Equation (8) in the proof of \cite[Lemma 1.4.3]{bel_che}, let $\pi_1: E\rightarrow k$ be 
the map obtained by projecting to the first component. Then the map $E\rightarrow M_2(k)$, $a\mapsto \diag( \pi_1(a), \pi_1(a))$ is a map of $R^{\ps}$-algebras, and hence induces a map of $R^{\ps}$-algebras $x: C\rightarrow k$, 
where $C$ is the algebra introduced in the 
proof of Lemma \ref{existence_Agen}.
The representation $\rho_x$ obtained 
by specializing $j: E\rightarrow M_2(C)$ 
at $x$ is isomorphic to $\chi_1+\chi_1$; hence
$\rho_x$ is not equal to $\chi_1+\chi_2$. 
\end{remar}

\subsection{Bounding the dimension of the fibres}\label{sec_fibre}
Let $\pp$ be a prime ideal of $R^{\ps}$ such that $\dim R^{\ps}/\pp\le 1$. 
Its residue field $\kappa(\pp)$ is either $k$ or a local field by Lemma \ref{P1R} below. 
Let $\kappa$ be an algebraic closure $\kappa(\pp)$ equipped with its natural topology and let $y: R^{\ps} \to \kappa$ denote the corresponding homomorphism. The goal of this subsection (Proposition \ref{bound_dim_fibre}) is to bound the dimension of the fibre
    \[ X^{\gen}_y:= X^{\gen}\times_{X^{\ps},y } \Spec \kappa. \]
 Let $D_y$ be the specialization of the universal pseudo-character along $y: R^{\ps}\rightarrow \kappa$ and let 
 \begin{equation}\label{def_E_y} 
E_y := E\otimes_{R^{\ps},y} \kappa \cong  (R^{\ps}\br{G_F}\otimes_{R^{\ps}, y} \kappa)/ \CH(D_y),
\end{equation}
where the last isomorphism follows from  \cite[Section 1.22]{che_durham} or  \cite[Lemma 1.1.8.6]{WE_thesis}. 

Since $E$ is a finitely generated $R^{\ps}$-module, $E_y$ is a finite dimensional $\kappa$-algebra. It follows from the proof of Lemma \ref{topology} that the natural 
map $G_F \rightarrow E_y^{\times}$ is continuous for the topology 
on $E_y$ induced by the topology on $\kappa$. Thus if $W$ is 
an $E_y$-module on a finite dimensional $\kappa$-vector space 
then the induced $G_F$-action on $W$ is continuous. 

Since $\kappa$ is algebraically closed we may write\footnote{We follow the convention of \cite{che_durham}, so that a pseudo-character of a direct sum of representations is a product of their pseudo-characters; the papers \cite{bj} and \cite{WE_alg}  refer to a direct sum instead.}
    \[ D_y= \prod_{i=1}^r D_i, \]
where each $D_i$ is an irreducible pseudo-character\footnote{We refer the reader to \cite[Section 4.1]{bj} for the fundamentals of pseudo-characters.} of dimension $d_i$. We define an equivalence relation on the set $\{ D_i: 1\le i \le r\}$ by $D_i \sim D_j$ if $D_i=D_j(m)$ for some $m\in \ZZ$. Let $k$ be the number of the equivalence classes and let $n_i$ be the number of elements in the $i$-th equivalence class.

Moreover, for $1\le i\le r$ we fix representations 
$\rho_i: G_F \rightarrow \GL_{d_i}(\kappa)$ such that 
$D_i$ is the pseudo-character associated to $\rho_i$.
These representations are uniquely determined up to an 
isomorphism by \cite[Theorem 2.12]{che_durham}, but by $\rho_i$ we 
really mean a group homomorphism into $\GL_{d_i}(\kappa)$ and not the equivalence 
class. 

If $V$ is a continuous representation of  $G_F$ on a finite dimensional $\kappa$-vector space such that its semi-simplification is isomorphic to $\oplus_{i=1}^r \rho_i$ then the pseudo-character associated to $V$ is equal to $D_y$ and the action 
of $G_F$ on $V$ extends to an action of $R^{\ps}\br{G_F}$ and then to an action of $R^{\ps}\br{G_F}\otimes_{R^{\ps}, y} \kappa$, which factors through the 
Cayley--Hamilton quotient. It follows from \eqref{def_E_y} that $V$ and any $G_F$-invariant subquotient  of $V$ is an $E_y$-module. 
In particular, we may apply this to $V= \oplus_{i=1}^r \rho_i$ to deduce that each $\rho_i$ is an $E_y$-module.

\begin{lem}\label{dim_ext} If $i\neq j$ then\footnote{We do not assume that $V$ is multiplicity free so that $\rho_i$ and $\rho_j$ might be isomorphic as $G_F$-representations even if $i\neq j$. Note
however that the statement of the lemma might not hold if $i=j$. For example, if $V$ is irreducible then $E_y=M_d(\kappa)$ is a semi-simple algebra, but $\Ext^1_{G_F}(\rho_1, \rho_1)$ is non-zero.
The proof uses that the pseudo-character associated to $\rho_i\oplus\rho_j$ is a factor of $D_y$.  }
$$\Hom_{E_y}(\rho_i, \rho_j)= \Hom_{G_F}(\rho_i, \rho_j) \quad \text{and} \quad \Ext^1_{E_y}(\rho_i, \rho_{j})= \Ext^1_{G_F}(\rho_i, \rho_{j}),$$
where $\Ext^1_{G_F}(\rho_i, \rho_j)$ is computed in the category of 
continuous representations of $G_F$ on finite dimensional $\kappa$-vector spaces. 
\end{lem}
\begin{proof} It follows from Lemma \ref{group_alg} that the natural map 
$\kappa[G_F]\rightarrow E_y$ is surjective. This implies the assertion 
about $\Hom$ spaces and gives an inclusion $\Ext^1_{E_y}(\rho_i, \rho_{j})\subset \Ext^1_{G_F}(\rho_i, \rho_{j})$. To prove the reverse 
inclusion consider an extension $0\rightarrow \rho_{j} \rightarrow W \rightarrow \rho_i \rightarrow 0$ of $G_F$-representations and  let 
$V= W\oplus \bigoplus_{l\neq i, {j}} \rho_l$. As explained above,
the $G_F$-action on $V$ will factor through the action of $E_y$. Hence, 
$W$ is a representation of $E_y$, which implies that $\Ext^1_{E_y}(\rho_i, \rho_{j})= \Ext^1_{G_F}(\rho_i, \rho_{j})$. 
\end{proof}

Since \eqref{adequate0} is an adequate homeomorphism there is a unique point $y'\in X^{\gen}\sslash G$ above $y$ and $X^{\gen}_{y'} \to X^{\gen}_y$ is a homeomorphism.
The group $G$ acts on $X^{\gen}_y$. Moreover, $X^{\gen}_y$ is of finite type over $\kappa$ and $X^{\gen}_y(\kappa)$ is in bijection with the set of continuous representations $\rho: G_F \rightarrow \GL_d(\kappa)$ such that the semi-simplification of $\rho$ is isomorphic to $\rho_1\oplus \ldots\oplus \rho_r$.

\begin{lem}\label{closed_orbit}
The fibre $X^{\gen}_y$ is connected and the unique closed $G$-orbit in  $X^{\gen}_y$ corresponds to the semi-simple representations. 
If the $\rho_i$ are pairwise non-isomorphic, then its dimension is equal to $d^2-r$. 
\end{lem}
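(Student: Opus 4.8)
The plan is to prove all three assertions at once by exhibiting, for every $\kappa$-point $\rho$ of $X^\gen_y$, an explicit one-parameter degeneration of $\rho$ to the semisimple representation $\rho^\semi:=\bigoplus_{i=1}^r\rho_i$ inside $X^\gen_y$, and then reading off the consequences.

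First I would construct this degeneration. Given $\rho\in X^\gen_y(\kappa)$, its Jordan--H\"older factors are precisely the $\rho_i$ (with multiplicities), so after conjugating by a suitable $g_0\in\GL_d(\kappa)$ I may assume $\rho$ is block upper triangular with diagonal blocks $\sigma_1,\dots,\sigma_r$ satisfying $\bigoplus_a\sigma_a\cong\rho^\semi$. Writing $\rho(g)=(A_{ab}(g))_{a,b}$ in block form, set $\widetilde\rho(g):=(t^{\,b-a}A_{ab}(g))_{a,b}$; a one-line computation shows $\widetilde\rho\colon G_F\to\GL_d(\kappa[t])$ is a homomorphism, and since for $x=\sum_g a_g g$ the matrix $\widetilde\rho(x)$ is again block upper triangular with $(a,a)$-block $\sum_g a_g\sigma_a(g)$, the determinant law of $\widetilde\rho$ equals the base change along $\kappa\hookrightarrow\kappa[t]$ of $D_y=\prod_i D_i$. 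Hence, by the universal property of $A^\gen$ (Lemma~\ref{existence_Agen}), the representation $\widetilde\rho$ defines a morphism $\gamma\colon\mathbb A^1_\kappa\to X^\gen$ lying over $y$, i.e.\ a morphism $\gamma\colon\mathbb A^1_\kappa\to X^\gen_y$. The upper off-diagonal blocks of $\widetilde\rho$ are divisible by $t$, so $\gamma(0)=\bigoplus_a\sigma_a\cong\rho^\semi$; and for $t\neq 0$ the matrix $\widetilde\rho(g)$ is the conjugate of $\rho(g)$ by the block-diagonal matrix acting as $t^{-a}$ on the $a$-th block, so $\gamma(t)\in G\cdot\rho$, whence $\rho^\semi\in\overline{G\cdot\rho}$.

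Granting this, the three claims follow quickly. \emph{Connectedness}: $\gamma(\mathbb A^1_\kappa)$ is connected and meets both $G\cdot\rho$ (connected, since $\GL_d$ is) and $G\cdot\rho^\semi$, so every closed point of $X^\gen_y$ lies in the connected component of $\rho^\semi$; as $X^\gen_y$ is of finite type over $\kappa$, this forces $X^\gen_y$ to be connected. \emph{Unique closed orbit}: via the $G$-equivariant homeomorphism $X^\gen_{y'}\to X^\gen_y$ recalled before the lemma, $X^\gen_y$ is homeomorphic to a fibre of the GIT quotient map $X^\gen\to X^\gen\sslash G$, which has a unique closed $G$-orbit by \cite[Theorem~3]{seshadri} (exactly as invoked in the proof of Lemma~\ref{bound}); call this orbit $O$ and pick $\tau\in O(\kappa)$. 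If $\tau$ were not semisimple, then applying the degeneration to $\tau$ puts a representation isomorphic to $\tau^\semi\cong\rho^\semi$ (hence not lying in $G\cdot\tau$) into $\overline{G\cdot\tau}$, contradicting that $O=G\cdot\tau$ is closed; so $O=G\cdot\rho^\semi$, which consists precisely of the semisimple representations in $X^\gen_y$. \emph{Dimension}: if the $\rho_i$ are pairwise non-isomorphic they are absolutely irreducible over $\kappa=\overline\kappa$, so Schur's lemma gives $\End_{G_F}(\rho^\semi)\cong\kappa^r$, hence $\Stab_G(\rho^\semi)\cong\mathbb G_m^r$ and $\dim O=\dim\GL_d-r=d^2-r$.

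The one step that really requires care is verifying that $\widetilde\rho$ gives an honest morphism $\mathbb A^1_\kappa\to X^\gen_y$ rather than merely a family of $\kappa$-points: one must check that $\widetilde\rho$ is continuous for the natural topology on $\kappa[t]$, that it annihilates the Cayley--Hamilton ideal $\CH(D^u)$ — which follows from the ordinary Cayley--Hamilton theorem applied to each matrix $\widetilde\rho(a)$ together with the identification of its characteristic-polynomial coefficients with the base change of the polynomial laws $\Lambda_i$ of $D^u$ — and that the induced map to $X^\gen$ factors through the fibre over $y$. Everything else (uniqueness of the closed orbit, the fact that an orbit closure always contains a closed orbit, and the orbit--stabilizer dimension count) is standard.
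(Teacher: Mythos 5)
Your proof is correct and follows essentially the same route as the paper: the one-parameter family $\widetilde\rho(g)=(t^{\,b-a}A_{ab}(g))$ is exactly the paper's trick of conjugating the block-upper-triangular $\rho$ by $\diag(T^{r-1}\id,\dots,\id)$ over $\kappa[T]$ and specializing at $T=0$, the unique closed orbit comes from \cite[Theorem 3]{seshadri} via the adequate homeomorphism in both arguments, and the dimension count via the stabilizer $\mathbb G_m^r$ is identical. Your write-up is merely more explicit about why the degeneration defines a morphism $\mathbb A^1_\kappa\to X^{\gen}_y$ (Cayley--Hamilton condition and compatibility with $D_y$), which the paper leaves implicit.
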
 
\begin{proof}
It follows from 
\cite[Theorem 3]{seshadri} that  $X^{\gen}_{y'}$
(and hence $X^{\gen}_y$, by the remark in the paragraph above) contains a unique closed $G$-orbit. Thus it is enough to show that the closure of every $G$-orbit will contain a semi-simple representation. If $x\in X^{\gen}_y(\kappa)$ then after conjugation we may assume that $x$ corresponds to a representation $\rho: G_F \rightarrow \GL_d(\kappa)$ such that the image of $\rho$ is block-upper-triangular, and the blocks on the diagonal are given by $\diag(\rho_{\sigma(1)}(g), \ldots, \rho_{\sigma(r)}(g))$ for some permutation $\sigma\in S_r$. By extending scalars to $\kappa[T]$, conjugating $\rho$ by $\diag(T^{r-1} \id_{d_{\sigma(1)}}, T^{r-2} \id_{d_{\sigma(2)}}, \dots, \id_{d_{\sigma(r)}})$ and specializing at $T=0$ we see that the closure of the $G$-orbit will contain a semi-simple representation. The action of $G$ on $X^{\gen}_y$ leaves the connected components invariant by Lemma \ref{IrredCompInvariant}. Hence, every connected component of $X^{\gen}_y$ will contain the closed point corresponding to the representation $g \mapsto \diag(\rho_{1}(g), \ldots, \rho_{r}(g))$. Thus $X^{\gen}_y$ is connected. 

The stabilizer of a semi-simple representation with distinct irreducible factors in $\GL_d$ is isomorphic to $\mathbb{G}_m^r$: a copy of  $\mathbb G_m$ is embedded as scalar matrices inside of each block. Hence, the dimension of the closed $G$-orbit is given by $\dim \GL_d - \dim \mathbb{G}_m^r= d^2-r$.
\end{proof}

In order to analyze $X^{\gen}_y$ we introduce the following notation. We fix a permutation $\sigma\in S_r$ and write $P$ for the block-upper-triangular parabolic subgroup of $\GL_d$ with the $i$-th diagonal block of size $d_{\sigma(i)}\times d_{\sigma(i)}$. We write $N$ for its unipotent radical and $L$ for its Levi subgroup consisting of block diagonal matrices. We let $Z_L \cong \mathbb{G}_m^r$ denote the centre of $L$. Finally, we denote their Lie algebras by $\mathfrak p$, $\mathfrak n$, $\mathfrak l$ and $\mathfrak z_L$, respectively, and write $\gl$ for the Lie algebra of $\GL_d$. We have 
\begin{equation}\label{formula1}
    \dim \gl =d^2, \quad \dim \lv= \sum_{i=1}^r d_{i}^2, 
    \quad \dim \zl= r, 
\end{equation}
\begin{equation}\label{formula2}
    \dim \nn=\frac{1}{2} (\dim \gl -\dim \lv)= \sum_{1\le i< j \le r} d_i d_j.
\end{equation}

\begin{remar}\label{nopara}
We note that although $\pp$, $\nn$, $\lv$ and $\zl$ depend on $\sigma$, their dimensions do not. 
\end{remar} 

Let $\rho_{\sigma}: G_F\rightarrow \GL_d(\kappa)$ be the representation $g\mapsto  \mathrm{diag}(\rho_{\sigma(1)}(g), \ldots,  \rho_{\sigma(r)}(g))$. 
It follows from a calculation with block-upper-triangular matrices that $\pp$ can be 
given an associative $\kappa$-algebra structure such that the
the inclusion $\pp\subset \gl=M_d(\kappa)$ is an inclusion of Cayley--Hamilton algebras. 

\begin{lem}\label{rhosig_univ}
There exists a closed subscheme $X^{\gen}_{y,\sigma} \subset X^{\gen}_y$
representing the functor sending a $\kappa$-algebra $B$ to the set of homomorphisms of Cayley--Hamilton $\kappa$-algebra  $\varphi: E_y \to \pp \otimes_\kappa B$ such that the projection onto the $i$th diagonal block is $\rho_{\sigma(i)}\otimes_{\kappa} B$ for $1 \leq i \leq r$.
\end{lem}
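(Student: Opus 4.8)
The plan is to exhibit $X^{\gen}_{y,\sigma}$ as a closed subscheme of $X^{\gen}_y$ cut out from the universal Cayley--Hamilton homomorphism by two families of \emph{linear} conditions. First I would record what $X^{\gen}_y$ represents: writing $A_y:=A^{\gen}\otimes_{R^{\ps},y}\kappa$ and $X^{\gen}_y=\Spec A_y$, and base-changing the universal property of $A^{\gen}$ from Lemma~\ref{existence_Agen} along $y$, one sees that $X^{\gen}_y$ represents the functor on $\kappa$-algebras $B\mapsto\{\text{Cayley--Hamilton homomorphisms }E_y\to M_d(B)\}$, where $E_y=E\otimes_{R^{\ps},y}\kappa$; here one uses that any $R^{\ps}$-algebra map from $A^{\gen}$ to a $\kappa$-algebra factors through $A_y$, and that the relevant determinant law becomes $D_y$. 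Let $j_y\colon E_y\to M_d(A_y)$ be the universal homomorphism (corresponding to $\mathrm{id}_{A_y}$). Since $E$ is a finitely generated $R^{\ps}$-module, $E_y$ is a finite-dimensional $\kappa$-vector space; fix a $\kappa$-basis $e_1,\dots,e_m$, so that a $B$-point $\varphi$ of $X^{\gen}_y$ is determined by the matrices $\varphi(e_1),\dots,\varphi(e_m)$.

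Next I would impose the two conditions and check each is closed. Condition (a): $\varphi$ should land in $\pp\otimes_\kappa B\hookrightarrow M_d(B)$; by $\kappa$-linearity of $\varphi$ this is equivalent to $\varphi(e_\ell)\in\pp\otimes_\kappa B$ for all $\ell$, i.e.\ to the vanishing of the entries of $\varphi(e_\ell)$ in the strictly-block-lower-triangular positions relative to $P$. Pulling back the origin under the morphism $X^{\gen}_y\to\mathbb A^N$ recording these entries of the $j_y(e_\ell)$ defines a closed subscheme $X_1\subseteq X^{\gen}_y$ with the expected functor of points. Condition (b): on $X_1$ the composite $E_y\to\pp\otimes_\kappa B\to\lv\otimes_\kappa B\to M_{d_{\sigma(i)}}(B)$ (reduction modulo $\nn$ followed by the $i$-th block projection, both ring maps) should agree with $\rho_{\sigma(i)}\otimes_\kappa B$. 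Here $\rho_{\sigma(i)}$ is read as a homomorphism $E_y\to M_{d_{\sigma(i)}}(\kappa)$: the block-diagonal representation $\rho_\sigma$ is semisimple with pseudo-character $D_y$ and satisfies the degree-$d$ Cayley--Hamilton identity, hence defines a point of $X^{\gen}_y(\kappa)$, i.e.\ a Cayley--Hamilton homomorphism $E_y\to M_d(\kappa)$ landing in $\lv(\kappa)$, whose $i$-th diagonal block is by definition $\rho_{\sigma(i)}$; this is exactly the place where the Cayley--Hamilton condition in the construction of $A^{\gen}$ is needed, compare the Remark following Lemma~\ref{closed_pts}. Again by $\kappa$-linearity, (b) holds iff the $i$-th diagonal block of $\varphi(e_\ell)$ equals the image of $\rho_{\sigma(i)}(e_\ell)$ in $M_{d_{\sigma(i)}}(B)$ for all $\ell$ and $i$, which is the vanishing of finitely many functions on $X_1$; this defines the closed subscheme $X^{\gen}_{y,\sigma}\subseteq X_1\subseteq X^{\gen}_y$.

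Finally I would verify that $X^{\gen}_{y,\sigma}$ represents the functor in the statement: a $\kappa$-algebra homomorphism $A_y\to B$ kills the defining ideal iff the associated Cayley--Hamilton homomorphism $\varphi\colon E_y\to M_d(B)$ satisfies (a) and (b), and by the linearity observations this is exactly the condition that $\varphi$ factors through $\pp\otimes_\kappa B$ with $i$-th diagonal block equal to $\rho_{\sigma(i)}\otimes_\kappa B$; conversely every such homomorphism arises uniquely from such a map on coordinate rings. Independence of the chosen basis is automatic, the defining ideal being intrinsically generated by all strictly-block-lower-triangular entries and all diagonal-block discrepancies of $j_y$ over $E_y$. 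The argument is mostly bookkeeping; the steps that genuinely need care are the first --- that after base change $X^{\gen}_y$ really represents the Cayley--Hamilton homomorphism functor, so that the standard principle that such conditions, being closed and linear, may be tested on a basis applies --- and the justification that the diagonal-block map $\rho_{\sigma(i)}$ is well defined as a homomorphism out of $E_y$.
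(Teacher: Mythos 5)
Your proposal is correct and follows essentially the same route as the paper: the paper defines $X^{\gen}_{y,\sigma}$ as $\Spec$ of $A^{\gen}\otimes_{R^{\ps},y}\kappa$ modulo the ideal generated by the below-block entries of $j_y(a)$ and the block-diagonal entries of $j_y(a)-\rho_\sigma(a)$ for all $a\in E_y$, which by linearity is exactly your ideal tested on a basis. Your additional checks — that $X^{\gen}_y$ represents the Cayley--Hamilton homomorphism functor after base change and that $\rho_\sigma$ genuinely factors through $E_y$ — are details the paper leaves implicit, and they are verified correctly.
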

\begin{proof}
The universal map $j: E\rightarrow M_d(A^{\gen})$ induces a map
    \[ j_y: E_y \rightarrow M_d(A^{\gen} \otimes_{R^{\ps},y} \kappa). \]
Let $I_{\rho, \sigma}$ be the ideal of $A^{\gen} \otimes_{R^{\ps},y} \kappa$ generated by the matrix entries of $j_y(a)$ for all $a\in E_y$, which lie below the diagonal blocks of $P$, and by all the elements on the block diagonal of the matrices $(j_y(a)- \rho_{\sigma}(a))$ for all  $a\in E_y$. Let
    \[ X^{\gen}_{y,\sigma}:= \Spec ((A^{\gen} \otimes_{R^{\ps},y} \kappa)/ I_{\rho, \sigma}). \]
    Then $X^{\gen}_{y,\sigma}$ is a closed subscheme of $X^{\gen}_y$, and its defining ideal $I_{\rho,\sigma}$ was constructed precisely so that a $B$-point of $X^{\gen}_y$ factors through $X^{\gen}_{y,\sigma}$ if and only if it lands in $\pp \otimes_\kappa B$ and matches the $\rho_i$ on the diagonals for $1 \le i \le r$.
\end{proof}

The adjoint action (i.e.~via conjugation) of $Z_L N$ on $\pp$ induces an action of $Z_L N$ on $X^{\gen}_{y,\sigma}$.

\begin{lem}\label{closed_orbit_sigma}
The unique closed $Z_L$-orbit in $X^{\gen}_{y,\sigma}$ is the singleton $\{\rho_\sigma\}$.
\end{lem}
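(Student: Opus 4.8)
The plan is to use the valuative/one-parameter-subgroup criterion from GIT together with the explicit description of $X^{\gen}_{y,\sigma}$ as parametrizing block-upper-triangular deformations. First I would observe that $\rho_\sigma$ itself defines a $\kappa$-point of $X^{\gen}_{y,\sigma}$, since it is block-diagonal, hence certainly lies in $\pp\otimes_\kappa\kappa$ and matches the $\rho_{\sigma(i)}$ on the diagonals; moreover $\rho_\sigma$ is fixed by the $Z_L$-action, because $Z_L\cong\Gm^r$ acts on $\pp$ by conjugation and the block-diagonal part $\lv$ of $\pp$ is centralized by $Z_L$. So $\{\rho_\sigma\}$ is a closed $Z_L$-orbit; the content is that it is the \emph{unique} one. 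By \cite[Theorem 3]{seshadri} applied to the reductive group $Z_L$ acting on the affine scheme $X^{\gen}_{y,\sigma}$, there is a unique closed $Z_L$-orbit, so it suffices to show that the closure of \emph{every} $Z_L$-orbit contains $\rho_\sigma$.

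Next I would take an arbitrary $\kappa$-point $x\in X^{\gen}_{y,\sigma}(\kappa)$, corresponding to a Cayley--Hamilton homomorphism $\varphi\colon E_y\to \pp\otimes_\kappa\kappa$ which is block-upper-triangular with diagonal blocks exactly $\rho_{\sigma(1)},\dots,\rho_{\sigma(r)}$; thus $\varphi = \rho_\sigma + (\text{strictly upper-triangular part})$, where the off-diagonal entries, in the $(i,j)$ block with $i<j$, are valued in $\Hom_\kappa(\rho_{\sigma(j)},\rho_{\sigma(i)})$-valued cochains. The key computation is the effect of conjugating by the cocharacter $t\mapsto \diag(t^{m_1}\mathrm{id}_{d_{\sigma(1)}},\dots,t^{m_r}\mathrm{id}_{d_{\sigma(r)}})\in Z_L$ with $m_1>m_2>\dots>m_r$: this scales the $(i,j)$ block of $\varphi$ by $t^{m_i-m_j}$, which has strictly positive exponent whenever $i<j$. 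Hence, after base change to $\kappa[T]$, conjugating by this cocharacter (with $t=T$) and specializing at $T=0$, every strictly-upper-triangular entry goes to zero and $\varphi$ degenerates to $\rho_\sigma$. This shows $\rho_\sigma\in \overline{Z_L\cdot x}$ for every $x$, exactly as in the proof of Lemma \ref{closed_orbit}; combined with the uniqueness of the closed orbit from \cite[Theorem 3]{seshadri}, this forces the unique closed $Z_L$-orbit to be $\{\rho_\sigma\}$.

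The one point that needs a little care — and is the only real obstacle — is to make sure the degeneration argument stays \emph{inside} $X^{\gen}_{y,\sigma}$, i.e.\ that the $\kappa[T]$-point obtained by conjugating $\varphi$ by the cocharacter still satisfies the defining conditions (valued in $\pp\otimes B$, diagonal blocks equal to the $\rho_{\sigma(i)}$, and Cayley--Hamilton over $\kappa[T]$). For the first two conditions this is immediate from the block-scaling description above, since conjugation by an element of $Z_L\subset P$ preserves $\pp$ and fixes the diagonal blocks. For the Cayley--Hamilton condition, conjugation by an invertible matrix over $\kappa[T]$ preserves it on the ring $\kappa[T]$, and since $X^{\gen}_{y,\sigma}$ is closed in $X^{\gen}_y$ — hence defined by closed conditions — the specialization at $T=0$ lands in $X^{\gen}_{y,\sigma}$ as well. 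This completes the plan.
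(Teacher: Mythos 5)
Your proposal is correct and follows essentially the same route as the paper, which simply invokes the proof of Lemma \ref{closed_orbit} with "the same diagonal matrix trick to kill off the unipotent part": conjugate by a cocharacter of $Z_L$ with strictly decreasing exponents over $\kappa[T]$ and specialize at $T=0$ to degenerate any point of $X^{\gen}_{y,\sigma}$ to $\rho_\sigma$, then conclude uniqueness of the closed orbit via \cite[Theorem 3]{seshadri}. Your extra verification that the degeneration stays inside $X^{\gen}_{y,\sigma}$ is a welcome (if implicit in the paper) detail, and is handled correctly.
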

\begin{proof}
This is the same proof as in Lemma \ref{closed_orbit}, just use the same diagonal matrix trick to kill off the unipotent part.
\end{proof}

\begin{prop}\label{tangent}
Let $x \in X^{\gen}_{y,\sigma}$ be the point corresponding to the representation $\rho_{\sigma}$. Then
\begin{equation}\label{ext2}
    \begin{split}
        \dim T_x(X^{\gen}_{y,\sigma})&=  \dim \nn +(\dim \nn)[F:\Qp] +\sum_{1\le i<j\le r}  
        \dim \Hom_{G_F}( \rho_{\sigma(i)}, \rho_{\sigma(j)}(1))\\
        & \le  \dim \nn +(\dim \nn)[F:\Qp] + \sum_{i=1}^k \begin{pmatrix} n_i\\ 2\end{pmatrix}.
    \end{split}
\end{equation}
\end{prop}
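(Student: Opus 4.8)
The plan is to compute $\dim T_x(X^{\gen}_{y,\sigma})$ as a Zariski tangent space, i.e.\ as the space of $\kappa[\epsilon]/(\epsilon^2)$-points of $X^{\gen}_{y,\sigma}$ lying over $x$. By Lemma~\ref{rhosig_univ} such a point is a homomorphism of Cayley--Hamilton $\kappa$-algebras $\varphi: E_y \to \pp\otimes_\kappa \kappa[\epsilon]$ reducing to $\rho_\sigma$ modulo $\epsilon$ and agreeing with $\rho_{\sigma(i)}$ on the $i$-th diagonal block. Writing $\varphi = \rho_\sigma + \epsilon\, c$ with $c: E_y \to \nn$ a $\kappa$-linear map, the condition that $\varphi$ be an algebra homomorphism translates, as usual, into $c$ being a $1$-cocycle for the $E_y$-bimodule $\nn$ (with $E_y$ acting on the left and right through $\rho_\sigma$), i.e.\ $c \in Z^1(E_y, \nn)$, where $\nn$ decomposes as a $G_F$-representation into $\bigoplus_{i<j}\Hom_\kappa(\rho_{\sigma(i)},\rho_{\sigma(j)})$. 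The diagonal-block normalization removes the coboundary ambiguity coming from conjugation by elements of $1+\epsilon\mathfrak{l}$ but \emph{not} the part coming from $1+\epsilon\nn$; I would be careful here and check that the resulting tangent space is genuinely $Z^1$ of $\nn$ rather than a cohomology group, so that no $H^0$ or $H^1$ of $\nn$ appears — the normalization in the definition of $I_{\rho,\sigma}$ is precisely designed to rigidify this. One should also verify, using that $\nn$ is a $G_F$-stable ideal inside $\pp$ on which the Cayley--Hamilton relations impose nothing new (nilpotent entries), that $Z^1(E_y,\nn) = Z^1(G_F,\nn)$; this is the $\nn$-analogue of Lemma~\ref{dim_ext}, and follows because an extension of $\rho_\sigma$ by $\nn$ of the required shape automatically factors through $E_y$, just as in that lemma's proof.

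Granting $\dim T_x(X^{\gen}_{y,\sigma}) = \dim_\kappa Z^1(G_F, \nn)$, the next step is the cohomology computation. Using the local Euler characteristic formula for $G_F$ applied to the $G_F$-representation $\nn$ (of dimension $\dim\nn$), together with $\dim Z^1 = \dim H^1 - \dim H^0 + \dim\nn$ and the fact that $H^0(G_F,\nn) = \Hom_{G_F}(\triv, \nn) = 0$ since $\nn = \bigoplus_{i<j}\Hom(\rho_{\sigma(i)},\rho_{\sigma(j)})$ contains no trivial summand (the $\rho_i$ being irreducible and the pairs $(i,j)$ with $i<j$ never giving $\rho_{\sigma(i)}\cong\rho_{\sigma(j)}$ in a way forcing $\Hom$ to contain the trivial representation — one must be slightly careful, but $H^0 = \bigoplus_{i<j}\Hom_{G_F}(\rho_{\sigma(i)},\rho_{\sigma(j)})$ vanishes term-by-term because $\sigma(i)\neq\sigma(j)$), one gets
\[
\dim Z^1(G_F,\nn) = \dim H^2(G_F,\nn) + (\dim\nn)[F:\Qp] + \dim\nn.
\]
By local Tate duality, $\dim H^2(G_F,\nn) = \dim H^0(G_F, \nn^\vee(1)) = \dim\Hom_{G_F}(\triv, \nn^\vee(1)) = \dim\Hom_{G_F}(\nn(-1), \triv)$; and dualizing the decomposition of $\nn$, this equals $\sum_{i<j}\dim\Hom_{G_F}(\rho_{\sigma(i)}, \rho_{\sigma(j)}(1))$. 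Substituting gives exactly the equality in \eqref{ext2}.

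For the inequality, I would group the sum $\sum_{1\le i<j\le r}\dim\Hom_{G_F}(\rho_{\sigma(i)},\rho_{\sigma(j)}(1))$ according to the equivalence classes under $D_i\sim D_j$. If $\rho_{\sigma(i)}$ and $\rho_{\sigma(j)}$ lie in different classes then $D_{\sigma(i)} \ne D_{\sigma(j)}(1)$, so $\rho_{\sigma(i)} \not\cong \rho_{\sigma(j)}(1)$, and since both are irreducible, $\Hom_{G_F}(\rho_{\sigma(i)},\rho_{\sigma(j)}(1)) = 0$; thus only pairs within a single class contribute. Within the $i$-th class, which has $n_i$ elements, each $\Hom$-space has dimension $\le 1$ by Schur's lemma (the $\rho$'s being absolutely irreducible, so a nonzero $G_F$-map between them is an isomorphism and the $\Hom$-space is a division algebra over $\kappa$, here one-dimensional since $\kappa$ is algebraically closed), and there are $\binom{n_i}{2}$ unordered pairs within the class, hence at most $\binom{n_i}{2}$ ordered pairs $(i,j)$ with $i<j$ contributing a nonzero term. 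Summing over the $k$ classes yields the bound $\sum_{i=1}^k \binom{n_i}{2}$, completing the proof.

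The step I expect to be the main obstacle is the first one: correctly identifying the tangent space with $Z^1(G_F,\nn)$ rather than with $H^1$ or with some larger or smaller space. One must track precisely what the generators of $I_{\rho,\sigma}$ in Lemma~\ref{rhosig_univ} kill (the strictly-lower-block entries, forcing $\varphi$ to land in $\pp$; and the block-diagonal entries, pinning the diagonal to $\rho_\sigma$ exactly, not just up to isomorphism), and confirm that this is exactly the rigidification that turns a deformation-of-$\rho_\sigma$-into-$\pp$ problem — which would naively compute $H^1$ of $\pp/\mathfrak{l}$-type coefficients — into the affine-space of cocycles with values in $\nn$ with no quotient taken. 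The identification $Z^1(E_y,\nn)=Z^1(G_F,\nn)$ and the vanishing of $H^0(G_F,\nn)$ are comparatively routine once the setup is pinned down.
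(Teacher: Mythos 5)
Your overall strategy is the same as the paper's: identify $T_x(X^{\gen}_{y,\sigma})$ with the space of cocycles valued in $\nn$ (not a cohomology group, exactly because Lemma \ref{rhosig_univ} pins the diagonal blocks to $\rho_{\sigma(i)}$ on the nose), then convert cocycle dimensions into the stated formula via the Euler characteristic formula and local Tate duality, and finally bound the duality terms by grouping the constituents into twist-equivalence classes. The one real difference is bookkeeping: the paper stays with Hochschild cohomology of $E_y$, decomposes $\nn=\bigoplus_{i<j}\Eins_i\nn\Eins_j$ and computes each block $V_{ij}$ via $\dim Z^1 = \dim HH^1 + \dim M - \dim HH^0$ (Cartan--Eilenberg), invoking Lemma \ref{dim_ext} only at the level of $\Ext^1$, together with $\Hom_{E_y}=\Hom_{G_F}$; you instead pass to continuous group cocycles $Z^1(G_F,\nn)$ for all of $\nn$ at once. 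Your route is fine, but the identification $Z^1(E_y,\nn)\cong Z^1_{\mathrm{cont}}(G_F,\nn)$ genuinely needs the Cayley--Hamilton factorization argument you only gesture at (the deformation $(1+\varepsilon c)\rho_\sigma$ has pseudo-character $D_y\otimes\kappa[\varepsilon]$, hence factors through $E_y$, as in the proof of Lemma \ref{dim_ext}); the paper's blockwise Hochschild computation avoids having to state this as a separate isomorphism.

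Two corrections. First, your assertion that $H^0(G_F,\nn)=0$ ``term-by-term because $\sigma(i)\neq\sigma(j)$'' is wrong in general: distinct indices do not give non-isomorphic constituents, since $D_y=\prod_i D_i$ may have repeated factors — indeed the multiplicities $n_i$ and the binomial coefficients $\binom{n_i}{2}$ in the statement exist precisely to account for repetitions (and note Lemma \ref{closed_orbit} only computes the closed orbit dimension \emph{if} the $\rho_i$ are pairwise non-isomorphic). Fortunately this claim is superfluous: in $\dim Z^1 = \dim H^1 - \dim H^0 + \dim\nn$ combined with $\dim H^1 = \dim H^0 + \dim H^2 + (\dim\nn)[F:\Qp]$ the $H^0$ terms cancel, so your displayed formula for $\dim Z^1(G_F,\nn)$, and hence the rest of the argument, stands; this cancellation is exactly what happens blockwise in the paper. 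Second, a sign-of-Hom slip: the block $\Eins_i\nn\Eins_j$ (with $i<j$) is $\Hom_{\kappa}(\rho_{\sigma(j)},\rho_{\sigma(i)})$, not $\Hom_{\kappa}(\rho_{\sigma(i)},\rho_{\sigma(j)})$; you need the correct direction for the Tate-duality step to produce verbatim the terms $\Hom_{G_F}(\rho_{\sigma(i)},\rho_{\sigma(j)}(1))$ in the displayed equality (for the final inequality the direction is immaterial, since non-vanishing in either direction forces the two constituents into the same equivalence class and contributes at most $1$ by Schur's lemma over the algebraically closed field $\kappa$).
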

\begin{proof}
Using Lemma \ref{rhosig_univ} and the decomposition $\pp = \mathfrak l \oplus \nn$ we may identify $T_x(X^{\gen}_{y,\sigma})$ with the space of $\kappa$-algebra homomorphisms $\varphi: E_y \rightarrow M_d(\kappa[\varepsilon])$, which can be written as $\varphi= \rho_{\sigma} + \varepsilon \beta$, where $\beta$ is a $\kappa$-linear map $\beta: E_y \rightarrow \nn$. If $\beta: E_y\rightarrow \nn$ is any $\kappa$-linear map then $\varphi:= \rho_{\sigma} + \varepsilon \beta$ is a homomorphism of $\kappa$-algebras if and only 
if
\begin{equation}\label{transf}
    \beta(aa') = \rho_{\sigma}(a) \beta(a')+ \beta(a) \rho_{\sigma}(a'), \quad \forall a, a'\in E_y.
\end{equation}
For $1\le i\le r$ we let $\Eins_i\in M_d(\kappa)$ be the block diagonal matrix with the identity matrix on the $i$-th block and zeros everywhere else. Since $\rho_{\sigma}(g)$ commutes with $\Eins_i$ for all $i$, we have an isomorphism 
    \[ T_x(X^{\gen}_{y,\sigma})\cong \bigoplus_{1\le i< j\le r} V_{ij}, \]
where $V_{ij}$ is the space of functions $\beta: E_y \rightarrow \Eins_i \nn \Eins_j$ satisfying \eqref{transf}. We may identify $\Eins_i \nn \Eins_j$ with $\Hom_{\kappa}(\rho_{\sigma(j)}, \rho_{\sigma(i)})$. Then $V_{ij}$ is precisely the space of $1$-cocycles for the Hochschild cohomology of $E_y$ with values in $\Hom_{\kappa}(\rho_{\sigma(j)}, \rho_{\sigma(i)})$. Thus 
\begin{equation}
    \begin{split}
        \dim_{\kappa} V_{ij} &= \dim_{\kappa} HH^1(E_y, \Hom_{\kappa}(\rho_{\sigma(j)}, \rho_{\sigma(i)})) + \dim_{\kappa} 
        \Hom_{\kappa}(\rho_{\sigma(j)}, \rho_{\sigma(i)})\\&- \dim_{\kappa} HH^0(E_y, \Hom_{\kappa}(\rho_{\sigma(j)}, \rho_{\sigma(i)}))\\&= \dim_{\kappa}\Ext^1_{E_y}(\rho_{\sigma(j)}, \rho_{\sigma(i)}) + d_i d_j - 
        \dim_{\kappa} \Hom_{E_y}(\rho_{\sigma(j)}, \rho_{\sigma(i)})\\
        &= d_i d_j + [F:\Qp] d_i d_j + \dim_{\kappa} \Ext^2_{G_F}(\rho_{\sigma(j)}, \rho_{\sigma(i)}),
    \end{split} 
\end{equation}
where the first equality follows from \cite[Proposition IX.4.4.1]{CE}, the second from \cite[Corollary IX.4.4.4]{CE} and the third from Lemma \ref{dim_ext} together with the local Euler--Poincar\'e characteristic formula
in this context\footnote{The results in \cite[Theorem 3.4.1]{bj} on local Tate duality and a local Euler--Poincar\'e characteristic formula, when the coefficient field $\kappa$ is a local field, are based on the work of Nekov\'{a}\v{r}~\cite{nekovar}.}
(see \cite[Theorem 3.4.1 (c)]{bj}). Thus 
    \[ \dim_{\kappa} T_x(X^{\gen}_{y,\sigma})= \dim \nn + (\dim \nn)[F:\Qp] +\sum_{1\le i<j\le r} \dim_{\kappa} \Ext^2_{G_F}(\rho_{\sigma(j)}, \rho_{\sigma(i)}). \]
It follows from the local duality, see \cite[Theorem 3.4.1 (b)]{bj}, that 
    \[ \dim_{\kappa}\Ext^2_{G_F}(\rho_{\sigma(j)}, \rho_{\sigma(i)}) = \dim_{\kappa}\Hom_{G_F}( \rho_{\sigma(i)}, \rho_{\sigma(j)}(1)). \]
Thus if this term is non-zero then it is equal to $1$ and $\rho_{\sigma(i)}$ and $\rho_{\sigma(j)}$ belong to the same equivalence class. 
\end{proof} 
 
\begin{remar}
If $\cha (\kappa)=p$ and $\zeta_p\in F$ then $D_i\sim D_j$ if and only if $D_i=D_j$ and the bound is sharp in this case. 
\end{remar}

\begin{cor}\label{bound_dim2}
$\dim X^{\gen}_{y,\sigma} \le \dim_{\kappa} T_x(X^{\gen}_{y,\sigma})\le  \dim \nn +(\dim \nn)[F:\Qp] + \sum_{i=1}^k \begin{pmatrix} n_i\\ 2\end{pmatrix}.$
\end{cor}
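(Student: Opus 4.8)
The plan is to prove the two inequalities independently. The right-hand inequality $\dim_\kappa T_x(X^{\gen}_{y,\sigma}) \le \dim\nn + (\dim\nn)[F:\Qp] + \sum_{i=1}^k \binom{n_i}{2}$ is literally the content of Proposition \ref{tangent}, so nothing new is required there; the work was in the Hochschild cohomology computation, the comparison $\Ext^1_{E_y}=\Ext^1_{G_F}$ of Lemma \ref{dim_ext}, and the application of local duality already carried out in its proof.

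For the left-hand inequality $\dim X^{\gen}_{y,\sigma}\le\dim_\kappa T_x(X^{\gen}_{y,\sigma})$, I would invoke the general GIT bound of Lemma \ref{bound}, but with the reductive group taken to be the torus $Z_L\cong\Gm^r$ rather than $\GL_d$, acting on $X^{\gen}_{y,\sigma}$ through the conjugation action inherited from the $Z_LN$-action on $\pp$ fixed above. To match the hypotheses of Lemma \ref{bound} one checks three things: (i) $Z_L$ is reductive, being a split torus, hence an admissible group in the sense of Section \ref{sec_GIT}; (ii) via the closed immersion $X^{\gen}_{y,\sigma}\hookrightarrow X^{\gen}_y$ and the realisation of $X^{\gen}_y$ inside the affine space of $n$-tuples of $d\times d$-matrices over $\kappa$ on which $\GL_d$, and hence $Z_L$, acts linearly by simultaneous conjugation, the scheme $X^{\gen}_{y,\sigma}$ is a closed $Z_L$-invariant subscheme of an affine space with linear $Z_L$-action; and (iii) by Lemma \ref{closed_orbit_sigma} the orbit $Z_L\cdot x=\{x\}$ of the point $x$ corresponding to $\rho_\sigma$ is closed, and is the unique closed $Z_L$-orbit in $X^{\gen}_{y,\sigma}$. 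The uniqueness in (iii) forces the ring of $Z_L$-invariant functions on $X^{\gen}_{y,\sigma}$ --- a finitely generated $\kappa$-algebra, hence a Jacobson ring --- to have a single maximal ideal and therefore to be Artinian local; thus $X^{\gen}_{y,\sigma}\sslash Z_L$ is a single geometric point $y'$ and $X^{\gen}_{y,\sigma}$ is itself the fibre over $y'$ of its own GIT quotient map. Lemma \ref{bound} then applies verbatim and gives $\dim X^{\gen}_{y,\sigma}\le\dim_\kappa T_x(X^{\gen}_{y,\sigma})$; combining the two inequalities proves the corollary.

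The only thing requiring care is this bookkeeping that turns $X^{\gen}_{y,\sigma}$ into a genuine fibre of a GIT quotient, so that Lemma \ref{bound} can be quoted unchanged. If one prefers to bypass it, one can instead copy the short argument in the proof of Lemma \ref{bound} directly: $Z_L$ is connected, so each irreducible component of $X^{\gen}_{y,\sigma}$ is $Z_L$-invariant; by Seshadri's theorem every top-dimensional component $Z$ carries a unique closed $Z_L$-orbit, which must coincide with the unique closed orbit $\{\rho_\sigma\}$ of the whole space, so $x\in Z$, and hence $\dim X^{\gen}_{y,\sigma}=\dim Z\le\dim_\kappa T_x(Z)\le\dim_\kappa T_x(X^{\gen}_{y,\sigma})$. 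Either way the step is purely formal; all the real content sits in Proposition \ref{tangent} and Lemma \ref{closed_orbit_sigma}, so I do not expect a genuine obstacle here.
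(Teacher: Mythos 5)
Your proposal is correct and follows the paper's own route: the right-hand inequality is Proposition \ref{tangent}, and the left-hand one is Lemma \ref{bound} applied with $G=Z_L$ and $X=X^{\gen}_{y,\sigma}$, using Lemma \ref{closed_orbit_sigma} and the observation that $X^{\gen}_{y,\sigma}\sslash Z_L$ is a singleton. Your extra bookkeeping (Jacobson invariant ring with a unique maximal ideal, hence Artinian local) just spells out the paper's parenthetical remark, so there is no substantive difference.
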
 
\begin{proof}
This follows from Lemma \ref{closed_orbit_sigma} and Lemma \ref{bound} applied with $G = Z_L$ and $X = X^{\gen}_{y,\sigma}$, noting that $X^{\gen}_{y,\sigma} \sslash Z_L$ is a singleton.
\end{proof}

\begin{lem}\label{jacobson_dom}
If $f: X \to Y$ is a finite type and dominant morphism of Noetherian Jacobson universally catenary schemes, then $\dim Y \leq \dim X$.
\end{lem}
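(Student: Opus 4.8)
The plan is to reduce to a statement about local rings and then invoke the dimension formula for finite-type algebras over a field, combined with the fact that minimal primes behave well under the Jacobson hypothesis. First I would reduce to the affine case: the claim is local on $Y$ and then on $X$, so we may assume $X = \Spec A$, $Y = \Spec B$ with $B \to A$ a finite-type ring map that is injective (dominance, since we may replace $B$ by $B/\mathrm{nil}$ and then by the image, using that $A$ is reduced is not needed — dominance of a morphism to an affine $Y$ means $B \to A$ has nilpotent kernel, so $\Spec A \to \Spec B_{\red}$ is still dominant and dimensions are unchanged). Thus assume $B \hookrightarrow A$.

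Next I would choose a minimal prime $\qq$ of $B$ with $\dim B/\qq = \dim B$. The key point is that since $\Spec A \to \Spec B$ is dominant, $\qq$ lies in the image, i.e.\ there is a prime $\pp$ of $A$ lying over $\qq$ which is a minimal prime of $A$ above $\qq$; replacing $A$ by $A/\pp$ and $B$ by $B/\qq$ (which only possibly decreases $\dim A$ and keeps the map dominant, finite type, between Jacobson universally catenary domains) we reduce to the case where $A$ and $B$ are domains, $B \hookrightarrow A$ finite type, and $\dim B$ is the original $\dim Y$. Now it suffices to show $\dim B \le \dim A$ for domains.

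For the domain case I would use the dimension formula for universally catenary Jacobson rings. Pick a maximal ideal $\mm$ of $A$; since $A$ is Jacobson of finite type over $B$ and $B$ is Jacobson, $\nn := \mm \cap B$ is a maximal ideal of $B$ and the residue extension $\kappa(\nn) \hookrightarrow \kappa(\mm)$ is finite. Because $A$ is Jacobson and universally catenary and a domain, $\dim A = \dim A_\mm$ — more precisely one can choose $\mm$ so that $\dim A_\mm = \dim A$, using that in a Jacobson domain of finite type over a field or over a Jacobson ring all maximal chains through a suitable maximal ideal realize the dimension; concretely, $\dim A = \trdeg$ plus $\dim B$ type formulas. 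The cleanest route: by the dimension formula (Matsumura, Theorem 15.6, valid since $B$ is universally catenary Noetherian and $A$ is a finitely generated $B$-domain), for any prime $\pp$ of $A$ over $\qq$ of $B$ we have $\mathrm{ht}(\pp) + \trdeg_{\kappa(\qq)}\kappa(\pp) = \mathrm{ht}(\qq) + \trdeg_B A$. Applying this with $\pp = \mm$, $\qq = \nn$ maximal and using $\trdeg_{\kappa(\nn)}\kappa(\mm) = 0$, we get $\mathrm{ht}(\mm) = \mathrm{ht}(\nn) + \trdeg_B A \ge \mathrm{ht}(\nn)$. Finally, because $B$ is a Jacobson domain, $\dim B = \sup_{\nn} \mathrm{ht}(\nn)$ over maximal ideals $\nn$, and for each such $\nn$ in the image (all are, as $\Spec A \to \Spec B$ is surjective on closed points for a dominant finite-type map of Jacobson schemes — by Jacobson-ness the image of a finite type dominant map contains a dense open, but actually we only need: choose $\mm$ over a maximal $\nn$ realizing $\dim B$, which exists by lying-over after localizing, since dominance gives $B \hookrightarrow A$ and we can find a maximal ideal of $A$ contracting into any given height-realizing $\nn$ — here one uses that $A$ is finite type over $B$, Jacobson) we get $\mathrm{ht}(\nn) \le \mathrm{ht}(\mm) \le \dim A_\mm \le \dim A$. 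Taking the supremum over $\nn$ yields $\dim B \le \dim A$.

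The main obstacle is the bookkeeping around \emph{which} maximal ideals of $B$ are hit and ensuring one of them realizes $\dim B$ while admitting a preimage in $A$; this is exactly where both the Jacobson hypothesis (so that dimension is computed on closed points and closed points pull back) and universal catenarity (so that the dimension formula holds with no correction terms) are used. Everything else is a standard reduction to domains plus one application of the dimension formula; I would not expect to need anything beyond Matsumura's Chapter 5 (dimension formula, Jacobson rings) together with the trivial observation that passing to quotients by the chosen minimal primes can only decrease $\dim X$ and cannot increase $\dim Y$ because $\qq$ was chosen with $\dim B/\qq = \dim B$.
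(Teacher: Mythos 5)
Your strategy in the domain case — combine the dimension formula (Matsumura 15.6, using universal catenarity of the base) with the general Nullstellensatz for Jacobson rings (maximal ideals of $A$ contract to maximal ideals of $B$ with $\trdeg_{\kappa(\nn)}\kappa(\mm)=0$) — is genuinely different from the paper's argument, which instead runs a Noether normalization over the bottom domain, $A\hookrightarrow A[x_1,\dots,x_m]\hookrightarrow B'\subseteq B$ with $B'$ finite over the polynomial ring and $B'_g\cong B_g$, and then compares dimensions. But as written your proof has a genuine gap at its crucial step: you need a maximal ideal $\nn$ of $B$ with $\mathrm{ht}(\nn)=\dim B$ which is the contraction of a maximal ideal of $A$, and both of your justifications for this are false. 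A dominant finite type morphism of Jacobson schemes need not be surjective on closed points, and there is no lying-over for a non-integral injection of domains: take $B=k[x]\hookrightarrow A=k[x,x^{-1}]$, which is finite type, dominant, with both rings Jacobson and universally catenary, yet no prime of $A$ lies over $\nn=(x)$, a maximal ideal realizing $\dim B$. Injectivity of $B\hookrightarrow A$ only guarantees that the \emph{minimal} primes of $B$ are contracted (your earlier reduction step), not any prescribed maximal ideal, and "lying-over after localizing" does not produce one.

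What is true, by Chevalley, is that the image contains a dense open $U\subseteq\Spec B$, and by Jacobson-ness the closed points of $U$ are maximal ideals of $B$; but to finish you must know that some such point has height $\dim B$, i.e. that $\dim U=\dim B$. This dense-open dimension equality is precisely the nontrivial input where the Jacobson and universally catenary hypotheses are really used — it fails without them, e.g. for $\Zp\subset\Qp=\Zp[1/p]$, or for $B=\Zp[x]$ and the dense open $D(p)$ of dimension $1<2=\dim B$ — and it is exactly what the paper extracts from the Stacks project reference together with its Noether-normalization computation $\dim B=\dim B_g=\dim B'_g=\dim B'=\dim A+m$, which never needs a closed point of $\Spec A$ over a prescribed maximal ideal of $B$. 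If you supply this statement (with proof or reference), your dimension-formula route does go through; without it, the step "choose $\mm$ over a maximal $\nn$ realizing $\dim B$" is unjustified and can fail for the particular $\nn$ you picked. A smaller point: your initial reduction to the affine case (shrinking $X$ and $Y$ to affine opens while preserving dominance) also silently uses the same dense-open dimension fact; the paper handles this bookkeeping by first passing to irreducible components and then choosing compatible dense affine opens.
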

\begin{proof}
Passing to reduced subschemes does not affect Krull dimension, so we may assume that $X$ and $Y$ are both reduced.

First assume $X$ and $Y$ are irreducible. Pick dense open affines $U\subset Y$, $V\subset X$ such that $f(V)\subset U$. Since $f$ is dominant \cite[\href{https://stacks.math.columbia.edu/tag/0CC1}{Tag 0CC1}]{stacks-project} implies that $A:=\OO_Y(U)\hookrightarrow B:= \OO_X(V)$ is injective. Since $A$ is an integral domain, Noether normalization \cite[\href{https://stacks.math.columbia.edu/tag/07NA}{Tag 07NA}]{stacks-project} implies that the map factors as
    \[ A\hookrightarrow A[x_1,\ldots, x_m]\hookrightarrow B' \hookrightarrow B, \]
with $B'$ finite over $A[x_1, \ldots, x_m]$ and $B'_g \cong B_g$ for some non-zero $g \in A$. Then \cite[\href{https://stacks.math.columbia.edu/tag/0DRT}{Tag 0DRT}]{stacks-project} and \cite[13.C, Theorem 20]{matsumura_alg} imply that
    \[ \dim X = \dim B = \dim B_g = \dim B'_g = \dim B' = \dim A + m = \dim Y + m \]
so $\dim Y \leq \dim X$.

For the general case we argue as in the proof of \cite[\href{https://stacks.math.columbia.edu/tag/01RM}{Tag 01RM}]{stacks-project}. Write $X = \bigcup_j Z_j$ as the union of its irreducible components. Because $f$ is dominant, we have $Y=\bigcup_j \overline{f(Z_j)}$. Clearly the $\overline{f(Z_j)}$ have to be irreducible, and so the irreducible components of $Y$ have to be among them. The $Z_j$ and $\overline{f(Z_j)}$ are again Noetherian, Jacobson and universally catenary, and hence by the case already treated we have
 \[ \dim Y = \max_j \dim \overline{f(Z_j)} 
 \leq \max_j \dim Z_{j} = \dim X. \qedhere \]
\end{proof}

\begin{prop}\label{bound_dim_fibre} $\dim X^{\gen}_y \le \dim \gl -r + (\dim \nn)[F:\Qp] + \sum_{i=1}^k \begin{pmatrix} n_i \\ 2\end{pmatrix}.$
\end{prop}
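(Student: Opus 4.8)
The plan is to bound $\dim X^{\gen}_y$ by covering it with the closed subschemes $X^{\gen}_{y,\sigma}$ translated around by the group action, and then using the fibre dimension estimate from Corollary~\ref{bound_dim2}. First I would observe that any $\kappa$-point $x \in X^{\gen}_y(\kappa)$ corresponds to a representation $\rho: G_F \to \GL_d(\kappa)$ whose semisimplification is $\rho_1 \oplus \dots \oplus \rho_r$; by the block-triangularization argument already used in the proof of Lemma~\ref{closed_orbit}, after conjugating by a suitable $g \in \GL_d(\kappa)$ we may assume $\rho$ is block-upper-triangular for the standard parabolic $P$ attached to some permutation $\sigma \in S_r$, with diagonal blocks $\rho_{\sigma(1)}, \dots, \rho_{\sigma(r)}$ in that order. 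In other words, $x$ lies in the $G$-translate of some $X^{\gen}_{y,\sigma}$. Running over all $\sigma \in S_r$ gives a (set-theoretic, in fact scheme-theoretic) surjection
\[
\coprod_{\sigma \in S_r} G \times_\kappa X^{\gen}_{y,\sigma} \longrightarrow X^{\gen}_y,
\]
via the action map $(g,x)\mapsto g\cdot x$, which is of finite type between Noetherian Jacobson universally catenary $\kappa$-schemes.

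Next I would bound the dimension of each piece. The source component $G \times_\kappa X^{\gen}_{y,\sigma}$ has dimension $\dim \GL_d + \dim X^{\gen}_{y,\sigma} = d^2 + \dim X^{\gen}_{y,\sigma}$, but this overcounts because the stabilizer of a block-upper-triangular $\rho$ with diagonal $\rho_\sigma$ contains at least $Z_L N$ (a copy of $\mathbb G_m^r$ sitting diagonally in the Levi, times the unipotent radical $N$), which has dimension $r + \dim\nn$. More precisely, the fibres of the action map $G \times X^{\gen}_{y,\sigma} \to X^{\gen}_y$ over its image have dimension at least $\dim Z_L N = r + \dim \nn$: given $g \cdot x = g' \cdot x'$ with $x, x' \in X^{\gen}_{y,\sigma}$, one checks $g^{-1}g'$ normalizes the flag and acts on the diagonal blocks, so lies in $P$, and modulo the discrete choices it contributes a $Z_L N$-worth of ambiguity. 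Hence by the fibre-dimension theorem (using Lemma~\ref{jacobson_dom} applied to the dominant maps onto the irreducible components of $X^{\gen}_y$),
\[
\dim X^{\gen}_y \le \max_\sigma \bigl( d^2 + \dim X^{\gen}_{y,\sigma} - (r + \dim \nn) \bigr).
\]
Plugging in Corollary~\ref{bound_dim2}, namely $\dim X^{\gen}_{y,\sigma} \le \dim\nn + (\dim\nn)[F:\Qp] + \sum_{i=1}^k \binom{n_i}{2}$, the two $\dim\nn$ terms cancel and we get $\dim X^{\gen}_y \le d^2 - r + (\dim\nn)[F:\Qp] + \sum_{i=1}^k \binom{n_i}{2} = \dim\gl - r + (\dim\nn)[F:\Qp] + \sum_{i=1}^k \binom{n_i}{2}$, as claimed.

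The main obstacle I expect is making the fibre-dimension bookkeeping for the action map precise and rigorous: one must verify that the scheme-theoretic fibres of $G \times X^{\gen}_{y,\sigma} \to X^{\gen}_y$ really have dimension $\ge r + \dim\nn$ everywhere on the image (not merely generically), so that the inequality $\dim(\text{image}) \le \dim(\text{source}) - (r+\dim\nn)$ is valid. This is where one genuinely uses that the stabilizer of a point of $X^{\gen}_{y,\sigma}$ inside $P$ always contains $Z_L N$ — which follows because such a point is by construction block-triangular with prescribed diagonal, so conjugation by $Z_L N$ fixes the diagonal and preserves triangularity, hence preserves the defining ideal $I_{\rho,\sigma}$ — together with the upper semicontinuity of fibre dimension and the decomposition into irreducible components. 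A secondary, more routine point is checking that $X^{\gen}_y$ inherits the Noetherian/Jacobson/universally catenary hypotheses needed to apply Lemma~\ref{jacobson_dom}, which is immediate since it is of finite type over the field $\kappa$.
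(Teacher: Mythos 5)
Your proposal is correct and follows essentially the same route as the paper: cover $X^{\gen}_y$ by the $G$-translates of the $X^{\gen}_{y,\sigma}$ over all $\sigma\in S_r$, feed in the tangent-space bound of Corollary \ref{bound_dim2}, and discount by $\dim Z_LN = r+\dim\nn$. The only difference is packaging: the paper maps the associated bundles $\coprod_\sigma G\times^{Z_{L_\sigma}N_\sigma}X^{\gen}_{y,\sigma}$ (whose dimensions are already $\dim\gl-r-\dim\nn+\dim X^{\gen}_{y,\sigma}$) dominantly onto $X^{\gen}_y$ and invokes Lemma \ref{jacobson_dom}, which sidesteps the fibre-dimension bookkeeping you identify as the main obstacle — and note that what that bookkeeping really uses is that $Z_LN$ preserves the subscheme $X^{\gen}_{y,\sigma}$ (so each fibre of the action map contains a copy of $Z_LN$), not that it stabilizes its individual points.
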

\begin{proof} We want to apply Lemma \ref{jacobson_dom} to  
\begin{equation}\label{fibre_bundle}
\coprod_{\sigma\in S_r} G\times^{Z_{L_{\sigma}} N_{\sigma}} X^{\gen}_{y,\sigma}
\rightarrow X_y^{\gen},
\end{equation}
where the actions of $Z_{L_{\sigma}} N_{\sigma}$ on $X^{\gen}_{y,\sigma}$ and of $G$ on $X^{\gen}_{y}$ are given by conjugation.

If $x\in X_y^{\gen}(\kappa)$ and $\varphi: E_y \rightarrow M_d(\kappa)$ is the corresponding 
$\kappa$-algebra homomorphism then there will exist $\sigma \in S_r$ such that $\kappa^d$
will admit a filtration by subspaces   $0=V_0\subset V_1\subset \ldots \subset V_r=V$, which 
is invariant under the action of $E_y$ via $\varphi$, satisfying 
$V_i/ V_{i-1}\cong \rho_{\sigma(i)}$  for $1\le i\le r$. Thus there is $g\in G(\kappa)$ such that 
$g \varphi g^{-1}$ will lie in $X^{\gen}_{y,\sigma}(\kappa)$, and hence \eqref{fibre_bundle} induces a surjection on $\kappa$-points. But \eqref{fibre_bundle} is also a map of finite type $\kappa$-schemes, and therefore is a dominant map of Noetherian Jacobson universally catenary schemes, so we can apply Lemma \ref{jacobson_dom}.

The fibre bundles $G\times^{Z_{L_{\sigma}} N_{\sigma}} X^{\gen}_{y,\sigma}$ have dimension equal to
    \[ \dim G + \dim X^{\gen}_{y,\sigma} -\dim (Z_{L_{\sigma}} N_{\sigma}) =\dim \gl -r +\dim X^{\gen}_{y,\sigma} -\dim \nn. \]
The bound in Corollary \ref{bound_dim2} gives 
the required assertion.
\end{proof} 

\begin{cor}\label{fibre_irr} If $r=1$ then $X^{\gen}_y$ is smooth of dimension $\dim \gl-1$. 
\end{cor}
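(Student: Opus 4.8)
The plan is to treat the case $r=1$ as the degenerate version of the fibre analysis carried out above, where there is no parabolic and no non-trivial unipotent radical. Concretely, when $r=1$ the decomposition $D_y = \prod_{i=1}^r D_i$ reduces to the single irreducible pseudo-character $D_y = D_1$ of dimension $d_1 = d$, and $\rho_1 = \rho_\sigma$ is an irreducible representation $G_F \to \GL_d(\kappa)$. In this situation the parabolic $P$ is all of $\GL_d$, the unipotent radical $N$ is trivial, the Levi $L$ equals $\GL_d$, and $Z_L$ is the centre $\mathbb G_m$ of $\GL_d$, so that $\dim \nn = 0$ and $\dim \zl = r = 1$. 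The unique closed $G$-orbit in $X^{\gen}_y$ is the orbit of $\rho_1$ by Lemma \ref{closed_orbit}, and its dimension is $d^2 - 1$.

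First I would invoke Proposition \ref{tangent} (or the bound of Corollary \ref{bound_dim2}) in this case: since $\dim \nn = 0$ and since the sum $\sum_{1\le i<j\le r}$ is empty when $r=1$, one gets $\dim T_x(X^{\gen}_{y,\sigma}) = 0$; but $X^{\gen}_{y,\sigma}$ is just the single point $\rho_\sigma$ here, so this is vacuous. The substantive content is the bound from Proposition \ref{bound_dim_fibre}: the right-hand side becomes $d^2 - 1 + 0 + 0 = d^2 - 1$, so $\dim X^{\gen}_y \le d^2 - 1$. On the other hand $X^{\gen}_y$ contains the closed $G$-orbit of $\rho_1$, which has dimension $d^2 - 1$ by the stabiliser computation (the stabiliser of an irreducible representation in $\GL_d$ is the scalars $\mathbb G_m$). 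Hence $\dim X^{\gen}_y = d^2 - 1$, and every irreducible component of $X^{\gen}_y$ has this dimension.

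Next I would establish smoothness. The key point is that when $\rho_1$ is absolutely irreducible, $\End_{G_F}(\rho_1) = \kappa$, so the orbit map $G \to G\cdot \rho_1$ has the reduced stabiliser $\mathbb G_m$ and the orbit $G\cdot\rho_1$ is smooth of dimension $d^2-1$. To see that $X^{\gen}_y$ is smooth everywhere, I would argue that $X^{\gen}_y$ is (set-theoretically, hence topologically) a single $G$-orbit: by Lemma \ref{closed_orbit} the unique closed orbit consists of semisimple representations, and when $r=1$ the only semisimple representation with the prescribed semisimplification is $\rho_1$ itself, so the closed orbit is $G\cdot\rho_1$; but any point of $X^{\gen}_y(\kappa)$ has $\rho_1$ in the closure of its orbit and has semisimplification $\rho_1$, forcing it to already be isomorphic to $\rho_1$, i.e.\ to lie in $G\cdot\rho_1$. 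Thus $X^{\gen}_y$ has a single point of its underlying topological space refined to $G\cdot\rho_1$; combined with the tangent space computation $\dim_\kappa T_x(X^{\gen}_y) = d^2 - 1 = \dim X^{\gen}_y$ at the point $x = \rho_1$ (which follows because, with $\pp = \gl$ and $\nn$ replaced by $\gl$, the analogue of the computation in Proposition \ref{tangent} gives the full space of $1$-cocycles, whose dimension is $\dim_\kappa Z^1(G_F,\ad\rho_1) = d^2 - h^0 + h^1$, and here $h^0 = 1$, $h^2 = 0$ by absolute irreducibility plus local duality $H^2 \cong \Hom_{G_F}(\rho_1,\rho_1(1))^\vee$ and the fact that $\rho_1 \not\cong \rho_1(1)$... wait, this needs care). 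The cleanest route is: $G\cdot\rho_1$ is smooth and is an open dense subscheme of $X^{\gen}_y$ of the same dimension as $X^{\gen}_y$, and then I would check that $G$-homogeneity of the tangent-space dimension forces smoothness on all of $X^{\gen}_y$ — the $G$-action is transitive on $\kappa$-points, so $\dim_\kappa T_x X^{\gen}_y$ is the same at every closed point, equal to $d^2-1 = \dim X^{\gen}_y$, whence $X^{\gen}_y$ is regular at every closed point and therefore regular. Since $X^{\gen}_y$ is of finite type over the algebraically closed field $\kappa$, regular equals smooth.

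The main obstacle I anticipate is the verification that $H^2(G_F, \ad\rho_1) = 0$, equivalently (by local Tate duality applied in this setting, cf.\ \cite[Theorem 3.4.1]{bj}) that $\Hom_{G_F}(\rho_1, \rho_1(1)) = 0$. This is automatic when $\cha\kappa = 0$ (compare Hodge–Tate weights or determinants), but in characteristic $p$ it genuinely requires the irreducibility: if $\rho_1 \cong \rho_1(1)$ then $\rho_1$ descends up to twist and one needs the hypothesis (absolute irreducibility of the constituents of $\Dbar$, hence of $\rho_1$) together with the structure of $G_F$ to rule it out — and in fact it is \emph{not} always ruled out (this is precisely the ``special'' phenomenon from \cite{bj}). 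So strictly the cleanest proof does not go through $H^2 = 0$ pointwise but rather through the GIT/orbit argument above: $X^{\gen}_y$ is \emph{scheme-theoretically} the orbit $G\cdot\rho_1 \cong G/\mathbb G_m$, which is smooth of dimension $d^2 - 1$ irrespective of whether $H^2$ vanishes, because the orbit is an open subscheme whose reduced structure is homogeneous, and the bound $\dim X^{\gen}_y \le d^2-1$ from Proposition \ref{bound_dim_fibre} together with equality of dimensions pins down $X^{\gen}_y = G\cdot\rho_1$ once one knows the orbit is already closed (it is, being the unique closed orbit). I would therefore lead with the orbit argument and relegate any cohomological remark to a comment.
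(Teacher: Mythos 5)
Your dimension statement is correct and essentially the paper's: every $\kappa$-point of $X^{\gen}_y$ corresponds to a representation whose pseudo-character is the irreducible $D_y$, hence is isomorphic to the single irreducible $\rho$, so the points form one $G$-orbit with stabiliser $\mathbb G_m$ and $\dim X^{\gen}_y=\dim\gl-1$ (the paper deduces the single-orbit claim from $E_y\cong M_d(\kappa)$ rather than from Proposition \ref{bound_dim_fibre}, a minor difference). The gap is in the smoothness half. Knowing that the underlying space of $X^{\gen}_y$ is the orbit $G\cdot\rho$ and that $\dim X^{\gen}_y=d^2-1$ only identifies the \emph{reduced} subscheme with $G/\mathbb G_m$; it does not exclude nilpotents (compare $\Spec \kappa[x]/(x^2)$, whose reduced subscheme is smooth of the same dimension), so the bound from Proposition \ref{bound_dim_fibre} does not ``pin down $X^{\gen}_y=G\cdot\rho$'' as schemes. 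Likewise your homogeneity argument shows only that $\dim_\kappa T_x(X^{\gen}_y)$ is the same at every closed point, not that this common value equals $d^2-1$, which is exactly what smoothness requires. Finally, your first attempt to compute the tangent space as $Z^1(G_F,\ad\rho)$ computes the tangent space of the framed deformation problem of $\rho$, not of the fibre: a $\kappa[\varepsilon]$-point of $X^{\gen}_y$ must induce the \emph{constant} pseudo-character $D_y$, i.e.\ it is a $\kappa$-algebra homomorphism $E_y\to M_d(\kappa[\varepsilon])$, so neither $Z^1(G_F,\ad\rho)$ nor $H^2(G_F,\ad\rho)$ is relevant here; in particular the worry about $\rho\cong\rho(1)$ (the special locus) is beside the point, and smoothness of the fibre holds even at such points.

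The paper closes exactly this gap with the tangent computation you abandoned, carried out with the correct coefficients: since $r=1$ one has $E_y\cong M_d(\kappa)$, a semisimple algebra, so $\Ext^1_{E_y}(\rho,\rho)=0$, and the Hochschild-cocycle argument from the first paragraph of the proof of Proposition \ref{tangent} yields $\dim_\kappa T_x(X^{\gen}_y)=\dim_\kappa\End_\kappa(\rho)-\dim_\kappa\End_{E_y}(\rho)=d^2-1=\dim X^{\gen}_y$. Hence $x$ is a smooth point, transitivity of the $G$-action makes every closed point smooth, and $X^{\gen}_y$ is smooth because it is of finite type over $\kappa$. If you insert this one computation (and drop the $Z^1(G_F,\ad\rho)$ detour), your argument becomes complete.
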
 
\begin{proof} If $r=1$ then $E_y\cong M_d(\kappa)$ and thus has a unique irreducible 
representation $\rho$ (up to isomorphism). 
Thus all the points in $X^{\gen}_y(\kappa)$ lie in the same $G$-orbit. Fix such a point $x$. Since the $G$-stabiliser of $x$ is equal to $Z_G$ we obtain $\dim X^{\gen}_y= \dim G-\dim Z_G= \dim \gl -1$.

Since $E_y$ is semi-simple we have $\Ext^1_{E_y}(\rho, \rho)=0$ and thus an argument as in
the proof of Proposition \ref{tangent} gives us $$\dim_{\kappa} T_x(X^{\gen}_y)= \dim_{\kappa} \End_{\kappa}(\rho)-\dim_{\kappa} \End_{E_y}(\rho)=\dim X_y^{\gen}.$$ 
Thus  $x$ is a smooth point of $X^{\gen}_y$, and since
$G$ acts transitively on $X^{\gen}_y(\kappa)$ all the points in $X^{\gen}_y(\kappa)$ are smooth. Since $X_y^{\gen}$ is
of finite type over $\kappa$, we deduce that $X_y^{\gen}$ is smooth.
\end{proof}
\subsection{Commutative algebra preparations}\label{com_alg_prep}
Lemma \ref{jacobson} is the key result of this
section and it will be applied repeatedly  with $R=R^{\ps}$ and $S=A^{\gen}$ or their reductions modulo $\varpi$.

We will start with some
general commutative algebra lemmas.
For a ring $R$ we set $P_1R=\{\pp\in\Spec R : \dim R/\pp =1\}$.

\begin{lem}\label{P1R} Let $(R, \mm_R)$ be a complete 
local Noetherian $\OO$-algebra with finite residue field $k'$. If $\pp\in P_1R$ then 
$\kappa(\pp)$ is either a finite extension of $L$ or a local field of characteristic $p$. Moreover, $R/\pp$ is contained in the ring 
of integers $\OO_{\kappa(\pp)}$ of $\kappa(\pp)$ and the quotient topology on 
$R/\pp$ induced by the $\mm_R$-adic topology on $R$ coincides with the subspace topology induced by the topology on $\OO_{\kappa(\pp)}$.
\end{lem}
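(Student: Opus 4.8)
The plan is to analyze the structure of $R/\pp$ for $\pp \in P_1R$. First I would note that $R/\pp$ is a complete local Noetherian domain of Krull dimension $1$ with finite residue field $k'$, and its fraction field is $\kappa(\pp)$. The key dichotomy is whether $\varpi \in \pp$ or not. If $\varpi \notin \pp$, then $R/\pp$ is a complete local Noetherian domain of dimension $1$ that is flat over $\OO$; by the Cohen structure theorem (or directly, since $R/\pp$ is then a complete local domain finite over $\OO[\![t]\!]$... more precisely one uses that a $1$-dimensional complete local Noetherian domain containing $\OO$ with $\varpi$ a nonzerodivisor has fraction field finite over $L = \OO[1/\varpi]$), so $\kappa(\pp)$ is a finite extension of $L$. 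If $\varpi \in \pp$, then $R/\pp$ is a complete local Noetherian domain of dimension $1$ which is an $\Fp$-algebra (indeed a $k'$-algebra since it's complete with residue field $k'$ and the Teichmüller lift splits), and again by Cohen's structure theorem it is finite over $k'[\![t]\!]$, so its fraction field $\kappa(\pp)$ is a finite extension of $k'(\!(t)\!)$, hence a local field of characteristic $p$.

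Next I would show $R/\pp \subseteq \OO_{\kappa(\pp)}$. In both cases $R/\pp$ is a $1$-dimensional complete local Noetherian domain, hence its normalization $\wt{R/\pp}$ in $\kappa(\pp)$ is a complete DVR (module-finite over $R/\pp$ by excellence / the Japanese property of complete local rings, or because it is finite over $k'[\![t]\!]$ resp.\ $\OO[\![t]\!]$ which are Nagata), and this DVR must be $\OO_{\kappa(\pp)}$ since $\kappa(\pp)$ is a local field with a unique valuation ring. Thus $R/\pp \subseteq \OO_{\kappa(\pp)}$, with $\OO_{\kappa(\pp)}$ a finite $R/\pp$-module.

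For the topological statement, I would argue as follows: the $\mm_R$-adic topology on $R$ induces on $R/\pp$ the $\mm_{R/\pp}$-adic topology, where $\mm_{R/\pp} = \mm_R/\pp$ is the maximal ideal. Since $\OO_{\kappa(\pp)}$ is a finite module over the complete local Noetherian ring $R/\pp$, it carries a unique topology as a topological $R/\pp$-module (as in the proof of Lemma \ref{topology}, via \cite[Corollary 1.10]{unique_top}), and this coincides with both its $\mm_{\OO_{\kappa(\pp)}}$-adic topology (the valuation topology) and the $\mm_{R/\pp}$-adic topology (since $\mm_{R/\pp} \cdot \OO_{\kappa(\pp)}$ is $\mm_{\OO_{\kappa(\pp)}}$-primary, as $\OO_{\kappa(\pp)}/\mm_{R/\pp}\OO_{\kappa(\pp)}$ is Artinian). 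Restricting back, the subspace topology on $R/\pp \subseteq \OO_{\kappa(\pp)}$ from the valuation topology agrees with the $\mm_{R/\pp}$-adic topology on $R/\pp$, which is the quotient topology.

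I expect the main obstacle to be the careful handling of the case $\varpi \notin \pp$: one needs to be sure that a $1$-dimensional complete local Noetherian $\OO$-domain with $\varpi$ a nonzerodivisor really has fraction field finite over $L$ — this follows because $R/\pp$ is then module-finite over a subring $\OO[\![t]\!]$ (Cohen/Noether normalization for complete local rings) with $t$ analytically independent, and inverting $\varpi$ gives a finite $L(\!(t)\!)$... no: inverting $\varpi$ in $\OO[\![t]\!]$ does not give a field. The correct argument is that $\dim R/\pp = 1$ forces the coefficient ring in Cohen's theorem to be $\OO$ itself (not $\OO[\![t]\!]$), so $R/\pp$ is module-finite over $\OO$, whence $\kappa(\pp) = \mathrm{Frac}(R/\pp)$ is finite over $L$. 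Getting this coefficient-ring bookkeeping right in both the mixed and equal characteristic cases is the delicate point; everything else is a routine application of standard commutative algebra.
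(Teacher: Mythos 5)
Your argument is correct and follows essentially the same route as the paper: Cohen's structure theorem in the two cases ($\varpi\notin\pp$, $\varpi\in\pp$) gives $R/\pp$ module-finite over $\OO$ resp.\ over a subring isomorphic to $k'\br{t}$, hence $\kappa(\pp)$ finite over $L$ resp.\ a local field of characteristic $p$, the containment $R/\pp\subseteq\OO_{\kappa(\pp)}$ comes from integrality, and the topological statement from finiteness of $\OO_{\kappa(\pp)}$ as an $R/\pp$-module (your Artin--Rees elaboration is exactly what the paper leaves implicit). Two of your justifications should be repaired, though neither affects the outcome: in mixed characteristic the Cohen coefficient ring is a complete DVR with residue field $k'$ and uniformizer $p$ (i.e.\ $W(k')$), not $\OO$ itself when $\OO$ is ramified over $\Zp$ or $k'\neq k$ --- finiteness over $\OO$ then follows because $R/\pp$ contains $\OO$ and is finite over $W(k')$, which is finite over $\Zp$; and a local field does not have a unique valuation subring as an abstract field, so the normalization should instead be identified with $\OO_{\kappa(\pp)}$ by observing that it equals the integral closure of $\OO$ (resp.\ $k'\br{t}$) in $\kappa(\pp)$, since $R/\pp$ is integral over that subring.
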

\begin{proof} It follows from Cohen's structure theorem that if $\cha (R/\pp)=0$ then $\OO\subset R/\pp$ and $R/\pp$ is a finitely generated 
$\OO$-module. Thus $\kappa(\pp)$ is a finite 
extension of $L$ and $R/\pp$ is contained in the integral closure of $\OO$ in $\kappa(\pp)$, which is equal to $\OO_{\kappa(\pp)}$. If $\cha(R/\pp)=p$ then $R/\pp$ is finite over 
a subring isomorphic to $k'\br{t}$ and the 
same argument carries over. Moreover, $\OO_{\kappa(\pp)}$ is a finitely generated $R/\pp$-module, and this implies that the topologies coincide. 
\end{proof}

\begin{lem}\label{jacobson} Let $(R, \mm_R)$ be a complete local Noetherian ring and $\varphi: R \rightarrow S$ a ring map of finite type. Let $U$ be a non-empty  open 
subscheme of $U_{\max}:=(\Spec R)\setminus \{\mm_R\}$, let $V$ (resp.~$V_{\max}$) be the  preimage of $U$ (resp.~$U_{\max}$) in $\Spec S$, let $Z$ (resp. $Z_{\max}$) be the closure of $V$ (resp. $V_{\max}$) in $\Spec S$ and let $Y$ be the preimage of $\{\mm_R\}$ in $\Spec S$. Then
\begin{enumerate}
\item $V$ is Jacobson; 
\item the set of closed points of $V$ is $V \cap \set{\text{closed points of } V_{\max}}$;
\item if $x$ is a closed point of $V$ then its image $y$ in $\Spec R$ is a closed point of $U$ and the field extension 
$\kappa(x)/ \kappa(y)$ is  finite; 
\item the set of closed points of $U$ is $U \cap P_1 R$;
\item if every irreducible component of $\Spec S$ meets $Y$ non-trivially then $\dim Z= \dim V +1$;
\item $ \dim V\le  \dim U +\max_{y \in U \cap P_1 R} \dim \varphi^{-1}(\{y\})$.
\end{enumerate}
\end{lem}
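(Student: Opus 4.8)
The plan is to deduce (1)--(4) from standard properties of Jacobson schemes, (5) from the dimension formula together with (3)--(4), and (6) from the general fibre-dimension bound combined with upper semicontinuity of fibre dimension.

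First I would record the three inputs I will use repeatedly: (i) the punctured spectrum $(\Spec R)\setminus\{\mm_R\}$ of a Noetherian local ring is a Jacobson scheme, a property inherited by open subschemes and by schemes locally of finite type over a Jacobson base \cite{stacks-project}; (ii) a finite type morphism of Jacobson schemes maps closed points to closed points and induces finite residue field extensions at closed points \cite{stacks-project}; (iii) the dimension formula: if $A$ is a Noetherian universally catenary domain, $B\supseteq A$ a domain of finite type over $A$, and $\qq\subseteq B$ a prime over $\pp\subseteq A$, then $\operatorname{ht}_B(\qq)+\trdeg_{\kappa(\pp)}\kappa(\qq)=\operatorname{ht}_A(\pp)+\trdeg_A B$ \cite{matsumura_alg}. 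Given these: for (1), $U$ is open in the Jacobson scheme $U_{\max}$, hence Jacobson, and $V\to U$ is of finite type (it is the base change of $\varphi$ along $U\hookrightarrow\Spec R$), so $V$ is Jacobson; likewise $V_{\max}$ is Jacobson. For (4), a prime $\pp\neq\mm_R$ is closed in $U_{\max}$ precisely when $\overline{\{\pp\}}\subseteq\{\pp,\mm_R\}$, i.e.\ $\dim R/\pp=1$; thus the closed points of $U_{\max}$ are exactly $P_1R$, and since the closed points of an open subscheme of a Jacobson scheme are the ambient closed points lying in it, the closed points of $U$ form $U\cap P_1R$. Part (2) is this same statement applied to $V\subseteq V_{\max}$, and part (3) is input (ii) applied to the finite type morphism $V\to U$ of Jacobson schemes (the image of $x$ lies in $U$ because $x\in V=\varphi^{-1}(U)$).

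For (5) I would argue component by component. Since $V$ is open in $\Spec S$, its closure $Z$ is the union of the irreducible components $T$ of $\Spec S$ that meet $V$, and for each such $T$ the set $V\cap T$ is dense open in $T$; as $\dim Z=\max_T\dim T$ and $\dim V=\max_T\dim(V\cap T)$ over these components, it is enough to prove $\dim T=\dim(V\cap T)+1$ for one of them. Write $T=\Spec S'$ with $S'$ a domain of finite type over $R$, put $\pp_0=\ker(R\to S')$ and $R'=R/\pp_0$; then $R'$ is a complete local Noetherian domain, hence excellent (so universally catenary) and, being a catenary local domain, satisfies $\operatorname{ht}_{R'}(\pp)+\dim R'/\pp=\dim R'$ for all primes $\pp$, and $R'\hookrightarrow S'$. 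Set $e=\trdeg_{R'}S'$. Being a non-empty open of $\Spec S'$, the scheme $V\cap T$ is Jacobson (finite type over $U$), so $\dim(V\cap T)$ equals the supremum of $\operatorname{ht}_{S'}(\mm)$ over closed points $\mm$ of $V\cap T$; by (3) every such $\mm$ lies over a closed point $\pp$ of $U$ with $\kappa(\mm)/\kappa(\pp)$ finite, hence $\pp\in P_1R'$ (via $\dim R'/\pp=\dim R/\pp=1$) and $\operatorname{ht}_{R'}(\pp)=\dim R'-1$, so the dimension formula gives $\operatorname{ht}_{S'}(\mm)=\dim R'+e-1$ for every such $\mm$; thus $\dim(V\cap T)=\dim R'+e-1$. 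On the other hand, the hypothesis that every component of $\Spec S$, in particular $T$, meets $Y$ furnishes a closed point $\nn$ of $T$ lying over $\mm_{R'}$; it is a closed point of a fibre of finite type over the field $\kappa(\mm_{R'})$, so $\kappa(\nn)/\kappa(\mm_{R'})$ is finite and the dimension formula gives $\operatorname{ht}_{S'}(\nn)=\dim R'+e$; since $\dim S'=\sup_\mm\operatorname{ht}_{S'}(\mm)$ over maximal ideals while every maximal ideal of $S'$ satisfies $\operatorname{ht}_{S'}(\mm)\le\dim R'+e$ by the dimension formula, we conclude $\dim T=\dim S'=\dim R'+e$. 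Hence $\dim T=\dim(V\cap T)+1$, and taking the maximum over components yields $\dim Z=\dim V+1$.

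Finally, for (6) I would start from the general inequality $\dim V\le\dim U+\sup_{y\in U}\dim\varphi^{-1}(\{y\})$ (valid for the quasi-compact morphism $V\to U$) and then show that the supremum is already attained over closed points: if $w$ is a point of a fibre $\varphi^{-1}(\{y\})$ realizing $\dim_w\varphi^{-1}(\{y\})=\dim\varphi^{-1}(\{y\})$, choose a closed point $w_0$ of $\overline{\{w\}}$ in the Jacobson scheme $V$; upper semicontinuity of fibre dimension forces $\dim\varphi^{-1}(\{\varphi(w_0)\})\ge\dim_{w_0}\varphi^{-1}(\{\varphi(w_0)\})\ge\dim\varphi^{-1}(\{y\})$, and by (3) and (4) the point $\varphi(w_0)$ lies in $U\cap P_1R$. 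The genuinely technical step is (5): one must keep careful track of which open subschemes are Jacobson, so that Krull dimension can be computed from heights of closed points, reduce correctly to irreducible components, and apply the dimension formula over the right base domain $R'$; the hypothesis on components meeting $Y$ enters only to pin down the equality $\dim T=\dim R'+e$.
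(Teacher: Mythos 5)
Your argument is correct, and for parts (1)--(5) it is essentially the argument in the paper: Jacobson-ness of the punctured spectrum and its stability under open immersions and finite type morphisms for (1)--(4), and for (5) the same reduction to an irreducible component $\Spec S'$ with $R'$ the image of $R$ in $S'$, followed by the dimension formula over the universally catenary complete local domain $R'$ (heights $\dim R'+\trdeg_{R'}S'-1$ at closed points of $V\cap T$, and a point of height $\dim R'+\trdeg_{R'}S'$ supplied by the hypothesis that the component meets $Y$). The only genuine divergence is in (6): the paper bounds $\dim \OO_{V,x}\le \dim\OO_{U,y}+\dim\varphi^{-1}(\{y\})$ directly at closed points $x$ of the Jacobson scheme $V$ (Matsumura 15.1(i)) and takes the supremum, whereas you use the global inequality over all base points and then invoke Chevalley's upper semicontinuity of fibre dimension to move the supremum to closed points of $U$; both are valid, yours trading the elementary local-ring inequality at closed points for the heavier semicontinuity theorem, with no gain or loss in generality here.
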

\begin{proof} We summarize the situation in the following diagram.
\begin{center}\begin{tikzcd}
	    Z_{(\max)} \ar[dr,hook,"\bullet"] \\
	    V_{(\max)} \dar \ar[u,hook,"\mathrm{cl}"] \ar[r,hook,swap,"\circ"] & \Spec S \dar{\varphi} & Y \ar[l,hook'] \dar \\
	    U_{(\max)} \ar[r,swap,hook,"\circ"] & \Spec R & \{\mathfrak{m}_R\} \lar[hook']
	\end{tikzcd}\end{center}
We will first prove parts (1),  (2) and (3). If $R=S$ and if $U=U_{\max}$ then (1) follows from \cite[\href{https://stacks.math.columbia.edu/tag/02IM}{Tag 02IM}]{stacks-project} and both (2) and (3) hold trivially. If $R=S$ and if $U$ is arbitrary then  $U=V$ 
and (1), (2)  follow from the previous case together with \cite[\href{https://stacks.math.columbia.edu/tag/005W}{Tag 005W}]{stacks-project} and (3) holds trivially. The case of general $\varphi$ now follows from \cite[\href{https://stacks.math.columbia.edu/tag/00GB}{Tag 00GB}]{stacks-project} together with \cite[\href{https://stacks.math.columbia.edu/tag/01P4}{Tag 01P4}]{stacks-project}, because the map $V\to U$ induced from $\varphi$ is of finite type.

Part (4)  follows from (2) applied with $S=R$, using that $\mm_R$ is the unique maximal ideal of $R$, so that the set of closed points of $U_{\max}$ is equal to $P_1R$. 

For (5) note first that since $V$ is open in $\Spec S$ the set of generic points of $V$ is a subset of the set of generic
points of $\Spec S$. Thus $Z$ is 
union of irreducible components of $\Spec S$. 
Let $Z'=\Spec S'$ be an irreducible component 
of $Z$ with the induced reduced subscheme structure so that $S'$ is a domain, let $V'= Z'\cap V$, let $R'$ be the image of 
$R$ in $S'$. The rings $R'$ and $S'$ are excellent and hence universally catenary by \cite[\href{https://stacks.math.columbia.edu/tag/07QW}{Tag 07QW}]{stacks-project}. If $\qq\in \Spec S'$ and $\pp=\qq \cap R'$ then 
\begin{equation}\label{nagata} 
\begin{split}
\dim S'_{\qq}&= \dim R'_{\pp} + \trdeg_{R'} S' - 
\trdeg_{\kappa(\pp)} \kappa(\qq)\\
&= 
\dim R'+ \trdeg_{R'} S' -\dim R'/\pp -\trdeg_{\kappa(\pp)} \kappa(\qq),
\end{split}
\end{equation}
where $\trdeg$ stands for transcendence degree, the
first equality is \cite[\href{https://stacks.math.columbia.edu/tag/02IJ}{Tag 02IJ}]{stacks-project}, and the second is \cite[Theorem 31.4]{matsumura}. It follows from 
\eqref{nagata} that 
\begin{equation}\label{nagata1}
\dim S'_{\qq} \le \dim R' + \trdeg_{R'} S'
\end{equation}
and the equality in \eqref{nagata1} holds if and only if $\qq$ maps to the maximal ideal of $R'$ and $\qq$ is a maximal ideal of $S'$. Since 
$Z'\cap Y$ is non-empty by assumption, such $\qq$
exists and so $$\dim S'= \dim R' + \trdeg_{R'} S'.$$
Let $\qq$ be a closed point of $V'$ and let $\pp=\qq \cap R'$. 
Since $V'$ is open in $Z'$ we have $\OO_{V',\qq}= S'_{\qq'}$. It 
follows from (3) that $\trdeg_{\kappa(\pp)}\kappa(\qq)=0$ and $\dim R/\pp=1$. Thus \eqref{nagata} gives us 
$$ \dim \OO_{V',\qq}= \dim R' + \trdeg_{R'} S'-1.$$
Since this holds for all closed points of $V'$ we
deduce that $$\dim V'= \dim R' + \trdeg_{R'} S'-1.$$ 
This implies part (5).

Let $x$ be a closed point of $V$ and let $y$ be its image in $U$. Then $y$ is also a closed point of $U$. We have 
$$\dim \OO_{V, x} \le \dim \OO_{U, y} + \dim (\OO_{V, x}\otimes_{\OO_{U, y}} \kappa(y))\le \dim U + \dim \varphi^{-1}(\{y\}),$$ 
where the first inequality is given by \cite[Theorem 15.1 (i)]{matsumura}. Since $$\dim V = \max_x \dim \OO_{V, x},$$ where the maximum 
is taken over all closed points $x$ of $V$, we get (6).
\end{proof}

\begin{remar} We caution the reader that the 
equality $\dim Z= \dim V +1$ might fail if one 
drops the assumption that $Y$ meets every irreducible component non-trivially. For example, 
if $R=\Zp$ and $S=\Zp[x]/(px-1)= \Qp$ 
then $Y$ is empty and $V_{\max}=Z_{\max}=\Spec S$. 
\end{remar} 

\begin{remar}\label{caution} Here is another cautionary example. 
If $R$ and $S$ are as in Lemma \ref{jacobson}, 
$\qq$ is a prime of $S$ and $S$ is a domain then 
it need not be true that $\dim S_{\qq}+\dim S/\qq = \dim S$. For example, if $R=\Zp$, $S=\Zp[x]$ and 
$\qq=(px-1)$ then $S/\qq=\Qp$ and $S_{\qq}$ is a DVR, so that $\dim S_{\qq}+\dim S/\qq =1$ and $\dim S=2$. 
We also note that $\qq$ is a closed point of $\Spec S$ but it does not map to a closed point of $\Spec R$. Further, if $\qq'=(p, x)$ then $S/\qq'=\Fp$ and 
$p, x$ is a regular sequence of parameters in $S_{\qq'}$, and 
thus $\dim S_{\qq'}=2$. Thus $\qq$ and $\qq'$ are 
closed points of an irreducible scheme, but their 
local rings have different dimensions. 
\end{remar}

\begin{lem}\label{fix} Let $Y$ be the preimage
of $\{\mm_{R^{\ps}}\}$ in $X^{\gen}$,
let $W$ be a closed non-empty $\GL_d$-invariant subscheme of  $X^{\gen}$  and
let $Z$ 
be an irreducible component of $W$. Then $Y\cap Z$ is non-empty. Moreover, if $x$ is a closed point of $Z$ then the following hold:
\begin{enumerate}
\item if $x\in Y$ then $\dim \OO_{Z, x}= \dim Z$;
\item if $x\not\in Y$ then $\dim \OO_{Z, x}= \dim Z-1$.
\end{enumerate}
\end{lem}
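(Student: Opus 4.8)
The plan is to establish two structural facts first — that $Z$ is $\GL_d$-invariant and that $Y\cap Z\neq\emptyset$ — and then to read off the local dimension formulas from the altitude computation in the proof of Lemma~\ref{jacobson}. For the first fact: $W$ is Noetherian, so it has finitely many irreducible components, and since $\GL_d$ is a connected group scheme over the connected base $X^\ps$, it fixes each of them. Concretely I would run the argument from the proof of Lemma~\ref{bound}: on the open locus $U_W\subset W$ obtained by deleting the pairwise intersections of the components, the subset $Z\cap U_W=Z\setminus\bigcup_{Z'\neq Z}Z'$ is clopen and connected, hence a connected component of $U_W$, hence $G$-invariant; taking closures shows that $Z$ is $G$-invariant.

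Next I would show $Y\cap Z\neq\emptyset$. Let $\pi\colon X^\gen\to X^\ps=\Spec R^\ps$ be the structure map, which factors as $X^\gen\xrightarrow{q}X^\gen\sslash G$ followed by the adequate homeomorphism \eqref{adequate0}. By \cite[Theorem~3]{seshadri}, $q$ sends the closed $G$-invariant subscheme $Z$ onto a closed subscheme of $X^\gen\sslash G$ (namely the quotient $Z\sslash G$), and $Z\to q(Z)$ is surjective. Composing with the integral — in particular closed — morphism \eqref{adequate0}, I get that $\pi(Z)$ is a non-empty closed subset of $\Spec R^\ps$; since $R^\ps$ is local it must contain $\mm_{R^\ps}$, so some $z\in Z$ maps to $\mm_{R^\ps}$, i.e.\ $z\in Y\cap Z$.

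For the dimension formulas I would first replace $Z$ by its reduction (this changes neither $\dim Z$ nor $\dim\OO_{Z,x}$), so that $S:=\Gamma(Z,\OO_Z)$ is a domain of finite type over $R^\ps$. If $Z\subseteq Y$, then $Z$ is a closed subscheme of the fibre $\Spec(A^\gen\otimes_{R^\ps}k)$, hence of finite type over the field $k$: such schemes are Jacobson and catenary, so $\dim\OO_{Z,x}=\dim Z$ at every closed point $x$, and every closed point lies in $Y$, so only assertion~(1) is relevant. If $Z\not\subseteq Y$, then $V:=Z\setminus Y$ is a non-empty open subscheme of $Z$ and Lemma~\ref{jacobson} applies with $R=R^\ps$, $S=\Gamma(Z,\OO_Z)$, $U=U_{\max}$. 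Writing $R'$ for the image of $R^\ps$ in $S$, the ring $S$ is excellent, hence universally catenary, and for $\qq\in\Spec S$ with $\pp=\qq\cap R'$ Nagata's altitude formula gives, exactly as in the proof of Lemma~\ref{jacobson}(5),
\[ \dim S_\qq=\dim R'+\trdeg_{R'}S-\dim R'/\pp-\trdeg_{\kappa(\pp)}\kappa(\qq), \]
with $\dim S_\qq\le\dim R'+\trdeg_{R'}S$ and equality exactly when $\qq$ is maximal in $S$ and $\pp=\mm_{R'}$. Applying this at a maximal ideal of $S$ lying over $\mm_{R^\ps}$ — which exists by the previous paragraph — gives $\dim Z=\dim S=\dim R'+\trdeg_{R'}S$. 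Now for a closed point $x$ of $Z$ with prime $\qq$ and $\pp=\qq\cap R'$: if $x\in Y$ then $\pp=\mm_{R'}$ and the equality case gives $\dim\OO_{Z,x}=\dim R'+\trdeg_{R'}S=\dim Z$; if $x\notin Y$ then $x$ is a closed point of $V$, so Lemma~\ref{jacobson}(3)--(4) shows that $\kappa(x)$ is finite over $\kappa(\pp)$ and that $\dim R'/\pp=\dim R^\ps/(\qq\cap R^\ps)=1$, and substituting $\trdeg_{\kappa(\pp)}\kappa(\qq)=0$ and $\dim R'/\pp=1$ into the displayed formula gives $\dim\OO_{Z,x}=\dim Z-1$.

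The step I expect to be the main obstacle is the non-emptiness $Y\cap Z\neq\emptyset$: since $\pi$ is only of finite type, the image of the closed set $Z$ need not be closed, and one really needs the GIT input \cite[Theorem~3]{seshadri} applied to the invariant closed subscheme $Z$ itself — not just to $X^\gen$ — together with the integrality of \eqref{adequate0} to conclude that $\pi(Z)$ is closed and hence meets the closed point of $\Spec R^\ps$. After that, the dimension statements are a matter of carefully feeding the output of Lemma~\ref{jacobson} into the altitude formula; the only pitfall, the dimension pathologies of Remark~\ref{caution}, is avoided here because closed points of $V$ lie over $P_1R^\ps$.
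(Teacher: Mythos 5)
Your argument is correct, but for the crucial claim $Y\cap Z\neq\emptyset$ it takes a genuinely different route from the paper. The paper argues by contradiction: if $Z\cap Y=\emptyset$ then $Z$ is Jacobson (Lemma \ref{jacobson}), one picks a closed point $x$ lying on $Z$ alone, whose residue field is a finite extension of $L$ or a local field of characteristic $p$; compactness of $G_F$ lets one conjugate $\rho_x$ into $\GL_d(\OO_{\kappa(x)})$, the resulting point $x'$ lies on the same component $Z$ because $\GL_d$ is irreducible and $W$ (not $Z$) is invariant, and the specialization of $x'$ to the residue field of $\OO_{\kappa(x)}$ produces a point of $Y$ in the closure of $\{x'\}\subseteq Z$, a contradiction. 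You instead first make $Z$ itself $G$-invariant (components of $W$ are fixed by the connected group) and then use the GIT package: the quotient map $X^{\gen}\to X^{\gen}\sslash G$ sends closed invariant subsets to closed subsets (Seshadri/Alper; the adequate moduli space map is universally closed), and \eqref{adequate0} is integral, hence closed, so $\pi(Z)$ is a non-empty closed subset of the local scheme $\Spec R^{\ps}$ and must contain $\mm_{R^{\ps}}$. This is shorter and more conceptual; what the paper's argument buys is that it only needs invariance of $W$ and the fibrewise/adequate-homeomorphism statements already quoted, and it produces the explicit auxiliary points $x'$, $z$ of the kind reused later (Corollary \ref{the_same_comp}). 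Two small points in your write-up deserve a word: your invariance argument via connected components of $U_W$ tacitly assumes the action preserves $U_W$, i.e.\ already permutes the components; it is cleaner to note that $G\times_S Z$ is irreducible (open map with irreducible fibres over irreducible $Z$, as in \cite[\href{https://stacks.math.columbia.edu/tag/037A}{Tag 037A}]{stacks-project}), so the closure of its image under the action map is an irreducible closed set containing $Z$, hence equal to $Z$ — though this informality matches the paper's own Lemma \ref{bound}. Second, to get $\dim S=\dim R'+\trdeg_{R'}S$ you need a \emph{closed} point of $Z$ over $\mm_{R^{\ps}}$, not just a point; this follows by taking a closed point of the non-empty finite-type fibre $Z\times_{R^{\ps}}k$, which is closed in $Z$. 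The dimension formulas themselves are handled exactly as in the paper, by feeding Lemma \ref{jacobson} (3)--(5) into the altitude formula, so that part is the same argument.
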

\begin{proof}
By Lemma \ref{IrredCompInvariant}, each irreducible component $Z$ of $W$ is $\GL_d$-invariant. The image of $Z$ in $X^{\ps}$ is closed by Corollary 2 (ii) to \cite[Proposition 9]{seshadri}, and nonempty and so must contain $\mm_{R^{\ps}}$ because $X^{\ps}$ has a unique closed point. Therefore $Y \cap Z$ is nonempty.

The claims about $\dim \OO_{Z,x}$ follows from the proof of part (5) in Lemma \ref{jacobson}.
\end{proof}

\begin{examp}\label{rem:concrete-ex} Let us illustrate Lemma \ref{fix} with a 
concrete example. Let $\Dbar$ be the pseudo-character 
of the $2$-dimensional trivial representation of the 
group $\Gamma:=\Zp$. It follows from 
\cite[Theorem 1.15]{che_durham}
that 
$R^{\ps}\cong\OO\br{t, d}$ and $$E\cong \frac{R^{\ps}\br{T}}{((1+T)^2- (2+t)(1+T) +1+d)},$$
where the map $\Gamma\rightarrow R^{\ps}\br{\Gamma}\twoheadrightarrow E$ sends a fixed topological generator $\gamma$ of $\Gamma$ to $1+T$. Then $E$ is a free $R^{\ps}$-module with basis $1+T, 1$ and so
$$A^{\gen}=\frac{R^{\ps}[x_{11}, x_{12}, x_{21}, x_{22}]}{(x_{11}+x_{22} - (2+t), x_{11}x_{22} - x_{12}x_{21} - (1+d))},$$
and $j: E\rightarrow M_2(A^{\gen})$ sends $1+T$ to the matrix $\bigl( \begin{smallmatrix} x_{11} & x_{12} \\ 
x_{21} & x_{22}\end{smallmatrix}\bigr)$.  
Let $x: A^{\gen}\rightarrow L$ be the homomorphism 
corresponding to the representation $\rho: E\rightarrow M_2(L)$, such that $\rho(\gamma)= \bigl( \begin{smallmatrix} 1 & p^{-1} \\ 
0 & 1 \end{smallmatrix}\bigr)$. Then $x$ is a closed point of $X^{\gen}$ with residue field $L$, thus it 
does not map to the closed point in $X^{\ps}$. Indeed, 
$A^{\gen}/(x_{11}-1, x_{21}, x_{22}-1)\cong \OO[x_{12}]$,
so we are in the situation considered in Remark \ref{caution}.
\end{examp}

\begin{lem}\label{dim_gen_sp} Let $W$ be a closed non-empty $\GL_d$-invariant subscheme of $X^{\gen}$ and write $W[1/p]$ and $\overline{W}$ for the generic and special fibre. 
Then $\dim W[1/p]\le \dim \overline{W}$. In particular, 
$\dim X^{\gen}[1/p] \le \dim \Xbar^{\gen}.$
\end{lem}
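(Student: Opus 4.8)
The plan is to reduce the inequality $\dim W[1/p]\le\dim\overline W$ to a local computation at one well-chosen closed point, using Lemmas~\ref{fix} and~\ref{jacobson}; the ``in particular'' then follows by taking $W=X^{\gen}$. We may assume $W[1/p]\neq\emptyset$, since otherwise there is nothing to prove. As $\GL_d$ is connected, it preserves every irreducible component of $W[1/p]$ (cf.\ the proof of Lemma~\ref{bound}). Fix a component $Z$ with $\dim Z=\dim W[1/p]$, endow it with its reduced structure, and let $\bar Z$ be the closure of $Z$ in $X^{\gen}$, again with reduced structure. Then $\bar Z$ is integral, closed in $X^{\gen}$, contained in $W$, and $\GL_d$-invariant (being the closure of a $\GL_d$-invariant subscheme); moreover $Z=\bar Z\setminus V(\varpi)$ is dense open in $\bar Z$. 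Write $\bar Z=\Spec A$ with $A$ a domain of finite type over $R^{\ps}$; since $Z\ne\emptyset$ the element $\varpi$ is nonzero in $A$.

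The first step is to extract structural information from Lemmas~\ref{fix} and~\ref{jacobson}. Applying Lemma~\ref{fix} to $\bar Z$ shows $Y\cap\bar Z\neq\emptyset$, where $Y$ is the preimage of $\mm_{R^{\ps}}$ in $X^{\gen}$; since $\varpi\in\mm_{R^{\ps}}$, this shows both that $\varpi$ is a non-unit in $A$ and that $\bar Z$ meets $\Xbar^{\gen}$. I then apply Lemma~\ref{jacobson}(5) with $R=R^{\ps}$, $S=A$ and $U=\Spec R^{\ps}[1/p]$: the preimage of $U$ in $\Spec A$ is precisely $Z$, whose closure in $\Spec A$ is $\bar Z$, and the hypothesis of~(5) holds because the unique irreducible component $\bar Z$ of $\Spec A$ meets $Y$; hence $\dim\bar Z=\dim Z+1$. (This is the point at which the pathologies of Remark~\ref{caution} would otherwise only yield $\dim\bar Z\le\dim Z+1$.) Finally, $Y\cap\bar Z=\Spec(A/\mm_{R^{\ps}}A)$ is non-empty and of finite type over the finite field $k$, hence Jacobson, so it admits a closed point $x$; since $Y\cap\bar Z$ is closed in $\bar Z$, the point $x$ is closed in $\bar Z$ and lies in $Y$, so Lemma~\ref{fix}(1) gives $\dim\OO_{\bar Z,x}=\dim\bar Z$.

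To conclude I would chain the evident inequalities at $x$. Since $\bar Z\subseteq W$ is a closed subscheme, $\OO_{\bar Z,x}$ is a quotient of $\OO_{W,x}$, so $\dim\OO_{\bar Z,x}\le\dim\OO_{W,x}$. Since $x\in Y\subseteq V(\varpi)$ we have $x\in W\cap V(\varpi)=\overline W$ and $\OO_{\overline W,x}=\OO_{W,x}/\varpi\OO_{W,x}$, so Krull's principal ideal theorem gives $\dim\OO_{W,x}\le\dim\OO_{\overline W,x}+1\le\dim\overline W+1$. Combining the three,
\[ \dim W[1/p]+1=\dim Z+1=\dim\bar Z=\dim\OO_{\bar Z,x}\le\dim\overline W+1, \]
whence $\dim W[1/p]\le\dim\overline W$.

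The argument is mostly bookkeeping between $X^{\gen}$, its fibres, and the integral subscheme $\bar Z$. The steps requiring genuine care are the invariance claims, needed so that Lemma~\ref{fix} applies to $\bar Z$, and the precise matching of hypotheses in Lemma~\ref{jacobson}(5) so as to obtain the \emph{equality} $\dim\bar Z=\dim Z+1$ rather than merely an inequality; beyond these I do not anticipate any obstacle.
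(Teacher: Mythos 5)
Your proof is correct and takes essentially the same route as the paper: both arguments pass to an irreducible piece, use Lemma \ref{fix} to produce a closed point lying over $\mm_{R^{\ps}}$, apply Lemma \ref{jacobson}(5) to get $\dim \bar Z=\dim Z+1$, and conclude by a local dimension count at that point after cutting by $\varpi$. The only cosmetic differences are that the paper chooses an irreducible component of $W$ itself with non-empty generic fibre (so Lemma \ref{fix} applies directly, without your extra verification that $\bar Z$ is $\GL_d$-invariant) and cuts the domain $\OO_{Z,x}$ by the nonzerodivisor $\varpi$ rather than invoking Krull's principal ideal theorem for $\OO_{W,x}$.
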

\begin{proof} We may assume that $W[1/p]$ is non-empty and using Lemma \ref{IrredCompInvariant} we may further assume that $W$ is irreducible. 
Lemma \ref{fix} implies that 
there is a closed point $x\in W$, which maps to the 
closed point in $X^{\ps}$. Lemma \ref{jacobson} (5) 
implies that $\dim W[1/p]=\dim W -1$. 

Since $W$ is irreducible and $W[1/p]\neq \emptyset$ the local ring $\OO_{W,x}$ is a domain and multiplication by $\varpi$ is injective.  Since 
$\cha(\kappa(x))=p$, $\varpi$ cannot be a unit in 
$\OO_{W,x}$. Thus $\dim \OO_{\overline{W}, x}= 
\dim \OO_{W,x}-1$. It follows from Lemma \ref{fix} that $\dim \overline{W}= \dim W-1$. 
\end{proof}

\subsection{Bounding the dimension of the space}\label{sec_dim_sp} The main result of this subsection is Theorem \ref{main}, which bounds the dimension of $\Xbar^{\gen}$. As explained earlier, this is an intermediate step in bounding the dimension of $R_{\rhobar}^\square$.

 Recall that $\Dbar: G_F \rightarrow k$ is the specialization of the universal pseudo-character $D^{u}: G_F \rightarrow R^{\ps}$ at the maximal 
 ideal of $R^{\ps}$. We may write $\Dbar= \prod_{i=1}^m \Dbar_i$, where $\Dbar_i$ are absolutely irreducible pseudo-characters. Let $\mathcal P$ be an (unordered)  
 partition of the set $\{1, \ldots, m\}$ into $r$ disjoint subsets $\Sigma_j$, and let $\underline{\Sigma}=(\Sigma_1,\ldots, \Sigma_r)$ be an ordering of the subsets in $\mathcal P$. For each $1\le j \le r$ let $\Dbar'_j= \prod_{i\in \Sigma_j} \Dbar_i$, 
 and let $d_j$ be the dimension of $\Dbar'_j$. We define an equivalence relation on the set 
 of pseudo-characters $\{\Dbar'_j: 1\le j\le r\}$ by $\Dbar'_j \sim \Dbar'_{j'}$ if 
$\Dbar'_j=\Dbar'_{j'}(t)$ for some $t\in \ZZ$. Let $k'$ be the number of the equivalence 
classes, $n_i'$ be the number of elements in the $i$-th equivalence class, $c_i$ be the dimension 
of the pseudo-characters in the $i$-th equivalence class. We have 
$$\sum_{i=1}^{k'} n_i' = r, \quad \sum_{i=1}^{k'} c_i n_i' = d.$$
We define 
\begin{equation}\label{formula3}
l_{\PP}:=\sum_{j=1}^r d_j^2=\sum_{i=1}^{k'}n_i' c_i^2, \quad p_{\PP}:= l_{\PP} + n_{\PP}= \sum_{j=1}^r d_j^2 +\sum_{1\le j<j' \le r} d_j d_{j'},
\end{equation}
where
\begin{equation}\label{formula4}
n_{\PP}= \frac{1}{2}(d^2- l_{\PP})= \sum_{1\le j<j' \le r} d_j d_{j'}= \sum_{1\le i < i' \le k'} c_i c_{i'} n_i' n_{i'}' + \sum_{i=1}^{k'} c_i^2 \begin{pmatrix} n_i' \\ 2\end{pmatrix}.
\end{equation} 
The notation is motivated by  \eqref{formula1} and \eqref{formula2}, see also Remark \ref{nopara}.
 
For each $1\le j\le r$ let $R^{\ps}_j$ be the universal deformation ring of $\Dbar'_j$ and let $X^{\ps}_j := R^{\ps}_j$.
The functor $\FF_{\underline{\Sigma}}$, which sends a local  Artinian $\OO$-algebra $(A, \mm_A)$ with residue field $k$ to the set of ordered $r$-tuples 
$(D_1, \ldots, D_r)$ of pseudo-characters with each $D_i$ a deformation of $\Dbar'_i$ to $A$ is represented by the completed tensor product
$$R^{\ps}_{\underline{\Sigma}}:= R^{\ps}_1\wtimes_{\OO} \ldots \wtimes_{\OO}R^{\ps}_r.$$
We let $X^{\ps}_{\underline{\Sigma}}:= \Spec R^{\ps}_{\underline{\Sigma}}$ and denote by $\Xbar^{\ps}_{\underline{\Sigma}}:= \Spec R^{\ps}_{\underline{\Sigma}}/\varpi$
to its special fibre. 
By mapping an $r$-tuple of pseudo-characters to their product we obtain a map 
 $$ \iota_{\underline{\Sigma}}: \Xbar^{\ps}_{\underline{\Sigma}} \rightarrow \Xbar^{\ps}.$$
 \begin{lem}\label{iotaP_finite} The map $R^{\ps}\rightarrow R^{\ps}_{\underline{\Sigma}}$  is finite. 
 \end{lem}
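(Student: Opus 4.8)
The plan is to show that $R^{\ps}_{\underline{\Sigma}}$ is module-finite over $R^{\ps}$ by exhibiting finitely many topological $R^{\ps}$-algebra generators of $R^{\ps}_{\underline{\Sigma}}$ each of which is integral over (the image of) $R^{\ps}$, and then using completeness to promote the resulting dense finite subalgebra to all of $R^{\ps}_{\underline{\Sigma}}$.

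I would set up notation as follows. Let $D^{u,j}$ be the universal pseudo-character over $R^{\ps}_j$ and let $1=\Lambda^{(j)}_0,\Lambda^{(j)}_1,\dots,\Lambda^{(j)}_{d_j}$ be the coefficients of its characteristic polynomial (homogeneous polynomial laws $R^{\ps}_j[G_F]\to R^{\ps}_j$). Pulling the $D^{u,j}$ back along the structure maps $R^{\ps}_j\to R^{\ps}_{\underline{\Sigma}}$ gives the universal $r$-tuple $(D_1,\dots,D_r)$ over $R^{\ps}_{\underline{\Sigma}}$, and by the very definition of the map $R^{\ps}\to R^{\ps}_{\underline{\Sigma}}$ the product $D:=D_1\cdots D_r$ is the image of the universal pseudo-character $D^u$. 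First I would recall from Chenevier's construction \cite[\S 3.1]{che_durham} (see also \cite{WE_thesis}) that each $R^{\ps}_j$ is topologically generated over $\OO$ by the values $\Lambda^{(j)}_l(x)$ with $x\in\OO[G_F]$ and $1\le l\le d_j$; hence $R^{\ps}_{\underline{\Sigma}}=R^{\ps}_1\wtimes_{\OO}\dots\wtimes_{\OO}R^{\ps}_r$ is topologically generated over $\OO$, a fortiori over $R^{\ps}$, by the union of all these values over $j$.

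Next I would check that each $\Lambda^{(j)}_l(x)$ is integral over $R^{\ps}$. Fix such an $x$. Since characteristic polynomials multiply under the product of pseudo-characters (our convention that a direct sum corresponds to a product), the characteristic polynomial of $D$ at $x$ factors as $\prod_{j=1}^r\bigl(\sum_{l=0}^{d_j}(-1)^l\Lambda^{(j)}_l(x)\,t^{d_j-l}\bigr)$, a product of $r$ monic polynomials of degrees $d_j$ whose product is monic of degree $d=\sum_j d_j$. Because $D$ is pulled back from $R^{\ps}$, $x$ is the image of an element of $R^{\ps}[G_F]$, and polynomial laws commute with base change, the coefficients of this degree-$d$ polynomial lie in the image of $R^{\ps}$. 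The coefficients of a monic factor of a monic polynomial are integral over the base ring (standard, via a splitting algebra), so each $\Lambda^{(j)}_l(x)$ is integral over $R^{\ps}$.

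Finally I would conclude. As $R^{\ps}_{\underline{\Sigma}}$ is complete local Noetherian, its relative cotangent space $\mm_{R^{\ps}_{\underline{\Sigma}}}/(\mm_{R^{\ps}_{\underline{\Sigma}}}^2+\mm_{R^{\ps}}R^{\ps}_{\underline{\Sigma}})$ is finite-dimensional over $k$, so finitely many of the elements above, say $c_1,\dots,c_M$, have images spanning it; by the topological Nakayama lemma $B:=R^{\ps}[c_1,\dots,c_M]$ is dense in $R^{\ps}_{\underline{\Sigma}}$. Since the $c_i$ are integral over $R^{\ps}$, $B$ is a finite $R^{\ps}$-module; moreover the images $\bar c_i$ generate $\mm_{R^{\ps}_{\underline{\Sigma}}}/\mm_{R^{\ps}}R^{\ps}_{\underline{\Sigma}}$ and are algebraic over $k$, so $R^{\ps}_{\underline{\Sigma}}/\mm_{R^{\ps}}R^{\ps}_{\underline{\Sigma}}$ is a finite-dimensional, hence Artinian, local $k$-algebra. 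Thus $\mm_{R^{\ps}}R^{\ps}_{\underline{\Sigma}}$ is $\mm_{R^{\ps}_{\underline{\Sigma}}}$-primary, $R^{\ps}_{\underline{\Sigma}}$ is complete for the $\mm_{R^{\ps}}$-adic topology, the finite module $B$ is $\mm_{R^{\ps}}$-adically closed in it, and being dense it equals $R^{\ps}_{\underline{\Sigma}}$; equivalently one may invoke directly the criterion that an $\mm_A$-adically complete module $M$ over a complete Noetherian local ring $A$ with $M/\mm_AM$ finite over $A/\mm_A$ is finite over $A$. The one genuinely delicate point is this last paragraph — concretely, verifying that $R^{\ps}_{\underline{\Sigma}}/\mm_{R^{\ps}}R^{\ps}_{\underline{\Sigma}}$ is Artinian, i.e.~that $\iota_{\underline{\Sigma}}$ has $0$-dimensional fibre over the closed point; everything else is formal once the integrality established in the previous paragraph is in hand. (In the special case where the $\Dbar'_j$ have pairwise disjoint sets of irreducible constituents this is immediate, since then $\iota_{\underline{\Sigma}}$ is an isomorphism by the standard fact that deformations of a direct sum of pairwise non-isomorphic pseudo-characters split uniquely; the work is in the general case.)
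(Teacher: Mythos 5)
Your argument is correct, and it is organized differently from the paper's, though both proofs turn on the same two inputs. The paper first reduces, by topological Nakayama, to showing that the fibre ring $C=k\otimes_{R^{\ps}}R^{\ps}_{\underline{\Sigma}}$ has Krull dimension $0$, and then argues by contradiction: on a hypothetical one-dimensional prime of $C$, the coefficients of some $D_{i,y}(t-a)$ would generate a transcendental extension of $k$ (this is where the density of characteristic-polynomial coefficients, via \cite[Corollary 1.14]{che_durham}, enters), whereas these coefficients are those of a monic factor of the fixed residual polynomial $\prod_i\Dbar'_i(t-a)$ and hence are algebraic over $k$. You instead prove the stronger, integral statement directly over $R^{\ps}$: each $\Lambda^{(j)}_l(x)$ is a coefficient of a monic factor of the monic polynomial $D^u(t-x)$ with coefficients in (the image of) $R^{\ps}$, hence integral over $R^{\ps}$, and you then run the Nakayama/completeness bookkeeping once. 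So the shared mechanism is ``coefficients of a monic factor of a monic polynomial over a subring are integral/algebraic over it,'' applied by the paper over $k$ on the closed fibre and by you over $R^{\ps}$ itself; and both proofs rely on the same topological-generation-by-characteristic-polynomial-coefficients fact that the paper also invokes with the same citation. Your route buys a cleaner, contradiction-free argument (no need to consider dimension-one primes) and yields the integrality of explicit generators as a by-product; the paper's route avoids discussing topological generators of $R^{\ps}_{\underline{\Sigma}}$ over $R^{\ps}$ and works only with the fibre. One step you compress and should spell out: from ``$C$ is topologically generated over $k$ by finitely many elements $\bar c_i$ algebraic over $k$'' to ``$C$ is finite-dimensional'' one should note that if $\bar c_i$ has residue $\lambda_i\in k$ and minimal polynomial $f_i=(t-\lambda_i)^{e_i}g_i$ with $g_i(\lambda_i)\neq 0$, then $g_i(\bar c_i)$ is a unit in the local ring $C$, so $(\bar c_i-\lambda_i)^{e_i}=0$; thus $C$ is generated (not just topologically) by finitely many nilpotents over $k$ and is Artinian, after which your ``equivalently'' clause (Matsumura's topological Nakayama, $M$ separated and $M/\mm_A M$ finite) finishes exactly as in the paper.
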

 \begin{proof}  By topological Nakayama's lemma it is enough to show that the fibre ring
 $C:=k\otimes_{R^{\ps}} R^{\ps}_{\underline{\Sigma}}$ is a finite dimensional $k$-vector 
 space. Let $\mathcal F$ be the closed subfunctor of $\mathcal F_{\underline{\Sigma}}$ defined by 
 $C$. If $(A, \mm_A)$ is a local Artinian $k$-algebra then $\mathcal F(A)$ is in bijection 
 with the set of $r$-tuples $(D_{1}, \ldots, D_{r})$, each $D_{i}$ lifting $\Dbar'_i$ to $A$ such 
 that $\prod_{i=1}^r D_i= (\prod_{i=1}^r \Dbar'_i)\otimes_k A$.
 
 Since $C$ is a complete local Noetherian ring, it is enough to show that its Krull dimension is $0$. 
 If this is not the case then there is $\pp\in \Spec C$ such that $\dim C/\pp=1$. Let $(D_{1, y}, \ldots, D_{r,y})$ be the specialization of the universal object of $\mathcal F_{\underline{\Sigma}}$ along 
 $y: R^{\ps}_{\underline{\Sigma}}\rightarrow \kappa(\pp)$. It follows from 
 \cite[Corollary 1.14]{che_durham} that the coefficients of the polynomials 
 $D_{i,y}(t - a)$, for all $a\in E$ and $1\le i\le r$ will generate a dense subring of $R^{\ps}_{\underline{\Sigma}}/\pp$. Since $R^{\ps}_{\underline{\Sigma}}/\pp$ is a complete local $k$-algebra
 of dimension $1$, there will exist $a\in E$ and index $i$  such that the coefficients of $D_{i,y}(t - a)$ will generate 
a transcendental extension of $k$ inside $\kappa(\pp)$. Since $\pp\in \Spec C$ we have 
$$ \prod_{i=1}^r D_{i,y}(t-a)= \prod_{i=1}^r \Dbar'_i(t-a).$$
Thus all the roots of $D_{i,y}(t-a)$ in the algebraic closure of $\kappa(\pp)$ are algebraic over $k$. Since $D_{i,y}(t-a)$ is a monic polynomial, 
we conclude that all the coefficients are also algebraic over $k$, giving a contradiction.
  \end{proof}

Let $\overline{X}^{\ps}_{\mathcal P}$ be the scheme theoretic image of $\iota_{\underline{\Sigma}}$. We note that $\overline{X}^{\ps}_{\mathcal P}$ depends only on $\mathcal P$ and not on the chosen ordering $\underline{\Sigma}$.  
It follows from  Lemma \ref{iotaP_finite} that 
\begin{equation}\label{XPps_bound}
\dim  \overline{X}^{\ps}_{\mathcal P}= \dim \Xbar^{\ps}_{\underline{\Sigma}}
 = \sum_{i=1}^r\dim \Xbar^{\ps}_i = r+ l_{\PP} [F:\Qp],
\end{equation}
 where the last equality is obtained by applying \cite[Theorem 5.4.1(a)]{bj} to each $\Xbar_i^{\ps}$.
  
 We define a partial order on the set of partitions of $\{1, \ldots, m\}$ by $\PP\le \PP'$ if $\PP'$ is a refinement of $\PP$. 
  The partition $\PP_{\min}$ consisting of $1$ part is the minimal element and the partition $\PP_{\max}$ consisting of $m$ 
  parts is the maximal element with respect to this partial ordering. If $ \PP\le \PP'$ then $\Xbar^{\ps}_{\PP'}$ is a closed subscheme 
  of $\Xbar^{\ps}_{\PP}$ and $\Xbar^{\ps}_{\PP_{\min}}= \Xbar^{\ps}$. Let 
  $$U_{\PP}:= \Xbar^{\ps}_{\PP} \setminus ( \{\mm_{R^{\ps}}\} \cup \bigcup_{\PP< \PP'} \Xbar^{\ps}_{\PP'})$$
and let $V_{\PP}$ be the preimage of $U_{\PP}$ in  $\Xbar^{\gen}$ and let $Z_{\PP}$ be the closure of $V_{\PP}$ in $\Xbar^{\gen}$.
Let $\Xbar^{\gen}_{\PP}$ be the preimage of $\Xbar^{\ps}_{\PP}$ in $\Xbar^{\gen}$. Then $\Xbar^{\gen}_{\PP}$ is closed in $\Xbar^{\gen}$ 
and contains $V_{\PP}$, hence we are in the situation of Lemma \ref{jacobson} with $\Spec R = \Xbar^{\ps}_{\PP}$ and $\Spec S = Z_\PP$.
Note that Lemma 
\ref{fix} implies that every irreducible component 
of $\Xbar^{\gen}_{\PP}$ contains a closed point mapping to $\mm_{R^{\ps}}$. Thus 
the condition in part (5) of Lemma \ref{jacobson}
is satisfied and hence $\dim Z_\PP = \dim V_\PP + 1$; the same conclusion applies to closures of various loci considered below.
Moreover, we have 
\begin{equation}\label{XgP}
 \Xbar^{\gen}_{\PP}= Y \cup \bigcup_{\PP\le \PP'} Z_{\PP'},
 \end{equation}
 where $Y$ is the preimage of 
 $\{\mm_{R^{\ps}}\}$ in $\Xbar^{\gen}$.

We will also need a variant of the situation above. Let us assume that $r>1$ and let $i$ and $j$ be distinct indices with $1\le i, j\le r$. 
Let $\mathcal F_{\underline{\Sigma}}^{ij}$ be a subfunctor of $\mathcal F_{\underline{\Sigma}}$ parameterizing the deformations 
$(D_1, \ldots, D_r)$ of the ordered $r$-tuple $(\Dbar'_1, \ldots, \Dbar'_r)$ such that $D_i= D_j(1)$. Then $\mathcal F_{\underline{\Sigma}}^{ij}$
is a closed subfunctor of $\mathcal F_{\underline{\Sigma}}$ and we let $R^{\ps, ij}_{\underline{\Sigma}}$ be the quotient of $R^{\ps}_{\underline{\Sigma}}$
representing it. If $\Dbar'_i\neq \Dbar'_j(1)$ then $R^{\ps, ij}_{\underline{\Sigma}}$ is the zero ring; otherwise it follows from Equation \eqref{XPps_bound} and another application of \cite[Theorem 5.4.1(a)]{bj} that
\begin{align*}
    \dim R^{\ps, ij}_{\underline{\Sigma}}/\varpi = \dim R^{\ps}_{\underline{\Sigma}}/\varpi - \dim R^{\ps}_i/\varpi &\leq r + l_{\PP} [F:\Qp] - ( 1+d_i^2[F:\Qp]) \\
    &\leq r + l_{\PP} [F:\Qp] - ( 1+[F:\Qp]).
\end{align*}
Let $\Xbar^{\ps, ij}_{\PP}$ be the scheme theoretic image of $\Spec  R^{\ps, ij}_{\underline{\Sigma}}$ in $\Xbar^{\ps}$ under $\iota_{\underline{\Sigma}}$. Then 
\begin{equation}\label{bound_Xij}
\dim \Xbar^{\ps,ij}_{\PP}\le r + l_{\PP} [F:\Qp] - ( 1+[F:\Qp]).
\end{equation}
Let $U_{\PP}^{ij}:= U_{\PP} \cap \Xbar^{\ps, ij}_{\PP}$, let $V_{\PP}^{ij}$ be the preimage of  $U_{\PP}^{ij}$ in $\Xbar^{\gen}$ and let 
$Z_{\PP}^{ij}$ be the closure of $V_{\PP}^{ij}$ in $\Xbar^{\gen}$. 

\begin{lem}\label{comb_fibre} If $y$ is a geometric closed point of $U_{\PP}$ then $$\dim X^{\gen}_y\le d^2-r+ n_{\PP} [F:\Qp] + \sum_{i=1}^{k'}  \begin{pmatrix} n_i'\\ 2\end{pmatrix}.$$
If we additionally assume that $y\not\in U_{\PP}^{ij}$ for any $i\neq j$ then 
$$\dim X^{\gen}_y\le d^2-r+ n_{\PP} [F:\Qp].$$
\end{lem}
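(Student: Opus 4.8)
The plan is to reduce both bounds to Proposition~\ref{bound_dim_fibre}, applied to the geometric point $y$, once we know that the irreducible decomposition of $D_y$ is exactly the product of the $r$ pseudo-characters attached to the partition $\PP$. Concretely, let $y_0$ be the closed point of $U_\PP$ below $y$. By Lemma~\ref{iotaP_finite} the map $\iota_{\underline\Sigma}$ is finite, hence surjective onto $\Xbar^{\ps}_\PP$, so $y_0$ is the image of a point $\tilde y_0$ of $\Xbar^{\ps}_{\underline\Sigma}$ with $\kappa(\tilde y_0)/\kappa(y_0)$ finite; write $(D_1,\dots,D_r)$, with $D_j$ a deformation of $\Dbar'_j$, for the attached tuple, so $D_y=\prod_{j=1}^r D_j$ after base change to $\kappa$. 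I would first show that each $D_j$ is absolutely irreducible (necessarily of dimension $d_j$): if some $D_j$ were reducible over $\kappa$, then --- conjugating the associated representation into $\GL_{d_j}(\OO_{\kappa(\tilde y_0)})$ and reducing modulo the maximal ideal, as in the proof of Lemma~\ref{fix} --- the constituents $\Dbar_i$, $i\in\Sigma_j$, would split into at least two groups according to the irreducible constituents of $D_j$, and multiplying the corresponding factors back together would exhibit $y_0\in\Xbar^{\ps}_{\PP'}$ for a strict refinement $\PP'>\PP$, contradicting $y_0\in U_\PP$. I expect this step to be the main obstacle: the remaining arguments are bookkeeping with Proposition~\ref{bound_dim_fibre}, but without this identification that proposition, applied to the genuine (finer) irreducible decomposition, can produce a larger bound than claimed.

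Granting this, for the first inequality I apply Proposition~\ref{bound_dim_fibre} to $y$ with the decomposition $D_y=\prod_{j=1}^r D_j$. Since $\dim\gl=d^2$ and $\dim\nn=\sum_{1\le j<j'\le r}d_jd_{j'}=n_\PP$ by~\eqref{formula4}, this gives $\dim X^{\gen}_y\le d^2-r+n_\PP[F:\Qp]+\sum_i\binom{\tilde n_i}{2}$, where the $\tilde n_i$ are the cardinalities of the twist-equivalence classes of $\{D_1,\dots,D_r\}$. These classes refine the twist-equivalence classes of $\{\Dbar'_1,\dots,\Dbar'_r\}$: if $D_j=D_{j'}(t)$, then, as $D_j$ and $D_{j'}(t)$ are base changes to $\operatorname{Frac}(R^{\ps}_{\underline\Sigma}/\pp)$ of determinant laws over the domain $R^{\ps}_{\underline\Sigma}/\pp$ (with $\pp$ the prime of $\tilde y_0$), they already agree over that domain, and reducing modulo its maximal ideal gives $\Dbar'_j=\Dbar'_{j'}(t)$. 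Because $\binom{a+b}{2}\ge\binom{a}{2}+\binom{b}{2}$, subdividing the classes only decreases $\sum\binom{\cdot}{2}$, so $\sum_i\binom{\tilde n_i}{2}\le\sum_{i=1}^{k'}\binom{n'_i}{2}$, which is the first inequality.

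For the second inequality, assume in addition that $y_0\notin U_\PP^{ij}$ for all $i\ne j$; since $y_0\in U_\PP$ this says $y_0\notin\Xbar^{\ps,ij}_\PP$. As the tuple $(D_1,\dots,D_r)$ at $\tilde y_0$ already defines a point over $y_0$ of $\Spec R^{\ps,ij}_{\underline\Sigma}$ whenever $D_i=D_j(1)$, we conclude $D_i\ne D_j(1)$, i.e.\ $\rho_i\not\cong\rho_j(1)$, hence $\Hom_{G_F}(\rho_i,\rho_j(1))=0$ for all $i\ne j$ by absolute irreducibility. Then in Proposition~\ref{tangent} the term $\sum_{1\le i<j\le r}\dim\Hom_{G_F}(\rho_{\sigma(i)},\rho_{\sigma(j)}(1))$ vanishes for every $\sigma\in S_r$, so Lemma~\ref{bound} and Proposition~\ref{tangent} give $\dim X^{\gen}_{y,\sigma}\le\dim T_x(X^{\gen}_{y,\sigma})=\dim\nn+(\dim\nn)[F:\Qp]$, and feeding this into the proof of Proposition~\ref{bound_dim_fibre} in place of Corollary~\ref{bound_dim2} yields $\dim X^{\gen}_y\le d^2-r+n_\PP[F:\Qp]$.
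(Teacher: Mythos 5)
Your proposal is correct and follows essentially the same route as the paper: identify the decomposition $D_y=\prod_j D_j$ with each $D_j$ absolutely irreducible (else $y$ would lie in $\Xbar^{\ps}_{\PP'}$ for a refinement $\PP'>\PP$), apply Proposition \ref{bound_dim_fibre}, note that twist-relations among the $D_j$ descend to the $\Dbar'_j$ so the combinatorial term is bounded by $\sum_i\binom{n_i'}{2}$, and for the second bound use $D_i\neq D_j(1)$ to kill the $\Hom$ terms in \eqref{ext2}. You merely spell out (correctly, up to replacing $\OO_{\kappa(\tilde y_0)}$ by the valuation ring of a finite extension over which the constituents are defined) the steps the paper asserts in one line.
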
 
\begin{proof} We may write $D_y=D_1+ \ldots+ D_r$ with $D_i$ lifting 
$\Dbar_i'$. We note that all the $D_i$ are absolutely irreducible, since otherwise $y \in X_{\PP'}^{\ps}$ for some $\PP'> \PP$.  
Let $k$ and $n_i$ be the numbers defined in Section~\ref{sec_fibre}. Proposition \ref{bound_dim_fibre} implies that 
$$\dim X^{\gen}_y\le d^2-r+ n_{\PP} [F:\Qp] + \sum_{i=1}^{k}  \begin{pmatrix} n_i\\ 2\end{pmatrix}.$$
If $D_i= D_j(m)$ for some $m \in \ZZ$ then also $\Dbar_i'= \Dbar_j'(m)$. 
This implies that 
$$\sum_{i=1}^k \begin{pmatrix} n_i \\ 2\end{pmatrix} \le \sum_{i=1}^{k'} \begin{pmatrix} n_i' \\ 2\end{pmatrix},$$
which implies the first assertion. We note that if $a_i, \ldots, a_s$ are positive integers then $\sum_{i=1}^s \begin{pmatrix} a_i \\2\end{pmatrix} \le 
\begin{pmatrix} \sum_{i=1}^s a_i \\2\end{pmatrix}$. 

If $y\not \in U_{\PP}^{ij}$ for any $i\neq j$ then $D_i\neq D_j(1)$ for any $i\neq j$ and the $\Hom$ terms in \eqref{ext2} vanish. 
The assertion follows from Proposition \ref{bound_dim_fibre} using this improved bound. 
\end{proof} 

\begin{prop}\label{bound_again} $\dim Z_{\PP}^{ij} \le d^2 + p_{\PP} [F:\Qp]+ \sum_{i=1}^{k'} \begin{pmatrix} n_i'\\ 2\end{pmatrix} - (1 + [F:\Qp])$. 
\end{prop}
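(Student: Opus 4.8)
The plan is to run the same estimate that handles $Z_{\PP}$ — bound the base, bound the fibres, then add one for passing to the closure — but to feed in the sharper base bound \eqref{bound_Xij} for $\Xbar^{\ps,ij}_{\PP}$.

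\textbf{Reduction via Lemma~\ref{jacobson}.} If $U_{\PP}^{ij}=\emptyset$ there is nothing to prove, so assume otherwise. I would apply Lemma~\ref{jacobson} with $\Spec R=\Xbar^{\ps,ij}_{\PP}$ — this is a local scheme, being the scheme-theoretic image of the complete local scheme $\Spec R^{\ps,ij}_{\underline{\Sigma}}$ — and $\Spec S=Z_{\PP}^{ij}$, where the structure morphism $\varphi\colon R\to S$ is of finite type because $Z_{\PP}^{ij}$ is a closed subscheme of the finite-type $\Xbar^{\ps}$-scheme $\Xbar^{\gen}$ whose map to $\Xbar^{\ps}$ factors through the closed immersion $\Xbar^{\ps,ij}_{\PP}\hookrightarrow\Xbar^{\ps}$. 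In this setting $U_{\PP}^{ij}$ is open in $\Xbar^{\ps,ij}_{\PP}$ and misses its closed point, hence is open in $U_{\max}=\Spec R\setminus\{\mm_R\}$; the preimage of $U_{\PP}^{ij}$ in $\Spec S$ is $V_{\PP}^{ij}$, and the closure of the latter in $\Spec S$ is $Z_{\PP}^{ij}$ itself. Since $Z_{\PP}^{ij}$ is $\GL_d$-invariant, Lemma~\ref{fix} shows each irreducible component of $\Spec S$ meets the fibre over $\mm_{R^{\ps}}$, so part~(5) of Lemma~\ref{jacobson} yields $\dim Z_{\PP}^{ij}=\dim V_{\PP}^{ij}+1$.

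\textbf{The two estimates.} Since $U_{\PP}^{ij}\subseteq\Spec R\setminus\{\mm_R\}$ for the local ring $R$, I would bound $\dim U_{\PP}^{ij}\le\dim R-1=\dim\Xbar^{\ps,ij}_{\PP}-1\le r+l_{\PP}[F:\Qp]-(2+[F:\Qp])$ using \eqref{bound_Xij}. For the fibres, Lemma~\ref{jacobson}(4) shows that the closed points of $U_{\PP}^{ij}$ lie in $P_1R^{\ps}$ and are closed points of $U_{\PP}$; for such a point $y$ the first inequality of Lemma~\ref{comb_fibre} gives
\[
\dim X^{\gen}_{y}\ \le\ d^2-r+n_{\PP}[F:\Qp]+\sum_{i=1}^{k'}\begin{pmatrix} n_i'\\ 2\end{pmatrix}
\]
for the geometric fibre, and $\dim\varphi^{-1}(y)\le\dim X^{\gen}_{y}$ because $\varphi^{-1}(y)$ sits inside the fibre of $X^{\gen}$ over $y$. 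Feeding these two bounds into Lemma~\ref{jacobson}(6) and using $p_{\PP}=l_{\PP}+n_{\PP}$ from \eqref{formula3} produces
\[
\dim V_{\PP}^{ij}\ \le\ d^2+p_{\PP}[F:\Qp]+\sum_{i=1}^{k'}\begin{pmatrix} n_i'\\ 2\end{pmatrix}-(2+[F:\Qp]),
\]
and adding the $1$ from the reduction gives the assertion.

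The main thing to be careful about is the first step: checking that we are genuinely in the hypotheses of Lemma~\ref{jacobson} over a \emph{local} base, so that part~(5) is available and passing to the closure contributes exactly one to the Krull dimension — this is the same phenomenon already recorded in the text just after the definition of $Z_{\PP}$. It is precisely this bookkeeping (``$+1$ for the closure, but $-1$ for deleting the closed point of $\Xbar^{\ps,ij}_{\PP}$''), together with the extra $[F:\Qp]$ gained in \eqref{bound_Xij}, that produces the stated bound rather than the weaker $d^2+p_{\PP}[F:\Qp]+\sum_i\begin{pmatrix} n_i'\\2\end{pmatrix}-[F:\Qp]$ one would otherwise obtain.
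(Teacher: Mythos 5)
Your argument is correct and is essentially the paper's own proof: the same combination of the base bound \eqref{bound_Xij}, the fibre bound of Lemma \ref{comb_fibre} at closed points of $U_{\PP}^{ij}$, and parts (5) and (6) of Lemma \ref{jacobson} (with the hypothesis of (5) supplied by Lemma \ref{fix}), with identical bookkeeping. The only cosmetic difference is that you get $\dim U_{\PP}^{ij}\le \dim \Xbar^{\ps,ij}_{\PP}-1$ directly from the punctured spectrum of a local ring, whereas the paper phrases this via Lemma \ref{jacobson}(5) applied to the closure of $U_{\PP}^{ij}$; the numerics coincide.
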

\begin{proof} It follows from Lemma \ref{jacobson} (5) that the closure of $U_{\PP}^{ij}$ has dimension $\dim U_{\PP}^{ij} +1$. 
Thus 
$$\dim U_{\PP}^{ij} +1 \le \dim X^{\ps, ij}_{\PP}\le  r+ l_{\PP}[F:\Qp] - (1+[F:\Qp]),$$
where the last inequality is \eqref{bound_Xij}.
Parts (5) and (6) of Lemma \ref{jacobson} together with Lemma \ref{comb_fibre} imply that 
$$\dim Z^{ij}_{\PP}\le  (r+ l_{\PP}[F:\Qp] - (1+[F:\Qp]))+ (d^2-r +n_{\PP} [F:\Qp] + \sum_{i=1}^{k'} \begin{pmatrix} n_i'\\ 2\end{pmatrix}),$$
which imply the assertion.
\end{proof}

 \begin{prop}\label{bound_delta} Let $\delta_{\PP}= \max \{ 0, \sum_{i=1}^{k'} \begin{pmatrix} n_i'\\ 2\end{pmatrix} - (1+[F:\Qp])\}$. Then 
$$\dim Z_{\PP} \le d^2+ p_{\PP} [F:\Qp] +\delta_{\PP}.$$
\end{prop}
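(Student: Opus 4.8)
The plan is to cut $V_\PP$, the preimage of $U_\PP$ in $\Xbar^\gen$, into the part lying over the ``generic'' locus of $U_\PP$ and the parts lying over the closed strata $\Xbar^{\ps,ij}_\PP$, and to bound each. Put $U'_\PP:=U_\PP\setminus\bigcup_{i\neq j}U^{ij}_\PP$ and let $V'_\PP$ be its preimage in $\Xbar^\gen$. Since each $\Xbar^{\ps,ij}_\PP$ is closed in $\Xbar^\ps_\PP$, the locus $U'_\PP$ is open in $\Xbar^\ps_\PP$, and as sets $V_\PP=V'_\PP\cup\bigcup_{i\neq j}V^{ij}_\PP$, whence $Z_\PP=\overline{V'_\PP}\cup\bigcup_{i\neq j}Z^{ij}_\PP$. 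It therefore suffices to bound $\dim\overline{V'_\PP}$ and each $\dim Z^{ij}_\PP$ by $d^2+p_\PP[F:\Qp]+\delta_\PP$.

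For the strata, Proposition \ref{bound_again} gives $\dim Z^{ij}_\PP\le d^2+p_\PP[F:\Qp]+\sum_{i=1}^{k'}\binom{n'_i}{2}-(1+[F:\Qp])$; since $\delta_\PP$ is by definition at least this last summand and at least $0$, this is $\le d^2+p_\PP[F:\Qp]+\delta_\PP$, and when $\Dbar'_i\neq\Dbar'_j(1)$ the scheme $Z^{ij}_\PP$ is empty and contributes nothing. So the real point is $\overline{V'_\PP}$, and here I would apply Lemma \ref{jacobson} with $\Spec R=\Xbar^\ps_\PP$ and $\Spec S$ the closure of $V'_\PP$ in $\Xbar^\gen$; this is legitimate since $V'_\PP$ is open in $\Xbar^\gen_\PP$, and part (5) applies because, as the text records, every irreducible component of $\Xbar^\gen_\PP$ meets the fibre $Y$ over $\mm_{R^\ps}$ by Lemma \ref{fix}. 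Since $U'_\PP$ is open in $\Xbar^\ps_\PP$ and omits the closed point, $\dim U'_\PP\le\dim\Xbar^\ps_\PP-1=r+l_\PP[F:\Qp]-1$, using the formula $\dim\Xbar^\ps_\PP=r+l_\PP[F:\Qp]$ established above. A closed point $y$ of $U'_\PP$ is a closed point of $U_\PP$ lying in no $U^{ij}_\PP$, so the sharper bound of Lemma \ref{comb_fibre} says the fibre of $\Xbar^\gen\to\Xbar^\ps$ over $y$ (equivalently, over a geometric point above it) has dimension at most $d^2-r+n_\PP[F:\Qp]$. Feeding both estimates into Lemma \ref{jacobson} (6) yields $\dim V'_\PP\le d^2-1+(l_\PP+n_\PP)[F:\Qp]=d^2-1+p_\PP[F:\Qp]$, and then part (5) gives $\dim\overline{V'_\PP}=\dim V'_\PP+1\le d^2+p_\PP[F:\Qp]$.

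Combining, $\dim Z_\PP=\max\{\dim\overline{V'_\PP},\max_{i\neq j}\dim Z^{ij}_\PP\}\le d^2+p_\PP[F:\Qp]+\delta_\PP$, which is the claim. The case $r=1$ is degenerate: there are no pairs $i\neq j$, so $V'_\PP=V_\PP$ and $\delta_\PP=0$, and the assertion is exactly the bound on $\overline{V'_\PP}$ just proved. I do not expect a genuine obstacle here: all the real work is already contained in Propositions \ref{bound_again} and \ref{bound_dim_fibre}, Lemmas \ref{jacobson}, \ref{comb_fibre}, \ref{fix}, and the dimension formula for $\Xbar^\ps_\PP$. The only points needing attention are the purely formal verifications that $U'_\PP$ is open (so that $V'_\PP$ is open in $\Xbar^\gen_\PP$ and Lemma \ref{jacobson}, including part (5), applies to it and to its closure), and the bookkeeping that the constant $\delta_\PP$ dominates both of the two bounds in all cases.
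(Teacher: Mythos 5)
Your proposal is correct and follows essentially the same route as the paper: decompose $Z_{\PP}=Z'_{\PP}\cup\bigcup_{i\neq j}Z^{ij}_{\PP}$ with $U'_{\PP}=U_{\PP}\setminus\bigcup_{i\neq j}U^{ij}_{\PP}$, bound $\dim Z'_{\PP}\le\dim\Xbar^{\ps}_{\PP}+(d^2-r+n_{\PP}[F:\Qp])=d^2+p_{\PP}[F:\Qp]$ using the sharper fibre bound of Lemma \ref{comb_fibre} together with Lemma \ref{jacobson} (parts (5), (6), with the component condition supplied by Lemma \ref{fix}), and absorb the $Z^{ij}_{\PP}$ bound of Proposition \ref{bound_again} into $\delta_{\PP}$. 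The extra bookkeeping you spell out (openness of $U'_{\PP}$, emptiness when $\Dbar'_i\neq\Dbar'_j(1)$, the case $r=1$) is consistent with the paper's argument.
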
  
\begin{proof} Let $U'_{\PP}:= U_{\PP} \setminus \bigcup_{i\neq j} U_{\PP}^{ij}$, let $V'_{\PP}$ be the preimage of $U'_{\PP}$ in 
$\Xbar^{\gen}$ and let $Z'_P$ denote the closure of $V'_{\PP}$ in $\Xbar^{\gen}$. If $y$ is a closed point of $U'_{\PP}$ then 
$\dim \Xbar^{\gen}_y \le d^2-r+ n_{\PP} [F:\Qp]$ by Lemma \ref{comb_fibre}. Thus Lemma \ref{jacobson} implies that 
\begin{equation}\label{bound_Zprime}
\dim Z'_{\PP} \le \dim \Xbar^{\ps}_{\PP} + (d^2-r+ n_{\PP} [F:\Qp])= d^2+ p_{\PP} [F:\Qp].
\end{equation}
Since $Z_{\PP}= Z'_{\PP} \cup \bigcup_{i\neq j} Z^{ij}_{\PP}$ we have $\dim Z_{\PP}= \max_{i\neq j}\{ \dim Z'_{\PP}, \dim Z^{ij}_{\PP}\}$ and the 
assertion follows from Proposition \ref{bound_again}. 
\end{proof}
 
 \begin{prop}\label{dim_Pmin} $\dim Z_{\PP_{\min}}\le d^2 + d^2[F:\Qp]$.
 \end{prop}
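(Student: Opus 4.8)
The plan is to obtain this as the $\PP=\PP_{\min}$ case of Proposition~\ref{bound_delta}, so that the proof reduces to computing the combinatorial invariants attached to $\PP_{\min}$. The partition $\PP_{\min}$ consists of the single part $\Sigma_1=\{1,\dots,m\}$, so $r=1$, $\Dbar'_1=\Dbar$ and $d_1=d$. Hence $l_{\PP_{\min}}=d_1^2=d^2$, and by \eqref{formula4} we get $n_{\PP_{\min}}=\tfrac12(d^2-l_{\PP_{\min}})=0$, whence $p_{\PP_{\min}}=l_{\PP_{\min}}+n_{\PP_{\min}}=d^2$. With only one part there is a single $\sim$-equivalence class, so $k'=1$ and $n_1'=1$; thus $\sum_{i=1}^{k'}\binom{n_i'}{2}=\binom12=0$, and therefore $\delta_{\PP_{\min}}=\max\{0,-(1+[F:\Qp])\}=0$. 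Substituting into Proposition~\ref{bound_delta} gives
\[ \dim Z_{\PP_{\min}}\le d^2+p_{\PP_{\min}}[F:\Qp]+\delta_{\PP_{\min}}=d^2+d^2[F:\Qp], \]
which is the claim.

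The only subtlety I would check is that the proof of Proposition~\ref{bound_delta} behaves well at $r=1$. In that case all index sets of pairs $i\neq j$ with $1\le i,j\le r$ are empty, so $U'_{\PP_{\min}}=U_{\PP_{\min}}$, $Z'_{\PP_{\min}}=Z_{\PP_{\min}}$, none of the auxiliary loci $\Xbar^{\ps,ij}_{\PP}$ or $Z^{ij}_{\PP}$ occur, and Proposition~\ref{bound_again} is not used; the relevant estimate is then exactly \eqref{bound_Zprime}, obtained from Lemma~\ref{comb_fibre} (whose extra hypothesis $y\notin U^{ij}_{\PP}$ is vacuous) and parts (5) and (6) of Lemma~\ref{jacobson}, together with $\dim\Xbar^{\ps}_{\PP_{\min}}=\dim\Xbar^{\ps}=1+d^2[F:\Qp]$.

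I do not expect any real obstacle: this is a degenerate ($r=1$, absolutely irreducible pseudo-character) instance of bounds already established. For a self-contained version one can argue directly: for a geometric point $y$ of $U_{\PP_{\min}}$ the specialised pseudo-character $D_y$ is absolutely irreducible, so Corollary~\ref{fibre_irr} gives $\dim X^{\gen}_y=d^2-1$; feeding this fibre bound and $\dim U_{\PP_{\min}}\le\dim\Xbar^{\ps}-1=d^2[F:\Qp]$ into parts (6) and (5) of Lemma~\ref{jacobson} yields $\dim Z_{\PP_{\min}}=\dim V_{\PP_{\min}}+1\le d^2+d^2[F:\Qp]$.
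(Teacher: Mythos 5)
Your argument is correct and is essentially the paper's own: the paper proves this by noting that $r=1$ forces $Z_{\PP_{\min}}=Z'_{\PP_{\min}}$ and then quoting \eqref{bound_Zprime}, which is exactly the estimate you identify (whether phrased as the $\PP=\PP_{\min}$ specialisation of Proposition~\ref{bound_delta} with $l_{\PP_{\min}}=d^2$, $n_{\PP_{\min}}=0$, $\delta_{\PP_{\min}}=0$, or directly via the fibre bound $\dim X^{\gen}_y=d^2-1$ and Lemma~\ref{jacobson}(5),(6)). Your check that the auxiliary loci $Z^{ij}_{\PP}$ and Proposition~\ref{bound_again} play no role when $r=1$ is exactly the point the paper makes, so nothing further is needed.
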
 
 \begin{proof} In this case $r=1$ so $Z_{\PP}=Z'_{\PP}$ and the assertion follows from \eqref{bound_Zprime}.
 \end{proof}
 
 \begin{lem}\label{bound_diff} Assume that  $\PP \neq \PP_{\min}$. If  $d=2$ then 
 $$ d^2+ d^2[F:\Qp] - \dim Z_{\PP} \ge [F:\Qp],$$  
 and $$d^2+d^2[F:\Qp] - \dim Z_{\PP}\ge 1+[F:\Qp],$$
 otherwise. 
 \end{lem}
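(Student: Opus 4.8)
The plan is to feed Proposition~\ref{bound_delta} into a short bookkeeping computation, so that almost all the content comes from there. First I would record the identity $d^2-p_{\PP}=n_{\PP}$: from \eqref{formula3} one has $p_{\PP}=l_{\PP}+n_{\PP}$, and from \eqref{formula4} one has $n_{\PP}=\tfrac12(d^2-l_{\PP})$, hence $p_{\PP}=\tfrac12(d^2+l_{\PP})$ and $d^2-p_{\PP}=n_{\PP}$. Combining this with the bound $\dim Z_{\PP}\le d^2+p_{\PP}[F:\Qp]+\delta_{\PP}$ from Proposition~\ref{bound_delta} reduces the lemma to the single inequality
\[
d^2+d^2[F:\Qp]-\dim Z_{\PP}\ \ge\ n_{\PP}[F:\Qp]-\delta_{\PP},
\]
after which it remains to bound $n_{\PP}[F:\Qp]-\delta_{\PP}$ below by $[F:\Qp]$ when $d=2$ and by $1+[F:\Qp]$ when $d>2$.

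I would then split into two cases according to whether $\delta_{\PP}$ vanishes. If $\delta_{\PP}>0$, then by definition $\delta_{\PP}=\sum_{i=1}^{k'}\binom{n_i'}{2}-(1+[F:\Qp])$, and the key observation is that the last identity in \eqref{formula4}, together with $c_i\ge1$, gives $n_{\PP}\ge\sum_{i=1}^{k'}c_i^2\binom{n_i'}{2}\ge\sum_{i=1}^{k'}\binom{n_i'}{2}$; since $[F:\Qp]\ge1$ this yields $n_{\PP}[F:\Qp]\ge\sum_{i=1}^{k'}\binom{n_i'}{2}$, and subtracting $\delta_{\PP}$ leaves precisely $1+[F:\Qp]$, which proves both statements simultaneously. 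If $\delta_{\PP}=0$, the inequality to prove becomes $n_{\PP}[F:\Qp]\ge1+[F:\Qp]$ for $d>2$ (resp.\ $\ge[F:\Qp]$ for $d=2$), and here I would argue combinatorially using $r\ge2$ (which holds since $\PP\ne\PP_{\min}$): for $d=2$ the only non-minimal partition has $r=2$ and $d_1=d_2=1$, so $n_{\PP}=d_1d_2=1$ and the bound $[F:\Qp]$ is exactly attained; for $d>2$ I claim $n_{\PP}\ge2$, since either some part satisfies $d_j\ge2$ and pairing it with any other part produces a term $d_jd_{j'}\ge2$ in $n_{\PP}=\sum_{1\le j<j'\le r}d_jd_{j'}$, or all parts equal $1$ in which case $r=d\ge3$ and $n_{\PP}=\binom r2\ge3$; either way $n_{\PP}[F:\Qp]\ge2[F:\Qp]\ge1+[F:\Qp]$.

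I do not expect a genuine obstacle here: the real input is Proposition~\ref{bound_delta}, and what remains is elementary. The only points requiring care are keeping track of the combinatorics of the partition $\PP$ and of its equivalence classes, using $[F:\Qp]\ge1$ at the right moments, and separating the tight case $d=2$ (where the asserted bound is sharp) from $d>2$.
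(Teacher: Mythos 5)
Your proof is correct and follows essentially the same route as the paper: both reduce, via Proposition \ref{bound_delta} and the identity $d^2-p_{\PP}=n_{\PP}$, to bounding $n_{\PP}[F:\Qp]-\delta_{\PP}$ below, and then split on whether $\delta_{\PP}$ vanishes, with the same combinatorial inputs ($n_{\PP}=1$ for $d=2$, $n_{\PP}\ge 2$ for $d>2$). The only cosmetic difference is that in the case $\delta_{\PP}\neq 0$ the paper rewrites $n_{\PP}[F:\Qp]-\delta_{\PP}$ as an exact sum of non-negative terms plus $1+[F:\Qp]$ using \eqref{formula4}, whereas you reach the same conclusion by chained inequalities.
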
 
 \begin{proof} Proposition \ref{bound_delta} implies that 
 $$ d^2+ d^2[F:\Qp] - \dim Z_{\PP}\ge n_{\PP}[F:\Qp] - \delta_{\PP}.$$
If $d>2$ then $n_{\PP}\ge 2$ and if $d=2$ then $n_{\PP} =1$, which implies the assertion if $\delta_{\PP}=0$. 
Let us assume that $\delta_{\PP}\neq 0$. Then using \eqref{formula4} we may write 
$$n_{\PP}[F:\Qp] -\delta_{\PP}= \sum_{1\le i< j\le k'} c_i  c_j n_i 'n_j' [F:\Qp]  + \sum_{i=1}^{k'} ( c_i^2 [F:\Qp] -1) \begin{pmatrix} n_i'\\ 2\end{pmatrix} + 1 + [F:\Qp],$$
which implies the assertion. 
 \end{proof}
 
 \begin{lem}\label{bound_Y} Let $Y$ be the preimage of $\{\mm_{R^{\ps}}\}$ in $\Xbar^{\gen}$. Then 
 $$\dim Y\le d^2 + n_{\PP_{\max}} [F:\Qp] + n_{\PP_{\max}}-1 .$$
In particular, $d^2+ d^2[F:\Qp]- \dim Y \ge 1+ l_{\PP_{\max}} [F:\Qp]  \ge 1+2 [F:\Qp]$. 
\end{lem}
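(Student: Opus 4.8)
The plan is to identify $Y$ with the fibre of $\pi\colon X^{\gen}\to X^{\ps}$ over the closed point and then invoke Proposition~\ref{bound_dim_fibre}. Concretely, since $\varpi\in\mm_{R^{\ps}}$, the preimage $Y$ of $\{\mm_{R^{\ps}}\}$ in $\Xbar^{\gen}$ equals $\Spec(A^{\gen}\otimes_{R^{\ps}}k)=X^{\gen}_y$, where $y\colon R^{\ps}\to k$ is the closed point. Krull dimension is unchanged under the base change $k\hookrightarrow\overline k$, so $\dim Y=\dim X^{\gen}_{\overline y}$ for the geometric point $\overline y\colon R^{\ps}\to\overline k$; note $\dim R^{\ps}/\mm_{R^{\ps}}=0\le 1$, so Proposition~\ref{bound_dim_fibre} applies. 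The specialization of the universal pseudo-character at $\overline y$ is $\Dbar\otimes_k\overline k=\prod_{i=1}^m\Dbar_i\otimes_k\overline k$, and by the standing hypothesis that the irreducible constituents of $\rhobar$ are absolutely irreducible, each $\Dbar_i\otimes_k\overline k$ is irreducible of dimension $d_i$. Hence the integers $(r,k,n_i)$ attached to $\overline y$ in Subsection~\ref{sec_fibre} coincide with the integers $(m,k',n_i')$ attached to $\PP_{\max}$, and $\dim\nn=\sum_{1\le i<j\le m}d_id_j=n_{\PP_{\max}}$ by \eqref{formula2} and \eqref{formula4}, while $\dim\gl=d^2$.

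Feeding this into Proposition~\ref{bound_dim_fibre} yields
$$\dim Y\le d^2-m+n_{\PP_{\max}}[F:\Qp]+\sum_{i=1}^{k'}\binom{n_i'}{2}.$$
To conclude the first displayed inequality of the lemma it then suffices to prove the elementary bound $\sum_{i=1}^{k'}\binom{n_i'}{2}-m\le n_{\PP_{\max}}-1$. This follows from the identity for $n_{\PP_{\max}}$ in \eqref{formula4}: discarding the non-negative term $\sum_{1\le i<i'\le k'}c_ic_{i'}n_i'n_{i'}'$ and using $c_i\ge 1$ gives $\sum_{i=1}^{k'}\binom{n_i'}{2}\le\sum_{i=1}^{k'}c_i^2\binom{n_i'}{2}\le n_{\PP_{\max}}$, and $m\ge 1$ finishes it.

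For the "in particular" part I would substitute $d^2=l_{\PP_{\max}}+2n_{\PP_{\max}}$ (again from \eqref{formula4}) into the bound just proved; a one-line rearrangement gives
$$d^2+d^2[F:\Qp]-\dim Y\ge 1+l_{\PP_{\max}}[F:\Qp]+n_{\PP_{\max}}\big([F:\Qp]-1\big)\ge 1+l_{\PP_{\max}}[F:\Qp],$$
since $[F:\Qp]\ge 1$ and $n_{\PP_{\max}}\ge 0$; and finally $l_{\PP_{\max}}=\sum_{j=1}^m d_j^2\ge\sum_{j=1}^m d_j=d\ge 2$ gives the last inequality $1+l_{\PP_{\max}}[F:\Qp]\ge 1+2[F:\Qp]$. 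There is no real obstacle here: the only point requiring care is the bookkeeping that matches the twist-equivalence classes at the closed point with the data defining $n_i'$ and $c_i$ for the partition $\PP_{\max}$; once that is in place everything reduces to the two elementary inequalities above.
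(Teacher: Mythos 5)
Your proposal is correct and follows essentially the same route as the paper: identify $Y$ with the fibre of $X^{\gen}\to X^{\ps}$ over the closed point, apply Proposition~\ref{bound_dim_fibre} with the twist-equivalence data of $\PP_{\max}$, and then use the elementary bound $\sum_{i=1}^{k'}\binom{n_i'}{2}\le n_{\PP_{\max}}$ (which the paper cites from the proof of Lemma~\ref{bound_diff}) together with $m\ge 1$ and the identity $d^2=l_{\PP_{\max}}+2n_{\PP_{\max}}$ for the ``in particular'' part. You merely spell out the bookkeeping that the paper leaves implicit.
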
  
\begin{proof}Proposition \ref{bound_dim_fibre} implies that 
$$\dim Y \le d^2 -m +  n_{\PP_{\max}} [F:\Qp]+ \sum_{i=1}^{k'}  \begin{pmatrix} n_i'\\ 2\end{pmatrix}.$$
As already explained in the proof of Lemma \ref{bound_diff} we have $\sum_{i=1}^{k'}  \begin{pmatrix} n_i'\\ 2\end{pmatrix}\le n_{\PP_{\max}}$. 
This implies the assertion. 
\end{proof} 

 \begin{thm}\label{main} $\dim \Xbar^{\gen} \le d^2+ d^2[F:\Qp]$.
 \end{thm}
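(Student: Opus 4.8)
The plan is to assemble the estimates established so far in this subsection by means of the stratification \eqref{XgP}. First I would apply \eqref{XgP} with $\PP=\PP_{\min}$; this is legitimate since $\PP_{\min}$ is the minimal partition, so $\PP_{\min}\le\PP'$ for every partition $\PP'$, and since $\Xbar^{\gen}_{\PP_{\min}}=\Xbar^{\gen}$. This yields the finite decomposition
$$\Xbar^{\gen}=Y\cup\bigcup_{\PP}Z_{\PP},$$
the union running over all partitions $\PP$ of $\{1,\dots,m\}$, whence
$$\dim\Xbar^{\gen}=\max\Bigl\{\dim Y,\ \max_{\PP}\dim Z_{\PP}\Bigr\}.$$
It then suffices to bound each of the finitely many terms on the right-hand side by $d^2+d^2[F:\Qp]$.

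The term $\dim Y$ is controlled by Lemma \ref{bound_Y}, which even gives the strict inequality $d^2+d^2[F:\Qp]-\dim Y\ge 1+l_{\PP_{\max}}[F:\Qp]>0$. The stratum indexed by $\PP_{\min}$ is handled by Proposition \ref{dim_Pmin}, which is precisely $\dim Z_{\PP_{\min}}\le d^2+d^2[F:\Qp]$. For a partition $\PP\ne\PP_{\min}$ I would invoke Lemma \ref{bound_diff}, which gives $d^2+d^2[F:\Qp]-\dim Z_{\PP}\ge[F:\Qp]>0$ when $d=2$ and the stronger bound $\ge 1+[F:\Qp]$ otherwise. In all cases the relevant dimension is $\le d^2+d^2[F:\Qp]$, and therefore so is the maximum, proving the theorem.

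I do not expect a genuine obstacle at this stage: the substantial work has already been carried out --- in the fibre-dimension bounds of Subsection \ref{sec_fibre} (via geometric invariant theory and Lemma \ref{bound}), in the commutative-algebra input of Lemma \ref{jacobson} that permits composing base and fibre dimensions, and in the combinatorial bookkeeping leading to Propositions \ref{bound_delta} and \ref{dim_Pmin} and to Lemma \ref{bound_diff}. The only point worth a moment's care is to verify that the decomposition \eqref{XgP} taken at $\PP=\PP_{\min}$ genuinely exhausts $\Xbar^{\gen}$ and is finite, so that passing to the maximum is valid; this is immediate from the definitions and from $m\ge 1$.
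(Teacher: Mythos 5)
Your proposal is correct and follows essentially the same route as the paper: the paper also writes $\Xbar^{\gen}=Y\cup\bigcup_{\PP}Z_{\PP}$ (obtained from $\Xbar^{\ps}=\{\mm_{R^{\ps}}\}\cup\bigcup_{\PP}U_{\PP}$, which is the same decomposition you extract from \eqref{XgP} at $\PP_{\min}$) and then bounds the maximum of the dimensions using Proposition \ref{dim_Pmin} and Lemmas \ref{bound_diff} and \ref{bound_Y}.
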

 \begin{proof} Since $\Xbar^{\ps}= \{\mm_{R^{\ps}}\}\cup \bigcup_{\PP} U_{\PP}$ we have 
 $\Xbar^{\gen}= Y \cup \bigcup_{\PP} Z_{\PP}$. Since these are closed in $\Xbar^{\gen}$ we have 
 $$\dim \Xbar^{\gen} = \max_{\PP} \{ \dim Y, \dim Z_{\PP}\}\le d^2+d^2[F:\Qp],$$ 
 by Proposition \ref{dim_Pmin} and Lemmas \ref{bound_diff} and  \ref{bound_Y}.
 \end{proof} 

Theorem \ref{main} is the main input to Corollary \ref{ci}, which proves Theorem \ref{thm_intro-1}. The missing ingredient is a description of the relationship between $X^{\gen}$ and $R_{\rhobar}^\square$, which is the subject of the next subsection.

\subsection{Completions at maximal ideals and deformation problems}\label{sec_completions}  
Let $Y\subset X^{\gen}$ be the preimage of the closed point of $X^{\ps}$ and let 
$x$ be either a closed point of $Y$ or a closed point of $X^{\gen}\setminus Y$ and let $y$ be its image in $\Spec R^{\ps}$.
 It follows from Lemmas \ref{P1R} and \ref{jacobson} that 
 $\kappa(x)$ is a finite extension of $\kappa(y)$ and 
 there are the following possibilities:
 \begin{enumerate}
 \item if $x\in Y$ then $\kappa(x)$ is a finite extension of $k$;
 \item if $x\in X^{\gen}[1/p]$ then $\kappa(x)$ is a finite extension of $L$;
 \item if $x\in \Xbar^{\gen}\setminus Y$ then $\kappa(x)$ is a local field of characteristic $p$.
 \end{enumerate}

The universal property of $A^{\gen}$ gives 
us a continuous Galois representation $$\rho_x: G_F \rightarrow \GL_d(\kappa(x)).$$ In this section we 
want to relate the completion of the local ring $\OO_{X^{\gen}, x}$ 
to a deformation problem for $\rho_x$.

We will introduce some notation to formulate the deformation 
problem for $\rho_x$. More generally, let 
$\rho: G_F\rightarrow \GL_d(\kappa)$ be a continuous 
representation, where
$\kappa$ is either a finite extension of $k$, a finite extension of $L$ 
or a local field of characteristic $p$ containing $k$ equipped with natural topology.
We first define a ring of coefficients $\Lambda$ over which the deformation problem is defined.
\begin{enumerate}
    \item If $\kappa$ is a finite field then pick an unramified extension $L'$ of $L$ with residue field $\kappa$ and let $\Lambda := \OO_{L'}$ denote the ring of integers in $L'$.
    \item If $\kappa$ is a finite extension of $L$ then let $\Lambda:=\kappa$, let $\Lambda^0$ 
be the ring of integers in $\Lambda$ and let $t=\varpi$.
    \item If $\kappa$ is a local field of characteristic $p$ then let $\OO_{\kappa}$ be the ring of integers 
in $\kappa$ and let $k'$ be its residue field. Since $\cha(\kappa)=p$ by choosing a uniformizer we obtain an isomorphism $\OO_{\kappa}\cong k'\br{t}$. Let $L'$ be an unramified extension of $L$ with residue field $k'$,
let $\Lambda^0:= \OO_{L'} \br{t}$ and let $\Lambda$ 
be the $p$-adic completion of $\Lambda^0[1/t]$. Then 
$\Lambda$ is a complete DVR with uniformiser $\varpi$
and residue field $\kappa$. We equip $\Lambda^0$ 
with its $(\varpi, t)$-adic topology, this induces 
a topology on $\Lambda^0[1/t]$ and 
$\Lambda^0[1/t]/ p^n \Lambda^0[1/t]$ for all $n\ge 1$. We equip $\Lambda= \varprojlim_{n} \Lambda^0[1/t]/ p^n \Lambda^0[1/t]$ with the projective limit topology. 
\end{enumerate}

\begin{remar} In case (3), if $\Lambda'$ is an $\OO$-algebra, which is a complete DVR with uniformiser
$\varpi$ and residue field $\kappa$ then it follows
from \cite[Ch.~IX, \S 2.3, Prop.~4]{Bourbaki_ac8et9} that $\Lambda'$ is non-canonically 
isomorphic to $\Lambda$. 
We will refer to 
$\Lambda'$ (and $\Lambda$) as an $\OO$-Cohen ring of $\kappa$. 
\end{remar}

Let $\mathfrak A_{\Lambda}$ be the category of local
Artinian $\Lambda$-algebras with residue field $\kappa$. Let $(A, \mm_A)\in \mathfrak A_{\Lambda}$.
\begin{enumerate}
    \item In case (1) $A$ is a finite $\OO/\varpi^n$-module for some $n\ge 1$, and 
we just put  the discrete topology on $A$.
    \item In case (2) 
$A$ is a finite dimensional $L$-vector space and 
we put the $p$-adic topology on $A$. 
    \item In case (3) 
$A$ is a $\Lambda^0[1/t]/ \varpi^n \Lambda^0[1/t]$-module of finite length for some $n\ge 1$ and we
put the induced topology on $A$.
\end{enumerate}

Let $D^{\square}_{\rho}(A)$ be the set of 
continuous group homomorphisms $\rho_A: G_F\rightarrow 
\GL_d(A)$, such that $\rho_A \pmod{\mm_A}=\rho$. 

\begin{prop}\label{represent_kappa}
The functor $D^{\square}_{\rho}: \mathfrak A_{\Lambda}
\rightarrow \mathrm{Sets}$ is pro-represented 
by a complete local Noetherian 
$\Lambda$-algebra $R^{\square}_{\rho}$. Moreover, there is a presentation
\begin{equation}\label{present_square_x}
R^{\square}_{\rho}\cong \Lambda\br{x_1,\ldots, x_r}/(f_1, \ldots, f_s) 
\end{equation}
with $r = \dim_{\kappa} Z^1(G_F, \ad \rho)$ and $s = \dim_{\kappa} H^2(G_F, \ad \rho)$.
\end{prop}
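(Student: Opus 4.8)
# Proof Proposal for Proposition \ref{represent_kappa}

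The plan is to verify Schlessinger's (or rather Grothendieck's) representability criteria for $D^{\square}_{\rho}$ on the category $\mathfrak A_{\Lambda}$, and then to extract the presentation from obstruction theory exactly as in the classical cases (1) and (2), paying attention to the topology introduced on the rings $A$ in case (3). First I would record that, for $(A,\mm_A)\in\mathfrak A_{\Lambda}$, the topology we placed on $A$ makes $\GL_d(A)$ a topological group and $\mm_A$ a (closed) nilpotent ideal, so that a continuous homomorphism $\rho_A\colon G_F\to\GL_d(A)$ lifting $\rho$ is the same datum as a \emph{continuous} cocycle-type deformation; since $G_F$ is topologically finitely generated as a profinite group (as $F/\Qp$ is finite), continuity is an honest finite condition and behaves well under the fibre products $A'\times_A A''$ appearing in Schlessinger's axioms. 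The functor is then seen to be pro-represented by a complete local Noetherian $\Lambda$-algebra $R^{\square}_{\rho}$ with residue field $\kappa$: the framing kills automorphisms, so $D^\square_\rho$ is even formally smooth over the unframed versal deformation functor and the tangent space is finite-dimensional because $H^1(G_F,\ad\rho)$ is finite (local Galois cohomology of a finite extension of $\Qp$ with coefficients in a finite-dimensional $\kappa$-vector space is finite-dimensional, and this persists in case (3) because $\kappa$ is a field and $G_F$ is compact with the relevant finiteness of cohomology surviving).

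Next I would establish the presentation \eqref{present_square_x}. The tangent space computation is the framed analogue of \cite[Proposition 21.1]{Mazur}: $D^\square_\rho(\kappa[\varepsilon])$ is in bijection with $Z^1(G_F,\ad\rho)$, so we may choose $r=\dim_\kappa Z^1(G_F,\ad\rho)$ and a surjection $\Lambda\br{x_1,\dots,x_r}\twoheadrightarrow R^\square_\rho$. The number of relations needed is controlled by the obstruction class living in $H^2(G_F,\ad\rho)$: the standard argument of \cite[Sec.~1.6, Proposition 2]{Mazur_GQ} produces, for the minimal presentation, a set of relations indexed by a $\kappa$-basis of $H^2(G_F,\ad\rho)$, giving $s=\dim_\kappa H^2(G_F,\ad\rho)$. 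Concretely one identifies $(f_1,\dots,f_s)$ with the image of a $\kappa$-linear map $H^2(G_F,\ad\rho)^\vee\to \mm/(\mm^2+\varpi)$ dual to the obstruction map. In cases (1) and (2) this is precisely the set-up of Mazur and Kisin respectively; in case (3) one checks that all the cohomological inputs (the $5$-term exact sequence, the interpretation of $H^2$ as the obstruction group, and finiteness of $H^i$) go through verbatim because they only use that $\ad\rho$ is a finite-dimensional $\kappa$-vector space with continuous $G_F$-action and that $G_F$ is a profinite group of finite cohomological type for such coefficients.

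The main obstacle, and the only place where case (3) genuinely differs, is topological bookkeeping: one must confirm that a deformation to $(A,\mm_A)\in\mathfrak A_\Lambda$ — where $A$ is a finite-length module over $\Lambda^0[1/t]/\varpi^n\Lambda^0[1/t]$ equipped with its induced topology, \emph{not} simply discrete — is still pinned down by continuous cochains valued in $\ad\rho\otimes_\kappa(\text{graded pieces})$, and that the functor satisfies Schlessinger's conditions (H1)--(H4) with these topologies. The key point I would isolate is that $\mm_A$ is a \emph{closed} ideal with $\mm_A^N=0$, so $A$ is built from finitely many $\kappa$-vector-space extensions, the topology on each graded piece $\mm_A^i/\mm_A^{i+1}$ is the $\kappa$-vector-space topology (finite-dimensional, hence unique), and therefore continuity of $\rho_A$ reduces inductively to continuity of a $1$-cocycle $G_F\to\ad\rho$ with values in a finite-dimensional $\kappa$-space — exactly the condition computing $H^1(G_F,\ad\rho)$. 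Granting this, the fibre products in $\mathfrak A_\Lambda$ are again in $\mathfrak A_\Lambda$ with the product topology, the relevant maps of deformation sets are bijective/surjective as required, and the classical machinery applies without further change. I would close by remarking that this is the sense in which ``the arguments of Mazur and Kisin carry over,'' so the proof is complete once the topological reduction in case (3) is spelled out.
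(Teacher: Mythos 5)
Your proposal is correct and is essentially the route the paper itself takes: the paper's proof simply cites the standard Mazur/Kisin exposition for cases (1) and (2) and asserts that the same Schlessinger-plus-obstruction-theory argument carries over to case (3), which is exactly the argument (including the topological reduction to finite-dimensional $\kappa$-vector-space graded pieces) that you spell out. The only cosmetic point is that the obstruction map gives a \emph{minimal} presentation with at most $\dim_\kappa H^2(G_F,\ad\rho)$ relations, so one obtains exactly $s$ relations after padding; with that understanding your write-up matches the intended proof.
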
 
\begin{proof} If $\kappa$ is a finite field then this is a well known consequence of the obstruction theory due to Mazur, \cite[Section 1.6]{Mazur_GQ}. (We revisit the argument in the proof of Proposition \ref{Prop-RelCI}.) If $\kappa$ is a local field then essentially the same argument 
works, except that one has to work harder to justify why the $2$-cocycle constructed out of 
an obstruction to lifting is continuous. 
Lecture 6 in \cite{mod_lift} contains 
 a very nice exposition of the result if $\kappa$ is  a finite extension of $L$. The same argument works 
if $\kappa$ is a local field of characteristic $p$. 
\end{proof} 
If we let $h^i:=\dim_{\kappa} H^i(G_F, \ad \rho)$ then 
\begin{equation}\label{r_minus_s}
r-s= \dim_{\kappa} (\ad \rho) -h^0 + h^1-h^2= d^2+d^2[F:\Qp],
\end{equation}
where the last equality follows from Euler--Poincar\'e characteristic formula, which by \cite[Theorem 3.4.1(c)]{bj} holds in all of the three settings under consideration.

\begin{prop}\label{main_complete} Let $\qq$ be the kernel of the 
map $$\Lambda\otimes_{\OO} A^{\gen} \rightarrow \kappa(x), \quad \lambda \otimes a \mapsto \bar{\lambda}\bar{a},$$
where $\bar{\lambda}$ and $\bar{a}$ denote the images
of $\lambda$ and $a$ in $\kappa(x)$. Then
the completion of $(\Lambda\otimes_{\OO} A^{\gen})_{\qq}$ with respect to the maximal ideal is naturally isomorphic to $R^{\square}_{\rho_x}$.
\end{prop}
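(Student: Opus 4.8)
The plan is to identify both sides as objects pro-representing the same functor on $\mathfrak{A}_{\Lambda}$. First I would set $B := \Lambda \otimes_{\OO} A^{\gen}$, note that $\qq$ is a maximal ideal of $B$ with residue field $\kappa(x)$ (the map is surjective onto $\kappa(x)$ since $A^\gen \to \kappa(x)$ is, and $\kappa(x)$ is a $\Lambda$-algebra via the chosen structure on $\Lambda$), and let $\hat{B}_\qq$ denote the $\qq$-adic completion of the localization $B_\qq$. Since $A^\gen$ is of finite type over $R^\ps$ and $R^\ps$ is complete local Noetherian, $B$ is Noetherian; hence $\hat B_\qq$ is a complete local Noetherian $\Lambda$-algebra with residue field $\kappa(x)$, i.e.\ an object of the relevant pro-category. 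The key point is that $\hat B_\qq$ pro-represents a deformation functor, and one checks directly that this functor is $D^\square_{\rho_x}$.

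The main step is the following. For $(A, \mm_A) \in \mathfrak{A}_\Lambda$, giving a local $\Lambda$-algebra homomorphism $\hat B_\qq \to A$ over $\Lambda$ is the same as giving a local homomorphism $B_\qq \to A$, which in turn (since $A$ is Artinian local with residue field $\kappa(x)$ and $\qq$ is the kernel of $B \to \kappa(x)$) is the same as giving a $\Lambda$-algebra homomorphism $B \to A$ lifting $B \to \kappa(x) \to A$, equivalently an $\OO$-algebra homomorphism $A^\gen \to A$ lifting $A^\gen \to \kappa(x)$. By the universal property of $A^\gen$ in Lemma \ref{existence_Agen}, an $\OO$-algebra map $A^\gen \to A$ is the same as a homomorphism of Cayley--Hamilton $R^\ps$-algebras $E \to M_d(A)$, where $A$ is regarded as an $R^\ps$-algebra via $R^\ps \to A^\gen \to \kappa(x) \to A$ composed with the deformation structure; and by Lemma \ref{topology} applied with $B = A$ (using the natural topology on $A$ described before the statement, which makes $A$ a topological $R^\ps$-algebra), such a map is automatically continuous and yields a continuous representation $G_F \to E^\times \to \GL_d(A)$. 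One then checks that the condition that this map lift the fixed map $A^\gen \to \kappa(x)$ corresponds exactly to the reduction mod $\mm_A$ being $\rho_x$; here one uses that $\rho_x$ is defined precisely by composing $G_F \to E \to M_d(A^\gen) \to M_d(\kappa(x))$, so the diagram
\[
\begin{array}{ccc}
E & \longrightarrow & M_d(A)\\
\downarrow & & \downarrow\\
M_d(\kappa(x)) & = & M_d(\kappa(x))
\end{array}
\]
commutes iff the representation $G_F \to \GL_d(A)$ reduces to $\rho_x$. Conversely, a continuous $\rho_A \in D^\square_{\rho_x}(A)$ extends by $\kappa$-linearity (resp.\ continuity) to a map $A[G_F] \to M_d(A)$, and since $\det \circ \rho_A$ reduces to $\Dbar$ it factors through a Cayley--Hamilton quotient; matching the induced pseudo-character with the structure map $R^\ps \to A$ shows $\rho_A$ arises from a unique Cayley--Hamilton map $E \to M_d(A)$, hence from a unique $A^\gen \to A$. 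This establishes a natural bijection $\Hom_{\Lambda}(\hat B_\qq, A) \cong D^\square_{\rho_x}(A)$, functorial in $A$, so $\hat B_\qq$ pro-represents $D^\square_{\rho_x}$. By Proposition \ref{represent_kappa} this functor is also pro-represented by $R^\square_{\rho_x}$, and Yoneda gives the desired isomorphism.

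The hard part, and the step requiring the most care, is the continuity and topological bookkeeping in case (3), where $\kappa(x)$ is a local field of characteristic $p$ and $\Lambda$ is an $\OO$-Cohen ring: one must verify that the ad hoc topology put on Artinian $\Lambda$-algebras $A$ (as finite-length $\Lambda^0[1/t]/\varpi^n$-modules) is compatible with the topology coming from $R^\ps$ via Lemma \ref{topology}, so that continuity of $E \to M_d(A)$ genuinely matches continuity of the Galois representation $\rho_A$ in the sense used to define $D^\square_{\rho}$. The other subtlety is checking that $\qq$ really is a maximal ideal with residue field $\kappa(x)$ and not larger — but this follows since the composite $\Lambda \to B \to \kappa(x)$ is the structural quotient map $\Lambda \to \kappa$ (surjective) and $A^\gen \to \kappa(x)$ is surjective, so $B \to \kappa(x)$ is surjective. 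Once these topological points are settled, the argument is the standard "completion at a point of a versal family represents the deformation functor" dévissage, combined with the defining universal property of $A^\gen$.
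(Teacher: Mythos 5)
Your overall strategy --- showing that the completion $\hat B_{\qq}$ pro-represents $D^{\square}_{\rho_x}$ and invoking Proposition \ref{represent_kappa} --- is the same as the paper's, and your forward direction (from local maps $\hat B_{\qq}\to A$ to deformations, via Lemma \ref{existence_Agen} and Lemma \ref{topology}) matches the paper's first step. The genuine gap is in the converse direction, which is where essentially all of the paper's work lies, and it is not the topological bookkeeping you flag. You write that, given $\rho_A\in D^{\square}_{\rho_x}(A)$, one ``matches the induced pseudo-character with the structure map $R^{\ps}\to A$''. But $A$ carries no map from $R^{\ps}$ a priori: it is only a $\Lambda$-algebra, and such a map must be manufactured from the pseudo-character $\det\circ\rho_A$. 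The universal property of $R^{\ps}$ cannot be applied directly, because $R^{\ps}$ pro-represents $D^{\ps}$ only on $\mathfrak A_{\OO}$ (Artinian local $\OO$-algebras with residue field $k$), whereas $A\in\mathfrak A_{\Lambda}$ has residue field $\kappa(x)$, which in cases (2) and (3) is a local field; $A$ is not (a pro-object of) $\mathfrak A_{\OO}$. The same problem blocks the assertion that $A[G_F]\to M_d(A)$ ``factors through a Cayley--Hamilton quotient'': the relevant quotient is $E=R^{\ps}\br{G_F}/\CH(D^u)$, and to factor through it you already need the map $R^{\ps}\to A$, as well as a boundedness/continuity argument to pass from $A[G_F]$ to the completed group algebra.

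This missing step is exactly what the paper supplies, following \cite[Proposition 9.5]{kisin_over}: conjugate $\rho_A$ by a lift of a matrix $M$ so that the residual representation lands in $\GL_d(\OO_{\kappa(x)})$, construct an ascending chain of open local $\Lambda^0$-subalgebras $A^0_n\subset A$ whose residue field is finite, use compactness of $G_F$ to get $\rho'(G_F)\subset\GL_d(A^0_n)$ for $n\gg 0$, apply the universal property of $R^{\ps}$ over $A^0_n$ (where it does apply, by Lemma \ref{closed_pts}), identify $\Lambda^0\otimes_{\OO}E$ with the Cayley--Hamilton quotient of $\Lambda^0\otimes_{\OO}R^{\ps}\br{G_F}$, and only then invert $t$ and conjugate back. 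Separately, you assert but do not prove uniqueness of the Cayley--Hamilton map $E\to M_d(A)$ inducing $\rho_A$; this is needed for your bijection to be well defined and is proved in the paper using that the characteristic-polynomial coefficients determine the composite $R^{\ps}\to A$ (Cayley--Hamilton in $M_d(A)$ plus \cite[Corollary 1.14]{che_durham}) together with surjectivity of $R^{\ps}[G_F]\to E$. Finally, a small slip: the phrases ``lifting $B\to\kappa(x)\to A$'' and ``via $R^{\ps}\to A^{\gen}\to\kappa(x)\to A$'' presuppose a ring section $\kappa(x)\to A$, which does not exist in cases (1) and (3); what you mean is compatibility with reduction modulo $\mm_A$.
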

\begin{proof} We will prove the proposition, 
when $\kappa(x)$ is a local field of characteristic $p$. The other cases are similar and are left to the 
reader. 

Let $\widehat{B}$ be the completion of  $(\Lambda\otimes_{\OO} A^{\gen})_{\qq}$. It follows
from Lemma \ref{335} below that $\widehat{B}/\varpi \widehat{B}$ (and hence $\widehat{B}$) is Noetherian.
Thus $\widehat{B}/\qq^n \widehat{B} \in \mathfrak A_{\Lambda}$ for all $n\ge 1$. The composition
$$ \Lambda\otimes_{\OO} E\overset{\id\otimes j}{\longrightarrow} \Lambda\otimes_{\OO} M_d(A^{\gen})\rightarrow M_d( \widehat{B}/\qq^n \widehat{B})$$
induces a continuous representation
$G_F \rightarrow
\GL_d(\widehat{B}/\qq^n \widehat{B})$ by Lemma \ref{topology}, which is a 
deformation of $\rho_x$ to $\widehat{B}/\qq^n \widehat{B}$, and hence 
a map of local $\Lambda$-algebras $R^{\square}_{\rho_x}\rightarrow \widehat{B}/\qq^n \widehat{B}$. By passing to the projective limit over $n$ we obtain a continuous representation 
$\hat{\rho}: G_F\rightarrow \GL_d(\widehat{B})$ and a map of local $\Lambda$-algebras $R^{\square}_{\rho_x}\rightarrow \widehat{B}$. 

Let $(A, \mm_A)\in \mathfrak A_{\Lambda}$ and let 
$\rho: G_F\rightarrow \GL_d(A)$ be a continuous 
representation such that $\rho\pmod{\mm_A}=\rho_x$. 
We claim that there is a unique homomorphism of local $\Lambda$-algebras 
 $\varphi: \widehat{B}\rightarrow A$, 
such that $\rho$ is equal to the composition 
$\GL_d(\varphi)\circ \hat{\rho}$. The claim implies
that the map $R^{\square}_{\rho_x}\rightarrow \widehat{B}$ constructed above is an isomorphism.

The proof of the claim is based  
on \cite[Proposition 9.5]{kisin_over}. Following 
its proof, we may construct an ascending chain of 
local open $\Lambda^0$-subalgebras $A_n^0$ of $A$ for $n\ge 1$, such 
that for all $n$ the following hold: $A_n^0[1/t]=A$, the image of $A_n^0$ under the 
projection $b: A \rightarrow \kappa(x)$ 
is equal to $\OO_{\kappa(x)}$ and 
$\bigcup_{n\ge 1} A_n^0= b^{-1}(\OO_{\kappa(x)})$. 
Let $M\in \GL_d(\kappa(x))$ be a matrix such that the 
image of $G_F$ under $M \rho_x M^{-1}$ is contained 
in $\GL_d(\OO_{\kappa(x)})$. Let 
$x'\in X^{\gen}$ correspond to the representation $M \rho_x M^{-1}$. Then $\kappa(x')=\kappa(x)$ and the image of 
$x': A^{\gen}\rightarrow \kappa(x)$ is contained in $\OO_{\kappa(x)}$. 
Let $z\in X^{\gen}$ be the composition 
$z: A^{\gen}\overset{x'}\longrightarrow \OO_{\kappa(x)}\rightarrow k'$, where  $k'$ is the residue field of $\OO_{\kappa(x)}$, 
let $\widetilde{M}\in \GL_d(A)$ be a matrix lifting $M$ and let $\rho':= \widetilde{M} \rho \widetilde{M}^{-1}$. Since $G_F$ is compact  $\rho'(G_F)$ 
will be contained in some $\GL_d(A^0_n)$ for $n \gg 0$. 
We may consider $\rho': G_F \rightarrow \GL_d(A^0_n)$ as 
a deformation of $\rho_z$ to $A^0_n$. Since the pseudo-character of $\rho_z$ is equal to 
$\Dbar\otimes_k k'$ by Lemma \ref{closed_pts},
the pseudo-character of $\rho':G_F\rightarrow \GL_d(A^0_n)$ is a deformation 
of $\Dbar\otimes_k k'$ to $A^0_n$ and hence induces a map 
of local $\OO$-algebras $R^{\ps}\rightarrow A_n^0$. 
Thus  $\rho'$ factors through the map 
$ \Lambda^0\otimes_{\OO} R^{\ps}\br{G_F}\rightarrow M_d(A^0_n)$, which will factor through the Cayley--Hamilton quotient
$(\Lambda^0\otimes_{\OO} R^{\ps}\br{G_F})/ \CH(\Lambda^0\otimes_\OO D^u)\rightarrow M_d(A^0_n)$.
It follows from  \cite[Section 1.22]{che_durham} or  \cite[Lemma 1.1.8.6]{WE_thesis} that 
$$\Lambda^0\otimes_{\OO} E\cong (\Lambda^0\otimes_{\OO} R^{\ps}\br{G_F})/ \CH(\Lambda^0\otimes_\OO D^u).$$
After inverting $t$ and conjugating by $\widetilde{M}^{-1}$ we obtain a map of $\Lambda^0[1/t]$-algebras 
$\Lambda^0[1/t]\otimes_{\OO} E \rightarrow M_d(A)$, such that if we compose this map with the map induced by 
$G_F\rightarrow R^{\ps}\br{G_F}\rightarrow E$ then we get back $\rho$. Since $A$ is an Artinian $\Lambda$-algebra, $\varpi^n \Lambda^0[1/t]$ will be mapped to zero for $n\gg 0$, and thus the map extends
to a map of $\Lambda$-algebras 
$\alpha:\Lambda\otimes_{\OO} E\rightarrow M_d(A)$. 
The universal property of $j: E\rightarrow M_d(A^{\gen})$ implies that there is a unique map 
of $\Lambda$-algebras $\varphi: \widehat{B}\rightarrow A$, such that $M_d(\varphi)\circ (\id\otimes j) = \alpha$. 

It remains to show the uniqueness of the map $\varphi$,
which is equivalent to showing that there is at most one map of $\Lambda\otimes_{\OO} R^{\ps}$-algebras $\alpha: \Lambda\otimes_{\OO} E\rightarrow M_d(A)$ such that the composition
with $G_F\rightarrow \Lambda \otimes_\OO E$ gives $\rho$. 
It follows from the Cayley--Hamilton theorem in 
$M_d(A)$ and \cite[Corollary 1.14]{che_durham},  that the map $\Lambda\otimes_{\OO} R^{\ps}\rightarrow \Lambda\otimes_{\OO} E\overset{\alpha}{\longrightarrow} A$ is uniquely determined by $\rho$. Thus $\alpha$ 
is uniquely determined on the image 
of $\Lambda\otimes_{\OO} R^{\ps}[G_F]$ 
in $\Lambda\otimes_{\OO} E$.  The map 
$R^{\ps}[G_F]\rightarrow E$ is surjective, since the image is dense and closed as $E$ is a finitely generated $R^{\ps}$-module,  hence $\alpha$ is uniquely determined by $\rho$. 
\end{proof}

The following Lemma is a mild generalization of 
\cite[Lemma 3.3.5]{bj}.

\begin{lem}\label{333} Let $R$ be a complete local Noetherian 
$k$-algebra with residue field $k$, let $A$ be a finitely
generated $R$-algebra, let $\pp\in \Spec A$ such that 
its image in $\Spec R$ lies in $P_1R$, and  let $\qq$ be the
kernel of the map 
$$B:=\kappa(\pp)\otimes_k A \rightarrow \kappa(\pp), \quad 
x\otimes a \mapsto x (a+\pp).$$
Then $\hat{B}_{\qq}\cong \hat{A}_{\pp}\br{T}$. In particular, 
$A_{\pp}$ is regular (resp.~complete intersection) if and only  if $\hat{B}_{\qq}$ is.
\end{lem}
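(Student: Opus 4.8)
The plan is to exhibit $\hat B_\qq$ as a flat $\hat A_\pp$-algebra and identify its closed fibre. First I would observe that $B=\kappa(\pp)\otimes_k A$ is flat over $A$, being the base change of $A$ along the field extension $k\to\kappa(\pp)$, and that the composite $A\to B\to\kappa(\pp)$ is the canonical surjection $A\to A/\pp$ followed by the inclusion $A/\pp\hookrightarrow\kappa(\pp)$; hence $\qq\cap A=\pp$, and $A_\pp\to B_\qq$ is a flat local homomorphism. Passing to completions (once we know $B_\qq$, equivalently $\hat B_\qq$, is Noetherian — see below), $\hat A_\pp\to\hat B_\qq$ is again flat and local, and its closed fibre is the completion of $B_\qq\otimes_{A_\pp}\kappa(\pp)=(B\otimes_A\kappa(\pp))_{\bar\qq}=(\kappa(\pp)\otimes_k\kappa(\pp))_{\Delta}$, where $\Delta$ is the kernel of the multiplication map $\kappa(\pp)\otimes_k\kappa(\pp)\to\kappa(\pp)$.

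The crux is then to show that this closed fibre is a formal power series ring in one variable, $\widehat{(\kappa(\pp)\otimes_k\kappa(\pp))_{\Delta}}\cong\kappa(\pp)\br{T}$. Since the image of $\pp$ in $\Spec R$ lies in $P_1R$ and $R$ is a $k$-algebra, $\kappa(\pp)$ is a finite extension of the fraction field of a one-dimensional complete local Noetherian domain with finite residue field, hence a local field of characteristic $p$; this is exactly the setting of \cite[Lemma 3.3.3]{bj}, from which I would take this computation, the point being that the argument there uses $A$ only through its residue field at $\pp$, so it carries over to an arbitrary finitely generated $R$-algebra $A$. Granting this, $\hat B_\qq$ is flat over the Noetherian complete local ring $\hat A_\pp$ with Noetherian closed fibre $\kappa(\pp)\br{T}$, hence is Noetherian; picking $T\in\hat B_\qq$ lifting the coordinate on the closed fibre gives a local homomorphism $\hat A_\pp\br{T}\to\hat B_\qq$ of $\hat A_\pp$-algebras that is flat and induces an isomorphism modulo $\pp\hat A_\pp$, and is therefore an isomorphism.

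The last sentence of the lemma then follows: completion of a Noetherian local ring preserves and reflects regularity and the complete-intersection property, so $A_\pp$ is regular (resp.\ a complete intersection) if and only if $\hat A_\pp$ is, and adjoining a formal variable does likewise, so $\hat A_\pp$ is if and only if $\hat A_\pp\br{T}\cong\hat B_\qq$ is. I expect the main obstacle to be the middle paragraph: $\kappa(\pp)\otimes_k\kappa(\pp)$ is not Noetherian and its K\"ahler cotangent space along $\Delta$ is infinite-dimensional, so one really has to use the explicit local-field structure of $\kappa(\pp)$ to see that completing along the diagonal collapses everything but a single formal parameter — this is the technical heart inherited from \cite[Lemma 3.3.3]{bj}.
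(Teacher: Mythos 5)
Your proposal diverges from the paper's argument in its superstructure. The paper's proof consists of two observations: since the image $\pp'$ of $\pp$ lies in $P_1R$, the field $\kappa(\pp')$ is a local field of characteristic $p$, and $\kappa(\pp)$ is a finite extension of it, hence again a local field (you make this observation too); and then the explicit proof of \cite[Lemma 3.3.3]{bj} is rerun verbatim with $A$ in place of $R$, so that the isomorphism $\hat{A}_{\pp}\br{T}\cong\hat{B}_{\qq}$ is constructed directly. You instead try to extract from \cite{bj} only the diagonal fibre computation $(\kappa(\pp)\otimes_k\kappa(\pp))^{\wedge}_{\Delta}\cong\kappa(\pp)\br{T}$ and to rebuild the lemma around it by general flatness and completion arguments. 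That reassembly is where there is a genuine gap: $B=\kappa(\pp)\otimes_k A$ is not Noetherian and $\qq$ is not known to be finitely generated, so none of the steps you treat as formal are automatic. Concretely: (i) identifying the closed fibre of $\hat{A}_{\pp}\to\hat{B}_{\qq}$ with the completion of $(\kappa(\pp)\otimes_k\kappa(\pp))_{\Delta}$, and the flatness of $\hat{A}_{\pp}\to\hat{B}_{\qq}$, require Artin--Rees type statements for $B_{\qq}$ which you do not have before knowing the conclusion; (ii) the assertion ``flat over the Noetherian complete local ring $\hat{A}_{\pp}$ with Noetherian closed fibre, hence Noetherian'' is false as stated: take $A=k\br{x}$ and let $B$ be the localization at $(x)$ of $\mathrm{colim}\,(A[y_1]\to A[y_2]\to\cdots)$ with $y_n\mapsto xy_{n+1}$; this is a local domain, flat over $A$, with closed fibre $k$, yet $0\neq y_1\in\bigcap_n x^nB$, so it is not Noetherian. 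To repair this you would need $\hat{B}_{\qq}$ to be complete for a finitely generated defining ideal (so that the ``complete plus finitely generated ideal plus Noetherian quotient implies Noetherian'' criterion applies), and establishing that completeness is itself one of the delicate points when $\qq$ is not finitely generated. The paper's route avoids all of this precisely because \cite[Lemma 3.3.3]{bj} does not separate the fibre computation from the construction of the map over the completed base.

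Two smaller points. Your diagnosis of the technical heart is slightly off: since $k$ is perfect and $[\kappa(\pp):\kappa(\pp)^p]=p$, one has $\Delta/\Delta^2\cong\Omega_{\kappa(\pp)/k}$ of dimension one over $\kappa(\pp)$ (spanned by $dt$), not infinite-dimensional; the genuine difficulty is the non-Noetherianity of $\kappa(\pp)\otimes_k\kappa(\pp)$ and of $B$, not the size of the cotangent space. On the other hand, your final steps are fine once the above is in place: the ``flat and an isomorphism modulo $\mathfrak{m}_{\hat{A}_{\pp}}$ implies isomorphism'' argument works for Noetherian complete local rings, and the deduction of the last sentence of the lemma from the behaviour of regularity and the complete intersection property under completion and under adjoining a formal variable is correct.
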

\begin{proof} Let $\pp'$ be the image of $\pp$ in $\Spec R$. 
Since by assumption $\pp'\in P_1R$, the residue field 
$\kappa(\pp')$ a local field of characteristic $p$. Since
$A$ is finitely generated over $R$, $\kappa(\pp)$ is a 
finite extension of $\kappa(\pp')$ and thus is also a local field of characteristic $p$. 
The proof of \cite[Lemma 3.3.4]{bj} goes through verbatim 
by replacing $R$ with $A$ everywhere. 
\end{proof}

\begin{lem}\label{335} Let $R$ be a complete local Noetherian 
$\OO$-algebra with residue field $k$, let $A$ be a finitely
generated $R$-algebra, let $\pp\in \Spec A$ such that 
$\kappa(\pp)$ is a local field of characteristic $p$, and  let $\qq$ be the
kernel of the map 
$$B:=\Lambda\otimes_\OO A \rightarrow \kappa(\pp), \quad 
\lambda\otimes a \mapsto \bar{\lambda} (a+\pp),$$
Then $\hat{B}_{\qq}\cong \hat{A}_{\pp}\br{T}$. In particular, 
$A_{\pp}$ is regular (resp.~complete intersection) if and only  if $\hat{B}_{\qq}$ is.
\end{lem}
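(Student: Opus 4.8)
The plan is to reduce the statement modulo $\varpi$ to Lemma~\ref{333} and then lift the resulting isomorphism of special fibres; the $\varpi$-torsion-freeness hypothesis is needed only at the final injectivity step. Since $\kappa(\pp)$ has characteristic $p$ and $A$ is an $\OO$-algebra, $\varpi\in\pp$, and likewise $\varpi\in\qq$. Put $\bar R:=R/\varpi R$, $\bar A:=A/\varpi A$, $\bar\pp:=\pp/\varpi A$, and $\bar B:=B/\varpi B$, $\bar\qq:=\qq/\varpi B$. Because $\Lambda/\varpi\Lambda=\kappa(\pp)$ and $\OO/\varpi=k$, we have $\bar B=\kappa(\pp)\otimes_k\bar A$ and $\bar\qq=\ker(\kappa(\pp)\otimes_k\bar A\to\kappa(\pp))$. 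The image $\pp_0$ of $\bar\pp$ in $\Spec\bar R$ lies in $P_1\bar R$: as $\bar A$ is of finite type over the complete local Noetherian $k$-algebra $\bar R$, the field $\kappa(\pp)=\kappa(\bar\pp)$ is finitely generated over $\kappa(\pp_0)$, and since $\kappa(\pp)$ is a local field of characteristic $p$ a transcendence-degree count forces $\trdeg_{\kappa(\pp_0)}\kappa(\pp)=0$ and $\dim\bar R/\pp_0=\trdeg_k\kappa(\pp_0)=1$. Hence Lemma~\ref{333} applies to $(\bar R,\bar A,\bar\pp)$ and yields an isomorphism of $\hat{\bar A}_{\bar\pp}$-algebras $\psi\colon\hat{\bar A}_{\bar\pp}\br{T}\iso\hat{\bar B}_{\bar\qq}$.

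Next I would build a lift. The $\OO$-algebra map $A\to B=\Lambda\otimes_\OO A$, $a\mapsto 1\otimes a$, sends $\pp$ into $\qq$ and induces a local homomorphism $\hat A_\pp\to\hat B_\qq$; pick $\tilde T\in\qq\hat B_\qq$ lifting $\psi(T)$. Since $\hat B_\qq$ is $\qq$-adically complete there is a unique local $\hat A_\pp$-algebra homomorphism $\phi\colon\hat A_\pp\br{T}\to\hat B_\qq$ with $\phi(T)=\tilde T$, and by construction $\phi$ reduces modulo $\varpi$ to $\psi$. The key observation is that, because $\varpi\in\pp$, one has $\varpi\in\mm_{\hat A_\pp}$, so $\varpi\hat B_\qq\subseteq\phi(\mm)\hat B_\qq$ with $\mm$ the maximal ideal of $\hat A_\pp\br{T}$; combining this with $\hat B_\qq/\varpi\hat B_\qq=\hat{\bar B}_{\bar\qq}$ and the fact that $\psi$ carries maximal ideal to maximal ideal, one gets $\qq\hat B_\qq=\phi(\mm)\hat B_\qq$. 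As $\mm$ is finitely generated over the Noetherian ring $\hat A_\pp\br{T}$ and $\hat B_\qq/\qq\hat B_\qq=\kappa(\pp)$, it follows that $\qq\hat B_\qq$ is a finitely generated ideal and that $\hat B_\qq$ is Noetherian; moreover a successive-approximation argument, using completeness of both rings, continuity of $\phi$, and the identity $\qq\hat B_\qq=\phi(\mm)\hat B_\qq$, shows that $\phi$ is surjective. None of this uses the $\varpi$-torsion-free hypothesis, which is why the Noetherianity needed in Proposition~\ref{main_complete} holds unconditionally.

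It remains to prove that $\phi$ is injective, and here the hypothesis enters. Suppose first that $\hat B_\qq$ is $\varpi$-torsion free. Then $\phi\bmod\varpi=\psi$ being an isomorphism gives $\ker\phi\subseteq\varpi\,\hat A_\pp\br{T}$, and if $x=\varpi x_1\in\ker\phi$ then $\varpi\phi(x_1)=0$ forces $\phi(x_1)=0$, so $x_1\in\ker\phi$; iterating, $\ker\phi\subseteq\bigcap_n\varpi^n\hat A_\pp\br{T}=0$ by $\varpi$-adic separatedness. In the other case, that $A_\pp$ is $\varpi$-torsion free, one reduces to the previous case by showing $\hat B_\qq$ is $\varpi$-torsion free: since $\Lambda$ is flat over $\OO$ and $\qq\cap A=\pp$, one has $B_\qq[\varpi]\cong(\Lambda\otimes_\OO A)_\qq\otimes_{A_\pp}(A_\pp)[\varpi]=0$, and $\varpi$-torsion-freeness is then inherited by the completion $\hat B_\qq$ (using the Noetherianity established above). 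Therefore $\phi\colon\hat A_\pp\br{T}\iso\hat B_\qq$. The final assertion follows formally: $A_\pp$ is regular (resp.\ a complete intersection) if and only if $\hat A_\pp$ is, if and only if $\hat A_\pp\br{T}$ is, if and only if $\hat B_\qq$ is, since adjoining a power-series variable and completion both preserve and reflect these properties.

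The main obstacle is the commutative algebra around the possibly non-Noetherian ring $B=\Lambda\otimes_\OO A$: one must first extract Noetherianity and the correct $\varpi$-adic behaviour of $\hat B_\qq$ from the special-fibre isomorphism before the lifting can run, and one must verify that the hypothesis ``$A_\pp$ is $\varpi$-torsion free'' really does force $\hat B_\qq$ to be $\varpi$-torsion free --- without a torsion-freeness assumption the isomorphism on the special fibre need not lift, as already the surjection $k\br{x}\to k[x]/(x^3)$ shows.
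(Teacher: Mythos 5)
Your proposal follows the paper's own route: reduce modulo $\varpi$ and use Lemma \ref{333} to identify $\hat{B}_{\qq}/\varpi$ with $(\hat{A}_{\pp}/\varpi)\br{T}$, lift $T$ to get a map $\hat{A}_{\pp}\br{T}\to\hat{B}_{\qq}$ which is an isomorphism modulo $\varpi$, deduce surjectivity by Nakayama/successive approximation, deduce injectivity from $\varpi$-torsion-freeness of $\hat{B}_{\qq}$, and in the remaining case transfer torsion-freeness from $A_{\pp}$ to $B_{\qq}=(\Lambda\otimes_{\OO}A_{\pp})_{\qq}$ via flatness of $\Lambda$ over $\OO$ and then to the completion. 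On the genuinely delicate points (the possible non-Noetherianness of $B$, compatibility of the completion with reduction mod $\varpi$) you are at least as careful as the paper, so I have no complaint there.

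The one step that is actually wrong is your verification that the image $\pp_0$ of $\bar\pp$ in $\Spec\bar R$ lies in $P_1\bar R$. You assert $\dim\bar R/\pp_0=\trdeg_k\kappa(\pp_0)=1$ and that a ``transcendence-degree count'' forces $\trdeg_{\kappa(\pp_0)}\kappa(\pp)=0$; but the identity ``dimension $=$ transcendence degree of the fraction field'' holds for finitely generated $k$-algebra domains, not for complete local ones: already for $\bar R/\pp_0\cong k\br{x}$ the dimension is $1$ while $\trdeg_k k((x))$ is uncountable, and likewise $\kappa(\pp)\cong k'((t))$ has infinite transcendence degree over the finite field $k$, so neither $\trdeg_k\kappa(\pp_0)=1$ nor the vanishing of $\trdeg_{\kappa(\pp_0)}\kappa(\pp)$ follows by counting. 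Fortunately the detour is unnecessary: the hypothesis ``image in $P_1R$'' in Lemma \ref{333} is used in its proof only to conclude that $\kappa(\pp)$ is a local field of characteristic $p$, so that the argument of \cite[Lemma 3.3.3]{bj} applies with $A$ in place of $R$; in Lemma \ref{335} this property of $\kappa(\pp)$ is assumed outright, and one can therefore invoke the argument of Lemma \ref{333} directly, as the paper does. (If you do want $\pp_0\in P_1\bar R$, it is true, but needs a different argument: one first shows that $\bar R/\pp_0$ maps into $\OO_{\kappa(\pp)}$ compatibly with the maximal ideals, and then $\OO_{\kappa(\pp)}$ is finite over the image of the power series ring on any nonzero element of the image of the maximal ideal, which forces $\dim\bar R/\pp_0\le 1$; the dimension cannot be $0$ because a local field is not a finitely generated field extension of $k$.) With that step repaired or removed, your argument is correct and is essentially the paper's.
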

\begin{proof} We first observe that $\hat{B}_{\qq}$ is flat 
over $\hat{A}_{\pp}$.  This can be seen as follows. Since $\Lambda$ is $\OO$-flat, $B$ is $A$-flat. Since $B_{\qq}$ is $B$-flat and 
$\hat{B}_{\qq}$ is $B_{\qq}$-flat, we conclude that $\hat{B}_{\qq}$ 
is $A$-flat. Thus $\hat{B}_{\qq}\otimes_A \hat{A}_{\pp}$ is 
$\hat{A}_{\pp}$-flat. This ring is isomorphic to $\hat{B}_{\qq}\otimes_{A_{\pp}} \hat{A}_{\pp}$. Since the map $A_{\pp}/ \pp^n A_{\pp}\rightarrow 
\hat{A}_{\pp}/ \pp^n \hat{A}_{\pp}$ is an isomorphism for all $n\ge 1$, 
$\hat{B}_{\qq}$ is a completion of $\hat{B}_{\qq}\otimes_{A_{\pp}} \hat{A}_{\pp}$ at $\qq$, which implies the claim. 

It follows from Lemma \ref{333} that the map $A\rightarrow B$, $a\mapsto 1\otimes a$
induces a map of local rings $\hat{A}_{\pp}\rightarrow \hat{B}_\qq$, 
such that $\hat{B}_{\qq}/\varpi\cong (\hat{A}_{\pp}/\varpi)\br{T}$. 
By choosing $b\in \hat{B}_{\qq}$, which maps to $T$ under this isomorphism, 
we obtain a map $\varphi: \hat{A}_{\pp}\br{T} \rightarrow  \hat{B}_{\qq}$, which induces an isomorphism modulo $\varpi$. Thus $\varphi$  is a homomorphism 
of pseudo-compact $\hat{A}_{\pp}$-modules, which induces an isomorphism after applying $\otimes_{\hat{A}_{\pp}} \kappa$. Thus 
$(\coker \varphi)\otimes_{\hat{A}_{\pp}} \kappa=0$, and since 
$\hat{B}_{\qq}$ is $\hat{A}_{\pp}$-flat, $(\ker \varphi)\otimes_{\hat{A}_{\pp}} \kappa=0$. Topological Nakayama's lemma\footnote{Lemma 0.3.3 in Expos\'e $VII_B$ in SGA3.} for pseudo-compact modules implies that $\coker \varphi$
and $\ker \varphi$ are both zero. 
\end{proof} 

\begin{lem}\label{334} Let $R$ be a complete local Noetherian 
$\OO$-algebra with residue field $k$, let $A$ be a finitely
generated $R$-algebra, let $\pp\in \Spec A$ such that $\kappa(\pp)$ is either a finite extension of $L$ or a finite extension of $k$.
Let $\qq$ be the
kernel of the map 
$$B:=\Lambda\otimes_{\OO} A \rightarrow \kappa(\pp), \quad 
\lambda\otimes a \mapsto \bar{\lambda} (a+\pp).$$
Then $\hat{B}_{\qq}\cong \hat{A}_{\pp}$. 
\end{lem}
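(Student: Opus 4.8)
The plan is to show that in both cases $\widehat{B}_{\qq}$ is a \emph{finite étale} local $\widehat{A}_{\pp}$-algebra whose residue field coincides with that of $\widehat{A}_{\pp}$, and then to invoke the elementary fact that such an algebra must be equal to $\widehat{A}_{\pp}$.

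First I would put both cases in a uniform form. If $\kappa(\pp)$ is a finite extension of $L$, then $\kappa(\pp)$ has characteristic $0$, so $\varpi\notin\pp$; setting $A':=A[1/\varpi]$ one has $\pp\in\Spec A'$, $A'_{\pp}=A_{\pp}$, and $B=\Lambda\otimes_{\OO}A=\kappa(\pp)\otimes_{L}A'$ is a finite étale $A'$-algebra because $\kappa(\pp)/L$ is finite separable. If $\kappa(\pp)$ is a finite extension of $k$, set $A':=A$; then $\Lambda$ is the ring of integers of a finite unramified extension of $L$, hence a finite étale $\OO$-algebra, so again $B=\Lambda\otimes_{\OO}A$ is a finite étale $A'$-algebra. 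In either case $B$ is finite étale over $A'$, the map $a\mapsto 1\otimes a$ sends $\pp$ into $\qq$ with $\qq\cap A'=\pp$, and $\widehat{A'_{\pp}}=\widehat{A}_{\pp}$.

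Next, since $\widehat{A}_{\pp}$ is a complete local ring and $B$ is finite over $A'$, the ring $B\otimes_{A'}\widehat{A}_{\pp}$ is a complete semilocal ring, and by the standard structure theory it decomposes as $\prod_i\widehat{B}_{\qq_i}$, the product running over the finitely many primes $\qq_i$ of $B$ lying over $\pp$; in particular $\widehat{B}_{\qq}$ is a direct factor of $B\otimes_{A'}\widehat{A}_{\pp}$. But $B\otimes_{A'}\widehat{A}_{\pp}$ is obtained from the finite étale $A'$-algebra $B$ by base change along $A'\to\widehat{A}_{\pp}$, hence is finite étale over $\widehat{A}_{\pp}$, and a direct factor of a finite étale algebra is again finite étale; thus $\widehat{B}_{\qq}$ is finite étale over $\widehat{A}_{\pp}$. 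Moreover the homomorphism $B\to\kappa(\pp)$ defining $\qq$ is surjective, since its image already contains the image of the structure map $\Lambda\to\kappa(\pp)$, which is all of $\kappa(\pp)$; hence $\kappa(\qq)=\kappa(\pp)$, which is also the residue field of $\widehat{A}_{\pp}$, and the induced local map $\widehat{A}_{\pp}\to\widehat{B}_{\qq}$ is the identity on residue fields.

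It remains to recall that a finite étale local algebra $C$ over a local ring $(S,\mm)$ whose residue field equals $S/\mm$ must coincide with $S$: being finite flat over the local ring $S$, the algebra $C$ is finite free of some rank $n$ (and $n\ge 1$ since $C\neq 0$); the fibre $C/\mm C$ is étale over the field $S/\mm$, hence reduced, and since $C$ is local, $C/\mm C$ is a field, necessarily the residue field of $C$, which by hypothesis is $S/\mm$; therefore $n=\dim_{S/\mm}(C/\mm C)=1$, so $1_C$ generates $C$ over $S$ by Nakayama and $S\to C$ is an isomorphism. Applying this with $S=\widehat{A}_{\pp}$ and $C=\widehat{B}_{\qq}$ gives $\widehat{B}_{\qq}\cong\widehat{A}_{\pp}$. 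The only point that needs care — and the main thing to get right — is the reduction in the case $\kappa(\pp)$ finite over $L$: one must invert $\varpi$ at the outset so that $B$ is \emph{finite} (not merely of finite type) over the base, which is what makes the étale–decomposition argument over the complete local ring $\widehat{A}_{\pp}$ available, and is also the feature distinguishing this (unramified) case from Lemma~\ref{335}.
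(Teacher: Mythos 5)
Your proof is correct, and it takes a genuinely different route from the paper. The paper's proof is two lines: it observes that the completion of $\Lambda\otimes_{\OO}\Lambda$ along the kernel of the multiplication map $\Lambda\otimes_{\OO}\Lambda\to\Lambda$ is just $\Lambda$ (citing the analogue of Lemma \ref{333}, i.e.\ \cite[Lemma 3.3.4]{bj}, to explain why no extra variable $T$ appears), and then says that the formal argument of Lemma \ref{333} (which is \cite[Lemma 3.3.3]{bj} with $R$ replaced by $A$) goes through verbatim. You instead argue self-containedly: $B$ is finite \emph{\'etale} over $A$ (resp.\ over $A[1/\varpi]$ after the correct reduction in the characteristic-zero case), you base change along $A_{\pp}\to\widehat{A}_{\pp}$, use the standard decomposition of a finite algebra over a complete local Noetherian ring into the product $\prod_i\widehat{B}_{\qq_i}$ over the primes above $\pp$, and then note that the factor $\widehat{B}_{\qq}$ is finite \'etale over $\widehat{A}_{\pp}$ with trivial residue field extension, hence free of rank one, hence equal to $\widehat{A}_{\pp}$. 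The underlying reason is the same in both arguments — $\Lambda$ is \'etale over $\OO$ (resp.\ $\kappa(\pp)$ separable over $L$), which is exactly why the completed diagonal tensor square is trivial — but your version buys independence from \cite{bj} and makes the mechanism (unramifiedness plus no residue extension forces rank one) explicit, whereas the paper's version buys brevity and makes the contrast with Lemmas \ref{333} and \ref{335}, where the completed diagonal contributes the extra variable $T$, completely transparent. One small caveat on your closing remark: the real feature separating this lemma from Lemma \ref{335} is that there $\Lambda$ (an $\OO$-Cohen ring of a local field of characteristic $p$) is not \'etale, indeed not even finite, over $\OO$, so the diagonal completion produces $\Lambda\br{T}$; the need to invert $\varpi$ in your case (2) is a technical preliminary rather than the conceptual dividing line.
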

\begin{proof} The completion of $\Lambda\otimes_{\OO} \Lambda$ with respect to the kernel of $\Lambda\otimes_{\OO} \Lambda \rightarrow \Lambda$, $x\otimes y\mapsto xy$ is just $\Lambda$ (and that is 
why we don't get an extra variable $T$ like in Lemma \ref{333}, see \cite[Lemma 3.3.5]{bj}.) The rest of the proof is the same as the 
proof of Lemma \ref{333}.
\end{proof}

\begin{cor}\label{ci} Let $x$ be either a closed point of $Y$ or a closed point of  $X^{\gen}\setminus Y$. Then the following hold:
\begin{enumerate}
\item $R^{\square}_{\rho_x}$ is a flat
$\Lambda$-algebra of relative  dimension $d^2+ d^2[F:\Qp]$ and is complete intersection;
\item if $\cha(\kappa(x))=p$ then $R^{\square}_{\rho_x}/\varpi$ is complete intersection of dimension $d^2+ d^2[F:\Qp]$.
\end{enumerate}
\end{cor}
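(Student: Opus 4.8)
The plan is to feed the presentation $R^\square_{\rho_x}\cong \Lambda\br{x_1,\ldots,x_r}/(f_1,\ldots,f_s)$ of Proposition~\ref{represent_kappa}, in which $r-s=d^2+d^2[F:\Qp]$ by \eqref{r_minus_s}, into the dimension bounds of Theorem~\ref{main} and Lemma~\ref{dim_gen_sp}. Since $\Lambda\br{x_1,\ldots,x_r}$ is regular local of dimension $\dim\Lambda+r$, Krull's height bound already gives $\dim R^\square_{\rho_x}\ge \dim\Lambda+r-s$ and, in cases (1) and (3) of Subsection~\ref{sec_completions} where $\Lambda$ is a discrete valuation ring with uniformiser $\varpi$, also $\dim R^\square_{\rho_x}/\varpi\ge r-s$. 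Once the reverse inequalities are proved, standard commutative algebra (a system of $s$ elements that cuts the dimension of a Cohen--Macaulay local ring down by exactly $s$ is a regular sequence) shows that $f_1,\ldots,f_s$, respectively their images modulo $\varpi$, form a regular sequence in a regular local ring, hence $R^\square_{\rho_x}$ and $R^\square_{\rho_x}/\varpi$ are complete intersections of the asserted dimensions. So the task reduces to: (i) the matching upper bounds on dimension, and (ii) $\Lambda$-flatness.

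First I would transport the question to $X^\gen$. By Proposition~\ref{main_complete}, $R^\square_{\rho_x}$ is the completion of the local ring of $\Lambda\otimes_\OO A^\gen$ at $\qq=\ker(\Lambda\otimes_\OO A^\gen\to\kappa(x))$, and $\qq$ contracts to the maximal ideal $\pp$ defining $x\in X^\gen$. If $\kappa:=\kappa(x)$ is finite over $L$ or over $k$, Lemma~\ref{334} gives $R^\square_{\rho_x}\cong\widehat{\OO_{X^\gen,x}}$; in the first case $\varpi\notin\pp$, so $x$ lies in the open subscheme $X^\gen[1/p]$ and $\OO_{X^\gen,x}=\OO_{X^\gen[1/p],x}$, while in the second case reduction modulo $\varpi$ gives $R^\square_{\rho_x}/\varpi\cong\widehat{\OO_{\Xbar^\gen,x}}$. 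If $\kappa$ is a local field of characteristic $p$, the (unconditional) first part of the proof of Lemma~\ref{335} gives $R^\square_{\rho_x}/\varpi\cong\widehat{\OO_{\Xbar^\gen,x}}\br{T}$.

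Now the dimension count. If $\kappa$ is finite over $L$, then $\Lambda=\kappa$ is a field and $\dim R^\square_{\rho_x}=\dim\OO_{X^\gen[1/p],x}\le\dim X^\gen[1/p]\le\dim\Xbar^\gen\le d^2+d^2[F:\Qp]$ by Lemma~\ref{dim_gen_sp} and Theorem~\ref{main}; with the lower bound this gives (1), and (2) is vacuous and flatness automatic. If $\kappa$ is finite over $k$, then $\dim R^\square_{\rho_x}/\varpi=\dim\OO_{\Xbar^\gen,x}\le\dim\Xbar^\gen\le d^2+d^2[F:\Qp]$ by Theorem~\ref{main}, so $\dim R^\square_{\rho_x}/\varpi=d^2+d^2[F:\Qp]$; then $\dim R^\square_{\rho_x}\le\dim R^\square_{\rho_x}/\varpi+1$ forces $\dim R^\square_{\rho_x}=1+d^2+d^2[F:\Qp]$. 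If $\kappa$ is a local field of characteristic $p$, the extra variable costs a $1$: $\dim R^\square_{\rho_x}/\varpi=\dim\OO_{\Xbar^\gen,x}+1$; but now $\kappa$ is infinite, so the image $y$ of $x$ in $\Spec R^\ps$ cannot be $\mm_{R^\ps}$ (whose residue field $k$ is finite), i.e. $x\notin Y$, and Lemma~\ref{fix}(2) applied with $W=\Xbar^\gen$ yields $\dim\OO_{Z,x}=\dim Z-1$ for every irreducible component $Z$ of $\Xbar^\gen$ through $x$, whence $\dim\OO_{\Xbar^\gen,x}\le\dim\Xbar^\gen-1\le d^2+d^2[F:\Qp]-1$ and again $\dim R^\square_{\rho_x}/\varpi=d^2+d^2[F:\Qp]$, $\dim R^\square_{\rho_x}=1+d^2+d^2[F:\Qp]$. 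In cases (1) and (3) the complete intersection $R^\square_{\rho_x}$ is Cohen--Macaulay with $\dim R^\square_{\rho_x}/\varpi=\dim R^\square_{\rho_x}-1$, so $\varpi$ lies in no minimal, hence in no associated, prime; thus $\varpi$ is a non-zero-divisor, $R^\square_{\rho_x}$ is $\varpi$-torsion free and so flat over the discrete valuation ring $\Lambda$, of relative dimension $d^2+d^2[F:\Qp]$.

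Everything above is bookkeeping once Theorem~\ref{main}, Proposition~\ref{main_complete}, Proposition~\ref{represent_kappa} and Lemmas~\ref{333}--\ref{335}, \ref{fix}, \ref{dim_gen_sp} are in hand; the one point that is not completely formal is the local-field case $\cha\kappa=p$, where the comparison isomorphism carries an extra variable $T$ and one must recover the lost $1$ from the geometry of $\Xbar^\gen$ via Lemma~\ref{fix}, using crucially that such a closed point lies strictly above the closed point of $X^\ps$. I would also record the (routine) hypotheses for invoking Lemma~\ref{fix}: $x$ is closed in every irreducible component through it because it is closed in $X^\gen$, and $\Xbar^\gen$ is a closed $\GL_d$-invariant subscheme of $X^\gen$.
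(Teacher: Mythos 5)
Your argument is correct and follows essentially the same route as the paper's proof: the presentation of Proposition \ref{represent_kappa} gives the lower bounds, Proposition \ref{main_complete} with Lemmas \ref{333}--\ref{335} identifies $R^{\square}_{\rho_x}$ (resp.\ its special fibre) with completed local rings of $X^{\gen}$, $\Xbar^{\gen}$, and the upper bounds come from Theorem \ref{main}, Lemma \ref{dim_gen_sp} and, in the characteristic-$p$ local-field case, Lemma \ref{fix} to recover the $1$ lost to the extra variable $T$, after which the regular-sequence and $\varpi$-torsion-free (hence flat) conclusions are the same. Your only deviations are cosmetic: citing the unconditional mod-$\varpi$ part of Lemma \ref{335} rather than Lemma \ref{333} directly, and deducing that $\varpi$ is a non-zero-divisor via Cohen--Macaulayness instead of permuting the regular sequence $\varpi,f_1,\ldots,f_s$.
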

\begin{proof} Let us assume that $\kappa(x)$ is a
finite extension of $k$. It follows from Proposition \ref{main_complete} and 
Lemma \ref{334} that $R^{\square}_{\rho_x}/\varpi\cong \widehat{\OO}_{\Xbar^{\gen}, x}$, the 
completion of the local ring of $\Xbar^{\gen}$ at $x$ with respect to the maximal ideal. We have 
$\dim \widehat{\OO}_{\Xbar^{\gen}, x}=\dim \OO_{\Xbar^{\gen}, x}\le \dim \Xbar^{\gen}$, 
and thus by Theorem \ref{main} we obtain the bound
$$\dim R^{\square}_{\rho_x}/\varpi \le \dim \Xbar^{\gen} \le d^2+ d^2[F:\Qp]= r-s,$$
where the last equality is \eqref{r_minus_s}. 
It follows from \eqref{present_square_x} 
that $\dim R^{\square}_{\rho_x}/\varpi \ge r-s$ and $\dim R^{\square}_{\rho_x} \ge 1+r-s$. Thus the lower bounds
of the dimensions are equalities, and $\varpi, f_1, \ldots, f_s$ are a part of system of parameters
in $\Lambda\br{x_1,\ldots, x_r}$. Thus they form a regular sequence in $\Lambda\br{x_1,\ldots, x_r}$ and so $R^{\square}_{\rho_x}$ and $R^{\square}_{\rho_x}/\varpi$ are complete intersections of the claimed dimensions. Moreover, 
since $\Lambda$ is a DVR with uniformiser $\varpi$, 
flatness is equivalent to $\varpi$-torsion equal to zero, and hence $R^{\square}_{\rho_x}$ is flat over $\Lambda$.   

Let us assume that $\kappa(x)$ is a local field of
characteristic $p$. Proposition \ref{main_complete} and Lemma \ref{333} imply that 
$R^{\square}_{\rho_x}/\varpi\cong \widehat{\OO}_{\Xbar^{\gen}, x}\br{T}$ and Lemma 
\ref{fix} applied with $W=\Xbar^{\gen}$ implies that $\dim \widehat{\OO}_{\Xbar^{\gen}, x}\le \dim \Xbar^{\gen}-1$. Thus  $\dim R^{\square}_{\rho_x}/\varpi \le \dim \Xbar^{\gen}$ and the same argument as above goes through. 

If $\kappa(x)$ is a finite extension of $L$ then Proposition \ref{main_complete} and 
Lemma \ref{334} imply that $$R^{\square}_{\rho_x}\cong \widehat{\OO}_{X^{\gen}, x}=\widehat{\OO}_{X^{\gen}[1/p],x}.$$ Corollary \ref{dim_gen_sp} implies that $\dim R^{\square}_{\rho_x}\le \dim X^{\gen}[1/p]\le \dim \Xbar^{\gen}$. Then the same argument goes through. 
\end{proof} 

\begin{cor}\label{local_rings} Let $x$ be either a closed point in $Y$ or a closed point in $X^{\gen}\setminus Y$
and let $\widehat{\OO}_{X^{\gen}, x}$  be the completion with respect to the maximal ideal of the local ring at $x$. If $\kappa(x)$ is a finite extension of $k$ or $L$ then $\widehat{\OO}_{X^{\gen}, x}\cong R^{\square}_{\rho_x}$. If $\kappa(x)$ is a local 
field of characteristic $p$ then $R^{\square}_{\rho_x}\cong \widehat{\OO}_{X^{\gen}, x}\br{T}$.  
\end{cor}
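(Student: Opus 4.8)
The plan is to obtain this as a direct consequence of Proposition~\ref{main_complete} together with the comparison Lemmas~\ref{334} and~\ref{335}. I would apply all three with $R=R^{\ps}$, $A=A^{\gen}$, $B:=\Lambda\otimes_{\OO}A^{\gen}$, and $\pp$ the prime of $A^{\gen}$ corresponding to the closed point $x$, so that $\kappa(\pp)=\kappa(x)$. Writing $y$ for the image of $x$ in $\Spec R^{\ps}$, it was noted above (Lemmas~\ref{P1R} and~\ref{jacobson}) that $\kappa(x)/\kappa(y)$ is finite and, if $\kappa(x)$ is infinite, $y\in P_1R^{\ps}$; hence $\kappa(x)$ lies in exactly one of the three cases of the statement, and the ring $\Lambda$ attached to $\rho_x$ in each case is the one prescribed by the relevant comparison lemma. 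Moreover, in each case the ideal $\qq=\ker(\Lambda\otimes_{\OO}A^{\gen}\to\kappa(x))$ appearing in Proposition~\ref{main_complete} is precisely the ideal $\qq$ appearing in Lemmas~\ref{334} and~\ref{335}. So, by Proposition~\ref{main_complete}, which identifies $R^{\square}_{\rho_x}$ with the completion $\widehat{B}_{\qq}$, it remains only to compare $\widehat{B}_{\qq}$ with $\widehat{\OO}_{X^{\gen},x}$, the completion of $(A^{\gen})_{\pp}$.

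If $\kappa(x)$ is a finite extension of $k$ or of $L$, then $\Lambda$ is, respectively, the ring of integers of the unramified extension of $L$ with residue field $\kappa(x)$, or $\kappa(x)$ itself, and Lemma~\ref{334} applies directly to give $\widehat{B}_{\qq}\cong\widehat{\OO}_{X^{\gen},x}$; combined with Proposition~\ref{main_complete} this gives $\widehat{\OO}_{X^{\gen},x}\cong R^{\square}_{\rho_x}$. If instead $\kappa(x)$ is a local field of characteristic $p$, I would invoke Lemma~\ref{335}, whose conclusion $\widehat{B}_{\qq}\cong\widehat{\OO}_{X^{\gen},x}\br{T}$ requires that $\widehat{B}_{\qq}$ or $(A^{\gen})_{\pp}$ be $\varpi$-torsion free. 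To supply this I would use Corollary~\ref{ci}(1): $R^{\square}_{\rho_x}$ is flat over the discrete valuation ring $\Lambda$, hence $\varpi$-torsion free, and $R^{\square}_{\rho_x}\cong\widehat{B}_{\qq}$ by Proposition~\ref{main_complete}. Lemma~\ref{335} then yields $R^{\square}_{\rho_x}\cong\widehat{\OO}_{X^{\gen},x}\br{T}$, which finishes the last case.

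The main point to check — the nearest thing to an obstacle in what is otherwise a bookkeeping argument — is that there is no circularity in invoking Corollary~\ref{ci}(1) here. This is harmless: the characteristic-$p$ part of Corollary~\ref{ci} is deduced from Lemma~\ref{333} applied to $A^{\gen}/\varpi$, the bound $\dim\Xbar^{\gen}\le d^2+d^2[F:\Qp]$ of Theorem~\ref{main}, Lemma~\ref{fix}, and the presentation of $R^{\square}_{\rho_x}$ provided by Proposition~\ref{represent_kappa}, none of which relies on the present corollary.
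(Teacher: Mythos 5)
Your proposal is correct and follows essentially the same route as the paper: Proposition~\ref{main_complete} combined with Lemma~\ref{334} in the finite residue field cases, and with Lemma~\ref{335} plus the $\OO$-torsion freeness of $R^{\square}_{\rho_x}$ from Corollary~\ref{ci} in the equal-characteristic-$p$ case. Your check that invoking Corollary~\ref{ci} is not circular is also accurate, since its proof rests on Lemmas~\ref{333}--\ref{335}, Theorem~\ref{main} and Lemma~\ref{fix}, not on the present statement.
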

\begin{proof} If $\kappa(x)$ is a finite extension 
of $k$ or $L$ then the assertion follows from Proposition \ref{main_complete} and Lemma \ref{334}. If $\kappa(x)$ is a local 
field of characteristic $p$ then the assertion follows from Proposition \ref{main_complete} and Lemma \ref{335}.
\end{proof}

 \begin{cor}\label{ci_gen} 
 The following hold:
 \begin{enumerate}
 \item $A^{\gen}$ is $\OO$-torsion free,
 equi-dimensional of dimension $1+ d^2+ d^2[F:\Qp]$ and is locally complete intersection;
 \item $A^{\gen}/\varpi$ is  equi-dimensional of dimension $d^2+d^2[F:\Qp]$ and is locally complete intersection. 
 \end{enumerate}
 \end{cor}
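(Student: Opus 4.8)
The plan is to reduce the two parts to the description of the completed local rings of $X^{\gen}$ and $\Xbar^{\gen}$ at their closed points, obtained in Corollaries~\ref{ci} and~\ref{local_rings}, together with the fact (Lemma~\ref{fix}) that every irreducible component of $X^{\gen}$, and of its $\GL_d$-invariant closed subscheme $\Xbar^{\gen}$, meets the fibre $Y$ over $\mm_{R^{\ps}}$. Throughout put $N:=1+d^2+d^2[F:\Qp]$.

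First I would prove that $A^{\gen}$ and $A^{\gen}/\varpi$ are locally complete intersection. Let $x$ be a closed point of $X^{\gen}$. By Corollary~\ref{local_rings}, $\widehat{\OO}_{X^{\gen},x}\cong R^{\square}_{\rho_x}$ when $\kappa(x)$ is finite over $k$ or $L$, while $\widehat{\OO}_{X^{\gen},x}\br{T}\cong R^{\square}_{\rho_x}$ when $\kappa(x)$ is a local field of characteristic $p$. Since $R^{\square}_{\rho_x}$ is complete intersection by Corollary~\ref{ci}(1), and a Noetherian local ring $R$ is complete intersection if and only if $R\br{T}$ is, the ring $\widehat{\OO}_{X^{\gen},x}$ is complete intersection; reducing modulo $\varpi$ and invoking Corollary~\ref{ci}(2) (the residue field of a closed point of $\Xbar^{\gen}$ has characteristic $p$) the same holds for $\widehat{\OO}_{\Xbar^{\gen},x}$. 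As closed points of $X^{\gen}$ (resp.\ $\Xbar^{\gen}$) are the maximal ideals of $A^{\gen}$ (resp.\ $A^{\gen}/\varpi$), a local ring is complete intersection iff its completion is, and localisations of complete intersection rings are complete intersection, both $A^{\gen}$ and $A^{\gen}/\varpi$ are locally complete intersection; in particular they are Cohen--Macaulay, hence have no embedded associated primes.

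Next I would record the dimensions at closed points. For a closed point $x$ of $X^{\gen}$, Corollaries~\ref{ci} and~\ref{local_rings} give $\dim\OO_{X^{\gen},x}=\dim R^{\square}_{\rho_x}$ if $\kappa(x)$ is finite over $k$ or $L$, and $\dim\OO_{X^{\gen},x}=\dim R^{\square}_{\rho_x}-1$ if $\kappa(x)$ is a local field of characteristic $p$; as $R^{\square}_{\rho_x}$ is flat of relative dimension $d^2+d^2[F:\Qp]$ over $\Lambda$, which is a discrete valuation ring in cases~(1),(3) and a field in case~(2), this yields $\dim\OO_{X^{\gen},x}=N$ when $\kappa(x)$ is finite over $k$ and $\dim\OO_{X^{\gen},x}=N-1$ otherwise, and $\dim\OO_{\Xbar^{\gen},x}\le N-1$ for closed points $x$ of $\Xbar^{\gen}$, with equality when $\kappa(x)$ is finite over $k$. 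Being Cohen--Macaulay local, all these rings are equi-dimensional. Now let $Z$ be an irreducible component of $X^{\gen}$. By Lemma~\ref{fix}, $Z\cap Y\neq\emptyset$; since $Y$ is of finite type over $k$, hence Jacobson, the nonempty closed subset $Z\cap Y$ of $Y$ contains a closed point $x$ of $Y$, which is a closed point of $X^{\gen}$ with $\kappa(x)$ finite over $k$. The component $Z$ corresponds to a minimal prime of the equi-dimensional ring $\OO_{X^{\gen},x}$, so $\dim Z\ge\dim\OO_{Z,x}=\dim\OO_{X^{\gen},x}=N$. Conversely, choosing a closed point $x'$ of $X^{\gen}$ on $Z$ at which $\dim\OO_{Z,x'}=\dim Z$ (such $x'$ exists because the Krull dimension of the Noetherian domain $A^{\gen}/\mathfrak{p}_Z$ is attained at a maximal ideal), one gets $\dim Z=\dim\OO_{Z,x'}\le\dim\OO_{X^{\gen},x'}\le N$. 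Hence every component of $X^{\gen}$ has dimension $N$. Running the identical argument for $\Xbar^{\gen}$ and $A^{\gen}/\varpi$---using that every component of $\Xbar^{\gen}$ meets $Y$ and the displayed estimates at closed points of $\Xbar^{\gen}$---shows every component of $\Xbar^{\gen}$ has dimension $N-1=d^2+d^2[F:\Qp]$, which is part~(2).

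Finally, for the $\OO$-torsion freeness in part~(1): if some component $Z$ of $X^{\gen}$ were contained in $\Xbar^{\gen}$, then its minimal prime would contain $\varpi$ and $\dim Z\le\dim\Xbar^{\gen}=N-1<N$, contradicting equi-dimensionality. So $\varpi$ lies in no minimal prime of $A^{\gen}$; since $A^{\gen}$ is Cohen--Macaulay it has no embedded primes, so $\varpi$ is a nonzerodivisor on $A^{\gen}$, and as $\OO$ is a discrete valuation ring with uniformiser $\varpi$ this says precisely that $A^{\gen}$ is $\OO$-torsion free. I expect the main obstacle to be the bookkeeping of the last two paragraphs: genuine control of the completed local rings is available only at closed points, so the argument hinges on Lemma~\ref{fix} together with the Jacobson property of the fibre $Y$ to guarantee that every irreducible component carries a closed point lying over $\mm_{R^{\ps}}$---where flatness over a discrete valuation ring pins the local dimension to $N$ and forces equi-dimensionality of the local ring---and on the elementary fact that the Krull dimension of a Noetherian ring is attained at a maximal ideal to transport the upper bound back to the whole component.
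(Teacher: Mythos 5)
Your argument is correct and follows essentially the same route as the paper: Corollaries \ref{ci} and \ref{local_rings} control the completed local rings at closed points, and Lemma \ref{fix} guarantees that each irreducible component contains a closed point above $\mm_{R^{\ps}}$, which pins its dimension to $1+d^2+d^2[F:\Qp]$ (resp.\ $d^2+d^2[F:\Qp]$ for the special fibre). The only cosmetic differences are that the paper reads off $\OO$-torsion-freeness directly from the $\OO$-torsion free local rings at closed points and quotes the equality $\dim Z=\dim\OO_{Z,x}$ for $x\in Y$ from Lemma \ref{fix}, whereas you re-derive that equality by a two-sided bound and obtain torsion-freeness via ``$\varpi$ lies in no minimal prime plus no embedded primes''; both variants are fine.
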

\begin{proof} Let us prove (1) as the proof of (2) is identical. Corollary \ref{local_rings} together 
with Corollary \ref{ci} implies that the local rings
at closed points of $X^{\gen}$ are $\OO$-torsion free and complete intersection.  This implies that $A^{\gen}$ is $\OO$-torsion free and $A^{\gen}$ is 
locally complete intersection by \cite[\href{https://stacks.math.columbia.edu/tag/09Q5}{Tag 09Q5}]{stacks-project}.

Let $Z$ be an irreducible component of $X^{\gen}$. Lemma \ref{fix} implies that there is a closed point 
$x\in Z$ such that $x$ maps to the closed point of $X^{\ps}$. Moreover, $\dim Z= \dim \OO_{Z, x}$. 
Since $\OO_{X^{\gen}, x}$ is complete intersection, 
it is equi-dimensional and thus $\dim \OO_{Z,x}=\dim
\OO_{X^{\gen}, x}= d^2+d^2[F:\Qp]+1$, where the last equality follows from Corollaries \ref{ci} and \ref{local_rings}. 
\end{proof} 

\begin{prop}\label{331} Let $x\in P_1 R^{\square}_{\rhobar}$, where
$R^{\square}_{\rhobar}$ is the framed deformation ring 
of $\rhobar: G_F \rightarrow \GL_d(k')$, where $k'$ is
finite extension of $k$. Let $\rho_x: G_F \rightarrow \GL_d(\kappa(x))$ be the representation obtained by specializing the universal framed deformation of $\rhobar$ at $x$. Let $\qq$ be the kernel of the 
map $$\Lambda\otimes_{\OO} R^{\square}_{\rhobar} \rightarrow \kappa(x), \quad \lambda \otimes a \mapsto \bar{\lambda}\bar{a},$$
where $\Lambda$ is the ring defined at the beginning of the subsection.  Then
the completion of $(\Lambda\otimes_{\OO} R^{\square}_{\rhobar})_{\qq}$ 
with respect to the maximal ideal is naturally isomorphic to $R^{\square}_{\rho_x}$.
\end{prop}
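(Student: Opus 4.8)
The plan is to run the argument of Proposition~\ref{main_complete} more or less verbatim, with the framed deformation ring $R^{\square}_{\rhobar}$ in place of $A^{\gen}$, the universal framed deformation $\rho^{\univ}\colon G_F\to\GL_d(R^{\square}_{\rhobar})$ in place of $j\colon E\to M_d(A^{\gen})$, and the universal property of $R^{\square}_{\rhobar}$ in place of the universal property of generic matrices. Since $x\in P_1 R^{\square}_{\rhobar}$, Lemma~\ref{P1R} puts us in case (2) or (3) and gives $R^{\square}_{\rhobar}/x\subseteq\OO_{\kappa(x)}$ with the subspace topology; hence $\rho_x$ already takes values in $\GL_d(R^{\square}_{\rhobar}/x)\subseteq\GL_d(\OO_{\kappa(x)})$. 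Moreover $\mm_{\OO_{\kappa(x)}}\cap(R^{\square}_{\rhobar}/x)$ is a nonzero prime of the one--dimensional local domain $R^{\square}_{\rhobar}/x$, hence equals $\mm_{R^{\square}_{\rhobar}}/x$, so the reduction of $\rho_x$ modulo $\mm_{\OO_{\kappa(x)}}$ is $\rhobar$ extended to the residue field $k''$ of $\OO_{\kappa(x)}$. In particular, unlike in Proposition~\ref{main_complete}, no conjugation is needed to achieve integrality, which streamlines the bookkeeping.

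First I would set $\widehat B$ to be the completion of $(\Lambda\otimes_{\OO}R^{\square}_{\rhobar})_{\qq}$ at its maximal ideal. Since $R^{\square}_{\rhobar}$ is $\OO$--torsion free (being a completion of a localisation of $A^{\gen}$, which is $\OO$--torsion free by Corollary~\ref{ci_gen}), Lemma~\ref{334} in case (2) and Lemmas~\ref{333} and~\ref{335} in case (3), applied with $R=\OO$, $A=R^{\square}_{\rhobar}$ and $\pp=x$, show that $\widehat B$ is Noetherian and that $\widehat B/\qq^{n}\widehat B\in\mathfrak A_{\Lambda}$ for all $n$, with $\widehat B/\qq\widehat B\cong\kappa(x)$. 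Pushing $\rho^{\univ}$ along $R^{\square}_{\rhobar}\to\widehat B/\qq^{n}\widehat B$ (continuity as in Lemma~\ref{topology}) gives deformations of $\rho_x$ and, in the limit, a deformation $\widehat\rho\colon G_F\to\GL_d(\widehat B)$ of $\rho_x$ together with a map of local $\Lambda$--algebras $R^{\square}_{\rho_x}\to\widehat B$.

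It then remains to show that $\widehat B$ pro-represents $D^{\square}_{\rho_x}$, i.e.\ that for every $(A,\mm_A)\in\mathfrak A_{\Lambda}$ and every deformation $\rho_A$ of $\rho_x$ to $A$ there is a unique local $\Lambda$--algebra map $\varphi\colon\widehat B\to A$ with $\GL_d(\varphi)\circ\widehat\rho=\rho_A$; by the usual Yoneda argument for pro-representable functors this forces $R^{\square}_{\rho_x}\to\widehat B$ to be an isomorphism. For existence I would, as in Proposition~\ref{main_complete}, invoke Kisin's argument \cite[Proposition 9.5]{kisin_over} to produce an ascending exhaustive chain of open local $\Lambda^{0}$--subalgebras $A^{0}_n\subseteq A$ with residue field $k''$ and $A^{0}_n[1/t]=A$ (a single such $A^{0}$ in case (2)), with $\rho_A(G_F)\subseteq\GL_d(A^{0}_n)$ for $n\gg 0$. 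Then $\rho_A\colon G_F\to\GL_d(A^{0}_n)$ is a deformation of $\rho_z:=\rhobar\otimes_{k'}k''$, so the universal property of the framed deformation ring of $\rho_z$ --- which by unramified base change is $R^{\square}_{\rho_z}\cong R^{\square}_{\rhobar}\,\widehat\otimes_{W(k')}W(k'')$ --- yields a map $R^{\square}_{\rhobar}\to R^{\square}_{\rho_z}\to A^{0}_n\hookrightarrow A$. Because $R^{\square}_{\rhobar}$ is topologically generated by the matrix entries of $\rho^{\univ}$, this map induces $\rho_A$ and reduces modulo $\mm_A$ to the homomorphism with kernel $x$; hence it carries $\qq$ into $\mm_A$ and extends to the required $\varphi$. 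Uniqueness of $\varphi$ is immediate from the same fact that the entries of $\rho^{\univ}$ topologically generate $R^{\square}_{\rhobar}$.

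The main obstacle is exactly the one in Proposition~\ref{main_complete}: the descent of the Artinian $\Lambda$--algebra $A$ along Kisin's argument to a chain of $\Lambda^{0}$--subalgebras with finite residue field $k''$, which is what makes it possible to apply the universal property of a framed deformation ring over $\OO$ (a statement about Artinian $\OO$--algebras with finite residue field) even though $A$ itself has residue field $\kappa(x)$ of positive Krull dimension. The only genuinely new point compared with Proposition~\ref{main_complete} is the standard identification $R^{\square}_{\rho_z}\cong R^{\square}_{\rhobar}\,\widehat\otimes_{W(k')}W(k'')$, needed because the residue field $k''$ of $\OO_{\kappa(x)}$ may properly contain $k'$.
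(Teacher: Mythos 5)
Your proposal is correct and is essentially the proof the paper has in mind: it reruns the argument of Proposition~\ref{main_complete} with the universal framed deformation in place of $j\colon E\to M_d(A^{\gen})$, using Kisin's integral models $A_n^0$ and the representability of the framed deformation functor of $\rho_z=\rhobar\otimes_{k'}k''$ (via the unramified base change $R^{\square}_{\rho_z}\cong R^{\square}_{\rhobar}\otimes_{\OO_{L'}}\OO_{L''}$), with uniqueness coming from the fact that $R^{\square}_{\rhobar}$ is topologically generated by the matrix entries of the universal lift. This matches the paper's indication that the proof is ``similar to Proposition~\ref{main_complete} but easier,'' the simplification being exactly the one you identify: $\rho_x$ is already valued in $\GL_d(\OO_{\kappa(x)})$, so no conjugation step is needed.
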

\begin{proof} The proof is similar to the proof
of Proposition \ref{main_complete}, but easier, since the setting is much closer to the setting of \cite[Proposition 9.5]{kisin_over} or \cite[Theorem 3.3.1]{bj}, where an analogous result is proved for versal deformation rings.
We leave the details to the reader.
\end{proof}

Let $x$ be a closed point of $X^{\gen}\setminus Y$, so that 
 $\kappa(x)$ is a local field. Since $G_F$ is compact there is 
a matrix $M\in \GL_d(\kappa(x))$, such 
that the image of $M \rho_x M^{-1}$ 
is contained in $\GL_d(\OO_{\kappa(x)})$. Let 
$x':A^{\gen}\rightarrow \OO_{\kappa(x)}$ be the $R^{\ps}$-algebra homomorphism corresponding to the 
representation $E\rightarrow M_d(\OO_{\kappa(x)})$, $a\mapsto M \rho_x(a) M^{-1}$. We will denote the 
corresponding Galois representation by 
$\rho_{x'}^0: G_F\rightarrow \GL_d(\OO_{\kappa(x)})$ and let $\rho_{x'}$
be the composition  
$\rho_{x'}:G_F \overset{\rho_{x'}^0}{\longrightarrow} \GL_d(\OO_{\kappa(x)})\rightarrow  \GL_d(\kappa(x))$. We note that $\kappa(x')=\kappa(x)$ and let $\Lambda$ be the coefficient ring defined at the beginning of the subsection. 
Let $k'$ be the residue field
of $\OO_{\kappa(x)}$ and let $\rho_z: G_F\rightarrow \GL_d(k')$ be the
representation corresponding to 
$z: A^{\gen}\overset{x'}{\longrightarrow} \OO_{\kappa(x)}\rightarrow k'$. 
Then $\rho^0_{x'}$ is a deformation of 
$\rho_z$ to $\OO_{\kappa(x)}$, thus the 
map $x': A^{\gen}\rightarrow \OO_{\kappa(x)}$ factors through 
$x': R^{\square}_{\rho_z}\rightarrow \OO_{\kappa(x)}$.

\begin{cor}\label{the_same_comp} There is an isomorphism of 
local $\Lambda$-algebras between $R^{\square}_{\rho_x}$, $R^{\square}_{\rho_{x'}}$ and the completion of $(\Lambda\otimes_{\OO} R^{\square}_{\rho_z})_{\qq}$ with respect to the maximal ideal, where $\qq$ is as in Proposition \ref{331}
with respect to $x': R^{\square}_{\rho_z}\rightarrow \OO_{\kappa(x)}.$
\end{cor}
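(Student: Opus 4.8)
The plan is to assemble the claimed isomorphism from results already in hand, by going through the conjugated point $x'$ as an intermediary. First I would establish $R^{\square}_{\rho_x}\cong R^{\square}_{\rho_{x'}}$. Since $\rho_{x'} = M\rho_x M^{-1}$ with $M\in\GL_d(\kappa(x))$ and $\kappa(x')=\kappa(x)$, conjugation by a lift of $M$ to $\GL_d(A)$ sets up a natural bijection of functors $D^{\square}_{\rho_x}\iso D^{\square}_{\rho_{x'}}$ on $\mathfrak A_\Lambda$ (one must check continuity is preserved, which is immediate since conjugation by a fixed invertible matrix is a homeomorphism on $\GL_d(A)$), hence an isomorphism of the representing rings. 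This part is routine and I would only sketch it.

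The heart of the argument is the identification of $R^{\square}_{\rho_{x'}}$ with the completion of $(\Lambda\otimes_\OO R^{\square}_{\rho_z})_{\qq}$. Here I would invoke Proposition \ref{331} applied \emph{not} to $R^{\square}_{\rhobar}$ but to $R^{\square}_{\rho_z}$: the point is that $\rho^0_{x'}: G_F\to\GL_d(\OO_{\kappa(x)})$ is an honest $\OO_{\kappa(x)}$-valued deformation of the finite-residue-field representation $\rho_z$, so it corresponds to an $\OO$-algebra map $x': R^{\square}_{\rho_z}\to\OO_{\kappa(x)}$, whose kernel lies in $P_1 R^{\square}_{\rho_z}$ (its residue field $\kappa(x)$ is a local field of characteristic $p$, so $\dim R^{\square}_{\rho_z}/(\ker x')=1$ by Lemma \ref{P1R}). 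The representation obtained by specializing the universal deformation of $\rho_z$ at this point $x'\in P_1 R^{\square}_{\rho_z}$ is exactly $\rho_{x'}$ by construction. Thus Proposition \ref{331}, applied with $R^{\square}_{\rho_z}$ in place of $R^{\square}_{\rhobar}$ and with this $x'$, directly yields that the completion of $(\Lambda\otimes_\OO R^{\square}_{\rho_z})_{\qq}$ is naturally isomorphic to $R^{\square}_{\rho_{x'}}$.

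Combining the two isomorphisms gives the corollary. The one point requiring a little care — and the place I expect the main (minor) obstacle — is to make sure all three identifications are \emph{as local $\Lambda$-algebras} and that the $\Lambda$-structures match up: $\Lambda$ here is the $\OO$-Cohen ring attached to $\kappa(x)=\kappa(x')$ from the beginning of the subsection, so it is the same ring for $\rho_x$, $\rho_{x'}$, and for the deformation problem of $\rho_z$ over $\OO_{\kappa(x)}$, and the structure maps are the evident ones coming from $\OO\hookrightarrow\Lambda$ and reduction to $\kappa(x)$. Since Proposition \ref{331} already produces a \emph{natural} isomorphism of local $\Lambda$-algebras, and conjugation is manifestly $\Lambda$-linear, the compatibility is formal. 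I would state the proof in two or three sentences: reduce to $\rho_{x'}$ by conjugation, then quote Proposition \ref{331} for $\rho_z$ and $x'$, and note the $\Lambda$-structures agree by construction.
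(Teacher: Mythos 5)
Your proposal is correct and follows essentially the same route as the paper: conjugation by a lift of $M$ (the paper lifts $M$ once to $\GL_d(\Lambda)$, which is automatically invertible since $\Lambda$ is local) identifies the deformation problems for $\rho_x$ and $\rho_{x'}$, and then Proposition \ref{331} applied to $R^{\square}_{\rho_z}$ at the point $x'\in P_1 R^{\square}_{\rho_z}$ identifies $R^{\square}_{\rho_{x'}}$ with the completion of $(\Lambda\otimes_{\OO} R^{\square}_{\rho_z})_{\qq}$. No gaps worth noting.
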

\begin{proof} Let $\widetilde{M}$ be 
any lift of $M$ to $M_d(\Lambda)$. Since
$\Lambda$ is a local ring $\det \widetilde M$ is a unit in $\Lambda$ and hence $\widetilde{M}\in \GL_d(\Lambda)$. Conjugation by $\widetilde{M}$ induces 
an isomorphism between the deformation
problems for $\rho_x$ and $\rho_{x'}$ and hence between the deformation rings. Proposition \ref{331} implies that these
rings are also isomorphic to the completion of $(\Lambda\otimes_{\OO} R^{\square}_{\rho_z})_{\qq}$. 
\end{proof}

\begin{remar}\label{strategy} Corollary \ref{the_same_comp} 
enables us  to study local properties 
of $X^{\gen}$, by  studying
the completions of local rings at closed points above $\mm_{R^{\ps}}$. 
For example, if we could show that
$R^{\square}_{\rho_z}$ is regular, we could conclude that the local ring at 
$x'$, $(R^{\square}_{\rho_z})_{x'}$ is regular, and hence that the completion 
$\widehat{(R^{\square}_{\rho_z})}_{x'}$
is regular. If $\kappa(x)$ is a local field 
of characteristic $p$ then Proposition \ref{main_complete}, Corollary \ref{the_same_comp} and Lemma \ref{335} imply that 
$$ \widehat{\OO}_{X^{\gen}, x}\br{T}\cong R^{\square}_{\rho_x}\cong R^{\square}_{\rho_{x'}}\cong \widehat{(R^{\square}_{\rho_z})}_{x'}\br{T}.$$
If $\kappa(x)$ is a finite extension of $L$
then Proposition \ref{main_complete}, Corollary \ref{the_same_comp} and
Lemma \ref{334} imply that 
$$ \widehat{\OO}_{X^{\gen}, x}\cong R^{\square}_{\rho_x}\cong R^{\square}_{\rho_{x'}}\cong \widehat{(R^{\square}_{\rho_z})}_{x'}.$$
Thus in both cases we can deduce that  
$\widehat{\OO}_{X^{\gen}, x}$ and hence $\OO_{X^{\gen}, x}$ are regular. Thus if 
we can show that $R^{\square}_{\rho_z}$
is regular for all closed points $z\in X^{\gen}$ above $\mm_{R^{\ps}}$ then we can conclude that $\OO_{X^{\gen}, x}$ is regular for all closed points $x\in X^{\gen}$ and thus $X^{\gen}$ is regular. 

Of course, one may also reverse the logic of this argument: if $X^{\gen}$ is regular then all its local rings and their completions are regular 
and hence $R^{\square}_{\rho_z}$ is regular for all closed points $z\in X^{\gen}$ above $\mm_{R^{\ps}}$. 
\end{remar}

\begin{cor}\label{cor_RforLocalK} Let $\rho: G_F\rightarrow \GL_d(\kappa)$ be a continuous representation with $\kappa$ a local field. Then the conclusion of Corollary \ref{ci} holds for
$R^{\square}_{\rho}$.
\end{cor}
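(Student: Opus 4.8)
The statement asserts that for a continuous representation $\rho: G_F\rightarrow \GL_d(\kappa)$ with $\kappa$ a local field, the ring $R^{\square}_{\rho}$ is flat over $\Lambda$ of relative dimension $d^2 + d^2[F:\Qp]$ and complete intersection, and moreover, if $\cha(\kappa) = p$, that $R^{\square}_{\rho}/\varpi$ is complete intersection of dimension $d^2+d^2[F:\Qp]$. The plan is to reduce this to the case of points on $X^{\gen}$ already treated in Corollary \ref{ci}. First I would observe that by conjugating by a suitable matrix $M \in \GL_d(\kappa)$ (possible since $G_F$ is compact), we may assume the image of $\rho$ lies in $\GL_d(\OO_{\kappa})$; conjugation by a lift $\widetilde M \in \GL_d(\Lambda)$ of $M$ gives an isomorphism of deformation functors and hence of deformation rings, so this reduction is harmless.

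Next, having arranged $\rho(G_F) \subset \GL_d(\OO_\kappa)$, the composite $G_F \to \GL_d(\OO_\kappa) \to \GL_d(\kappa)$ where $\kappa$ here denotes the residue field $k'$ of $\OO_\kappa$ gives a residual representation $\rho_z: G_F \to \GL_d(k')$, and $\rho$ is a deformation of it. The pseudo-character of $\rho_z$ defines a point $y \in \Spec R^{\ps}$, and composing $E \to M_d(\OO_\kappa)$ with the quotient to $M_d(k')$ produces, via the universal property of $A^{\gen}$, a map $A^{\gen} \to k'$, i.e. a closed point $z$ of $X^{\gen}$ lying above a closed point of $X^{\ps}$ (it lies above the closed point since $\kappa(z) = k'$ is finite). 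Lifting back up, the map $E \to M_d(\OO_\kappa)$ gives a point $x'$ of $X^{\gen}$ with $\kappa(x') = \kappa$, which is a closed point of $X^{\gen}$: indeed its residue field is a local field, so by the analysis in Subsections \ref{sec_completions} and \ref{sec_dim_sp} (in particular Lemma \ref{jacobson}, Lemma \ref{fix}, Lemma \ref{P1R}) such points are exactly the closed points of $X^{\gen}$ not lying above the closed point of $X^{\ps}$ — or possibly above it, but in any case closed. More carefully, $x'$ corresponds to a Galois representation valued in $\GL_d(\OO_{\kappa(x')})$, so $A^{\gen}/\ker(x')$ embeds in $\OO_{\kappa(x')}$, forcing $\dim A^{\gen}/\ker(x') \le 1$ and exhibiting $x'$ as a closed point.

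Now Corollary \ref{ci} applies directly to the closed point $x'$ of $X^{\gen}$: it tells us $R^{\square}_{\rho_{x'}}$ is flat over $\Lambda$ of relative dimension $d^2 + d^2[F:\Qp]$, complete intersection, and (when $\cha(\kappa) = p$) that $R^{\square}_{\rho_{x'}}/\varpi$ is complete intersection of that dimension. Since conjugation by $\widetilde M$ gives $R^{\square}_\rho \cong R^{\square}_{\rho_{x'}}$, we are done. I do not expect any serious obstacle here: the only point requiring a little care is verifying that the point $x'$ built from $\rho$ is genuinely a closed point of $X^{\gen}$ so that Corollary \ref{ci} is applicable — but this is immediate from the fact that the associated representation takes values in a compact subgroup and hence in $\GL_d(\OO_{\kappa(x')})$, whence $A^{\gen}$ maps to the ring of integers of a local field and the image has Krull dimension at most one. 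Everything else is a matter of unwinding the conjugation isomorphism, which is routine and can be left to the reader, exactly as in the style of Proposition \ref{331} and Corollary \ref{the_same_comp}.
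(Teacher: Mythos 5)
There is a genuine gap at the central step of your argument: you claim that, after conjugating $\rho$ into $\GL_d(\OO_{\kappa})$, the induced map $A^{\gen}\to \OO_{\kappa}$ defines a \emph{closed} point $x'$ of $X^{\gen}$ with $\kappa(x')=\kappa$, so that Corollary \ref{ci} applies directly and $R^{\square}_{\rho}\cong R^{\square}_{\rho_{x'}}$. Neither claim is justified, and both fail in general. The scheme-theoretic point is $\pp=\ker(A^{\gen}\to\OO_{\kappa})$, whose residue field is the fraction field of $A^{\gen}/\pp$, i.e.\ of the subring of $\OO_{\kappa}$ generated over the image of $R^{\ps}$ by the matrix entries; this can be a proper subfield of $\kappa$ (e.g.\ when $\rho$ is defined over a smaller field), and for the finite-type algebra $A^{\gen}$ it need not even be complete, so in general $\rho_{x'}$ is not $M\rho M^{-1}$ viewed over $\kappa$ and $R^{\square}_{\rho_{x'}}$ is not the ring you want. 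Worse, $x'$ is closed only if $A^{\gen}/\pp$ is a field, i.e.\ has Krull dimension $0$; your observation that $A^{\gen}/\pp$ embeds into $\OO_{\kappa}$, ``forcing $\dim\le 1$'', proves the opposite of what you need. In characteristic $0$ the image contains $\OO$ and hence is never a field, so the point attached to an integrally-conjugated $\rho$ is \emph{never} closed; in characteristic $p$ it is closed only when the entries lie in the finite residue field. Compare Remark \ref{caution} and Example \ref{rem:concrete-ex}: the closed points with residue field a local field are those like $x_{12}\mapsto p^{-1}$, and conjugating into $\GL_d(\OO_{\kappa})$ is precisely the operation that moves you off the closed locus (the proof of Lemma \ref{fix} exploits exactly this: the closure of the ``integral'' point $x'$ contains the reduction $z$). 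Lemmas \ref{P1R} and \ref{jacobson} say that closed points have residue fields of this type, not conversely, so Corollary \ref{ci} cannot be invoked at $x'$.

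The paper argues in the opposite direction. After conjugating $\rho$ into $\GL_d(\OO_{\kappa})$ one reduces modulo a uniformizer to obtain $\rho_z:G_F\to\GL_d(k')$, for which Corollary \ref{ci} does apply (this is an honest closed point lying over $\mm_{R^{\ps}}$). Then $\rho$ is regarded as giving a point of $P_1 R^{\square}_{\rho_z}$, and Proposition \ref{331} together with Lemmas \ref{334} and \ref{335} identifies $R^{\square}_{\rho}$ with the completion of a localization of $\Lambda\otimes_{\OO} R^{\square}_{\rho_z}$ (up to a power series variable $T$), which bounds $\dim R^{\square}_{\rho}$ from above in terms of $\dim R^{\square}_{\rho_z}$. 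Combining this upper bound with the presentation \eqref{present_square_x} and the Euler characteristic identity \eqref{r_minus_s}, the regular-sequence argument from the proof of Corollary \ref{ci} then yields flatness, the complete intersection property and the dimension statements. In other words, the integral point attached to $\rho$ is used as a one-dimensional point of $\Spec R^{\square}_{\rho_z}$, exactly as in Proposition \ref{331} and Corollary \ref{the_same_comp}, and never as a closed point of $X^{\gen}$; your proposal needs this transfer step, and as written it is missing.
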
 
\begin{proof} After conjugation we may assume that 
$\rho(G_F)\subset \GL_d(\OO_{\kappa})$. Let $\rhobar$ 
be the representation obtained by reducing the matrix 
entries modulo a uniformizer of $\OO_{\kappa}$ and 
let $\Dbar$ be the associated pseudo-character. Corollary \ref{ci} 
applies to $R^{\square}_{\rhobar}$. 
Since $\rho$ corresponds to an $x\in P_1 R^{\square}_{\rhobar}$, 
Proposition \ref{331} together with Lemmas \ref{334}, \ref{335} allows us to bound 
the dimension of $R^{\square}_{\rho}$ from above. Then the proof of Corollary \ref{ci} 
carries over. 
\end{proof}

\begin{cor}\label{char0_lift} Every
representation 
$\rhobar: G_F\rightarrow \GL_d(k)$ 
can be lifted to characteristic zero.
\end{cor}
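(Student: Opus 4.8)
The plan is to read this off from Corollary~\ref{ci}, after one harmless reduction at the start. First I would reduce to the standing hypothesis of the paper that all irreducible subquotients of $\rhobar$ are absolutely irreducible: since $\rhobar$ has finite image this can be arranged after replacing $k$ by a finite extension $k'$ and $\OO$ by the ring of integers $\OO'$ of the associated unramified extension of $L$, and it costs nothing because a lift of $\rhobar\otimes_k k'$ over a ring of characteristic zero \emph{is}, by definition, a lift of $\rhobar$ to characteristic zero (enlarging the residue field is permitted). So from now on assume $\rhobar$ is of this form, with coefficient ring $\Lambda=\OO'$. Then $\rhobar$ corresponds to a closed point $x$ of $X^{\gen}$ lying over $\mm_{R^{\ps}}$ with $\rho_x\cong\rhobar$ and $\kappa(x)$ finite over $k$ (case~(1) of Subsection~\ref{sec_completions}), and Proposition~\ref{main_complete} identifies $R^{\square}_{\rhobar}$ with $R^{\square}_{\rho_x}$.

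Next I would invoke Corollary~\ref{ci}(1), which says that $R^{\square}_{\rhobar}$ is flat over $\Lambda$; in particular $\varpi$ is a nonzerodivisor on the nonzero ring $R^{\square}_{\rhobar}$, so $\varpi$ is not nilpotent and $\Spec R^{\square}_{\rhobar}[1/p]$ is a nonempty affine scheme, hence has a closed point $\xi$, corresponding to a prime $\pp_\xi\in\Spec R^{\square}_{\rhobar}$ with $\varpi\notin\pp_\xi$. Since $\Spec R^{\square}_{\rhobar}[1/p]$ is a nonempty open subscheme of $U_{\max}=\Spec R^{\square}_{\rhobar}\setminus\{\mm\}$, Lemma~\ref{jacobson}(4) gives $\pp_\xi\in P_1R^{\square}_{\rhobar}$, and because $\varpi\notin\pp_\xi$ Lemma~\ref{P1R} forces $\kappa(\xi)$ to be a finite extension $L''$ of $L$ with $R^{\square}_{\rhobar}/\pp_\xi\hookrightarrow\OO_{L''}$. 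Composing the universal framed deformation with the local, $\mm$-adically continuous map $R^{\square}_{\rhobar}\twoheadrightarrow R^{\square}_{\rhobar}/\pp_\xi\hookrightarrow\OO_{L''}$ then produces a continuous representation $\rho\colon G_F\to\GL_d(\OO_{L''})$ reducing to $\rhobar$ modulo $\mm_{\OO_{L''}}$, which is the desired characteristic-zero lift.

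The point I want to stress is that there is essentially no obstacle here beyond what has already been established: once $\OO$-flatness of $R^{\square}_{\rhobar}$ is in hand — and that is precisely Corollary~\ref{ci}, which rests on the dimension bound of Theorem~\ref{main} — the existence of a characteristic-zero lift is formal. The only steps requiring (routine) care are the residue-field enlargement at the outset and the verification that $\Lambda$-flatness makes $\Spec R^{\square}_{\rhobar}[1/p]$ nonempty with a closed point of local residue field, so that one genuinely lands in $\GL_d(\OO_{L''})$ rather than merely in $\GL_d$ of some characteristic-zero field.
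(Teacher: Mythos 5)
Your proposal is correct and follows essentially the same route as the paper: Corollary~\ref{ci} (flatness over $\Lambda$) makes $R^{\square}_{\rhobar}[1/p]$ nonzero, and one then specializes the universal framed deformation at a characteristic-zero point, the paper doing this by choosing any $\OO$-algebra map $R^{\square}_{\rhobar}\to\Qpbar$. Your extra steps (the residue-field enlargement, Lemma~\ref{jacobson}(4) and Lemma~\ref{P1R} to land continuously in $\GL_d(\OO_{L''})$) just make explicit what the paper leaves implicit.
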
 
 \begin{proof} It follows from Corollary \ref{ci} that $R^{\square}_{\rhobar}[1/p]$ is non-zero. 
We may obtain a lift by specializing the universal framed deformation along any 
 $\OO$-algebra homomorphism $x: R^{\square}\rightarrow \Qpbar$. 
 \end{proof}

\subsection{Bounding the maximally reducible semi-simple locus}\label{bound_mrs}

Writing $\Dbar = \prod_{i=1}^m \Dbar_i$ with $\Dbar_i$ absolutely irreducible pseudo-characters, we now take $\PP = \PP_{\max}$ and consider the finite (by Lemma \ref{iotaP_finite}) $R^{\ps}$-algebra $R^{\ps}_{\underline{\Sigma}}$, where 
$\underline{\Sigma}$ amounts to some choice of ordering of $\set{1,\dots,m}$. Note that if $\rhobar_i:G_F\rightarrow \GL_d(k)$ is an (absolutely irreducible) representation with pseudo-character $\Dbar_i$ then
    \[ R^{\ps}_{\underline{\Sigma}} \cong R_{\rhobar_1} \wtimes_{\OO} \cdots \wtimes_{\OO} R_{\rhobar_m} \]
where $R_{\rhobar_i}$ denotes the universal deformation ring of $\rhobar_i$. So let $\rho_i^{\univ}: G_F \to \GL_{d_i}(R_{\rhobar_i})$ denote a representative of the strict equivalence class of the universal representation for each $i = 1,\dots,m$. If we let $M$ denote the universal invertible matrix in $\GL_d(\OO_{\GL_d}(\GL_d))$, then the representation
    \[ M \times \diag(\rho_1^{\univ}, \dots, \rho_m^{\univ}) \times M^{-1}: G_F \to \GL_d(R^{\ps}_{\underline{\Sigma}} \otimes_\OO \OO_{\GL_d}(\GL_d)) \]
gives rise to a map of Cayley--Hamilton algebras $E \to M_d(R^{\ps}_{\underline{\Sigma}} \otimes_\OO \OO_{\GL_d}(\GL_d))$ which satisfies the universal property of $A^{\gen}$ and so defines a map of $R^{\ps}$-schemes
    \[ \GL_d \times_\OO X^{\ps}_{\underline{\Sigma}} \to X^{\gen} \]
which descends to a map of $R^{\ps}$-schemes
    \[ \eta_{\underline{\Sigma}}: \GL_d/Z_L \times_\OO X^{\ps}_{\underline{\Sigma}} \to X^{\gen} \]
where $L:=L_{\underline{\Sigma}}$ denotes the standard Levi subgroup of $\GL_d$ with blocks corresponding to $\underline{\Sigma}$ and $Z_L$ denotes its center.

\begin{defi}
The \textit{maximally reducible semi-simple locus} $X^{\mrs} \subset X^{\gen}$ is the scheme-theoretic image of $\eta_{\underline{\Sigma}}: \GL_d/Z_L \times_\OO X^{\ps}_{\underline{\Sigma}} \to X^{\gen}$.
\end{defi}

\begin{lem}\label{mrs_points}
Let  $x \in X^{\gen}$ and let $y$ be the image of $x$ in $X^{\ps}$. If $y$ lies in $X^{\ps}_{\PP_{\max}}$ and $\rho_x$ is semi-simple then $x\in X^{\mrs}$. Moreover, such points are 
dense in $X^{\mrs}$.
\end{lem}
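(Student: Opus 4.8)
The statement has two parts: first that any semisimple $\rho_x$ with $y\in X^{\ps}_{\PP_{\max}}$ lies in $X^{\mrs}$, and second that such points are dense in $X^{\mrs}$. For the first part I would argue pointwise on geometric points. Let $x$ be a geometric point as in the statement, with underlying representation $\rho_x:G_F\to\GL_d(\kappa)$ semisimple. Since $y=\pi(x)$ lies in $X^{\ps}_{\PP_{\max}}$, the associated pseudo-character $D_y$ factors as $\prod_{i=1}^m D_i$ where each $D_i$ is (absolutely) irreducible after base change, matching the partition $\PP_{\max}=\{\{1\},\dots,\{m\}\}$; by Lemma \ref{closed_pts} the semisimplification of $\rho_x$ has pseudo-character $D_y$, and since $\rho_x$ is already semisimple we get $\rho_x\cong\bigoplus_{i=1}^m\rho_i$ with $\rho_i$ the irreducible representation attached to $D_i$. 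After choosing a basis realizing this decomposition, $\rho_x$ is visibly in the image of $\diag(\rho_1,\dots,\rho_m)$ conjugated by a suitable $g\in\GL_d(\kappa)$, i.e.\ it is a $\kappa$-point of the image of $\eta_{\underline\Sigma}$ for a suitable ordering $\underline\Sigma$ refining the block structure. Since $\GL_d/Z_L\times X^{\ps}_{\underline\Sigma}$ is of finite type over $\OO$ and $X^{\mrs}$ is its scheme-theoretic image, the image of $\eta_{\underline\Sigma}$ is a constructible set whose closure is $X^{\mrs}$; but here I want the stronger statement that the specific point $x$ actually lands on $X^{\mrs}$, which follows because $X^{\mrs}$ is the scheme-theoretic image and $x$ is a point of the underlying reduced image. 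One subtlety: the ring homomorphism $A^\gen\to\kappa$ corresponding to $\rho_x$ factors through $(A^\gen)^{\red}$ if $X^{\mrs}$ is defined with a non-reduced structure, but since we only claim a set-theoretic membership $x\in X^{\mrs}$ this is harmless.

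For the density statement, the cleanest approach is: $X^{\mrs}$ is by definition the scheme-theoretic image of the finite-type morphism $\eta_{\underline\Sigma}$, so by \cite[\href{https://stacks.math.columbia.edu/tag/01R8}{Tag 01R8}]{stacks-project} (or the analogous statement for images of quasi-compact morphisms) the image of $\eta_{\underline\Sigma}$ on topological spaces is a constructible subset of $X^{\mrs}$ that is dense in it. It therefore suffices to show that the points described in the lemma --- semisimple $\rho_x$ lying above $X^{\ps}_{\PP_{\max}}$ --- are exactly the image of $\eta_{\underline\Sigma}$ on $\kappa$-points (as $\underline\Sigma$ ranges over orderings of $\{1,\dots,m\}$), hence contain a dense constructible subset. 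One direction was the first part of the proof; the converse is immediate from the construction of $\eta_{\underline\Sigma}$ via $M\diag(\rho_1^{\univ},\dots,\rho_m^{\univ})M^{-1}$: any point in the image has pseudo-character lying in $X^{\ps}_{\PP_{\max}}$ (it is a product of deformations of the $\Dbar_i$ along $R^{\ps}_{\underline\Sigma}$) and, being a conjugate of a block-diagonal representation with irreducible blocks, is semisimple. So the image of $\eta_{\underline\Sigma}$ is contained in the locus described, it is dense in $X^{\mrs}$, and the locus described is contained in $X^{\mrs}$ by the first part; combining these, the locus described is dense in $X^{\mrs}$.

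**Main obstacle.** The only genuinely delicate point is bookkeeping with the different orderings $\underline\Sigma$ and with scheme-theoretic versus set-theoretic images: $X^{\mrs}$ was defined using a single ordering $\underline\Sigma$, but a given semisimple $\rho_x$ may only match $\diag(\rho_1,\dots,\rho_m)$ after a permutation of the blocks. This is resolved by noting that the Weyl group action (permutation matrices) on $\GL_d$ identifies the images of $\eta_{\underline\Sigma}$ for all orderings $\underline\Sigma$ of the \emph{same} partition $\PP_{\max}$ --- permuting the blocks of the Levi is an inner operation --- so the image of $\eta_{\underline\Sigma}$ is independent of the choice of ordering, and every semisimple representation above $X^{\ps}_{\PP_{\max}}$ is captured. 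With that observation in place the argument is essentially formal, combining Lemma \ref{closed_pts} (to identify semisimplifications via pseudo-characters) with the constructibility of images of finite-type morphisms.
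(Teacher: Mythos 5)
Your proposal is correct and follows essentially the same route as the paper: the first part by decomposing the semisimple $\rho_x$ into irreducible lifts of the $\rhobar_i$ and conjugating it into block-diagonal form to exhibit $x$ in the image of $\eta_{\underline{\Sigma}}$, and the density by noting $\eta_{\underline{\Sigma}}$ is quasi-compact (it is a morphism of affine schemes) so that by \cite[\href{https://stacks.math.columbia.edu/tag/01R8}{Tag 01R8}]{stacks-project} its set-theoretic image is dense in the scheme-theoretic image $X^{\mrs}$. Your extra remarks (constructibility via Chevalley, the Weyl-group bookkeeping of orderings, and the explicit converse inclusion of the image in the described locus) are harmless elaborations of points the paper leaves implicit.
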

\begin{proof} We first note that if
$x\in X^{\gen}$ maps to $X^{\ps}_{\PP_{\max}}$ 
and $\rho_x$ is semi-simple then 
$\rho_x\cong \rho_1\oplus\ldots\oplus \rho_m$, 
with each $\rho_i$ an irreducible representation of $G_F$ lifting $\rhobar_i$.
By conjugating by an element of $h\in \GL_d(\kappa(x))$ we may ensure that  
$h^{-1} \rho_x(g) h= \diag(\rho_1(g), \ldots, \rho_m(g))$ for all $g\in G_F$ and this implies
that $x\in X^{\mrs}$. 

Since $\eta_{\underline{\Sigma}}$ is a map
of affine schemes, it is affine and hence quasi-compact, see 
\cite[\href{https://stacks.math.columbia.edu/tag/01S5}{Tag 01S5}]{stacks-project}. It follows from \cite[\href{https://stacks.math.columbia.edu/tag/01R8}{Tag 01R8}]{stacks-project} that the set theoretic image of 
$\eta_{\underline{\Sigma}}$ is dense in $X^{\mrs}$.
\end{proof}

\begin{prop}\label{mrs_bound}
$\dim X^{\mrs} \leq 1+d^2 +[F:\Qp]  \sum_{i=1}^m d_i^2$.
\end{prop}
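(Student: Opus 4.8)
Set $T:=\GL_d/Z_L\times_\OO X^{\ps}_{\underline{\Sigma}}$ for the source of $\eta_{\underline{\Sigma}}$. The plan is to bound $\dim X^{\mrs}$ by $\dim T$ and then to compute $\dim T$. Since $Z_L\cong\Gm^m$, the quotient $\GL_d/Z_L$ is smooth over $\OO$ of relative dimension $d^2-m$, so the projection $T\to X^{\ps}_{\underline{\Sigma}}$ is smooth of the same relative dimension and $\dim T=(d^2-m)+\dim R^{\ps}_{\underline{\Sigma}}$. As $\dim R^{\ps}_{\underline{\Sigma}}\le 1+\dim(R^{\ps}_{\underline{\Sigma}}/\varpi)$ and $\dim(R^{\ps}_{\underline{\Sigma}}/\varpi)=\dim\Xbar^{\ps}_{\underline{\Sigma}}=m+[F:\Qp]\sum_{i=1}^m d_i^2$ — which was recorded above using \cite[Theorem~5.4.1(i)]{bj}, since here $\PP=\PP_{\max}$, $r=m$ and $l_{\PP_{\max}}=\sum_{i=1}^m d_i^2$ — we obtain $\dim T\le(d^2-m)+1+m+[F:\Qp]\sum_{i=1}^m d_i^2=1+d^2+[F:\Qp]\sum_{i=1}^m d_i^2$, exactly the asserted bound.

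It remains to show $\dim X^{\mrs}\le\dim T$. First, $X^{\mrs}$ is a closed $\GL_d$-invariant subscheme of $X^{\gen}$: the morphism $\eta_{\underline{\Sigma}}$ is equivariant for the action of $\GL_d$ by left translation on $\GL_d/Z_L$ and by conjugation on $X^{\gen}$ — the assignment $gZ_L\mapsto g\,\diag(\rho_1^{\univ},\dots,\rho_m^{\univ})\,g^{-1}$ is well defined because $Z_L$ centralises the block-diagonal matrices, and translation by $h$ conjugates the result by $h$ — so the ideal of the scheme-theoretic image is $\GL_d$-stable. Over a Jacobson base the inequality $\dim X^{\mrs}\le\dim T$ would be formal, but $X^{\gen}$ is only of finite type over the complete local ring $R^{\ps}$ and the naive bound may fail (cf.\ Remark~\ref{caution}), so I would instead localise at a point over $\mm_{R^{\ps}}$, as in Subsection~\ref{sec_dim_sp}: by Lemma~\ref{fix} some top-dimensional irreducible component $Z$ of $X^{\mrs}$ contains a closed point $x$ over $\mm_{R^{\ps}}$ with $\dim\OO_{Z,x}=\dim Z=\dim X^{\mrs}$, whence $\dim X^{\mrs}\le\dim\OO_{X^{\mrs},x}$. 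The structure morphism $X^{\mrs}\to X^{\ps}$ factors through the closed subscheme $W:=\iota_{\underline{\Sigma}}(X^{\ps}_{\underline{\Sigma}})$, which is finite over $X^{\ps}$ by Lemma~\ref{iotaP_finite} and satisfies $\dim W=\dim R^{\ps}_{\underline{\Sigma}}$, and $x$ lies over the closed point of $W$; hence \cite[Theorem~15.1]{matsumura} gives $\dim\OO_{X^{\mrs},x}\le\dim R^{\ps}_{\underline{\Sigma}}+\dim X^{\mrs}_{\mm_{R^{\ps}}}$. Combined with the fibre estimate $\dim X^{\mrs}_{\mm_{R^{\ps}}}\le d^2-m$ this yields $\dim X^{\mrs}\le\dim R^{\ps}_{\underline{\Sigma}}+(d^2-m)=\dim T$.

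The main obstacle is thus the fibre estimate $\dim X^{\mrs}_{\mm_{R^{\ps}}}\le d^2-m$, which I would prove by the GIT arguments of Section~\ref{sec_GIT}, along the lines of the fibre analysis in Subsection~\ref{sec_fibre}. The fibre $X^{\mrs}_{\mm_{R^{\ps}}}$ is a non-empty closed $\GL_d$-invariant subscheme of $X^{\gen}_{\mm_{R^{\ps}}}$, which by Lemma~\ref{closed_orbit} has a unique closed $\GL_d$-orbit, that of the semisimple point $x_0:=\diag(\rhobar_1,\dots,\rhobar_m)$; so by \cite[Theorem~3]{seshadri} it contains $\GL_d\cdot x_0$ as its unique closed orbit, and since the stabiliser of $x_0$ contains $Z_L\cong\Gm^m$ we have $\dim\GL_d\cdot x_0\le d^2-m$. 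Lemma~\ref{bound} then gives $\dim X^{\mrs}_{\mm_{R^{\ps}}}\le\dim_k T_{x_0}\big(X^{\mrs}_{\mm_{R^{\ps}}}\big)$, and the crux is to bound this tangent space by $d^2-m$; this amounts to checking that the normal directions to the orbit — deforming the irreducible blocks $\rhobar_i$ and switching on extension classes between them — are all obstructed by the requirement that the pseudo-character stay equal to $\Dbar\otimes_k\kappa(x_0)$, a cohomological computation parallel to Proposition~\ref{tangent} but with the $\Ext$ and Euler-characteristic terms absent because the pseudo-character is held fixed rather than allowed to deform. I expect this tangent-space bookkeeping, together with making the reduction to the fibre over $\mm_{R^{\ps}}$ precise, to be the technically delicate point of the argument.
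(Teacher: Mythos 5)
Your first half is fine: the computation $\dim T\le 1+d^2+[F:\Qp]\sum_i d_i^2$ for $T=\GL_d/Z_L\times_\OO X^{\ps}_{\underline{\Sigma}}$, the $\GL_d$-equivariance of $\eta_{\underline{\Sigma}}$, and the reduction via Lemma \ref{fix} and \cite[Theorem 15.1]{matsumura} to the inequality $\dim X^{\mrs}\le \dim R^{\ps}_{\underline{\Sigma}}+\dim X^{\mrs}_{\mm_{R^{\ps}}}$ are all correct. The genuine gap is the fibre estimate $\dim X^{\mrs}_{\mm_{R^{\ps}}}\le d^2-m$, which you leave as the ``delicate point'': it is not just unproved, it is false in general, because you are bounding the \emph{fibre of the image} by the \emph{image of the fibre}. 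Scheme-theoretic image does not commute with passage to fibres, and $X^{\mrs}\cap Y$ contains all specializations into $Y$ of points lying over $U_{\max}$. Concretely, take $d=2$ and $\Dbar=\chi+\chi$ (a case that is needed later, in Case 3 of Proposition \ref{nice_open}). Over a ramified DVR $\OO'$ with uniformiser $\varpi'$, choose lifts $\tilde\chi_1=\tilde\chi$, $\tilde\chi_2=\tilde\chi\mu$ of $\chi$ with $\mu\equiv 1 \pmod{\varpi'}$; conjugating $\diag(\tilde\chi_1,\tilde\chi_2)$ by $\bigl(\begin{smallmatrix}1&1/\varpi'\\ 0&1\end{smallmatrix}\bigr)$ gives an $\OO'$-point of $X^{\gen}$ whose generic fibre lies in the set-theoretic image of $\eta_{\underline{\Sigma}}$ and whose special fibre is the non-split extension $\chi\otimes\bigl(\begin{smallmatrix}1&\beta\\0&1\end{smallmatrix}\bigr)$ with $\beta$ the reduction of $(\mu-1)/\varpi'$. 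Since $X^{\mrs}$ is closed, all these non-semisimple points lie in $X^{\mrs}\cap Y$; letting $\mu$ vary, the classes $\beta$ sweep out a subspace of $\Hom(G_F,k)$ of dimension at least $[F:\Qp]+1$, and together with the $\GL_2$-orbits (each of dimension $2$) one gets a locus in $X^{\mrs}_{\mm_{R^{\ps}}}$ of dimension at least $[F:\Qp]+2>2=d^2-m$. For the same reason your heuristic for the tangent-space bound cannot work: turning on an extension class between the blocks does \emph{not} change the pseudo-character, so those normal directions are not obstructed by fixing $\Dbar$; by Lemma \ref{bound} the tangent space at the semisimple point is at least the fibre dimension, hence cannot be $\le d^2-m$ here. (Only when the $\rhobar_i$ are pairwise non-isomorphic does a lattice argument force reductions of split representations to be split, and even then you would need to prove it.)

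The paper's proof sidesteps exactly this issue by never bounding the fibre of $X^{\mrs}$ over $\mm_{R^{\ps}}$. It splits $X^{\mrs}$ as $Z_{\max}\cup Y^{\mrs}$, where $V_{\max}=X^{\mrs}\times_{X^{\ps}}U_{\max}$, $Z_{\max}$ is its closure, and $Y^{\mrs}$ is the scheme-theoretic image of the fibre of the \emph{source} $\GL_d/Z_L\times_\OO\{\mm_{R^{\ps}}\}$, which really does have dimension at most $d^2-m$ since it only contains conjugates of the residual semisimple representation. The part over $U_{\max}$ is controlled by Lemma \ref{jacobson_dom} applied to the dominant map $(T\times_{X^{\ps}}U_{\max})\to V_{\max}$ of Jacobson schemes, and then $\dim Z_{\max}=\dim V_{\max}+1$ and $\dim T=\dim(T\times_{X^{\ps}}U_{\max})+1$ via Lemma \ref{jacobson} (5) and Lemma \ref{fix}; the problematic non-semisimple points of $X^{\mrs}\cap Y$ are absorbed into $Z_{\max}$, where they cost nothing extra. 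To repair your argument you would have to replace the fibre estimate by this kind of decomposition (or prove a bound on $\dim(X^{\mrs}\cap Y)$ of the form $\dim T-\dim W$, which is essentially equivalent to the paper's route); as written, the step you flag as delicate is where the proof breaks.
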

\begin{proof}
The open subscheme $U_{\max} = X^{\ps} \setminus \set{\mm_{R^{\ps}}} \subset \Xbar^{\ps}$ is Jacobson by Lemma \ref{jacobson}, as is $V_{\max} := X^{\mrs} \times_{X^{\ps}} U_{\max}$. Let $Z_{\max}$ denote the closure of $V_{\max}$ in $X^{\mrs}$. The formation of scheme-theoretic images commutes with restriction to opens, so the map
    \[ (\GL_d/Z_L \times_\OO X^{\ps}_{\underline{\Sigma}}) \times_{X^{\ps}} U_{\max} \to V_{\max} \]
is a dominant map of Jacobson Noetherian excellent schemes. Applying Lemma \ref{jacobson_dom} we see that
    \[ \dim V_{\max} \leq \dim ((\GL_d/Z_L\times_{\OO} X^{\ps}_{\underline{\Sigma}}) \times_{X^{\ps}} U_{\max}). \]
Since $X^{\mrs}$ is by definition a non-empty closed $\GL_d$-invariant subscheme of $X^{\gen}$, Lemma \ref{fix} implies that every irreducible component of $X^{\mrs}$ has a point in common with the preimage of $\mm_{R^{\ps}}$ in $X^{\mrs}$. Therefore, Lemma \ref{jacobson} (5) implies that
    \[ \dim Z_{\max} = \dim V_{\max} + 1 \]
Furthermore, $\GL_d/Z_L$ is flat over $\Spec \OO$ with geometrically irreducible fibres, 
so the projection $\GL_d/Z_L\times_{\OO} X^{\ps}_{\underline{\Sigma}}\rightarrow X^{\ps}_{\underline{\Sigma}}$ is a flat (and hence open) map with irreducible fibres. It follows from \cite[\href{https://stacks.math.columbia.edu/tag/037A}{Tag 037A}]{stacks-project} that this 
map induces a bijection between the sets of irreducible components. Since $R^{\ps}_{\underline{\Sigma}}$ is a local ring, we deduce that $\GL_d/Z_L\times_{\OO} X^{\ps}_{\underline{\Sigma}}$ satisfies the 
assumptions of Lemma \ref{jacobson} (5) and thus Lemma \ref{jacobson} (5) implies that
    \[ \dim \GL_d/Z_L\times_\OO X^{\ps}_{\underline{\Sigma}} = \dim ((\GL_d/Z_L \times_\OO X^{\ps}_{\underline{\Sigma}}) \times_{X^{\ps}} U_{\max}) + 1 \]
Since $\dim X^{\ps}_{\underline{\Sigma}}= 
1+\sum_{i=1}^m (1+d_i^2[F:\Qp])$ and 
the relative dimension of $\GL_d/Z_L$ over $\OO$ is $d^2-m$ we get that 
$$\dim Z_{\max}\le \dim \GL_d/Z_L\times_\OO X^{\ps}_{\underline{\Sigma}}=1+d^2 + [F:\Qp]\sum_{i=1}^m d_i^2.$$

Let $Y^{\mrs}$ be the scheme theoretic image of 
$\GL_d/ Z_L \times_{\OO} \{\mm_{R^{\ps}}\}\rightarrow Y$. 
Since $Y$ is of finite type over $k$ the same argument as above shows that 
$$\dim Y^{\mrs} \le \dim (\GL_d/ Z_L \times_{\OO} \{\mm_{R^{\ps}}\})= d^2-m.$$ 

Now $Z_{\max}\cup Y^{\mrs}$ is a closed subscheme of $X^{\gen}$ containing the image of $\eta_{\underline{\Sigma}}$. It follows from Lemma \ref{mrs_points} that  $Z_{\max}\cup Y^{\mrs}$ will contain 
$X^{\mrs}$. Hence, $$\dim X^{\mrs}\le \max\{ \dim Z_{\max}, \dim Y^{\mrs}\}= \dim Z_{\max}.$$
\end{proof}

\begin{cor}\label{ss_bound}$ \dim \Xbar^{\mrs}
=\dim X^{\mrs}-1\leq d^2  + [F:\Qp]\sum_{i=1}^m d_i^2$.
\end{cor}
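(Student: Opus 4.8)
The plan is to deduce Corollary~\ref{ss_bound} directly from Proposition~\ref{mrs_bound} together with the same type of dimension-shift argument that was already used, e.g.\ in Lemma~\ref{dim_gen_sp} and in the proof of Lemma~\ref{fix}. First I would observe that $\Xbar^{\mrs}$ is by construction the special fibre $X^{\mrs}\otimes_{\OO}k$, and that $X^{\mrs}$ is a non-empty closed $\GL_d$-invariant subscheme of $X^{\gen}$ by Definition (being a scheme-theoretic image of an equivariant map). Hence Lemma~\ref{fix} applies to $W=X^{\mrs}$: every irreducible component $Z$ of $X^{\mrs}$ contains a closed point $x$ lying over $\mm_{R^{\ps}}$, and at such a point $\dim\OO_{Z,x}=\dim Z$, while at any closed point not over $\mm_{R^{\ps}}$ the local dimension drops by one.

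The key step is then to show $\dim\Xbar^{\mrs}=\dim X^{\mrs}-1$. For the upper bound $\dim\Xbar^{\mrs}\le \dim X^{\mrs}-1$: let $\overline Z$ be an irreducible component of $\Xbar^{\mrs}$ of maximal dimension; it is contained in some irreducible component $Z$ of $X^{\mrs}$, and by Lemma~\ref{fix} there is a closed point $x\in Z$ over $\mm_{R^{\ps}}$, so $\kappa(x)$ is finite by Lemma~\ref{jacobson}(4) and $\cha(\kappa(x))=p$; thus $\varpi$ is not a unit in $\OO_{Z,x}$. Since $X^{\gen}$, hence $X^{\mrs}$, is $\OO$-torsion free at such points (it is a closed subscheme of $X^{\gen}$, which by Corollary~\ref{ci_gen} is $\OO$-torsion free; alternatively argue on the component $Z$ after checking it meets the generic fibre, using Lemma~\ref{fix} and Lemma~\ref{jacobson}(5)), multiplication by $\varpi$ is injective on $\OO_{Z,x}$, so $\dim\OO_{\overline Z,x}=\dim\OO_{Z,x}-1=\dim Z-1\le \dim X^{\mrs}-1$; taking the maximum over components gives the bound. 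For the reverse inequality, note that each component $Z$ of $X^{\mrs}$ that meets the generic fibre has $\dim\Xbar^{\mrs}\ge \dim(Z\cap\Xbar^{\mrs})=\dim Z-1$ by the same computation, and some such $Z$ realises $\dim X^{\mrs}$ because $X^{\mrs}$ is $\OO$-flat at its component-defining points; alternatively, one invokes Lemma~\ref{dim_gen_sp} applied to $W=X^{\mrs}$ to get $\dim X^{\mrs}[1/p]\le\dim\Xbar^{\mrs}$ and combines it with $\dim X^{\mrs}[1/p]=\dim X^{\mrs}-1$. Finally, plugging the bound from Proposition~\ref{mrs_bound} gives
\[
\dim\Xbar^{\mrs}=\dim X^{\mrs}-1\le \bigl(1+d^2+[F:\Qp]\sum_{i=1}^m d_i^2\bigr)-1=d^2+[F:\Qp]\sum_{i=1}^m d_i^2.
\]

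The main obstacle I anticipate is the bookkeeping needed to ensure that the dimension of $X^{\mrs}$ is actually \emph{attained} on a component meeting the generic fibre, rather than on a purely characteristic-$p$ component contained in $Y$; here one should either cite $\OO$-torsion freeness of $X^{\mrs}$ (inherited from Corollary~\ref{ci_gen} since $X^{\mrs}\subset X^{\gen}$ is a closed subscheme and scheme-theoretic images of $\OO$-flat schemes behave well) so that every component meets the generic fibre, or directly check that the $Y^{\mrs}$-part in the proof of Proposition~\ref{mrs_bound} has strictly smaller dimension, which was in fact shown there ($\dim Y^{\mrs}\le d^2-m<\dim Z_{\max}$). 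Given that, the corollary is immediate, and the whole argument is a short application of already-established lemmas.
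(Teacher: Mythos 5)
Your overall strategy is the same as the paper's: show that $\varpi$ is a non-zerodivisor on $X^{\mrs}$, deduce the pointwise drop $\dim\OO_{\Xbar^{\mrs},x}=\dim\OO_{X^{\mrs},x}-1$, use Lemma \ref{fix} to see that $\dim X^{\mrs}$ is attained at points over $\mm_{R^{\ps}}$, and then quote Proposition \ref{mrs_bound}. However, the step on which everything hinges --- the $\OO$-torsion-freeness of $X^{\mrs}$ --- is justified incorrectly. You claim it is ``inherited from Corollary \ref{ci_gen} since $X^{\mrs}\subset X^{\gen}$ is a closed subscheme''. Torsion-freeness does not pass to closed subschemes: a quotient of an $\OO$-torsion-free ring can have $\varpi$-torsion (e.g.\ $\OO[x]/(\varpi x)$ inside $\mathbb{A}^1_{\OO}$), and indeed $\Xbar^{\gen}$ itself is a closed subscheme of $X^{\gen}$ killed by $\varpi$. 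Without torsion-freeness the inequality $\dim\Xbar^{\mrs}\le\dim X^{\mrs}-1$ could fail, since a maximal-dimensional irreducible component of $X^{\mrs}$ might lie entirely in characteristic $p$; your fallback of comparing with $\dim Y^{\mrs}\le d^2-m$ does not rule this out, because the worry concerns components of the scheme-theoretic image, not the locus $Y^{\mrs}$. Likewise the assertion $\dim X^{\mrs}[1/p]=\dim X^{\mrs}-1$ in your alternative route presupposes exactly the flatness you have not established.

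The correct argument, and the one the paper uses, is that the coordinate ring of the scheme-theoretic image $X^{\mrs}$ is by construction a subring of the coordinate ring of the source $\GL_d/Z_L\times_{\OO}X^{\ps}_{\underline{\Sigma}}$ of $\eta_{\underline{\Sigma}}$, and a subring of an $\OO$-torsion-free ring is $\OO$-torsion free; the source is $\OO$-torsion free because $R^{\ps}_{\underline{\Sigma}}\cong R_{\rhobar_1}\wtimes_{\OO}\cdots\wtimes_{\OO}R_{\rhobar_m}$ is, by Corollary \ref{ci} (applied to the framed rings of the irreducible $\rhobar_i$, which are formally smooth over the $R_{\rhobar_i}$). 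Your phrase ``scheme-theoretic images of $\OO$-flat schemes behave well'' gestures at exactly this, but you attach it to the wrong flat scheme ($X^{\gen}$, via Corollary \ref{ci_gen}) and never establish flatness of the actual source. A smaller slip: the displayed equality $\dim\OO_{\overline{Z},x}=\dim\OO_{Z,x}-1$ uses a point $x$ produced by Lemma \ref{fix} on $Z$, which need not lie on your chosen component $\overline{Z}$ of $\Xbar^{\mrs}$; the clean formulation is the pointwise drop at every $x\in\Xbar^{\mrs}$ (valid once $\varpi$ is a non-zerodivisor) combined with Lemma \ref{fix}. With the torsion-freeness supplied as above, the rest of your argument goes through and coincides with the paper's proof.
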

\begin{proof} It follows from Corollary \ref{ci} that $R^{\ps}_{\underline{\Sigma}}$ 
is $\OO$-torsion free, which implies that 
$\GL_d/Z_L\times_{\OO} X^{\ps}_{\underline{\Sigma}}$ is flat over 
$\Spec \OO$ and the same applies for $X^{\mrs}$. (Here we are simply saying that a subring of $\OO$-torsion free ring is $\OO$-torsion free.) Thus for all $x\in \Xbar^{\mrs}$, $\varpi$ is a regular element 
in $\OO_{X^{\mrs}, x}$ and so 
$\dim \OO_{\Xbar^{\mrs}, x}= \dim \OO_{X^{\mrs}, x}-1$. This implies 
$\dim \Xbar^{\mrs}= \dim X^{\mrs}-1$ and the 
inequality  follows from the Proposition \ref{mrs_bound}. 
\end{proof}

\begin{remar} One could study the closure of the reducible semi-simple locus corresponding to more general 
partitions using a similar argument. We don't pursue this here, since we need the bound only for $d = 2$ and $F=\mathbb{Q}_2$ when we apply it to Case 3 in the proof of Proposition \ref{nice_open} below.
\end{remar}

\subsection{Density of the irreducible locus}\label{sec_dens_irr} Let us first unravel the definitions of $U_{\PP_{\min}}$ 
 and $V_{\PP_{\min}}$ in Section~\ref{sec_dim_sp}. We have that $U_{\PP_{\min}}$ is 
 an open subscheme of $\Xbar^{\ps}$ such that 
 the closed points of $U_{\PP_{\min}}$ are in bijection with $\pp\in P_1(R^{\ps}/\varpi)$, 
 such that the specialization of the universal pseudo-character along $R^{\ps}\rightarrow \kappa(\pp)$ 
 is absolutely irreducible. Now $V_{\PP_{\min}}$ is the preimage of $U_{\PP_{\min}}$ in 
 $\Xbar^{\gen}$, so that it is an open subscheme of $\Xbar^{\gen}$ and its closed points are in bijection 
 $\qq \in \Xbar^{\gen}$, which map to $P_1(R^{\ps}/\varpi)$ in $\Xbar^{\ps}$,  such that the representation 
 $$ E\overset{j}{\rightarrow} M_d(A^{\gen})\rightarrow M_d(\kappa(\qq))$$
 is absolutely irreducible. 
 
 \begin{prop}\label{VPmin_dense} $V_{\PP_{\min}}$ is dense in $\Xbar^{\gen}$. 
 \end{prop}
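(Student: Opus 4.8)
The plan is to prove the apparently stronger (but equivalent) statement that $Z_{\PP_{\min}}=\Xbar^{\gen}$, i.e.\ that every irreducible component $Z$ of $\Xbar^{\gen}$ is contained in $Z_{\PP_{\min}}=\overline{V_{\PP_{\min}}}$. The one nontrivial ingredient beyond what is assembled in Subsection~\ref{sec_dim_sp} is equi-dimensionality: by Corollary~\ref{ci_gen} the scheme $\Xbar^{\gen}$ is equi-dimensional of dimension $d^2+d^2[F:\Qp]$, so \emph{every} irreducible component $Z$ of $\Xbar^{\gen}$ has $\dim Z=d^2+d^2[F:\Qp]$, which is the extremal value allowed by Theorem~\ref{main}. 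Fix such a $Z$ and let $\eta$ be its generic point.

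Next I would use the decomposition already recorded in \eqref{XgP}. Applying it with $\PP=\PP_{\min}$, and noting $\Xbar^{\ps}_{\PP_{\min}}=\Xbar^{\ps}$ (so that its preimage $\Xbar^{\gen}_{\PP_{\min}}$ is all of $\Xbar^{\gen}$), gives
$$\Xbar^{\gen}=Y\cup\bigcup_{\PP}Z_{\PP},$$
where $Y$ is the preimage of $\{\mm_{R^{\ps}}\}$ in $\Xbar^{\gen}$. Hence $\eta$ lies in $Y$ or in some $Z_{\PP}$. If $\eta\in Y$, then $Z=\overline{\{\eta\}}\subseteq Y$ and $\dim Z\le\dim Y$; but Lemma~\ref{bound_Y} yields $d^2+d^2[F:\Qp]-\dim Y\ge 1+l_{\PP_{\max}}[F:\Qp]>0$, contradicting $\dim Z=d^2+d^2[F:\Qp]$. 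So $\eta\in Z_{\PP}$ for some $\PP$, and since $Z_{\PP}$ is closed, $Z=\overline{\{\eta\}}\subseteq Z_{\PP}$, whence $d^2+d^2[F:\Qp]=\dim Z\le\dim Z_{\PP}$. On the other hand Lemma~\ref{bound_diff} gives $\dim Z_{\PP}<d^2+d^2[F:\Qp]$ for every $\PP\neq\PP_{\min}$ (using $[F:\Qp]\ge1$). Therefore $\PP=\PP_{\min}$, i.e.\ $Z\subseteq Z_{\PP_{\min}}$. Running this over all components gives $\Xbar^{\gen}=\bigcup_Z Z\subseteq Z_{\PP_{\min}}$, and since $Z_{\PP_{\min}}$ is a closed subscheme of $\Xbar^{\gen}$ this forces $Z_{\PP_{\min}}=\Xbar^{\gen}$; in particular $V_{\PP_{\min}}$ is non-empty and dense.

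I do not anticipate a real obstacle: the argument is purely a bookkeeping consequence of the dimension estimates already in place (Theorem~\ref{main}, Lemmas~\ref{bound_diff} and~\ref{bound_Y}, and the equi-dimensionality from Corollary~\ref{ci_gen}). The only points deserving a moment's care are (i) checking that \eqref{XgP} indeed exhausts $\Xbar^{\gen}$ when $\PP=\PP_{\min}$, and (ii) confirming that all the ``bad'' pieces $Y$ and $Z_{\PP}$ ($\PP\neq\PP_{\min}$) have dimension \emph{strictly} below $d^2+d^2[F:\Qp]$ — which is exactly the content of the cited lemmas once one observes $[F:\Qp]\ge1$ and $l_{\PP_{\max}}\ge1$.
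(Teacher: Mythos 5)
Your proposal is correct and uses exactly the ingredients of the paper's own proof (the decomposition \eqref{XgP}, Lemmas \ref{bound_diff} and \ref{bound_Y}, and equi-dimensionality from Corollary \ref{ci_gen}); the paper phrases it as "the complement $Y\cup\bigcup_{\PP_{\min}<\PP}Z_{\PP}$ has positive codimension, hence density by equi-dimensionality," which is the same argument you spell out component-by-component via generic points. No gap; this is essentially the paper's proof in slightly more detail.
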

 \begin{proof} We have
 $$ \Xbar^{\gen}\setminus V_{\PP_{\min}}= Y \cup \bigcup_{\PP_{\min}< \PP} Z_{\PP}$$
 and it follows from Lemmas \ref{bound_diff}, \ref{bound_Y} that $\Xbar^{\gen}\setminus V_{\PP_{\min}}$ has positive 
 codimension in $\Xbar^{\gen}$. 
 Since $\Xbar$ is equi-dimensional by Corollary \ref{ci_gen} 
 we conclude that $V_{\PP_{\min}}$ is dense in $\Xbar^{\gen}$. In particular, the inequality 
 in Proposition \ref{dim_Pmin} is an equality. 
 \end{proof} 
 
 We will now prove a stronger version of the above result. Following \cite[Definition 5.1.2]{bj} 
 we call $y\in U_{\PP_{\min}}$ \textit{special} if either $\zeta_p\not\in F$ and $D_y= D_y(1)$ or 
 $\zeta_p\in F$ and the restriction $D_y$ to $G_{F'}$ is reducible for some degree $p$ Galois 
 extension $F'$ of $F$. Otherwise, $y$ is called \textit{non-special}. According to 
 \cite[Lemma 5.1.3]{bj} there is a closed subscheme $U^{\spcl}$ of $U_{\PP_{\min}}$ such that 
 the closed points of $U^{\spcl}$ are precisely the closed special points of $U_{\PP_{\min}}$. 
 Let $V^{\spcl}$ denote the preimage of $U^{\spcl}$ in $\Xbar^{\gen}$ and let $Z^{\spcl}$ denote the 
 closure of $V^{\spcl}$.
 
 Similarly 
 let $U^{\Kirr}\subset U_{\PP_{\min}}$ 
 be the Kummer-irreducible locus 
 defined in Appendix \ref{appendix}. Let $U^{\Kred}$ denote its complement in $U_{\PP_{\min}}$, let 
 $V^{\Kred}$ be the preimage of 
 $U^{\Kred}$ in $\Xbar^{\gen}$ and 
 let $Z^{\Kred}$ denote the closure
 of $V^{\Kred}$. We have $V^{\spcl} \subseteq V^{\Kred}$ with equality if $\zeta_p\in F$ and thus $Z^{\spcl}\subseteq Z^{\Kred}$. 
 
 \begin{lem}\label{special} We have 
 $$\dim \Xbar^{\gen}- \dim Z^{\spcl}   \ge \frac{1}{2}[F:\Qp]d^2, \quad \dim \Xbar^{\gen} - \dim Z^{\Kred}   \ge [F:\Qp]d.$$
 \end{lem}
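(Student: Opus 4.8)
The plan is to reduce the two estimates to the dimension bounds on $Z^{\spcl}$ and $Z^{\Kred}$, which are themselves inherited from the corresponding loci in $\Xbar^{\ps}$ via the fibre-dimension machinery of Subsection~\ref{sec_fibre}. Recall from Corollary~\ref{ci_gen} that $\Xbar^{\gen}$ is equi-dimensional of dimension $d^2 + d^2[F:\Qp]$, so both inequalities amount to showing the closed subschemes $Z^{\spcl}$ and $Z^{\Kred}$ have the stated codimension. Since $Z^{\spcl}$ (resp.~$Z^{\Kred}$) is the closure of $V^{\spcl}$ (resp.~$V^{\Kred}$), and since by Lemma~\ref{fix} every irreducible component of the relevant closed $\GL_d$-invariant preimage meets $Y$, we may apply Lemma~\ref{jacobson}~(5) to get $\dim Z^{\spcl} = \dim V^{\spcl}+1$ and likewise for $Z^{\Kred}$. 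Then Lemma~\ref{jacobson}~(6) bounds $\dim V^{\spcl}$ by $\dim U^{\spcl} + \max_y \dim X^{\gen}_y$, where $y$ ranges over closed points of $U^{\spcl}$, and similarly for $V^{\Kred}$.

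First I would bound $\dim U^{\spcl}$ and $\dim U^{\Kred}$. These loci lie inside $U_{\PP_{\min}}$, whose closed points correspond to absolutely irreducible pseudo-characters, so the fibre $X^{\gen}_y$ is smooth of dimension $d^2 - 1$ by Corollary~\ref{fibre_irr} (the case $r = 1$). Hence the fibre contributes exactly $d^2 - 1$ uniformly over these loci. So the two inequalities will follow once I have:
\begin{itemize}
\item $\dim U^{\spcl} \le \dim \Xbar^{\ps} - \tfrac{1}{2}[F:\Qp]d^2$ (up to the $+1$ bookkeeping from Lemma~\ref{jacobson}~(5)), and
\item $\dim U^{\Kred} \le \dim \Xbar^{\ps} - [F:\Qp]d$.
\end{itemize}
By \cite[Theorem 5.4.1]{bj} (or the equi-dimensionality statements there), $\dim \Xbar^{\ps} = d^2[F:\Qp]$. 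The codimension estimate on the special locus $U^{\spcl} \subset \Xbar^{\ps}$ is precisely the content of \cite[Lemma 5.1.3]{bj} and the surrounding dimension counts in \cite{bj}: a special point either satisfies $D_y = D_y(1)$ (which is a nontrivial closed condition cutting down the cyclotomic-twist direction) or becomes reducible over a degree-$p$ extension, and in either case the locus has codimension at least $\tfrac{1}{2}[F:\Qp]d^2$ in $\Xbar^{\ps}$. For $U^{\Kred}$ I would invoke the analysis in Appendix~\ref{appendix}, where the Kummer-reducible condition is shown to impose a codimension-$[F:\Qp]d$ constraint (this is the slight generalization of the non-special estimate, needed because we no longer assume $\zeta_p \in F$). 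Combining: $\dim Z^{\spcl} \le \dim U^{\spcl} + (d^2 - 1) + 1 \le d^2[F:\Qp] - \tfrac{1}{2}[F:\Qp]d^2 + d^2 = \dim \Xbar^{\gen} - \tfrac{1}{2}[F:\Qp]d^2$, and similarly for $Z^{\Kred}$.

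The main obstacle will be correctly transporting the \cite{bj} codimension bounds on the special/non-special loci in $\Xbar^{\ps}$ to the Kummer-reducible locus, in particular making sure the bookkeeping between $U$, $V = \bar\pi^{-1}(U)$, and $Z = \overline{V}$ — i.e., the various $+1$'s from Lemma~\ref{jacobson}~(5) and the uniform fibre dimension $d^2-1$ from Corollary~\ref{fibre_irr} — lines up so that the net codimension is exactly $\tfrac{1}{2}[F:\Qp]d^2$ (resp.~$[F:\Qp]d$) and not off by one; and, on the \cite{bj}/Appendix side, verifying that the Kummer-reducible estimate genuinely gives codimension $[F:\Qp]d$ in all cases, including the subtle case $\zeta_p \notin F$ where one works over degree-$p$ extensions of $F(\zeta_p)$ rather than of $F$. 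Everything else is a routine assembly of Lemmas~\ref{jacobson} and~\ref{fix} with Corollary~\ref{fibre_irr}.
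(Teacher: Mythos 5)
Your proposal is correct and follows essentially the same route as the paper: bound $\dim U^{\spcl}$ (resp.\ $\dim U^{\Kred}$) in $\Xbar^{\ps}$, add the uniform fibre dimension $d^2-1$ from Corollary~\ref{fibre_irr} via Lemma~\ref{jacobson}~(6), pass to closures with Lemma~\ref{jacobson}~(5) (justified by Lemma~\ref{fix}), and compare with $\dim \Xbar^{\gen}=d^2+d^2[F:\Qp]$ from Corollary~\ref{ci_gen}; your arithmetic matches the paper's. The only cosmetic difference is the citation: the codimension bound for the special locus is \cite[Theorem 5.3.1~(i)]{bj} (Lemma~5.1.3 of \cite{bj} only gives the closed subscheme structure), and the Kummer-reducible bound is exactly Proposition~\ref{existence_Kirr} of the appendix.
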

 \begin{proof} It follows from \cite[Theorem 5.4.1 (a)]{bj} that the dimension of the Zariski closure of $U^{\spcl}$ in $\Xbar^{\ps}$ is at most $1+ \frac{1}{2}[F:\Qp]d^2$. If $y \in U^{\spcl}$ then 
 its fibre $X_y^{\gen}$ has dimension $d^2-1$ by Corollary \ref{fibre_irr}. Thus Lemma \ref{jacobson} 
 implies that $$\dim Z^{\spcl} \le d^2+ \frac{1}{2}[F:\Qp]d^2.$$
  Since $\dim \Xbar^{\gen}= d^2+d^2[F:\Qp]$ 
 by Corollary \ref{ci_gen} the assertion follows. Similarly Proposition \ref{existence_Kirr} implies that the dimension of the closure of $U^{\Kred}$ in $\Xbar^{\ps}$ is at most $1+ (d^2-d)[F:\Qp]$. The same argument gives the required bound 
 for the codimension of $Z^{\Kred}$.
 \end{proof} 
 
 Let $U^{\nspcl}:= U_{\PP_{\min}}\setminus U^{\spcl}$ and let $V^{\nspcl}$ the preimage 
 of $U^{\nspcl}$ in $\Xbar^{\gen}$. Let $V^{\Kirr}$ be the preimage
 of $U^{\Kirr}$ in $\Xbar^{\gen}$. We have an inclusion 
 $V^{\Kirr}\subset V^{\nspcl}$ and the subschemes coincide
 if $\zeta_p\in F$. 
 
 \begin{prop}\label{nspcl}
 $V^{\Kirr}$ is Zariski dense in
 $\Xbar^{\gen}$. Moreover, the following hold:
 \begin{enumerate}
 \item if $d=2$ then $\dim \Xbar^{\gen} - \dim (\Xbar^{\gen}\setminus V^{\Kirr})\ge [F:\Qp]$; 
 \item if $d>2$ then $\dim \Xbar^{\gen} - \dim (\Xbar^{\gen}\setminus V^{\Kirr})\ge 1+[F:\Qp]$.
 \item if $d > 1$ is arbitrary but $\Dbar$ is absolutely irreducible (i.e.~$m = 1$) then $\dim \Xbar^{\gen} - \dim (\Xbar^{\gen}\setminus V^{\Kirr}) \ge d[F:\Qp]$.
 \end{enumerate}
 \end{prop}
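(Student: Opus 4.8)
The plan is to decompose $\Xbar^{\gen}\setminus V^{\Kirr}$ into pieces we can already bound and then combine the bounds. By definition, $V^{\Kirr}$ is the preimage of $U^{\Kirr}\subset U_{\PP_{\min}}$, so a point of $\Xbar^{\gen}$ lies outside $V^{\Kirr}$ precisely when it lies over $\mm_{R^{\ps}}$, or over some $U_{\PP}$ with $\PP>\PP_{\min}$, or over the Kummer-reducible locus $U^{\Kred}\subset U_{\PP_{\min}}$. Thus
\[ \Xbar^{\gen}\setminus V^{\Kirr}= Y\cup\bigcup_{\PP>\PP_{\min}}Z_{\PP}\cup Z^{\Kred}, \]
and since all three types of pieces are closed, $\dim(\Xbar^{\gen}\setminus V^{\Kirr})$ is the maximum of $\dim Y$, the $\dim Z_{\PP}$ for $\PP>\PP_{\min}$, and $\dim Z^{\Kred}$. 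Recalling $\dim\Xbar^{\gen}=d^2+d^2[F:\Qp]$ from Corollary \ref{ci_gen}, density of $V^{\Kirr}$ will follow once we show each of these pieces has strictly smaller dimension, and the quantitative statements follow by taking the worst of the three codimension bounds.

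The three inputs are already available: Lemma \ref{bound_Y} gives $d^2+d^2[F:\Qp]-\dim Y\ge 1+2[F:\Qp]$; Lemma \ref{bound_diff} gives, for $\PP\neq\PP_{\min}$, a codimension of at least $[F:\Qp]$ when $d=2$ and at least $1+[F:\Qp]$ when $d>2$; and Lemma \ref{special} gives $d^2+d^2[F:\Qp]-\dim Z^{\Kred}\ge [F:\Qp]d$. So in case (1), where $d=2$, the minimum of $1+2[F:\Qp]$, $[F:\Qp]$, and $2[F:\Qp]$ is $[F:\Qp]$ (for $[F:\Qp]\ge 1$), giving the claimed bound. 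In case (2), where $d>2$, the relevant minimum is $1+[F:\Qp]$ coming from Lemma \ref{bound_diff}, since both $1+2[F:\Qp]$ and $d[F:\Qp]\ge 3[F:\Qp]$ are at least that large. In case (3), where $m=1$, there are \emph{no} partitions $\PP>\PP_{\min}$ and $Z^{\Kred}$ is empty (there is nothing reducible to quotient), so $\Xbar^{\gen}\setminus V^{\Kirr}=Y$, and actually one should use the sharper part of Lemma \ref{bound_Y}: when $m=1$ we have $\PP_{\max}=\PP_{\min}$, $n_{\PP_{\max}}=1$, $l_{\PP_{\max}}=d^2$, so $d^2+d^2[F:\Qp]-\dim Y\ge 1+d^2[F:\Qp]\ge d[F:\Qp]$. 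This yields the bound in (3) (in fact a better one).

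I do not anticipate a serious obstacle here; the proof is essentially bookkeeping, assembling the codimension estimates already proven. The one point requiring a little care is case (3): one must observe that when $\Dbar$ is absolutely irreducible the only partition is $\PP_{\min}=\PP_{\max}$, so $U^{\spcl}=U^{\Kred}=\emptyset$ and the decomposition collapses to $\Xbar^{\gen}\setminus V^{\Kirr}=Y$; then the estimate on $\dim Y$ from Lemma \ref{bound_Y} must be quoted in its more precise form rather than the weakened $1+2[F:\Qp]$ bound. A second minor point: since $\Xbar^{\gen}$ is equi-dimensional by Corollary \ref{ci_gen}, a closed subset of strictly smaller dimension is automatically nowhere dense, so strict inequality of dimensions genuinely gives density of the complement $V^{\Kirr}$. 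Finally one remarks, as in the analogous argument for $V_{\PP_{\min}}$ in Proposition \ref{VPmin_dense}, that $V^{\Kirr}$ is open in $\Xbar^{\gen}$, being the preimage of the open subscheme $U^{\Kirr}$ of $\Xbar^{\ps}$ (and openness of $U^{\Kirr}$ is part of its construction in Appendix \ref{appendix}), which is what makes "dense" meaningful.
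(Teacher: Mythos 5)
Your decomposition and bookkeeping for cases (1) and (2) coincide with the paper's proof: the complement of $V^{\Kirr}$ is $Y\cup Z^{\Kred}\cup\bigcup_{\PP_{\min}<\PP}Z_{\PP}$, the codimension bounds come from Lemmas \ref{bound_Y}, \ref{bound_diff} and \ref{special}, and the density step via equi-dimensionality (Corollary \ref{ci_gen}) is the same argument the paper runs, phrased there as $\Xbar^{\gen}=Z_{\PP_{\min}}=Z^{\Kred}\cup Z^{\Kirr}$ together with $\dim Z^{\Kred}<\dim\Xbar^{\gen}$, which forces $\Xbar^{\gen}=Z^{\Kirr}$.

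Case (3), however, contains a genuine error. When $\Dbar$ is absolutely irreducible it is true that $\{\PP:\PP_{\min}<\PP\}=\emptyset$, but it is \emph{not} true that $Z^{\Kred}=\emptyset$. Kummer-reducibility of a point $x\in U_{\PP_{\min}}$ means that $\rho_x$ becomes reducible after restriction to $G_{F'}$ for some degree $p$ Galois extension $F'$ of $F(\zeta_p)$; this happens for absolutely irreducible $\rho_x$ as well (for instance when $\rho_x$ is induced from a proper subextension), and bounding exactly this locus in the absolutely irreducible case is the whole content of Lemma \ref{special} and Appendix \ref{appendix}. Your parenthetical ``there is nothing reducible to quotient'' conflates reducibility of $\Dbar$ over $G_F$ with reducibility after restriction. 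As a result, your claimed stronger bound $1+d^2[F:\Qp]$ in case (3) is false in general: the complement in this case is $Y\cup Z^{\Kred}$, and the correct bound is $\min\{1+d^2[F:\Qp],\,d[F:\Qp]\}=d[F:\Qp]$, which is precisely what the proposition asserts. So the stated result still follows from the ingredients you already assembled, provided you keep $Z^{\Kred}$ in the decomposition and quote Lemma \ref{special} for it, as the paper does.
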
 
 \begin{proof} 
 Since $V_{\PP_{\min}}$ is dense in $\Xbar^{\gen}$ by Proposition \ref{VPmin_dense}, we have $\Xbar^{\gen}= Z_{\PP_{\min}}= Z^{\Kred}\cup Z^{\Kirr}$, where 
  $Z^{\Kirr}$ is the closure of $V^{\Kirr}$.  Since 
 $\dim Z^{\Kred}< \dim \Xbar^{\gen}$ by Lemma \ref{special} and $\Xbar^{\gen}$ is equi-dimensional 
 we get that $\Xbar^{\gen}= Z^{\Kirr}$. Moreover, 
 $$ \Xbar^{\gen}\setminus V^{\Kirr}= Y\cup Z^{\Kred} \cup \bigcup_{\PP_{\min}< \PP} Z_{\PP}$$
 and claims (1) and (2)  follow from the dimension estimates in Lemmas \ref{bound_Y}, \ref{bound_diff}, \ref{special}. 
 If $\Dbar$ is absolutely irreducible then $\set{\PP : \PP_{\min} < \PP} = \emptyset$
 and claim (3) follows from Lemmas \ref{bound_Y} and \ref{special}.
 \end{proof} 
 
 We now want to transfer the density results from $\Xbar^{\gen}$ to $R^{\square}_{\rhobar}/\varpi$. 
 
 \begin{lem}\label{going_down} Let $A\rightarrow B$ be a flat ring homomorphism and let $U$ be an open subscheme
 of $\Spec A$ and let $V$ be the preimage of $U$ in $\Spec B$. If $U$ is dense in $\Spec A$ then 
 $V$ is dense in $\Spec B$.
 \end{lem}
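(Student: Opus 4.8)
The statement follows from going-down for flat ring maps, together with the fact that a dense open subscheme of a Noetherian scheme contains the generic point of every irreducible component. Write $f\colon\Spec B\to\Spec A$ for the morphism induced by $A\to B$, so that $V=f^{-1}(U)$ is open. The plan is to reduce the assertion to a statement about minimal primes and then invoke going-down.

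First I would reduce as follows. Since $\Spec B=\bigcup_{\mathfrak q}\overline{\{\mathfrak q\}}$, the union running over the minimal primes $\mathfrak q$ of $B$, in order to show that $V$ is dense it suffices to show that every minimal prime $\mathfrak q$ of $B$ lies in $V$. So fix such a $\mathfrak q$ and set $\mathfrak p=\mathfrak q\cap A=f(\mathfrak q)$. Because $A\to B$ is flat it satisfies going-down \cite[\href{https://stacks.math.columbia.edu/tag/00HS}{Tag 00HS}]{stacks-project}: applying this to a minimal prime $\mathfrak p_0\subseteq\mathfrak p$ of $A$ yields a prime $\mathfrak q_0\subseteq\mathfrak q$ of $B$ with $\mathfrak q_0\cap A=\mathfrak p_0$, and minimality of $\mathfrak q$ forces $\mathfrak q_0=\mathfrak q$; hence $\mathfrak p=\mathfrak p_0$ is a minimal prime of $A$, i.e.\ the generic point of an irreducible component of $\Spec A$.

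Finally, since $U$ is a dense open subscheme of the (Noetherian) scheme $\Spec A$, it contains the generic points of all irreducible components of $\Spec A$: for a component $C$ with generic point $\eta$, the set $C\setminus\bigcup_{C'\neq C}C'$ is a nonempty open subscheme of $\Spec A$ contained in $C$ and containing $\eta$, so density of $U$ forces $U$ to meet it in a nonempty open of the irreducible scheme $C$, which therefore contains $\eta$. Thus $\mathfrak p\in U$, and hence $\mathfrak q\in f^{-1}(U)=V$, as desired. The only genuine point of care is this last step: density of an open set does not by itself guarantee that it contains the generic points of the irreducible components of an arbitrary scheme, so it matters that the spectra occurring in the applications are Noetherian (equivalently, have finitely many irreducible components). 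Granting that, there is no serious obstacle; the argument is a routine combination of going-down with this elementary topological observation.
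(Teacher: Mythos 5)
Your proof is correct and is essentially the paper's own argument: reduce to minimal primes of $B$, use going-down for the flat map $A\to B$ to see they lie over minimal primes of $A$, and observe that the dense open $U$ contains every minimal prime of $A$. Your closing caveat is well taken — the last step needs $\Spec A$ to have only finitely many irreducible components (e.g.\ $A$ Noetherian), a hypothesis the paper's statement omits and its proof silently uses (for arbitrary flat maps the statement can fail, e.g.\ over absolutely flat rings with infinitely many minimal primes), but it holds in all of the paper's applications since the rings there are Noetherian.
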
 
 \begin{proof} Let $\qq$ be a minimal prime of $B$ and let $\pp$ be its image in $\Spec A$. 
 Since the map is flat, it satisfies going down, and so $\pp$ is a minimal prime of $A$. Since
 $U$ is dense, it will contain $\pp$ and hence $V$ will contain $\qq$. Thus $V$ contains all the 
 minimal primes of $B$ and so is dense in $\Spec B$.
  \end{proof} 

  \begin{prop}\label{nspcl_formal} Let $(\Spec (R^{\square}_{\rhobar}/\varpi))^{\Kirr}$ be the preimage of $V^{\Kirr}$ in $\Spec (R^{\square}_{\rhobar}/\varpi)$. Then $(\Spec (R^{\square}_{\rhobar}/\varpi))^{\Kirr}$ is dense in 
  $\Spec (R^{\square}_{\rhobar}/\varpi)$.
  \end{prop}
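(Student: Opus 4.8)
The plan is to reduce the statement to the density of $V^{\Kirr}$ in $\Xbar^{\gen}$, which is Proposition \ref{nspcl}, via a flatness argument of exactly the shape of Lemma \ref{going_down}. First I would recall that the representation $\rhobar$ gives a homomorphism $A^{\gen}\rightarrow k$ whose kernel is a maximal ideal $x\in X^{\gen}$ with residue field $k$, and that by Proposition \ref{main_complete} (with $\Lambda=\OO$) the ring $R^{\square}_{\rhobar}$ is the completion of $A^{\gen}$ at $x$. Reducing modulo $\varpi$, Lemma \ref{334} gives $R^{\square}_{\rhobar}/\varpi\cong \widehat{\OO}_{\Xbar^{\gen},x}$, and the structure morphism $\Spec(R^{\square}_{\rhobar}/\varpi)\rightarrow \Xbar^{\gen}$ factors through $\Spec \OO_{\Xbar^{\gen},x}$; the subscheme $(\Spec(R^{\square}_{\rhobar}/\varpi))^{\Kirr}$ is by definition the preimage of $V^{\Kirr}$ under this map.

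Next I would verify that the preimage $U$ of $V^{\Kirr}$ in $\Spec \OO_{\Xbar^{\gen},x}$ is dense. Since $V^{\Kirr}$ is open in $\Xbar^{\gen}$, the set $U$ is open, so it is enough to check that $U$ contains every minimal prime of $\OO_{\Xbar^{\gen},x}$, equivalently that $V^{\Kirr}$ contains the generic point of every irreducible component $Z$ of $\Xbar^{\gen}$ through $x$. As $V^{\Kirr}$ is dense in $\Xbar^{\gen}$ by Proposition \ref{nspcl}, it meets the nonempty open subset of $Z$ obtained by deleting the other irreducible components; since $Z$ is irreducible, $V^{\Kirr}\cap Z$ is then dense in $Z$ and in particular contains its generic point. (Equi-dimensionality of $\Xbar^{\gen}$ from Corollary \ref{ci_gen} is a convenient sanity check here, though what is actually used is just openness and density of $V^{\Kirr}$.)

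Finally, the completion map $\OO_{\Xbar^{\gen},x}\rightarrow \widehat{\OO}_{\Xbar^{\gen},x}=R^{\square}_{\rhobar}/\varpi$ is flat, so Lemma \ref{going_down} applies directly and shows that the preimage of $U$ in $\Spec(R^{\square}_{\rhobar}/\varpi)$ is dense; this preimage is $(\Spec(R^{\square}_{\rhobar}/\varpi))^{\Kirr}$, which finishes the argument. I do not expect any serious obstacle: the only step with any content is the density of $U$ in the local ring, and that is immediate from the openness and density of $V^{\Kirr}$ in $\Xbar^{\gen}$; the rest is the identification of $R^{\square}_{\rhobar}/\varpi$ with $\widehat{\OO}_{\Xbar^{\gen},x}$ (already available) plus faithful flatness of completion.
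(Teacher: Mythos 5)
Your argument is correct and is essentially the paper's proof: the paper simply observes that $A^{\gen}/\varpi \rightarrow R^{\square}_{\rhobar}/\varpi$ is flat (a localization followed by a completion) and applies Lemma \ref{going_down} together with Proposition \ref{nspcl} in one step. Your intermediate hand-check that the preimage of $V^{\Kirr}$ in $\Spec \OO_{\Xbar^{\gen},x}$ contains all minimal primes just re-derives what Lemma \ref{going_down} already gives for the (flat) localization, so the two arguments coincide in substance.
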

  \begin{proof} The map $A^{\gen}/\varpi \rightarrow  R^{\square}_{\rhobar}/\varpi$ is flat, since it is a localization followed by a completion. The assertion follows from Lemma \ref{going_down} and Proposition 
  \ref{nspcl}.
  \end{proof}
  
  \begin{remar}\label{spell_out} Since $(\Spec (R^{\square}_{\rhobar}/\varpi))^{\Kirr}$ is also the preimage of $U^{\Kirr}$ in 
 $\Spec R^{\square}_{\rhobar}/\varpi$ we may characterise it as an open subscheme of 
 $\Spec R^{\square}_{\rhobar}/\varpi$, such that its closed points are in bijection with 
 $x\in P_1 (R^{\square}_{\rhobar}/\varpi)$, which map to $P_1 (R^{\ps}/\varpi)$ in $\Spec R^{\ps}$ and  for which the representation 
 $$\rho_{x}: G_F \rightarrow \GL_d(R^{\square}_{\rhobar}/\varpi)\rightarrow \GL_d(\kappa(x))$$
 remains  absolutely irreducible after restriction 
 to $G_{F'}$ for all degree $p$ Galois extensions 
 $F'$ of $F(\zeta_p)$.  Lemma \ref{adz0} implies that
 $H^2(G_F, \adz\rho_x)=0$ for such $x$.
\end{remar}

 We will now prove similar results for the generic fibres. For each partition $\mathcal P$ as in Section~\ref{sec_dim_sp} let $X^{\ps}_{\PP}$ be the scheme theoretic image of $X^{\ps}$ inside 
 $X^{\ps}_{\underline{\Sigma}}$ and let $X^{\gen}_{\PP}$ be the preimage of $X^{\ps}_{\PP}$ in $X^{\gen}$.
 We warn the reader that contrary to our usual notational conventions it is not clear that 
 $\Xbar^{\ps}_{\PP}$ considered in Section~\ref{sec_dim_sp} is the special fibre of $X^{\ps}_{\PP}$. 
 However, the following still holds.

 \begin{lem}\label{dim_generic} $\dim X^{\gen}_{\PP}[1/p] \le \dim \Xbar^{\gen}_{\PP}$.
 \end{lem}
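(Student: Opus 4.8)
The statement $\dim X^{\gen}_{\PP}[1/p] \le \dim \Xbar^{\gen}_{\PP}$ is the analogue for the closed subscheme $X^{\gen}_{\PP}$ of the already-established inequality $\dim X^{\gen}[1/p] \le \dim \Xbar^{\gen}$ (Lemma~\ref{dim_gen_sp}), and the plan is to adapt that argument. The key structural input is that $X^{\gen}_{\PP}$ is a closed $\GL_d$-invariant subscheme of $X^{\gen}$: indeed $X^{\ps}_{\PP}$ is a closed subscheme of $X^{\ps}_{\underline\Sigma}$, hence its scheme-theoretic image in $X^{\ps}$ is closed (the map $X^{\ps}_{\underline\Sigma}\to X^{\ps}$ being finite by Lemma~\ref{iotaP_finite}, hence proper), and $X^{\gen}_{\PP}$ is its preimage in $X^{\gen}$ under the $\GL_d$-invariant map $X^{\gen}\to X^{\ps}$, so it is closed and $\GL_d$-invariant. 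Thus Lemma~\ref{fix} applies directly with $W = X^{\gen}_{\PP}$.

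The argument then proceeds exactly as in Lemma~\ref{dim_gen_sp}. We may assume $X^{\gen}_{\PP}[1/p]$ is non-empty. Let $Z = \Spec A^{\gen}/\pp$ be an irreducible component of $X^{\gen}_{\PP}$ with $Z[1/p]\neq\emptyset$, and let $\overline Z = \Spec A^{\gen}/(\pp,\varpi)$. By Lemma~\ref{fix} applied to $W = X^{\gen}_{\PP}$, there is a closed point $x\in Z$ mapping to $\mm_{R^{\ps}}$, and $\dim\OO_{Z,x} = \dim Z$. Since every such closed point $x$ lies in $\Xbar^{\gen}_{\PP}$, Lemma~\ref{jacobson}~(5) (whose hypothesis that every irreducible component of $\Xbar^{\gen}_{\PP}$ meets the preimage $Y$ of $\mm_{R^{\ps}}$ is again guaranteed by Lemma~\ref{fix}, applied this time to $W = \Xbar^{\gen}_{\PP}$ inside $\Xbar^{\gen}$ — note $\Xbar^{\gen}_{\PP}$ is still a closed $\GL_d$-invariant subscheme) gives $\dim Z[1/p] = \dim Z - 1$. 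On the other hand, since $Z$ is irreducible with $Z[1/p]\neq\emptyset$, the local ring $\OO_{Z,x}$ is a domain in which $\varpi$ is a non-unit non-zero-divisor (because $\cha\kappa(x) = p$), so $\dim\OO_{\overline Z,x} = \dim\OO_{Z,x} - 1$; combined with $\dim\OO_{Z,x} = \dim Z$ and Lemma~\ref{fix} (now giving $\dim\overline Z = \dim\OO_{\overline Z,x}$, since $x\in Y\cap\overline Z$), we get $\dim\overline Z = \dim Z - 1 = \dim Z[1/p]$.

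Finally, $\overline Z$ is a closed subscheme of $\Xbar^{\gen}_{\PP}$ (as $\overline Z = \Spec A^{\gen}/(\pp,\varpi)$ and $\pp$ contains the defining ideal of $X^{\gen}_{\PP}$), so $\dim\Xbar^{\gen}_{\PP}\ge\dim\overline Z = \dim Z[1/p]$. Taking the maximum over the finitely many irreducible components $Z$ of $X^{\gen}_{\PP}$ with non-empty generic fibre yields $\dim\Xbar^{\gen}_{\PP}\ge\dim X^{\gen}_{\PP}[1/p]$, as desired.

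\textbf{Main obstacle.} The only point requiring care is the verification that $\Xbar^{\gen}_{\PP}$ — rather than $X^{\gen}_{\PP}$ — satisfies the hypothesis of Lemma~\ref{jacobson}~(5), i.e. that each of its irreducible components meets $Y$. This is not automatic from the corresponding statement for $X^{\gen}_{\PP}$, but it follows from a second application of Lemma~\ref{fix}, since $\Xbar^{\gen}_{\PP} = X^{\gen}_{\PP}\cap\Xbar^{\gen}$ is again a closed non-empty (assuming $X^{\gen}_{\PP}[1/p]$, and hence $X^{\gen}_{\PP}$, is non-empty and that some component survives to the special fibre — which is exactly what the argument establishes) $\GL_d$-invariant subscheme. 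Beyond this bookkeeping, everything is a routine transcription of the proof of Lemma~\ref{dim_gen_sp}.
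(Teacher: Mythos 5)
Your argument has a genuine gap at the one point where this lemma has actual content: you identify $\Xbar^{\gen}_{\PP}$ with $X^{\gen}_{\PP}\cap\Xbar^{\gen}$, the special fibre of $X^{\gen}_{\PP}$, and this identification is neither the definition nor formal. Recall from Section \ref{sec_dim_sp} that $\Xbar^{\ps}_{\PP}$ is the scheme-theoretic image of the \emph{special-fibre} map $\iota_{\underline{\Sigma}}:\Xbar^{\ps}_{\underline{\Sigma}}\to\Xbar^{\ps}$, and $\Xbar^{\gen}_{\PP}$ is its preimage in $\Xbar^{\gen}$; the paper explicitly warns, just before this lemma, that it is not clear that this agrees with the special fibre of $X^{\ps}_{\PP}$ (scheme-theoretic images do not in general commute with the non-flat base change $\otimes_{\OO}k$). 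Concretely, writing $\mathfrak a_{\PP}=\ker(R^{\ps}\to R^{\ps}_{\underline{\Sigma}})$ and $\mathfrak b_{\PP}=\ker(R^{\ps}\to R^{\ps}_{\underline{\Sigma}}/\varpi)$, one has $X^{\gen}_{\PP}=\Spec A^{\gen}/\mathfrak a_{\PP}A^{\gen}$ and $\Xbar^{\gen}_{\PP}=\Spec A^{\gen}/\mathfrak b_{\PP}A^{\gen}$, and only the inclusion $(\mathfrak a_{\PP},\varpi)\subseteq\mathfrak b_{\PP}$ is trivial; it gives $\Xbar^{\gen}_{\PP}\subseteq X^{\gen}_{\PP}\cap\Xbar^{\gen}$, i.e.\ the inequality in the \emph{wrong} direction for your final step. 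Your closing sentence (``$\overline Z$ is a closed subscheme of $\Xbar^{\gen}_{\PP}$ as $\pp$ contains the defining ideal of $X^{\gen}_{\PP}$'') only places $\overline Z$ inside the special fibre of $X^{\gen}_{\PP}$, not inside $\Xbar^{\gen}_{\PP}$, so the conclusion $\dim\Xbar^{\gen}_{\PP}\ge\dim\overline Z$ does not follow as written.

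What is needed, and what the paper's proof supplies, is the radical identity $\sqrt{\mathfrak b_{\PP}}=\sqrt{(\mathfrak a_{\PP},\varpi)}$, which holds because $R^{\ps}_{\underline{\Sigma}}$ is a \emph{finite} $R^{\ps}$-module (Lemma \ref{iotaP_finite}), so that $\supp(R^{\ps}_{\underline{\Sigma}}/\varpi)=\supp(R^{\ps}_{\underline{\Sigma}})\cap V(\varpi)$; without finiteness this can fail (e.g.\ $R^{\ps}=\Zp$, module $\Qp$). Once this is in place, $\Xbar^{\gen}_{\PP}$ and the special fibre of $X^{\gen}_{\PP}$ have the same underlying topological space, and the lemma is an immediate application of Lemma \ref{dim_gen_sp} to the closed $\GL_d$-invariant subscheme $W=X^{\gen}_{\PP}$ — note that Lemma \ref{dim_gen_sp} is already stated for arbitrary such $W$, so your page of re-transcription of its proof via Lemmas \ref{fix} and \ref{jacobson} is correct but unnecessary. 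In short: the routine part of your proposal duplicates a cited lemma, while the genuinely nontrivial comparison of the two mod-$\varpi$ objects, which is the actual point here, is asserted without justification.
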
 
 \begin{proof} Let $\mathfrak a_{\PP}$ be the $R^{\ps}$-annihilator of $R^{\ps}_{\underline{\Sigma}}$, 
 and let $\mathfrak b_{\PP}$ be the $R^{\ps}$-annihilator of $R^{\ps}_{\underline{\Sigma}}/\varpi$.
 We may write 
$$X^{\gen}_{\PP}= \Spec A^{\gen}/ \mathfrak a_{\PP} A^{\gen}, \quad \Xbar^{\gen}_{\PP}=\Spec A^{\gen}/\mathfrak b_{\PP} A^{\gen}.$$ 
Since $R^{\ps}_{\underline{\Sigma}}$ is a finite $R^{\ps}$-module by Lemma \ref{iotaP_finite}, we have 
 $\sqrt{\mathfrak b_{\PP}}= \sqrt{(\mathfrak a_{\PP}, \varpi)}$. In particular, the special fibre of 
 $X^{\gen}_{\PP}$ has dimension equal to $\dim \Xbar^{\gen}_{\PP}$. 
 The assertion follows from Lemma \ref{dim_gen_sp}.
\end{proof} 
 
 \begin{prop}\label{Virr} Let 
 $$V^{\irr}:= X^{\gen}[1/p]\setminus \bigcup_{\PP_{\min}< \PP} X^{\gen}_{\PP}[1/p].$$ 
 Then $V^{\irr}$ is an open dense subset of $X^{\gen}[1/p]$. Moreover, the following hold:
 \begin{enumerate}
 \item if $d=2$ then $\dim X^{\gen}[1/p] - \dim (X^{\gen}[1/p]\setminus V^{\irr})\ge [F:\Qp]$; 
 \item if $d>2$ then $\dim X^{\gen}[1/p] - \dim (X^{\gen}[1/p]\setminus V^{\irr})\ge 1+[F:\Qp]$;
 \item if $d > 1$ is arbitrary but $\Dbar$ is absolutely irreducible (i.e.~$m = 1$) then $X^{\gen}[1/p]=V^{\irr}$.
 \end{enumerate}
 \end{prop}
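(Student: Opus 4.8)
The plan is to run the generic-fibre analogue of the argument proving Proposition~\ref{VPmin_dense}, feeding in the estimate $\dim X^{\gen}_{\PP}[1/p]\le\dim\Xbar^{\gen}_{\PP}$ of Lemma~\ref{dim_generic} in place of direct dimension bounds on $X^{\gen}_{\PP}$. First I would dispose of openness: each $X^{\gen}_{\PP}$ is closed in $X^{\gen}$, being the preimage of the closed subscheme $X^{\ps}_{\PP}\subset X^{\ps}$, so each $X^{\gen}_{\PP}[1/p]$ is closed in $X^{\gen}[1/p]$; since there are only finitely many partitions $\PP$ of $\set{1,\dots,m}$, the set $V^{\irr}$ is open. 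For the size of the complement, the key point is that
\[ X^{\gen}[1/p]\setminus V^{\irr}=\bigcup_{\PP_{\min}<\PP}X^{\gen}_{\PP}[1/p], \]
and by Lemma~\ref{dim_generic} each term satisfies $\dim X^{\gen}_{\PP}[1/p]\le\dim\Xbar^{\gen}_{\PP}$, where $\Xbar^{\gen}_{\PP}$ is the scheme of Subsection~\ref{sec_dim_sp}. By \eqref{XgP} we have $\Xbar^{\gen}_{\PP}=Y\cup\bigcup_{\PP\le\PP'}Z_{\PP'}$, and every $\PP'$ appearing there satisfies $\PP'\ge\PP>\PP_{\min}$; hence Lemma~\ref{bound_diff} applies to every $Z_{\PP'}$ and Lemma~\ref{bound_Y} to $Y$.

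Putting these together gives $\dim\Xbar^{\gen}_{\PP}\le d^2+d^2[F:\Qp]-[F:\Qp]$ when $d=2$ and $\dim\Xbar^{\gen}_{\PP}\le d^2+d^2[F:\Qp]-(1+[F:\Qp])$ when $d>2$, and therefore the same bounds hold for $\dim\bigl(X^{\gen}[1/p]\setminus V^{\irr}\bigr)$. To turn this into density and into the stated codimension inequalities I would record that $X^{\gen}[1/p]$ is equi-dimensional of dimension $d^2+d^2[F:\Qp]$: by Corollary~\ref{ci_gen} the ring $A^{\gen}$ is $\OO$-torsion free and equi-dimensional of dimension $1+d^2+d^2[F:\Qp]$, and the argument in the proof of Lemma~\ref{dim_gen_sp} (each irreducible component of $X^{\gen}$ meets $Y$ by Lemma~\ref{fix}, and its generic fibre then has dimension one smaller) shows every irreducible component of $X^{\gen}[1/p]$ has dimension $d^2+d^2[F:\Qp]$. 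As $[F:\Qp]\ge1$, the closed set $X^{\gen}[1/p]\setminus V^{\irr}$ has strictly smaller dimension, so it contains no irreducible component of $X^{\gen}[1/p]$; hence $V^{\irr}$ contains every generic point and is dense, and assertions (1), (2) are the displayed codimension bounds. For (3): if $\Dbar$ is absolutely irreducible then $m=1$, so $\set{\PP:\PP_{\min}<\PP}=\emptyset$, the union above is empty, and $V^{\irr}=X^{\gen}[1/p]$.

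I expect the only subtle point — and the main obstacle — to be the passage from the special fibre to the generic fibre, i.e.\ the reliance on Lemma~\ref{dim_generic}. As the warning preceding the proposition flags, $\Xbar^{\gen}_{\PP}$ (the object of Subsection~\ref{sec_dim_sp}, whose dimension we actually control via Lemmas~\ref{bound_diff} and~\ref{bound_Y}) need not be the special fibre of $X^{\gen}_{\PP}$, so one cannot simply transport a dimension estimate for $X^{\gen}_{\PP}$ through its own special fibre; this is exactly why the estimate has to be funnelled through $\dim X^{\gen}_{\PP}[1/p]\le\dim\Xbar^{\gen}_{\PP}$ and then through \eqref{XgP}. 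Everything else is a bookkeeping combination of results already established.
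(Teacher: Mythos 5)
Your proposal is correct and follows essentially the same route as the paper: the key special-to-generic step is Lemma \ref{dim_generic}, the decomposition \eqref{XgP} supplies the dimension bounds, and density comes from equi-dimensionality of $X^{\gen}[1/p]$ via Corollary \ref{ci_gen}. The only cosmetic difference is that you apply Lemmas \ref{bound_diff} and \ref{bound_Y} directly to each $\Xbar^{\gen}_{\PP}$, whereas the paper funnels the same estimates through the comparison with $\Xbar^{\gen}\setminus V^{\Kirr}$ and Proposition \ref{nspcl} (and settles part (3) by noting all $\rho_x$ are absolutely irreducible rather than that the set of refinements is empty) --- the underlying inputs are identical.
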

 \begin{proof} It follows from Corollary \ref{ci_gen} that $\dim X^{\gen}[1/p]= d^2+d^2[F:\Qp]=\dim \Xbar^{\gen}$. 
 Lemmas \ref{dim_generic} and \ref{bound_diff} 
 together with \eqref{XgP} imply that for $\PP> \PP_{\min}$ we have 
 \begin{equation}\label{inequality}
 \dim X^{\gen}[1/p]- \dim X^{\gen}_{\PP}[1/p]\ge \dim \Xbar^{\gen} - \dim \Xbar^{\gen}_{\PP}.
 \end{equation}
 It follows from \eqref{XgP} that $\Xbar^{\gen}\setminus V^{\Kirr} = Y \cup Z^{\Kred} \cup \bigcup_{\PP_{\min}< \PP} 
 \Xbar^{\gen}_{\PP}$. Thus it follows from \eqref{inequality} and the definition of $V^{\irr}$ that 
 \begin{equation} 
 \dim X^{\gen}[1/p] -\dim (X^{\gen}[1/p]\setminus V^{\irr}) \ge \dim \Xbar^{\gen}- \dim ( \Xbar^{\gen}\setminus V^{\Kirr})
 \end{equation} 
 and the lower bounds for the codimension of 
 $X^{\gen}[1/p]\setminus V^{\irr}$ follow from Proposition \ref{nspcl}. 
 
 Thus the dimension of the closure of $V^{\irr}$ is equal to $\dim X^{\gen}[1/p]$. Since $A^{\gen}$ 
 is $\OO$-torsion free and equi-dimensional by 
 Corollary \ref{ci_gen} (1),  
 $X^{\gen}[1/p]$ is equi-dimensional, and so 
 $V^{\irr}$ is dense in $X^{\gen}[1/p]$.
 
 If $\Dbar$ is absolutely irreducible then $\rho_x$ is absolutely irreducible 
 for all closed points $x\in X^{\gen}[1/p]$ and 
 so $X^{\gen}[1/p]= V^{\irr}$. 
 \end{proof}

  \begin{cor}\label{irr_gen_dense} Let $(\Spec R^{\square}_{\rhobar}[1/p])^{\irr}$ be the preimage of $V^{\irr}$ in 
  $\Spec R^{\square}_{\rhobar}[1/p]$. Then $(\Spec R^{\square}_{\rhobar}[1/p])^{\irr}$ is dense in $\Spec R^{\square}_{\rhobar}[1/p]$.
  \end{cor}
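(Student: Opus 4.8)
The plan is to deduce this directly from Proposition~\ref{Virr}, by the same flatness argument that was used in the proof of Proposition~\ref{nspcl_formal} for the special fibre. First I would recall that, by the discussion preceding Lemma~\ref{closed_pts} (and as made precise in Proposition~\ref{main_complete}), $R^{\square}_{\rhobar}$ is the completion of $A^{\gen}$ at a maximal ideal; hence the structure morphism $A^{\gen}\to R^{\square}_{\rhobar}$ factors as a localization followed by an $\mm$-adic completion of a Noetherian ring (note $A^{\gen}$ is of finite type over the Noetherian ring $R^{\ps}$, hence Noetherian), and both operations are flat. Therefore $A^{\gen}\to R^{\square}_{\rhobar}$ is flat, and inverting $p$ the induced map $A^{\gen}[1/p]\to R^{\square}_{\rhobar}[1/p]$ remains flat. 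By construction $(\Spec R^{\square}_{\rhobar}[1/p])^{\irr}$ is the preimage of the open subscheme $V^{\irr}\subset X^{\gen}[1/p]=\Spec A^{\gen}[1/p]$ under the morphism $\Spec R^{\square}_{\rhobar}[1/p]\to X^{\gen}[1/p]$. Proposition~\ref{Virr} tells us that $V^{\irr}$ is dense in $X^{\gen}[1/p]$, so Lemma~\ref{going_down}, applied to the flat ring map $A^{\gen}[1/p]\to R^{\square}_{\rhobar}[1/p]$, yields that $(\Spec R^{\square}_{\rhobar}[1/p])^{\irr}$ is dense in $\Spec R^{\square}_{\rhobar}[1/p]$.

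I do not expect any real obstacle in this argument; it is a purely formal consequence of the geometric density statement of Proposition~\ref{Virr} together with the fact that passing from $X^{\gen}$ to $\Spec R^{\square}_{\rhobar}$ is a flat base change. The only points that need a word of care are (i) that completion at a maximal ideal of a Noetherian ring is flat, which is standard once one knows $A^{\gen}$ is Noetherian, and (ii) that $(\Spec R^{\square}_{\rhobar}[1/p])^{\irr}$ is indeed, by definition, the preimage of $V^{\irr}$ under the canonical map to $X^{\gen}[1/p]$, which is immediate from how these loci are set up. This makes the present corollary the exact generic-fibre analogue of Proposition~\ref{nspcl_formal}.
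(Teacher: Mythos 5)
Your argument is correct and coincides with the paper's own proof: both establish flatness of $A^{\gen}\to R^{\square}_{\rhobar}$ (localization followed by completion), pass to the localization $A^{\gen}[1/p]\to R^{\square}_{\rhobar}[1/p]$, and then combine Lemma \ref{going_down} with the density of $V^{\irr}$ from Proposition \ref{Virr}. No gaps; this is exactly the generic-fibre analogue of the proof of Proposition \ref{nspcl_formal}, as the paper itself indicates.
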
 
  \begin{proof} As explained in the proof of Proposition \ref{nspcl_formal} 
  the map $A^{\gen} \rightarrow R^{\square}_{\rhobar}$ is flat. Hence, the localization 
  $A^{\gen}[1/p]\rightarrow R^{\square}_{\rhobar}[1/p]$ is also flat. The assertion follows from 
  Lemma \ref{going_down} and Proposition \ref{Virr}.
  \end{proof}
  
  \begin{remar} Similarly to Remark \ref{spell_out} we may characterize 
  $(\Spec R^{\square}_{\rhobar}[1/p])^{\irr}$ as an open subscheme of $\Spec R^{\square}_{\rhobar}[1/p]$ such that 
  its closed points correspond to maximal ideals $\pp$ of $R^{\square}_{\rhobar}[1/p]$ for which the representation
  $$\rho_{\pp}: G_F\rightarrow \GL_d(R^{\square}_{\rhobar}[1/p])\rightarrow \GL_d(\kappa(\pp))$$
  is absolutely irreducible.
  \end{remar}
  
  \begin{cor}\label{cor_irrlift} The characteristic zero lift of $\rhobar$ in Corollary \ref{char0_lift} may be chosen to be absolutely irreducible.
  \end{cor}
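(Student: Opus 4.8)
The plan is to deduce the statement directly from Corollary~\ref{irr_gen_dense} together with the non-emptiness of $\Spec R^{\square}_{\rhobar}[1/p]$. First I would record that $R^{\square}_{\rhobar}$ is $\OO$-flat by Corollary~\ref{ci}, so $R^{\square}_{\rhobar}[1/p]\neq 0$ and $\Spec R^{\square}_{\rhobar}[1/p]$ is a non-empty scheme; it is moreover Jacobson by Lemma~\ref{jacobson}(1), being an open subscheme of $U_{\max}=\Spec R^{\square}_{\rhobar}\setminus\{\mm_{R^{\square}_{\rhobar}}\}$. By Corollary~\ref{irr_gen_dense} the subset $(\Spec R^{\square}_{\rhobar}[1/p])^{\irr}$ is dense, hence non-empty and open, and a non-empty open subset of a non-empty Jacobson scheme contains a closed point $\pp$.

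Next I would identify the residue field of such a closed point. By Lemma~\ref{jacobson}(4) the closed points of $\Spec R^{\square}_{\rhobar}[1/p]$ lie in $P_1 R^{\square}_{\rhobar}$, and since $p$ is invertible at $\pp$, Lemma~\ref{P1R} forces $\kappa(\pp)$ to be a finite extension of $L$ (rather than a local field of characteristic $p$). Fixing an $\OO$-algebra embedding $\kappa(\pp)\hookrightarrow\Qpbar$ and composing with the homomorphism $R^{\square}_{\rhobar}\to\kappa(\pp)$ corresponding to $\pp$ produces an $\OO$-algebra homomorphism $x:R^{\square}_{\rhobar}\to\Qpbar$, and specializing the universal framed deformation along $x$ gives a characteristic-zero lift $\rho_x:G_F\to\GL_d(\Qpbar)$ of $\rhobar$ exactly as in Corollary~\ref{char0_lift}.

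Finally, by the Remark following Corollary~\ref{irr_gen_dense}, a closed point $\pp$ lies in $(\Spec R^{\square}_{\rhobar}[1/p])^{\irr}$ precisely when $\rho_{\pp}:G_F\to\GL_d(\kappa(\pp))$ is absolutely irreducible; since absolute irreducibility is preserved by the field extension $\kappa(\pp)\subset\Qpbar$, the lift $\rho_x$ is absolutely irreducible, which is the assertion. I do not expect any genuine obstacle here: all the substance is contained in Corollary~\ref{irr_gen_dense}, and the only points requiring care are the bookkeeping that the (non-empty, open, dense) irreducible locus contains a closed point and that such a point has residue field finite over $L$, so that it yields a bona fide lift to characteristic zero.
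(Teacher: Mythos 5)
Your proposal is correct and follows essentially the same route as the paper: non-emptiness of $\Spec R^{\square}_{\rhobar}[1/p]$ (Corollary \ref{char0_lift}) plus density of the absolutely irreducible locus (Corollary \ref{irr_gen_dense}) yields a closed point of $(\Spec R^{\square}_{\rhobar}[1/p])^{\irr}$, which gives the lift. The extra bookkeeping you supply (Jacobson-ness, residue field finite over $L$, invariance of absolute irreducibility under extension to $\Qpbar$) is accurate and simply makes explicit what the paper leaves implicit.
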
 
  \begin{proof} It follows from Corollary \ref{char0_lift} that $\Spec R^{\square}_{\rhobar}[1/p]$ is non-empty, 
  and Corollary \ref{irr_gen_dense} implies that $(\Spec R^{\square}_{\rhobar}[1/p])^{\irr}$ is non-empty. 
  A closed point in $(\Spec R^{\square}_{\rhobar}[1/p])^{\irr}$ gives the desired lift of $\rhobar$ to characteristic zero.
  \end{proof}
  
  \begin{cor}\label{irr_sub_dense} Let $\Sigma \subset \MaxSpec R^{\square}_{\rhobar}[1/p]$ be dense in $\Spec R^{\square}_{\rhobar}[1/p]$. Then 
  $$\Sigma^{\irr}:=\Sigma \cap  (\Spec R^{\square}_{\rhobar}[1/p])^{\irr}$$
  is also dense in $\Spec R^{\square}_{\rhobar}[1/p]$.
  \end{cor}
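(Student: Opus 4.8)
The plan is to reduce the statement to the elementary topological fact that the intersection of a dense subset with a dense \emph{open} subset of a topological space is again dense. First I would record, using Corollary~\ref{irr_gen_dense}, that $(\Spec R^{\square}_{\rhobar}[1/p])^{\irr}$ is dense in $\Spec R^{\square}_{\rhobar}[1/p]$, and observe that it is in fact open: by definition it is the preimage of $V^{\irr}$ under the continuous map $\Spec R^{\square}_{\rhobar}[1/p]\to X^{\gen}[1/p]$, and $V^{\irr}$ is open in $X^{\gen}[1/p]$ by Proposition~\ref{Virr}.

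Then I would argue directly from density. Let $W$ be an arbitrary non-empty open subset of $\Spec R^{\square}_{\rhobar}[1/p]$. Since $(\Spec R^{\square}_{\rhobar}[1/p])^{\irr}$ is dense, the intersection $W\cap (\Spec R^{\square}_{\rhobar}[1/p])^{\irr}$ is non-empty, and it is open because both sets are open. As $\Sigma$ is dense in $\Spec R^{\square}_{\rhobar}[1/p]$, it meets this non-empty open set, so
\[ W\cap\Sigma^{\irr}=W\cap (\Spec R^{\square}_{\rhobar}[1/p])^{\irr}\cap\Sigma\neq\emptyset. \]
Since $W$ was arbitrary, $\Sigma^{\irr}$ meets every non-empty open subset, hence is dense in $\Spec R^{\square}_{\rhobar}[1/p]$.

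There is essentially no obstacle here: all the real content is already carried by the density of the irreducible locus (Corollary~\ref{irr_gen_dense}) together with its openness (from Proposition~\ref{Virr}), and the remainder is formal point-set topology. The only point that requires a moment's care is that one genuinely uses the \emph{openness} of $(\Spec R^{\square}_{\rhobar}[1/p])^{\irr}$ — the analogous claim would fail for an arbitrary dense subset in place of this locus — and, as noted above, that openness is automatic from its description as the preimage of the open subscheme $V^{\irr}\subset X^{\gen}[1/p]$.
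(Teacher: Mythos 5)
Your argument is correct, but it follows a different route from the paper's. You reduce everything to the stated results: openness of $(\Spec R^{\square}_{\rhobar}[1/p])^{\irr}$ (immediate, since it is by definition the preimage of the open subscheme $V^{\irr}$ of Proposition~\ref{Virr} under the continuous map $\Spec R^{\square}_{\rhobar}[1/p]\to X^{\gen}[1/p]$) together with its density (Corollary~\ref{irr_gen_dense}), and then invoke the purely topological fact that a dense subset meets every non-empty open set, so intersecting it with a dense \emph{open} set preserves density. The paper instead goes back into the proof of Proposition~\ref{Virr}: it notes that $\Sigma\setminus\Sigma^{\irr}$ is contained in a closed subset of $\Spec R^{\square}_{\rhobar}[1/p]$ of positive codimension and then uses equi-dimensionality of $\Spec R^{\square}_{\rhobar}[1/p]$ to conclude that no irreducible component can be swallowed by that closed set, so $\overline{\Sigma^{\irr}}$ must contain every component. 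Your version is shorter and needs neither the codimension estimate nor equi-dimensionality, only density and openness of the irreducible locus; the paper's version carries slightly more information, namely that the exceptional points $\Sigma\setminus\Sigma^{\irr}$ are trapped in an explicit closed subset of positive codimension, which is the sort of quantitative statement used elsewhere in the paper (e.g.\ in the normality arguments). Both proofs ultimately rest on the same inputs, so there is no gap in your approach.
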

  \begin{proof} It follows from the proof of Proposition \ref{Virr} that $\Sigma \setminus \Sigma^{\irr}$ is 
  contained in a closed subset of $\Spec R^{\square}_{\rhobar}[1/p]$ of positive codimension. Since $\Spec R^{\square}_{\rhobar}[1/p]$ is equi-dimensional $\Sigma^{\irr}$ is dense. 
  \end{proof} 

\section{Irreducible components}\label{sec_irrcomp}

The aim of this section is to determine the irreducible components of $\Spec R_{\rhobar}^\square$ for any $\rhobar: G_F \to \GL_d(k)$ and study their geometry. 
It is instructive to consider first the one dimensional case.  Let $\bar{\psi}: G_F\rightarrow k^{\times}$ denote any continuous character and write $\psi^{\univ}: G_F\rightarrow \GL_1(R_{\bar{\psi}})$ for its universal deformation. Local class field theory gives a group homomorphism
    \[ \mu\rightarrow F^{\times} \overset{\Art_F}{\longrightarrow} G_F^{\mathrm{ab}}\overset{\psi^{\univ}}{\longrightarrow} \GL_1(R_{\bar{\psi}}), \]
where $\mu:=\mu_{p^{\infty}}(F)$ is the subgroup of $p$-power roots of unity in $F$. 
We note that $\mu$ is a finite cyclic $p$-group. The map induces a homomorphism of $\OO$-algebras $\OO[\mu]\rightarrow R_{\bar{\psi}}$, where
$\OO[\mu]$ is the group algebra of $\mu$ over $\OO$.
\begin{lem}\label{detmapmu}
    $R_{\bar{\psi}} \cong \OO[\mu]\br{y_1,\dots,y_{[F:\Qp]+1}}.$
\end{lem}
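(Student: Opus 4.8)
The plan is to identify the deformation functor of $\bar{\psi}$ with the functor of continuous homomorphisms into $1+\mm_A$, and then to compute, via local class field theory, the maximal abelian pro-$p$ quotient of $G_F$.

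First I would observe that for $(A,\mm_A)\in\mathfrak A_{\OO}$ the group $1+\mm_A$ is a finite abelian $p$-group (since $\mm_A$ is nilpotent and $p\in\mm_A$), so, as $\#k^\times$ is prime to $p$, the exact sequence $1\to 1+\mm_A\to A^\times\to k^\times\to 1$ splits uniquely, the splitting being the Teichm\"uller lift $[\,\cdot\,]\colon k^\times\to A^\times$. Hence every deformation $\psi_A$ of $\bar{\psi}$ to $A$ decomposes uniquely, and functorially in $A$, as $\psi_A=[\bar{\psi}]\cdot\theta$, where $[\bar{\psi}]$ is the composite of $\bar{\psi}$ with $[\,\cdot\,]$ and $\theta\colon G_F\to 1+\mm_A$ is a continuous homomorphism; conversely any such $\theta$ yields a deformation. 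Thus the deformation functor $D_{\bar{\psi}}$ of $\bar{\psi}$ is naturally isomorphic to $A\mapsto\Hom_{\cont}(G_F,1+\mm_A)$, and since the target is a finite abelian $p$-group this equals $\Hom_{\cont}(\Gamma,1+\mm_A)$, where $\Gamma$ denotes the maximal abelian pro-$p$ quotient of $G_F$.

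Next I would compute $\Gamma$. The Artin map identifies $G_F^{\ab}$ with $\widehat{F^\times}$, and $F^\times\cong\varpi_F^{\ZZ}\times\oF^\times$ with $\oF^\times\cong\mu(F)\times\Zp^{[F:\Qp]}$, where $\mu(F)$ is the finite cyclic group of all roots of unity in $F$. Passing to the maximal pro-$p$ quotient completes the first factor to $\Zp$ and replaces $\mu(F)$ by its $p$-part $\mu=\mu_{p^\infty}(F)$, giving a (non-canonical) isomorphism $\Gamma\cong\mu\times\Zp^{[F:\Qp]+1}$ in which $\mu$ sits inside $\Gamma$ via $\mu\hookrightarrow F^\times\overset{\Art_F}{\longrightarrow}G_F^{\ab}$ --- precisely the map defining the $\OO[\mu]$-algebra structure on $R_{\bar{\psi}}$.

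Finally I would verify pro-representability factor by factor. Writing $n=[F:\Qp]+1$, the functor $A\mapsto(1+\mm_A)^{n}$ is pro-represented by $\OO\br{y_1,\dots,y_n}$ (sending $y_i$ to the $i$-th coordinate minus $1$), and the functor $A\mapsto\Hom_{\cont}(\mu,1+\mm_A)$ is pro-represented by $\OO[\mu]$: indeed, since $\mu$ is a $p$-group, every $\OO$-algebra map $\OO[\mu]\to A$ carries $\mu$ into the $p$-Sylow subgroup $1+\mm_A$ of $A^\times$, so $\Hom_{\OO\text{-alg}}(\OO[\mu],A)=\Hom_{\cont}(\mu,1+\mm_A)$, and $\OO[\mu]$ is a complete local Noetherian $\OO$-algebra because $\OO[\mu]/\varpi=k[\mu]$ is local ($\mu$ being a $p$-group). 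Taking the product of these two functors, $D_{\bar{\psi}}$ is pro-represented by $\OO[\mu]\wtimes_{\OO}\OO\br{y_1,\dots,y_n}$, which equals $\OO[\mu]\br{y_1,\dots,y_n}$ since $\OO[\mu]$ is module-finite over $\OO$; and because the universal character restricted to $\mu\subset\Gamma$ is the tautological map $\mu\to\OO[\mu]^\times$, this isomorphism is compatible with the $\OO[\mu]$-structure. I expect the only step with genuine content to be the computation of $\Gamma$ together with the verification that its torsion subgroup is the $\mu$ of the statement, and this is a direct consequence of local class field theory and the structure of the unit group of $F$.
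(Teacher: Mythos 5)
Your proof is correct and takes essentially the same route as the paper: local class field theory identifies the pro-$p$ completion of $G_F^{\ab}$ with $\mu\times\Zp^{[F:\Qp]+1}$, and the universal deformation ring of a character is then the completed group algebra of this group, which is $\OO[\mu]\br{y_1,\dots,y_{[F:\Qp]+1}}$. The only difference is that the paper cites \cite[Proposition 3.13]{Gouvea} for the second step, whereas you verify it directly via the Teichm\"uller-lift twisting and factor-by-factor pro-representability.
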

\begin{proof} It follows from local class field theory that the pro-$p$ 
completion of $G_F^{\ab}$ is isomorphic to $\mu_{p^{\infty}}(F)\times \Zp^{[F:\Qp]+1}$ 
and the assertion follows from \cite[Proposition 3.13]{Gouvea}.
\end{proof}

It follows immediately from Lemma \ref{detmapmu} that the set of irreducible components of
$\Spec R_{\bar{\psi}}$ is in bijection with the 
group of characters $\chi: \mu \rightarrow \OO^{\times}$ and the irreducible component 
corresponding to $\chi$ is given by $R_{\bar{\psi}}\otimes_{\OO[\mu], \chi} \OO$, which is formally smooth over $\OO$.

Let us return to the general case $\rhobar: G_F \to \GL_d(k)$. Mapping a deformation of $\rhobar$ to its determinant induces  a natural map $R_{\det\rhobar} \to R^{\square}_{\rhobar}$, 
which makes $R^{\square}_{\rhobar}$ into an 
$\OO[\mu]$-algebra by applying the above discussion to $\bar{\psi}=\det \rhobar$.
The algebra $\OO[\mu][1/p]$ is semi-simple
and its maximal ideals are in bijection with characters $\chi: \mu \rightarrow \OO^{\times}$. 
We thus have 
\begin{equation}\label{product}
R^{\square}_{\rhobar}[1/p] \cong \prod_{\chi: \mu \rightarrow \OO^{\times}} R^{\square, \chi}_{\rhobar}[1/p],
\end{equation}
where $R^{\square, \chi}_{\rhobar}:= R^{\square}_{\rhobar}\otimes_{\OO[\mu], \chi} \OO$. So our goal is to show that 
the rings $R^{\square, \chi}_{\rhobar}$ are $\OO$-torsion free integral 
domains, which we do by showing in Corollary \ref{chi_normal} that they are normal. Since we already know 
that $R^{\square}_{\rhobar}$ is $\OO$-torsion 
free by Corollary \ref{ci}, this implies 
that the map $R_{\det \rhobar}\rightarrow
R^{\square}_{\rhobar}$ induces a bijection 
between the sets of irreducible components, 
which answers affirmatively a question raised by GB--Juschka in \cite{bjpp}. Along the way we will also determine the irreducible components of $A^{\gen}$ and $R^{\ps}$. 

\begin{warning}
We emphasize that $R_{\rhobar}^{\square,\chi}$ is \textit{not} a ``fixed determinant deformation ring'' in the usual sense, but is rather constructed by fixing the value of the determinant only on the subgroup $\Art_F(\mu)\subset G_F^{\ab}$: the ring $R_{\rhobar}^{\square,\chi}$ represents the closed subfunctor $D_{\rhobar}^{\square, \chi}\subset D^{\square}_{\rhobar}$ given by 
$$ D_{\rhobar}^{\square, \chi}(A)=\{\rho_A\in D^{\square}_{\rhobar}(A): \det \rho_A(\Art_F(x))= \chi(x), \forall x\in \mu\}.$$
\end{warning}

\begin{prop}\label{Prop-RelCI}
There is an isomorphism
    \[ R_{\det\rhobar}\br{x_1,\dots,x_r}/(f_1,\ldots,f_t)\xrightarrow{\simeq} R^{\square}_{\rhobar} \]
where $r := \dim_k Z^1(G_F, \ad^0\rhobar)$ and $t := \dim_k H^2(G_F,\adz{\rhobar})$ such that the elements $f_1,\ldots,f_t$ form a regular sequence in $R_{\det\rhobar}\br{x_1,\dots,x_r}$. Moreover, 
$$r-t= (d^2-1)([F:\Qp]+1).$$  
\end{prop}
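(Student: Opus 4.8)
The plan is to imitate Kisin's method of presenting a global deformation ring over a local one, here presenting $R^{\square}_{\rhobar}$ over $R_{\det\rhobar}$ via the determinant map, and to compute the numerical invariants using Tate local duality for $\ad^0\rhobar$.

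First I would set up the relative obstruction theory. The natural transformation $D^{\square}_{\rhobar}\to D_{\det\rhobar}$ sending a lift to its determinant is formally smooth on tangent/obstruction spaces in a suitable relative sense: concretely, one chooses a lift of $\det\rhobar$ to each $A\in\mathfrak A_{\OO}$ and studies the fibre, which is governed by the cohomology of $\ad^0\rhobar = \ker(\tr:\ad\rhobar\to k)$. Standard deformation theory (as in \cite{Mazur_GQ}, \cite{Mazur}) then yields a presentation
\[
R^{\square}_{\rhobar}\cong R_{\det\rhobar}\br{x_1,\dots,x_r}/(f_1,\dots,f_t)
\]
with $r$ the dimension of the relative tangent space and $t=\dim_k H^2(G_F,\ad^0\rhobar) = h^2(G_F,\adz\rhobar)$ the relative obstruction. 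The relative tangent space is the kernel of $Z^1(G_F,\ad\rhobar)\to Z^1(G_F,k)$ induced by the trace (where the target records deformations of the determinant, with $G_F$ acting trivially on $k$ since $\det$ is a character), giving $r=\dim\ker(Z^1(\tr))$ as stated. The fact that $R_{\det\rhobar}$ is itself a power series ring over $\OO[\mu]$ (Lemma \ref{detmapmu}), hence regular, is what makes this a \emph{clean} presentation over a smooth base.

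Next I would verify that $(f_1,\dots,f_t)$ is a regular sequence in the regular ring $R_{\det\rhobar}\br{x_1,\dots,x_r}$. By Corollary \ref{ci} we know $\dim R^{\square}_{\rhobar}=1+d^2+d^2[F:\Qp]$. Since $R_{\det\rhobar}$ has dimension $[F:\Qp]+2$ (power series in $[F:\Qp]+1$ variables over $\OO[\mu]$, and $\dim\OO[\mu]=1$), the ambient ring $R_{\det\rhobar}\br{x_1,\dots,x_r}$ has dimension $[F:\Qp]+2+r$. Cutting by $t$ elements can drop the dimension by at most $t$, so $\dim R^{\square}_{\rhobar}\ge [F:\Qp]+2+r-t$; combined with the known value of $\dim R^{\square}_{\rhobar}$ this forces $r-t$ to equal $(d^2-1)([F:\Qp]+1)$ precisely when equality holds throughout, i.e.\ when $(f_1,\dots,f_t)$ is part of a system of parameters, hence a regular sequence in the Cohen--Macaulay (indeed regular) ambient ring.

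For the numerical identity $r-t=(d^2-1)([F:\Qp]+1)$ I would argue directly with the trace decomposition $\ad\rhobar = \ad^0\rhobar\oplus k$ (valid since $p\nmid d$ is \emph{not} assumed — so one should instead use the exact sequence $0\to\ad^0\rhobar\to\ad\rhobar\xrightarrow{\tr} k\to 0$ and work with $Z^1$ and $H^\bullet$): from the long exact sequence one gets $r = \dim Z^1(G_F,\ad^0\rhobar)$ up to the relevant comparison with $Z^1(G_F,k)$, and then $r - t = \dim Z^1(G_F,\ad^0\rhobar) - h^2(G_F,\ad^0\rhobar)$. Writing $Z^1 = H^1 + \dim\ad^0\rhobar - h^0(\ad^0\rhobar)$ and invoking the local Euler characteristic formula $h^0-h^1+h^2 = -[F:\Qp]\dim\ad^0\rhobar = -(d^2-1)[F:\Qp]$ (see \cite[Theorem 3.4.1]{bj}), one obtains $r-t = \dim\ad^0\rhobar + (d^2-1)[F:\Qp] = (d^2-1) + (d^2-1)[F:\Qp] = (d^2-1)([F:\Qp]+1)$. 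The main obstacle is the bookkeeping in characteristic-$p$ subtleties: when $p\mid d$ the trace map $\ad\rhobar\to k$ is still surjective but $\ad^0\rhobar$ contains the scalars, so one must be careful that the kernel of $Z^1(\tr)$ is genuinely $Z^1(G_F,\ad^0\rhobar)$ and track the $h^0$-terms correctly; also one must confirm that the relative obstruction space is exactly $H^2(G_F,\ad^0\rhobar)$ and not something larger, which relies on $H^2(G_F,k)$ appearing in the right spot and the determinant functor $D_{\det\rhobar}$ being formally smooth (unobstructed as a deformation problem of a character), so that no extra obstructions leak in from the base.
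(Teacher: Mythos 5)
The heart of this proposition is the existence of a presentation of $R^{\square}_{\rhobar}$ over $R_{\det\rhobar}$ with only $t=h^2(G_F,\adz\rhobar)$ relations, uniformly in $p$ and $d$, and this is exactly the step you do not prove: you invoke ``standard deformation theory'' and then remark at the end that ``one must confirm that the relative obstruction space is exactly $H^2(G_F,\adz\rhobar)$ and not something larger'', without confirming it. Moreover, the justification you sketch rests on a false premise: $D_{\det\rhobar}$ is \emph{not} formally smooth and $R_{\det\rhobar}\cong\OO[\mu_{p^{\infty}}(F)]\br{y_1,\dots,y_{[F:\Qp]+1}}$ is \emph{not} regular whenever $\zeta_p\in F$ (then $\OO[\mu_{p^{\infty}}(F)]$ is a non-regular complete intersection and $h^2(G_F,k)=1$), which is precisely the interesting case in this paper. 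What must be proved is the bound $\dim_k J/\widetilde{\mm}J\le t$ for the kernel $J$ of a surjection $R_{\det\rhobar}\br{x_1,\dots,x_r}\twoheadrightarrow R^{\square}_{\rhobar}$. The paper does this by attaching to each $u\in\Hom_k(J/\widetilde{\mm}J,k)$ an obstruction class $[c_u]\in H^2(G_F,\ad\rhobar)$, using the \emph{universality} of $R_{\det\rhobar}$ (not any unobstructedness of the base) to see that $[c_u]$ dies in $H^2(G_F,k)$, and then showing that the kernel of $u\mapsto[c_u]$ injects into $\coker(H^1(\tr))$; only then does the long exact sequence of $0\to\adz\rhobar\to\ad\rhobar\to k\to 0$ identify the resulting bound $\dim\ker(H^2(\tr))+\dim\coker(H^1(\tr))$ with $h^2(G_F,\adz\rhobar)$. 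When $p\mid d$ this sequence does not split and $H^1(\tr)$ need not be surjective, so the $\coker(H^1(\tr))$ contribution is genuinely present, and a bare ``relative obstructions live in $H^2(\adz\rhobar)$'' needs an actual argument (either the two-step one above, or a direct one: first adjust the set-theoretic lift, using surjectivity of $\tr$ on matrices, so that its determinant is literally the prescribed character, and only then read off a $2$-cocycle valued in $\adz\rhobar$). As written, your proposal leaves this central point open.

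The remaining steps follow the paper's strategy and are fine modulo two corrections. First, $R_{\det\rhobar}\br{x_1,\dots,x_r}$ is not regular; it is a complete intersection, hence Cohen--Macaulay, and that is all you need for ``dimension drops by exactly $t$ implies regular sequence'' (the paper instead lifts to the genuinely regular ring $\OO\br{z,y_1,\dots,y_{[F:\Qp]+1},x_1,\dots,x_r}$ and adjoins $(1+z)^{m}-1$ to the sequence). Second, your middle paragraph is circular as phrased: Corollary \ref{ci} and Krull's height theorem only give $r-t\le(d^2-1)([F:\Qp]+1)$; one must first establish the equality $r-t=(d^2-1)([F:\Qp]+1)$ cohomologically and then deduce that the $f_i$ extend to a system of parameters. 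Your Euler characteristic computation does establish it, since $\ker(Z^1(\tr))=Z^1(G_F,\adz\rhobar)$ holds tautologically (cocycles valued in $\ker\tr$) and the local Euler characteristic formula applies to $\adz\rhobar$ for any $p$ and $d$; this is in fact a slightly more direct route than the paper's, which checks $r+[F:\Qp]+1-t=\dim Z^1(G_F,\ad\rhobar)-h^2(G_F,\ad\rhobar)$ against Corollary \ref{ci}. So once the presentation step is actually carried out, the rest of your argument can be repaired; without it the proof is incomplete.
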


\begin{proof} 
This argument is a modification of Kisin's method of presenting global deformation rings over local ones in \cite[Section 4]{KisinCurrentDevelopments}.
Kisin's argument is an important refinement of Mazur's obstruction theory in \cite[Section 1.6]{Mazur_GQ}.

The exact sequence $0\rightarrow \ad^0\rhobar \rightarrow \ad \rhobar \overset{\tr}{\rightarrow} k \rightarrow 0$ of Galois representations induces 
an exact sequence of abelian groups:  
$$0 \rightarrow Z^1(G_F, \ad^0\rhobar)\rightarrow Z^1(G_F, \ad\rhobar) \overset{Z^1(\tr)}{\longrightarrow} Z^1(G_F, k)$$
and hence $r=\dim_k \ker (Z^1(\tr))$. 
The map $Z^1(\tr): Z^1(G_F, \ad\rhobar) \to Z^1(G_F, k)$ is the induced map on Zariski tangent spaces of the map of deformation rings $R_{\det\rhobar} \to R^{\square}_{\rhobar}$, and thus lifts to a surjection
    \[ \wt\phi: \wt R := R_{\det\rhobar}\br{x_1,\ldots,x_r} \twoheadrightarrow R^{\square}_{\rhobar}. \]
We set $J := \ker \wt\phi$.  By Nakayama's lemma, we need to show that $\dim_k J/\wt\mm J \le t$.

The module $J/\wt\mm J$ appears as the kernel in the sequence
\begin{equation}\label{sesforlift}
    0 \to J/\wt\mm J \to \wt R/\wt\mm J \to \wt R/J \cong R^{\square}_{\rhobar} \to 0.
\end{equation}
In view of the above sequence, we shall construct a homomorphism
    \[ \alpha: \Hom_k(J/\wt\mm J,k) \to \ker (H^2(\tr)\colon H^2(G_F,\ad \rhobar) \to H^2(G_F,k)) \]
and show that the kernel of $\alpha$ injects into $\coker(H^1(\tr))$. This will imply the existence of the presentation in the statement of the Proposition, since then
\begin{equation}\label{bound_Jm}
 \dim_k J/\wt\mm J\le  \dim_k \ker (H^2(\tr) )+ \dim_k \coker (H^1(\tr) ) = \dim_k H^2(G_F,\adz \rhobar), 
\end{equation}
where the last equality comes from the long exact cohomology sequence that arises from $0 \to \adz \rhobar \to \ad \rhobar \to k \to 0$.

Fix $u \in \Hom_k(J/\wt\mm J, k)$. The pushout under $u$ of the sequence (\ref{sesforlift}) yields 
\begin{equation*}
    0\to I_u\to R_u\stackrel{\phi_u}\to  R^{\square}_{\rhobar}\to 0,
\end{equation*}
where $I_u = k$. The surjection of profinite groups $\GL_d(R_u)\twoheadrightarrow \GL_d(R^{\square}_{\rhobar})$ has a continuous section by \cite[Proposition 2.2.2]{RZ} (which is not necessarily a group homomorphism). Thus there is a continuous function $\wt\rho_u: G_F\rightarrow \GL_d(R_u)$ such that the diagram of sets
\[
    \xymatrix@C+2em{ G_F \ar[r]^-{\wt\rho_u} \ar[dr]_-{\rho^\square}&\GL_d(R_u)\ar@{>>}[d]^{\GL_d(\phi_u)}\\&\GL_d(R^{\square}_{\rhobar}) \rlap{.}}
\]
commutes. The kernel $1+M_d(I_u)$ of $\GL_d(\phi_u)$ can be identified with $\ad \rhobar\otimes_k I_u$, and so the set-theoretic lift yields a continuous $2$-cocycle
    \[ c_u \in Z^2(G_F,\ad \rhobar) \otimes_k I_u \]
given by
$1 + c_u(g_1,g_2) = \wt \rho_u(g_1g_2)\wt \rho_u(g_2)^{-1}\wt \rho_u(g_1)^{-1}$. The class 
$$[c_u] \in H^2(G_F,\ad \rhobar)\otimes_k I_u$$ is independent of the chosen lifting since any other lift $\wt\rho_u'$ gives rise to a class $c_u' \in Z^2(G_F, \ad\rhobar) \otimes_k I_u$ which differs from $c_u$ by a coboundary in $B^2(G_F, \ad\rhobar) \otimes_k I_u$, so the representation $\rho^\square$ can be lifted to a homomorphism $G_F \to \GL_d(R_u)$ if and only if $[c_u]=0$. The existence of the homomorphisms $R_{\det\rhobar}\to R_u \to R^{\square}_{\rhobar}$ together with the universality of $R_{\det\rhobar}$ imply that the image of $[c_u]$ in $H^2(G_F, k)$ is zero. We define $\alpha$ as the homomorphism $u\mapsto [c_u]$. 

To analyze the kernel of $\alpha$, let $u$ be such that $[c_u]=0$, so that $\rho^\square$ can be lifted to $R_u$. By the universality of $R^{\square}_{\rhobar}$ we obtain a splitting $s_u$ of $\phi_u$. One deduces that the map from $I_u$ to the kernel of the surjective map
    \[ t_u: \mm_{R_u}/(\mm_{R_u}^2 + \varpi R_u)\to \mm^\square/((\mm^\square)^2 + \varpi R^{\square}_{\rhobar}) \]
of mod $\varpi$ cotangent spaces is an isomorphism.

The map $t_u$ in turn is induced from the homomorphism $\wt R/\wt\mm J \to R^{\square}_{\rhobar}$ by pushout and from the analogous surjection
    \[ \wt t : \wt\mm/(\wt \mm^2+\varpi \wt R)\to \mm^\square/((\mm^\square)^2+\varpi R^{\square}_{\rhobar}). \]
Via our identification $I_u\cong \ker t_u$, the pushout along $u$ induces a surjective homomorphism $\gamma_u: \ker(\wt t)\to I_u \cong k$ of $k$-vector spaces. One easily verifies that $u \mapsto \gamma_u$ induces an injective $k$-linear map
    \[ \ker(\alpha) \hookrightarrow \Hom_k(\ker(\wt t),k) \]
Upon identifying $\ker(\wt t)^*$ with $\coker(H^1(\tr))$, the proof of the bound \eqref{bound_Jm} is complete.

It remains to show that $f_1,\ldots,f_t$ is a regular sequence. We may write $\OO[\mu]=\OO\br{z}/((1+z)^{m}-1)$, where $m$ is the order of $\mu$. By Lemma \ref{detmapmu}, we get a presentation
    \[ \frac{\OO\br{z,y_1,\ldots,y_{[F:\Q_p]+1},x_1,\ldots,x_r}}{((1+z)^{m}-1,f_1,\ldots,f_t)} \xrightarrow{\cong} R^{\square}_{\rhobar}. \]
By the same argument as in \eqref{r_minus_s} the Euler--Poincar\'e characteristic formula implies that $r-t= \dim_k(\ad^0\rhobar)  ([F:\Qp]+1)= 
(d^2-1)([F:\Qp]+1)$ and thus it follows from Corollary \ref{ci} that 
\begin{equation}\label{dimRrt}
\dim R^{\square}_{\rhobar}= [F:\Qp]+2+r-t.
\end{equation}
This  implies that $(1+z)^{m}-1,f_1,\ldots,f_t$ can 
be extended to a system  of parameters in a regular ring
$S:=\OO\br{z,y_1,\ldots,y_{[F:\Q_p]+1},x_1,\ldots,x_r}$. 
Thus $(1+z)^{m}-1,f_1,\ldots,f_t$ is a regular sequence
in $S$ 
and so $f_1,\ldots,f_t$ is a regular sequence in 
$R_{\det\rhobar}\br{x_1,\dots,x_r}=S/((1+z)^m-1)$.
\end{proof}

\begin{remar}\label{present_kappa} The Proposition also holds for continuous representations $\rhobar: G_F \rightarrow \GL_d(\kappa)$, where $\kappa$ is a local field, with essentially the same proof. 
The only difference is that one has to work harder to show the
existence of the continuous section $\tilde{\rho}_u$, as the groups 
$\GL_d(R_u)$ and $\GL_d(R^{\square}_{\rhobar})$ are not profinite anymore. The existence of such a section is well explained in \cite[Lecture 6]{mod_lift}.
\end{remar} 
\begin{cor}\label{ci_chi} For each character $\chi: \mu_{p^{\infty}}(F) \to \OO^\times$ and each closed point $x\in X^{\gen}$ above $\mm_{R^{\ps}}$ the following hold:
\begin{enumerate}
\item $R^{\square, \chi}_{\rho_x}$ is $\OO$-torsion free  of dimension
$1+d^2+ d^2[F:\Qp]$ and is complete intersection;
\item $R^{\square,\chi}_{\rho_x}/\varpi$ is complete intersection of dimension $d^2+ d^2[F:\Qp]$.
\end{enumerate}
\end{cor}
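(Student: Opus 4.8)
The plan is to reduce the statement to Corollary~\ref{ci} and Proposition~\ref{Prop-RelCI} by base change along $\chi$, following the strategy sketched in the overview. First I would observe that, because $x$ lies above $\mm_{R^{\ps}}$ and $A^{\gen}$ is of finite type over $R^{\ps}$, the fibre $\Spec(A^{\gen}/\mm_{R^{\ps}}A^{\gen})$ is a closed subscheme of $X^{\gen}$ that is of finite type over $k$; hence the closed point $x$ has residue field $\kappa(x)$ finite over $k$, so we are in case~(1) of Subsection~\ref{sec_completions} with $\Lambda=\OO':=\OO_{L'}$ the ring of integers of the unramified extension $L'/L$ with residue field $\kappa(x)$ (and $\varpi$ is still a uniformizer of $\OO'$). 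I would take $R^{\square,\chi}_{\rho_x}:=R^{\square}_{\rho_x}\otimes_{\OO'[\mu],\chi}\OO'$, the $\OO'[\mu]$-structure coming via $R_{\det\rho_x}$ and the analogue of Lemma~\ref{detmapmu} over $\OO'$.

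Next I would invoke Proposition~\ref{Prop-RelCI} (via Remark~\ref{present_kappa}) applied to $\rho_x\colon G_F\to\GL_d(\kappa(x))$ over $\OO'$, giving an isomorphism $R_{\det\rho_x}\br{x_1,\dots,x_r}/(f_1,\dots,f_t)\cong R^{\square}_{\rho_x}$ with $(f_1,\dots,f_t)$ a regular sequence and $r-t=(d^2-1)([F:\Qp]+1)$. Applying $\otimes_{\OO'[\mu],\chi}\OO'$, by (the $\OO'$-analogue of) Lemma~\ref{detmapmu} the ring $R^{\chi}_{\det\rho_x}:=R_{\det\rho_x}\otimes_{\OO'[\mu],\chi}\OO'$ is a power series ring $\OO'\br{y_1,\dots,y_{[F:\Qp]+1}}$, so the base change of the presentation exhibits $R^{\square,\chi}_{\rho_x}$ as $S/(\bar f_1,\dots,\bar f_t)$ where $S:=\OO'\br{y_1,\dots,y_{[F:\Qp]+1},x_1,\dots,x_r}$ is regular local of dimension $1+([F:\Qp]+1)+r$.

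Then I would close the dimension count from both sides. Since $\OO'[\mu]\to\OO'$ is surjective, $R^{\square,\chi}_{\rho_x}$ (resp.\ $R^{\square,\chi}_{\rho_x}/\varpi$) is a quotient of $R^{\square}_{\rho_x}$ (resp.\ $R^{\square}_{\rho_x}/\varpi$), so Corollary~\ref{ci} bounds its dimension above by $1+d^2+d^2[F:\Qp]$ (resp.\ $d^2+d^2[F:\Qp]$). On the other hand, cutting the $(1+([F:\Qp]+1)+r)$-dimensional regular ring $S$ by the $t$ elements $\bar f_i$, resp.\ by $\varpi,\bar f_1,\dots,\bar f_t$, drops dimension by at most $t$, resp.\ $t+1$; using $r-t=(d^2-1)([F:\Qp]+1)$ this forces equality in all four bounds. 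Consequently $\varpi,\bar f_1,\dots,\bar f_t$ is part of a system of parameters of the Cohen--Macaulay ring $S$, hence a regular sequence; reordering gives that $\bar f_1,\dots,\bar f_t$ is regular and $\varpi$ is a non-zero-divisor on $R^{\square,\chi}_{\rho_x}=S/(\bar f_i)$. This yields that $R^{\square,\chi}_{\rho_x}$ and $R^{\square,\chi}_{\rho_x}/\varpi$ are complete intersections of the asserted dimensions and that $R^{\square,\chi}_{\rho_x}$ is $\varpi$-torsion free (equivalently $\OO$-torsion free, as $\OO$ is a DVR with uniformizer $\varpi$).

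The one point to be careful about is the compatibility of the presentation with the base change $\otimes_{\OO'[\mu],\chi}\OO'$ — that a power series ring over $\OO'[\mu]$ base changes to the power series ring over $\OO'$, taking $(f_i)$ to $(\bar f_i)$ — but this is routine since $\OO'[\mu]\to\OO'$ is a finite surjection of Noetherian rings. The substantive inputs are entirely Proposition~\ref{Prop-RelCI} and the dimension equality $\dim R^{\square}_{\rho_x}=1+d^2+d^2[F:\Qp]$ of Corollary~\ref{ci}; no genuinely new difficulty arises here.
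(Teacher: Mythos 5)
Your proposal is correct and follows essentially the same route as the paper: base change the presentation of Proposition~\ref{Prop-RelCI} along $\chi$ over the power-series ring $R^{\chi}_{\det\rho_x}$, bound $\dim R^{\square,\chi}_{\rho_x}/\varpi$ from above using that it is a quotient of $R^{\square}_{\rho_x}$ together with Corollary~\ref{ci}, and conclude that $\varpi,f_1,\dots,f_t$ is a regular sequence, hence the complete intersection and $\OO$-torsion-freeness claims. The only cosmetic difference is that you keep the general finite residue field and work over $\OO'$, whereas the paper reduces without loss of generality to residue field $k$.
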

\begin{proof} Without loss of generality we may assume that the residue field of $x$ is equal to $k$. 
Proposition \ref{Prop-RelCI} gives the presentation
    \[ R_{\det\rho_x}^\chi\br{x_1,\dots,x_r}/(f_1,\dots,f_t)
    \xrightarrow{\cong} R^{\square,\chi}_{\rho_x}, \]
where $R_{\det\rho_x}^\chi:=
R_{\det\rho_x}\otimes_{\OO[\mu],\chi} \OO$. Since 
 $R_{\det\rho_x}^\chi$ is formally smooth over $\OO$ of dimension $[F:\Qp]+2$
 by Lemma \ref{detmapmu}, it is enough to show that 
 $$\dim R^{\square,\chi}_{\rho_x}/\varpi \le [F:\Qp]+1+ r-t.$$ 
 Then the same argument as in 
 the proof of Proposition \ref{Prop-RelCI}
 shows that the sequence 
 $\varpi, f_1, \ldots, f_t$ is regular
 in $R_{\det\rho_x}^\chi\br{x_1,\dots,x_r}$. Since
 $R^{\square, \chi}_{\rho_x}$ is a quotient of $R^{\square}_{\rho_x}$
 and $R^{\square}_{\rho_x}$ is $\OO$-torsion free by Corollary \ref{ci}, 
 we have $\dim  R^{\square, \chi}_{\rho_x}/\varpi \le \dim R^{\square}_{\rho_x}/\varpi=\dim R^{\square}_{\rho_x}-1$ 
 and the desired inequality follows from \eqref{dimRrt}.
 \end{proof}

The restriction of a pseudo-character $D: A[G_F] \rightarrow A$ to $G$ defines
a continuous group homomorphism $\det D: G_F \rightarrow A^{\times}$, see \cite[Definition 4.1.5]{bj}. Moreover, if $D$ is associated to a representation $\rho: G_F \rightarrow \GL_d(A)$ then $\det D=\det \rho$. This induces a map 
of deformation rings $R_{\det \Dbar}\rightarrow R^{\ps}$ and makes $R^{\ps}$ into an $\OO[\mu]$-algebra. 

Since $A^{\gen}$ is an $R^{\ps}$-algebra, we may define $$A^{\gen,\chi} := A^{\gen} \otimes_{\OO[\mu],\chi} \OO, \quad  X^{\gen,\chi}:=\Spec A^{\gen, \chi}$$ and we let $\Xbar^{\gen,\chi}$ denote its special fibre. Note that since a character of $G_F^{\ab}$ valued in a characteristic $p$ field is trivial after pulling back to $\mu_{p^\infty}(F)$, we have that $\Xbar^{\gen,\chi} = \Xbar^{\gen, \Eins}$ for all $\chi$, where $\Eins$ is the trivial character. Moreover, the reduced subschemes 
of $\Xbar^{\gen}$ and $\Xbar^{\gen, \chi}$ coincide and so
$$\dim \Xbar^{\gen, \chi}=\dim \Xbar^{\gen} =d^2+d^2[F:\Qp],$$
where the last equality is given by Corollary \ref{ci_gen}.

\begin{cor}\label{ci_gen_chi} For each character $\chi: \mu_{p^{\infty}}(F) \to \OO^\times$ the following hold:
 \begin{enumerate}
 \item $A^{\gen, \chi}$ is $\OO$-torsion free, equi-dimensional of dimension $1+ d^2+ d^2[F:\Qp]$ and is locally complete intersection;
 \item $A^{\gen, \chi}/\varpi$ is  equi-dimensional of dimension $d^2+d^2[F:\Qp]$ and is locally complete intersection. 
 \end{enumerate}
 \end{cor}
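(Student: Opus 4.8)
The plan is to imitate the proof of Corollary~\ref{ci_gen}, replacing $A^{\gen}$ by $A^{\gen,\chi}$ and using Corollary~\ref{ci_chi} in place of Corollary~\ref{ci}. First I would observe that the local rings of $X^{\gen,\chi}$ at closed points above $\mm_{R^{\ps}}$ are exactly the rings controlled by Corollary~\ref{ci_chi}: indeed, the analogue of Corollary~\ref{local_rings} holds for $A^{\gen,\chi}$, since base change along $\OO[\mu]\overset{\chi}{\to}\OO$ commutes with completion and with the formation of the rings $\Lambda\otimes_\OO(-)$ appearing in Proposition~\ref{main_complete}, so that for a closed point $x\in X^{\gen,\chi}$ with residue field finite over $k$ or $L$ one gets $\widehat{\OO}_{X^{\gen,\chi},x}\cong R^{\square,\chi}_{\rho_x}$, and with an extra variable $\br{T}$ when $\kappa(x)$ is a local field of characteristic $p$. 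Granting this, Corollary~\ref{ci_chi} tells us that every such completed local ring is $\OO$-torsion free and complete intersection, hence in particular every local ring at a closed point above $\mm_{R^{\ps}}$ is $\OO$-torsion free and lci.

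Next I would handle all closed points of $X^{\gen,\chi}$, not merely those above $\mm_{R^{\ps}}$. Here the argument of Corollary~\ref{the_same_comp} and Remark~\ref{strategy} carries over verbatim: given a closed point $x$ with $\kappa(x)$ a local field, one conjugates $\rho_x$ into $\GL_d(\OO_{\kappa(x)})$, reduces mod the uniformizer to get a point $z$ above $\mm_{R^{\ps}}$, and uses Proposition~\ref{331} (which applies equally to the fixed-partial-determinant ring $R^{\square,\chi}_{\rho_z}$, being a quotient of $R^{\square}_{\rho_z}$, or more directly one reruns the proof with the $\OO[\mu]$-structure in place) together with Lemmas~\ref{334} and~\ref{335} to identify $\widehat{\OO}_{X^{\gen,\chi},x}$ (up to a power series variable) with $\widehat{(R^{\square,\chi}_{\rho_z})}_{x'}$, which is a localization-then-completion of a complete intersection $\OO$-torsion free ring and hence again $\OO$-torsion free and complete intersection. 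Thus all local rings of $X^{\gen,\chi}$ at closed points are $\OO$-torsion free and lci, so by \cite[\href{https://stacks.math.columbia.edu/tag/09Q5}{Tag 09Q5}]{stacks-project} $A^{\gen,\chi}$ is $\OO$-torsion free and locally complete intersection, proving the first halves of (1) and (2) (for (2) one uses $\varpi$-flatness to pass to the special fibre, exactly as in Corollary~\ref{ci_gen}).

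For equi-dimensionality I would argue as in Corollary~\ref{ci_gen}: let $Z$ be an irreducible component of $X^{\gen,\chi}$. Since $X^{\gen,\chi}$ is a closed $\GL_d$-invariant subscheme of $X^{\gen}$ — the $\GL_d$-action is by conjugation and commutes with fixing the determinant on $\Art_F(\mu)$ — Lemma~\ref{fix} applies and gives a closed point $x\in Z$ mapping to $\mm_{R^{\ps}}$ with $\dim Z=\dim\OO_{Z,x}$. As $\OO_{X^{\gen,\chi},x}$ is complete intersection it is equi-dimensional, so $\dim\OO_{Z,x}=\dim\OO_{X^{\gen,\chi},x}$, and the latter equals $1+d^2+d^2[F:\Qp]$ by Corollary~\ref{ci_chi} together with the identification of completed local rings above; this proves (1), and dividing by the regular element $\varpi$ gives $\dim\Xbar^{\gen,\chi}=d^2+d^2[F:\Qp]$ with the same equi-dimensionality, which is (2). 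The main obstacle, and the only point requiring genuine care rather than bookkeeping, is checking that the completion statements of Proposition~\ref{main_complete}, Corollary~\ref{the_same_comp} and Remark~\ref{strategy} really do descend along $\otimes_{\OO[\mu],\chi}\OO$ — i.e.\ that $(\Lambda\otimes_\OO A^{\gen})\otimes_{\OO[\mu],\chi}\OO$ computes the right deformation ring $R^{\square,\chi}_{\rho_x}$ — but since $\OO[\mu]\to\OO$ is a surjection of Noetherian rings and all the operations involved (localization, completion, tensoring with $\Lambda$) are compatible with such base change, this is straightforward; alternatively one can simply rerun the proof of Proposition~\ref{main_complete} carrying the $\OO[\mu]$-algebra structure throughout, replacing $D^{\square}_{\rho_x}$ by its subfunctor of deformations with prescribed determinant on $\Art_F(\mu)$.
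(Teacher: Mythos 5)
Your proposal is correct and follows essentially the same route as the paper: establish the $\chi$-versions of Proposition \ref{main_complete}, Proposition \ref{331} and Corollary \ref{the_same_comp} by applying $\otimes_{\OO[\mu],\chi}\OO$, identify the completed local rings of $X^{\gen,\chi}$ at closed points with (power series extensions of) $R^{\square,\chi}_{\rho_z}$ for points $z$ above $\mm_{R^{\ps}}$ as in Remark \ref{strategy}, invoke Corollary \ref{ci_chi} for torsion-freeness and the complete intersection property, and then repeat the equi-dimensionality argument of Corollary \ref{ci_gen} via Lemma \ref{fix}. The only points needing care — that the base change along $\OO[\mu]\to\OO$ is compatible with localization and completion, and that $X^{\gen,\chi}$ is a closed $\GL_d$-invariant subscheme so Lemma \ref{fix} applies — are exactly the ones you flag and handle as the paper does.
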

 \begin{proof} We claim that the local 
 rings at closed points of $X^{\gen, \chi}$ 
 are $\OO$-torsion free and complete intersection. Given the claim the proof is the same as in Corollary \ref{ci_gen}. 
 
 We will prove the claim using the strategy outlined in Remark \ref{strategy}. We already know from Corollary \ref{ci_chi} that $R^{\square, \chi}_{\rho_x}$ is $\OO$-torsion free and complete intersection of 
 dimension $d^2+d^2[F:\Qp]+1$ whenever
 $x\in X^{\gen, \chi}$ is a closed point with $\kappa(x)/k$ a finite extension. By applying $\otimes_{\OO[\mu], \chi} \OO$ we obtain the
 $\chi$-versions of Propositions \ref{main_complete}
 and \ref{331} and Corollary \ref{the_same_comp}. 
 
 Let $x$ be a closed point of $X^{\gen, \chi}$. If $\kappa(x)$ is a finite extension of $k$ then $\widehat{\OO}_{X^{\gen,\chi},x}\cong 
 R^{\square, \chi}_{\rho_x}$ by Proposition \ref{main_complete} and hence $\OO_{X^{\gen,\chi},x}$ is complete intersection. Otherwise, let $x'$ and $z$ be as in Corollary \ref{the_same_comp}. In particular, 
 $z$ is a closed point of $X^{\gen, \chi}$ and 
 $\kappa(z)$ is a finite extension of $k$. It
 follows from the argument explained in Remark 
 \ref{strategy} that 
 if $\kappa(x)$ is a local field 
of characteristic $p$ then 
$$ \widehat{\OO}_{X^{\gen,\chi}, x}\br{T}\cong R^{\square,\chi}_{\rho_x}\cong R^{\square,\chi}_{\rho_{x'}}\cong \widehat{(R^{\square,\chi}_{\rho_z})}_{x'}\br{T},$$
and if $\kappa(x)$ is a finite extension of $L$
then 
$$ \widehat{\OO}_{X^{\gen,\chi}, x}\cong R^{\square,\chi}_{\rho_x}\cong R^{\square,\chi}_{\rho_{x'}}\cong \widehat{(R^{\square,\chi}_{\rho_z})}_{x'}.$$
Since $R^{\square,\chi}_{\rho_z}$ is complete 
intersection, it follows from \cite[\href{https://stacks.math.columbia.edu/tag/09Q4}{Tag 09Q4}]{stacks-project} that the
local ring $(R^{\square,\chi}_{\rho_z})_{x'}$
(and hence its completion) is also complete intersection. The isomorphisms
above imply that 
$\widehat{\OO}_{X^{\gen,\chi}, x}$ is complete 
intersection. Hence, $\OO_{X^{\gen,\chi}, x}$
is complete intersection, see \cite[\href{https://stacks.math.columbia.edu/tag/09Q3}{Tag 09Q3}]{stacks-project}.
 \end{proof}
 
 \begin{remar} Alternatively, one could 
 first prove a version of Proposition \ref{Prop-RelCI} for deformation rings  of 
$\rhobar: G_F\rightarrow \GL_d(\kappa(x))$ 
to Artinian $\Lambda$-algebra as in Section~\ref{sec_completions} for any closed point of $x\in X^{\gen}$, by changing $\OO$ to $\Lambda$
and $k$ to $\kappa(x)$ everywhere. The Euler--Poincar\'e characteristic formula still holds in this setting,  see \cite[Theorem 3.4.1(c)]{bj}. Then deduce Corollary \ref{ci_chi} in this more general setting using the same proof and 
then obtain Corollary \ref{ci_gen_chi} by repeating verbatim the proof of Corollary 
\ref{ci_gen}. 
\end{remar}

In the Lemmas below, $\kappa$ is either a finite extension of $k$, a finite extension of $L$ or a local field of characteristic $p$ containing $k$. The ring $\Lambda$ is 
defined exactly as in the beginning of Section \ref{sec_completions}. If 
$\cha(\kappa)=0$ then $\Lambda=\kappa$, if 
$\cha(\kappa)=p$ then $\Lambda$ is an $\OO$-algebra, which is a complete DVR with 
uniformiser $\varpi$ and residue field $\kappa$. As in Section~\ref{sec_completions} 
we consider deformation problems of $\rho: G_F\rightarrow \GL_d(\kappa)$ to local Artinian $\Lambda$-algebras with residue field $\kappa$. 

\begin{lem}\label{formal_smooth} Let $\rho: G_F \to \GL_d(\kappa)$ be a continuous representation, where $\kappa$ is either a finite extension of $k$, a local field of characteristic $p$, or a finite extension of $L$. If $H^2(G_F, \adz\rho)=0$ then for all characters $\chi: \mu_{p^{\infty}}(F)\rightarrow \OO^{\times}$ the ring $R^{\square, \chi}_{\rho}$ is formally smooth over $\Lambda$.
\end{lem}
\begin{proof} 
It follows from the proof of \cite[Lemma 3.4.2]{bj}, where an analogous statement 
is proved for the deformation functors without the framing and for Artinian $\kappa$-algebras, that the map 
$$R_{\det \rho}\rightarrow R^{\square}_{\rho},$$ 
induced by sending a deformation of $\rho$ to an Artinian $\Lambda$-algebra to its determinant, 
is formally smooth. By applying $\otimes_{\OO[\mu], \chi} \OO$ we deduce that the map 
$$R_{\det \rho}^{\chi}\rightarrow R^{\square, \chi}_{\rho}$$
is formally smooth. 
 
Since the 
group 
$G_F^{\ab}/\Art_F(\mu_{p^{\infty}}(F))$ is $p$-torsion free, the ring 
$R_{\det \rho}^{\chi}$ is formally smooth 
over $\Lambda$. Hence, $R^{\square, \chi}_{\rho}$ is formally smooth over $\Lambda$.
(Alternatively, one could prove Proposition \ref{Prop-RelCI} for $\rho$, 
see Remark \ref{present_kappa}, and then obtain the Lemma as a Corollary.)
\end{proof}
 
Recall that in Section~\ref{sec_dens_irr} we have defined an open subscheme $U^{\nspcl}$ of $\Xbar^{\ps}\setminus \{\mm_{R^{\ps}}\}$ and
defined $V^{\nspcl}$ to be a preimage of $U^{\nspcl}$ 
in $\Xbar^{\gen}$. We will refer to $V^{\nspcl}$ as the 
\textit{absolutely irreducible non-special locus}.

\begin{prop}\label{Vnspcl_regular} For each character 
$\chi: \mu_{p^{\infty}}(F)
\rightarrow \OO^\times$ the 
absolutely irreducible non-special locus in $\Xbar^{\gen, \chi}$ 
is regular. 
\end{prop}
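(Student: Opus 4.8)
The plan is to check the regularity of $V^{\nspcl,\chi}$ at closed points, by identifying the completed local rings of $\Xbar^{\gen,\chi}$ there with the framed deformation rings $R^{\square,\chi}_{\rho_x}$ (up to adding a formal variable) and proving the latter are regular via Lemma~\ref{formal_smooth}.

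\emph{Reduction to closed points.} By construction $V^{\nspcl}$ is the preimage in $\Xbar^{\gen}=\Spec(A^{\gen}/\varpi)$ of the open subscheme $U^{\nspcl}$ of $\Xbar^{\ps}\setminus\{\mm_{R^{\ps}}\}$, so it is Jacobson by Lemma~\ref{jacobson}(1); since $\Xbar^{\gen,\chi}$ and $\Xbar^{\gen}$ have the same underlying topological space, $V^{\nspcl,\chi}$ is a Jacobson open subscheme of $\Xbar^{\gen,\chi}$. As $A^{\gen,\chi}$ is excellent, its regular locus is open, so the non-regular locus of $V^{\nspcl,\chi}$ is closed and, if nonempty, contains a closed point of $V^{\nspcl,\chi}$. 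Hence it suffices to show that $\OO_{\Xbar^{\gen,\chi},x}$ is regular for every closed point $x$ of $V^{\nspcl,\chi}$.

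\emph{Cohomological input and formal smoothness.} Fix such an $x$; its image in $\Xbar^{\ps}$ lies in $P_1(R^{\ps}/\varpi)$, so by Lemma~\ref{P1R} (applied inside $R^{\ps}/\varpi$) and Lemma~\ref{jacobson}(3) the residue field $\kappa(x)$ is a finite extension of $k$ or a local field of characteristic $p$, and $\rho_x\colon G_F\to\GL_d(\kappa(x))$ is absolutely irreducible and non-special. The decisive point is that then $H^2(G_F,\adz\rho_x)=0$: this is the defining feature of the non-special locus established in \cite{bj} (cf.\ also Lemma~\ref{adz0}). Lemma~\ref{formal_smooth} then shows $R^{\square,\chi}_{\rho_x}$ is formally smooth over $\Lambda$, so $R^{\square,\chi}_{\rho_x}/\varpi$ is formally smooth over the field $\Lambda/\varpi=\kappa(x)$, and in particular regular.

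\emph{Transport to $\Xbar^{\gen,\chi}$ and conclusion.} Applying $\otimes_{\OO[\mu],\chi}\OO$ to Proposition~\ref{main_complete} (as done in the proof of Corollary~\ref{ci_gen_chi}) and using that $A^{\gen,\chi}$ is $\OO$-torsion free by Corollary~\ref{ci_gen_chi}: if $\kappa(x)$ is finite over $k$, Lemma~\ref{334} gives $\widehat{\OO}_{X^{\gen,\chi},x}\cong R^{\square,\chi}_{\rho_x}$, whence $\widehat{\OO}_{\Xbar^{\gen,\chi},x}\cong R^{\square,\chi}_{\rho_x}/\varpi$; if $\kappa(x)$ is a local field of characteristic $p$, Lemma~\ref{335} gives $R^{\square,\chi}_{\rho_x}\cong\widehat{\OO}_{X^{\gen,\chi},x}\br{T}$, whence $R^{\square,\chi}_{\rho_x}/\varpi\cong\widehat{\OO}_{\Xbar^{\gen,\chi},x}\br{T}$. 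In either case $\widehat{\OO}_{\Xbar^{\gen,\chi},x}$, or a formal power series ring over it, is regular; hence $\widehat{\OO}_{\Xbar^{\gen,\chi},x}$ is regular, and therefore so is $\OO_{\Xbar^{\gen,\chi},x}$. This completes the pointwise check and the proof. I expect the genuinely load-bearing step to be the vanishing $H^2(G_F,\adz\rho_x)=0$ at non-special absolutely irreducible points (possibly with local residue field), which is exactly the input making Lemma~\ref{formal_smooth} applicable; the remainder is a routine assembly of Proposition~\ref{main_complete}, Lemmas~\ref{334}--\ref{335} and~\ref{formal_smooth}, Corollary~\ref{ci_gen_chi}, and openness of the regular locus of the excellent scheme $\Xbar^{\gen,\chi}$.
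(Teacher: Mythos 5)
Your overall architecture is the same as the paper's: reduce to closed points, identify the completed local ring of $\Xbar^{\gen,\chi}$ at such a point with $R^{\square,\chi}_{\rho_x}/\varpi$ (up to a power series variable) via Proposition~\ref{main_complete} and Lemmas~\ref{333}--\ref{335}, and conclude by formal smoothness of that deformation ring. The reduction and transport steps are fine. The gap is exactly at the step you yourself single out as load-bearing: the claim that non-specialness of $x$ gives $H^2(G_F,\adz\rho_x)=0$. Lemma~\ref{adz0} does not give this: it applies to \emph{Kummer-irreducible} points, and when $\zeta_p\notin F$ the notions differ (non-special only means $\rho_x\not\cong\rho_x(1)$; a non-special point can perfectly well become reducible over a degree $p$ extension of $F(\zeta_p)$, i.e.\ be Kummer-reducible). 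Likewise \cite[Lemma 5.1.1]{bj}, which is what the non-special condition actually feeds into, gives $H^2(G_F,\adz\rho_x)=0$ only when $\zeta_p\in F$; when $\zeta_p\notin F$ it gives $H^2(G_F,\ad\rho_x)=0$ for the full adjoint. If $p\nmid d$ the trace sequence $0\to\adz\rho_x\to\ad\rho_x\to\kappa(x)\to 0$ splits and you can deduce the $\adz$ vanishing, but if $p\mid d$ it does not split, and $H^2(G_F,\adz\rho_x)\cong\coker\bigl(H^1(G_F,\ad\rho_x)\xrightarrow{\tr} H^1(G_F,\kappa(x))\bigr)$ need not vanish for a merely non-special point; ruling that out is essentially the content of Kummer-irreducibility. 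So as written, the appeal to Lemma~\ref{formal_smooth} is unjustified in the case $\zeta_p\notin F$, $p\mid d$.

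The fix is what the paper does: split into cases. If $\zeta_p\in F$, non-special does give $H^2(G_F,\adz\rho_x)=0$ by \cite[Lemma 5.1.1]{bj}, and Lemma~\ref{formal_smooth} applies as you say. If $\zeta_p\notin F$, then $\mu_{p^\infty}(F)$ is trivial, so $R^{\square,\chi}_{\rho_x}=R^{\square}_{\rho_x}$, and \cite[Lemma 5.1.1]{bj} gives $H^2(G_F,\ad\rho_x)=0$; the presentation \eqref{present_square_x} with $s=h^2(G_F,\ad\rho_x)=0$ then shows $R^{\square}_{\rho_x}/\varpi$ is formally smooth over $\kappa(x)$ directly, with no need for the $\adz$ vanishing or Lemma~\ref{formal_smooth}. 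With that case distinction inserted, the rest of your argument goes through.
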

\begin{proof} 
It is enough to show that localization 
of $A^{\gen, \chi}/\varpi$ at $x$ is a regular ring for every closed point $x$
in $V^{\nspcl}\cap \Xbar^{\gen, \chi}$.
It follows from Lemma \ref{333} applied with $R=R^{\ps, \chi}/\varpi$ and $A=A^{\gen, \chi}/\varpi$
that it is enough to show that 
the completion of 
$\kappa(x)\otimes_{\OO} A^{\gen, \chi}$ 
at the kernel of the 
map of $\kappa(x)$-algebras 
$\kappa(x)\otimes_{\OO} A^{\gen, \chi}\rightarrow \kappa(x)$ is regular. 
Proposition \ref{main_complete} implies that 
we may identify this ring with 
deformation ring 
$R^{\square, \chi}_{\rho_x}/\varpi$. 
If $\zeta_p\in F$ then since $x$ is non-special $H^2(G_F, \adz\rho_x)=0$, see \cite[Lemma 5.1.1]{bj}, Lemma \ref{formal_smooth} 
implies that $R^{\square, \chi}_{\rho_x}/\varpi$
is formally smooth over $\kappa(x)$. If $\zeta_p\not\in F$ 
then $\mu$ is trivial, so that $R^{\square, \chi}_{\rho_x}= R^{\square}_{\rho_x}$, and $H^2(G_F, \ad\rho_x)=0$, see \cite[Lemma 5.1.1]{bj}. It follows from \eqref{present_square_x}  that $R^{\square}_{\rho_x}/\varpi$ is formally smooth over  $\kappa(x)$. 
\end{proof}

\begin{prop}\label{Virr_reg} For each character 
$\chi: \mu_{p^{\infty}}(F)
\rightarrow \OO^\times$ the 
absolutely irreducible locus in $X^{\gen, \chi}[1/p]$ 
is regular.
\end{prop}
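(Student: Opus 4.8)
The plan is to run the generic-fibre analogue of the proof of Proposition \ref{Vnspcl_regular}. Write $V^{\irr,\chi}:=V^{\irr}\cap X^{\gen,\chi}[1/p]$ for the absolutely irreducible locus in question; it is an open subscheme of $X^{\gen,\chi}[1/p]$, which is Jacobson (it is an open subscheme of the preimage in $\Spec A^{\gen,\chi}$ of $\Spec R^{\ps,\chi}\setminus\{\mm_{R^{\ps,\chi}}\}$, to which Lemma \ref{jacobson} (1) applies). Hence it is enough to show that $\OO_{X^{\gen,\chi}[1/p],x}$ is regular for every closed point $x\in V^{\irr,\chi}$, and since regularity of a Noetherian local ring may be checked after completion, I would instead show that $\widehat{\OO}_{X^{\gen,\chi}[1/p],x}$ is regular. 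For such $x$ the residue field $\kappa(x)$ is a finite extension of $L$, and by the description of $V^{\irr}$ recorded after Corollary \ref{irr_gen_dense} the attached representation $\rho_x:G_F\to\GL_d(\kappa(x))$ is absolutely irreducible.

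The first step is to identify this completion with a framed deformation ring. Writing $\pp\subset A^{\gen,\chi}$ for the prime corresponding to $x$, we have $\varpi\notin\pp$ since $\cha\kappa(\pp)=0$, so $\OO_{X^{\gen,\chi}[1/p],x}=(A^{\gen,\chi})_\pp$. Applying Lemma \ref{334} with $R=R^{\ps,\chi}$, $A=A^{\gen,\chi}$ and $\Lambda=\kappa(x)$, together with the base change along $\chi$ of Proposition \ref{main_complete} (the $\chi$-version already used in the proof of Corollary \ref{ci_gen_chi}), I would obtain a natural isomorphism
\[ \widehat{\OO}_{X^{\gen,\chi}[1/p],x}\cong R^{\square,\chi}_{\rho_x}. \]

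It then remains to verify the hypothesis of Lemma \ref{formal_smooth}, namely $H^2(G_F,\adz\rho_x)=0$; granted this, Lemma \ref{formal_smooth} gives that $R^{\square,\chi}_{\rho_x}$ is formally smooth over $\kappa(x)$, hence regular, and the displayed isomorphism finishes the proof. For the vanishing I would argue as follows. Since $\cha\kappa(x)=0$, the integer $d$ is invertible, so $\ad\rho_x=\adz\rho_x\oplus\kappa(x)$ and the trace form identifies $\adz\rho_x$ with its own $\kappa(x)$-linear dual; local Tate duality (\cite[Theorem 3.4.1 (b)]{bj}) then gives $H^2(G_F,\adz\rho_x)^\vee\cong H^0(G_F,\adz\rho_x(1))$, which injects into $H^0(G_F,\ad\rho_x(1))=\Hom_{G_F}(\rho_x,\rho_x(1))$. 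Absolute irreducibility of $\rho_x$ forces this last group to vanish unless $\rho_x\cong\rho_x(1)$, and that isomorphism is impossible, since comparing determinants it would force the $p$-adic cyclotomic character of $G_F$ to have finite order.

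Conceptually the argument is routine: it is the exact generic-fibre mirror of Proposition \ref{Vnspcl_regular}, and the only genuinely arithmetic ingredient is the vanishing $H^2(G_F,\adz\rho_x)=0$ for absolutely irreducible $\rho_x$ in characteristic zero, which is classical. The one place requiring a little care — and which I regard as the main thing to pin down — is that Lemma \ref{334} and the $\chi$-version of Proposition \ref{main_complete} do apply at the point $x$, which is closed in the generic fibre $X^{\gen,\chi}[1/p]$ but need not be closed in $X^{\gen,\chi}$; this is harmless because Lemma \ref{334} requires only that $\kappa(\pp)$ be a finite extension of $L$, and the deformation-theoretic argument underlying Proposition \ref{main_complete} goes through verbatim at any such point.
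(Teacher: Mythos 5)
Your proposal is correct and follows essentially the same route as the paper: identify $\widehat{\OO}_{X^{\gen,\chi},x}$ at an absolutely irreducible closed point with $R^{\square,\chi}_{\rho_x}$ via Proposition \ref{main_complete} and Lemma \ref{334}, prove $H^2(G_F,\adz\rho_x)=0$ by local Tate duality plus the determinant contradiction with $\rho_x\cong\rho_x(1)$, and conclude formal smoothness from Lemma \ref{formal_smooth}. The only cosmetic difference is that the paper reduces to $H^2(G_F,\ad\rho_x)=0$ using that $\adz\rho_x$ is a direct summand of $\ad\rho_x$ in characteristic zero, whereas you dualize $\adz\rho_x$ directly via the trace form; both are the same classical computation.
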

\begin{proof} 
Let $x$ be a closed point in $X^{\gen, \chi}[1/p]$ and 
let $\rho_x: G_F\rightarrow \GL_d(\kappa(x))$ be
the corresponding Galois representation. We claim  that if $\rho_x$ is absolutely irreducible then $H^2(G_F, \adz \rho_x)=0$. Since $\kappa(x)$ is a finite extension of 
$L$, $\adz \rho_x$ is a direct summand of $\ad \rho_x$, 
and thus it is enough to show that $H^2(G_F, \ad \rho_x)=0$. 
By local Tate duality, it is enough to show that
$H^0(G_F, \ad \rho_x(1))=0$. Since $\rho_x$ is absolutely 
irreducible, non-vanishing of this group is equivalent to 
$\rho_x\cong \rho_x(1)$. By considering determinants we 
would obtain that the $d$-th power of the cyclotomic
character is trivial, yielding a contradiction.

Given the claim the rest of the proof is the same as the
proof of Proposition \ref{Vnspcl_regular}, since  Lemma \ref{334} implies that $\widehat{\OO}_{X^{\gen,\chi}, x}\cong R^{\square, \chi}_{\rho_x}$. 
\end{proof} 

\begin{lem}\label{pnot2_smooth} Assume that  $F=\Qp$ and $d=2$. Let $\kappa$ be either a finite extension of 
$L$ or a finite or local field of characteristic $p$. If $\cha(\kappa)=p$ then we further assume that $p>2$. Let $\rho: G_{\Qp}\rightarrow \GL_2(\kappa)$ be a
continuous representation with semi-simplification isomorphic to a direct sum of characters $\psi_1 \oplus \psi_2$ satisfying $\psi_1\neq \psi_2(1)$ and $\psi_2\neq \psi_1(1)$. Then 
$$H^2(G_{\Qp}, \ad \rho)=H^2(G_F, \adz\rho)=0.$$
In particular,
$R^{\square, \chi}_{\rho}$ is formally smooth over $\Lambda$.
\end{lem}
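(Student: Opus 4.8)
The plan is to reduce the statement to a $\Hom$-vanishing assertion via local Tate duality, and then establish that vanishing from the filtration of $\rho$ by characters. First I would observe that since $\cha\kappa\neq 2$ (it is either $0$ or, by hypothesis, $p>2$), the trace map splits $G_F$-equivariantly, so $\ad\rho\cong\adz\rho\oplus\kappa$ with $\kappa$ the scalar matrices; hence $H^2(G_F,\adz\rho)$ is a direct summand of $H^2(G_F,\ad\rho)$, and it is enough to prove $H^2(G_{\Qp},\ad\rho)=0$. Next I would use that $\ad\rho=\End_\kappa(\rho)$ is self-dual via the trace pairing $(\phi,\psi)\mapsto\tr(\phi\psi)$, which is $G_{\Qp}$-invariant and perfect; combined with local Tate duality in this setting (\cite[Theorem 3.4.1 (b)]{bj}, which also covers the case where $\kappa$ is a local field of characteristic $p$) this identifies
\[
H^2(G_{\Qp},\ad\rho)^{\vee}\cong H^0\bigl(G_{\Qp},\ad\rho(1)\bigr)\cong\Hom_{G_{\Qp}}(\rho,\rho(1)),
\]
using $\ad\rho(1)=\End_\kappa(\rho)\otimes\chi_{\cyc}\cong\Hom_\kappa(\rho,\rho(1))$ as $G_{\Qp}$-modules. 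So everything comes down to showing $\Hom_{G_{\Qp}}(\rho,\rho(1))=0$.

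For that I would write the extension (relabelling $\psi_1,\psi_2$ if needed) as $0\to\psi_1\to\rho\to\psi_2\to 0$, whose twist is $0\to\psi_1(1)\to\rho(1)\to\psi_2(1)\to 0$, and record first that the composite character $\chi_{\cyc}\colon G_{\Qp}\to\Zp^{\times}\to\kappa^{\times}$ is nontrivial: clear when $\cha\kappa=0$, and when $\cha\kappa=p$ this is exactly where $p>2$ enters, since then $\chi_{\cyc}$ reduces to the mod-$p$ cyclotomic character, which surjects onto $\Fp^{\times}\neq 1$. Given a $G_{\Qp}$-map $f\colon\rho\to\rho(1)$, the image $f(\psi_1)$ is a $G_{\Qp}$-submodule of $\rho(1)$ that is a quotient of the $1$-dimensional $\psi_1$, hence $0$ or a sub-line isomorphic to $\psi_1$; the latter would force $\psi_1\cong\psi_1(1)$ (impossible since $\chi_{\cyc}\neq 1$) or $\psi_1\cong\psi_2(1)$ (excluded by hypothesis). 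Thus $f$ descends to $\bar f\colon\rho/\psi_1\cong\psi_2\to\rho(1)$, and the same argument — now using $\psi_2\neq\psi_1(1)$ and again $\chi_{\cyc}\neq 1$ — gives $\bar f=0$, so $f=0$. This yields $H^2(G_{\Qp},\ad\rho)=0$, hence $H^2(G_F,\adz\rho)=0$.

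Finally I would conclude by invoking Lemma \ref{formal_smooth}, which takes $H^2(G_F,\adz\rho)=0$ as input and produces formal smoothness of $R^{\square,\chi}_{\rho}$ over $\Lambda$ for every character $\chi\colon\mu_{p^{\infty}}(F)\to\OO^{\times}$. The main obstacle here is not deep: it is the bookkeeping needed to be sure that in characteristic $p$ one twists by the correct (mod-$p$) cyclotomic character, and that the three non-isomorphisms $\psi_1\not\cong\psi_2(1)$, $\psi_2\not\cong\psi_1(1)$ and $\psi_i\not\cong\psi_i(1)$ really exhaust all ways a nonzero $G_{\Qp}$-map $\rho\to\rho(1)$ could arise; if one wished to avoid citing self-duality of $\ad\rho$ one could instead use directly that $H^2(G_{\Qp},\ad\rho)^{\vee}\cong H^0(G_{\Qp},\ad\rho^{\vee}(1))$ together with $\ad\rho^{\vee}\cong\ad\rho$.
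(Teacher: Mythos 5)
Your proposal is correct and follows essentially the same route as the paper's proof: split $\ad\rho\cong\adz\rho\oplus\kappa$ using $\cha(\kappa)\neq 2$, reduce via local Tate duality to the vanishing of $H^0(G_{\Qp},\ad\rho(1))$, and rule out a nonzero $G_{\Qp}$-equivariant map $\rho\to\rho(1)$ using the hypotheses $\psi_1\neq\psi_2(1)$, $\psi_2\neq\psi_1(1)$ and the nontriviality of the (mod $p$) cyclotomic character, then quote Lemma \ref{formal_smooth}. Your filtration analysis of such a map is just a more explicit version of the paper's observation that nonvanishing would force some $\psi_i\psi_j^{-1}(1)$ to be trivial.
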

\begin{proof} Since $\cha(\kappa)\neq 2$, $\adz\rho$ is a direct summand $\ad \rho$, and thus it is enough 
to show that 
$H^2(G_{\Qp}, \ad \rho)=0$. By local 
Tate duality, see \cite[Theorem 3.4.1(b)]{bj}, it is enough to show that
$H^0(G_{\Qp}, \ad \rho(1))=0$. Non-vanishing of this group would 
imply that $\psi_i\psi_j^{-1}(1)$ 
is a trivial character for some
$i, j\in \{1,2\}$. If $i=j$ then this would imply 
$\chi_{\cyc}\otimes_{\Zp} \kappa$ is 
 trivial, which is not the case as $\cha(\kappa)\neq 2$. If $i\neq j$ then this 
does not hold by assumption.

The last assertion follows from Lemma \ref{formal_smooth}.
\end{proof}

\begin{lem}\label{peq2_smooth} Assume that $p=2$, $F=\QQ_2$ and $d=2$. Let $\kappa$ be a
finite or
local field of characteristic $2$ and 
let $\rho: G_{\QQ_2}\rightarrow \GL_2(\kappa)$ be a continuous representation, which is a non-split extension of distinct characters.

Then $H^2(G_{\QQ_2}, \adz \rho)=0$. In particular
$R^{\square, \chi}_{\rho}$ is formally smooth over $\Lambda$.
\end{lem}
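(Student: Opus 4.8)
The plan is to reduce the vanishing of $H^2(G_{\QQ_2},\adz\rho)$ to a Galois-cohomology invariants computation via local duality, exploiting that over $\QQ_2$ with $p=2$ the mod-$p$ cyclotomic character is trivial (because $\mathbb{F}_2^{\times}=1$), so that $\chi_{\cyc}\otimes_{\ZZ_2}\kappa$ is trivial and hence $\adz\rho(1)\cong\adz\rho$ and, more generally, the Tate twist is invisible on $\kappa[G_{\QQ_2}]$-modules. By local Tate duality \cite[Theorem 3.4.1 (b)]{bj} it then suffices to prove $H^0(G_{\QQ_2},(\adz\rho)^{\vee})=0$. First I would record the feature that makes this case genuinely different from Lemma \ref{pnot2_smooth}: in characteristic $2$ one has $\mathrm{id}_{\rho}\in\adz\rho$, so $\adz\rho$ is not a direct summand of $\ad\rho$, and in fact $H^2(G_{\QQ_2},\ad\rho)\cong\End_{G_{\QQ_2}}(\rho)^{\vee}\neq 0$; this is precisely why the $\ad\rho$-argument of Lemma \ref{pnot2_smooth} cannot be used and why the non-split hypothesis will have to enter. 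Using the $G_{\QQ_2}$-invariant, symmetric, non-degenerate trace form $(A,B)\mapsto\tr(AB)$ on $\ad\rho$ to dualise the inclusion $\adz\rho\hookrightarrow\ad\rho$, one identifies $(\adz\rho)^{\vee}$ with $\ad\rho/(\kappa\cdot\mathrm{id})$ as a $G_{\QQ_2}$-module: the orthogonal complement of $\adz\rho$ is $\kappa\cdot\mathrm{id}$, by a dimension count together with $\tr(\mathrm{id}\cdot B)=\tr B=0$ for $B\in\adz\rho$. Thus the task becomes: every $A\in\End_{\kappa}(\rho)$ with $\rho(g)A\rho(g)^{-1}\equiv A\pmod{\kappa\cdot\mathrm{id}}$ for all $g$ is a scalar.

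To carry this out I would pass to an explicit basis in which $\rho(g)=\bigl(\begin{smallmatrix}\psi_2(g)&b(g)\\0&\psi_1(g)\end{smallmatrix}\bigr)$, and set $\xi(g):=\psi_1(g)^{-1}b(g)$, which is a $1$-cocycle in $Z^1(G_{\QQ_2},\psi_2\psi_1^{-1})$ whose class is the extension class of $\rho$, nonzero by hypothesis. Writing $A=\bigl(\begin{smallmatrix}a&\beta\\\gamma&\delta\end{smallmatrix}\bigr)$ and expanding $\rho(g)A\rho(g)^{-1}-A$, its lower-left entry is $\bigl((\psi_1\psi_2^{-1})(g)-1\bigr)\gamma$, which forces $\gamma=0$ since $\psi_1\neq\psi_2$; its diagonal entries then vanish, so it lies in $\kappa\cdot\mathrm{id}$ iff its upper-right entry vanishes, which (using $\cha\kappa=2$) reads
\[ \bigl((\psi_2\psi_1^{-1})(g)-1\bigr)\beta=\xi(g)\,(a+\delta)\qquad\text{for all }g\in G_{\QQ_2}. \]
A short case distinction then closes the argument: if $\tr A=a+\delta=0$, the displayed identity forces $\beta=0$ (choose $g$ with $(\psi_2\psi_1^{-1})(g)\neq 1$), so $A=a\cdot\mathrm{id}$ is a scalar; if $a+\delta\neq 0$, the identity exhibits $\xi$ as the coboundary of the constant $(a+\delta)^{-1}\beta$, contradicting non-splitness of $\rho$. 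Hence $(\adz\rho)^{\vee}$ has no nonzero $G_{\QQ_2}$-invariants, so $H^2(G_{\QQ_2},\adz\rho)=0$. The "in particular" is then immediate from Lemma \ref{formal_smooth}, whose hypotheses ($\kappa$ a finite or local field of characteristic $p$, and $H^2(G_F,\adz\rho)=0$) are met.

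The main obstacle is conceptual rather than computational: since $\adz\rho$ is not a direct summand of $\ad\rho$ and $H^2(G_{\QQ_2},\ad\rho)$ does not vanish, there is no reduction to generalities, and one must locate by hand where the non-split hypothesis is used — namely, it is exactly what rules out the hypothetical invariant with $\tr A\neq 0$, this being the assertion that the extension class of $\rho$ is not a coboundary. Once that structure is in place, the explicit matrix manipulation and the characteristic-$2$ bookkeeping (vanishing of $\tr(\mathrm{id})$, the relation $-1=1$, etc.) are routine.
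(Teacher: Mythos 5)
Your proposal is correct and follows essentially the same route as the paper: local Tate duality with the (trivial) mod-$2$ cyclotomic twist, the trace-pairing identification $(\adz\rho)^{\vee}\cong \ad\rho/(\kappa\cdot\mathrm{id})=\adbar\rho$, and a verification that $\adbar\rho$ has no $G_{\QQ_2}$-invariants, with non-splitness used at the decisive step. The only difference is presentational — the paper identifies $\kappa\,\overline{e}_{12}$ as the unique irreducible subrepresentation of $\adbar\rho$, whereas you show directly by the cocycle/coboundary case analysis that any projective invariant is scalar, which in fact makes the role of the non-split hypothesis slightly more explicit.
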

\begin{proof} After twisting we may assume that we can choose a basis of the underlying vector space of $\rho$, such that with respect to that basis 
$$\rho(g)=\begin{pmatrix} 1 & b(g)\\ 0 & \psi(g)\end{pmatrix}, \quad \forall g\in G_{\QQ_2},$$
where $\psi: G_{\QQ_2}\rightarrow \kappa^{\times}$ is a non-trivial character. 
We use the same basis to identify $\ad \rho$ with $M_2(k)$ with the $G_{\QQ_2}$-action given by 
$$g\cdot M:= \rho(g) M \rho(g)^{-1}.$$
For $i,j\in \{1,2\}$ let $e_{ij}\in M_2(k)$ be the matrix with the $ij$-entry equal to $1$ and all the other entries equal to zero. Let 
$\adbar\rho$ be the quotient 
$\ad \rho$ by the scalar matrices and 
let $\overline{e}_{ij}$ be the image 
of $e_{ij}$ in $\adbar \rho$. A
direct computation shows that 
$$g\cdot \overline{e}_{12}= \psi(g)^{-1} \overline{e}_{12}, \quad 
g\cdot \overline{e}_{11} =
\overline{e}_{11}- 
\psi(g)^{-1}b(g) \overline{e}_{12},
\quad g\cdot \overline{e}_{21}= 
\psi(g) \overline{e}_{21}-\psi(g)^{-1} b(g)^2 \overline{e}_{12}.$$ 
Since $\rho$ is non-split, $b(g)\neq 0$
for some $g\in G_{\QQ_2}$. Thus
$\kappa \overline{e}_{12}$ is the 
unique irreducible subrepresentation 
of $\adbar \rho$. 
Since $G_{\QQ_2}$ acts on 
$\overline{e}_{12}$ 
by a non-trivial character, we deduce
that $H^0(G_{\QQ_2}, \adbar \rho)=0$. 

It follows from local Tate duality, 
see \cite[Theorem 3.4.1(b)]{bj}, that 
$H^2(G_{\QQ_2}, \adz \rho)=0$. Note 
that the cyclotomic character is
trivial modulo $2$.

The last assertion follows from 
Lemma \ref{formal_smooth}.
\end{proof}

\begin{prop}\label{nice_open} 
There is an open subscheme 
$V^{0,\chi} \subset \Xbar^{\gen, \chi}$ such that 
\begin{enumerate}
\item $H^2(G_F, \adz \rho_x)=0$ for all 
closed points $x\in V^{0, \chi}$;
\item $\dim \Xbar^{\gen, \chi} - \dim (\Xbar^{\gen, \chi}\setminus V^{0, \chi})\ge 2$. 
\end{enumerate}
In particular, $\Xbar^{\gen, \chi}$ is regular in 
codimension $1$. 
\end{prop}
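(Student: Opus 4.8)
The plan is to construct the open subscheme $V^{0,\chi}$ as the union of two pieces: the absolutely irreducible non-special locus $V^{\nspcl}$ intersected with $\Xbar^{\gen,\chi}$, which by Proposition~\ref{Vnspcl_regular} is regular and by Proposition~\ref{Vnspcl_regular} together with the cohomological vanishing in~\cite[Lemma~5.1.1]{bj} (see also Lemma~\ref{adz0}) satisfies $H^2(G_F,\adz\rho_x) = 0$ at its closed points; and a second piece handling the reducible locus in the exceptional small cases, which will be controlled by Lemmas~\ref{pnot2_smooth} and~\ref{peq2_smooth}. Concretely, I would split into the cases identified in the overview:

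\textbf{Case 1: $F \neq \Qp$, or $d > 2$, or $\Dbar$ absolutely irreducible.} Here I simply take $V^{0,\chi} := V^{\nspcl} \cap \Xbar^{\gen,\chi}$. Proposition~\ref{nspcl} gives $\dim \Xbar^{\gen,\chi} - \dim(\Xbar^{\gen,\chi}\setminus V^{\Kirr}) \ge 1 + [F:\Qp]$ when $d > 2$, $\ge d[F:\Qp]$ when $\Dbar$ is absolutely irreducible, and $\ge [F:\Qp] \ge 2$ when $d = 2$ and $F \neq \Qp$ (since then $[F:\Qp]\ge 2$). Since $V^{\Kirr} \subseteq V^{\nspcl}$ and the two coincide when $\zeta_p \in F$, and in all these cases the complement of $V^{\nspcl}$ has codimension $\ge 2$, this gives~(2); property~(1) is immediate from the characterization of non-special points. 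Here one uses that $\Xbar^{\gen,\chi}$ is equi-dimensional, which follows from Corollary~\ref{ci_gen_chi}.

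\textbf{Case 2: $F = \Qp$, $d = 2$.} Now $[F:\Qp] = 1$, so $\Xbar^{\gen,\chi}\setminus V^{\nspcl}$ may have codimension only $1$ — the offending stratum being the reducible locus. The fix is to enlarge $V^{\nspcl}$ by adding in the locus of reducible $\rho_x$ that are non-split extensions of distinct characters $\psi_1,\psi_2$ with $\psi_1 \neq \psi_2(1)$ and $\psi_2 \neq \psi_1(1)$; call its preimage in $\Xbar^{\gen,\chi}$ the set $V^{\red,0}$. On $V^{\red,0}$, Lemma~\ref{pnot2_smooth} (if $p > 2$) or Lemma~\ref{peq2_smooth} (if $p = 2$) shows $H^2(G_F,\adz\rho_x) = 0$ and that $R^{\square,\chi}_{\rho_x}$ (and hence, via Proposition~\ref{main_complete} and Lemma~\ref{333}, the local ring of $\Xbar^{\gen,\chi}$ at $x$) is regular. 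Set $V^{0,\chi} := (V^{\nspcl} \cup V^{\red,0}) \cap \Xbar^{\gen,\chi}$; this is open and regular, and~(1) holds by construction. For~(2), the complement $\Xbar^{\gen,\chi}\setminus V^{0,\chi}$ is contained in the union of: the diagonalizable (maximally reducible semi-simple) locus, bounded by Corollary~\ref{ss_bound} to have dimension $\le d^2 + [F:\Qp]\sum d_i^2 = 4 + \sum d_i^2 \le 4 + 2 = 6 = d^2 + d^2[F:\Qp] - 2$; the locus of split extensions of a character by its cyclotomic twist, which imposes the condition $\psi_1 = \psi_2(1)$, cutting down the pseudo-character space and the fibre dimension enough; the locus $Y$ over $\mm_{R^{\ps}}$, bounded by Lemma~\ref{bound_Y}; and the higher strata $Z_\PP$ for $\PP > \PP_{\min}$, bounded by Lemma~\ref{bound_diff}. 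Each of these is checked to have codimension $\ge 2$.

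The main obstacle will be the bookkeeping in Case~2: one must show that the reducible non-semi-simple locus \emph{not} covered by $V^{\red,0}$ — i.e.\ where $\rho_x$ is a non-split extension but $\psi_1 = \psi_2(1)$ or $\psi_2 = \psi_1(1)$ — together with the semi-simple locus has codimension $\ge 2$ in $\Xbar^{\gen,\chi}$. This requires combining the bound on $\Xbar^{\mrs}$ from Corollary~\ref{ss_bound} with a dimension count for the "wrong-twist" extension locus, analogous to the estimates on $Z^{ij}_\PP$ in Proposition~\ref{bound_again}: fixing the cyclotomic-twist relation between the two Jordan–Hölder characters removes $1 + [F:\Qp]$ from the pseudo-character side, which for $F = \Qp$, $d = 2$ gives exactly the codimension $2$ needed. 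Once the stratification is set up carefully and each stratum's dimension is bounded, conclusion~(2) and hence regularity in codimension~$1$ (Serre's $(R_1)$, since $\Xbar^{\gen,\chi}$ is equi-dimensional and $V^{0,\chi}$ contains all codimension-$\le 1$ points) follow immediately.
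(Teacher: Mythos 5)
Your overall strategy is the paper's: handle the generic case via the dense irreducible locus, and for $d=2$, $F=\Qp$ enlarge by the ``good-twist'' reducible locus using Lemmas \ref{pnot2_smooth}/\ref{peq2_smooth} together with the dimension bounds of Proposition \ref{bound_again}, Lemma \ref{bound_Y} and Corollary \ref{ss_bound}. But there are two genuine gaps. First, in your Case 1 you take $V^{0,\chi}=V^{\nspcl}\cap\Xbar^{\gen,\chi}$ and assert that (1) ``is immediate from the characterization of non-special points.'' That is only true when $\zeta_p\in F$ (where non-special coincides with Kummer-irreducible). When $\zeta_p\notin F$, non-special means $D_x\neq D_x(1)$, which gives $H^2(G_F,\ad\rho_x)=0$ --- enough for regularity in Proposition \ref{Vnspcl_regular} --- but \emph{not} $H^2(G_F,\adz\rho_x)=0$: in characteristic $p$ with $p\mid d$, $\adz\rho_x$ is not a direct summand of $\ad\rho_x$, and by duality $H^2(G_F,\adz\rho_x)^\vee\cong H^0(G_F,\adbar\rho_x(1))$ need not vanish. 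This is exactly why the paper works with the Kummer-irreducible locus $V^{\Kirr,\chi}$ and invokes Lemma \ref{adz0}; the fix is harmless since the codimension bounds of Proposition \ref{nspcl} are stated for the complement of $V^{\Kirr}$ anyway, but as written your statement (1) is not established, and it is the $\adz$-vanishing (not $\ad$-vanishing) that is used downstream in Lemma \ref{Kummer_fix} and Theorem \ref{psi_normal}.

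Second, in your Case 2 the set $V^{\red,0}$ (non-split extensions with the twist conditions) is contained in the reducible locus, which is a closed subset of $\Xbar^{\gen,\chi}$ of positive codimension; so the asserted openness of $V^{0,\chi}=(V^{\nspcl}\cup V^{\red,0})\cap\Xbar^{\gen,\chi}$ is unproven and doubtful --- non-splitness is not an open condition on $\Xbar^{\gen,\chi}$, and points of $V^{\red,0}$ have no neighbourhood avoiding, e.g., nearby semi-simple or Kummer-reducible points unless one argues this. The paper sidesteps the issue by \emph{defining} $V^{0,\chi}$ as the complement of explicit closed subsets, namely $Y\cup Z^{12}_{\PP_{\max}}\cup Z^{21}_{\PP_{\max}}\cup Z^{\Kred}$ (and additionally $\Xbar^{\mrs}$ when $p=2$), which is manifestly open, and only afterwards checks (1) at its closed points by splitting them into $V^{\Kirr}$ and $V'_{\PP_{\max}}$. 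Note also that for $p>2$ Lemma \ref{pnot2_smooth} does not require non-splitness, so there is no need to exclude the semi-simple locus except in the $p=2$ case, where removing the closed subscheme $\Xbar^{\mrs}$ (bounded by Corollary \ref{ss_bound}) does the job; your uniform insistence on non-split points is what forces the problematic openness claim. Your dimension bookkeeping for the wrong-twist and semi-simple strata is otherwise correct and matches the paper's.
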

\begin{proof} We first note that if $V\subset \Xbar^{\gen, \chi}$ is open and satisfies part (1) 
then $V$ is regular by the argument explained in the proof of Proposition \ref{Vnspcl_regular}. 
Thus if (1) and (2) hold then $\Xbar^{\gen,\chi}$ is regular in codimension $1$. 
We also note that Lemma \ref{adz0} implies that 
part (1) holds for $V^{\Kirr, \chi}:=V^{\Kirr}\cap 
\Xbar^{\gen, \chi}$.  We consider three separate cases. 

\textbf{Case 1: $d > 2$ or $F \neq \Qp$ or $\Dbar$ is (absolutely) irreducible.} These three conditions correspond to parts (1), (2), and (3) of Proposition \ref{nspcl} respectively, and indeed Proposition \ref{nspcl} implies that the complement of $V^{\Kirr, \chi}$ in $\Xbar^{\gen,\chi}$ has dimension at most $\dim \Xbar^{\gen,\chi} - 2$.
Hence, we may take $V^{0, \chi}=V^{\Kirr, \chi}$. 

\textbf{Case 2: $d = 2$ and $F = \Qp$ and $p>2$ and $\Dbar$ is reducible.} In this case, $\mu = \set{1}$ so $\chi = \Eins$ and thus $\Xbar^{\gen,\Eins} = \Xbar^{\gen}$. 
It follows from  Proposition \ref{bound_again}, Lemma \ref{bound_Y}, Lemma \ref{special} that 
$$V^{0, \chi}:= \Xbar^{\gen}\setminus( Y \cup Z_{\PP_{\max}}^{12} \cup Z_{\PP_{\max}}^{21} \cup Z^{\Kred})$$
satisfies part (2). We may also write $V^{0, \chi}=
V^{\Kirr}\cup V_{\PP_{\max}}'$, where we use the notation introduced in the proof of Proposition \ref{bound_delta}. Since part (1) holds for $V^{\Kirr}$ it is enough to consider closed points 
$x\in V_{\PP_{\max}}'$. The definition of $V_{\PP_{\max}}'$ implies firstly that $\rho_x$ is reducible and secondly that if we let $\psi_1$ and $\psi_2$ denote its irreducible Jordan-H\"older constituents then $\psi_1 \neq \psi_2(1)$ and $\psi_2 \neq \psi_1(1)$. Therefore,
$H^2(G_F, \adz \rho_x)=0$ by
Lemma \ref{pnot2_smooth}.

\textbf{Case 3: $d = 2$ and $F = \Q_2$ and $\Dbar$ is reducible.}
The proof is the same as in Case 2, using 
Lemma \ref{peq2_smooth} instead of Lemma \ref{pnot2_smooth}. However, one additionally 
has to remove the reducible semi-simple locus 
in $\Xbar^{\gen,\chi}$. Its dimension 
is at most $4+2=6$ by Corollary \ref{ss_bound} and the dimension 
of $\Xbar^{\gen,\chi}$ is $8$. Thus 
the codimension is at least $2$.
\end{proof}

\begin{prop}\label{Xbar_normal}
$\Xbar^{\gen,\chi}$ is normal.
\end{prop}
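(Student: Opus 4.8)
The plan is to verify Serre's criterion for normality: $\Xbar^{\gen,\chi}$ is normal if and only if it satisfies (R1) and (S2). The condition (R1), that $\Xbar^{\gen,\chi}$ is regular in codimension $1$, is exactly the content of Proposition~\ref{nice_open}. So the remaining work is to establish (S2). This is where Corollary~\ref{ci_gen_chi} does the heavy lifting: it asserts that $A^{\gen,\chi}/\varpi$ is equi-dimensional of dimension $d^2 + d^2[F:\Qp]$ and is locally complete intersection. A locally complete intersection ring is in particular Cohen--Macaulay, and Cohen--Macaulay rings satisfy $(S_k)$ for every $k$, in particular $(S_2)$.

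So the proof is essentially a one-line citation assembly. First I would recall that by \cite[\href{https://stacks.math.columbia.edu/tag/031R}{Tag 031R}]{stacks-project} (or Serre's criterion), a Noetherian ring is normal if and only if it satisfies (R1) and (S2). Next I would note that $\Xbar^{\gen,\chi}$ is regular in codimension $1$ by Proposition~\ref{nice_open}. Then I would invoke Corollary~\ref{ci_gen_chi}(2): $A^{\gen,\chi}/\varpi$ is locally complete intersection, hence Cohen--Macaulay by \cite[\href{https://stacks.math.columbia.edu/tag/0BJJ}{Tag 0BJJ}]{stacks-project} (lci implies CM), and Cohen--Macaulay rings satisfy (S2) by \cite[\href{https://stacks.math.columbia.edu/tag/00NB}{Tag 00NB}]{stacks-project}. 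Combining (R1) and (S2) gives normality.

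I do not anticipate a genuine obstacle here — the real mathematical content has already been packed into Proposition~\ref{nice_open} (the codimension estimates on the non-regular locus, which use the bounds from Section~\ref{sec_dens_irr} and the explicit $d=2$, $F=\Qp$ or $F=\Q_2$ analysis) and into Corollary~\ref{ci_gen_chi} (the lci property, which came from the completions at closed points and Corollary~\ref{ci_chi}). The only minor point to be careful about is that $\Xbar^{\gen,\chi}$ is not reduced a priori, so one should phrase normality for the possibly non-reduced scheme; but since it satisfies (R0) (being regular in codimension $\le 1$ in particular means generically regular, hence generically reduced) and (S1) (CM implies $(S_1)$), it is automatically reduced, and then (R1)+(S2) gives normality in the usual sense.

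Here is the proof.

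\begin{proof}
By Serre's criterion for normality \cite[\href{https://stacks.math.columbia.edu/tag/031R}{Tag 031R}]{stacks-project}, it suffices to check that $A^{\gen,\chi}/\varpi$ satisfies conditions (R1) and (S2). Condition (R1) is precisely the assertion that $\Xbar^{\gen,\chi}$ is regular in codimension $1$, which holds by Proposition \ref{nice_open}. For condition (S2), recall from Corollary \ref{ci_gen_chi} (2) that $A^{\gen,\chi}/\varpi$ is locally complete intersection; since a locally complete intersection Noetherian ring is Cohen--Macaulay by \cite[\href{https://stacks.math.columbia.edu/tag/0BJJ}{Tag 0BJJ}]{stacks-project}, and a Cohen--Macaulay ring satisfies $(S_k)$ for all $k$ by \cite[\href{https://stacks.math.columbia.edu/tag/00NB}{Tag 00NB}]{stacks-project}, in particular $(S_2)$ holds. (In particular $A^{\gen,\chi}/\varpi$ satisfies $(R_0)$ and $(S_1)$, hence is reduced, so normality is meant in the usual sense.) Therefore $\Xbar^{\gen,\chi}$ is normal.
\end{proof}
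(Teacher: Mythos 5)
Your proof is correct and follows exactly the paper's argument: Serre's criterion, with (S2) coming from the local complete intersection property of Corollary \ref{ci_gen_chi} (hence Cohen--Macaulay) and (R1) from Proposition \ref{nice_open}. The extra remark on reducedness via (R0) and (S1) is a harmless and valid addition.
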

\begin{proof}
Since $\Xbar^{\gen,\chi}$ is a local complete intersection
by Corollary \ref{ci_gen_chi}, it is Cohen--Macaulay and satisfies property (S2), 
and Proposition \ref{nice_open} says that it satisfies property (R1). Hence, $\Xbar^{\gen,\chi}$
is normal by Serre's criterion for normality. 
\end{proof}

\begin{cor}\label{sp_normal} For each character $\chi: \mu_{p^{\infty}}(F)\rightarrow \OO^{\times}$ and $\rhobar: G_F\rightarrow \GL_d(k)$ the ring 
$R^{\square,\chi}_{\rhobar}/\varpi$ is a normal integral domain.
\end{cor}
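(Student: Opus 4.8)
The plan is to read the statement off from the normality of $\Xbar^{\gen,\chi}$ (Proposition~\ref{Xbar_normal}) by identifying $R^{\square,\chi}_{\rhobar}/\varpi$ with a completed local ring of $\Xbar^{\gen,\chi}$, exactly in the spirit of the proof of Corollary~\ref{ci_gen_chi}.

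First I would check that $\rhobar$ cuts out a closed point $x$ of $X^{\gen,\chi}$ lying above $\mm_{R^{\ps}}$ with $\kappa(x)=k$. The ring homomorphism $A^{\gen}\to k$ attached to $\rhobar$ factors through $A^{\gen,\chi}=A^{\gen}\otimes_{\OO[\mu],\chi}\OO$, because the restriction of $\det\rhobar$ along $\Art_F$ to $\mu$ is trivial — it is a character of a pro-$p$ group with values in a field of characteristic $p$ — and hence agrees with the reduction of $\chi$; its image is a subfield of $k$ which, being the residue field of a closed point of the finite type $k$-scheme obtained as the fibre of $X^{\gen,\chi}$ over $\mm_{R^{\ps}}$, must be all of $k$. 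Applying $-\otimes_{\OO[\mu],\chi}\OO$ to Proposition~\ref{main_complete} then identifies $R^{\square,\chi}_{\rhobar}$ with the completion $\widehat{\OO}_{X^{\gen,\chi},x}$, and reduction modulo $\varpi$ gives
\[
R^{\square,\chi}_{\rhobar}/\varpi \;\cong\; \widehat{\OO}_{\Xbar^{\gen,\chi},x}.
\]

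Next I would invoke Proposition~\ref{Xbar_normal}: $\Xbar^{\gen,\chi}$ is normal, so $\OO_{\Xbar^{\gen,\chi},x}$ is a normal local domain; it is moreover excellent, being essentially of finite type over the complete local Noetherian (hence excellent) ring $R^{\ps}$. Since the completion of a normal excellent local ring is again normal, $\widehat{\OO}_{\Xbar^{\gen,\chi},x}$ is normal, and as a Noetherian local ring its spectrum is connected, so being normal it is an integral domain. This gives that $R^{\square,\chi}_{\rhobar}/\varpi$ is a normal integral domain.

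I do not expect a genuine obstacle: all of the substantive work sits in Proposition~\ref{Xbar_normal}, which rests on Corollary~\ref{ci_gen_chi}, Proposition~\ref{nice_open} and the density bounds of Subsection~\ref{sec_dens_irr}. The only points requiring a little care are the bookkeeping that the point defined by $\rhobar$ really lies on the $\chi$-twisted space, and the passage from the normal scheme $\Xbar^{\gen,\chi}$ to its completed local ring, which is a standard consequence of excellence. If one preferred to avoid quoting excellence, one could argue directly with Serre's criterion on the complete local ring $R^{\square,\chi}_{\rhobar}/\varpi$: it is a complete intersection by Corollary~\ref{ci_chi}, so it satisfies (S2), and it satisfies (R1) because the regular open locus $V^{0,\chi}$ of Proposition~\ref{nice_open} has complement of codimension at least $2$ in $\Xbar^{\gen,\chi}$; being local, it is then automatically an integral domain.
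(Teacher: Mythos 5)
Your proposal is correct and follows essentially the same route as the paper: identify $R^{\square,\chi}_{\rhobar}/\varpi$ with the completed local ring of $\Xbar^{\gen,\chi}$ at the point given by $\rhobar$, then use that completions of local rings of a normal excellent scheme are normal (the paper cites \cite[Theorem 32.2 (i)]{matsumura}) and that a normal Noetherian local ring is a domain. The extra bookkeeping you supply (the point lying on the $\chi$-twisted space, the $\chi$-version of Proposition~\ref{main_complete}) is exactly what the paper implicitly relies on via Corollary~\ref{ci_gen_chi}.
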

\begin{proof}
Since $\Xbar^{\gen,\chi}$ is normal and excellent 
the completions of its local rings are normal by
\cite[Theorem 32.2 (i)]{matsumura}.
So after formally completing along the maximal ideal corresponding to $\rhobar$, Proposition \ref{main_complete} (after applying $\otimes_{\OO[\mu],\chi} \OO$) and Proposition \ref{334} tell us that  $R^{\square,\chi}_{\rhobar}/\varpi$ is normal, and thus an integral domain since it is a local ring. 
\end{proof}

\begin{lem}\label{Kummer_fix} Let $\hat{Y}$ be the preimage of $\mm_{R^{\ps}}$ in 
$\Spec R^{\square, \chi}_{\rhobar}/\varpi$. 
Let $W$ be a closed subscheme of $\Spec R^{\square, \chi}_{\rhobar}/\varpi$ such that 
$H^2(G_F, \adz \rho_x)\neq 0$ for all closed points 
$x\in W\setminus \hat{Y}$. Then 
$\dim R^{\square, \chi}_{\rhobar}/\varpi - \dim W\ge 2$. 
\end{lem}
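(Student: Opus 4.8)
The plan is to reduce this statement about the local ring $R^{\square,\chi}_{\rhobar}/\varpi$ to the corresponding statement about (closures of loci in) the finite-type scheme $\Xbar^{\gen,\chi}$, where the dimension estimates from Section \ref{sec_dens_irr} are available. The first step is to replace $W$ by the (reduced) closure of its complement inside $\hat Y$; since $\dim \hat Y$ is cut down by Lemma \ref{bound_Y} applied after the appropriate base change — more precisely $\dim (\Spec R^{\square,\chi}_{\rhobar}/\varpi) - \dim \hat Y \ge 1 + 2[F:\Qp] \ge 2$ — we may assume every irreducible component of $W$ meets $\Spec R^{\square,\chi}_{\rhobar}/\varpi \setminus \hat Y$, and hence (by Corollary \ref{sp_normal}, which gives that the ambient ring is an integral domain of dimension $d^2+d^2[F:\Qp]$) it suffices to bound $\dim W'$ for each irreducible component $W'$ of $W$ that is \emph{not} contained in $\hat Y$.

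The key geometric input is the faithfully flat map $A^{\gen,\chi}/\varpi \to R^{\square,\chi}_{\rhobar}/\varpi$, which is a localization followed by completion. A closed point $x$ of $W'$ not lying in $\hat Y$ maps to a point of $\Xbar^{\ps}$ lying in $P_1(R^{\ps}/\varpi)$ (by Lemma \ref{jacobson}), and its residue field $\kappa(x)$ is a finite extension of $L$ or a local field of characteristic $p$; by Proposition \ref{main_complete} (or its $\chi$-variant, as in the proof of Corollary \ref{ci_gen_chi}) the completed local ring at $x$ is $R^{\square,\chi}_{\rho_x}$ up to a formal variable, so the hypothesis $H^2(G_F,\adz\rho_x)\neq 0$ says exactly that $x$ lies outside the ``good'' locus. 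Now transport the whole picture to $\Xbar^{\gen,\chi}$: the image of $W'$ (or rather, the closure of the image of $W'\setminus\hat Y$) is a closed subset of $\Xbar^{\gen,\chi}$ whose closed points outside $Y$ all have $H^2(G_F,\adz\rho_x)\neq 0$, hence by Lemma \ref{adz0} they are \emph{not} Kummer-irreducible, so this closure is contained in $Z^{\Kred}$ together with $\bigcup_{\PP_{\min}<\PP}Z_{\PP}$ and $Y$. Proposition \ref{nspcl} (together with Lemmas \ref{bound_diff}, \ref{bound_Y}, \ref{special}) gives that this union has codimension $\ge 2$ in $\Xbar^{\gen,\chi}$ in every case — here one must invoke all three parts of Proposition \ref{nspcl}, and in the case $d=2$, $F=\Qp$, $\Dbar$ reducible, additionally the analysis of the reducible locus as in Proposition \ref{nice_open} Cases 2 and 3.

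Finally I would descend the codimension bound back along $A^{\gen,\chi}/\varpi \to R^{\square,\chi}_{\rhobar}/\varpi$: since this map is flat with equidimensional source and target (of the same dimension $d^2+d^2[F:\Qp]$, by Corollary \ref{ci_gen_chi} and Corollary \ref{sp_normal}) and $\Spec R^{\square,\chi}_{\rhobar}/\varpi$ is the spectrum of a completion of a local ring of $\Xbar^{\gen,\chi}$ at a closed point, the dimension of the preimage of a closed subset $T\subset \Xbar^{\gen,\chi}$ is $\dim T - \dim \Xbar^{\gen,\chi} + \dim \Spec R^{\square,\chi}_{\rhobar}/\varpi$ on those components meeting the closed fibre — so codimension is preserved. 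Thus $\operatorname{codim}(W,\Spec R^{\square,\chi}_{\rhobar}/\varpi)\ge 2$, which is the claim. The main obstacle I anticipate is bookkeeping in the reducible $d=2$ cases: one has to check that the extra components of the bad locus (the reducible semi-simple locus and the loci $Z^{12}_{\PP_{\max}}$, $Z^{21}_{\PP_{\max}}$) that appear in Proposition \ref{nice_open} are genuinely of codimension $\ge 2$ after base change to $R^{\square,\chi}_{\rhobar}/\varpi$, and that the flatness/equidimensionality argument legitimately transfers codimension rather than merely dimension — which is why the equidimensionality statements of Corollary \ref{ci_gen_chi} and Corollary \ref{sp_normal} are essential.
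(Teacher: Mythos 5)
Your argument is correct and is essentially the paper's proof: the paper simply observes that the hypothesis forces $W\subseteq \hat{Z}\cup\hat{Y}$ with $Z=\Xbar^{\gen,\chi}\setminus V^{0,\chi}$, and then applies Proposition \ref{nice_open} (whose Cases 2 and 3 supply exactly the extra reducible-locus analysis you flag for $d=2$, $F=\Qp$ or $\QQ_2$) together with Lemma \ref{bound_Y}, using $\dim\hat{Z}=\dim Z$ and $\dim\hat{Y}=\dim Y$ in place of your flat-descent bookkeeping. Your remaining deviations (citing Corollary \ref{sp_normal} rather than Corollary \ref{ci_chi} for the dimension, allowing residue fields finite over $L$ in the special fibre, and phrasing the containment via closed points of the closure of the image rather than via the images of closed points of $W\setminus\hat{Y}$) are cosmetic and easily repaired.
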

\begin{proof} The assumptions imply that 
$W$ is contained in $\hat{Z}\cup \hat{Y}$, where $Z= \Xbar^{\gen, \chi}\setminus V^{0, \chi}$ and $\hat{Z}$ is a
formal completion of 
$Z$ at the point corresponding to $\rhobar$. In terms of commutative algebra the ring of functions 
of $\hat{Z}$ corresponds to the completion of the ring 
of functions of $Z$ with respect to the maximal ideal corresponding to $\rhobar$. 
Hence, $\dim \hat{Z}\le \dim Z$, and Proposition \ref{nice_open} implies that $\hat{Z}$ has codimension at least $2$ in $\Spec R^{\square, \chi}_{\rhobar}/\varpi$. Similarly 
$\hat{Y}$ is a formal completion of  $Y$ (the preimage
of $\{\mm_{R^{\ps}}\}$ in $X^{\gen}$)
at the point corresponding to $\rhobar$, and 
using Lemma \ref{bound_Y} we conclude that $\hat{Y}$ also has 
codimension of at least $2$ in 
$\Spec R^{\square, \chi}_{\rhobar}/\varpi$.
\end{proof}

\begin{prop}\label{X1p_normal} $X^{\gen, \chi}[1/p]$ is normal. 
\end{prop}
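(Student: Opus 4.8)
The plan is to verify Serre's criterion for normality, in close parallel with the proof of Proposition~\ref{Xbar_normal}. By Corollary~\ref{ci_gen_chi} the ring $A^{\gen,\chi}$ is locally complete intersection, hence so is its localisation $A^{\gen,\chi}[1/p]$; in particular $X^{\gen,\chi}[1/p]$ is Cohen--Macaulay, so it satisfies $(S_2)$, and it is equidimensional of dimension $d^2+d^2[F:\Qp]$. Moreover $X^{\gen,\chi}[1/p]$ is a union of connected components of $X^{\gen}[1/p]$, since $\OO[\mu][1/p]=L[\mu]\cong\prod_\chi L$, so any codimension bound proved on $X^{\gen}[1/p]$ restricts to $X^{\gen,\chi}[1/p]$. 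Thus it remains to check condition $(R_1)$, i.e.\ that the non-regular locus of $X^{\gen,\chi}[1/p]$ has codimension at least $2$.

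First I would invoke Proposition~\ref{Virr_reg}: the absolutely irreducible locus $V^{\irr}$ is regular. Hence the non-regular locus is contained in $X^{\gen,\chi}[1/p]\setminus V^{\irr}=\bigcup_{\PP_{\min}<\PP}X^{\gen}_{\PP}[1/p]$, intersected with $X^{\gen,\chi}[1/p]$. If $d>2$, or if $\Dbar$ is absolutely irreducible (in particular if $d=1$), or if $d=2$ and $F\neq\Qp$, then Proposition~\ref{Virr} shows this set is empty or of codimension at least $2$, and we are done by Serre's criterion.

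The remaining case $d=2$, $F=\Qp$, $\Dbar$ reducible I would handle as in Cases~2 and~3 of Proposition~\ref{nice_open}. Here the only partition above $\PP_{\min}$ is $\PP_{\max}$, and $X^{\gen}_{\PP_{\max}}[1/p]$ is the reducible locus; by Lemma~\ref{dim_generic} and Proposition~\ref{bound_delta} it has codimension only $\ge 1$, so it must be split. I would introduce, by the scheme-theoretic image construction of $R^{\ps,ij}_{\underline{\Sigma}}$ from Section~\ref{sec_dim_sp} (without reducing mod $\varpi$), the closed $\GL_d$-invariant subschemes $X^{\gen,ij}_{\PP_{\max}}\subset X^{\gen}$ for the two orderings $\{i,j\}=\{1,2\}$, parametrising the reducible deformations whose two characters differ by a twist by the cyclotomic character. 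Lemma~\ref{dim_gen_sp} bounds $\dim X^{\gen,ij}_{\PP_{\max}}[1/p]$ by the dimension of the corresponding special fibre, and Proposition~\ref{bound_again} together with Lemma~\ref{bound_Y} gives that this is $\le d^2+d^2[F:\Qp]-2$. On the complement of $X^{\gen,12}_{\PP_{\max}}[1/p]\cup X^{\gen,21}_{\PP_{\max}}[1/p]$ within the reducible locus, $\rho_x$ is an extension of characters $\psi_1,\psi_2$ with $\psi_1\neq\psi_2(1)$ and $\psi_2\neq\psi_1(1)$; since $\kappa(x)$ has characteristic $0$, Lemma~\ref{pnot2_smooth} applies for every residue characteristic $p$ and gives $H^2(G_F,\adz\rho_x)=0$, whence $R^{\square,\chi}_{\rho_x}$ is formally smooth over $\kappa(x)$ and, by Lemma~\ref{334} and the reasoning of Remark~\ref{strategy}, $X^{\gen,\chi}[1/p]$ is regular at $x$. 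So the non-regular locus lies in $X^{\gen,12}_{\PP_{\max}}[1/p]\cup X^{\gen,21}_{\PP_{\max}}[1/p]$, of codimension $\ge 2$, and $(R_1)$ holds; Serre's criterion then gives normality of $X^{\gen,\chi}[1/p]$.

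The main obstacle is this last case: one must carefully set up the generic-fibre loci $X^{\gen,ij}_{\PP_{\max}}$, confirm that pushing the special-fibre bound of Proposition~\ref{bound_again} through Lemma~\ref{dim_gen_sp} really yields codimension $\ge 2$ on a single component $X^{\gen,\chi}[1/p]$, and check that Lemma~\ref{pnot2_smooth} covers all remaining reducible points---including the semisimple ones---uniformly in $p$. Note that, unlike in Case~3 of Proposition~\ref{nice_open}, one need not remove the reducible semisimple locus separately here, precisely because the cyclotomic character is non-trivial in characteristic $0$.
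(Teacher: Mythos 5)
Your proof is correct and follows essentially the same route as the paper: Serre's criterion with $(S_2)$ coming from Corollary \ref{ci_gen_chi}, regularity on the absolutely irreducible locus via Proposition \ref{Virr_reg}, the codimension bounds of Proposition \ref{Virr} in all cases except $d=2$, $F=\Qp$, $\Dbar$ reducible, and in that remaining case Lemma \ref{pnot2_smooth} (valid in characteristic $0$ for every $p$) to confine the singular locus to the $\psi\oplus\psi(1)$ locus, whose dimension is bounded by transferring the special-fibre estimate of Proposition \ref{bound_again} through Lemma \ref{dim_gen_sp}. The only, harmless, difference is that you spell out the generic-fibre loci $X^{\gen,ij}_{\PP_{\max}}$ where the paper argues directly with $Z^{12}_{\PP_{\max}}$.
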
 
\begin{proof} The proof is essentially the same as 
the proof of Proposition \ref{Xbar_normal}. It follows 
from Corollary \ref{ci_gen_chi} that 
$X^{\gen, \chi}[1/p]$ is Cohen--Macaulay and 
we have to check that the codimension of the 
singular locus is at least $2$. Since 
$X^{\gen, \chi}[1/p]$ is a preimage of $\Spec R^{\ps, \chi}[1/p]$ in $X^{\gen}$, Lemma \ref{jacobson} implies 
that $X^{\gen, \chi}[1/p]$ is Jacobson and we may argue 
with closed points. 

We have already shown in Proposition \ref{Virr_reg} that 
the absolutely irreducible locus $V^{\irr, \chi}$ in 
$X^{\gen, \chi}[1/p]$ is regular. Thus the singular 
locus is contained in $$\bigcup_{\PP_{\min}< \PP} 
X^{\gen, \chi}_{\PP}[1/p],$$ where $X^{\gen, \chi}_{\PP}:= X^{\gen, \chi}\cap X^{\gen}_{\PP}$. 

If either $\rhobar$ is 
absolutely irreducible, $F\neq \Qp$ or $d>2$ then it follows from Proposition \ref{Virr} that  $X^{\gen, \chi}[1/p]$ is regular in codimension $1$. 

If $\rhobar$ is reducible, $F=\Qp$ and $d=2$ then there 
are two partitions $\PP_{\min}$ and $\PP_{\max}$ and
$\dim X^{\gen, \chi}[1/p] -\dim X^{\gen, \chi}_{\PP_{\max}}[1/p]=1$, so the previous argument does not work. If $x\in X^{\gen, \chi}[1/p]$ is a closed singular point then 
it follows from Proposition \ref{Virr_reg} and Lemma \ref{pnot2_smooth} that 
$\rho_x$ is reducible and its semi-simplification has the form $\psi\oplus \psi(1)$ for some character $\psi: G_F\rightarrow \kappa(x)^{\times}$. Thus we may assume that 
the pseudo-character $\Dbar$ associated to 
$\rhobar$ is equal to $\bar{\psi} + \bar{\psi}(1)$.
We will now recall a construction, carried out after Lemma \ref{iotaP_finite}, in this special case. 
Let $R_{\bar{\psi}}$ be the universal deformation ring of $\bar{\psi}$. Mapping  a deformation $\psi_A$ to the pseudo-character $\psi_A+\psi_A(1)$ induces a map of local $\OO$-algebras $R^{\ps}\rightarrow R_{\bar{\psi}}$.
Let  $X^{\ps,12}_{\PP_{\max}}$ be the 
schematic image of $\Spec R_{\bar{\psi}}\rightarrow X^{\ps}$, 
induced by this map. 
Let 
$W:= X^{\gen,\chi}\times_{X^{\ps}} X^{\ps,12}_{\PP_{\max}}$. 
The generic fibre $W[1/p]$ contains all the singular closed points, and since $X^{\gen}[1/p]$ is Jacobson, $W[1/p]$ contains the singular locus of $X^{\gen}[1/p]$. The special fibre $\overline{W}$ is a union of $Z^{12}_{\PP_{\max}}$ and $Y$ (as underlying topological spaces). Thus $\dim \overline{W}\le 6$ as  
$\dim Y\le 4$ by Lemma \ref{bound_Y} and 
$\dim Z^{12}_{\PP_{\max}}\le 6$ by Proposition \ref{bound_again}.
Since $W$ is a 
$\GL_d$-invariant subscheme of $X^{\gen}$, Lemma \ref{dim_gen_sp} implies that 
$\dim W[1/p]\le 6$. 
 It follows from Corollary
\ref{ci_gen_chi} that $\dim X^{\gen, \chi}[1/p]=\dim \Xbar^{\gen, \chi}=8$. Thus 
the codimension of the singular locus in 
$X^{\gen, \chi}[1/p]$ is at least $2$. 
\end{proof}

\begin{cor}\label{Xgen_chi_normal} $X^{\gen, \chi}$ is normal.
\end{cor}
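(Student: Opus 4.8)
The plan is to deduce normality of $X^{\gen,\chi}$ from Serre's criterion applied to the ring $A:=A^{\gen,\chi}$, using the normality of the special fibre $\Xbar^{\gen,\chi}$ (Proposition~\ref{Xbar_normal}) and of the generic fibre $X^{\gen,\chi}[1/p]$ (Proposition~\ref{X1p_normal}) just proved. By Corollary~\ref{ci_gen_chi} the ring $A$ is $\OO$-torsion free and locally complete intersection, so $\varpi$ is a nonzerodivisor in $A$ and $A$ is Cohen--Macaulay; hence $A$ satisfies $(S_2)$ automatically, and it remains only to verify $(R_1)$.

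For $(R_1)$, I would take a prime $\pp$ of $A$ with $\dim A_\pp\le 1$ and treat two cases according to whether $\varpi\in\pp$. If $\varpi\notin\pp$, then $A_\pp$ is a localisation of $A[1/p]$; since $X^{\gen,\chi}[1/p]$ is normal it satisfies $(R_1)$, so $A_\pp$ is regular. If $\varpi\in\pp$, then as $\varpi$ is a nonzerodivisor of the Cohen--Macaulay local ring $A_\pp$ we get $\dim A_\pp=1$ and $\dim(A/\varpi)_\pp=\dim A_\pp-1=0$; since $\Xbar^{\gen,\chi}=\Spec A/\varpi$ is normal, hence reduced, the Artinian local ring $(A/\varpi)_\pp$ is a field, so $\pp A_\pp=\varpi A_\pp$ is principal. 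A one-dimensional Noetherian local ring whose maximal ideal is generated by a single nonzerodivisor is a discrete valuation ring, hence regular.

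With $(R_1)$ and $(S_2)$ established, Serre's criterion yields that $A$ is normal, i.e.\ $X^{\gen,\chi}$ is normal. All the genuine work sits in the two preceding propositions together with the complete intersection statement of Corollary~\ref{ci_gen_chi}; the only point requiring a little care is the dimension bookkeeping $\dim(A/\varpi)_\pp=\dim A_\pp-1$, which is exactly where Cohen--Macaulayness (equivalently, that $\varpi$ is a regular parameter) is used, so I do not anticipate a real obstacle. If desired, the same deduction can be packaged through a standard fibrewise criterion for normality of a flat algebra over a discrete valuation ring.
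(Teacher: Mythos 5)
Your argument is correct. It differs from the paper only in how the final gluing step is carried out: the paper simply observes that $\OO\to A^{\gen,\chi}$ is flat (being $\OO$-torsion free over a DVR, by Corollary \ref{ci_gen_chi}), that both fibre rings $k\otimes_{\OO}A^{\gen,\chi}$ and $L\otimes_{\OO}A^{\gen,\chi}$ are normal by Propositions \ref{Xbar_normal} and \ref{X1p_normal}, and then invokes the fibrewise ascent result \cite[Corollary 2.2.23]{BH} for flat algebras over a regular base --- exactly the ``packaging'' you mention in your last sentence. You instead verify Serre's criterion on $A^{\gen,\chi}$ directly: $(S_2)$ from the local complete intersection property of Corollary \ref{ci_gen_chi}, and $(R_1)$ by the dichotomy $\varpi\notin\pp$ (localise the normal generic fibre) versus $\varpi\in\pp$ (then $\dim A_\pp=1$, and reducedness of the normal special fibre forces $\pp A_\pp=\varpi A_\pp$, so $A_\pp$ is a DVR). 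This is sound; two small remarks on bookkeeping. First, in the case $\varpi\in\pp$ you do not actually need Cohen--Macaulayness: since $\varpi$ remains a nonzerodivisor in $A_\pp$ and minimal primes consist of zerodivisors, $\pp$ cannot be minimal, so $\dim A_\pp=1$ and $\dim(A/\varpi)_\pp=0$ already follow. Second, your route does consume the complete intersection input at the gluing stage (for $(S_2)$), whereas the paper's citation needs only flatness plus normal fibres; since the complete intersection property is in any case used to prove the fibres are normal, this costs nothing here, and your proof has the mild advantage of being self-contained in place of the external reference.
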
 
\begin{proof} Since $A^{\gen, \chi}$ is $\OO$-torsion 
free by Corollary \ref{ci_gen_chi}, the map $\OO\rightarrow A^{\gen, \chi}$ is flat. We have shown 
in Propositions \ref{Xbar_normal} and  \ref{X1p_normal}
that the fibre rings $L\otimes_{\OO} A^{\gen, \chi}$
and $k\otimes_{\OO} A^{\gen, \chi}$ are normal. 
Since $\OO$ is a regular ring \cite[Corollary 2.2.23]{BH}
implies that $A^{\gen, \chi}$ is normal.
\end{proof}

\begin{cor}\label{chi_normal} For each character $\chi: \mu_{p^{\infty}}(F)\rightarrow \OO^{\times}$ and $\rhobar: G_F\rightarrow \GL_d(k)$ the ring 
$R^{\square,\chi}_{\rhobar}$ is a normal integral domain.
\end{cor}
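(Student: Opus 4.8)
The plan is to deduce this from the normality of $X^{\gen,\chi}$ proved in Corollary~\ref{Xgen_chi_normal}, in exactly the same way that Corollary~\ref{sp_normal} was deduced from Proposition~\ref{Xbar_normal}.

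First I would recall that $R^{\square,\chi}_{\rhobar}$ is the completion of a local ring of $X^{\gen,\chi}$ at a closed point. Applying $\otimes_{\OO[\mu],\chi}\OO$ to Proposition~\ref{main_complete} (the $\chi$-version already invoked in the proof of Corollary~\ref{ci_gen_chi}) identifies $R^{\square,\chi}_{\rhobar}$ with the completion of $(A^{\gen,\chi})_{\qq}$ at its maximal ideal, where $\qq$ is the kernel of the map $A^{\gen,\chi}\to k$ attached to $\rhobar$; since $\kappa(x)=k$ is finite over $k$, no extra formal variable appears (cf.\ Lemma~\ref{334}), so $R^{\square,\chi}_{\rhobar}\cong \widehat{\OO}_{X^{\gen,\chi},x}$ for $x$ the closed point of $X^{\gen,\chi}$ corresponding to $\rhobar$.

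Next I would invoke excellence: $A^{\gen,\chi}$ is of finite type over $R^{\ps,\chi}=R^{\ps}\otimes_{\OO[\mu],\chi}\OO$, which is complete local Noetherian and hence excellent, so $A^{\gen,\chi}$ is excellent and $X^{\gen,\chi}$ is an excellent scheme. As $X^{\gen,\chi}$ is normal by Corollary~\ref{Xgen_chi_normal}, the completions of its local rings are normal by \cite[Theorem 32.2 (i)]{matsumura}. Hence $R^{\square,\chi}_{\rhobar}\cong \widehat{\OO}_{X^{\gen,\chi},x}$ is normal, and being a normal Noetherian local ring it is an integral domain; this gives the claim.

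There is no substantial obstacle here: the only point requiring a little care is the identification $R^{\square,\chi}_{\rhobar}\cong\widehat{\OO}_{X^{\gen,\chi},x}$, which rests on the $\chi$-twisted form of Proposition~\ref{main_complete} together with Lemma~\ref{334}; everything else is a formal consequence of Serre's criterion (already packaged in Corollary~\ref{Xgen_chi_normal}) and the stability of normality under completion for excellent rings. One could alternatively argue directly on $R^{\square,\chi}_{\rhobar}$, which is complete intersection (hence Cohen--Macaulay, so $(S2)$) by Corollary~\ref{ci_chi}, and verify $(R1)$ using Propositions~\ref{Vnspcl_regular}, \ref{Virr_reg} and \ref{nice_open}, but the completion route is shorter.
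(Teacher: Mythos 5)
Your proposal is correct and follows essentially the same route as the paper: the paper's proof of Corollary \ref{chi_normal} refers back to the argument of Corollary \ref{sp_normal}, i.e.\ it uses the normality and excellence of $X^{\gen,\chi}$ from Corollary \ref{Xgen_chi_normal}, the identification of $R^{\square,\chi}_{\rhobar}$ with the completed local ring at the closed point corresponding to $\rhobar$, and \cite[Theorem 32.2 (i)]{matsumura} to pass normality to the completion, concluding it is a domain since it is local. Your write-up merely makes explicit the identification step that the paper had already established in the proof of Corollary \ref{ci_gen_chi}.
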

\begin{proof} The proof is essentially the same as the proof of 
Corollary \ref{sp_normal}. To see this, note that Corollary \ref{Xgen_chi_normal} implies that $X^{\gen,\chi}$ is normal, and that the formal completion of $X^{\gen,\chi}$ along the maximal ideal corresponding to $\rhobar$ is $R^{\square,\chi}_{\rhobar}$ by the $\chi$-version of Proposition \ref{main_complete} as explained in the proof of Corollary \ref{ci_gen_chi}.
\end{proof}

\begin{lem}\label{Kummer_fix_char0} 

Let $W$ be a
closed subscheme of $\Spec R^{\square, \chi}_{\rhobar}[1/p]$ with the property that $H^2(G_F, \adz\rho_x)\neq 0$ for all closed points $x\in W$. Then $\dim R^{\square, \chi}_{\rhobar}[1/p]- \dim W\ge 2.$
\end{lem}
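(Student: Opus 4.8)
The plan is to follow the proof of Lemma~\ref{Kummer_fix} closely, replacing the special fibre $\Xbar^{\gen,\chi}$ by the generic fibre $X^{\gen,\chi}[1/p]$. There are two differences. On the one hand the argument is slightly simpler: there is no analogue of the locus $Y$, because every point above $\mm_{R^{\ps}}$ has residue characteristic $p$, hence lies in the special fibre and is invisible in $X^{\gen,\chi}[1/p]$. On the other hand the exceptional case $d=2$, $F=\Qp$, $\Dbar$ reducible must be treated separately, exactly as in the proof of Proposition~\ref{X1p_normal}, since there the absolutely irreducible locus has codimension only $1$.

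Concretely, I would first record that $\dim R^{\square,\chi}_{\rhobar}[1/p]=d^2+d^2[F:\Qp]$: this follows from Corollary~\ref{ci_chi} together with $\OO$-torsion-freeness, and it equals $\dim X^{\gen,\chi}[1/p]$ by Corollary~\ref{ci_gen_chi}. It then suffices to exhibit an open subscheme $V\subseteq X^{\gen,\chi}[1/p]$ such that (a) $H^2(G_F,\adz\rho_x)=0$ for every closed point $x$ of $V$, and (b) $\dim\bigl(X^{\gen,\chi}[1/p]\setminus V\bigr)\le d^2+d^2[F:\Qp]-2$. Granting such a $V$, set $Z:=X^{\gen,\chi}[1/p]\setminus V$ and let $\hat Z$ be the preimage of $Z$ in $\Spec R^{\square,\chi}_{\rhobar}[1/p]$ under the flat morphism induced by $A^{\gen,\chi}[1/p]\to R^{\square,\chi}_{\rhobar}[1/p]$ (a localization followed by a completion). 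By Lemma~\ref{jacobson} the scheme $\Spec R^{\square,\chi}_{\rhobar}[1/p]$ is Jacobson, and every closed point of its open subscheme $\Spec R^{\square,\chi}_{\rhobar}[1/p]\setminus\hat Z$ maps into $V$ and thus, by (a), is a point at which $H^2(G_F,\adz\rho_x)$ vanishes; hence the hypothesis on $W$ forces $W\subseteq\hat Z$. Finally $\hat Z$ is the spectrum of a completion of a local ring of $Z$ at the maximal ideal attached to $\rhobar$, so $\dim\hat Z\le\dim Z\le d^2+d^2[F:\Qp]-2$ (using that $\varpi$ is a non-zero-divisor on the rings involved, so that inverting $p$ lowers Krull dimension by exactly one), which is the assertion.

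The construction of $V$ is where the case analysis enters. For property (a) I would use that on the absolutely irreducible locus $V^{\irr,\chi}$ one has $H^2(G_F,\adz\rho_x)=0$, for the reason already given in the proof of Proposition~\ref{Virr_reg}: $\adz\rho_x$ is a direct summand of $\ad\rho_x$ in characteristic zero, and $H^2(G_F,\ad\rho_x)$ is dual to $H^0(G_F,\ad\rho_x(1))$, which is zero since $\rho_x\cong\rho_x(1)$ would imply $\chi_{\cyc}^{\,d}=1$. If $d>2$, or $F\ne\Qp$, or $\Dbar$ is absolutely irreducible, then Proposition~\ref{Virr} (applied to $X^{\gen,\chi}[1/p]$, which is a union of connected components of $X^{\gen}[1/p]$) shows that $X^{\gen,\chi}[1/p]\setminus V^{\irr,\chi}$ is empty or of codimension at least $2$, so $V:=V^{\irr,\chi}$ already works. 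In the remaining case $d=2$, $F=\Qp$, $\Dbar$ reducible, I would instead take $V:=X^{\gen,\chi}[1/p]\setminus\Sigma$, where $\Sigma$ is the closed, $\GL_d$-invariant subscheme defined by requiring the attached pseudo-character to factor as a product $\psi\cdot\psi(1)$. Then $\Sigma$ is disjoint from $V^{\irr,\chi}$, and any closed point $x$ of $V$ is either absolutely irreducible (so $H^2(G_F,\adz\rho_x)=0$ by the above) or reducible with Jordan--H\"older characters $\psi_1,\psi_2$ satisfying $\psi_1\ne\psi_2(1)$ and $\psi_2\ne\psi_1(1)$, in which case $H^2(G_F,\adz\rho_x)=0$ by Lemma~\ref{pnot2_smooth} (applicable since $\kappa(x)$ has characteristic zero); this gives (a). For (b) one bounds $\dim\Sigma$: its special fibre has dimension at most $\dim Z_{\PP_{\max}}^{12}\le 6$ by Proposition~\ref{bound_again}, whence $\dim\Sigma\le 6=\dim X^{\gen,\chi}[1/p]-2$ by Lemma~\ref{dim_gen_sp}, exactly as in the proof of Proposition~\ref{X1p_normal}.

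The main obstacle I anticipate is the bookkeeping in the exceptional case: one must check that the locus of ``bad'' reducible points is genuinely contained in a $\GL_d$-invariant closed subscheme $\Sigma$ to which Lemma~\ref{dim_gen_sp} and the estimate of Proposition~\ref{bound_again} apply, and that the closed points of $X^{\gen,\chi}[1/p]\setminus\Sigma$ are precisely those whose two Jordan--H\"older characters are not cyclotomic twists of one another, so that Lemma~\ref{pnot2_smooth} can be invoked. A secondary, more routine, point is the transfer of the dimension bound from $X^{\gen,\chi}[1/p]$ to the completion $\Spec R^{\square,\chi}_{\rhobar}[1/p]$, which rests on the flatness of $A^{\gen,\chi}[1/p]\to R^{\square,\chi}_{\rhobar}[1/p]$, the Jacobson property from Lemma~\ref{jacobson}, and the invariance of Krull dimension under completion.
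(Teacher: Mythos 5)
Your argument is correct in substance, but it takes a much longer route than the paper. The paper's own proof is essentially three lines: since $\kappa(x)$ has characteristic zero, $\adz\rho_x$ is a direct summand of $\ad\rho_x$, so the hypothesis forces $H^2(G_F,\ad\rho_x)\neq 0$ at every closed point of $W$; by the presentation of the completed local ring (whose tangent space is $Z^1(G_F,\ad\rho_x)$ and whose dimension is $\dim Z^1-h^2$), this means $W$ lies in the singular locus of $\Spec R^{\square,\chi}_{\rhobar}[1/p]$; and since $R^{\square,\chi}_{\rhobar}[1/p]$ is already known to be normal (Corollary \ref{chi_normal}), Serre's condition (R1) gives that the singular locus has codimension at least $2$. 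You instead bypass the normality statement and redo the codimension-two bound on the bad locus of $X^{\gen,\chi}[1/p]$ — in effect re-proving the (R1) part of Proposition \ref{X1p_normal}, including the separate treatment of $d=2$, $F=\Qp$, $\Dbar$ reducible via Lemma \ref{pnot2_smooth} and Proposition \ref{bound_again} — and then transfer it to $\Spec R^{\square,\chi}_{\rhobar}[1/p]$ by flatness and completion, mirroring the special-fibre Lemma \ref{Kummer_fix}. Both work; the paper's proof is shorter because it cashes in normality, while yours is self-contained at the level of the dimension estimates. One point in your transfer step deserves more care than your phrasing suggests: $\hat Z$ is obtained by completing the local ring of the \emph{closure} of $Z$ in $X^{\gen,\chi}$ at the point of $\rhobar$ and then inverting $p$, and the inequality $\dim\hat Z\le\dim Z$ relies on every irreducible component of that closure meeting the fibre over $\mm_{R^{\ps}}$ (Lemma \ref{fix}) so that Lemma \ref{jacobson}(5) applies; this is exactly why you must, as you anticipate, keep the bad locus inside a closed $\GL_d$-invariant (pulled back from $X^{\ps}$) subscheme rather than an arbitrary closed subset, since for a general closed subset of the generic fibre the drop in dimension upon taking closures and inverting $p$ need not behave as claimed (compare Remark \ref{caution}).
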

\begin{proof} Since in characteristic zero 
$\adz \rho_x$ is a direct summand of $\ad \rho_x$
we obtain that $H^2(G_F, \ad \rho_x)\neq 0$ 
for all $x\in W$. This implies that $W$ is contained in the singular locus of $\Spec R^{\square, \chi}_{\rhobar}[1/p]$. Since $R^{\square, \chi}_{\rhobar}[1/p]$ is normal, the singular locus has codimension of at least $2$.
\end{proof}

The next result answers affirmatively a question raised 
by GB--Juschka in \cite[Question 1.10]{bjpp}.

\begin{cor}\label{BJ_conjecture} The map $R_{\det \rhobar}\rightarrow R^{\square}_{\rhobar}$ induces a bijection between the sets of 
irreducible components. 
\end{cor}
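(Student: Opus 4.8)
The plan is to deduce the statement from the normality results already established. First I would observe that since $R^{\square}_{\rhobar}$ is $\OO$-torsion free (Corollary \ref{ci}) and decomposes as $R^{\square}_{\rhobar}[1/p] \cong \prod_\chi R^{\square,\chi}_{\rhobar}[1/p]$ after inverting $p$ via \eqref{product}, the irreducible components of $\Spec R^{\square}_{\rhobar}$ are in bijection with those of $\Spec R^{\square}_{\rhobar}[1/p]$, which in turn break up according to the idempotents indexed by characters $\chi\colon\mu\to\OO^\times$. Since each $R^{\square,\chi}_{\rhobar}$ is a normal local ring by Corollary \ref{chi_normal}, it is a domain, so $\Spec R^{\square,\chi}_{\rhobar}[1/p]$ is irreducible (being a nonempty open subscheme of an integral scheme, nonempty because $R^{\square,\chi}_{\rhobar}$ is $\OO$-flat by Corollary \ref{ci_chi}). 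Therefore $\Spec R^{\square}_{\rhobar}$ has exactly one irreducible component for each character $\chi$ with $R^{\square,\chi}_{\rhobar}\neq 0$, and the set of irreducible components is naturally indexed by $\{\chi : R^{\square,\chi}_{\rhobar}\neq 0\}$.

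Next I would analyze the target. The map $R_{\det\rhobar}\to R^{\square}_{\rhobar}$ is, by construction, a map of $\OO[\mu]$-algebras, and $R_{\det\rhobar}\cong \OO[\mu]\br{y_1,\dots,y_{[F:\Qp]+1}}$ by Lemma \ref{detmapmu}. Hence $\Spec R_{\det\rhobar}[1/p] \cong \bigsqcup_\chi \Spec R^{\chi}_{\det\rhobar}[1/p]$ where $R^{\chi}_{\det\rhobar} = R_{\det\rhobar}\otimes_{\OO[\mu],\chi}\OO$ is a power series ring over $\OO$, hence a domain, and its generic fibre is nonempty and irreducible. So the irreducible components of $\Spec R_{\det\rhobar}$ are also indexed by the full set of characters $\chi\colon\mu\to\OO^\times$. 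To finish I must check that the map sends the component indexed by $\chi$ on the source to the component indexed by $\chi$ on the target, and that every $\chi$ actually occurs on the source, i.e. $R^{\square,\chi}_{\rhobar}\neq 0$ for all $\chi$; this follows from $R^{\square,\chi}_{\rhobar}$ being $\OO$-flat (Corollary \ref{ci_chi}), in particular nonzero. Compatibility with the indexing is immediate because the decomposition on both sides is induced by the same idempotents of $\OO[\mu][1/p]$ and the map is $\OO[\mu]$-linear.

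Concretely, I would write: passing to $\Spec(-)[1/p]$ does not change the set of irreducible components since both rings are $\OO$-flat and every minimal prime survives inversion of $p$; the product decompositions \eqref{product} and its analogue for $R_{\det\rhobar}$ refine these into $\coprod_\chi$; each factor $\Spec R^{\square,\chi}_{\rhobar}[1/p]$ (resp.\ $\Spec R^{\chi}_{\det\rhobar}[1/p]$) is irreducible because $R^{\square,\chi}_{\rhobar}$ (resp.\ $R^{\chi}_{\det\rhobar}$) is a domain by Corollary \ref{chi_normal} (resp.\ Lemma \ref{detmapmu}) and nonzero by $\OO$-flatness; and the map $R_{\det\rhobar}\to R^{\square}_{\rhobar}$, being $\OO[\mu]$-linear, carries the $\chi$-factor to the $\chi$-factor, inducing the asserted bijection on irreducible components. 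I do not expect a serious obstacle here: the only mild point is making sure that every character $\chi$ does give a nonzero $R^{\square,\chi}_{\rhobar}$ and hence a genuine component, which is exactly Corollary \ref{ci_chi}(1), and that no component of $\Spec R^{\square}_{\rhobar}$ is "vertical" over $\OO$, which is Corollary \ref{ci}.
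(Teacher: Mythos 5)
Your argument is correct and essentially the same as the paper's: invert $p$ using $\OO$-torsion freeness of $R^{\square}_{\rhobar}$, decompose along the idempotents of $\OO[\mu][1/p]$, and conclude from each $R^{\square,\chi}_{\rhobar}$ being a normal, hence integral, domain (Corollary \ref{chi_normal}). The only quibble is that $\OO$-flatness by itself does not give nonvanishing (the zero ring is flat); the paper instead observes the special fibres are nonzero, but the dimension statement in Corollary \ref{ci_chi}(1) that you cite settles this point anyway.
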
 
\begin{proof} Since $R^{\square}_{\rhobar}$ is $\OO$-torsion free by Corollary \ref{ci}, 
the irreducible components of  $R^{\square}_{\rhobar}$
and $R^{\square}_{\rhobar}[1/p]$ coincide. Since 
the algebra $\OO[\mu_{p^{\infty}}(F)] [1/p]$ is semi-simple, we have 
\begin{equation}\label{invert_p} R^{\square}_{\rhobar}[1/p]\cong \prod_{\chi: \mu_{p^{\infty}}(F)\rightarrow \OO^{\times}} R^{\square, \chi}_{\rhobar}[1/p].
\end{equation}
It follows from Corollaries \ref{ci_chi} and \ref{chi_normal} that $R^{\square, \chi}_{\rhobar}$ 
is an $\OO$-torsion free integral domain. We note that 
the special fibres of these rings are non-zero, thus the 
rings themselves are non-zero. Hence, the localization 
$R^{\square, \chi}_{\rhobar}[1/p]$ is  non-zero
and is an integral domain.
\end{proof} 

\begin{cor}\label{generic_fibre_normal}$R^{\square}_{\rhobar}[1/p]$ is normal and $R^{\square}_{\rhobar}$ is reduced. 
\end{cor}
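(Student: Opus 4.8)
The plan is to deduce this instantly from Corollary \ref{chi_normal} via the product decomposition already established. First I would invoke the isomorphism \eqref{product}, namely
\[
R^{\square}_{\rhobar}[1/p]\;\cong\;\prod_{\chi\colon \mu\to\OO^{\times}} R^{\square,\chi}_{\rhobar}[1/p],
\]
which came from the fact that $\OO[\mu][1/p]=L[\mu]$ is a finite product of copies of $L$ indexed by the characters $\chi$ (here $L$ is assumed large enough). Since normality can be checked on the factors of a finite product of rings, it suffices to prove that each $R^{\square,\chi}_{\rhobar}[1/p]$ is normal. But $R^{\square,\chi}_{\rhobar}[1/p]$ is a localization of $R^{\square,\chi}_{\rhobar}$, and the latter is a normal integral domain by Corollary \ref{chi_normal}; as localizations of normal domains are normal, each factor is normal, and hence so is $R^{\square}_{\rhobar}[1/p]$.

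The main obstacle in establishing this statement has already been overcome, namely proving Corollary \ref{chi_normal} (which in turn relied on Serre's criterion applied to the complete intersection rings $R^{\square,\chi}_{\rhobar}$ together with the regularity-in-codimension-$1$ input from Proposition \ref{nice_open} and its generic-fibre analogue). Consequently this final corollary is purely a formal assembly step, with no further serious work required. One small point worth spelling out in the write-up is the elementary fact that a finite product $\prod_i A_i$ of normal rings is normal: its total ring of fractions is $\prod_i \mathrm{Frac}(A_i)$, and an element $(a_i)$ integral over $\prod_i A_i$ has each $a_i$ integral over $A_i$, hence lies in $\prod_i A_i$ by normality of each $A_i$.
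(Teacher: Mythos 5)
Your proposal is correct and matches the paper's own argument, which likewise combines the decomposition $R^{\square}_{\rhobar}[1/p]\cong \prod_{\chi} R^{\square,\chi}_{\rhobar}[1/p]$ (equation \eqref{invert_p}, identical to \eqref{product}) with Corollary \ref{chi_normal}. The only difference is that you spell out the routine facts that localization preserves normality and that a finite product of normal rings is normal, which the paper leaves implicit.
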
 
\begin{proof} The first assertion follows from \eqref{invert_p} 
and Corollary \ref{chi_normal}.  Since $R^{\square}_{\rhobar}$ is $\OO$-torsion free by Corollary \ref{ci}, it is a subring of $R^{\square}_{\rhobar}[1/p]$. This implies the second assertion as normal rings are reduced.
\end{proof} 

\begin{cor}\label{factorial} If either $d=2$ and $[F:\Qp]\ge 4$ or $d\ge 3$ and $[F:\Qp]\ge 3$ then for each character $\chi: \mu_{p^{\infty}}(F)\rightarrow \OO^{\times}$ and $\rhobar: G_F\rightarrow \GL_d(k)$ the rings 
$R^{\square,\chi}_{\rhobar}$, $R^{\square,\chi}_{\rhobar}/\varpi$  are regular in codimension $3$. In particular, $R^{\square,\chi}_{\rhobar}$ and $R^{\square, \chi}_{\rhobar}/\varpi$ are factorial. 
\end{cor}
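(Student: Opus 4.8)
We sketch the argument. The plan is to reduce the statement to the codimension estimates already established and then invoke Grothendieck's theorem that a Noetherian local ring which is a complete intersection and regular in codimension $\le 3$ is factorial (a theorem of Grothendieck; see SGA~2, Exp.~XI). By Corollary \ref{ci_chi} the rings $R^{\square,\chi}_{\rhobar}$ and $R^{\square,\chi}_{\rhobar}/\varpi$ are local complete intersections, so the ``in particular'' clause follows once we prove they are regular in codimension $3$; that is all that remains.

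First I would transfer the problem to the schemes $\Xbar^{\gen,\chi}$ and $X^{\gen,\chi}[1/p]$. Exactly as in the proofs of Corollaries \ref{sp_normal} and \ref{chi_normal} -- using that these schemes are excellent, equidimensional and catenary by Corollary \ref{ci_gen_chi}, and that completing an excellent local ring is a regular faithfully flat operation preserving the singular locus and its codimension -- it suffices to show that $\mathrm{Sing}(\Xbar^{\gen,\chi})$ has codimension $\ge 4$ in $\Xbar^{\gen,\chi}$ and $\mathrm{Sing}(X^{\gen,\chi}[1/p])$ has codimension $\ge 4$ in $X^{\gen,\chi}[1/p]$. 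Granting these, regularity in codimension $3$ of $R^{\square,\chi}_{\rhobar}$ itself follows: by Corollaries \ref{ci_chi} and \ref{chi_normal} it is an $\OO$-flat normal domain, so a prime of height $\le 3$ either contains $\varpi$, in which case it corresponds to a prime of height $\le 2$ of $R^{\square,\chi}_{\rhobar}/\varpi$ and one uses that a Noetherian local ring with a nonzerodivisor in its maximal ideal whose quotient is regular is itself regular, or it does not contain $\varpi$, in which case it is a prime of $R^{\square,\chi}_{\rhobar}[1/p]$.

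For the codimension bounds I would simply read off the existing estimates. Under either hypothesis $F \ne \Qp$ whenever $d = 2$, so we are in Case~1 of Proposition \ref{nice_open} throughout: $\mathrm{Sing}(\Xbar^{\gen,\chi})$ is contained in the complement of the regular open subscheme $V^{\Kirr}\cap\Xbar^{\gen,\chi}$ (regular by Proposition \ref{Vnspcl_regular} and Lemma \ref{adz0}), which by the proof of Proposition \ref{nspcl} equals $Y \cup Z^{\Kred} \cup \bigcup_{\PP_{\min}<\PP}Z_{\PP}$, whose pieces have codimension at least $1+2[F:\Qp]$ (Lemma \ref{bound_Y}), $d[F:\Qp]$ (Lemma \ref{special}), and $1+[F:\Qp]$ if $d>2$ respectively $[F:\Qp]$ if $d=2$ (Lemma \ref{bound_diff}). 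Similarly $\mathrm{Sing}(X^{\gen,\chi}[1/p])$ lies in $\bigcup_{\PP_{\min}<\PP}X^{\gen,\chi}_{\PP}[1/p]$ (its complement being regular by Proposition \ref{Virr_reg}), and combining \eqref{inequality} and \eqref{XgP} with Lemmas \ref{bound_Y} and \ref{bound_diff}, each such $X^{\gen,\chi}_{\PP}[1/p]$ has codimension at least $1+[F:\Qp]$ ($d>2$) respectively $[F:\Qp]$ ($d=2$) in $X^{\gen,\chi}[1/p]$; if $\Dbar$ is absolutely irreducible both singular loci are empty by Propositions \ref{nspcl}(3) and \ref{Virr}(3). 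One then checks directly that when $d=2$ and $[F:\Qp]\ge 4$, or $d\ge 3$ and $[F:\Qp]\ge 3$, every bound above is $\ge 4$. Hence $\Xbar^{\gen,\chi}$ and $X^{\gen,\chi}[1/p]$ are regular in codimension $3$, and the proof concludes as above.

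There is no conceptual obstacle here: all the real work lies in Section \ref{sec_dens_irr} and Subsection \ref{sec_dim_sp}, where these codimension bounds were established with normality (codimension $\ge 2$) in mind but are in fact strong enough to yield codimension $\ge 4$ once $F$ is sufficiently ramified or $\Dbar$ is irreducible. The only steps needing care are the finite-case bookkeeping of which estimate applies, and the passage of the condition ``regular in codimension $3$'' between the semi-local schemes $\Xbar^{\gen,\chi}$, $X^{\gen,\chi}[1/p]$ and the complete local rings $R^{\square,\chi}_{\rhobar}/\varpi$, $R^{\square,\chi}_{\rhobar}$ -- which is the same excellence argument already used in Corollaries \ref{sp_normal} and \ref{chi_normal}, now for (R3) instead of (R1).
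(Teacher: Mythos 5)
Your proposal is correct and follows essentially the same route as the paper: bound the singular loci of $\Xbar^{\gen,\chi}$ and $X^{\gen,\chi}[1/p]$ in codimension at least $4$ using the estimates behind Propositions \ref{nspcl} and \ref{Virr} together with the regularity statements \ref{Vnspcl_regular}, \ref{Virr_reg}, transfer this to $R^{\square,\chi}_{\rhobar}$ and $R^{\square,\chi}_{\rhobar}/\varpi$ via completion of excellent local rings, and conclude factoriality from Grothendieck's theorem for complete intersections regular in codimension $3$ (Corollary \ref{ci_chi}). You merely spell out more explicitly the bookkeeping (primes containing $\varpi$ or not, and the (R3)-descent to completions) that the paper's proof leaves implicit.
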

\begin{proof} The assumptions together with the lower bound on the codimension of the Kummer-irreducible locus in Proposition \ref{nspcl} and the containment $V^{\Kirr} \subset V^{\nspcl}$ 
(resp.\ Proposition \ref{Virr})
 imply that the complement of the absolutely irreducible non-special locus in $\Xbar^{\gen, \chi}$ (resp.\ absolutely irreducible locus in $X^{\gen, \chi}[1/p]$) has codimension at least $4$. It follows from Proposition \ref{Vnspcl_regular} (resp.\ Proposition \ref{Virr_reg}) that 
it contains the singular locus in $\Xbar^{\gen,\chi}$ (resp.~$X^{\gen, \chi}[1/p]$).
Hence, $X^{\gen, \chi}$ and $\Xbar^{\gen, \chi}$ are regular in codimension $3$, which 
implies that $R^{\square, \chi}_{\rhobar}$ and $R^{\square, \chi}_{\rhobar}/\varpi$ are regular in codimension $3$. Since both rings are also complete intersection by Corollary \ref{ci_chi}, they are 
factorial by a theorem of Grothendieck, see \cite{R3_factorial} for a short proof. 
\end{proof}

\begin{remar} The assumptions in Corollary \ref{factorial} are not optimal as the next 
Corollary shows. To find the optimal assumptions
one would have to further study the reducible locus
and we don't want to pursue this here. 
We note that if $F=\Qp$, $p\ge 5$ and $\rhobar= 
\bigl(\begin{smallmatrix} 1 & \ast \\ 0 & \omega \end{smallmatrix}\bigr)$ is non-split, where $\omega$ 
is the cyclotomic character modulo $p$, then it follows from \cite[Corollary B.5]{image} that 
$R^{\square, \chi}_{\rhobar}\cong \OO\br{x_1, \ldots, x_9}/(x_1 x_2- x_3 x_4)$ and hence is not factorial. Hence some assumptions in Corollary \ref{factorial} 
have to be made. 
\end{remar}

\begin{cor}\label{factorial_irr} If $\rhobar$ is absolutely irreducible then $R^{\square, \chi}_{\rhobar}$ and $R^{\square, \chi}_{\rhobar}/\varpi$ are factorial, 
except in the case $d=2$, $F=\QQ_3$ and $\rhobar\cong \rhobar(1)$. 
\end{cor}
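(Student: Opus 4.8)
The plan is to reduce, via the theorem of Grothendieck (\cite{R3_factorial}), to showing that the complete intersections $R^{\square,\chi}_{\rhobar}$ and $R^{\square,\chi}_{\rhobar}/\varpi$ of Corollary~\ref{ci_chi} are regular in codimension $3$. Since $\rhobar$ is absolutely irreducible, so is $\Dbar$, hence $m=1$ and $\PP_{\min}$ is the only partition. By Proposition~\ref{Virr}(3), $X^{\gen,\chi}[1/p]$ coincides with its absolutely irreducible locus, which is regular by Proposition~\ref{Virr_reg}; by Proposition~\ref{Vnspcl_regular} (together with the remark in the proof of Proposition~\ref{nice_open} that (1) holds on $V^{\Kirr,\chi}$) the locus $V^{\Kirr,\chi}$ is regular, while by Proposition~\ref{nspcl}(3) its complement in $\Xbar^{\gen,\chi}$ has codimension $\ge d[F:\Qp]$. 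As $A^{\gen,\chi}$ is $\OO$-flat (Corollary~\ref{ci_gen_chi}), regularity of $\Xbar^{\gen,\chi}$ at a point forces regularity of $X^{\gen,\chi}$ there, so the non-regular loci of both $X^{\gen,\chi}$ and $\Xbar^{\gen,\chi}$ lie in $\Xbar^{\gen,\chi}\setminus V^{\Kirr,\chi}$, of codimension $\ge 1+d[F:\Qp]$ in $X^{\gen,\chi}$ and $\ge d[F:\Qp]$ in $\Xbar^{\gen,\chi}$. Completing at the closed point attached to $\rhobar$ (using excellence), $R^{\square,\chi}_{\rhobar}$ is regular in codimension $d[F:\Qp]$ and $R^{\square,\chi}_{\rhobar}/\varpi$ in codimension $d[F:\Qp]-1$; so both are factorial as soon as $d[F:\Qp]\ge 4$.

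It remains to deal with $F=\QQ_p$ and $d\in\{2,3\}$. First one checks that $h^2(G_{\QQ_p},\adz\rhobar)$ is nonzero only when $d=2$ and $\rhobar\cong\rhobar(1)$: by local Tate duality it is dual to $H^0(G_{\QQ_p},(\adz\rhobar)^\vee(1))$, and for $\rhobar$ absolutely irreducible a short computation with Schur's lemma — distinguishing whether $p\mid d$, and using that $\rhobar(1)=\rhobar$ is automatic for $p=2$ since $\chi_{\cyc}\bmod p$ is then trivial — shows it vanishes in the cases $d=3$ (for $p$ odd because $\rhobar\not\cong\rhobar(1)$ by a determinant argument, for $p=2$ because $\End_{G_F}(\rhobar)$ is scalar of trace $3\ne0$) and in the case $d=2$, $\rhobar\not\cong\rhobar(1)$. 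When $h^2(G_{\QQ_p},\adz\rhobar)=0$, Proposition~\ref{Prop-RelCI} presents $R^{\square}_{\rhobar}$ with no relations over $R_{\det\rhobar}$, so $R^{\square,\chi}_{\rhobar}$ is formally smooth over the regular ring $R^{\chi}_{\det\rhobar}\cong\OO\br{y_1,\dots,y_{[F:\Qp]+1}}$ (Lemma~\ref{detmapmu}), hence regular; likewise $R^{\square,\chi}_{\rhobar}/\varpi$ is regular, and regular local rings are factorial. This settles all remaining cases with $d=3$, and those with $d=2$, $\rhobar\not\cong\rhobar(1)$; and $\rhobar\cong\rhobar(1)$ with $d=2$ forces $\chi_{\cyc}^2\equiv1\bmod p$, i.e.\ $p\in\{2,3\}$.

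The two genuinely delicate cases are therefore $(d,F)=(2,\QQ_2)$ (where $\rhobar\cong\rhobar(1)$ is automatic) and $(2,\QQ_3)$ with $\rhobar\cong\rhobar(1)$; in both $h^2(G_{\QQ_p},\adz\rhobar)=1$, so by Proposition~\ref{Prop-RelCI} the ring $R^{\square,\chi}_{\rhobar}$, and likewise $R^{\square,\chi}_{\rhobar}/\varpi$, is a hypersurface over a formally smooth $\OO$-algebra (resp.\ $k$-algebra). I would analyse the defining relation through its lowest-degree (quadratic) part, which up to the $\varpi$-direction is the cup-product/Lie-bracket obstruction pairing on $H^1(G_{\QQ_p},\ad\rhobar)$ valued in $H^2(G_{\QQ_p},\ad\rhobar)$, using the isotypic decomposition of $\ad\rhobar$. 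For $F=\QQ_2$ the goal is to show this quadratic part (together with the $\varpi$-direction) has rank $\ge5$, so the non-regular locus has codimension $\ge4$ and both rings are factorial. For $F=\QQ_3$ with $\rhobar\cong\rhobar(1)$, the representation is dihedral, $\rhobar\cong\Ind_{G_K}^{G_{\QQ_3}}\psi$ with $K=\QQ_3(\zeta_p)$ and $\ad\rhobar\cong\Eins\oplus\epsilon_{K/\QQ_3}\oplus\Ind_{G_K}^{G_{\QQ_3}}(\psi/\psi^{c})$, the obstruction living entirely in the $\epsilon_{K/\QQ_3}$-summand; there the quadratic part of the relation has small rank, so that — as in the example following Corollary~\ref{factorial} — $R^{\square,\chi}_{\rhobar}$ and $R^{\square,\chi}_{\rhobar}/\varpi$ acquire a quadric singularity of codimension at most $3$ while remaining normal by Corollary~\ref{chi_normal}, hence fail to be regular in codimension $3$ and are not factorial. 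This is the stated exception.

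The heart of the matter, and the main obstacle, is precisely this last quadratic-form analysis in the two small residual cases: proving the obstruction pairing has rank $\ge5$ for $F=\QQ_2$ but exactly the ``hyperbolic'' shape for $F=\QQ_3$, $\rhobar\cong\rhobar(1)$, and checking that the higher-order terms of the relation do not change the codimension of the singular locus. This amounts to an explicit Galois-cohomology cup-product computation, which in the two-dimensional setting can alternatively be extracted from known presentations of local deformation rings over $\QQ_p$.
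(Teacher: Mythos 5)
The skeleton of your first paragraph (complete intersection by Corollary~\ref{ci_chi}, regularity in codimension~$3$, Grothendieck's theorem) matches the paper, but the decisive content of the statement lies exactly in the two residual cases that you leave as a plan, and there your argument has a genuine gap. For $(d,F)=(2,\QQ_3)$ with $\rhobar\cong\rhobar(1)$ the concluding inference is invalid: from ``quadric singularity of codimension at most $3$'' you deduce ``fail to be regular in codimension $3$, hence not factorial'', but Grothendieck's theorem only goes one way (CI $+$ (R3) $\Rightarrow$ factorial); failing (R3) does not disprove factoriality. To prove non-factoriality one must actually exhibit a height-one prime which is not principal, which is what the paper does using B\"ockle's explicit presentation of $R^{\square,\chi}_{\rhobar}$ as formally smooth over $\OO\br{b,c,d}/\bigl((1+d)^6(1+bcu)-(1+bcv)\bigr)$ from \cite{boeckle} and the prime $(b,d)$; knowing only the quadratic part of the single relation (and ``small rank'') does not determine the class group of the completed local ring, and you do not carry out the cup-product computation nor the control of higher-order terms in any case. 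For $(d,F)=(2,\QQ_2)$ your stated goal (rank $\ge 5$ of the obstruction pairing) is likewise not proved; in fact the resolution is different: in that case $R^{\square,\chi}_{\rhobar}$ and $R^{\square,\chi}_{\rhobar}/\varpi$ are regular, which the paper gets by citing \cite[Proposition 4.5]{che_unpublished}, so there is no singular quadric to analyse, but your proposal establishes nothing there.

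A secondary gap comes from your choice of codimension bound. Using the Kummer-reducible locus (Proposition~\ref{nspcl}(3), codimension $\ge d[F:\Qp]$) instead of $Z^{\spcl}$ (Lemma~\ref{special}, codimension $\ge \tfrac12 d^2[F:\Qp]$) forces you to treat $d=3$, $F=\Qp$ separately, and your vanishing claim $H^2(G_{\Qp},\adz\rhobar)=0$ is not justified when $p=3$: since $p\mid d$, $\adz\rhobar$ is not self-dual, $H^2(G_{\Qp},\adz\rhobar)$ is dual to $H^0(G_{\Qp},\adbar\rhobar(1))$, and $\rhobar\not\cong\rhobar(1)$ only kills the contribution of $H^0(G_{\Qp},\ad\rhobar(1))$; a nonzero class could a priori map to the boundary in $H^1(G_{\Qp},k(1))\neq 0$. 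The vanishing does hold, but it needs an actual argument (for instance, every absolutely irreducible $3$-dimensional mod-$3$ representation of $G_{\QQ_3}$ is induced from the unramified cubic extension, and one computes $H^0(\adbar\rhobar(1))=0$ from that); the simpler fix is to use Lemma~\ref{special}, whose bound $\tfrac12 d^2[F:\Qp]\ge 4$ disposes of all cases with $d>2$ or $F\neq\Qp$ at once, which is how the paper argues.
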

\begin{proof} Since $\rhobar$ is absolutely irreducible $X^{\gen, \chi}[1/p]$ is regular by Proposition \ref{Virr_reg},
and the singular locus of $\Xbar^{\gen, \chi}$ is
contained in $Z^{\spcl}$, which has codimension at least $\frac{1}{2}[F:\Qp]d^2$ by Lemma \ref{special}. Thus if either $d>2$ 
or $F\neq \Qp$ then we can conclude that 
$R^{\square,\chi}_{\rhobar}$ and $R^{\square,\chi}_{\rhobar}/\varpi$ are regular in codimension $3$ and hence factorial. 

If  $\rhobar\not\cong \rhobar(1)$ then $H^2(G_{\Qp}, \adz \rhobar)=0$ and 
it follows from Lemma \ref{formal_smooth} that
$R^{\square, \chi}_{\rhobar}$ and 
$R^{\square,\chi}_{\rhobar}/\varpi$ are
formally smooth, hence regular and hence factorial. 

If $d=2$ then $\rhobar\cong \rhobar(1)$ implies that
$\det \rhobar= (\det\rhobar) \omega^2$. This leaves us with two cases $F=\QQ_2$ or $F=\QQ_3$.
If $p=2$ then $R^{\square}_{\rhobar}$ is formally smooth over $\OO[\mu]$ by \cite[Proposition 4.5]{che_unpublished}, and thus $R^{\square, \chi}_{\rhobar}$ and $R^{\square,\chi}_{\rhobar}/\varpi$
are regular. 

We claim that if $F=\QQ_3$, $d=2$ and $\rhobar\cong \rhobar(1)$ then 
the ring $R^{\square, \chi}_{\rhobar}$ is not factorial. It follows from \cite[Theorem 5.1]{boeckle} that in this case
$R^{\square, \chi}_{\rhobar}$ is formally smooth over 
$\OO\br{b,c,d}/(r)$, where $r= (1+d)^6 (1 + bc u) - (1+bc v)$ and  $u, v$ are units in $\OO\br{b,c}$. The ideal 
$\pp=(b, d)$ is prime of height $1$. If $R^{\square, \chi}_{\rhobar}$ were factorial then $\pp$ would have to be principal, \cite[\href{https://stacks.math.columbia.edu/tag/0AFT}{Tag 0AFT}]{stacks-project}, and thus there would exist $\pi \in \OO\br{b,c,d}$ such that we have an equality of ideals 
$(b, d)= ( r, \pi)$
in $\OO\br{b,c,d}$. By considering this modulo $(\varpi, c)$, we would conclude that $(d^3- d^6, \bar{\pi})$ is
the maximal ideal in $k\br{b,d}$. Since 
$(d^3-d^6, \bar{\pi})\rightarrow (b,d)/(b,d)^2$ is not surjective, we obtain a contradiction.
The same argument shows that $R^{\square, \chi}_{\rhobar}/\varpi$ is also not factorial.  
\end{proof}

\begin{prop}\label{Agen_chi_domain}
For each character $\chi: \mu_{p^{\infty}}(F)\rightarrow \OO^{\times}$ the rings 
$A^{\gen,\chi}$ and $A^{\gen, \chi}/\varpi$  are integral domains.
\end{prop}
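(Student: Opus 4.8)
The approach is to bootstrap from normality. By Corollary~\ref{Xgen_chi_normal} the ring $A^{\gen,\chi}$ is normal, and by Proposition~\ref{Xbar_normal} so is $A^{\gen,\chi}/\varpi$. A Noetherian normal ring is a finite product of normal domains, one factor per connected component of its spectrum, so it suffices to prove that $X^{\gen,\chi}=\Spec A^{\gen,\chi}$ and $\Xbar^{\gen,\chi}=\Spec A^{\gen,\chi}/\varpi$ are connected. Since every character of $\mu$ valued in a field of characteristic $p$ is trivial, $\Xbar^{\gen,\chi}=\Xbar^{\gen,\Eins}$, so it is enough to show that $X^{\gen,\chi}$ and $\Xbar^{\gen,\Eins}$ are connected, and both come down to the same argument.

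First I would note that $R^{\ps,\chi}:=R^{\ps}\otimes_{\OO[\mu],\chi}\OO$ is a quotient of the complete local ring $R^{\ps}$, because $\chi\colon\OO[\mu]\to\OO$ is surjective; hence $R^{\ps,\chi}$, and likewise $R^{\ps,\chi}/\varpi$, is local, so $X^{\ps,\chi}:=\Spec R^{\ps,\chi}$ and $\Spec R^{\ps,\chi}/\varpi$ are connected. Next, by the universal property of Lemma~\ref{existence_Agen} the formation of $A^{\gen}$, of the $G$-action by conjugation, and of the GIT quotient all commute with the base change $R^{\ps}\to R^{\ps,\chi}$: concretely, $A^{\gen}\otimes_{R^{\ps}}R^{\ps,\chi}=A^{\gen,\chi}$ is the algebra of generic matrices attached to the Cayley--Hamilton algebra $E\otimes_{R^{\ps}}R^{\ps,\chi}$ over $R^{\ps,\chi}$. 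Therefore, exactly as for the special fibre after~\eqref{adequate0}, \cite[Theorem 2.20]{WE_alg} together with the stability of adequate moduli spaces under base change shows that $X^{\gen,\chi}\sslash G\to X^{\ps,\chi}$, and its reduction mod $\varpi$, is an adequate homeomorphism; in particular it is a homeomorphism of topological spaces, so $M:=X^{\gen,\chi}\sslash G$ is connected, and similarly $\Xbar^{\gen,\Eins}\sslash G$ is connected.

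It then remains to descend connectedness through the quotient map $q\colon X^{\gen,\chi}\to M$. This map is surjective and, as for ordinary GIT quotients by a reductive group, it sends disjoint closed $G$-invariant subsets of $X^{\gen,\chi}$ to disjoint closed subsets of $M$ (a point lying in the image of two such subsets would have fibre containing a closed $G$-orbit inside each of them). Now suppose $X^{\gen,\chi}=U\sqcup V$ with $U,V$ non-empty and open, hence also closed. Each $G$-orbit, being the image of the connected group scheme $G_{\kappa(x)}=\GL_{d,\kappa(x)}$, is connected, so meets exactly one of $U,V$ and lies in it; hence $U$ and $V$ are closed and $G$-invariant, and $M=q(U)\sqcup q(V)$ is a partition of $M$ into two non-empty closed subsets, contradicting the connectedness of $M$. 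So $X^{\gen,\chi}$ is connected, and the same argument applied to $\Xbar^{\gen,\Eins}$ gives connectedness there; combined with normality this shows that $A^{\gen,\chi}$ and $A^{\gen,\chi}/\varpi$ are integral domains. I expect the only delicate point to be the middle step — checking that the whole GIT picture over $R^{\ps}$ really base changes correctly along the non-flat map $R^{\ps}\to R^{\ps,\chi}$, i.e.\ that $X^{\gen,\chi}\sslash G\to X^{\ps,\chi}$ is again an adequate homeomorphism; once this is granted the connectedness argument is purely topological and soft.
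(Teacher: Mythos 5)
Your proof is correct in substance, but it takes a genuinely different route from the paper's. The paper also starts from normality (so $A^{\gen,\chi}$ is a finite product of normal domains) and from $G$-invariance of connected components, but it then concludes by a purely local argument at the closed point of $X^{\ps}$: Lemma \ref{fix} guarantees that each factor $\Spec A_i$ contains a closed point lying over $\mm_{R^{\ps}}$, so if there were more than one factor the fibre over the closed point would be disconnected, contradicting Lemma \ref{closed_orbit}. You instead prove connectedness of the whole of $X^{\gen,\chi}$ by descending it along the quotient map from the adequate moduli space: you need (i) that $X^{\gen,\chi}\sslash G\to X^{\ps,\chi}$ is again an adequate homeomorphism after the (non-flat) base change $R^{\ps}\to R^{\ps,\chi}$ — which is exactly what the paper asserts later, in Corollary \ref{no_name2}, by rerunning the argument for \eqref{adequate0} with $E^\chi=E\otimes_{R^{\ps}}R^{\ps,\chi}$, and your identification of $A^{\gen,\chi}$ with the generic-matrix algebra of $E^{\chi}$ via the universal property is the right way to justify it — and (ii) the GIT separation property that disjoint closed $G$-invariant subsets have disjoint closed images, which over the Noetherian mixed-characteristic base rests on Seshadri's results (your fallback via uniqueness of the closed orbit in each geometric fibre is the same input as Lemma \ref{closed_orbit}). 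So both arguments ultimately lean on \cite[Theorem 3]{seshadri}; the paper's version buys simplicity by only needing connectedness of the single fibre over $\mm_{R^{\ps}}$ together with Lemma \ref{fix}, thereby sidestepping any base-change statement for the adequate moduli map, while yours buys a cleaner global picture (connectedness of the quotient, which is homeomorphic to the spectrum of the local ring $R^{\ps,\chi}$) at the cost of the delicate point you yourself flagged. You might note that Lemma \ref{fix} is also available to you and would let you bypass the separation property: each clopen piece is closed and $G$-invariant, hence meets the fibre over $\mm_{R^{\ps}}$, and Lemma \ref{closed_orbit} finishes the argument — which is precisely the paper's proof.
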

\begin{proof}Since $A^{\gen,\chi}$ is normal by Corollary \ref{Xgen_chi_normal}, it is a product of normal
domains $A^{\gen, \chi}\cong A_1\times \ldots \times A_m$. 
The action of $G$ on $X^{\gen, \chi}$ leaves the connected 
components invariant by Lemma \ref{IrredCompInvariant}. 
It follows from Lemma \ref{fix} that each $\Spec A_i$ contains 
a closed point over the closed point $X^{\ps}$. Thus 
$A_i\otimes_{R^{\ps}} k$ are non-zero for $1\le i\le m$. 
If $m>1$ then this would imply that the fibre at 
the closed point of $X^{\ps}$ is not connected contradicting 
Lemma \ref{closed_orbit}. The same proof 
works also for the special fibre. 
\end{proof} 

Define $R^{\ps, \chi}:= R^{\ps}\otimes_{\OO[\mu], \chi} \OO $ for a character $\chi:\mu\to \OO^\times$ and using the isomorphism from Lemma~\ref{detmapmu}. We let 
$X^{\ps,\chi}=\Spec R^{\ps, \chi}$ and let $\Xbar^{\ps, \chi}$ be its special fibre. 
\begin{cor}\label{Rps_normal} The rings $R^{\ps}[1/p]$, $R^{\ps, \chi}[1/p]$ and the 
rigid spaces $(\Spf R^{\ps})^{\rig}$, $(\Spf R^{\ps, \chi})^{\rig}$  are normal. Moreover, 
$R^{\ps, \chi}[1/p]$
is an integral domain and thus the map
$R_{\det\rhobar}[1/p]\rightarrow R^{\ps}[1/p]$
induces a bijection between the sets of irreducible components.
\end{cor}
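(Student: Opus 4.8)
The plan is to realise $R^{\ps}[1/p]$ --- after decomposing by characters of $\mu$ --- as a ring of $G=\GL_d$-invariants of a normal ring, and then to transport normality and the component structure from there. First I would invert $p$ and decompose: since $L$ is large, $\OO[\mu][1/p]\cong\prod_\chi L$ with $\chi$ ranging over the characters $\mu\to\OO^\times$, and base changing the $\OO[\mu]$-algebras $R^{\ps}$ and $A^{\gen}$ along this yields $\OO[\mu][1/p]$-linear isomorphisms $R^{\ps}[1/p]\cong\prod_\chi R^{\ps,\chi}[1/p]$ and $A^{\gen}[1/p]\cong\prod_\chi A^{\gen,\chi}[1/p]$, compatible with the map $R^{\ps}\to A^{\gen}$ and with the $G$-action. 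By Corollary~\ref{ci_gen_chi} and Proposition~\ref{Agen_chi_domain} each $A^{\gen,\chi}$ is an $\OO$-torsion free integral domain, and by Proposition~\ref{X1p_normal} each $A^{\gen,\chi}[1/p]$ is normal; hence each $A^{\gen,\chi}[1/p]$ is a nonzero normal domain, and since $G$ is connected it preserves each connected component $\Spec A^{\gen,\chi}[1/p]$ (as in the proof of Lemma~\ref{bound}), so $(A^{\gen}[1/p])^G\cong\prod_\chi(A^{\gen,\chi}[1/p])^G$.

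Next I would identify these invariants with the pseudo-deformation rings. The map \eqref{adequate0} is an adequate homeomorphism, in particular an integral universal homeomorphism which is a local isomorphism at every point with residue field of characteristic $0$; after inverting $p$ all residue fields have characteristic $0$, so $R^{\ps}[1/p]\to(A^{\gen}[1/p])^G$ is a local isomorphism everywhere, hence an isomorphism. Comparing the $\chi$-decompositions gives $R^{\ps,\chi}[1/p]\cong(A^{\gen,\chi}[1/p])^G$, which is in particular nonzero. For any group $G$ acting on a normal domain $B$, the invariant ring $B^G=B\cap\operatorname{Frac}(B)^G$ is integrally closed in $\operatorname{Frac}(B)^G$, hence in its own fraction field, so $B^G$ is again a normal domain. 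Applying this to $B=A^{\gen,\chi}[1/p]$ shows that $R^{\ps,\chi}[1/p]$ is a normal domain, and therefore $R^{\ps}[1/p]\cong\prod_\chi R^{\ps,\chi}[1/p]$ is normal.

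For the rigid spaces I would invoke standard properties of Berthelot's generic-fibre construction: for a complete local Noetherian $\OO$-algebra $R$ with finite residue field, $R$ and $R[1/p]$ are excellent, the points of $(\Spf R)^{\rig}$ correspond to the maximal ideals of the Jacobson ring $R[1/p]$, and the completed local ring of $(\Spf R)^{\rig}$ at such a point agrees with the completion of the corresponding local ring of $\Spec R[1/p]$. Since for excellent local rings normality passes both to and from the completion by \cite[Theorem 32.2]{matsumura}, normality of $R[1/p]$ implies normality of $(\Spf R)^{\rig}$. Taking $R=R^{\ps}$ and $R=R^{\ps,\chi}$ gives that $(\Spf R^{\ps})^{\rig}$ and $(\Spf R^{\ps,\chi})^{\rig}$ are normal.

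Finally I would treat the components. By Lemma~\ref{detmapmu}, $R_{\det\rhobar}[1/p]\cong\prod_\chi R^{\chi}_{\det\rhobar}[1/p]$ with each factor the generic fibre of a power series ring over $\OO$, hence a nonzero normal domain; thus both $\Spec R^{\ps}[1/p]$ and $\Spec R_{\det\rhobar}[1/p]$ have exactly one irreducible component for each character $\chi$. The map $R_{\det\rhobar}\to R^{\ps}$ is $\OO[\mu]$-linear, so after inverting $p$ it respects the two $\chi$-decompositions and sends the $\chi$-factor to the $\chi$-factor; the induced map on irreducible components is therefore the identity on the index set $\{\chi\}$, a bijection. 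The only point not reducible to bookkeeping over results already proved is the comparison of completed local rings in Berthelot's construction used for the rigid spaces; otherwise one merely has to be careful that forming $G$-invariants commutes with inverting $p$ and with the idempotent decomposition of $\OO[\mu][1/p]$, and that \eqref{adequate0} becomes an isomorphism after inverting $p$.
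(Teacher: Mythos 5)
Your argument is correct, but it takes a genuinely different route from the paper. The paper disposes of all the normality claims in one stroke by citing \cite[Theorem A.1]{finite} with Corollary \ref{generic_fibre_normal} (normality of $R^{\square}_{\rhobar}[1/p]$) as input, extracting the identity $R^{\ps}[1/p]=(A^{\gen}[1/p])^G$ as a by-product of that external proof, and then deducing the domain property of $R^{\ps,\chi}[1/p]$ and the component count exactly as you do. You instead re-prove the relevant content internally: you obtain $R^{\ps}[1/p]\cong(A^{\gen}[1/p])^G$ directly from the fact that the adequate homeomorphism \eqref{adequate0} is a local isomorphism at characteristic-zero points, hence an isomorphism after inverting $p$ (with invariants commuting with localization since they are the kernel of an $\OO$-linear map), and you get normality of $R^{\ps,\chi}[1/p]$ from normality and irreducibility of $A^{\gen,\chi}[1/p]$ (Propositions \ref{X1p_normal} and \ref{Agen_chi_domain}) via the GIT fact that invariants of a normal domain are normal; the rigid statements then follow from the de Jong--Berthelot comparison of completed local rings together with excellence, which is essentially the mechanism inside the cited Theorem A.1. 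Two small points should be made precise: your "invariants of a normal domain" claim is stated for an abstract group but is needed for the coaction of the group scheme $\GL_d$ over $L$ --- this is fine because for $t=x/y$ with $x,y\in B^G$ one applies the coaction to $ty=x$ and uses that $B\otimes_L\OO(\GL_d)$ is a domain to conclude $\sigma(t)=t\otimes 1$; and the comparison of points and completed local rings for $(\Spf R)^{\rig}$ versus $\MaxSpec R[1/p]$ deserves an explicit reference (it is standard, e.g.\ in de Jong's treatment of Berthelot's construction, and is what \cite{finite} relies on). The trade-off: the paper's proof is shorter and leans on the framed rings via the external theorem, whereas yours is self-contained in the paper's $\chi$-decomposed results and makes the invariant-theoretic mechanism explicit, at the cost of supplying these two standard ingredients yourself.
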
 
\begin{proof} 
The assertion 
follows from \cite[Theorem A.1]{finite} using Corollary \ref{generic_fibre_normal}. As part of the proof one obtains
$R^{\ps}[1/p]=( A^{\gen}[1/p])^G$. This yields $R^{\ps, \chi}[1/p]=
( A^{\gen, \chi}[1/p])^G$. Proposition \ref{Agen_chi_domain}
implies that $A^{\gen, \chi}[1/p]$ is an integral domain. Hence
$R^{\ps, \chi}[1/p]$ is an integral domain, and the assertion 
about irreducible components is proved in the same manner as 
Corollary \ref{BJ_conjecture}.
\end{proof}

\begin{cor}\label{no_name} The image of 
$R^{\ps}$ in $A^{\gen}$ is the maximal $\OO$-torsion free quotient of $R^{\ps}$ 
and is also  
the maximal reduced quotient
of $R^{\ps}$. In particular, 
the map $R_{\det \Dbar}\rightarrow R^{\ps}\rightarrow R^{\ps}[1/p]$ induces a 
bijection between the sets of irreducible components. Moreover, if $\Dbar$ is multiplicity free then $R^{\ps}$ is reduced 
and $\OO$-torsion free.
\end{cor}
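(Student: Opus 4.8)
The plan is to analyse the kernel $\mathfrak a:=\ker(R^{\ps}\to A^{\gen})$, so that the image of $R^{\ps}$ in $A^{\gen}$ is $R^{\ps}/\mathfrak a$, and to show that $\mathfrak a$ coincides both with the nilradical $\sqrt 0$ of $R^{\ps}$ and with the ideal $R^{\ps}[\varpi^{\infty}]$ of elements killed by a power of $\varpi$. First I would record that $A^{\gen}$ is reduced: it is $\OO$-torsion free by Corollary \ref{ci_gen}, hence embeds into $A^{\gen}[1/p]\cong\prod_{\chi}A^{\gen,\chi}[1/p]$ (by the same reasoning as \eqref{product}), which is a finite product of domains by Proposition \ref{Agen_chi_domain}. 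Consequently the subring $R^{\ps}/\mathfrak a\subseteq A^{\gen}$ is reduced and $\OO$-torsion free, which already gives $\sqrt 0\subseteq\mathfrak a$ and $R^{\ps}[\varpi^{\infty}]\subseteq\mathfrak a$.

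For the opposite inclusions I would invoke the two structural facts established earlier. On the one hand, $R^{\ps}\to A^{\gen}$ factors through the subring $(A^{\gen})^{G}$, and $R^{\ps}\to(A^{\gen})^{G}$ is an adequate homeomorphism by \eqref{adequate0}; being in particular a universal homeomorphism, the induced map $\Spec(A^{\gen})^{G}\to\Spec R^{\ps}$ is surjective, so $\mathfrak a$ lies in every prime of $R^{\ps}$ and hence $\mathfrak a\subseteq\sqrt 0$. On the other hand, the proof of Corollary \ref{Rps_normal} identifies $R^{\ps}[1/p]$ with the subring $(A^{\gen}[1/p])^{G}$ of $A^{\gen}[1/p]$; since the composite $R^{\ps}\to A^{\gen}\to A^{\gen}[1/p]$ factors through $R^{\ps}\to R^{\ps}[1/p]$, we get $\mathfrak a\subseteq\ker(R^{\ps}\to R^{\ps}[1/p])=R^{\ps}[\varpi^{\infty}]$. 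Combining these, $\mathfrak a=\sqrt 0=R^{\ps}[\varpi^{\infty}]$, so $R^{\ps}/\mathfrak a$ is at once the maximal reduced and the maximal $\OO$-torsion free quotient of $R^{\ps}$.

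For the statement on irreducible components I would argue as follows. Since $R^{\ps}_{\red}=R^{\ps}/\sqrt 0=R^{\ps}/\mathfrak a$ is $\OO$-torsion free, $\varpi$ lies in no minimal prime of it, so the open immersion $\Spec R^{\ps}[1/p]\hookrightarrow\Spec R^{\ps}$ induces a bijection on irreducible components. Corollary \ref{Rps_normal} gives a bijection between the irreducible components of $\Spec R^{\ps}[1/p]$ and those of $\Spec R_{\det\Dbar}[1/p]$, and the latter biject with the irreducible components of $\Spec R_{\det\Dbar}$ since $R_{\det\Dbar}$ is $\OO$-flat by Lemma \ref{detmapmu}. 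Concatenating these bijections yields the claim for $R_{\det\Dbar}\to R^{\ps}\to R^{\ps}[1/p]$.

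Finally, assume $\Dbar$ is multiplicity free, say $\Dbar=\prod_{i=1}^{m}\Dbar_{i}$ with the $\Dbar_{i}$ pairwise distinct absolutely irreducible pseudo-characters. Here I would use Chenevier's theory of residually multiplicity free pseudo-characters: every deformation of $\Dbar$ factors uniquely as a product of deformations of the $\Dbar_{i}$, so the natural transformation $\FF_{\underline\Sigma}\to D^{\ps}$ of Subsection \ref{sec_dim_sp} (taken with $\PP=\PP_{\max}$ and any ordering $\underline\Sigma$) is an isomorphism of functors, giving $R^{\ps}\cong R^{\ps}_{\underline\Sigma}\cong R_{\rhobar_{1}}\wtimes_{\OO}\cdots\wtimes_{\OO}R_{\rhobar_{m}}$. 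Each $R_{\rhobar_{i}}$ is $\OO$-torsion free, since $R^{\square}_{\rhobar_{i}}$ is formally smooth over it and is $\OO$-torsion free by Corollary \ref{ci}, and $\OO$-flatness of complete Noetherian local $\OO$-algebras is preserved under $\wtimes_{\OO}$; hence $R^{\ps}$ is $\OO$-torsion free, so $R^{\ps}[\varpi^{\infty}]=0$, and the equality $\sqrt 0=R^{\ps}[\varpi^{\infty}]$ forces $R^{\ps}$ to be reduced as well. I expect the only genuinely delicate point to be this last case: one must check that the hypothesis that the $\Dbar_{i}$ are pairwise distinct is exactly what makes the factorization of deformations unique, so that $\FF_{\underline\Sigma}\to D^{\ps}$ is an isomorphism and not merely a surjection; the remainder is formal given the results already in place.
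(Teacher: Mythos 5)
The first two thirds of your argument are sound and run essentially parallel to the paper's proof: you get $\ker(R^{\ps}\to A^{\gen})\subseteq\sqrt{0}$ from the universal homeomorphism \eqref{adequate0} (the paper instead quotes the nilpotence statement in Alper directly), you get $\ker(R^{\ps}\to A^{\gen})\subseteq R^{\ps}[\varpi^{\infty}]$ from the identification $R^{\ps}[1/p]=(A^{\gen}[1/p])^{G}$ in the proof of Corollary \ref{Rps_normal} (the paper uses the "isomorphism after inverting $p$" part of adequacy), and the reverse inclusions and the statement on irreducible components are handled exactly as in the paper, using that $A^{\gen}$ is $\OO$-torsion free and reduced (your reduction to Proposition \ref{Agen_chi_domain} via $A^{\gen}\hookrightarrow A^{\gen}[1/p]\cong\prod_{\chi}A^{\gen,\chi}[1/p]$ is fine, under the standing assumption that $L$ is large enough, which the paper also needs for \eqref{invert_p}).

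The multiplicity free case, however, is wrong, and it is precisely the point you flagged as delicate. It is not true that every deformation of $\Dbar=\prod_{i}\Dbar_{i}$ factors (uniquely or otherwise) as a product of deformations of the $\Dbar_{i}$: a pseudo-character lifting a reducible $\Dbar$ need not be reducible at all. Already for $d=2$ and $\Dbar=\chi_{1}+\chi_{2}$ with $\chi_{1}\neq\chi_{2}$, any absolutely irreducible lift of $\rhobar$ (these exist, and indeed are dense by the results of Section \ref{sec_dens_irr}) gives a point of $\Spec R^{\ps}$ not in the image of $\Spec R^{\ps}_{\underline{\Sigma}}$. Equivalently, the natural transformation $\FF_{\underline{\Sigma}}\to D^{\ps}$ corresponds to the map $R^{\ps}\to R^{\ps}_{\underline{\Sigma}}$, which is finite (Lemma \ref{iotaP_finite}) but far from an isomorphism: by \cite[Theorem 5.4.1]{bj} the absolutely irreducible locus of $\Xbar^{\ps}$ has dimension $d^{2}[F:\Qp]$, whereas $\dim\Xbar^{\ps}_{\underline{\Sigma}}=m+l_{\PP_{\max}}[F:\Qp]$ is strictly smaller when $m>1$. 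So the asserted isomorphism $R^{\ps}\cong R_{\rhobar_{1}}\wtimes_{\OO}\cdots\wtimes_{\OO}R_{\rhobar_{m}}$ fails, and your deduction that $R^{\ps}$ is $\OO$-torsion free and reduced collapses. What multiplicity freeness actually buys is a statement about the Cayley--Hamilton algebra, not a splitting of the deformation functor: by \cite[Theorem 2.22]{che_durham} the algebra $E$ is a generalized matrix algebra, and then \cite[Theorem 3.8 (4)]{WE_alg} gives $R^{\ps}=(A^{\gen})^{\GL_d}$; in particular $R^{\ps}$ embeds into $A^{\gen}$ and inherits reducedness and $\OO$-torsion freeness from it, which is how the paper concludes.
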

\begin{proof} By \cite[Theorem 2.20]{WE_alg} the map $X^{\gen}\sslash\GL_d \rightarrow X^{\ps}$ is an adequate homeomorphism. It follows from 
\cite[Proposition 3.3.5]{alper} that the kernel of 
$R^{\ps}\rightarrow (A^{\gen})^{\GL_d}$ is 
nilpotent and vanishes after inverting $p$.
Since $A^{\gen}$ is $\OO$-torsion free and reduced, this implies that both quotients coincide and are equal to the image of $R^{\ps}$ in $A^{\gen}$. This together with the last part of 
Corollary \ref{Rps_normal}
implies the assertion 
about the irreducible components. 

If $\Dbar$ is multiplicity free then 
$E$ is a generalized matrix algebra by
\cite[Theorem 2.22]{che_durham},  and 
it follows from \cite[Theorem 3.8 (4)]{WE_alg} that
$R^{\ps}=(A^{\gen})^{\GL_d}$, 
and so $R^{\ps}$ is $\OO$-torsion free and reduced. 
\end{proof} 

\begin{cor}\label{no_name2} The image of 
$R^{\ps, \chi}/\varpi$ in 
$A^{\gen, \chi}/\varpi$ is 
the maximal reduced quotient of 
$R^{\ps, \chi}/\varpi$. 
The image of $R^{\ps, \chi}$
in $A^{\gen, \chi}$ is the maximal reduced 
quotient of $R^{\ps, \chi}$ and is also 
the maximal $\OO$-torsion free quotient of $R^{\ps, \chi}$. Moreover, if 
$\Dbar$ is 
multiplicity free then both $R^{\ps, \chi}/\varpi$ and $R^{\ps, \chi}$ are integral domains.
\end{cor}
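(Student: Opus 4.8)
\textbf{Proof proposal for Corollary \ref{no_name2}.}

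The plan is to mimic the proof of Corollary \ref{no_name}, transporting each ingredient through the functor $\otimes_{\OO[\mu],\chi}\OO$. First I would record that $A^{\gen,\chi}$ is $\OO$-torsion free (Corollary \ref{ci_gen_chi}) and reduced — indeed it is normal by Corollary \ref{Xgen_chi_normal}, hence a product of normal domains, and in fact a domain by Proposition \ref{Agen_chi_domain} — and that $A^{\gen,\chi}/\varpi$ is reduced for the same reason (normal by Proposition \ref{Xbar_normal}, a domain by Proposition \ref{Agen_chi_domain}). So the image of $R^{\ps,\chi}$ in $A^{\gen,\chi}$, and the image of $R^{\ps,\chi}/\varpi$ in $A^{\gen,\chi}/\varpi$, are each reduced and (in the first case) $\OO$-torsion free quotients of the respective source rings.

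The heart of the argument is to show these images are the \emph{maximal} such quotients, i.e. that the kernels are exactly the nilradical (resp. the $\OO$-torsion ideal). For this I would invoke, as in Corollary \ref{no_name}, that $X^{\gen}\sslash\GL_d\to X^{\ps}$ is an adequate homeomorphism by \cite[Theorem 2.20]{WE_alg}, and hence so is $\Xbar^{\gen}\sslash\GL_d\to\Xbar^{\ps}$; then \cite[Proposition 3.3.5]{alper} gives that $R^{\ps}\to(A^{\gen})^{\GL_d}$ and $R^{\ps}/\varpi\to(A^{\gen}/\varpi)^{\GL_d}$ have nilpotent kernels (the former vanishing after inverting $p$). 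Applying the exact functor $\otimes_{\OO[\mu],\chi}\OO$, which commutes with taking $\GL_d$-invariants here because $\OO$ is flat over $\OO[\mu]$ at $\chi$ after inverting $p$ — and more carefully, because the $\GL_d$-action is linearly reductive away from $p$ and one can argue integrally using that $R^{\ps}_\chi$ is a direct summand construction — one deduces that the kernels of $R^{\ps,\chi}\to(A^{\gen,\chi})^{\GL_d}$ and $R^{\ps,\chi}/\varpi\to(A^{\gen,\chi}/\varpi)^{\GL_d}$ are nilpotent. Combined with the reducedness (and $\OO$-torsion freeness) of the targets established in the first step, and the fact that any surjection from $R^{\ps,\chi}$ (resp. $R^{\ps,\chi}/\varpi$) onto a reduced ring factors through the reduced quotient, this pins down the images as claimed. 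The $\OO$-torsion statement follows because $(A^{\gen,\chi})^{\GL_d}\subseteq A^{\gen,\chi}$ is $\OO$-torsion free, so the image of $R^{\ps,\chi}$ is $\OO$-torsion free, while the kernel being nilpotent and $R^{\ps,\chi}$ being Noetherian forces the $\OO$-torsion of $R^{\ps,\chi}$ into that kernel.

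For the last sentence, suppose $\Dbar$ is multiplicity free. Then by \cite[Theorem 2.22]{che_durham} the algebra $E$ is a generalized matrix algebra, and by \cite[Theorem 3.8 (4)]{WE_alg} we have $R^{\ps}=(A^{\gen})^{\GL_d}$ on the nose; applying $\otimes_{\OO[\mu],\chi}\OO$ gives $R^{\ps,\chi}=(A^{\gen,\chi})^{\GL_d}$, which is a subring of the domain $A^{\gen,\chi}$ and hence a domain, and likewise $R^{\ps,\chi}/\varpi=(A^{\gen,\chi}/\varpi)^{\GL_d}$ is a subring of the domain $A^{\gen,\chi}/\varpi$, hence a domain. (One should note $A^{\gen,\chi}/\varpi$ is nonzero, which follows since its spectrum surjects onto the nonempty $\Xbar^{\ps,\chi}$, so these invariant rings are genuinely integral domains and not the zero ring.)

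\textbf{Main obstacle.} The delicate point is the compatibility of GIT quotients with the base change $\otimes_{\OO[\mu],\chi}\OO$ \emph{integrally} — i.e. before inverting $p$. Over $\mathbb Z[1/p]$ this is clean since $\GL_d$ is linearly reductive and $\OO[\mu][1/p]$ is étale over $\OO[1/p]$, so invariants commute with the flat base change $\chi$. Integrally, $\mu$ has $p$-power order and $\OO[\mu]\to\OO$ is not flat, so I cannot simply commute invariants past $\otimes_{\OO[\mu],\chi}\OO$. The fix I would pursue is to avoid base-changing the invariants directly: instead argue that $R^{\ps,\chi}\to A^{\gen,\chi}$ has nilpotent kernel by a direct \emph{spectrum-level} argument — the map $\Xbar^{\gen,\chi}\sslash\GL_d\to\Xbar^{\ps,\chi}$ (and its characteristic-zero analogue) is still an adequate homeomorphism because adequate homeomorphisms are stable under base change along $\Spec\OO\to\Spec\OO[\mu]$ followed by $\chi$, being an instance of base change of the morphism $X^{\gen}\sslash\GL_d\to X^{\ps}$ along $X^{\ps,\chi}\hookrightarrow X^{\ps}$ — and then apply \cite[Proposition 3.3.5]{alper} to this morphism directly. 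This sidesteps the non-flatness issue, since adequate homeomorphism is a property of a morphism that is preserved under arbitrary base change by \cite[Definition 3.3.1]{alper}.
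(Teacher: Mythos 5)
Your overall skeleton matches the paper's (reducedness, $\OO$-torsion freeness and the domain property of $A^{\gen,\chi}$ and $A^{\gen,\chi}/\varpi$ from Corollary \ref{ci_gen_chi} and Proposition \ref{Agen_chi_domain}, plus a nilpotent-kernel statement coming from an adequate homeomorphism), and you correctly identify the real obstacle: $\OO[\mu]\to\OO$ is not flat at $\chi$, so $\GL_d$-invariants cannot simply be commuted past $\otimes_{\OO[\mu],\chi}\OO$. But your proposed fix is circular. Base-changing $X^{\gen}\sslash\GL_d\to X^{\ps}$ along $X^{\ps,\chi}\hookrightarrow X^{\ps}$ produces an adequate homeomorphism whose source is $\Spec\bigl((A^{\gen})^{\GL_d}\otimes_{\OO[\mu],\chi}\OO\bigr)$, not $X^{\gen,\chi}\sslash\GL_d=\Spec\bigl((A^{\gen,\chi})^{\GL_d}\bigr)$; identifying the two is precisely the commuting-invariants-with-non-flat-base-change statement you conceded you cannot prove integrally. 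Consequently your argument only controls the kernel of $R^{\ps,\chi}\to (A^{\gen})^{\GL_d}\otimes_{\OO[\mu],\chi}\OO$, and since the natural map from that ring to $A^{\gen,\chi}$ need not be injective, nilpotence of the kernel of $R^{\ps,\chi}\to A^{\gen,\chi}$ (and of its mod-$\varpi$ analogue) does not follow. The same unjustified commutation reappears in your multiplicity-free step, where you assert $R^{\ps,\chi}=(A^{\gen,\chi})^{\GL_d}$ by applying $\otimes_{\OO[\mu],\chi}\OO$ to $R^{\ps}=(A^{\gen})^{\GL_d}$.

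The paper's proof removes the difficulty by never base-changing the GIT statements at all: it works with the Cayley--Hamilton $R^{\ps,\chi}$-algebra $E^{\chi}:=E\otimes_{\OO[\mu],\chi}\OO$ (and $E^{\chi}/\varpi$ over $R^{\ps,\chi}/\varpi$), for which $A^{\gen,\chi}$ is the universal generic-matrices algebra, and reruns the argument of Corollary \ref{no_name}; \cite[Theorem 2.20]{WE_alg} then gives the adequate homeomorphisms $X^{\gen,\chi}\sslash\GL_d\to X^{\ps,\chi}$ and $\Xbar^{\gen,\chi}\sslash\GL_d\to\Xbar^{\ps,\chi}$ intrinsically, and in the multiplicity-free case $E^{\chi}$ and $E^{\chi}/\varpi$ are generalized matrix algebras, so $R^{\ps,\chi}=(A^{\gen,\chi})^{\GL_d}$ holds on the nose and sits inside the domains $A^{\gen,\chi}$, $A^{\gen,\chi}/\varpi$. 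For the nilpotence claims alone one could salvage your route more cheaply: surjectivity of morphisms of schemes is stable under arbitrary base change, so the surjection $X^{\gen}\to X^{\ps}$ (from the adequate moduli space being surjective and the adequate homeomorphism) base-changes to surjections $X^{\gen,\chi}\to X^{\ps,\chi}$ and $\Xbar^{\gen,\chi}\to\Xbar^{\ps,\chi}$, forcing the kernels into the nilradical, while after inverting $p$ the base change along $\chi$ is a direct-factor (flat) operation so the torsion-freeness argument goes through as you say. But that salvage does not repair the multiplicity-free assertion, which genuinely needs the $E^{\chi}$-level (or an equivalent) argument.
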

\begin{proof} If we work with the algebra 
$E^{\chi}:=E\otimes_{\OO[\mu], \chi} \OO$ instead of $E$ then the argument in the proof of Corollary \ref{no_name} gives adequate homeomorphisms
$$X^{\gen,\chi}\sslash\GL_d \rightarrow X^{\ps,\chi}, \quad \Xbar^{\gen,\chi}\sslash\GL_d \rightarrow \Xbar^{\ps, \chi}.$$
In particular, the kernel of
$R^{\ps, \chi}/\varpi\rightarrow A^{\gen, \chi}/\varpi$ is nilpotent. Since $A^{\gen,\chi}/\varpi$ is an integral domain by Proposition \ref{Agen_chi_domain} we obtain the first assertion. The argument with $R^{\ps, \chi}$ is the same as in Corollary \ref{no_name} using 
that $A^{\gen, \chi}$ is an integral domain.

If $\Dbar$ is multiplicity free then $E^{\chi}$ and $E^{\chi}/\varpi$ are generalized matrix algebras, and the argument in Corollary \ref{no_name} carries over. 
\end{proof}

\begin{lem} If $R^{\ps, \chi}/\varpi$ satisfies Serre's 
condition (S1) then $R^{\ps, \chi}/\varpi$ and $R^{\ps, \chi}$ are integral domains.
\end{lem}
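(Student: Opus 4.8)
The plan is to show first that $R^{\ps,\chi}/\varpi$ is reduced, by checking Serre's conditions $(R0)$ and $(S1)$, and then to bootstrap to the statement for $R^{\ps,\chi}$. By Corollary~\ref{no_name2} the natural surjection $R^{\ps,\chi}/\varpi \twoheadrightarrow \bar S$, where $\bar S$ denotes the image of $R^{\ps,\chi}/\varpi$ in $A^{\gen,\chi}/\varpi$, has nilpotent kernel, and $\bar S$ is a nonzero integral domain because $A^{\gen,\chi}/\varpi$ is one by Proposition~\ref{Agen_chi_domain}. Hence $\Spec R^{\ps,\chi}/\varpi$ is irreducible, its nilradical is the unique minimal prime $\pp_0$, and $\bar S \cong (R^{\ps,\chi}/\varpi)_{\mathrm{red}}$. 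Condition $(S1)$ is the hypothesis, so only $(R0)$ --- that $(R^{\ps,\chi}/\varpi)_{\pp_0}$ is a field --- remains to be established.

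For $(R0)$ I would produce a nonempty open subscheme of $\Xbar^{\ps,\chi}$ on which $R^{\ps,\chi}/\varpi$ is regular; being nonempty open in the irreducible scheme $\Xbar^{\ps,\chi}$, such a set contains the generic point $\pp_0$, so $(R^{\ps,\chi}/\varpi)_{\pp_0}$ is then a $0$-dimensional regular local ring, i.e.\ a field. The open set to use is the absolutely irreducible non-special locus $U^{\nspcl}$. It is open; and it is dense in $\Xbar^{\ps,\chi}$, since $\Xbar^{\ps,\chi}$ has the same underlying space as $\Xbar^{\ps}$ (modulo $\varpi$ the character $\chi$ is trivial on the $p$-group $\mu$, and the elements $[g]-1$, $g\in\mu$, are nilpotent in $R^{\ps}/\varpi$) while the Kummer-irreducible locus, contained in $U^{\nspcl}$, is dense in $\Xbar^{\ps}$ by \cite{bj} (cf.\ also Proposition~\ref{VPmin_dense}). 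Moreover $R^{\ps,\chi}/\varpi$ is regular on $U^{\nspcl}$: applying $\otimes_{\OO[\mu],\chi}\OO$ to the smoothness of $X^{\ps}\to\Spec R_{\det\Dbar}$ over the non-special locus (established in \cite{bj}) and passing to special fibres exhibits $\Xbar^{\ps,\chi}$, over $U^{\nspcl}$, as smooth over $\Spec\bigl(R_{\det\Dbar}\otimes_{\OO[\mu],\chi}\OO\bigr)/\varpi \cong k\br{y_1,\dots,y_{[F:\Qp]+1}}$ (Lemma~\ref{detmapmu}), and smoothness over a regular base implies regularity. (Alternatively, identify the completed local rings at Kummer-irreducible closed points with unframed pseudodeformation rings, which are formally smooth because $H^2(G_F,\adz\rho_x)=0$ there by Lemma~\ref{adz0}.) Thus $(R0)$ holds, $R^{\ps,\chi}/\varpi$ is reduced by Serre's criterion, and together with the first paragraph $\pp_0=0$, so $R^{\ps,\chi}/\varpi\cong\bar S$ is an integral domain.

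For $R^{\ps,\chi}$ itself, let $J$ be its nilradical. By Corollary~\ref{no_name2}, $R^{\ps,\chi}/J$ is simultaneously the maximal reduced and the maximal $\OO$-torsion free quotient of $R^{\ps,\chi}$, so $J$ coincides with the ideal of $\OO$-torsion elements; and $R^{\ps,\chi}/J$ embeds into the domain $A^{\gen,\chi}$, so it is a nonzero $\OO$-torsion free integral domain. Since $R^{\ps,\chi}/\varpi$ is reduced, $J$ maps to $0$ in $R^{\ps,\chi}/\varpi$, i.e.\ $J\subseteq\varpi R^{\ps,\chi}$. A short descent then gives $J=0$: if $0\neq t\in J$, write $t=\varpi t_1$; as $t$ is $\OO$-torsion so is $t_1$, whence $t_1\in J\subseteq\varpi R^{\ps,\chi}$, and iterating puts $t\in\bigcap_{n\ge1}\varpi^n R^{\ps,\chi}=0$ by Krull's intersection theorem, a contradiction. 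Hence $R^{\ps,\chi}$ is reduced and $\OO$-torsion free, so $\varpi$ is a nonzerodivisor; and a Noetherian local ring $R$ with $\varpi\in\mm_R$ a nonzerodivisor and $R/\varpi R$ a domain is itself a domain (compare $\varpi$-adic orders, again using Krull's intersection theorem). Applied to $R^{\ps,\chi}$ this finishes the proof.

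The one genuinely delicate point is the verification of $(R0)$: one must confirm that the non-special / Kummer-irreducible locus is nonempty --- and hence dense --- in $\Xbar^{\ps,\chi}$, and that the relative smoothness of $X^{\ps}$ over $\Spec R_{\det\Dbar}$ from \cite{bj} really survives the $\chi$-twist and the passage to special fibres well enough to make the \emph{scheme} $\Xbar^{\ps,\chi}$ (not merely its reduction $\bar S$) regular there. Everything else is formal commutative algebra resting on Corollary~\ref{no_name2} and Proposition~\ref{Agen_chi_domain}.
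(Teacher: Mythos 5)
Your proposal is correct and follows essentially the same route as the paper: establish (R0) for $R^{\ps,\chi}/\varpi$ at Kummer-irreducible (non-special) points, where $H^2(G_F,\adz\rho_x)=0$ makes the $\chi$-fixed deformation ring regular --- your parenthetical alternative is exactly the paper's argument, which works with completed local rings and the framed/unframed comparison rather than a global relative-smoothness statement over $\Spec R_{\det\Dbar}$ --- combine with the assumed (S1) to get reducedness, then use Corollary \ref{no_name2} and Proposition \ref{Agen_chi_domain} to conclude the mod-$\varpi$ ring is a domain, and finally kill the nilradical of $R^{\ps,\chi}$ and embed it into the domain $A^{\gen,\chi}$. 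The only cosmetic deviations are that the paper handles the last step via the exact sequence $0\to\mathfrak a/\varpi\to R^{\ps,\chi}/\varpi\to R^{\ps,\chi}_{\mathrm{tf}}/\varpi\to 0$ together with Nakayama's lemma where you use a torsion-descent plus Krull-intersection argument, and that your reduction of (R0) to a \emph{nonempty} regular open inside the irreducible space (rather than a dense one) is a slight, harmless streamlining; neither difference affects correctness.
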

\begin{proof}
We first note that 
$R^{\ps, \chi}/\varpi$
satisfies Serre's condition
(R0). Since the underlying 
reduced subschemes of $\Xbar^{\ps}$ and $\Xbar^{\ps, \chi}$ coincide, 
Proposition \ref{existence_Kirr}
implies that the Kummer-irreducible locus
$(\Xbar^{\ps, \chi})^{\Kirr}$ in $\Xbar^{\ps, \chi}$ is open dense. If $x\in (\Xbar^{\ps, \chi})^{\Kirr}$ is a closed point then the pseudo-character $D_x$ is absolutely irreducible, and hence is associated to an absolutely irreducible representation  which we denote by $\rho_x$. Let $R_{\rho_x}$ be the universal deformation ring of $R_{\rho_x}$ and let $R^{\chi}_{\rho_x}$ be the quotient of $R_{\rho_x}$
parameterizing deformations such that the restriction of the determinant to $\Art_F(\mu)\subset G_F^{\ab}$ is equal to $\chi$.
Since $R^{\square, \chi}_{\rho_x}$ is formally smooth over $R_{\rho_x}^{\chi}$, the 
 Kummer-irreducibility of $x$ implies that 
$R_{\rho_x}^{\chi}$ is regular. The proof of 
\cite[Lemma 5.1.6]{bj} shows that $x$ is a regular 
point in $\Xbar^{\ps, \chi}$.
Hence $\Xbar^{\ps, \chi}$ contains an open dense regular 
subscheme, which implies that  
 $R^{\ps, \chi}/\varpi$ satisfies (R0). Since 
$R^{\ps, \chi}/\varpi$ satisfies (S1) by assumption we conclude that $R^{\ps, \chi}/\varpi$ is reduced. It follows from 
Lemma \ref{no_name2} and Proposition \ref{Agen_chi_domain}  that $R^{\ps, \chi}/\varpi$ is an integral domain.

Let $R^{\ps,\chi}\twoheadrightarrow R^{\ps,\chi}_{\mathrm{tf}}$ be the maximal 
$\OO$-torsion free quotient quotient, and let 
$\mathfrak a$ be the kernel of this map.
We have an exact sequence 
$0\rightarrow \mathfrak a/\varpi\rightarrow
R^{\ps,\chi}/\varpi \rightarrow
R^{\ps,\chi}_{\mathrm{tf}}/\varpi
\rightarrow 0$. It follows from Corollary \ref{no_name2} that $\mathfrak a$ is nilpotent. Since $R^{\ps,\chi}/\varpi$ is reduced we deduce from the exact sequence that $\mathfrak a/\varpi$ is zero. Nakayama's lemma implies that $\mathfrak a=0$. Thus $R^{\ps,\chi}$ 
is $\OO$-torsion free, and
hence is a subring of $A^{\gen, \chi}$ by Corollary \ref{no_name2}.
Since $A^{\gen, \chi}$ is 
domain we conclude that 
$R^{\ps,\chi}$ is an integral domain.
\end{proof}

\begin{remar}
 We expect that the rings $R^{\ps, \chi}$ and $R^{\ps, \chi}/\varpi$ are integral domains. Although we know the dimension of $R^{\ps, \chi}/\varpi$ by \cite[Theorem 5.5.1]{bj} 
 we cannot conclude that the ring is complete intersection (which would imply that (S1) holds) as we lack a presentation analogous to \eqref{present_square_x}.
  Since $A^{\gen, \chi}$ and $A^{\gen, \chi}/\varpi$ are 
 integral domains this question is closely related to the embedding problem discussed in \cite[Section 1.3.4]{bel_che}.
 \end{remar}

\section{Deformation rings with fixed determinant}\label{fixed_det}

Let $\rhobar: G_F\rightarrow \GL_d(k)$ be a representation with pseudo-character $\Dbar$ and let $\psi: G_F\rightarrow \OO^{\times}$
be a character lifting 
$\det \rhobar=\det \Dbar$.  Let
$$
R^{\square, \psi}_{\rhobar}:=
R^{\square}_{\rhobar}\otimes_{R_{\det \rhobar}, \psi} \OO.$$ 
Let $\mu:=\mu_{p^{\infty}}(F)$ and let $\chi:\mu\to \OO^\times$ be a character such that the restriction of $\psi$ to $\mu$ under the Artin map $\mu\to G_F^\ab$ from local class field theory is equal to $\chi$. Then $R^{\square, \psi}_{\rhobar}$ is a quotient of the ring $R^{\square, \chi}_{\rhobar}$ considered in the previous section. 
We let $X^{\square, \chi}=
\Spec R^{\square, \chi}_{\rhobar}$, $X^{\square, \psi}=
\Spec R^{\square, \psi}_{\rhobar}$
and denote by $\Xbar^{\square, \chi}$
and 
$\Xbar^{\square, \psi}$ their special fibres.

Let $\mathcal X: \mathfrak{A}_{\OO}\rightarrow \Sets$ be the functor, which sends $(A, \mm_A)$ to the group 
$\mathcal X(A)$ of continuous characters $\theta: G_F\rightarrow 1+\mm_A$ whose restriction to $\mu$ under the Artin map is trivial. It follows from Lemma \ref{detmapmu} that the functor 
$\mathcal X$ is pro-represented by 
\begin{equation}\label{OX1}
\OO(\mathcal X)\cong R_{\Eins}\otimes_{\OO[\mu]} \OO\cong \OO\br{y_1, \ldots, y_{[F:\Qp]+1}}.
\end{equation}

For $e\in\mathbb{N}$ 
let $\varphi_e:\mathcal X\to \mathcal X$ be the natural transformation that sends $\theta\in \mathcal X(A)$ to $\theta^e$. We also write $\varphi_e$ for the induced maps $\OO(\mathcal X)\to \OO(\mathcal X)$  
and $\Spec \OO(\mathcal X)\to \Spec \OO(\mathcal X)$. The natural transformation $D^{\square,\chi}_{\rhobar}\to \mathcal X$,  $\rho\mapsto (\det\rho) \psi^{-1}$ induces a homomorphism of local $\OO$-algebras $\OO(\mathcal X)\rightarrow R^{\square, \chi}_{\rhobar}$; we will consider
$R^{\square, \chi}_{\rhobar}$ as 
$\OO(\mathcal X)$-algebra via this map in the statements below. 

\begin{prop}\label{prop_TwistFunctors}
One has a natural isomorphism of functors
$$D^{\square,\chi}_{\rhobar} \times_{\mathcal X, \varphi_d} \mathcal X \cong  D^{\square,\psi}_{\rhobar}\times
\mathcal X.$$
\end{prop}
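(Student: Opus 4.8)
The plan is to produce the isomorphism by an explicit character-twisting construction and then to check it is a well-defined natural isomorphism. Fix $(A,\mm_A)\in\mathfrak A_{\OO}$. An $A$-point of the left-hand functor is a pair $(\rho,\theta)$ consisting of a framed deformation $\rho\colon G_F\to\GL_d(A)$ of $\rhobar$ with $\det\rho$ restricted to $\mu$ equal to $\chi$, together with a character $\theta\in\mathcal X(A)$, subject to the fibre-product relation $(\det\rho)\psi^{-1}=\theta^d$. To such a pair I would associate $(\rho\otimes\theta^{-1},\theta)$, where $\rho\otimes\theta^{-1}$ denotes the homomorphism $g\mapsto\theta(g)^{-1}\rho(g)$; since we work with framed deformations this twist is defined literally, with no conjugation ambiguity. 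Because $\theta\equiv1\pmod{\mm_A}$ the twist still reduces to $\rhobar$ and is continuous, and the fibre-product relation gives $\det(\rho\otimes\theta^{-1})=(\det\rho)\theta^{-d}=\psi$, so $\rho\otimes\theta^{-1}\in D^{\square,\psi}_{\rhobar}(A)$ and the pair lies in $(D^{\square,\psi}_{\rhobar}\times\mathcal X)(A)$.

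In the other direction, I would send $(\rho',\theta)\in(D^{\square,\psi}_{\rhobar}\times\mathcal X)(A)$ to $(\rho'\otimes\theta,\theta)$. Here $\det(\rho'\otimes\theta)=\psi\theta^{d}$, so $(\det(\rho'\otimes\theta))\psi^{-1}=\theta^{d}=\varphi_d(\theta)$, which is exactly the fibre-product condition; moreover the restriction of $\det(\rho'\otimes\theta)$ to $\mu$ is $\psi|_{\mu}\cdot(\theta^{d})|_{\mu}=\chi$ since $\theta$ is trivial on $\mu$, so $\rho'\otimes\theta\in D^{\square,\chi}_{\rhobar}(A)$. The two assignments are mutually inverse because character twisting is multiplicative: $(\rho\otimes\theta^{-1})\otimes\theta=\rho$ and $(\rho'\otimes\theta)\otimes\theta^{-1}=\rho'$. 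Functoriality in $A$ is immediate, since $\rho\otimes\theta^{\pm1}$ and the determinant are compatible with base change along morphisms in $\mathfrak A_{\OO}$; hence the construction assembles into the asserted natural isomorphism.

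I do not anticipate a genuine obstacle here: the whole content is the elementary fact that twisting a framed deformation by a character from $\mathcal X$ is an invertible operation realised on the nose, and that the exponent $d$ in $\varphi_d$ is precisely the exponent by which twisting scales determinants. The only care needed is in matching the determinant conditions defining the $\chi$- and $\psi$-versions of the functors and in the (routine) reduction-mod-$\mm_A$ and continuity verifications, all of which follow from $\theta\equiv1\pmod{\mm_A}$.
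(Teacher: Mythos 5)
Your proposal is correct and coincides with the paper's own argument: the paper defines exactly the same twisting map $(\rho,\theta)\mapsto(\rho\cdot\theta^{-1},\theta)$ on $A$-points and notes it is a natural bijection, with the determinant bookkeeping you spell out being the "one verifies" step. No gaps.
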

\begin{proof}
Let $(A,\mm_A)$ be in $\mathfrak{A}_\OO$. 
An element in
$(D^{\square,\chi}_{\rhobar} \times_{\mathcal X, \varphi_d} \mathcal X) (A)$ 
is a pair $(\rho, \theta)$ 
such that $\theta:G_F\to 1+\mm_A$ 
is a continuous homomorphism that is
trivial on $\mu$, $\rho:G_F\to \GL_d(A)$
is a continuous homomorphism such that
$\det\rho$ and $\chi$ agree when
restricted to $\mu$, and one has
$(\det\rho)\psi^{-1}=\theta^d$.
An element in $(D^{\square,\psi}_{\rhobar}\times \mathcal X)(A)$ is a pair $(\rho_1,\theta_1)$ where $\theta_1:G_F\to 1+\mm_A$ is a continuous homomorphism that is trivial on $\mu$ and $\rho_1:G_F\to \GL_d(A)$ is a continuous homomorphism such that $\det\rho_1=\psi$. One verifies that the map
$$ (\rho, \theta)\mapsto (\rho \otimes \theta^{-1}, \theta) $$ 
defines a bijection that is natural in $A$.
\end{proof}
\begin{cor}\label{cor_TwistedIsom}
Proposition~\ref{prop_TwistFunctors} induces a natural isomorphism 
\[  R_{\rhobar}^{\square,\chi}\otimes_{\OO(\mathcal X), \varphi_d}\OO(\mathcal X) \cong   R_{\rhobar}^{\square,\psi} \widehat\otimes_\OO\OO(\mathcal X). \]
\end{cor}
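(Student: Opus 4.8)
The plan is to deduce Corollary~\ref{cor_TwistedIsom} from Proposition~\ref{prop_TwistFunctors} by a purely formal manipulation of representing objects of the functors involved, once one interprets the fibre products and products on the level of functors as the appropriate (completed) tensor products on the level of the pro-representing rings. First I would recall that all four functors $D^{\square,\chi}_{\rhobar}$, $D^{\square,\psi}_{\rhobar}$, $\mathcal X$ appearing in Proposition~\ref{prop_TwistFunctors} are pro-represented by complete local Noetherian $\OO$-algebras with residue field $k$: for $\mathcal X$ this is \eqref{OX1}; for $D^{\square,\chi}_{\rhobar}$ and $D^{\square,\psi}_{\rhobar}$ this is the definition of $R^{\square,\chi}_{\rhobar}$ and $R^{\square,\psi}_{\rhobar}$ as quotients of $R^{\square}_{\rhobar}$. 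The natural transformation $\varphi_d:\mathcal X\to\mathcal X$ corresponds to the ring map $\varphi_d:\OO(\mathcal X)\to\OO(\mathcal X)$ defined in the text, and the natural transformation $D^{\square,\chi}_{\rhobar}\to\mathcal X$, $\rho\mapsto(\det\rho)\psi^{-1}$, corresponds to the structural map $\OO(\mathcal X)\to R^{\square,\chi}_{\rhobar}$ also fixed in the text.

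Next I would invoke the standard fact that in the category $\mathfrak A_\OO$ (or rather the pro-category of complete local Noetherian $\OO$-algebras with residue field $k$) the fibre product of two functors over a third, all pro-representable, is pro-represented by the completed tensor product of the representing rings over the representing ring of the base: concretely, $D^{\square,\chi}_{\rhobar}\times_{\mathcal X,\varphi_d}\mathcal X$ is pro-represented by $R^{\square,\chi}_{\rhobar}\wtimes_{\OO(\mathcal X),\varphi_d}\OO(\mathcal X)$, which since $R^{\square,\chi}_{\rhobar}$ is already a finite-type-up-to-completion $\OO(\mathcal X)$-algebra (indeed a quotient of a power series ring over $\OO(\mathcal X)$ via Proposition~\ref{Prop-RelCI} after $\otimes_{\OO[\mu],\chi}\OO$) is just the ordinary tensor product $R^{\square,\chi}_{\rhobar}\otimes_{\OO(\mathcal X),\varphi_d}\OO(\mathcal X)$ completed at its maximal ideal; here one should note that this tensor product is automatically a complete local Noetherian ring because $\varphi_d$ makes $\OO(\mathcal X)$ module-finite over itself is false — rather $\varphi_d$ is finite flat since in coordinates $y_i\mapsto (1+y_i)^d-1$ — so $R^{\square,\chi}_{\rhobar}\otimes_{\OO(\mathcal X),\varphi_d}\OO(\mathcal X)$ is module-finite over $R^{\square,\chi}_{\rhobar}$ and hence already complete. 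Dually, $D^{\square,\psi}_{\rhobar}\times\mathcal X$ is pro-represented by $R^{\square,\psi}_{\rhobar}\wtimes_\OO\OO(\mathcal X)$. The isomorphism of functors in Proposition~\ref{prop_TwistFunctors} then translates, via Yoneda for pro-representable functors, into an isomorphism of the pro-representing objects, which is exactly the asserted isomorphism $R^{\square,\chi}_{\rhobar}\otimes_{\OO(\mathcal X),\varphi_d}\OO(\mathcal X)\cong R^{\square,\psi}_{\rhobar}\wtimes_\OO\OO(\mathcal X)$.

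Concretely the proof is then one paragraph: evaluate the functorial isomorphism of Proposition~\ref{prop_TwistFunctors} on the universal objects. On the left-hand side, the pair consisting of the universal deformation $\rho^{\chi,\univ}$ pushed forward along $R^{\square,\chi}_{\rhobar}\to R^{\square,\chi}_{\rhobar}\otimes_{\OO(\mathcal X),\varphi_d}\OO(\mathcal X)$ together with the universal character $\theta^{\univ}$ coming from the second factor $\OO(\mathcal X)$ defines an $A$-point of the left functor for $A=R^{\square,\chi}_{\rhobar}\otimes_{\OO(\mathcal X),\varphi_d}\OO(\mathcal X)$; by Proposition~\ref{prop_TwistFunctors} this corresponds to an $A$-point of $D^{\square,\psi}_{\rhobar}\times\mathcal X$, equivalently to a map $R^{\square,\psi}_{\rhobar}\wtimes_\OO\OO(\mathcal X)\to A$. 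Symmetrically, the universal pair $(\rho^{\psi,\univ},\theta^{\univ})$ over $R^{\square,\psi}_{\rhobar}\wtimes_\OO\OO(\mathcal X)$ corresponds, via the inverse of the functorial bijection $(\rho,\theta)\mapsto(\rho\theta^{-1},\theta)$, i.e.\ via $(\rho_1,\theta_1)\mapsto(\rho_1\theta_1,\theta_1)$, to a map $R^{\square,\chi}_{\rhobar}\otimes_{\OO(\mathcal X),\varphi_d}\OO(\mathcal X)\to R^{\square,\psi}_{\rhobar}\wtimes_\OO\OO(\mathcal X)$; one checks these two maps are mutually inverse by uniqueness in the universal properties, using naturality of the functorial isomorphism in $A$. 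The only mild subtlety — and the step I would flag as the one needing care — is the bookkeeping of which copy of $\OO(\mathcal X)$ is used for what: the structural map $\OO(\mathcal X)\to R^{\square,\chi}_{\rhobar}$ is $\rho\mapsto(\det\rho)\psi^{-1}$ and the twist $\varphi_d$ must be applied to this copy, while the ``free'' $\OO(\mathcal X)$-factor records $\theta$ itself; keeping these straight is what makes the relation $(\det\rho)\psi^{-1}=\theta^d$ match the fibre product over $\varphi_d$ on the nose, and is precisely the content one verifies in the last line of the proof of Proposition~\ref{prop_TwistFunctors}. No deeper input is needed; the statement is a formal consequence of Proposition~\ref{prop_TwistFunctors} together with pro-representability and the finiteness of $\varphi_d$.
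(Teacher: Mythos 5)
Your proof is correct and matches the paper's (implicit) argument: the corollary is stated without proof precisely because it follows, as you do it, by evaluating the functorial isomorphism of Proposition~\ref{prop_TwistFunctors} on the pro-representing objects, using that fibre products and products of pro-representable functors on $\mathfrak A_{\OO}$ are pro-represented by the corresponding (completed) tensor products. The finiteness and flatness of $\varphi_d$ that you invoke to identify the ordinary tensor product on the left with the completed one is exactly Lemma~\ref{lem_R1Basics}, proved immediately after the corollary in the paper (your parenthetical remark that module-finiteness fails is a slip, since finite flat means in particular module-finite, but the conclusion you draw from it is the right one).
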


We now clarify some properties of the map $\varphi_d:\OO(\mathcal X)\to \OO(\mathcal X)$.

\begin{lem}\label{lem_R1Basics}
The map $\varphi_d$ is finite and flat and 
becomes \'etale after inverting $p$. Moreover, it induces a universal homeomorphism on the special fibres. 
\end{lem}\label{univ_homeo}
\begin{proof} We may write  $d= e p^m$, such that $p$ does not divide $e$. Then  $\varphi_d= \varphi_{p^m} \circ \varphi_e$.
Since  $e$ is prime to $p$, elements in $1+\mm_A$ for $(A,\mm_A)$ in 
$\mathfrak{A}_{\OO}$ possess 
a unique $e$-th root in $1+\mm_A$ 
by the binomial theorem, and
it follows that $\varphi_e$ 
is an isomorphism. We thus may 
assume that $d$ is a power of $p$.

The map 
$\varphi_d:\OO(\mathcal X)\to \OO(\mathcal X)$ sends $y_i$ to $(1+y_i)^d-1$. 
One checks that the monomials 
$\prod_{i=1}^{[F:\Qp]+1} y_i^{m_i}$ with 
$0\le m_i\le d-1$ form a basis of $\OO(\mathcal X)$ 
as $\OO(\mathcal X)$-module via $\varphi_d$, by checking the assertion modulo $\varpi$ and using 
Nakayama's lemma. A (standard) calculation
shows that the discriminant is a power of $p$ up to a sign. Thus $\varphi_d$ becomes \'etale after inverting $p$.

The map $\overline{\varphi}_d: \OO(\mathcal X)/\varpi\rightarrow \OO(\mathcal X)/\varpi$ is a power of 
the relative Frobenius of $\Spec (\OO(\mathcal X)/\varpi) /\Spec k$. The last assertion follows from 
\cite[\href{https://stacks.math.columbia.edu/tag/0CCB}{Tag 0CCB}]{stacks-project}.
\end{proof}

In the following results we deduce properties of the ring $R^{\square, \psi}_{\rhobar}$.
\begin{cor}\label{ci_psi} The following hold:
\begin{enumerate}
\item $R^{\square, \psi}_{\rhobar}$ is a local complete intersection, flat over $\OO$ and of relative dimension
$(d^2-1)([F:\Qp]+1)$.
\item $R^{\square, \psi}_{\rhobar}/\varpi$ is a local complete intersection of dimension $(d^2-1)([F:\Qp]+1)$.
\end{enumerate}
\end{cor}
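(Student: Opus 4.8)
The plan is to deduce Corollary~\ref{ci_psi} from Corollary~\ref{ci_chi} (the already-established complete intersection and flatness properties of $R^{\square,\chi}_{\rhobar}$) via the twisting isomorphism of Corollary~\ref{cor_TwistedIsom} and the structural properties of $\varphi_d$ recorded in Lemma~\ref{lem_R1Basics}. The starting point is the natural isomorphism
\[ R_{\rhobar}^{\square,\chi}\otimes_{\OO(\mathcal X), \varphi_d}\OO(\mathcal X) \cong R_{\rhobar}^{\square,\psi} \widehat\otimes_\OO\OO(\mathcal X). \]
Since $\varphi_d:\OO(\mathcal X)\to \OO(\mathcal X)$ is finite flat by Lemma~\ref{lem_R1Basics}, base change along $\varphi_d$ preserves being a local complete intersection and preserves flatness over $\OO$; hence the left-hand side is a complete intersection, flat over $\OO$. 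On the right-hand side, $\OO(\mathcal X)\cong\OO\br{y_1,\ldots,y_{[F:\Qp]+1}}$ is formally smooth over $\OO$ of relative dimension $[F:\Qp]+1$, so $R_{\rhobar}^{\square,\psi} \widehat\otimes_\OO\OO(\mathcal X)$ is a complete intersection (resp.~$\OO$-flat) if and only if $R^{\square,\psi}_{\rhobar}$ is; and its Krull dimension exceeds that of $R^{\square,\psi}_{\rhobar}$ by exactly $[F:\Qp]+1$. This immediately gives part (1): $R^{\square,\psi}_{\rhobar}$ is a local complete intersection, flat over $\OO$, and its relative dimension over $\OO$ equals
\[ \big(\dim R^{\square,\chi}_{\rhobar}-1\big) - ([F:\Qp]+1) = (d^2-1)([F:\Qp]+1), \]
using $\dim R^{\square,\chi}_{\rhobar} = 1 + d^2 + d^2[F:\Qp]$ from Corollary~\ref{ci_chi} and a short arithmetic simplification $d^2+d^2[F:\Qp]-[F:\Qp]-1 = (d^2-1)+(d^2-1)[F:\Qp]$.

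For part (2), I would reduce modulo $\varpi$. Because $\varphi_d$ induces a universal homeomorphism on special fibres (Lemma~\ref{lem_R1Basics}) — in fact $\overline{\varphi}_d$ is a power of the relative Frobenius, hence in particular $\OO(\mathcal X)/\varpi$ is finite free over itself via $\overline\varphi_d$ — reducing the twisting isomorphism mod $\varpi$ yields
\[ (R_{\rhobar}^{\square,\chi}/\varpi)\otimes_{\OO(\mathcal X)/\varpi, \overline\varphi_d}\OO(\mathcal X)/\varpi \cong (R_{\rhobar}^{\square,\psi}/\varpi) \otimes_k (\OO(\mathcal X)/\varpi). \]
The left side is finite free over the complete intersection $R^{\square,\chi}_{\rhobar}/\varpi$ (complete intersection by Corollary~\ref{ci_chi}(2)) along the flat map $\overline\varphi_d$, hence is itself a complete intersection of the same dimension $d^2+d^2[F:\Qp]$. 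The right side is $(R^{\square,\psi}_{\rhobar}/\varpi)\wtimes_k k\br{y_1,\ldots,y_{[F:\Qp]+1}}$, a formally smooth extension of relative dimension $[F:\Qp]+1$ over $R^{\square,\psi}_{\rhobar}/\varpi$; so $R^{\square,\psi}_{\rhobar}/\varpi$ is a local complete intersection of dimension $d^2+d^2[F:\Qp]-([F:\Qp]+1) = (d^2-1)([F:\Qp]+1)$.

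The routine-but-necessary lemmas to invoke or check are: (a) base change of a local complete intersection along a finite flat (or more generally flat) ring map is again a local complete intersection — this follows from the local criterion, e.g.~\cite[\href{https://stacks.math.columbia.edu/tag/09Q4}{Tag 09Q4}]{stacks-project} applied to a presentation, together with the behavior of regular sequences under flat base change; (b) flatness over $\OO$ is detected by $\varpi$-torsion-freeness and is preserved under the tensor operations above; (c) for a formally smooth $R$-algebra $S$ of relative dimension $n$, $\dim S = \dim R + n$ and $S$ is a complete intersection iff $R$ is. The main obstacle I anticipate is purely bookkeeping: carefully tracking that the two sides of Corollary~\ref{cor_TwistedIsom} are finite free resp.~formally smooth with the asserted relative dimensions, and that ``complete intersection'' descends through a formally smooth extension (one needs $R^{\square,\psi}_{\rhobar}$ itself, not just the bigger ring, to be a complete intersection — this is \cite[\href{https://stacks.math.columbia.edu/tag/09Q3}{Tag 09Q3}]{stacks-project} or an analogous descent statement). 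No genuinely hard new input is needed beyond what is already in the excerpt.
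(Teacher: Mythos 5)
Your argument is correct, but it runs along a different mechanism than the paper's. The paper first pushes out the presentation of Proposition~\ref{Prop-RelCI} along $R_{\det\rhobar}\to\OO$ (corresponding to $\psi$) to write $R^{\square,\psi}_{\rhobar}\cong\OO\br{x_1,\dots,x_r}/(f_1,\dots,f_t)$ with $r-t=(d^2-1)([F:\Qp]+1)$, so that everything reduces to the single inequality $\dim R^{\square,\psi}_{\rhobar}/\varpi\le r-t$; that bound is obtained from Corollary~\ref{cor_TwistedIsom} using only the fact that $\overline{\varphi}_d$ is a universal homeomorphism on special fibres (a purely topological input, no flatness), after which the usual ``system of parameters, hence regular sequence'' argument gives both (1) and (2) at once. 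You instead transport the complete intersection property and $\OO$-flatness themselves across the twisting isomorphism, using the finite flatness of $\varphi_d$ from Lemma~\ref{lem_R1Basics} together with ascent of the complete intersection property along a finite flat local base change with complete intersection fibre, and then descent through the formally smooth extension $R^{\square,\psi}_{\rhobar}\to R^{\square,\psi}_{\rhobar}\wtimes_\OO\OO(\mathcal X)\cong R^{\square,\psi}_{\rhobar}\br{y_1,\dots,y_{[F:\Qp]+1}}$ (quotient by the regular sequence $y_1,\dots,y_{[F:\Qp]+1}$, and $\varpi$-torsion-freeness passing to the subring). This is valid and the dimension bookkeeping is right, but note that your step (a) is the only place where real care is needed: ``base change of a complete intersection along a finite flat map is a complete intersection'' is not literally Tag 09Q4; you either invoke Avramov-type ascent (flat local map, CI source, CI closed fibre $\OO(\mathcal X)/\varphi_d(\mm)\OO(\mathcal X)$, which is Artinian and cut out by a system of parameters in a regular ring), or argue by presentations: writing $R^{\square,\chi}_{\rhobar}\cong S/(f_1,\dots,f_t)$ with $S$ regular and lifting the structure map $\OO(\mathcal X)\to R^{\square,\chi}_{\rhobar}$ to $S$ by formal smoothness of $\OO(\mathcal X)$, one gets $S\otimes_{\OO(\mathcal X),\varphi_d}\OO(\mathcal X)\cong S\br{z_1,\dots,z_{[F:\Qp]+1}}/((1+z_i)^d-1-u(y_i))$, a complete intersection, on which $(f_1,\dots,f_t)$ remains a regular sequence by flatness and Nakayama. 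What each route buys: the paper's argument needs less commutative algebra (only a dimension bound, for which the homeomorphism of special fibres suffices) but leans on the relative presentation of Proposition~\ref{Prop-RelCI}; yours avoids any direct presentation of $R^{\square,\psi}_{\rhobar}$ at the cost of the CI-ascent input. Finally, your separate treatment of (2) is unnecessary: once (1) gives $\OO$-flatness, $\varpi$ is a regular element and $R^{\square,\psi}_{\rhobar}/\varpi$ is a complete intersection of dimension one less.
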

\begin{proof}
The pushout of the isomorphism from Proposition~\ref{Prop-RelCI} under $R_{\det\rhobar}\to\OO$, which corresponds to $\psi$, gives an isomorphism 
    \[ \OO\br{x_1,\dots,x_r}/(f_1,\ldots,f_t)\xrightarrow{\simeq} R^{\square,\psi}_{\rhobar} \]
with $r-t=(d^2-1)([F:\Qp]+1)$. 
To prove (1) and (2) it thus suffices to show that the dimension of 
$R^{\square, \psi}_{\rhobar}/\varpi$ is at most  $(d^2-1)([F:\Qp]+1)$, or equivalently, see \eqref{OX1}, it suffices to show that 
\begin{equation}\label{bound_for_psi}
\dim \bigl((R^{\square, \psi}_{\rhobar}\wtimes_{\OO} \OO(\mathcal X))/\varpi\bigr) \le d^2([F:\Qp]+1).
\end{equation}
Let us write
$\overline{\mathcal X}:=\Spec \OO(\mathcal X)/\varpi$. 
Since 
$\overline{\varphi}_d: \overline{\mathcal X}\rightarrow \overline{\mathcal X}$
 is a universal homeomorphism 
by Lemma \ref{lem_R1Basics}
the map 
\begin{equation}\label{homeo1}
\Xbar^{\square, \chi}
\times_{\overline{\mathcal X}, \overline{\varphi}_d} \overline{\mathcal X}\rightarrow \Xbar^{\square, \chi}
\end{equation}
is a homeomorphism. In particular, the spaces have the same dimension, which is equal to $d^2([F:\Qp]+1)$ by Corollary \ref{ci_chi}.
We conclude using
Corollary~\ref{cor_TwistedIsom}
that \eqref{bound_for_psi} is an equality. 
\end{proof}

\begin{lem}\label{formal_smooth_psi} Let $\rho: G_F \rightarrow \GL_d(\kappa)$ be a representation, such that $\det \rho=\psi$, where $\kappa$ is either a finite or local field of characteristic $p$ or a finite extension of $L$. If $H^2(G_F, \adz\rho)=0$, then the ring $R^{\square, \psi}_{\rho}$ is formally smooth over $\Lambda$ with $\Lambda$ as in Subsection~\ref{sec_completions}. 
\end{lem}
\begin{proof} This is the same proof as the 
proof of Lemma \ref{formal_smooth}.
\end{proof} 

\begin{thm}\label{psi_normal} The rings $R^{\square, \psi}_{\rhobar}$ and $R^{\square, \psi}_{\rhobar}/\varpi$ are normal integral domains. 
\end{thm}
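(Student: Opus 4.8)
The plan is to deduce normality of $R^{\square,\psi}_{\rhobar}$ from normality of $R^{\square,\chi}_{\rhobar}$ (Corollaries~\ref{chi_normal} and~\ref{sp_normal}) by transporting the information along the isomorphism of Corollary~\ref{cor_TwistedIsom}. Since $R^{\square,\psi}_{\rhobar}$ and $R^{\square,\psi}_{\rhobar}/\varpi$ are already known to be complete intersections (Corollary~\ref{ci_psi}), hence Cohen--Macaulay, hence (S2), by Serre's criterion it suffices to show they are regular in codimension~$1$; and because they are local complete intersections they will then automatically be normal, and being local will then be domains. So the whole problem reduces to controlling the singular locus in codimension~$1$.

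The key step is to analyze the two maps relating the $\psi$- and $\chi$-versions. First I would record that Corollary~\ref{cor_TwistedIsom} gives $R^{\square,\chi}_{\rhobar}\otimes_{\OO(\mathcal X),\varphi_d}\OO(\mathcal X)\cong R^{\square,\psi}_{\rhobar}\wtimes_\OO\OO(\mathcal X)$. On the right-hand side, $R^{\square,\psi}_{\rhobar}\wtimes_\OO\OO(\mathcal X)\cong R^{\square,\psi}_{\rhobar}\br{y_1,\dots,y_{[F:\Qp]+1}}$ is a formal power series ring over $R^{\square,\psi}_{\rhobar}$, so it is normal (resp.\ regular in codimension~$1$) if and only if $R^{\square,\psi}_{\rhobar}$ is, and likewise after reducing mod $\varpi$. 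On the left-hand side, $\varphi_d$ is finite flat and becomes \'etale after inverting $p$, and induces a universal homeomorphism on special fibres (Lemma~\ref{lem_R1Basics}). I would use this in two regimes. In the generic fibre: the map $\Spec\bigl(R^{\square,\chi}_{\rhobar}\otimes_{\OO(\mathcal X),\varphi_d}\OO(\mathcal X)\bigr)[1/p]\to \Spec R^{\square,\chi}_{\rhobar}[1/p]$ is finite \'etale, so the source is normal (resp.\ regular in codimension~$1$) because the target is, by Corollary~\ref{chi_normal}; transporting through the isomorphism, $R^{\square,\psi}_{\rhobar}[1/p]$ is normal. In the special fibre: the map $\overline{\varphi}_d$ induces a homeomorphism $\Xbar^{\square,\chi}\times_{\overline{\mathcal X},\overline\varphi_d}\overline{\mathcal X}\to\Xbar^{\square,\chi}$, so a point of the source is in codimension~$1$ iff its image is, and one can pull back the singular-locus codimension estimate. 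Concretely I would show: the complement of the Kummer-irreducible locus (and, in the exceptional case $d=2$, $F=\Qp$, $\Dbar$ reducible, also the relevant reducible loci) in $\Xbar^{\square,\chi}$ has codimension $\ge 2$, as in Proposition~\ref{nice_open}, and at points of that open locus $R^{\square,\chi}_{\rho_x}/\varpi$ is formally smooth over $\kappa(x)$ (Lemmas~\ref{formal_smooth}, \ref{pnot2_smooth}, \ref{peq2_smooth}); by the twist isomorphism the corresponding statement holds for $R^{\square,\psi}_{\rho_x}/\varpi$ using $\det\rho_x$ and $\adz$ are twist-invariant and $H^2(G_F,\adz\rho_x)=0$ is preserved under replacing $\rho_x$ by a twist. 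Then $R^{\square,\psi}_{\rhobar}/\varpi$ is regular in codimension~$1$.

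More precisely, for the special fibre I would run the argument directly on $\Xbar^{\square,\psi}\wtimes\OO(\mathcal X)/\varpi \cong \Xbar^{\square,\chi}\times_{\overline{\mathcal X},\overline\varphi_d}\overline{\mathcal X}$: compute dimensions via Corollary~\ref{ci_psi} and Lemma~\ref{lem_R1Basics}, identify the locus where $H^2(G_F,\adz\rho_x)\ne 0$ with (the pullback of) a closed subset of codimension $\ge 2$ using Lemma~\ref{Kummer_fix} applied to $R^{\square,\chi}_{\rhobar}/\varpi$ together with the homeomorphism~\eqref{homeo1}, and on the complement invoke formal smoothness as above to get (R1); then Serre's criterion gives that $\Xbar^{\square,\psi}\wtimes\OO(\mathcal X)/\varpi$, being a complete intersection satisfying (R1), is normal, hence so is its direct factor-style base $R^{\square,\psi}_{\rhobar}/\varpi$ (strip the power series variables), which is therefore a normal local ring and hence a domain. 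The generic fibre case is the analogue of Corollary~\ref{chi_normal} via the \'etale base change, and combining the two (as in Corollary~\ref{Xgen_chi_normal}, using $R^{\square,\psi}_{\rhobar}$ is $\OO$-flat by Corollary~\ref{ci_psi} and \cite[Corollary 2.2.23]{BH}) yields normality of $R^{\square,\psi}_{\rhobar}$ itself, whence it is a domain.

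The main obstacle I expect is the bookkeeping around the twisting isomorphism: I must check carefully that ``$H^2(G_F,\adz\rho)=0$'' and the conclusions of Lemmas~\ref{pnot2_smooth} and~\ref{peq2_smooth} are genuinely invariant under replacing $\rho$ by $\rho\otimes\theta^{-1}$ for a character $\theta$ trivial on $\mu$ (true since $\adz$ is unchanged by a twist and the cyclotomic-character obstructions in those lemmas only involve $\rho$ up to twist and the determinant), and that the closed subsets being compared under the homeomorphism~\eqref{homeo1} and the \'etale map on generic fibres really are the pullbacks of one another with the same codimension — in particular that taking the fibre product with $\overline{\varphi}_d$ does not merge components or drop dimension, which is exactly what the universal-homeomorphism statement of Lemma~\ref{lem_R1Basics} guarantees. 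Once this compatibility is in place, the rest is a routine reprise of the Serre's-criterion arguments already used for $R^{\square,\chi}_{\rhobar}$.
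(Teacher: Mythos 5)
Your proposal is correct and follows essentially the same route as the paper's proof: Serre's criterion with (S2) coming from the complete intersection property of Corollary \ref{ci_psi}, and (R1) obtained by transporting the codimension-$\ge 2$ bound for the locus with $H^2(G_F,\adz\rho_x)\neq 0$ from the $\chi$-side via Corollary \ref{cor_TwistedIsom}, Lemma \ref{lem_R1Basics}, the twist-invariance of $\adz$, the formal smoothness lemmas and Lemma \ref{Kummer_fix}, then deducing normality of $R^{\square,\psi}_{\rhobar}$ itself from $\OO$-flatness together with normality of both fibres as in Proposition \ref{Xgen_chi_normal}. The only (harmless) deviation is on the generic fibre, where you ascend normality along the finite \'etale map \eqref{etale} and descend through the power-series variables, instead of rerunning the codimension/contradiction argument with Lemma \ref{Kummer_fix_char0} as the paper does.
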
 
\begin{proof} We will first prove that
$R^{\square, \psi}_{\rhobar}/\varpi$
is normal. 
Since $R^{\square,\psi}_{\rhobar}/\varpi$ 
is complete intersection by Corollary~\ref{ci_psi},
it suffices to show that
$R^{\square, \psi}_{\rhobar}/\varpi$ 
satisfies Serre's condition (R1). 
Let $\mathfrak p \in \Xbar^{\square,\psi}
:=\Spec R^{\square,\psi}_{\rhobar}/\varpi$ be a point of height at most $1$ and assume that 
the local ring at $\pp$
is not regular. Then by Lemma~\ref{formal_smooth_psi} there is a closed irreducible subset $Z$ of $\Xbar^{\square,\psi}$
of codimension at most $1$, 
the closure of $\mathfrak p$, 
such that for all $z\in Z$ with 
finite or local residue field the space 
$H^2(G_F, \adz\rho_z)$ is non-zero.
Using the explicit bijection from 
the proof of 
Proposition~\ref{prop_TwistFunctors}, 
and the isomorphism of 
Corollary \ref{cor_TwistedIsom} modulo $\varpi$
it follows that there is a closed irreducible subset 
$W\subset \Xbar^{\square, \chi}\times_{\overline{\mathcal X}, \overline{\varphi}_d} \overline{\mathcal X}$ 
of codimension at most $1$, such that for all
$w\in W$ with finite or local residue field 
the space $H^2(G_F, \adz\rho_w)$ is non-zero,
where, 
as in the proof 
of Proposition~\ref{prop_TwistFunctors},
the point $w$ corresponds to a pair $(\rho_w, \theta_w)$. 
Since the map \eqref{homeo1} is a homeomorphism 
and sends $(\rho_w, \theta_w)$ to $\rho_w$, the image 
of $W$ in $\Xbar^{\square, \chi}$, which we denote by $W'$, 
is closed irreducible of codimension
at most $1$ in 
$\Xbar^{\square, \chi}$ 
and all $x\in W'$
with finite or local residue field have non-vanishing
$H^2(G_F, \adz\rho_x)$. Lemma \ref{Kummer_fix} implies that the codimension 
of $W'$ is at least $2$ yielding a contradiction.

Let us prove that $R^{\square, \psi}_{\rhobar}$ is normal. Since 
$R^{\square, \psi}_{\rhobar}$ is $\OO$-torsion free by Corollary \ref{ci_psi} and we know that the special fibre is normal, 
it is enough to prove that 
$R^{\square, \psi}_{\rhobar}[1/p]$ is normal,
see  the proof of Proposition \ref{Xgen_chi_normal}. 
Lemma \ref{lem_R1Basics} implies 
that the map 
\begin{equation}\label{etale}
X^{\square, \chi}[1/p]
\times_{\mathcal X[1/p], \varphi_d}
\mathcal X[1/p]\rightarrow 
X^{\square, \chi}[1/p]
\end{equation}
is finite \'etale. We proceed exactly 
as in the proof for the special fibre,
using \eqref{etale} instead of \eqref{homeo1} and Lemma \ref{Kummer_fix_char0} instead of Lemma 
\ref{Kummer_fix}.
\end{proof}

\begin{cor} The absolutely irreducible locus
is dense in $\Spec R^{\square, \psi}_{\rhobar}[1/p]$ and the Kummer-irreducible locus is dense 
in $\Spec R^{\square, \psi}_{\rhobar}/\varpi$.
\end{cor}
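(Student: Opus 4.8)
The plan is to transfer the known density statements for $R^{\square,\chi}_{\rhobar}$ to the fixed-determinant quotient $R^{\square,\psi}_{\rhobar}$ along the same geometric comparison used in the proof of Theorem~\ref{psi_normal}. First I would recall the two ingredients: by Corollaries~\ref{irr_gen_dense} and~\ref{nspcl_formal} (and their evident $\chi$-analogues via $A^{\gen,\chi}\to R^{\square,\chi}_{\rhobar}$, using that this map is flat as in the proof of Proposition~\ref{nspcl_formal}) the absolutely irreducible locus is dense in $\Spec R^{\square,\chi}_{\rhobar}[1/p]$ and the Kummer-irreducible locus is dense in $\Spec R^{\square,\chi}_{\rhobar}/\varpi$; and by Corollary~\ref{cor_TwistedIsom} together with Lemma~\ref{lem_R1Basics} we have the maps \eqref{homeo1} and \eqref{etale}, a universal homeomorphism on special fibres and a finite étale map on generic fibres, identifying $\Xbar^{\square,\psi}\times_k\overline{\mathcal X}$ (resp.\ $X^{\square,\psi}[1/p]\times\mathcal X[1/p]$) with $\Xbar^{\square,\chi}\times_{\overline{\mathcal X},\overline\varphi_d}\overline{\mathcal X}$ (resp.\ with the étale cover of $X^{\square,\chi}[1/p]$).

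For the generic fibre I would argue as follows. Density of the absolutely irreducible locus in $\Spec R^{\square,\chi}_{\rhobar}[1/p]$ pulls back, under the finite étale (in particular open and surjective) map \eqref{etale}, to density of the absolutely irreducible locus in $X^{\square,\chi}[1/p]\times_{\mathcal X[1/p],\varphi_d}\mathcal X[1/p]$ — note that irreducibility of $\rho_w$ is unchanged since the comparison isomorphism of Proposition~\ref{prop_TwistFunctors} sends $(\rho,\theta)$ to $(\rho\theta^{-1},\theta)$ and twisting by a character does not affect absolute irreducibility. Then the projection to the $\mathcal X[1/p]$-factor (equivalently, the other projection $X^{\square,\chi}[1/p]\times_{\mathcal X}\mathcal X\to X^{\square,\psi}[1/p]\times\mathcal X\to X^{\square,\psi}[1/p]$) is faithfully flat with irreducible (indeed integral, since $\OO(\mathcal X)[1/p]$ is a domain) fibres, so it again induces a density-preserving correspondence on closed subsets; since the absolutely irreducible locus on the source is exactly the preimage of the absolutely irreducible locus in $\Spec R^{\square,\psi}_{\rhobar}[1/p]$ times $\mathcal X[1/p]$, we conclude density in $\Spec R^{\square,\psi}_{\rhobar}[1/p]$ — alternatively, and more simply, one can invoke Corollary~\ref{irr_sub_dense}-style reasoning: the complement of the absolutely irreducible locus in $\Spec R^{\square,\psi}_{\rhobar}[1/p]$ is contained in the singular locus (Proposition~\ref{Virr_reg}'s argument shows $H^2(G_F,\adz\rho_x)=0$ at absolutely irreducible points), which has codimension $\ge 2$ by Theorem~\ref{psi_normal}, and $\Spec R^{\square,\psi}_{\rhobar}[1/p]$ is equidimensional by Corollary~\ref{ci_psi}.

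For the special fibre the argument is parallel, using the universal homeomorphism \eqref{homeo1} in place of \eqref{etale}: a universal homeomorphism is a bijection on underlying topological spaces preserving closures, so density transfers both ways between $\Xbar^{\square,\chi}\times_{\overline{\mathcal X},\overline\varphi_d}\overline{\mathcal X}$ and $\Xbar^{\square,\chi}$, and hence to $\Xbar^{\square,\psi}\times_k\overline{\mathcal X}$; then flatness of $k\to\OO(\mathcal X)/\varpi$ with integral fibres (Lemma~\ref{going_down}) pushes density down to $\Spec R^{\square,\psi}_{\rhobar}/\varpi$. One must check that the Kummer-irreducibility condition is preserved under the twist $(\rho,\theta)\mapsto(\rho\theta^{-1},\theta)$: this holds because $\theta$ is a character, so $\rho|_{G_{F'}}$ is absolutely irreducible if and only if $(\rho\theta^{-1})|_{G_{F'}}$ is, for every degree $p$ extension $F'/F(\zeta_p)$. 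The main obstacle, and the step requiring the most care, is this bookkeeping of which locus maps to which under the chain of maps \eqref{homeo1}/\eqref{etale} and the projection to $\overline{\mathcal X}$ (resp.\ $\mathcal X[1/p]$) — in particular confirming that the "good" loci on both sides correspond exactly (not just up to a dense open), so that density genuinely transfers — but once the explicit description of the bijection of functors from Proposition~\ref{prop_TwistFunctors} is unwound, this is routine. Alternatively, the cleanest route simply cites Theorem~\ref{psi_normal} for codimension $\ge 2$ of the singular locus plus equidimensionality from Corollary~\ref{ci_psi} and the vanishing of $H^2(G_F,\adz\rho_x)$ at Kummer-irreducible (resp.\ absolutely irreducible) points, mirroring Corollary~\ref{irr_sub_dense}.
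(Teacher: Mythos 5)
Your main argument is correct and is essentially the paper's own proof: the paper likewise starts from Proposition~\ref{nspcl_formal} and Corollary~\ref{irr_gen_dense} on the $\chi$-side and transfers density along the twist isomorphism of Corollary~\ref{cor_TwistedIsom}, using the universal homeomorphism \eqref{homeo1} on special fibres and the finite \'etale map \eqref{etale} on generic fibres, exactly as in the proof of Theorem~\ref{psi_normal}; your explicit check that absolute irreducibility and Kummer-irreducibility are unchanged under the twist $(\rho,\theta)\mapsto(\rho\theta^{-1},\theta)$ is precisely the bookkeeping that makes this transfer legitimate.

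One caveat: the ``cleanest route'' you offer at the end is not correct as stated. Vanishing of $H^2(G_F,\adz\rho_x)$ at absolutely irreducible (resp.\ Kummer-irreducible) points yields the inclusion of the singular locus in the \emph{complement} of that locus, not the reverse inclusion you assert; reducible points with $H^2(G_F,\adz\rho_x)=0$ (e.g.\ those covered by Lemma~\ref{pnot2_smooth}) are also regular, and for $d=2$, $F=\Qp$ the reducible locus has codimension $1$, so it certainly cannot sit inside the codimension-$\ge 2$ singular locus. Hence Theorem~\ref{psi_normal} plus equidimensionality from Corollary~\ref{ci_psi} does not by itself give the density statement. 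If you want to avoid the full transfer of loci, you could instead note that $\Spec R^{\square,\psi}_{\rhobar}[1/p]$ (resp.\ $\Spec R^{\square,\psi}_{\rhobar}/\varpi$) is irreducible, being the spectrum of a domain by Theorem~\ref{psi_normal}, and that the relevant bad locus is a proper closed subset; but properness still requires producing at least one absolutely irreducible (resp.\ Kummer-irreducible) point on the $\psi$-side, which again comes from the transfer argument, so the first route is the one to keep.
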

\begin{proof} 
By Proposition~\ref{nspcl_formal} and Corollary~\ref{irr_gen_dense} the absolutely irreducible locus is dense open in $\Spec R^{\square,\chi}_{\rhobar}/\varpi$ and in $\Spec R^{\square,\chi}_{\rhobar}[1/p]$. Arguing as in the proof of Theorem \ref{psi_normal} one deduces that the absolutely irreducible locus is dense open in the spaces $\Spec R^{\square,\psi}_{\rhobar}/\varpi$ and $\Spec R^{\square,\psi}_{\rhobar}[1/p]$. For absolutely irreducible $x\in \Spec R^{\square,\chi}_{\rhobar}/\varpi$ Kummer-irreducibility implies $H^2(G_F,\adz\rho_x)=0$, so the assertion on the density of the Kummer-irreducible locus in $\Spec R^{\square,\chi}_{\rhobar}/\varpi$ follows from the proof of Theorem \ref{psi_normal}.
\end{proof}

As explained in Section~\ref{sec_irrcomp} both $R^{\ps}$ and $A^{\gen}$ are naturally $R_{\det \Dbar}$-algebras. Moreover, $\det \Dbar=\det\rhobar$. 
We let 
$$ R^{\ps, \psi}:=R^{\ps}\otimes_{R_{\det \Dbar}, \psi}\OO, \quad A^{\gen, \psi}:=A^{\gen}\otimes_{R_{\det \Dbar}, \psi}\OO.$$

\begin{cor}\label{ci_gen_psi} The following hold:
 \begin{enumerate}
 \item $A^{\gen, \psi}$ is $\OO$-flat, 
 equi-dimensional of dimension $1+ (d^2-1)([F:\Qp]+1)$, normal and is locally complete intersection;
 \item $A^{\gen, \psi}/\varpi$ is  equi-dimensional of dimension $(d^2-1)([F:\Qp]+1)$, normal, and is locally complete intersection. 
 \end{enumerate}
 \end{cor}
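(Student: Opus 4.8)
The plan is to deduce Corollary~\ref{ci_gen_psi} from the results already established for $A^{\gen,\chi}$ (Corollary~\ref{ci_gen_chi}, Corollary~\ref{Xgen_chi_normal}, Proposition~\ref{Agen_chi_domain}) together with the twisting isomorphism, exactly paralleling the way Corollary~\ref{ci_psi} was deduced from Corollary~\ref{ci_chi}. First I would fix the character $\chi:\mu\to\OO^\times$ equal to the restriction of $\psi$ to $\mu$ under the Artin map, so that $A^{\gen,\psi}$ is a quotient of $A^{\gen,\chi}$. The key input is an analogue of Corollary~\ref{cor_TwistedIsom} at the level of the generic-matrix algebras: the natural transformation $\mathcal D^{\gen}_{\rhobar}\to\mathcal X$ sending a Cayley--Hamilton representation to $(\det)\psi^{-1}$ makes $A^{\gen,\chi}$ an $\OO(\mathcal X)$-algebra, and the same twisting bijection $(\rho,\theta)\mapsto(\rho\cdot\theta^{-1},\theta)$ (which makes sense for maps $E\to M_d(B)$, not just for honest Galois representations, since it only multiplies the matrix map by a central character) yields a natural isomorphism
\[
 A^{\gen,\chi}\otimes_{\OO(\mathcal X),\varphi_d}\OO(\mathcal X)\;\cong\;A^{\gen,\psi}\widehat\otimes_{\OO}\OO(\mathcal X).
\]
I would set this up carefully using the universal property of $A^{\gen}$ from Lemma~\ref{existence_Agen}: twisting a map $E\to M_d(B)$ by a character $\theta:G_F\to 1+\mm_B$ again gives a map of Cayley--Hamilton algebras (one must check the characteristic-polynomial coefficients transform correctly, which is where $\varphi_d$ enters since $\det$ scales by $\theta^d$), hence descends to a map $A^{\gen}\to B$, and the two constructions are mutually inverse.

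Granting this isomorphism, the rest is a transfer argument using Lemma~\ref{lem_R1Basics}. Since $\varphi_d$ is finite and flat, the map $A^{\gen,\chi}\to A^{\gen,\chi}\otimes_{\OO(\mathcal X),\varphi_d}\OO(\mathcal X)$ is finite and flat; since $\OO(\mathcal X)$ is formally smooth over $\OO$, the ring $A^{\gen,\psi}\widehat\otimes_\OO\OO(\mathcal X)$ is formally smooth over $A^{\gen,\psi}$ of relative dimension $[F:\Qp]+1$. Combining these with the fact that $A^{\gen,\chi}$ is $\OO$-flat, equi-dimensional of dimension $1+d^2([F:\Qp]+1)$ and locally complete intersection (Corollary~\ref{ci_gen_chi}), and that complete intersection and equi-dimensionality ascend/descend along finite flat and along formally smooth maps, I would read off $\dim A^{\gen,\psi}=1+d^2([F:\Qp]+1)-([F:\Qp]+1)=1+(d^2-1)([F:\Qp]+1)$, that $A^{\gen,\psi}$ is locally complete intersection and equi-dimensional, and $\OO$-flatness from $\OO$-flatness of $A^{\gen,\psi}\widehat\otimes_\OO\OO(\mathcal X)$. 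The special-fibre statement (2) follows the same way: on special fibres $\overline\varphi_d$ is a universal homeomorphism (Lemma~\ref{lem_R1Basics}), so $\Xbar^{\gen,\chi}\times_{\overline{\mathcal X},\overline\varphi_d}\overline{\mathcal X}\to\Xbar^{\gen,\chi}$ is a homeomorphism preserving dimension, giving $\dim A^{\gen,\psi}/\varpi=(d^2-1)([F:\Qp]+1)$; locally complete intersection descends as before using that $A^{\gen,\chi}/\varpi$ is locally complete intersection.

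For normality of $A^{\gen,\psi}$ and $A^{\gen,\psi}/\varpi$, I would mimic the proof of Theorem~\ref{psi_normal}. Since $A^{\gen,\psi}$ is $\OO$-flat and (to be shown) has normal special fibre, normality of $A^{\gen,\psi}$ reduces by \cite[Corollary 2.2.23]{BH} to normality of $A^{\gen,\psi}[1/p]$, and on the generic fibre $\varphi_d$ is finite \'etale, so $X^{\gen,\chi}[1/p]\times_{\mathcal X[1/p],\varphi_d}\mathcal X[1/p]\to X^{\gen,\chi}[1/p]$ is finite \'etale; as $X^{\gen,\chi}[1/p]$ is normal (Proposition~\ref{X1p_normal}) and $\mathcal X[1/p]$ is smooth, the fibre product is normal, hence so is the formally smooth $A^{\gen,\psi}[1/p]\widehat\otimes\OO(\mathcal X)[1/p]$, hence $A^{\gen,\psi}[1/p]$. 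For the special fibre I would use that $A^{\gen,\psi}/\varpi$ is locally complete intersection (hence (S2)) and verify (R1) by the same codimension argument: a non-regular point of codimension $\le 1$ would by Lemma~\ref{formal_smooth_psi} propagate via the twisting bijection and the homeomorphism on special fibres to a codimension-$\le 1$ locus in $\Xbar^{\gen,\chi}$ along which $H^2(G_F,\adz\rho_x)\neq 0$ at all finite/local points, contradicting Lemma~\ref{Kummer_fix} (or rather the bound on $\Xbar^{\gen,\chi}\setminus V^{0,\chi}$ in Proposition~\ref{nice_open}, which is the $A^{\gen}$-level version underlying Lemma~\ref{Kummer_fix}). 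The main obstacle I anticipate is the careful verification that the character-twisting construction genuinely lifts to a morphism of $A^{\gen}$'s — i.e.\ that twisting a Cayley--Hamilton map $E\to M_d(B)$ by $\theta$ respects the full Cayley--Hamilton ideal $\CH(D^u)$ after the base change $\varphi_d$ — since unlike the framed deformation functor, $A^{\gen}$ is defined by a universal property among \emph{all} Cayley--Hamilton algebras and one must check the twist stays within that class; once this is in place, everything else is a routine propagation of the already-proven properties of $A^{\gen,\chi}$.
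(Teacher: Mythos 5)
Your plan would work, but it is a genuinely different route from the paper's, and its one new ingredient costs more than you allow. The paper never constructs a twisting isomorphism at the level of $A^{\gen}$. Its proof of Corollary \ref{ci_gen_psi} is purely local: $A^{\gen,\psi}$ is excellent (finite type over a complete local Noetherian ring), so normality of a local ring can be checked on its completion; applying $\otimes_{R_{\det\Dbar},\psi}\OO$ to Proposition \ref{main_complete}, Corollary \ref{the_same_comp} and Lemmas \ref{333}--\ref{335} identifies the completed local rings of $X^{\gen,\psi}$ at closed points with $R^{\square,\psi}_{\rho_z}$ (possibly after adjoining $\br{T}$); and then Corollary \ref{ci_psi} and Theorem \ref{psi_normal} — in which all the $\varphi_d$-twisting has already been absorbed at the complete-local level via Corollary \ref{cor_TwistedIsom} — give $\OO$-flatness, the complete intersection property and normality, while Lemma \ref{fix} gives equi-dimensionality and the dimension count, exactly as in Corollary \ref{ci_gen_chi}. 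So the paper buys everything pointwise from the already-proved fixed-determinant results and needs no global comparison of $X^{\gen,\psi}$ with $X^{\gen,\chi}$.

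Your route instead hinges on the isomorphism $A^{\gen,\chi}\otimes_{\OO(\mathcal X),\varphi_d}\OO(\mathcal X)\cong A^{\gen,\psi}\otimes_{R^{\ps,\psi}}(R^{\ps,\psi}\wtimes_\OO\OO(\mathcal X))$ (note that this is the correct reading of "$A^{\gen,\psi}\wtimes_\OO\OO(\mathcal X)$", since $A^{\gen,\psi}$ is not complete local). This is plausible, but it is not merely "multiplying the matrix map by a central character": the universal property of Lemma \ref{existence_Agen} concerns algebra homomorphisms $E\to M_d(B)$ for arbitrary commutative $B$ carrying no topology, whereas the twist is only defined on group elements via the universal character $G_F\to\OO(\mathcal X)^{\times}$; it changes the determinant law, hence the $R^{\ps}$-algebra structure, and one must verify that the twisted map factors through the Cayley--Hamilton quotient attached to the twisted pseudo-character before the universal property can be invoked. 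That is a genuine construction, going beyond Proposition \ref{prop_TwistFunctors} (which lives on Artinian test objects where continuity is automatic). Moreover, even granting it, your (R1) argument on $\Xbar^{\gen,\psi}$ still needs the $\psi$-analogue of Proposition \ref{main_complete} to convert $H^2(G_F,\adz\rho_x)=0$ at finite/local points into regularity of the local rings, so the pointwise completed-local-ring machinery cannot be avoided — at which point the paper's short reduction to Theorem \ref{psi_normal} is available. The remaining transfers you propose (finite flat and finite étale ascent, descent along the faithfully flat extension by $\OO(\mathcal X)$, using the mod-$\varpi$ map only as a homeomorphism for codimension bounds rather than to transport normality, and the appeal to the codimension bound of Proposition \ref{nice_open}) are all correct in principle, though the equi-dimensionality bookkeeping still ultimately rests on Lemma \ref{fix}, as in the paper.
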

\begin{proof} The ring $A^{\gen}$ is excellent,
since it is a finitely generated over a complete local Noetherian ring. Thus its local rings are also excellent. An excellent local ring is normal if and only if its completion 
with respect to the maximal ideal is normal, 
\cite[Theorem 32.2 (i)]{matsumura}. Given this
the proof is the same as
the proof of Corollary \ref{ci_gen_chi} 
using Theorem  \ref{psi_normal}.
\end{proof} 

\begin{cor}\label{Agen_psi_domain} The rings $A^{\gen, \psi}$ and $A^{\gen,\psi}/\varpi$   are integral domains.
 \end{cor}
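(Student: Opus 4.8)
The plan is to imitate exactly the proof of Proposition~\ref{Agen_chi_domain}, now using the normality statements from Corollary~\ref{ci_gen_psi} in place of those from Corollary~\ref{ci_gen_chi}. First I would observe that since $A^{\gen,\psi}$ is normal by Corollary~\ref{ci_gen_psi}, it decomposes as a finite product of normal integral domains $A^{\gen,\psi}\cong A_1\times\cdots\times A_m$; equivalently $\Spec A^{\gen,\psi}=X^{\gen,\psi}$ is a disjoint union of its (irreducible) connected components $\Spec A_i$. The same holds for the special fibre $A^{\gen,\psi}/\varpi$. The goal is then to show $m=1$ in both cases, i.e.\ that $X^{\gen,\psi}$ and $\Xbar^{\gen,\psi}$ are connected.

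The key step is to reduce connectedness to the already-established connectedness of the fibre of $\pi\colon X^{\gen}\to X^{\ps}$ over the closed point of $X^{\ps}$ (Lemma~\ref{closed_orbit}). The action of $\GL_d$ on $X^{\gen}$ preserves $X^{\gen,\psi}$ (the determinant condition is $\GL_d$-invariant), and by the argument in the proof of Lemma~\ref{bound} the $\GL_d$-action leaves the connected components of $X^{\gen,\psi}$ invariant. By Lemma~\ref{fix}, applied to the closed $\GL_d$-invariant subscheme $X^{\gen,\psi}\subseteq X^{\gen}$ (and to each irreducible component, which is $\GL_d$-invariant), every connected component $\Spec A_i$ contains a closed point lying above $\mm_{R^{\ps}}$; hence each $A_i\otimes_{R^{\ps}}k$ is non-zero. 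If $m>1$ this would force the fibre $X^{\gen,\psi}\times_{R^{\ps}}k$ over the closed point to be disconnected. But this fibre is a closed subscheme of $X^{\gen}_y$ with $y=\mm_{R^{\ps}}$, cut out by the determinant condition $\det\rho=\psi$; since on that fibre $\psi$ reduces to $\det\rhobar$, which is already the determinant of every point of $X^{\gen}_y$, the fibre $X^{\gen,\psi}\times_{R^{\ps}}k$ equals $\Xbar^{\gen}$ restricted over $\mm_{R^{\ps}}$, i.e.\ it is $Y$ itself, and $Y$ is connected because $\Xbar^{\gen}_y = X^{\gen}_y$ is connected by Lemma~\ref{closed_orbit}. (More carefully: $\det$ of any point of $X^{\gen}_y$ is the pseudo-determinant of $\Dbar\otimes k$, so imposing $\det=\psi$ over the closed point is a vacuous condition after reduction mod $\varpi$, hence $X^{\gen,\psi}$ and $X^{\gen}$ have the same fibre over $\mm_{R^{\ps}}$, which is connected.) This contradiction gives $m=1$, so $A^{\gen,\psi}$ is an integral domain; the identical argument with the special fibre, using that the reduced subschemes of $\Xbar^{\gen,\psi}$ and $\Xbar^{\gen}$ agree over $\mm_{R^{\ps}}$ and that $Y$ is connected, shows $A^{\gen,\psi}/\varpi$ is an integral domain.

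I expect the main obstacle to be the bookkeeping in the reduction step: one must check carefully that the fibre of $X^{\gen,\psi}$ (resp.\ $\Xbar^{\gen,\psi}$) over the closed point of $X^{\ps}$ really coincides with the fibre of $X^{\gen}$ (resp.\ $\Xbar^{\gen}$), i.e.\ that fixing the determinant imposes nothing new on the closed fibre. This is where one uses that $\psi$ lifts $\det\rhobar=\det\Dbar$ and that, by Lemma~\ref{closed_pts}, every point of $X^{\gen}_y$ has pseudo-character $\Dbar\otimes k$, hence determinant $\det\rhobar\otimes k$. Once this identification is in place, connectedness is immediate from Lemma~\ref{closed_orbit}, and the rest is exactly the formal argument of Proposition~\ref{Agen_chi_domain}. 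Everything else—the product decomposition into normal domains, the $\GL_d$-invariance of components, the existence of a closed point above $\mm_{R^{\ps}}$ in each component—is verbatim from the cited results.

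\begin{proof}
Since $A^{\gen,\psi}$ is normal by Corollary~\ref{ci_gen_psi}, it is a finite product of normal integral domains $A^{\gen,\psi}\cong A_1\times\cdots\times A_m$, so $X^{\gen,\psi}$ is the disjoint union of its connected components $\Spec A_i$. The action of $\GL_d$ on $X^{\gen}$ preserves the closed subscheme $X^{\gen,\psi}$ (the condition on the determinant is conjugation-invariant) and leaves its connected components invariant, by the argument in the proof of Lemma~\ref{bound}. Applying Lemma~\ref{fix} to the closed $\GL_d$-invariant subscheme $X^{\gen,\psi}$ (and to each of its irreducible components, which is $\GL_d$-invariant, hence a union of the $\Spec A_i$), we find that each $\Spec A_i$ contains a closed point lying above $\mm_{R^{\ps}}$; thus $A_i\otimes_{R^{\ps}} k\neq 0$ for all $i$. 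By Lemma~\ref{closed_pts}, every point of $X^{\gen}_y$, where $y$ is the closed point of $X^{\ps}$, has associated pseudo-character $\Dbar\otimes_k\kappa$ and hence determinant $\det\rhobar$ after reduction; since $\psi$ lifts $\det\rhobar$, imposing $\det\rho=\psi$ after reducing modulo $\varpi$ is a vacuous condition, so the fibre of $X^{\gen,\psi}$ over $\mm_{R^{\ps}}$ coincides with $Y$, the fibre of $X^{\gen}$ over $\mm_{R^{\ps}}$. The fibre $X^{\gen}_y$ is connected by Lemma~\ref{closed_orbit}, hence so is $Y$. If $m>1$, then $Y$ would be disconnected (as it meets each of the $m\geq 2$ pieces $\Spec A_i$), a contradiction. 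Therefore $m=1$ and $A^{\gen,\psi}$ is an integral domain. The same argument, applied to the special fibre using the normality of $A^{\gen,\psi}/\varpi$ from Corollary~\ref{ci_gen_psi} and the fact that the reduced subscheme of the fibre of $\Xbar^{\gen,\psi}$ over $\mm_{R^{\ps}}$ agrees with the connected scheme $Y$, shows that $A^{\gen,\psi}/\varpi$ is an integral domain.
\end{proof}
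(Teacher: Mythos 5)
Your proof is correct and follows essentially the same route as the paper, which simply says the argument of Proposition \ref{Agen_chi_domain} carries over: normality gives a product of normal domains, Lemma \ref{fix} puts a point of each component over $\mm_{R^{\ps}}$, and connectedness of the closed fibre (Lemma \ref{closed_orbit}) forces a single factor. Your extra verification that the determinant condition is vacuous on the closed fibre (equivalently, that $\ker\psi\subset\mm_{R_{\det\Dbar}}$ maps into $\mm_{R^{\ps}}$, so $X^{\gen,\psi}$ and $X^{\gen}$ have the same fibre over the closed point) is exactly the point the paper leaves implicit, and it is handled correctly.
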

 \begin{proof} The proof is the same as the 
 proof of Proposition \ref{Agen_chi_domain}.
 \end{proof}
 
\begin{cor}\label{Rpspsi_normal} The ring $R^{\ps, \psi}[1/p]$ and 
the rigid space $(\Spf R^{\ps, \psi})^{\rig}$ are normal. 
The ring $R^{\ps, \psi}[1/p]$ is 
an integral domain. 
\end{cor}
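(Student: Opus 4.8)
The plan is to imitate the proof of Corollary~\ref{Rps_normal}, feeding in the fixed-determinant results of the present section in place of their $\chi$-versions. First I would record that $E^{\psi}:=E\otimes_{R_{\det\Dbar},\psi}\OO$ is again a Cayley--Hamilton algebra over $R^{\ps,\psi}$, finitely generated as an $R^{\ps,\psi}$-module, and that the Procesi construction of Lemma~\ref{existence_Agen} is compatible with the base change $-\otimes_{R_{\det\Dbar},\psi}\OO$, so that the algebra of generic matrices attached to $E^{\psi}$ is exactly $A^{\gen,\psi}$; this is the same bookkeeping already used for $\chi$ in Corollary~\ref{no_name2}. Then \cite[Theorem 2.20]{WE_alg} produces an adequate homeomorphism $X^{\gen,\psi}\sslash\GL_d\to X^{\ps,\psi}$, and \cite[Proposition 3.3.5]{alper} shows that the kernel of $R^{\ps,\psi}\to (A^{\gen,\psi})^{\GL_d}$ is nilpotent and vanishes after inverting $p$. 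Since every point of $\Spec R^{\ps,\psi}[1/p]$ has residue field of characteristic $0$, one obtains $R^{\ps,\psi}[1/p]=(A^{\gen,\psi}[1/p])^{\GL_d}$.

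Next I would deduce the ring-theoretic statements. By Corollaries~\ref{ci_gen_psi} and~\ref{Agen_psi_domain} the ring $A^{\gen,\psi}$ is a normal integral domain, hence so is its localization $A^{\gen,\psi}[1/p]$; and the ring of $\GL_d$-invariants of a normal domain is again a normal domain (an element of the fraction field of $(A^{\gen,\psi}[1/p])^{\GL_d}$ that is integral over the invariants is integral over $A^{\gen,\psi}[1/p]$, hence lies in it, and it is manifestly $\GL_d$-invariant). Therefore $R^{\ps,\psi}[1/p]=(A^{\gen,\psi}[1/p])^{\GL_d}$ is a normal integral domain, which gives the second and third assertions.

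For the normality of the rigid space $(\Spf R^{\ps,\psi})^{\rig}$ I would again invoke \cite[Theorem A.1]{finite}, precisely as in Corollary~\ref{Rps_normal}: its hypotheses hold because $A^{\gen,\psi}[1/p]$ is normal (Corollary~\ref{ci_gen_psi}) and, at each closed point of $X^{\gen,\psi}$, the completed local ring has normal generic fibre --- this last point following from the fixed-determinant versions of Proposition~\ref{main_complete} and Corollary~\ref{ci_psi} together with Theorem~\ref{psi_normal} (whose proof applies, with $\rhobar$ replaced by $\rho_x$, since the Euler characteristic formula and the inputs from \cite{bj} are available in that generality, cf.\ the remark after Corollary~\ref{ci_gen_chi}). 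The cited theorem then yields normality of $(\Spf R^{\ps,\psi})^{\rig}$ and re-proves $R^{\ps,\psi}[1/p]=(A^{\gen,\psi}[1/p])^{\GL_d}$.

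The main obstacle is really the bookkeeping in the first paragraph: one must check that the geometric invariant theory of \cite{WE_alg} can be applied after the base change along the non-flat map $\psi\colon R_{\det\Dbar}\to\OO$, i.e.\ that $A^{\gen,\psi}$ is genuinely the representation scheme of $E^{\psi}$, since one cannot simply base-change the identity $R^{\ps}[1/p]=(A^{\gen}[1/p])^{\GL_d}$ (taking $\GL_d$-invariants does not commute with $\psi$). An alternative route, which however only gives normality and not integrality directly, would be to transport normality of $R^{\ps,\chi}[1/p]$ (Corollary~\ref{Rps_normal}) across the finite \'etale map coming from the pseudo-character analogue of Corollary~\ref{cor_TwistedIsom} and Lemma~\ref{lem_R1Basics}, and then descend along the faithfully flat map $R^{\ps,\psi}[1/p]\to (R^{\ps,\psi}\wtimes_\OO\OO(\mathcal X))[1/p]$.
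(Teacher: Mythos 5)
Your proposal is correct and follows essentially the same route as the paper: the paper's proof cites \cite[Corollary A.1]{finite} for the two normality assertions (with Theorem \ref{psi_normal} as input, exactly parallel to Corollary \ref{Rps_normal} using Corollary \ref{generic_fibre_normal}), and proves integrality just as you do, via $R^{\ps,\psi}[1/p]=(A^{\gen,\psi}[1/p])^{\GL_d}$ together with Corollary \ref{Agen_psi_domain}. The base-changed Cayley--Hamilton bookkeeping with $E^{\psi}$ that you flag as the main obstacle is precisely the device the paper already uses for $E^{\chi}$ in Corollary \ref{no_name2} (and implicitly for $\psi$ in the corollary after \ref{Rpspsi_normal}), so your direct invariant-theoretic derivation of normality of $R^{\ps,\psi}[1/p]$ is a harmless, correct shortcut rather than a different method.
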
 
\begin{proof} This follows from \cite[Corollary A.10]{finite}. The last part is proved in the same way as Corollary \ref{Rps_normal} using Corollary \ref{Agen_psi_domain}.
\end{proof}
 
\begin{cor} The maximal reduced quotient of 
$R^{\ps, \psi}$ is equal to 
 the maximal $\OO$-torsion free quotient of
  $R^{\ps, \psi}$  and is an integral domain. 
  Moreover, if 
  $\Dbar$ is multiplicity free then $R^{\ps, \psi}$ is an 
$\OO$-torsion free integral domain. 
\end{cor}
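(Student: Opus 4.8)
The plan is to replay the arguments of Corollaries~\ref{no_name} and~\ref{no_name2} in the fixed--determinant setting, feeding in the integrality and flatness statements for $A^{\gen,\psi}$ that have already been proved. First I would set $E^\psi:=E\otimes_{R^\ps}R^{\ps,\psi}$ and note that it is a Cayley--Hamilton $R^{\ps,\psi}$-algebra of degree $d$: the Cayley--Hamilton ideal of \cite[\S 1.17]{che_durham} and the Procesi construction of Lemma~\ref{existence_Agen} are defined by the coefficients of characteristic polynomials and by equations, so they commute with the base change $R^\ps\to R^{\ps,\psi}$; consequently $A^{\gen,\psi}=A^{\gen}\otimes_{R^\ps}R^{\ps,\psi}$ is the algebra of generic matrices attached to $(E^\psi,D^{u,\psi})$, with $\GL_d$ acting by conjugation. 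This is exactly the bookkeeping already carried out for $\chi$ in Corollary~\ref{no_name2}, and I would cite that rather than redo it. Then \cite[Theorem 2.20]{WE_alg} gives that $X^{\gen,\psi}\sslash\GL_d\to X^{\ps,\psi}$ is an adequate homeomorphism, and \cite[Proposition 3.3.5]{alper} shows that the kernel $\mathfrak a$ of the (necessarily $\GL_d$-invariant) homomorphism $R^{\ps,\psi}\to (A^{\gen,\psi})^{\GL_d}$ is nilpotent and vanishes after inverting $p$.

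Next I would combine this with Corollary~\ref{Agen_psi_domain}, which gives that $A^{\gen,\psi}$ is an integral domain, and Corollary~\ref{ci_gen_psi}, which gives that it is $\OO$-flat, hence $\OO$-torsion free. The image of $R^{\ps,\psi}$ in $A^{\gen,\psi}$, namely $R^{\ps,\psi}/\mathfrak a$, is then a reduced, $\OO$-torsion free integral domain. Since $\mathfrak a$ is nilpotent it is contained in the nilradical of $R^{\ps,\psi}$, and since $R^{\ps,\psi}/\mathfrak a$ is reduced the nilradical is contained in $\mathfrak a$; so $\mathfrak a$ is the nilradical and $R^{\ps,\psi}/\mathfrak a$ is the maximal reduced quotient. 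On the other hand $\mathfrak a[1/p]=0$ means every element of $\mathfrak a$ is annihilated by a power of $\varpi$, so $\mathfrak a$ lies in the $\OO$-torsion ideal $\mathfrak t$ of $R^{\ps,\psi}$; conversely $\mathfrak t\subseteq\mathfrak a$ because $R^{\ps,\psi}/\mathfrak a$ is $\OO$-torsion free. Hence $\mathfrak a=\mathfrak t$ equals the nilradical, the maximal reduced quotient coincides with the maximal $\OO$-torsion free quotient, and this common quotient is the integral domain $R^{\ps,\psi}/\mathfrak a\hookrightarrow A^{\gen,\psi}$. For the last assertion, if $\Dbar$ is multiplicity free then $E$ is a generalized matrix algebra over $R^\ps$ by \cite[Theorem 2.22]{che_durham}, hence $E^\psi$ is a generalized matrix algebra over $R^{\ps,\psi}$ (stability under base change), so \cite[Theorem 3.8 (4)]{WE_alg} forces $R^{\ps,\psi}\xrightarrow{\sim}(A^{\gen,\psi})^{\GL_d}$, i.e.\ $\mathfrak a=0$; then $R^{\ps,\psi}$ is $\OO$-torsion free and, being a subring of the integral domain $A^{\gen,\psi}$, is itself an integral domain.

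The only point that is not entirely formal is the compatibility used at the start: one must be sure that forming the largest Cayley--Hamilton quotient and the Procesi algebra of generic matrices commutes with the base change $R^\ps\to R^{\ps,\psi}$, so that \cite[Theorem 2.20]{WE_alg} and \cite[Theorem 3.8 (4)]{WE_alg} apply verbatim to $(E^\psi,D^{u,\psi})$ over $R^{\ps,\psi}$. Since this is precisely the verification performed for $\chi$ in the proof of Corollary~\ref{no_name2}, I expect no real obstacle; everything after that is the same ideal-theoretic argument as in Corollary~\ref{no_name}.
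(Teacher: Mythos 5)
Your proposal is correct and follows essentially the same route as the paper, whose own proof is just ``proved in the same way as Corollary~\ref{no_name}'': the adequate homeomorphism $X^{\gen,\psi}\sslash\GL_d\to X^{\ps,\psi}$ plus \cite[Proposition 3.3.5]{alper} to get a nilpotent, $p$-power-torsion kernel, combined with Corollaries~\ref{ci_gen_psi} and~\ref{Agen_psi_domain}, and the generalized-matrix-algebra argument via \cite[Theorem 3.8 (4)]{WE_alg} in the multiplicity free case. The points you spell out --- compatibility of the Cayley--Hamilton quotient and the Procesi construction with the base change $R^{\ps}\to R^{\ps,\psi}$, and the identification of the kernel with both the nilradical and the $\OO$-torsion ideal --- are exactly the details the paper leaves implicit, and they are handled correctly.
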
 
\begin{proof}This is proved in the same way as 
Corollary \ref{no_name}.
\end{proof}

\begin{prop}\label{prop_det_to_square} The map 
\begin{equation}\label{det_to_square}
R_{\det \rhobar}\rightarrow R^{\square}_{\rhobar}
\end{equation}
is flat.
\end{prop}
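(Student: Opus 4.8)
The plan is to prove flatness of $R_{\det\rhobar}\to R^{\square}_{\rhobar}$ by a fibre-wise criterion. Since $R_{\det\rhobar}\cong \OO[\mu]\br{y_1,\dots,y_{[F:\Qp]+1}}$ by Lemma~\ref{detmapmu}, it is a complete intersection, hence Cohen--Macaulay. We have already established (Proposition~\ref{Prop-RelCI}) that $R^{\square}_{\rhobar}\cong R_{\det\rhobar}\br{x_1,\dots,x_r}/(f_1,\dots,f_t)$ with $(f_1,\dots,f_t)$ a regular sequence in $R_{\det\rhobar}\br{x_1,\dots,x_r}$; in particular $R^{\square}_{\rhobar}$ is Cohen--Macaulay. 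So the natural approach is to use the "miracle flatness" criterion: a local homomorphism $A\to B$ of Noetherian local rings with $A$ regular (or at least Cohen--Macaulay) and $B$ Cohen--Macaulay is flat as soon as $\dim B = \dim A + \dim(B/\mm_A B)$. Here $\dim R_{\det\rhobar} = [F:\Qp]+2$ and $\dim R^{\square}_{\rhobar} = 1 + d^2 + d^2[F:\Qp]$ by Theorem~\ref{thm_intro-1}/Corollary~\ref{ci}, so it suffices to show the closed fibre $R^{\square}_{\rhobar}\otimes_{R_{\det\rhobar}}k = R^{\square,\psi_0}_{\rhobar}/\varpi$ (where $\psi_0$ is the trivial-ish lift, i.e.\ the fibre over the closed point) has dimension exactly $d^2+d^2[F:\Qp] - ([F:\Qp]+2) = (d^2-1)([F:\Qp]+1)$.

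First I would identify the closed fibre of \eqref{det_to_square} precisely. The closed point of $\Spec R_{\det\rhobar}$ corresponds to the trivial deformation of $\det\rhobar$; base-changing $R^{\square}_{\rhobar}$ along $R_{\det\rhobar}\to k$ yields $R^{\square,\psi}_{\rhobar}/\varpi$ for $\psi$ the Teichmüller-type lift, or more cleanly, it is the mod-$\varpi$ fibre of the fixed-determinant deformation ring. By Corollary~\ref{ci_psi}(2), $R^{\square,\psi}_{\rhobar}/\varpi$ is a local complete intersection of dimension exactly $(d^2-1)([F:\Qp]+1)$, which is precisely the number we need. (One should check the bookkeeping: the closed fibre of $R_{\det\rhobar}\to R^\square_{\rhobar}$ is indeed a mod-$\varpi$ fixed-determinant ring, because killing the maximal ideal of $R_{\det\rhobar}$ kills $\varpi$ together with all the $y_i$ and the group-ring augmentation.) Then $\dim R^\square_{\rhobar} = (1+d^2+d^2[F:\Qp]) = ([F:\Qp]+2) + (d^2-1)([F:\Qp]+1) = \dim R_{\det\rhobar} + \dim(\text{closed fibre})$, so miracle flatness applies and \eqref{det_to_square} is flat.

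The main obstacle, such as it is, is purely organizational: making sure that the closed fibre is correctly identified as $R^{\square,\psi}_{\rhobar}/\varpi$ (up to a harmless difference), and that the dimension equality is exact rather than merely an inequality. The dimension of $R^\square_{\rhobar}$ comes from Corollary~\ref{ci}, the dimension of $R_{\det\rhobar}$ from Lemma~\ref{detmapmu}, and the dimension of the fibre from Corollary~\ref{ci_psi}; since all three rings involved are Cohen--Macaulay (indeed complete intersections), the hypotheses of the local flatness criterion \cite[Theorem 23.1]{matsumura} are met. An alternative, equivalent route avoids choosing a base point: show $\dim R^\square_{\rhobar}/\mathfrak{a}R^\square_{\rhobar} = \dim R^\square_{\rhobar} - \dim R_{\det\rhobar}$ for $\mathfrak{a} = \mm_{R_{\det\rhobar}}$ directly using the regular sequence $(f_1,\dots,f_t)$ from Proposition~\ref{Prop-RelCI} together with the fact that the $[F:\Qp]+2$ generators of $\mm_{R_{\det\rhobar}}$ (namely $\varpi$ or $(1+z)^m-1$, and $y_1,\dots,y_{[F:\Qp]+1}$) form part of a system of parameters in $\OO\br{z,y_1,\dots,y_{[F:\Qp]+1},x_1,\dots,x_r}$ extending $(f_1,\dots,f_t)$ — which is exactly what was proved inside Proposition~\ref{Prop-RelCI}. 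Either way the proof is short; I would present the miracle-flatness version as the cleanest.
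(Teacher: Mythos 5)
Your overall strategy — complete intersection/Cohen--Macaulayness plus the fibre-dimension count $\dim R^{\square}_{\rhobar}=\dim R_{\det\rhobar}+\dim R^{\square,\psi}_{\rhobar}/\varpi$ coming from Corollaries \ref{ci} and \ref{ci_psi} — is the same as the paper's, and your identification of the closed fibre of \eqref{det_to_square} with $R^{\square,\psi}_{\rhobar}/\varpi$ is correct. The genuine gap is in how you invoke the criterion: miracle flatness (\cite[Theorem 23.1]{matsumura}) requires the \emph{base} to be regular, and your parenthetical ``or at least Cohen--Macaulay'' is false. A standard counterexample is the normalization of a cusp, $A=k\br{x,y}/(y^2-x^3)\hookrightarrow B=k\br{t}$ with $x\mapsto t^2$, $y\mapsto t^3$: both rings are Cohen--Macaulay, $\dim B=\dim A+\dim B/\mm_A B$ since the fibre is $k[t]/(t^2)$ of dimension $0$, yet $B$ is not flat over $A$ (finite flat over a local ring would be free of rank one, forcing $B\cong A$). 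Now $R_{\det\rhobar}\cong\OO[\mu]\br{y_1,\dots,y_{[F:\Qp]+1}}$ is regular only when $\mu=\mu_{p^\infty}(F)$ is trivial; if $\zeta_p\in F$ it is a non-regular complete intersection, so the direct application of the criterion to $R_{\det\rhobar}\to R^{\square}_{\rhobar}$ is unjustified, and your ``alternative route'' does not repair this — it only re-verifies the fibre-dimension hypothesis, which, as the example shows, does not imply flatness over a non-regular base.

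The missing step, which is exactly what the paper does, is to factor through a regular ring and then base change. Write $R_{\det\rhobar}\cong S/((1+z)^m-1)$ with $S:=\OO\br{z,y_1,\dots,y_{[F:\Qp]+1}}$ regular and $m$ the order of $\mu$, and lift the presentation of Proposition \ref{Prop-RelCI} to $R^{\square}_{\rhobar}\cong S\br{x_1,\dots,x_r}/((1+z)^m-1,f_1,\dots,f_t)$, compatibly with \eqref{det_to_square}. Set $S':=S\br{x_1,\dots,x_r}/(f_1,\dots,f_t)$: this is a complete intersection, hence Cohen--Macaulay, and its closed fibre $k\otimes_S S'$ is again $R^{\square,\psi}_{\rhobar}/\varpi$, of dimension $\dim S'-\dim S$ by Corollary \ref{ci_psi}. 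Miracle flatness now legitimately applies to $S\to S'$, and since $R^{\square}_{\rhobar}\cong S'\otimes_S R_{\det\rhobar}$, flatness of \eqref{det_to_square} follows by base change. With this one-step detour your argument becomes the paper's proof; as written it is only valid when $\zeta_p\notin F$.
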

\begin{proof} Let $S:=\OO\br{z, y_1, \ldots, y_{1+[F:\Qp]}}$. By arguing  as in the proof of Proposition \ref{Prop-RelCI}  we may choose presentations 
$$R_{\det \rhobar}\cong S/((1+z)^m-1), \quad 
R^{\square}_{\rhobar}\cong S\br{x_1, \ldots, x_r}/( (1+z)^m-1, f_1, \ldots, f_t),$$
such that \eqref{det_to_square} is a map of $S$-algebras and $(1+z)^m-1, f_1, \ldots, f_t$ is a regular sequence in $S\br{x_1, \ldots, x_r}$. Let $S':= S\br{x_1, \ldots, x_r}/(f_1, \ldots, f_t)$. Then $S'$ is complete intersection, and hence Cohen--Macaulay, and the fibre ring $k\otimes_S S'$ is isomorphic to $R^{\square, \psi}_{\rhobar}/\varpi$, which 
has dimension equal to $\dim R^{\square}_{\rhobar} - \dim R_{\det \rho}=\dim S'- \dim S$, by Corollary \ref{ci_psi}. Since $S$ is regular, the 
fibre-wise criterion for flatness, \cite[Theorem 23.1]{matsumura}, implies that $S'$ is flat over $S$.
Hence, $R^{\square}_{\rhobar}\cong S'/((1+z)^m-1)$ is flat over
$R_{\det \rhobar}\cong S/((1+z)^m-1)$.
\end{proof}

 \section{Density of points with prescribed \texorpdfstring{$p$}{p}-adic Hodge theoretic properties}\label{sec_dense}
 
 We fix a continuous representation $\rhobar: G_F \rightarrow \GL_d(k)$, let $R^{\square}_{\rhobar}$ be its universal framed deformation ring and let $X^{\square}=\Spec R^{\square}_{\rhobar}$. If $x:R^{\square}_{\rhobar}\rightarrow \Qpbar$ is an $\OO$-algebra homomorphism then we 
 denote by $\rho^{\square}_x:G_F\rightarrow \GL_d(\Qpbar)$ the specialization of 
the universal framed deformation  $\rho^{\square}: G_F\rightarrow \GL_d(R^{\square}_{\rhobar})
 $ at $x$. 
 In this section we will study Zariski closures of  subsets $\Sigma \subset X^{\square}(\Qpbar)$, 
 such that $\rho^{\square}_x$ is potentially semi-stable for all $x\in \Sigma$ and satisfies additional conditions imposed on either  the Hodge--Tate weights or the inertial type of $\rho^{\square}_x$. Recall that the
 Hodge--Tate weights $\HT(\rho)$ of a potentially semi-stable representation $\rho$ is a collection $\underline{k}$ of $d$-tuples of integers
 $\underline{k}_{\sigma}=(k_{\sigma,1}\ge k_{\sigma,2}\ge\ldots \ge k_{\sigma, d})$ for each embedding $\sigma: F\hookrightarrow \Qpbar$ and we say that 
 $\underline{k}$ is \textit{regular} if all the inequalities are strict. Let $$\Sigma^{\cris}:=\{ x\in X^{\square}(\Qpbar): \rho_x^{\square} \text{ is crystalline with regular Hodge--Tate weights}\}.$$
 For a fixed regular Hodge--Tate weight $\underline{k}$ we let 
 $$\Sigma_{\underline{k}}:=\{x\in X^{\square}(\Qpbar): \rho_x^{\square} \text{ is potentially semi-stable with } \HT(\rho^{\square}_x)= \underline{k}\}.$$
 If $\rho$ is potentially semi-stable then to it we may attach a Weil--Deligne representation $\WD(\rho)$; we denote by $\WD(\rho)^{F-\mathrm{ss}}$ its Frobenius semi-simplification. We may attach a smooth 
 irreducible representation of $\GL_d(F)$, which we denote by $\LL(\WD(\rho))$, to $\WD(\rho)^{F-\mathrm{ss}}$  via the classical Langlands correspondence, see \cite[Section 1.8]{6auth} for more details
 and further references. 
 
 Let $\Sigma^{\prnc}_{\underline{k}}$ be the subset of  $\Sigma_{\underline{k}}$, 
 such that $x\in \Sigma_{\underline{k}}$ lies in
 $\Sigma^{\prnc}_{\underline{k}}$ if and only if
 $\LL(\WD(\rho_x^{\square}))$ is a principal series representation.
 In terms of the Galois side  $\Sigma^{\prnc}_{\underline{k}}$ may be characterised
 as the set of $x\in \Sigma_{\underline{k}}$ such that the restriction of $\rho^{\square}_x$ to the Galois group of some finite abelian extension of  $F$ is crystalline. 
 
 Let $\Sigma^{\spcd}_{\underline{k}}$ be the subset of $\Sigma_{\underline{k}}$ such that $x$ lies in $\Sigma^{\spcd}_{\underline{k}}$ if and only if $\WD(\rho_x)$ is irreducible as a representation of the Weil group $W_F$ of $F$, and 
 is induced from a $1$-dimensional representation of $W_E$, where $E$ 
 is an unramified extension of $F$ of degree $d$. In this case, $\LL(\WD(\rho_x))$ is a supercuspidal representation of $\GL_d(F)$.
 
 The goal of this section is the following theorem.
 \begin{thm}\label{density} Assume that $p\nmid 2d$. Let $\Sigma$ be any of the sets $\Sigma^{\cris}$, $\Sigma^{\prnc}_{\underline{k}}$, $\Sigma^{\spcd}_{\underline{k}}$ defined above.  Then $\Sigma$ is Zariski dense in
 $X^{\square}$.
 \end{thm}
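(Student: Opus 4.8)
\textbf{Proof strategy for Theorem \ref{density}.}

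The plan is to combine the structural results on $R^{\square}_{\rhobar}$ obtained above (in particular that $R^{\square}_{\rhobar}$ is $\OO$-flat and that its generic fibre is reduced and equi-dimensional) with the existence and properties of the Taylor--Wiles--Kisin patched module $M_{\infty}$ constructed in \cite{6auth}, and then invoke the density criterion of \cite{EP}. The key new input from our work is the faithfulness of the $R^{\square}_{\rhobar}$-action on $M_{\infty}$, which is exactly the place where we need to know that $R^{\square}_{\rhobar}$ has no "spurious" components. Recall that \cite{6auth} requires $\rhobar$ to have a potentially diagonalisable lift, which is now known unconditionally by \cite{EG_stack}, so the patched module is available for every $\rhobar$ once $p\nmid 2d$.

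First I would recall the setup of \cite{6auth}: there is a complete local Noetherian $\OO$-algebra $R_{\infty} = R^{\square}_{\rhobar}\wtimes_{\OO}\OO\br{z_1,\dots,z_g}$ (a formally smooth power series extension of the local framed deformation ring, possibly after enlarging $L$ and adding auxiliary framing variables), carrying a module $M_{\infty}$ which is finite projective over a power series ring $S_{\infty}$ and such that $R_{\infty}$ acts on $M_{\infty}$ through its $S_{\infty}$-algebra structure. For each of the $p$-adic-Hodge-theoretic conditions defining $\Sigma^{\cris}$, $\Sigma^{\prnc}_{\underline k}$, $\Sigma^{\spcd}_{\underline k}$, there is a corresponding potentially semistable (or crystalline) quotient $R_{\infty}(\tau,\underline k)$ or $R_{\infty}^{\cris}$ etc., and by \cite{6auth} the module $M_{\infty}(\tau,\underline k) := M_{\infty}\otimes_{R_{\infty}} R_{\infty}(\tau,\underline k)$ is either zero or a faithful maximal Cohen--Macaulay module over the corresponding quotient, which is known to be equi-dimensional of the expected dimension. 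The main point is then: (i) the $R^{\square}_{\rhobar}$-action on $M_{\infty}$ is faithful (this is Theorem \ref{faithful_square}, whose proof uses that $R^{\square}_{\rhobar}$ is equi-dimensional and $\OO$-flat with the computed dimension, so that $M_{\infty}$, being MCM over $R_{\infty}$ of the right dimension, cannot miss any component); and (ii) the image of $\MaxSpec R_{\infty}(\tau,\underline k)[1/p]$ inside $\MaxSpec R_{\infty}[1/p]$ is Zariski dense in each irreducible component it meets, by the local model / potential automorphy arguments of \cite{6auth}.

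Next I would run the argument of \cite{EP}: because $M_{\infty}$ is faithful over $R_{\infty}$ and equi-dimensional, and because for suitable choices of inertial type $\tau$ and weight $\underline k$ the support of $M_{\infty}(\tau,\underline k)$ meets every irreducible component of $\Spec R_{\infty}[1/p]$ (this uses that $\rhobar$ — and hence every lift — has a potentially diagonalisable lift, giving non-emptiness of the relevant patched modules on each component), one concludes that the union over all $(\tau,\underline k)$ of the images of these potentially semistable loci is Zariski dense in $\Spec R_{\infty}[1/p]$. Since $R_{\infty}$ is a formally smooth (power series) extension of $R^{\square}_{\rhobar}$, Zariski density descends along $\Spec R_{\infty}[1/p]\to \Spec R^{\square}_{\rhobar}[1/p]$ (the map is flat with geometrically integral fibres, so pulls back dense opens to dense opens and pushes forward dense constructible sets to dense sets — here one uses that $R^{\square}_{\rhobar}[1/p]$ is equi-dimensional by Corollary \ref{ci_gen}). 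One then specialises to each of the three families $\Sigma^{\cris}$, $\Sigma^{\prnc}_{\underline k}$, $\Sigma^{\spcd}_{\underline k}$ by choosing the types appropriately: for $\Sigma^{\cris}$ one lets $\underline k$ vary over all regular weights with trivial type; for $\Sigma^{\prnc}_{\underline k}$ one fixes $\underline k$ and varies over principal-series types; for $\Sigma^{\spcd}_{\underline k}$ one fixes $\underline k$ and uses the unramified-degree-$d$-induced types. In each case Corollary \ref{irr_sub_dense} (density of a dense subset of closed points intersected with the irreducible locus, using equi-dimensionality) lets us intersect with the absolutely irreducible locus when needed to invoke the automorphy lifting input.

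The main obstacle, and the place where essentially all the work of the preceding sections is consumed, is establishing the faithfulness of the $R^{\square}_{\rhobar}$-action on $M_{\infty}$ (Theorem \ref{faithful_square}): a priori $M_{\infty}$ is only known to be supported on a union of components of $\Spec R_{\infty}$, and ruling out that it misses a component is precisely what requires knowing that $R^{\square}_{\rhobar}$ is $\OO$-flat, equi-dimensional of the expected dimension $d^2+d^2[F:\Qp]$, and a complete intersection — all of which are Corollary \ref{ci} and Corollary \ref{ci_gen}. Concretely, $M_{\infty}$ is maximal Cohen--Macaulay over $R_{\infty}$ and $R_{\infty}$ is a complete intersection (hence equi-dimensional and connected in codimension $1$), so $\Supp M_{\infty}$ is a union of components of full dimension; combined with a lower bound on $\dim \Supp M_{\infty}$ coming from the Taylor--Wiles patching (the module is faithful over $S_{\infty}$ of the right dimension) one forces $\Supp M_{\infty} = \Spec R_{\infty}$. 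Once faithfulness is in hand, the density statements are a formal consequence of the machinery of \cite{EP}, and the remaining routine points are the (standard) verification that the relevant potentially semistable deformation rings of \cite{6auth} are non-zero on each component — which follows from potential diagonalisability via \cite{EG_stack} — and the descent of density along the formally smooth map $\Spec R_{\infty}\to \Spec R^{\square}_{\rhobar}$.
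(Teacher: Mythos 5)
Your high-level skeleton matches the paper: use the patched module $M_{\infty}$ of \cite{6auth}, prove that $R^{\square}_{\rhobar}$ acts faithfully on it, and then run the capture/density machinery of \cite{EP}. But the step you identify as the crux — faithfulness (Theorem \ref{faithful_square}) — is exactly where your argument has a genuine gap. You propose to deduce $\Supp M_{\infty}=\Spec R_{\infty}$ from a maximal Cohen--Macaulay/dimension count ("$M_{\infty}$ is MCM over $R_{\infty}$, $R_{\infty}$ is a complete intersection, so the support is a union of full-dimensional components; a lower bound on the dimension of the support forces it to be everything"). This does not work: knowing the support is a union of components of full dimension never forces it to be \emph{all} of them, since $\Spec R_{\infty}$ has several components of the same dimension. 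Indeed, the paper explicitly remarks after Theorem \ref{density} that faithfulness of the full $R_{\infty}$-action on $M_{\infty}$ is \emph{not} known — it is equivalent to every component of the auxiliary potentially semistable rings containing an automorphic point, i.e. to modularity-lifting/Fontaine--Mazur type statements — so any argument that proves $R_{\infty}$-faithfulness by pure commutative algebra proves too much. Relatedly, your description $R_{\infty}\cong R^{\square}_{\rhobar}\wtimes_{\OO}\OO\br{z_1,\dots,z_g}$ is wrong: by Lemma \ref{components_Rinfty}, $R_{\infty}\cong R^{\square}_{\rhobar}\wtimes_{\OO}A$ with $A$ built from potentially semistable deformation rings at the other $p$-adic places, and it is precisely the unknown component structure of $A$ (from the automorphic point of view) that blocks the naive argument.

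What the paper actually does, and what your proposal is missing, is the mechanism by which the support of $M_{\infty}$ is shown to surject onto every component of $\Spec R^{\square}_{\rhobar}$ (which is all that is needed). The ingredients are: (a) Corollary \ref{BJ_conjecture}, which says the irreducible components of $R^{\square}_{\rhobar}$ are exactly the $\Spec R^{\square,\chi}_{\rhobar}$, indexed by characters $\chi$ of $\mu_{p^{\infty}}(F)$, i.e. components are distinguished by the restriction of the determinant to the $p$-power roots of unity; (b) Lemma \ref{cris_char}, producing for each $\chi$ a \emph{crystalline} character $\psi$ lifting $\det\rhobar$ with that prescribed restriction; (c) Lemmas \ref{delta} and \ref{proj}: the fibre $M_{\infty}\otimes_{R_{\det\rhobar},\psi}\OO$ is nonzero and projective in the category of linearly compact $\OO\br{K}$-modules with fixed central character, so by the highest-weight/density argument of \cite[Corollary 7.8]{DPS} there is an algebraic $\xi$ with $M_{\infty}(\xi)\otimes_{R_{\det\rhobar},\psi}\OO\neq 0$; and (d) \cite[Theorem 6.12]{EP} together with Lemma \ref{components_Rinfty}, which show the support of $\bigcup_{\xi}M_{\infty}(\xi)$ is a union of components of $\Spec R_{\infty}$ and that any such component containing a point with determinant restricting to $\chi$ on $\mu$ maps onto $\Spec R^{\square,\chi}_{\rhobar}$. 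Since $R^{\square}_{\rhobar}$ is reduced, hitting every minimal prime gives faithfulness. Your appeal to potential diagonalisability "giving non-emptiness of the relevant patched modules on each component" is not a substitute for this: potential diagonalisability only guarantees the existence of the patching construction, not that its support meets a given component. Finally, the concluding density step in the paper does not descend density along $\Spec R_{\infty}[1/p]\to\Spec R^{\square}_{\rhobar}[1/p]$; it argues directly that any $a\in R^{\square}_{\rhobar}$ vanishing on $\Sigma$ annihilates every $M_{\infty}(\xi)$, hence (by density of the locally algebraic evaluation map and $\OO$-torsion-freeness) annihilates $M_{\infty}$, hence is zero by faithfulness — but this part of your sketch is only a cosmetic difference; the real gap is the faithfulness argument.
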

 
 \begin{remar} We could additionally require 
 the representations in $\Sigma^{\cris}$ to be \textit{benign} in the sense of \cite[Definition 6.8]{EP} or instead of considering crystalline representations fix an inertial type. 
 
 One could also change the definition of $\Sigma^{\spcd}_{\underline{k}}$ to allow 
 $E$ to be a ramified extension of $F$, see \cite[Section 5.3]{EP}.
 \end{remar}
 The problem for $\Sigma^{\cris}$ has been studied 
 by Colmez \cite{colmez_tri}, 
 Kisin \cite{kisin_GQp_GL2Qp},
 Chenevier \cite{che_inf_fern},
 Nakamura \cite{nakamura1}, \cite{nakamura}. 
 Hellmann and Schraen have studied the problem for $\Sigma^{\prnc}_{\underline{k}}$ and $\Sigma^{\cris}$ in \cite{HS}. Emerton 
 and VP have studied the problem for $\Sigma^{\cris}$,
 $\Sigma^{\prnc}_{\underline{k}}$ 
 and $\Sigma^{\spcd}_{\underline{k}}$ in \cite{EP}. 
 A common feature of these papers is 
 that they show that 
 the closure of $\Sigma$ is a union of irreducible components of $X^{\square}$ and density is equivalent to 
 showing that $\Sigma$ meets each irreducible component. If one knows the irreducible components
 then one might hope to show density this way. This strategy has been carried out   for $\Sigma^{\cris}$ by Colmez--Dospinescu--VP in \cite{CDP2} for $p=d=2$ and $F=\Qp$ and by AI in \cite{Iy} for $p>d$ and $F$ arbitrary, when 
 $\rhobar$ is the trivial representation, 
 where after determining irreducible components
 one can write down the lifts explicitly. We note that using 
 Corollary \ref{BJ_conjecture} one may remove the assumption
 $p>d$ in \cite[Theorem 5.11]{Iy}. It seems impossible to carry this out for 
  arbitrary $\underline{k}$ and $\rhobar$  directly, even if 
 one knows that the irreducible components of 
 $X^{\square}$ are in bijection with irreducible 
 components of $\Spec R_{\det \rhobar}$. Instead 
 we combine our knowledge of irreducible components
 with results of \cite{EP}.
 
 The paper \cite{EP} builds on the global patching 
 arguments carried out in \cite{6auth}, which
 assumes that $p\nmid 2d$ and $\rhobar$ has 
 a potentially diagonalisable lift. This
 last condition can be easily verified if 
 $\rhobar$ is semi-simple, see 
 \cite[Lemma 2.2]{6auth}; it has been shown 
 to always be satisfied in 
 \cite[Theorem 1.2.2]{EG_stack}. The output of 
 \cite{6auth} is a complete local Noetherian $\OO$-algebra $R_{\infty}$ with residue field $k$ 
 and a linearly compact $R_{\infty}$-module
 $M_{\infty}$, which carries a continuous,
 $R_{\infty}$-linear action of $G:=\GL_d(F)$.
 Moreover, the action of $R_{\infty}[K]$ on
 $M_{\infty}$ extends (uniquely) to a continuous action 
 of the completed group algebra $R_{\infty}\br{K}$,
 where $K:=\GL_d(\OO_F)$, so that $M_{\infty}$ 
 is a finitely generated $R_{\infty}\br{K}$-module.
 
 \begin{lem}\label{components_Rinfty} We have an isomorphism of $R^{\square}_{\rhobar}$-algebras:
 $$ R_{\infty}\cong R^{\square}_{\rhobar}\wtimes_{\OO} A,$$
 where $A$ is a complete local Noetherian $\OO$-algebra, which is  $\OO$-torsion free, reduced and equi-dimensional. Thus the  ring $R_{\infty}$ is a reduced,
 $\OO$-torsion free and flat $R^{\square}_{\rhobar}$-algebra. 
 
 After replacing $L$ by a finite extension, the irreducible components of $\Spec R_{\infty}$
 are of the form $\Spec(R^{\square, \chi}_{\rhobar}\wtimes_{\OO} A/\pp)$, for a character $\chi:\mu_{p^{\infty}}(F)\rightarrow \OO^{\times}$
 and a minimal prime $\pp$ of $A$. Moreover, distinct pairs $(\chi, \pp)$ give rise to distinct irreducible components of $\Spec R_{\infty}$. 
 \end{lem}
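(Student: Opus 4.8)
The plan is to recall the structure of $R_\infty$ from the patching construction of \cite{6auth} and then graft onto it the results on irreducible components of $R^\square_{\rhobar}$ obtained earlier in this paper. First I would recall that by the construction in \cite{6auth}, $R_\infty$ is a power series ring over a completed tensor product of local lifting rings at the ramified places together with the patching variables; that is, $R_\infty \cong R^\square_{\rhobar}\wtimes_\OO A$ where $A$ absorbs the auxiliary local lifting rings at the places $v\neq p$ in the global set-up (together with the formal variables coming from patching). The key input one needs about $A$ is that it is $\OO$-torsion free, reduced and equi-dimensional: $\OO$-flatness and reducedness follow because the local lifting rings at places $v\neq p$ of residue characteristic $\neq p$ are $\OO$-flat and reduced (by the results of \cite{6auth}, ultimately going back to work on local lifting rings, e.g. of Choi, Gee, Shotton), and equi-dimensionality likewise follows from the explicit description of those local deformation rings. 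Tensoring a power series ring over such an $A$ preserves these properties, so $R_\infty$ is a reduced, $\OO$-torsion free and flat $R^\square_{\rhobar}$-algebra; flatness over $R^\square_{\rhobar}$ is immediate since $R_\infty = R^\square_{\rhobar}\wtimes_\OO A$ with $A$ flat over $\OO$.

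Next I would analyze the irreducible components. Since $R_\infty$ is $\OO$-torsion free, its irreducible components agree with those of $R_\infty[1/p]$, and by \eqref{invert_p} we have $R^\square_{\rhobar}[1/p]\cong \prod_\chi R^{\square,\chi}_{\rhobar}[1/p]$, hence
\[ R_\infty[1/p] \cong \Bigl(\prod_\chi R^{\square,\chi}_{\rhobar}[1/p]\Bigr)\wtimes_{L} A[1/p]. \]
By Corollaries~\ref{ci_chi} and~\ref{chi_normal} each $R^{\square,\chi}_{\rhobar}$ is an $\OO$-torsion free normal (hence integral) domain, so each $R^{\square,\chi}_{\rhobar}[1/p]$ is a normal domain; in particular $\Spec R^{\square,\chi}_{\rhobar}$ is irreducible and its spectrum is a single irreducible component of $\Spec R^\square_{\rhobar}$. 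On the other side, after enlarging $L$ so that all minimal primes of $A$ are geometrically prime and the residue fields are $L$, write $A_{\red}\hookrightarrow \prod_{\pp} A/\pp$ with $\pp$ ranging over the minimal primes; each $A/\pp$ is then an $\OO$-flat domain. The completed tensor product $R^{\square,\chi}_{\rhobar}\wtimes_\OO A/\pp$ of two $\OO$-flat rings, one of them a geometrically integral normal domain, is again a domain (one checks this after inverting $p$ and using that $R^{\square,\chi}_{\rhobar}[1/p]$ is geometrically integral so remains a domain after base change, combined with $\OO$-flatness to descend back); its spectrum is then an irreducible closed subscheme of $\Spec R_\infty$ of the expected dimension, and as $(\chi,\pp)$ vary these exhaust all irreducible components and are pairwise distinct, the latter because they already differ on the $\chi$-component (the generic fibre decomposition is indexed by $\chi$) or on the $A$-component (different minimal primes $\pp$).

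The main obstacle I expect is twofold. First, one must pin down precisely the factorization $R_\infty \cong R^\square_{\rhobar}\wtimes_\OO A$ from the patching set-up of \cite{6auth}: the patched ring there is a priori a completed tensor product over $\OO$ of $R^\square_{\rhobar}$ with local framed deformation rings at the auxiliary places (with a Taylor–Wiles type choice ensuring these are formally smooth over nice base rings) plus formal variables, and one needs the cited local results to conclude that the tensor factor $A$ is $\OO$-flat, reduced and equi-dimensional — this is where the assumption $p\nmid 2d$ and the hypothesis that $\rhobar$ has a potentially diagonalizable lift enter, guaranteeing that \cite{6auth} applies. Second, the claim that $R^{\square,\chi}_{\rhobar}\wtimes_\OO A/\pp$ is a domain (and that distinct pairs give distinct components) needs the normality/geometric irreducibility of $R^{\square,\chi}_{\rhobar}$ established in Corollary~\ref{chi_normal} together with a standard lemma on completed tensor products of geometrically integral $\OO$-flat algebras; after enlarging $L$ as indicated, this reduces to the corresponding statement after inverting $p$, which is the usual fact that the tensor product over a field of a geometrically integral algebra with any integral algebra is integral.
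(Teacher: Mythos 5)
Your overall strategy is the same as the paper's: quote the structure of $R_{\infty}$ from \cite{6auth} to get $R_{\infty}\cong R^{\square}_{\rhobar}\wtimes_{\OO}A$ with $A$ built out of nice local deformation rings, then combine the fact that the $R^{\square,\chi}_{\rhobar}$ are the (geometrically integral) irreducible components of $R^{\square}_{\rhobar}$ with a lemma on completed tensor products to enumerate the minimal primes. Two points in your write-up are off, however. First, you locate the hard input for $A$ in "local lifting rings at places $v\neq p$ of residue characteristic $\neq p$" (Choi/Gee/Shotton-type results). In the construction of \cite{6auth} the only auxiliary place away from $p$ contributes a ring that is formally smooth over $\OO$; the factor $A$ is formally smooth over the completed tensor product of the \emph{potentially crystalline} deformation rings at the \emph{other places above} $p$, and it is Kisin's theorem \cite[Theorem 3.3.8]{kisin_pst} that makes these $\OO$-torsion free, reduced and equi-dimensional; one then needs the appendix results of \cite{6auth} and \cite{HP} to see that these properties survive the completed tensor product (reducedness of a completed tensor product is not formal).

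Second, your argument for irreducibility of $R^{\square,\chi}_{\rhobar}\wtimes_{\OO}A/\pp$ "reduces to the corresponding statement after inverting $p$, which is the usual fact that the tensor product over a field of a geometrically integral algebra with any integral algebra is integral." This reduction does not work as stated: $(R^{\square,\chi}_{\rhobar}\wtimes_{\OO}A/\pp)[1/p]$ is not the ordinary tensor product $R^{\square,\chi}_{\rhobar}[1/p]\otimes_{L}(A/\pp)[1/p]$ but a completion of it, so the field-theoretic fact does not directly apply. What is needed (and what the paper uses) is the statement for completed tensor products of complete local Noetherian $\OO$-algebras with geometrically integral quotients, namely \cite[Lemma 3.3 (5)]{blght}, after enlarging $L$ via \cite[Lemma A.5]{HP} so that each $A/\pp$ is geometrically integral; one also needs geometric integrality of $R^{\square,\chi}_{\rhobar}$, which the paper extracts from its moduli interpretation together with Corollaries \ref{ci_chi} and \ref{chi_normal} applied over the larger coefficient field, rather than merely asserting it. With these corrections your argument coincides with the paper's; the distinctness of components for distinct pairs $(\chi,\pp)$ then falls out of the explicit description of the minimal primes of the completed tensor product, as in the paper.
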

 \begin{proof} The ring $R_{\infty}$ is defined 
 in \cite[Section 2.8]{6auth} and is formally 
 smooth over the ring denoted by
 $R^{\mathrm{loc}}$ in \cite[Section 2.6]{6auth}.
 The ring $R^{\mathrm{loc}}$ is a completed tensor
 product over $\OO$ of $R^{\square}_{\rhobar}$, the ring
 $R_{\tilde{v}_1}^{\square}$, which is formally 
 smooth over $\OO$ by \cite[Lemma 2.5]{6auth},
 and potentially semi-stable 
 rings at other places above $p$, denoted by $R^{\square, \xi, \tau}_{\tilde{v}}$ in \cite[Section 2.4]{6auth}. These are 
 $\OO$-torsion free, reduced and equi-dimensional by
 \cite[Theorem 3.3.8]{kisin_pst}. Thus $R_{\infty}\cong R^{\square}_{\rhobar}\wtimes A$, where $A$ is formally smooth over
 the ring 
 $\wtimes_{v\in S_p\setminus \pp} R^{\square,\xi,\tau}_{\tilde{v}}$ in the notation of 
 \cite{6auth}. Since the rings $R^{\square,\xi,\tau}_{\tilde{v}}$ are $\OO$-torsion free, reduced and equi-dimensional, so is the ring $A$ 
 by \cite[Corollary A.2]{6auth} and \cite[Lemma A.1]{HP}. Since $R^{\square}_{\rhobar}$ is 
 also $\OO$-torsion free, reduced and equi-dimensional we obtain that the same holds
 for $R_{\infty}$. Since $A$ is $\OO$-torsion free, $R_{\infty}$ is a flat $R^{\square}_{\rhobar}$-algebra. 
 
It follows from \cite[Lemma A.5]{HP} that after replacing $L$ with a finite extension, we may assume that for all minimal primes $\pp$ of $A$, the quotient $A/\pp$ is geometrically integral, by which we mean that 
$(A/\pp)\otimes_{\OO} \OO_{L'}$ is integral domain for all finite extensions $L'/L$.
If $\pp'$ is a minimal prime of $R^{\square}_{\rhobar}$ then $R^{\square}_{\rhobar}/\pp'= R^{\square, \chi}_{\rhobar}$ for a unique 
character $\chi: \mu_{p^{\infty}}(F)\rightarrow \OO^{\times}$ by Corollary \ref{BJ_conjecture}.  The moduli interpretation of 
$R^{\square, \chi}_{\rhobar}$ together with Corollary \ref{chi_normal} shows that the ring is geometrically integral. It follows from \cite[Lemma 3.3 (5)]{blght} that the minimal primes $\qq$ of $R_{\infty}$ are of the form $\pp'(R^{\square}_{\rhobar}\wtimes_{\OO} A)+\pp(R^{\square}_{\rhobar}\wtimes_{\OO} A)$, where $\pp'$ is the image of $\qq$ in $\Spec R^{\square}_{\rhobar}$ and 
$\pp$ is the image of $\qq$ in $\Spec A$. This implies the last assertion.
\end{proof}

In our arguments we will not invoke the assumption $p\nmid 2d$, since eventually this restriction used in construction of $M_{\infty}$ should become redundant. In particular,  the next two Lemmas  do not use this assumption.
 \begin{lem}\label{dth-root} Let $\psi:G_F\rightarrow \OO^{\times}$ be a character such 
 that $\psi$ is trivial on the torsion subgroup of $G_F^{\ab}$. Then after replacing $L$ by a finite extension we may find a character $\eta: G_F \rightarrow \OO^{\times}$ such that $\eta^d=\psi$. 
 \end{lem}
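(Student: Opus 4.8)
The plan is to factor $\psi$ through the maximal torsion-free quotient $\Gamma$ of $G_F^{\ab}$, which by local class field theory is a direct product of a $\widehat{\mathbb Z}$-factor and $[F:\Qp]$ copies of $\mathbb Z_p$, and then to build $\eta$ by prescribing its values on a set of topological generators of $\Gamma$, extracting $d$-th roots after a finite extension of scalars.

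In detail: local class field theory gives $G_F^{\ab}\cong\widehat{F^\times}$, and combined with the structure of the local unit group (as in the proof of Lemma~\ref{detmapmu}) this shows that the torsion subgroup of $G_F^{\ab}$ is $\mu(F)$ and that $\Gamma:=G_F^{\ab}/\mu(F)\cong\widehat{\mathbb Z}\times\mathbb Z_p^{n}$ with $n=[F:\Qp]$. By hypothesis $\psi$ factors through $\Gamma$. I would fix a topological generator $\gamma_0$ of the $\widehat{\mathbb Z}$-factor and the standard topological generators $\gamma_1,\dots,\gamma_n$ of the $\mathbb Z_p^{n}$-factor, and set $u_i:=\psi(\gamma_i)\in\OO^\times$. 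Since $\gamma_1,\dots,\gamma_n$ lie in a pro-$p$ subgroup of $\Gamma$ and the maximal pro-$p$ subgroup of $\OO^\times$ is the group $1+\varpi\OO$ of $1$-units, one has $u_i\in 1+\varpi\OO$ for $1\le i\le n$. I would then enlarge $L$ to a finite extension $L'$, with ring of integers $\OO'$ and uniformizer $\varpi'$, containing a $d$-th root of each $u_i$. For $1\le i\le n$ I claim the $d$-th root can be chosen among the $1$-units of $L'$: writing a given $d$-th root as $\zeta w$ with $\zeta$ a root of unity of order prime to $p$ and $w$ a $1$-unit (Teichm\"uller decomposition in $\OO'^\times$), the relation $\zeta^d w^d=u_i\in 1+\varpi\OO'\subseteq 1+\varpi'\OO'$ forces $\zeta^d=1$, so $w=\zeta^{-1}(\zeta w)$ is again a $d$-th root of $u_i$ and is a $1$-unit; call it $v_i$. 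For $i=0$ I would let $v_0\in\OO'^\times$ be any $d$-th root of $u_0$.

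Finally I would define $\eta\colon G_F\to\OO'^\times$ as the composite of $G_F\twoheadrightarrow\Gamma$ with the continuous character of $\Gamma=\widehat{\mathbb Z}\times\mathbb Z_p^{n}$ determined by $\gamma_0\mapsto v_0$ and $\gamma_i\mapsto v_i$ for $1\le i\le n$. This is well defined: a continuous character of $\widehat{\mathbb Z}$ into $\OO'^\times$ is the same as an element of $\OO'^\times$, a continuous character of $\mathbb Z_p$ into the pro-$p$ group of $1$-units of $L'$ is the same as a $1$-unit, and $\Gamma$ is the direct product of these two factors, so there is nothing further to check. By construction $\eta^d$ and $\psi$ agree on the topologically generating set $\gamma_0,\dots,\gamma_n$ of $\Gamma$, and both characters factor through $\Gamma$; hence $\eta^d=\psi$, and replacing $L$ by $L'$ finishes the proof. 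The only nonformal point — the rest being bookkeeping — is arranging the $d$-th roots $v_i$ for $i\ge 1$ to be $1$-units, so that $\eta$ extends continuously over the $\mathbb Z_p$-factors of $\Gamma$.
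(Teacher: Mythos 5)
Your argument is correct and follows essentially the same route as the paper: identify the maximal torsion-free quotient of $G_F^{\ab}$ with $\widehat{\mathbb Z}\times\Zp^{[F:\Qp]}$ via local class field theory, extract $d$-th roots of the values of $\psi$ on topological generators after a finite extension of $L$, and define $\eta$ on those generators, taking care that the roots over the $\Zp$-factors are $1$-units. The only cosmetic difference is that the paper first makes the value on the $\widehat{\mathbb Z}$-generator a $1$-unit by twisting with an unramified Teichm\"uller-lift character and then works uniformly with characters of $\Zp^{[F:\Qp]+1}$, whereas you treat the $\widehat{\mathbb Z}$-factor directly, using that any element of $\OO'^{\times}$ (a profinite group) defines a continuous character of $\widehat{\mathbb Z}$.
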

 \begin{proof} It follows from local class field theory that 
 the maximal torsion-free quotient of $G_F^{\ab}$ is isomorphic 
 to $\widehat{\mathbb Z}\times \Zp^{m}$, where $m=[F:\Qp]$. We choose topological generators $\gamma_1, \ldots, \gamma_{m+1}$, where $\gamma_1$ is a generator of $\widehat{\mathbb Z}$. Let $\overline{\psi(\gamma_1)}$ be the 
 image of 
 $\psi(\gamma_1)$ in $k$. If it is not equal to $1$ then 
  choose $\lambda\in \overline{k}$ such that $\lambda^d=
 \overline{\psi(\gamma_1)}$. We enlarge $L$, so that the residue field contains $\lambda$ and
 let $\mu: \widehat{\mathbb Z}\times \Zp^{m} \rightarrow \widehat{\mathbb{Z}}\rightarrow \OO^{\times}$ be the unramified character, such that $\mu(\gamma_1)$ is equal to 
 the Teichm\"uller lift of $\lambda$. 
 After replacing $\psi$ with $\psi \mu^{-d}$ we may 
 assume that $\psi(\gamma_1)\equiv 1\pmod{\varpi}$.
Thus we may view $\psi$ as a character on $\Zp^{m+1}$ and 
$\psi(\gamma_i)\equiv 1\pmod{\varpi}$ for $1\le i \le m+1$. 
After enlarging 
$L$ we may find $y_i \in (\varpi)$ such that $(1+y_i)^d= \psi(\gamma_i)$. 
Since the series $(1+y_i)^x:=\sum_{n=0}^{\infty} \begin{pmatrix}x\\ n\end{pmatrix} y_i^n$ converges for all $x\in \Zp$, we may define $\eta$ on $\Zp^{m+1}$ by sending $\gamma_i$ to $1+y_i$ and then inflate it to $G_F$. 
\end{proof}

\begin{lem}\label{cris_char} Let $\kappa: G_F\rightarrow \OO^{\times}$ be a character. Then 
there is a crystalline character $\psi: G_F\rightarrow \OO^{\times}$ such that $\psi\kappa^{-1}$ is trivial on the torsion part of $G_F^{\ab}$. 

In particular, given characters $\overline{\kappa}: G_F\rightarrow k^{\times}$ and $\chi: \mu_{p^{\infty}}(F)\rightarrow \OO^{\times}$ there exists 
a crystalline character $\psi: G_F\rightarrow \OO^{\times}$ lifting $\overline{\kappa}$ such that $\psi(\Art_F(z))= \chi(z)$ for all $z\in \mu_{p^{\infty}}(F)$.
\end{lem}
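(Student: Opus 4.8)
The proof relies on the standard classification of crystalline characters of $G_F$ together with elementary manipulations via local class field theory. I would first recall the input: a continuous character $G_F\to\Qpbar^{\times}$ is crystalline if and only if, up to an unramified twist, its restriction to the inertia subgroup $I_F$ corresponds under the local Artin isomorphism $\OO_F^{\times}\xrightarrow{\sim} I_F^{\ab}$ to a character of the shape $u\mapsto\prod_{\sigma\in\Hom_{\Qp-\mathrm{alg}}(F,\Qpbar)}\sigma(u)^{n_{\sigma}}$ for some integers $n_{\sigma}$; conversely every such tuple $(n_{\sigma})$, combined with every unramified character, is realised by a crystalline character. Since $L$ contains the normal closure of $F$ over $\Qp$, each embedding $\sigma$ carries $\OO_F$ into $\OO$, so such characters take values in $\OO^{\times}$. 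I will also use that $\mu(F)$, the group of all roots of unity in $F$, is finite cyclic, that $\Art_F$ identifies it with the torsion subgroup of $G_F^{\ab}$, that $\OO_F^{\times}/\mu(F)$ is pro-$p$, and that any continuous homomorphism from a pro-$p$ group to $\OO^{\times}$ has image in the maximal pro-$p$ subgroup $1+\varpi\OO$ and hence reduces to the trivial character modulo $\varpi$.

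For the first assertion, let $N=\#\mu(F)$, fix a generator $\zeta$ of $\mu(F)$ and an embedding $\sigma_0\colon F\hookrightarrow L$. Then $\sigma_0(\zeta)$ has exact order $N$ and therefore generates $\mu_N(L)$; since $\kappa(\Art_F(\zeta))\in\mu_N(L)$ there is an integer $n_0$ with $\sigma_0(\zeta)^{n_0}=\kappa(\Art_F(\zeta))$. Take $\psi$ to be a crystalline character whose inertial datum is the tuple $(n_{\sigma})$ with $n_{\sigma_0}=\pm n_0$ (the sign according to the normalisation above) and $n_{\sigma}=0$ for $\sigma\neq\sigma_0$. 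By construction $\psi$ and $\kappa$ agree on the generator $\Art_F(\zeta)$ of the cyclic group $\Art_F(\mu(F))$, hence on all of $\Art_F(\mu(F))$, which is the torsion subgroup of $G_F^{\ab}$; so $\psi\kappa^{-1}$ is trivial on that torsion subgroup, and so is any unramified twist of $\psi$.

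For the second assertion, I would first produce a character $\kappa\colon G_F\to\OO^{\times}$ lifting $\overline{\kappa}$ and satisfying $\kappa(\Art_F(z))=\chi(z)$ for all $z\in\mu_{p^{\infty}}(F)$. Local class field theory gives a decomposition $G_F^{\ab}\cong\widehat{\ZZ}\times\mu_{p'}(F)\times\mu_{p^{\infty}}(F)\times\ZZ_p^{[F:\Qp]}$ in which $\mu_{p^{\infty}}(F)$ is a direct factor; I set $\kappa$ equal to $\chi$ on this factor and to an arbitrary continuous lift of $\overline{\kappa}$ on the complementary factor (Teichm\"uller lifts handle the finite cyclic prime-to-$p$ factor, available because $L\supseteq F$, while the $\ZZ_p^{[F:\Qp]}$-part of $\overline{\kappa}$ is automatically trivial). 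This is consistent because both $\overline{\kappa}$ and $\chi$ are trivial on $\mu_{p^{\infty}}(F)$ modulo $\varpi$ (the former since $\mu_{p^{\infty}}(F)$ is a $p$-group and $k^{\times}$ has order prime to $p$, the latter since $\chi$ has pro-$p$ image). Applying the first assertion to this $\kappa$ yields a crystalline $\psi_1$ with $\psi_1\kappa^{-1}$ trivial on $\Art_F(\mu(F))$; since $\OO_F^{\times}/\mu(F)$ is pro-$p$, the restriction of $\psi_1\kappa^{-1}$ to $I_F^{\ab}=\Art_F(\OO_F^{\times})$ has pro-$p$ image, so $\overline{\psi_1}\,\overline{\kappa}^{-1}$ is unramified. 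Lifting the value of its inverse at a Frobenius element by Teichm\"uller produces an unramified, hence crystalline, character $\psi_2\colon G_F\to\OO^{\times}$, and I set $\psi:=\psi_1\psi_2$. Then $\psi$ is crystalline, $\overline{\psi}=\overline{\psi_1}\cdot(\overline{\psi_1}^{-1}\overline{\kappa})=\overline{\kappa}$, and $\psi$ agrees with $\kappa$ on $\Art_F(\mu(F))$ because $\psi_2$ is trivial there, so $\psi(\Art_F(z))=\chi(z)$ for all $z\in\mu_{p^{\infty}}(F)$.

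The only substantive ingredient is the classification/existence statement for crystalline characters recalled at the outset; everything else is bookkeeping with local class field theory, the cyclicity of $\mu(F)$, and reduction modulo $\varpi$. The step that deserves the most care is the construction of the unramified correction twist $\psi_2$: one must verify that $\overline{\psi_1}\,\overline{\kappa}^{-1}$ is genuinely unramified, which is exactly where the pro-$p$-ness of $\OO_F^{\times}/\mu(F)$ (equivalently: the two characters already agree on the torsion of $G_F^{\ab}$, and pro-$p$ characters reduce trivially mod $\varpi$) enters.
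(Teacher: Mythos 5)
Your proposal is correct and follows essentially the same route as the paper: identify characters of $G_F$ with characters of $F^{\times}$ via $\Art_F$, use the classification of crystalline characters (algebraic on $\OO_F^{\times}$ up to unramified twist), match $\kappa$ on a generator of the torsion subgroup $\mu(F)$ by a suitable choice of embedding and exponent, and deduce the second assertion by applying the first part to a lift $\kappa$ of $\overline{\kappa}$ that agrees with $\chi$ on $\mu_{p^{\infty}}(F)$. The only difference is that you spell out the unramified Teichm\"uller correction $\psi_2$ ensuring $\psi$ actually reduces to $\overline{\kappa}$ at Frobenius, a point the paper compresses into the remark that $\psi\kappa^{-1}\equiv 1\pmod{\varpi}$ (which is immediate only on inertia); this is extra care within the same argument rather than a different approach.
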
 
\begin{proof} The Artin map $\Art_F: F^{\times}\rightarrow G_F^{\ab}$ of local class field theory allows us to identify characters 
$\psi: G_F \rightarrow \OO^{\times}$ with characters $\psi: F^{\times}\rightarrow \OO^{\times}$. Under this identification $\psi$ is crystalline if and only if $\psi(x)= \prod_{\sigma: F\hookrightarrow L} \sigma(x)^{n_{\sigma}}$ for some integers $n_{\sigma}$ and for all $x\in \OO_F^{\times}$ by \cite[Proposition B.4]{conrad_lift}.

Let $\zeta$ be a generator of the torsion subgroup of $F^{\times}$ and 
let $m$ be the multiplicative order of $\zeta$. Let $\xi$ be a primitive 
$m$-th root of unity in $L$. Then $\kappa(\zeta)= \xi^a$ for some integer 
$a$. Let $\sigma: F\hookrightarrow L$ be an embedding such that $\sigma(\zeta)=\xi$. Let $\psi: F^{\times}\rightarrow \OO^{\times}$ 
be the character $\psi(x)= \sigma( x \varpi_F^{-v(x)})^a$ for all $x\in F^{\times}$, where $v$
is a valuation on $F$ normalized so that $v(\varpi_F)=1$. Then $\psi \kappa^{-1}(\zeta)=1$ and hence $\psi \kappa^{-1}$ is trivial on the torsion subgroup of $F^{\times}$. Moreover, $\psi$ is crystalline by the above. 
Note that $\psi\kappa^{-1}\equiv 1\pmod{\varpi}$. 

For the last part, we may choose any 
$\kappa: G_F\rightarrow \OO^{\times}$ lifting
$\overline{\kappa}$ and satisfying $\kappa(\Art_F(z))= \chi(z)$
for all $z\in \mu_{p^{\infty}}(F)$ and apply the previous
part. 
 \end{proof} 
 
 \begin{lem}\label{delta} Let $\psi: G_F\rightarrow \OO^{\times}$ be a character lifting $\det \rhobar$ and let 
 $x:R_{\det \rhobar}\rightarrow \OO$ be the corresponding $\OO$-algebra homomorphism. Then 
 the centre $Z$ of $G$ acts on $M_{\infty}\otimes_{R_{\det \rhobar}, x}\OO$ via the character $\delta^{-1}$, where $\delta: Z\rightarrow \OO^{\times}$ is the composition 
 $$ Z\overset{\cong}{\longrightarrow} F^{\times}\overset{\Art_F}{\longrightarrow} G_F^{\ab}\overset{\varepsilon^{d(d-1)/2} \psi}{\longrightarrow}\OO^{\times},$$
 where $\varepsilon$ is the $p$-adic cyclotomic character. 
 
 Moreover, 
 $M_{\infty}\otimes_{R_{\det \rhobar}, x}\OO$ 
 is non-zero and projective in the category of 
 linearly compact $\OO\br{K}$-modules 
 on which $Z\cap K$ acts by $\delta^{-1}$.
 
 Further, if $\psi$ is crystalline then there is an algebraic character $\theta: \Res^F_{\Qp} \Gm \rightarrow \Gm$ defined over $L$ such that $\delta|_{K\cap Z}$ is equal to the composition 
 $$ \OO_F^{\times} \hookrightarrow (\Res^F_{\Qp} \Gm)(\Qp)\rightarrow (\Res^F_{\Qp} \Gm)(L)\overset{\theta}{\longrightarrow}\Gm(L),$$
 where $\Res^F_{\Qp}$ denotes the restriction of scalars. 
 \end{lem}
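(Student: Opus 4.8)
The plan is to derive all three claims from the construction of $M_\infty$ in \cite{6auth} together with Lemma~\ref{components_Rinfty} and Proposition~\ref{prop_det_to_square}. Throughout I write $K=\GL_d(\OO_F)$, identify $Z$ with $F^\times$ via scalar matrices so that $Z\cap K=\OO_F^\times$, and let $\rho^\infty$ denote the pushforward to $R_\infty$ of the universal framed deformation of $\rhobar$ over $R^\square_{\rhobar}$. By Lemma~\ref{components_Rinfty} we have $R_\infty\cong R^\square_{\rhobar}\wtimes_\OO A$ with $A$ flat over $\OO$, and by Proposition~\ref{prop_det_to_square} the ring $R^\square_{\rhobar}$ is flat over $R_{\det\rhobar}$; hence the composite $R_{\det\rhobar}\to R^\square_{\rhobar}\hookrightarrow R_\infty$ is flat and local, it is through this map that $R_{\det\rhobar}$ acts on $M_\infty$, and $R_\infty^\psi:=R_\infty\otimes_{R_{\det\rhobar},x}\OO$ is a complete local Noetherian, $\OO$-flat $R_\infty$-algebra with $M_\infty\otimes_{R_{\det\rhobar},x}\OO=M_\infty\wtimes_{R_\infty}R_\infty^\psi$. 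For part~(1): the compatibility of the $\GL_d(F)$-action on $M_\infty$ with its $R_\infty$-module structure established in \cite{6auth} forces $Z$ to act on $M_\infty$ through the character $\delta_\infty^{-1}$, where $\delta_\infty$ is the composite $Z\cong F^\times\xrightarrow{\Art_F}G_F^\ab\xrightarrow{\varepsilon^{d(d-1)/2}\det\rho^\infty}R_\infty^\times$; the twist by $\varepsilon^{d(d-1)/2}$ is the standard shift between the automorphic central character and the Galois determinant built into the normalization of \cite{6auth}, and the inverse reflects that $M_\infty$ is a linearly compact (Pontryagin dual) object. Specializing along $x$ sends $\det\rho^\infty$ to $\psi$, hence $\delta_\infty$ to $(\varepsilon^{d(d-1)/2}\psi)\circ\Art_F=\delta$, giving~(1).

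For part~(2), non-vanishing follows from topological Nakayama's lemma: $M_\infty$ is a nonzero finitely generated $R_\infty\br{K}$-module, the image of $\mm_x$ in $R_\infty$ lies in $\mm_{R_\infty}$, and $\mm_{R_\infty}$ maps into the Jacobson radical of $R_\infty\br{K}$, so $M_\infty\otimes_{R_{\det\rhobar},x}\OO=M_\infty/\mm_x M_\infty$ surjects onto $M_\infty/\mm_{R_\infty}M_\infty\neq 0$. For projectivity, one uses that by \cite{6auth} (and \cite{EP}) the module $M_\infty$ is projective in the category of linearly compact $R_\infty\br{K}$-modules on which $Z\cap K$ acts by $\delta_\infty^{-1}$. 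Since $R_\infty^\psi$ is flat over $\OO$, base change along $x$ carries such projectives to objects projective over $R_\infty^\psi\br{K}$ on which, by~(1), $Z\cap K$ acts by $\delta^{-1}$; and since $R_\infty^\psi$ is an $\OO$-torsion free pseudocompact $\OO$-module, the forgetful functor to linearly compact $\OO\br{K}$-modules with $Z\cap K$ acting by $\delta^{-1}$ preserves projectivity. Hence $M_\infty\otimes_{R_{\det\rhobar},x}\OO$ is projective in the required category.

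For part~(3), assume $\psi$ is crystalline. Viewing $\psi$ as a character of $F^\times$ via $\Art_F$, \cite[Proposition B.4]{conrad_lift} — as already used in Lemma~\ref{cris_char} — yields integers $n_\sigma$ indexed by the embeddings $\sigma\colon F\hookrightarrow L$ with $\psi(u)=\prod_\sigma\sigma(u)^{n_\sigma}$ for all $u\in\OO_F^\times$; moreover the cyclotomic character satisfies $\varepsilon(\Art_F(u))=\prod_\sigma\sigma(u)^{-1}$ for $u\in\OO_F^\times$. Hence, for $u\in\OO_F^\times=Z\cap K$, one has $\delta(\Art_F(u))=\prod_\sigma\sigma(u)^{m_\sigma}$ with $m_\sigma:=n_\sigma-\tfrac{d(d-1)}{2}$. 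As $L$ contains all embeddings of $F$ we have $(\Res^F_{\Qp}\Gm)_L\cong\prod_\sigma\Gm$, and the monomial $\theta:=\prod_\sigma\pr_\sigma^{m_\sigma}$ is an algebraic character of $\Res^F_{\Qp}\Gm$ over $L$ whose composition with $\OO_F^\times\hookrightarrow(\Res^F_{\Qp}\Gm)(\Qp)\to(\Res^F_{\Qp}\Gm)(L)$ is precisely $\delta|_{Z\cap K}$, as required.

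The main obstacle is not conceptual but bookkeeping: one must extract from \cite{6auth} the exact normalization of the central character of $M_\infty$ — in particular the appearance of $\varepsilon^{d(d-1)/2}$ and the passage to the inverse character — and the precise form of the projectivity statement after fixing a central character; once these are in hand, parts~(2) and~(3) are routine, relying only on Nakayama's lemma and on the input already recorded in Lemma~\ref{cris_char}.
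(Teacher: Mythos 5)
Parts (1) and (3) of your argument are in substance the paper's proof: for the central character the paper simply invokes the normalization recorded at the beginning of Section 4.22 of \cite{6auth} (exactly the input you describe as ``bookkeeping'' — the twist by $\varepsilon^{d(d-1)/2}$ and the dualization responsible for the inverse), and for the crystalline case it invokes \cite[Proposition B.4]{conrad_lift} as in the proof of Lemma \ref{cris_char}, which is what you do. Your explicit Nakayama argument for the non-vanishing of $M_{\infty}\otimes_{R_{\det\rhobar},x}\OO$ is also fine.

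The genuine gap is the projectivity claim in part (2). The paper gets both non-vanishing and projectivity directly from \cite[Corollary 4.26]{6auth}, whereas you rest the projectivity on the assertion that $M_\infty$ itself is projective in the category of linearly compact $R_\infty\br{K}$-modules on which $Z\cap K$ acts by $\delta_\infty^{-1}$, attributed to \cite{6auth} and \cite{EP}. No such statement is available there, and it is much stronger than what patching provides: a projective object of that category is a direct summand of a product of copies of the twisted algebra $R_\infty\br{K}\wtimes_{R_\infty\br{Z\cap K},\,\delta_\infty^{-1}}R_\infty$, hence topologically flat over $R_\infty$; flatness of $M_\infty$ over $R_\infty$ is not known and is far beyond the construction (even faithfulness of the $R^{\square}_{\rhobar}$-action is Theorem \ref{faithful_square} of this paper). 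What \cite{6auth} does give is that $M_\infty$ is finite projective over an auxiliary power-series group algebra, hence projective as a linearly compact $\OO\br{K}$-module without any fixed central character; the passage from this to projectivity of the specialized module $M_{\infty}\otimes_{R_{\det\rhobar},x}\OO$ in the fixed-central-character category uses that the $\OO\br{Z\cap K}$-action on $M_\infty$ is identified, up to the twist, with the $R_{\det\rhobar}$-action through the patched ring, and this is precisely the content of \cite[Corollary 4.26]{6auth} — it is not a formal base change of an $R_\infty\br{K}$-projectivity statement. To repair your proof, replace that intermediate claim by a direct appeal to \cite[Corollary 4.26]{6auth} (as the paper does) or reproduce its argument from the $\OO\br{K}$-projectivity of $M_\infty$.
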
 
 \begin{proof} It follows from the discussion at the beginning of \cite[Section 4.22]{6auth} that 
 $Z$ acts via $\delta$ on the Pontryagin dual of  $M_{\infty}\otimes_{R_{\det \rhobar}, x}\OO$. 
 Hence it acts on $M_{\infty}\otimes_{R_{\det \rhobar}, x}\OO$ via $\delta^{-1}$. 
 The second part follows from \cite[Corollary 4.26]{6auth}. The last part follows from 
 \cite[Proposition B.4]{conrad_lift} as explained in the proof of Lemma \ref{cris_char}.
 \end{proof}
 If $V$ is a continuous
 representation of $K$ on a finite
 dimensional $L$-vector space then we define a
 finitely generated $R_{\infty}[1/p]$-module
 $M_{\infty}(V)$ as follows. Since $K$ is compact 
 it stabilizes a bounded $\OO$-lattice $\Theta$ in $V$. Let
 $$M_{\infty}(\Theta):=
 (\Hom^{\cont}_{\OO\br{K}}(M_{\infty},
 \Theta^d))^d,$$
 where $(\cdot)^d:= \Hom^{\cont}_{\OO}
 (\cdot, \OO)$. Then 
 $M_{\infty}(\Theta)$ is a finitely
 generated $R_{\infty}$-module. The module 
 $M_{\infty}(V):=M_{\infty}(\Theta)\otimes_{\OO}L$
 does not depend on the choice of a lattice
 $\Theta$. 
 
 We will denote by $\Irr(G)$ the set
 of equivalence classes of irreducible algebraic
 representation of $(\Res^F_{\Qp} \GL_d)_L$ defined over $L$.  If $\xi\in \Irr(G)$ then
 we will consider it as a representation of $K$ 
 by evaluating at $L$ and letting $K$ act 
 via the composition
 $$K\hookrightarrow (\Res^F_{\Qp} \GL_d)(\Qp)
 \rightarrow (\Res^F_{\Qp} \GL_d)(L).$$
If $M$ is a compact $\OO$-module then we define an $L$-Banach space 
 $$\Pi(M):=\Hom^{\cont}_{\OO}(M, L),$$
equipped with supremum norm. If $M$ is a compact 
$\OO\br{K}$-module, then the action of $K$ on $M$ makes $\Pi(M)$ into  a unitary $L$-Banach space representation of $K$. For example, the map $K\rightarrow \OO\br{K}$ induces an isomorphism of unitary $L$-Banach space representations 
$\Pi(\OO\br{K})\cong \mathcal C(K, L)$, the space 
of continuous functions from $K$ to $L$, with $K$-action given by left translations, \cite[Corollary 2.2]{st_iw}.  

\begin{lem}\label{proj} Let $\theta: \Res^F_{\Qp} \Gm \rightarrow \Gm$ be an algebraic character defined over $L$, and let $\delta: Z\cap K\rightarrow \OO^{\times}$ be the character associated to $\theta$ in Lemma \ref{delta}.
Let $M$  be non-zero and  projective in the category of 
 linearly compact $\OO\br{K}$-modules 
 on which $Z\cap K$ acts by $\delta^{-1}$. Then there is $\xi\in \Irr(G)$ such that $\Hom^{\cont}_K(M, \xi^*)\neq 0$.
 \end{lem}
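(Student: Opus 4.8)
The plan is to use the projectivity of $M$ twice: first to produce a simple quotient, and then to lift that simple module, after dualizing, into the mod-$\varpi$ reduction of a $K$-stable lattice in a suitable $\xi\in\Irr(G)$.

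Let $R_\delta:=\OO\br K\otimes_{\OO\br{Z\cap K},\delta^{-1}}\OO$ be the pseudocompact quotient of $\OO\br K$ on which the central (scalar) subgroup $Z\cap K\cong\OO_F^\times$ acts by $\delta^{-1}$; the category in the statement is that of linearly compact $R_\delta$-modules $\mathfrak C_{\delta^{-1}}$, whose free objects are the products $\prod_I R_\delta$ and whose projectives are direct summands of free ones. In particular $R_\delta$, and hence $M$, is $\OO$-torsion free, so $M/\varpi\neq 0$ is a nonzero projective object of the $k$-linear analogue $\mathfrak C_{\bar\delta^{-1}}$, where $\bar\delta=\delta\bmod\varpi$. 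By the pseudocompact Nakayama lemma $M/\varpi$ has a simple quotient $S$, and, enlarging $L$ if necessary so that $k\supseteq k_F$, we may write $S=\bar\sigma^\vee$ for an irreducible $k$-representation $\bar\sigma$ of $\GL_d(k_F)$ (inflated to $K$) on which the centre acts by $\bar\delta$. Now fix any $\xi\in\Irr(G)$ whose central character restricted to $Z\cap K$ is $\delta$, and a $K$-stable lattice $\Theta^*\subset\xi^*$. Since $M$ is compact and $\OO$-torsion free, $\Hom^{\cont}_K(M,\xi^*)=\Hom^{\cont}_{\OO\br K}(M,\Theta^*)[1/p]=\bigl(\varprojlim_n\Hom^{\cont}_{\OO\br K}(M,\Theta^*/\varpi^n)\bigr)[1/p]$, and each $\Theta^*/\varpi^n$ lies in $\mathfrak C_{\delta^{-1}}$ because $Z\cap K$ acts on $\xi^*$ by $\delta^{-1}$; projectivity of $M$ then makes this inverse system surjective, so $\Hom^{\cont}_K(M,\xi^*)\neq 0$ as soon as $\Hom^{\cont}_{k\br K}(M/\varpi,\overline{\Theta^*})\neq 0$. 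Finally, letting $P_S$ be the projective cover of $S$ in $\mathfrak C_{\bar\delta^{-1}}$, the fact that $M/\varpi$ is projective with $S$ in its head makes $P_S$ a direct summand of $M/\varpi$, and $\dim_{\End S}\Hom(P_S,\overline{\Theta^*})$ equals the Jordan--H\"older multiplicity of $S$ in $\overline{\Theta^*}$. So the whole lemma reduces to producing $\xi\in\Irr(G)$ with central character $\delta$ on $Z\cap K$ together with a lattice $\Theta\subset\xi$ such that $\bar\sigma$ is a Jordan--H\"older factor of the reduction $\overline\Theta$.

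This last step is the crux, and it is exactly here that the algebraicity of $\delta$ from Lemma~\ref{delta} enters. Writing $\delta|_{Z\cap K}(z)=\prod_{\tau\colon F\hookrightarrow L}\tau(z)^{n_\tau}$ for $z\in\OO_F^\times$, a representation $\xi=\boxtimes_\tau\tilde L(\lambda_\tau)\in\Irr(G)$ has central character $\delta$ on $Z\cap K$ if and only if $\sum_i(\lambda_\tau)_i=n_\tau$ for every $\tau$ (by Zariski density of $\OO_F^\times$ in $\Res^F_{\Qp}\Gm$). On the other hand, by the Steinberg tensor product theorem $\bar\sigma$ is the restriction to $\GL_d(k_F)$ of $\bar L(\nu)$ for a dominant weight $\nu$ with $p$-adic Steinberg digits $\mu_0,\dots,\mu_{f-1}$, and comparing central characters shows $\sum_\ell p^\ell|\mu_\ell|\equiv\sum_\tau p^{j(\tau)}n_\tau$ modulo $\#k_F^\times$, where $j(\tau)$ records the Frobenius slot of $\tau$. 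Using this congruence one distributes the weights $\mu_\ell$ among the $p$-adic digits of the $\lambda_\tau$ so as to meet both constraints at once, invoking that $\overline{V(\lambda)_\OO}$ has $\bar L(\lambda)$ as a Jordan--H\"older factor, that $\bigotimes_i\bar L(\alpha_i)$ always has $\bar L(\sum_i\alpha_i)$ among its Jordan--H\"older factors (the submodule generated by the product of highest weight vectors), and that the $p^f$-power Frobenius acts trivially on $\GL_d(k_F)$; then $\bar\sigma$ occurs in $\overline\Theta$ for $\Theta=\bigotimes_\tau V(\lambda_\tau)_\OO$. Equivalently, this is the standard statement that every Serre weight with a prescribed algebraic central character is a constituent of the reduction of a lattice in an algebraic representation of $\Res^F_{\Qp}\GL_d$ with that central character. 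I expect this weight bookkeeping to be the only genuine difficulty; all the reductions above are formal.
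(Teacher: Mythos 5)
Your formal reductions are fine, and they follow a genuinely different route from the paper: the paper works on the Banach space side, noting that $\Pi(M)$ is a direct summand of $\mathcal C_{\delta}(K,L)$ and invoking the density of locally algebraic vectors (\cite[Corollary 7.8]{DPS}) to produce $\xi$, whereas you work integrally, reducing mod $\varpi$, extracting a simple quotient $S=\bar\sigma^\vee$ with $\bar\sigma$ a Serre weight of $\GL_d(k_F)$, and using projectivity to lift a nonzero map $M/\varpi\to \overline{\Theta^*}$ to a nonzero element of $\Hom^{\cont}_K(M,\xi^*)$. The torsion-freeness of $M$, the existence and finite-dimensionality of $S$ (via the pro-$p$ group $K_1$), the surjectivity of the transition maps in $\varprojlim_n\Hom(M,\Theta^*/\varpi^n)$, and the reformulation via the projective cover $P_S$ and Jordan--H\"older multiplicities are all correct.

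The problem is the final step, which you yourself flag as "the only genuine difficulty": the assertion that for any Serre weight $\bar\sigma$ with central character $\bar\delta$ there exist dominant weights $(\lambda_\tau)_\tau$ with $\lvert\lambda_\tau\rvert=n_\tau$ for \emph{every} embedding $\tau$ such that $\bar\sigma$ is a Jordan--H\"older factor of the reduction of $\boxtimes_\tau V(\lambda_\tau)\vert_K$. This is not an off-the-shelf "standard statement" in the exact-central-character form you need: the usual lifting of Serre weights only prescribes the central character up to the obvious congruence, while here each per-embedding sum $\lvert\lambda_\tau\rvert$ is pinned as an integer, and your available moves (determinant twists change $\lvert\lambda_\tau\rvert$ by multiples of $d$, raising a digit multiplies its contribution by $p$, the $q$-Frobenius trick shifts by multiples of $q-1$) interact with divisibility in ways your sketch does not control --- already when $p\mid d$, or when $F/\Qp$ is ramified so that several embeddings share one residue embedding, the naive distribution of the digits $\mu_j$ need not meet the equalities $\lvert\lambda_\tau\rvert=n_\tau$, and the single congruence modulo $q-1$ you isolate is only a necessary condition. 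Note moreover that, applied to the indecomposable projective $P_S$, this combinatorial claim is \emph{equivalent} to the lemma being proved, so deferring it is deferring the entire content of the statement; the paper's proof avoids the issue altogether by quoting the density theorem. To complete your argument you would have to actually prove this weight-realization statement (or replace it by a citation that really contains the exact central character constraint).
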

 \begin{proof}
 We may assume that $M$ is a direct summand of 
 $\OO\br{K}\wtimes_{\OO\br{K\cap Z}, \delta^{-1}}\OO$, 
 since an arbitrary projective module is isomorphic to a product of 
 indecomposable projectives, and these are direct summands of $\OO\br{K}\wtimes_{\OO\br{K\cap Z}, \delta^{-1}}\OO$. Then the Banach space $\Pi(M)$ is 
 a non-zero direct summand $\mathcal C_{\delta}(K, L)$, 
 the subspace of $\mathcal C(K, L)$ on which $Z\cap K$ 
 acts by $\delta$.

Using the theory of highest weight we may find $\tau\in \Irr(G)$ such that the central character of $\tau$ is 
 equal to $\theta$. It follows from \cite[Corollary 7.8]{DPS} that the evaluation map 
 $$\bigoplus_{\xi' \in \Irr(G/Z)} \tau\otimes \xi'\otimes \Hom_K^{\cont}(\tau\otimes \xi', \Pi(M))\rightarrow \Pi(M)$$
 has dense image. Thus there is $\xi'\in \Irr(G/Z)$ 
 and an irreducible summand $\xi$ of $\tau\otimes \xi'$
 such that $\Hom_K^{\cont}(\xi, \Pi(M))\neq 0$. Dually, 
 this implies $\Hom_K^{\cont}(M, \xi^*)\neq 0$. 
 \end{proof} 
 
\begin{thm}\label{faithful_square} The action of $R^{\square}_{\rhobar}$ on $M_{\infty}$ is faithful. 
 \end{thm}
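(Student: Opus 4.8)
The plan is to read off faithfulness from the classification of the irreducible components of $\Spec R_\infty$ in Lemma~\ref{components_Rinfty}, combined with the existence of enough crystalline specializations of $M_\infty$ supplied by Lemmas~\ref{cris_char}, \ref{delta} and~\ref{proj}. By Corollary~\ref{ci} the ring $R^\square_{\rhobar}$ is $\OO$-torsion free, and by Corollary~\ref{chi_normal} each $R^{\square,\chi}_{\rhobar}$ is a normal domain; hence $R^\square_{\rhobar}$ is reduced, its minimal primes are the kernels $\qq_\chi$ of the projections $R^\square_{\rhobar}\to R^{\square,\chi}_{\rhobar}$ (Corollary~\ref{BJ_conjecture}), and $\bigcap_\chi\qq_\chi=0$. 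Writing $I:=\ann_{R_\infty}M_\infty$, the $R^\square_{\rhobar}$-action on $M_\infty$ is faithful if and only if $I\cap R^\square_{\rhobar}=0$; since $R^\square_{\rhobar}\hookrightarrow R_\infty$ and $R^\square_{\rhobar}$ is reduced, $I\cap R^\square_{\rhobar}=\sqrt I\cap R^\square_{\rhobar}$, so it suffices to show that $\Supp_{R_\infty}M_\infty$ contains, for every $\chi\colon\mu\to\OO^\times$, at least one irreducible component of $\Spec R_\infty$ lying over the $\chi$-block $V(\qq_\chi)\subset\Spec R^\square_{\rhobar}$.

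First I would record two facts. (i) $\Supp_{R_\infty}M_\infty$ is a union of irreducible components of $\Spec R_\infty$: the patching construction of \cite{6auth} realises $M_\infty$ as a maximal Cohen--Macaulay $R_\infty$-module, so $M_\infty$ has no embedded associated primes, and since $R_\infty$ is equi-dimensional by Lemma~\ref{components_Rinfty} every associated prime of $M_\infty$ is a minimal prime of $R_\infty$. By Lemma~\ref{components_Rinfty} these components are the $Z_{\chi,\pp}:=\Spec(R^{\square,\chi}_{\rhobar}\widehat\otimes_\OO A/\pp)$ with $\pp$ a minimal prime of $A$, and those lying over $V(\qq_\chi)$ are exactly the $Z_{\chi,\pp}$ with that fixed $\chi$. (ii) Each $Z_{\chi,\pp}$ dominates $V(\qq_\chi)$: since $A$ is reduced and $\OO$-torsion free, $A/\pp$ is $\OO$-flat, so by Lemma~\ref{P1R} there is an $\OO$-algebra map $A/\pp\to\OO'$ with $\OO'$ the ring of integers of a finite extension of $L$; the composite $R^{\square,\chi}_{\rhobar}\to R^{\square,\chi}_{\rhobar}\widehat\otimes_\OO A/\pp\to R^{\square,\chi}_{\rhobar}\otimes_\OO\OO'$ lands in a free $R^{\square,\chi}_{\rhobar}$-module and is the obvious inclusion, so the first arrow is injective. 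Granting (i) and (ii), it is enough to find, for each $\chi$, a point of $\Supp_{R_\infty}M_\infty$ mapping into $V(\qq_\chi)$: by (i) such a point lies on some $Z_{\chi,\pp}$ (only components of these shapes pass through it), and by (ii) that $Z_{\chi,\pp}\subseteq\Supp M_\infty$ dominates $V(\qq_\chi)$, forcing $\qq_\chi$ into the image of $\Supp M_\infty$.

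To produce such a point, fix $\chi$ and use Lemma~\ref{cris_char} to choose a crystalline character $\psi\colon G_F\to\OO^\times$ lifting $\det\rhobar$ with $\psi\circ\Art_F|_\mu=\chi$; let $x\colon R_{\det\rhobar}\to\OO$ be the associated homomorphism and $R_\infty^\psi:=R_\infty\otimes_{R_{\det\rhobar},x}\OO$. By Lemma~\ref{delta} the linearly compact $\OO\br{K}$-module $M_\infty^\psi:=M_\infty\otimes_{R_{\det\rhobar},x}\OO$ is nonzero and projective in the category of linearly compact $\OO\br{K}$-modules on which $Z\cap K$ acts by $\delta^{-1}$, and, $\psi$ being crystalline, $\delta|_{Z\cap K}$ arises from an algebraic character $\theta$ of $\Res^F_{\Qp}\Gm$; Lemma~\ref{proj} then provides $\xi\in\Irr(G)$ with $\Hom^{\cont}_K(M_\infty^\psi,\xi^*)\neq0$. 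Hence the analogously defined finitely generated $R_\infty^\psi[1/p]$-module $M_\infty^\psi(\xi)$ is nonzero, so $\Supp_{R_\infty^\psi[1/p]}M_\infty^\psi(\xi)$ is nonempty; any point in it, viewed in $\Spec R_\infty$, lies in $\Supp_{R_\infty}M_\infty$ because $\Supp M_\infty^\psi(\xi)\subseteq\Supp M_\infty^\psi$ and $M_\infty^\psi$ is a quotient of $M_\infty$, and it maps to the $x$-point of $\Spec R_{\det\rhobar}$, hence into $V(\qq_\chi)$ since $\psi|_\mu=\chi$. This is the point we needed; letting $\chi$ vary finishes the proof.

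The substantive inputs are imported: Cohen--Macaulayness of $M_\infty$ over $R_\infty$ and the central-character computation of Lemma~\ref{delta} come from \cite{6auth}, the passage to $\xi$ from the representation-theoretic Lemma~\ref{proj}, and the structure of $\Spec R_\infty$ from Lemma~\ref{components_Rinfty} together with the normality results of Section~\ref{sec_irrcomp}. The one place requiring care is the bookkeeping relating $M_\infty$ as a compact $\OO\br{K}$-module to $M_\infty^\psi(\xi)$ as an $R_\infty^\psi[1/p]$-module — namely that the functor $M_\infty^\psi\rightsquigarrow M_\infty^\psi(\xi)$ does not enlarge supports and that its interaction with the $R_{\det\rhobar}$-action genuinely places the constructed point over the $\psi$-point — but this is of the same flavour as the Schikhof-duality manipulations already present in \cite{6auth} and presents no essential difficulty.
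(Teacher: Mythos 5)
Your overall architecture coincides with the paper's: reduce to showing that for each $\chi$ the support of $M_\infty$ contains an irreducible component of $\Spec R_\infty$ dominating the component $V(\qq_\chi)$ of $\Spec R^{\square}_{\rhobar}$ (using Corollary \ref{BJ_conjecture} and Lemma \ref{components_Rinfty}), and produce a point of the support over a crystalline determinant point with $\psi|_{\mu}=\chi$ via Lemmas \ref{cris_char}, \ref{delta} and \ref{proj}. The place where you diverge is exactly the place where your argument breaks: step (i). You assert that $M_\infty$ is a maximal Cohen--Macaulay $R_\infty$-module, so that $\Supp_{R_\infty}M_\infty$ is a union of irreducible components. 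This is not available, and as stated it is false in the relevant sense: $M_\infty$ is finitely generated over $R_\infty\br{K}$, not over $R_\infty$, so it is not a (maximal) Cohen--Macaulay $R_\infty$-module; the Cohen--Macaulay statements in \cite{6auth} concern the finite $R_\infty$-modules $M_\infty(\sigma)$ for fixed types, and those are supported on potentially semi-stable quotients of $R_\infty$ which are \emph{not} unions of irreducible components of $\Spec R_\infty$. The statement you need — that the closure of the union of the supports of the $M_\infty(\xi)$, $\xi\in\Irr(G)$, is a union of irreducible components of $\Spec R_\infty$ — is a genuinely hard theorem, namely \cite[Theorem 6.12]{EP} (proved following Chenevier and Nakamura), and it is precisely the input the paper's proof invokes at this point. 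Your shortcut would, if correct, render that theorem nearly trivial.

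The gap is not cosmetic, because without the "union of components" statement your constructed point does not suffice. If $y\in\Supp_{R_\infty}M_\infty$ maps into $V(\qq_\chi)$, then $\qq_\chi\subseteq y\cap R^{\square}_{\rhobar}$ and $\mathfrak a\cap R^{\square}_{\rhobar}\subseteq y\cap R^{\square}_{\rhobar}$, which is the wrong direction: to conclude $\mathfrak a\cap R^{\square}_{\rhobar}\subseteq\qq_\chi$ you need the \emph{generic} point of $V(\qq_\chi)$ to lie in the image of the support, i.e.\ you need an entire component of $\Spec R_\infty$ of the form $Z_{\chi,\pp}$ inside the support (or inside the closure of $\bigcup_{\xi}\Supp M_\infty(\xi)$, which is contained in $V(\mathfrak a)$). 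So the fix is to replace your (i) by the citation of \cite[Theorem 6.12]{EP}: since $M_\infty(\xi)\otimes_{R_{\det\rhobar},x}\OO\neq 0$, its support meets the fibre over the characteristic-zero point $x$, hence lies on a component of that closure; because the point is of characteristic zero and $\psi|_\mu=\chi$, Lemma \ref{components_Rinfty} forces that component to be some $Z_{\chi,\pp}$, whence $\mathfrak a\cap R^{\square}_{\rhobar}\subseteq\qq_\chi$, and reducedness of $R^{\square}_{\rhobar}$ finishes the proof. (Your step (ii) is then unnecessary, since $Z_{\chi,\pp}$ visibly surjects onto $\Spec R^{\square,\chi}_{\rhobar}$; and note also that $I\cap R^{\square}_{\rhobar}=\sqrt I\cap R^{\square}_{\rhobar}$ is not an equality in general — fortunately only the inclusion you actually use is needed.)
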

 \begin{proof} Let $\pp$ be a minimal prime 
 of $R^{\square}_{\rhobar}$. We have shown in Corollary \ref{BJ_conjecture} that 
 there is a character
 $\chi: \mu_{p^{\infty}}(F)\rightarrow L^{\times}$
 such that $R^{\square}_{\rhobar}/\pp = R^{\square, \chi}_{\rhobar}$.
 It follows from Lemma \ref{cris_char} that there is a
 crystalline character 
 $\psi: G_F\rightarrow \OO^{\times}$
 lifting $\det \rhobar$ such that 
 $\psi(\Art_F(z))= \chi(z)$ 
 for all $z\in \mu_{p^{\infty}}(F)$. Let 
 $x:R_{\det\rhobar}\rightarrow \OO$ 
 be the corresponding $\OO$-algebra
 homomorphism. It follows from Lemmas 
 \ref{delta}, \ref{proj} that there is 
 $\xi \in \Irr(G)$ such that 
 $$\Hom_K^{\cont}(M_{\infty}\otimes_{R_{\det \rhobar},x}\OO, \xi^*)\neq 0.$$
This implies that 
$M_{\infty}(\xi)\otimes_{R_{\det \rhobar}, x} \OO\neq 0.$
 
 Let $\mathfrak a$ be the $R_{\infty}$ annihilator of
 $M_{\infty}$. In \cite[Theorem 6.12]{EP} it is shown, following the approach of Chenevier \cite{che_inf_fern} and Nakamura \cite{nakamura}, that the closure in $\Spec R_{\infty}$ of the union 
 of the supports of  $M_{\infty}(\xi')$ for all $\xi'\in \Irr(G)$ is a union 
 of irreducible components of $\Spec R_{\infty}$. Thus 
 there is a minimal prime $\qq$ of $R_{\infty}$
 such that $\Supp M_{\infty}(\xi)\subset V(\qq)\subset V(\mathfrak a)$.
 
 Since $M_{\infty}(\xi)\otimes_{R_{\det \rhobar}, x} \OO\neq 0$, Lemma \ref{components_Rinfty} implies that the image of 
 $\qq$ in $\Spec R^{\square}_{\rhobar}$ is equal to $\pp$. Thus
 $\pp$ contains $\mathfrak a \cap R^{\square}_{\rhobar}$, which 
 is the $R^{\square}_{\rhobar}$-annihilator of $M_{\infty}$. Since 
 $R^{\square}_{\rhobar}$ is reduced by Corollary \ref{generic_fibre_normal}, the intersection of all minimal
 prime ideals is zero and hence $R^{\square}_{\rhobar}$ acts
 faithfully on $M_{\infty}$.
 \end{proof}

\begin{proof}[Proof of Theorem \ref{density}] This is proved in the same way as 
\cite[Theorems 5.1, 5.3]{EP}. 
Let us sketch the proof in 
the case of $\Sigma^{\cris}$ for the convenience of the reader. For each $\xi\in \Irr(G)$ let 
$\mathfrak a_{\xi}$ be the 
$R^{\square}_{\rhobar}$-annihilator of $M_{\infty}(\xi)$. 
It follows from \cite[Lemma 4.18]{6auth} that 
$R^{\square}_{\rhobar}/\mathfrak a_{\xi}$ is a quotient 
of the crystalline deformation ring of $\rhobar$ 
with Hodge--Tate weights corresponding to the highest weight of $\xi$, see \cite[Section 1.8]{6auth}, \cite[Remark 5.14]{DPS}. Moreover, $R^{\square}_{\rhobar}/\mathfrak a_{\xi}$ is a union of irreducible components of that ring. This implies that $R^{\square}_{\rhobar}/\mathfrak a_{\xi}$ is reduced and $\OO$-torsion free. The set $\Sigma^{\cris}$ contains the set of maximal ideals of $(R^{\square}_{\rhobar}/\mathfrak a_{\xi})[1/p]$. Since $(R^{\square}_{\rhobar}/\mathfrak a_{\xi})[1/p]$ is Jacobson, if $a\in R^{\square}_{\rhobar}$ is 
contained in the intersection of all maximal ideals 
in $\Sigma^{\cris}$ then $a$ will 
annihilate $M_{\infty}(\xi)$ for all $\xi \in \Irr(G)$. 
The continuous $L$-linear dual of   $M_{\infty}(\xi)$
can be identified with $\Hom_K(\xi, \Pi(M_{\infty}))$. 
The key point is that the image of the evaluation map 
\begin{equation}\label{evaluation}
\bigoplus_{\xi\in \Irr(G)} \xi\otimes_L \Hom_K(\xi, \Pi(M_{\infty}))\rightarrow \Pi(M_{\infty})
\end{equation}
is dense. Thus $a$ will annihilate the left hand side of \eqref{evaluation}, and by density it will annihilate $\Pi(M_{\infty})$. The continuous $L$-linear dual of $\Pi(M_{\infty})$ can 
be identified with $M_{\infty}[1/p]$. Since $R^{\square}_{\rhobar}$ is $\OO$-torsion free and $R^{\square}_{\rhobar}$ acts faithfully on $M_{\infty}$ by Theorem \ref{faithful_square} we deduce that $a=0$. 

If $\Sigma=\Sigma^{\prnc}_{\underline{k}}$ or $\Sigma^{\spcd}_{\underline{k}}$ then the argument is the same, except that instead of considering 
all $\xi\in \Irr(G)$ one fixes $\xi\in \Irr(G)$,
such that the highest weight of $\xi$ corresponds to the Hodge--Tate weights $\underline{k}$ and one considers the family $\xi\otimes_L V$, where $V$ are principal series or appropriate supercuspidal types, see the proof of Theorems 5.1, 5.3 in \cite{EP} for more details.   
\end{proof}
\begin{remar} It is natural to ask whether the ring $R_{\infty}$ acts faithfully on $M_{\infty}$. We cannot answer this question in general, since it boils down to whether every 
irreducible component of the potentially semi-stable rings
$R^{\square, \xi, \tau}_{\tilde{v}}$ (see the proof of Lemma \ref{components_Rinfty}, where $v\in S_p\setminus \pp$ is a place above
$p$, different from the place at which the patching construction is
carried out) has a point corresponding to an automorphic Galois
representation. These questions are connected with modularity lifting theorems and the Fontaine--Mazur conjecture, see \cite[Remark 4.20]{6auth}.

However, if all $R^{\square, \xi, \tau}_{\tilde{v}}$ were integral domains then the ring $A$ in Lemma \ref{components_Rinfty} would also be an integral domain, and we would deduce from 
the proof of Theorem \ref{faithful_square} that $R_{\infty}$ acts faithfully on $M_{\infty}$. A further possibility is to avoid the 
modularity lifting related issues  by carrying out the patching construction of \cite{6auth} 
at all places above $p$ at once. Then the proof of Theorem \ref{faithful_square} 
would carry over in this new setting to show that $R_{\infty}$ acts faithfully on $M_{\infty}$. 
\end{remar} 

\begin{remar}\label{rel_to_BIP2} In \cite{BIP2}, which is a sequel to this paper, we have proved the density 
of $\Sigma^{\cris}$ in the rigid analytic space $(\Spf R^{\square}_{\rhobar})^{\rig}$ 
associated to the formal scheme $\Spf R^{\square}_{\rhobar}$ by making a strong use of 
\cite{EG_stack} to show that $\Sigma^{\cris}$ is non-empty. This in turn implies Theorem 
\ref{density} for $\Sigma^{\cris}$ without any restrictions on the prime $p$, see \cite[Corollary 5.2]{BIP2}. However, the sets $\Sigma^{\prnc}_{\underline{k}}$ and $\Sigma^{\spcd}_{\underline{k}}$ for a fixed regular $\underline{k}$ are not Zariski dense
in $(\Spf R^{\square}_{\rhobar})^{\rig}$ and the argument explained in this section is the only 
known method to approach the density result in Theorem \ref{density} in these cases. We also find Theorem \ref{faithful_square} to be  an interesting result in its own right: if one believes the expectation in \cite[Section 6]{6auth} that 
$M_{\infty}$ should realize the conjectural $p$-adic Langlands correspondence then
Theorem \ref{faithful_square} has to hold. 
\end{remar} 

\appendix
\section{Kummer-irreducible points}\label{appendix}

The purpose of the appendix is to slightly generalize the notion of non-special points in 
$\Xbar^{\ps}= \Spec R^{\ps}/\varpi$ in \cite[Definition 5.1.2]{bj}. 
We use the notation of the main text. In particular, $\zeta_p$ is a primitive $p$-th root of unity in a fixed algebraic closure $\overline{F}$ of $F$. If $x\in \Xbar^{\ps}$ then we let $D_x= D^u\otimes_{R^{\ps}}\overline{\kappa(x)}$, where
$\overline{\kappa(x)}$ is an algebraic closure of the 
residue field at $x$, and we let $\rho_x:G_F\to \GL_d(\overline{\kappa(x)})$ be the semisimple representation whose pseudo-character is~$D_x$.

\begin{defi} We say that $x\in P_1(R^{\ps}/\varpi)$ is \textit{Kummer-irreducible} if the restriction of 
$D_x$ to $G_{F'}$ is absolutely irreducible for all degree $p$ Galois extensions $F'$ of $F(\zeta_p)$. Otherwise, we say that $x$ 
is \textit{Kummer-reducible}.
\end{defi} 
Thus $x$ is Kummer-irreducible if and only 
if $\rho_x|_{G_{F(\zeta_p)}}$ is non-special in the sense of \cite[Definition 5.1.2]{bj}. In particular, if $\zeta_p\in F$ then both notions coincide. 
Our main interest in this notion is the following Lemma. 

\begin{lem}\label{adz0} If $x$ is Kummer-irreducible  then
$H^2(G_F, \adz \rho_x)=0$.
\end{lem}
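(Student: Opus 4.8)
The plan is to reduce the vanishing of $H^2(G_F,\adz\rho_x)$ to a statement about fixed vectors of a twisted adjoint representation, and then exploit Kummer-irreducibility to rule out those fixed vectors. First I would invoke local Tate duality in the form of \cite[Theorem 3.4.1 (b)]{bj} (valid over the residue field $\kappa(x)$, which is finite or local of characteristic $p$): since $\adz\rho_x$ is self-dual up to the Tate twist, we have $\dim_{\overline{\kappa(x)}} H^2(G_F,\adz\rho_x)=\dim_{\overline{\kappa(x)}} H^0(G_F,\adz\rho_x(1))$, so it suffices to show that there is no nonzero trace-zero endomorphism $\phi$ of $\rho_x$ with $g\cdot \phi = \varepsilon(g)^{-1}\phi$ for all $g\in G_F$, where $\varepsilon$ is the mod-$p$ cyclotomic character. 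Equivalently, writing $V=\rho_x$ (which is semisimple by construction), there is no nonzero $G_F$-equivariant map $V\to V(1)$ whose composite with the trace pairing vanishes; since $V$ is semisimple, any such map is controlled by the isotypic decomposition of $V$.

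Next I would analyze when $\Hom_{G_F}(V,V(1))\neq 0$ for semisimple $V$. Decompose $V=\bigoplus_i W_i^{\oplus m_i}$ into irreducibles. A nonzero element of $\Hom_{G_F}(V,V(1))$ forces $W_i\cong W_j(1)$ for some $i,j$. The key claim is that if $x$ is Kummer-irreducible, then no irreducible constituent $W$ of $V$ satisfies $W\cong W'(1)$ for another constituent $W'$ (including $W'=W$), OR if such an isomorphism exists it can only contribute scalar maps which have nonzero trace. Here is where Kummer-irreducibility enters: restricting to $G_{F(\zeta_p)}$, the representation $\rho_x|_{G_{F(\zeta_p)}}$ is non-special in the sense of \cite[Definition 5.1.2]{bj}, so by \cite[Lemma 5.1.1]{bj} we already know $H^2(G_{F(\zeta_p)},\adz(\rho_x|_{G_{F(\zeta_p)}}))=0$ when the representation stays irreducible on $G_{F(\zeta_p)}$; but the point of the present generalization is to cover the case where $\rho_x$ becomes \emph{reducible} on $G_{F(\zeta_p)}$ yet stays irreducible on every degree-$p$ extension $F'$ of $F(\zeta_p)$. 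In that situation I would argue by inflation-restriction along the abelian (in fact cyclic of order dividing $p-1$, or a $p$-group) quotient $\Gal(F(\zeta_p)/F)$ together with Clifford theory: the constituents of $\rho_x|_{G_{F(\zeta_p)}}$ form a single $\Gal(F(\zeta_p)/F)$-orbit, and a self-twist $W\cong W(1)$ on $G_F$ would, after restriction, produce a nontrivial self-twist of a constituent on $G_{F(\zeta_p)}$ by a character of $p$-power order, which by Kummer theory is cut out by a degree-$p$ extension $F'/F(\zeta_p)$; over $G_{F'}$ the self-twist becomes trivial and $W|_{G_{F'}}$ would decompose, contradicting Kummer-irreducibility. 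The trace-zero condition then kills the remaining scalar endomorphisms.

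The main obstacle, and the step requiring the most care, is the Clifford-theoretic bookkeeping when $[F(\zeta_p):F]$ is divisible by $p$ (so that the intermediate extension itself is a $p$-extension), and when $V$ has several non-isomorphic constituents that get permuted — one must check that a self-twist by the cyclotomic character on $G_F$ genuinely descends to a cyclotomic-type self-twist of a single irreducible constituent over $G_{F(\zeta_p)}$, rather than merely permuting the constituents in a way that produces no contradiction. I expect this to follow from the fact that $\varepsilon$ restricted to $G_{F(\zeta_p)}$ is a character of pro-$p$ order (trivial mod $p$ on $G_{F(\zeta_p)}$, hence factoring through the maximal pro-$p$ quotient), so Kummer theory applies directly: any nontrivial quotient of order $p$ corresponds to a degree-$p$ Galois extension $F'$ of $F(\zeta_p)$, and over $G_{F'}$ the offending self-twist vanishes. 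Assembling these, one concludes $\Hom_{G_F}(V,V(1))$ consists only of endomorphisms that are forced to be scalar on isotypic pieces matched by the twist, all of which have nonzero trace unless they are zero; hence $H^0(G_F,\adz\rho_x(1))=0$ and therefore $H^2(G_F,\adz\rho_x)=0$.
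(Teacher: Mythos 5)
Your reduction breaks at two points. First, the duality step is wrong in precisely the delicate case: here $\kappa(x)$ has characteristic $p$, and when $p\mid d$ the trace pairing on $\adz\rho_x$ is degenerate (scalar matrices are trace zero), so $\adz\rho_x$ is \emph{not} self-dual up to twist; its Tate dual is $\adbar\rho_x(1)$ with $\adbar\rho_x=\ad\rho_x/(\text{scalars})$. Concretely, if $p\mid d$ and $\zeta_p\in F$ (so the mod-$p$ cyclotomic character is trivial), the scalar line gives $H^0(G_F,\adz\rho_x(1))\neq 0$ while the lemma asserts $H^2(G_F,\adz\rho_x)=0$, so the identity $\dim H^2(G_F,\adz\rho_x)=\dim H^0(G_F,\adz\rho_x(1))$ you start from is false. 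The correct target $H^0(G_F,\adbar\rho_x(1))$ is not computed by $\Hom_{G_F}(\rho_x,\rho_x(1))$: an invariant of the quotient $\adbar\rho_x$ need not lift to an invariant endomorphism, and analysing such classes is exactly the nontrivial content of \cite[Lemma 5.1.1]{bj}, which your Schur/semisimplicity argument does not reach. Second, the case you present as ``the point of the generalization''---$\rho_x$ reducible on $G_{F(\zeta_p)}$ yet irreducible on every degree-$p$ Galois extension $F'$ of $F(\zeta_p)$---is empty: reducibility passes to the subgroups $G_{F'}\subset G_{F(\zeta_p)}$, and degree-$p$ Galois extensions of $F(\zeta_p)$ exist, so Kummer-irreducibility forces $\rho_x|_{G_{F(\zeta_p)}}$ to be absolutely irreducible; indeed the definition says precisely that $\rho_x|_{G_{F(\zeta_p)}}$ is non-special over $F(\zeta_p)$. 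Relatedly, your twisting mechanism is off: in characteristic $p$ the cyclotomic character is trivial on $G_{F(\zeta_p)}$, so ``self-twists by $\varepsilon$'' carry no Kummer-theoretic information there (the relevant self-twists in the special/non-special dichotomy are by characters of order $p$), and $[F(\zeta_p):F]$ divides $p-1$, so your worry about this degree being divisible by $p$ is moot. (When $p\nmid d$ your outline can be repaired: a nonzero class forces $\rho_x\cong\rho_x(1)$, and if $\omega\neq 1$ Clifford theory makes $\rho_x$ induced from a subgroup containing $G_{F(\zeta_p)}$, hence reducible there; but this does not cover $p\mid d$, which is the case where the real work lies.)

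The missing idea is the paper's entire (two-line) proof: since $[F(\zeta_p):F]$ is prime to $p$, restriction gives $H^2(G_F,\adz\rho_x)\cong H^2(G_{F(\zeta_p)},\adz\rho_x)^{\Gal(F(\zeta_p)/F)}$, and Kummer-irreducibility of $x$ means $\rho_x|_{G_{F(\zeta_p)}}$ is non-special in the sense of \cite[Definition 5.1.2]{bj}, so $H^2(G_{F(\zeta_p)},\adz\rho_x)=0$ by \cite[Lemma 5.1.1]{bj}. Passing to $F(\zeta_p)$ first, where $\zeta_p$ is present and that lemma applies verbatim, and then descending along the prime-to-$p$ quotient is what sidesteps all of the self-duality and twisting issues you encounter by working directly over $F$.
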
 
\begin{proof} Since the order of $\Gal(F(\zeta_p)/F)$ is prime to $p$ we have 
$$H^2(G_F, \adz \rho_x)\cong H^2(G_{F(\zeta_p)}, \adz \rho_x)^{\Gal(F(\zeta_p)/F)}.$$
Since $x$ is Kummer-irreducible,  the restriction 
of $\rho_x$ to $G_{F(\zeta_p)}$ is non-special, 
and it follows from \cite[Lemma 5.1.1]{bj} that 
$H^2(G_{F(\zeta_p)}, \adz \rho_x)=0$. 
\end{proof}

If $E\subset \overline F$ is a finite extension of $F$ then we denote by 
$R^{\ps}_{E}$ the universal ring for deformations of the pseudo-character 
$\Dbar|_{G_{E}}$. We let $\Xbar^{\ps}_E=\Spec R^{\ps}_{E}$, $U^{\irr}_E$ 
the absolute irreducible locus in $\Xbar^{\ps}_E$
and $U^{\nspcl}_E$ the non-special locus in $U^{\irr}_E$. 
These are open subschemes of $\Xbar^{\ps}_E$. Let $U^{\spcl}_E$ be the complement of $U^{\nspcl}_E$ of $U^{\irr}_E$.
We drop the subscript $E$, when $E=F$.

\begin{lem}\label{lem_PDring-Restr}
If $E\subset \overline F$ is a finite extension of $F$ then the morphism $r: X^{\ps} \rightarrow X^{\ps}_E$, 
induced by restriction of pseudo-characters of $G_F$ to  $G_{E}$, is finite.
\end{lem}
\begin{proof}
The proof is similar to the proof of Proposition \ref{iotaP_finite}. The map $R^{\ps}_E\rightarrow R^{\ps}$ is a local homomorphism of complete local rings with residue field $k$. 
Topological Nakayama's lemma implies that it is enough to show that the fibre ring $S:=R^\ps/\mm_{R^\ps_{E}}R^\ps$ is a
finite dimensional $k$-vector space, which amounts to showing $\Spec S=\{\mm_S\}$. We note that $S$ represents the functor of deformations $D_A:A[G_F]\to A$ of $\Dbar$ to local artinian $k$-algebras $A$ such that $D_A|_{G_{E}}=\Dbar|_{G_{E}}\otimes_k A$.

Let $y$ be any point of $\Spec S$ with associated pseudo-character $D_y$ and semisimple representation $\rho_y: G_F\to \GL_d(\overline{\kappa(y)})$. The restriction $\rho_y|_{G_{E}}$ is semisimple, cf.~\cite[Lemma~2.1.4]{bj}, and its associated pseudo-character is $\Dbar|_{G_{E}}\otimes_k\overline{\kappa(y)}$, so that $\rho_y(G_{E})$ is finite. Hence $\rho_y(G_F)$ is finite, and therefore $D_y$ is defined over a finite field $k'\supset k$. This shows that the corresponding ring map $S\to \kappa(y)$ factors via $k'$, and thus its kernel $y$ is the maximal ideal~$\mm_S$.
\end{proof}

We define the \textit{Kummer-reducible} locus in $U^{\irr}$ as 
$$U^{\Kred}:= U^{\irr} \cap \bigl(\bigcup_{F'} r^{-1} (\Xbar^{\ps}_{F'} \setminus U^{\irr}_{F'}) \bigr),$$
where the union is taken over all degree $p$ Galois extensions $F'$ of $F(\zeta_p)$. Since
there are only finitely many such extensions, $U^{\Kred}$ is closed in $U^{\irr}$. We define 
the \textit{cyclotomic-reducible} locus in $U^{\irr}$ as 
$$U^{\cred}:= U^{\irr}\cap r^{-1}( \Xbar^{\ps}_{F(\zeta_p)} \setminus U^{\irr}_{F(\zeta_p)}).$$
This is also a closed subset of $U^{\irr}$, and is contained in $U^{\Kred}$. If $F$ does not 
contain a primitive $p$-th root of unity then 
$U^{\cred}=U^{\spcl}$, and $U^{\cred} = \emptyset$ otherwise.

\begin{lem} We have $U^{\spcl} \subset U^{\Kred}$. Moreover, the inclusion is an equality if $F$
contains a primitive $p$-th root of unity.
\end{lem}
\begin{proof} If $\zeta_p\in F$ then the definitions of $U^{\Kred}$ and $U^{\spcl}$ coincide. 
If $\zeta_p\not\in F$ then $y\in U^{\spcl}$ if and only if $D_y$ is irreducible and the restriction of $D_y$ to $G_{F(\zeta_p)}$ is reducible. If we further restrict 
$D_y$ to $G_{F'}$, where $F'$ is any degree $p$ Galois extension of $F(\zeta_p)$, then 
the pseudocharacter remains reducible. Thus $y\in U^{\Kred}$.
\end{proof}

\begin{lem}\label{jakob_cl_dim} Let $T$ be a locally closed subset of $U^{\irr}$, let $\overline{T}$ be its closure in $U^{\irr}$ and let $Z$ be its closure in $\Xbar^{\ps}$.
Then $\dim T = \dim \overline{T}$ and $\dim Z= \dim T+1$.
\end{lem} 
\begin{proof} Since $U^{\irr}$ is open 
in $\Xbar^{\ps}$, $T$ is locally closed in  $\Xbar^{\ps}$. Thus $T$ is open in $Z$.
Lemma \ref{jacobson} (5) applied with 
$\Spec R=\Spec S= Z$ and $U=T$ implies that $\dim Z = \dim T+1$. Since $\overline{T}$ is contained in $U^{\irr}$, it does not contain the closed point of $\Xbar^{\ps}$. 
Thus $\overline{T}\subset Z\setminus \{\mm_{R^{\ps}}\}$. Since $Z$ is the spectrum of 
a local ring, $\dim ( Z\setminus \{\mm_{R^{\ps}}\}) = \dim Z -1$. We conclude that 
$\dim \overline{T} \le \dim Z -1 = \dim T$. Since $\overline{T}$ contains $T$, $\dim T\le\dim \overline{T}$. 
\end{proof}
\begin{remar} The equality $\dim T= \dim \overline{T}$ in Lemma \ref{jakob_cl_dim} may also be deduced from \cite[\href{https://stacks.math.columbia.edu/tag/0DRT}{Tag 0DRT}]{stacks-project}, which applies in a more general context.
\end{remar} 

\begin{lem}\label{zeta_p}
We have 
$$\dim U^{\irr}- \dim U^{\cred} \ge \frac{1}{2} d^2[F:\Qp]\ge 2.$$
\end{lem}
\begin{proof} It follows from \cite[Theorem 5.5.1]{bj} and Lemma \ref{jakob_cl_dim} 
that 
\begin{equation}\label{dim_Uirr}
\dim U^{\irr} = d^2[F:\Qp].
\end{equation}
If $\zeta_p\in F$  then $U^{\cred}$ is empty and the required bound follows. If $\zeta_p\not\in F$ then $U^{\cred}=U^{\spcl}$, and it follows from 
\cite[Theorem 5.4.1 (a)]{bj} and Lemma \ref{jakob_cl_dim} that $\dim U^{\spcl} \le \frac{1}{2} d^2[F:\Qp]$.
\end{proof}

\begin{lem}\label{Xprime} We have 
$$ \dim U^{\irr} - \dim U^{\Kred} \ge d[F:\Qp]\ge 2.$$
\end{lem}
\begin{proof} It follows from \cite[Lemma 5.1.1]{bj} that the preimage of 
$ U^{\spcl}_{F(\zeta_p)}$ in $\Xbar^{\ps}$ under the morphism 
$r: \Xbar^{\ps}\rightarrow \Xbar^{\ps}_{F(\zeta_p)}$ from Lemma~\ref{lem_PDring-Restr} with $E=F(\zeta_p)$ is equal to $U^{\Kred}\setminus U^{\cred}$. Thus the induced
morphism $r: U^{\Kred}\setminus U^{\cred} \rightarrow U^{\spcl}_{F(\zeta_p)}$ 
is also finite.  We deduce
$$\dim (  U^{\Kred}\setminus U^{\cred}) \le \dim U^{\spcl}_{F(\zeta_p)}$$
from \cite[\href{https://stacks.math.columbia.edu/tag/01WG}{Tag 01WG}]{stacks-project}.

Since $F(\zeta_p)$ contains a primitive $p$-th root of unity, \cite[Lemma 5.1.1]{bj} implies that if 
$p$ does not divide $d$ then $U^{\spcl}_{F(\zeta_p)}$ is empty, thus $U^{\Kred}=U^{\cred}$ 
and the required bound follows from Lemma \ref{zeta_p}. 

Let us assume 
that $p$ divides $d$. Part (a) of \cite[Theorem 5.4.1]{bj} applied with $K=F(\zeta_p)$ bounds the dimension of 
$U^{\spcl}_{F(\zeta_p)}$ by $\frac{1}{2}d^2
[F(\zeta_p):\Qp]$ from above. The $\frac{1}{2}$ in this estimate appears by estimating $[K':K]\ge 2$ (see the proof of \cite[Theorem 5.4.1]{bj} for the notation; $K'$ corresponds to our $F'$).  If $K$ contains a $p$-th root of unity then it follows from Case II of \cite[Lemma 5.1.1]{bj} that $[K':K]=p$. 
Since $F(\zeta_p)$ contains a $p$-th root of unity,  the argument in the proof of \cite[Theorem 5.4.1]{bj} gives us
$$\dim U^{\spcl}_{F(\zeta_p)}  \le  (d/p)^2[F':\Qp] = \frac{[F(\zeta_p):F]}{p} d^2[F:\Qp].$$
Since $[F(\zeta_p):F]\le p-1$ we conclude that 
$$\dim ( U^{\Kred}\setminus U^{\cred}) \le \frac{p-1}{p} d^2[F:\Qp] .$$
Lemma \ref{jakob_cl_dim} implies that the same bound holds for the dimension of 
the closure of $U^{\Kred}\setminus U^{\cred}$ in $U^{\irr}$. 
Lemma \ref{zeta_p} gives the bound 
$$\dim U^{\cred}\le \frac{1}{2} d^2[F:\Qp] .$$
Thus 
$$\dim U^{\Kred} \le \frac{p-1}{p} d^2[F:\Qp] .$$
Since $\dim U^{\irr} =d^2 [F:\Qp]$ by \eqref{dim_Uirr}, we obtain 
$$\dim U^{\irr}-\dim U^{\Kred} \ge
\frac{d}{p} d [F:\Qp]\ge d[F:\Qp]\ge 2,$$
where we have used that $p$ divides $d$ in the second inequality.
\end{proof} 

\begin{prop}\label{existence_Kirr} There exists an open dense
subscheme $U^{\mathrm{Kirr}}\subset U^{\irr}$ such that 
$x\in P_1(R^{\ps}/\varpi)$ is Kummer-irreducible if and only if
$x$ is a closed point in $U^{\mathrm{Kirr}}$. Moreover, 
$\dim U^{\irr} -\dim (U^{\irr}\setminus U^{\mathrm{Kirr}})\ge d[F:\Qp]\ge 2$.
\end{prop}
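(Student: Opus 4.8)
The plan is to define $U^{\mathrm{Kirr}}$ as the complement in $\Xbar^{\ps,\irr}$ of the closed set $\bigcup_{F'} X^{\Kred}_{F'}$, where $F'$ ranges over degree $p$ Galois extensions of $F(\zeta_p)$, and then verify that this set does the job. First I would observe that there are only finitely many such $F'$: each is contained in the compositum of the (finitely many) degree $p$ extensions of $F(\zeta_p)$ inside a fixed algebraic closure, and $F(\zeta_p)$ is a $p$-adic field so has only finitely many extensions of bounded degree. Hence $\bigcup_{F'} X^{\Kred}_{F'}$ is a finite union of closed subsets of $\Xbar^{\ps,\irr}$ (each $X^{\Kred}_{F'}$ being closed by Remark~\ref{closed_condition} applied with $E=F'$, together with the Jacobson property of $\Xbar^{\ps}\setminus\{\mm_{R^{\ps}}\}$), so it is closed, and $U^{\mathrm{Kirr}}$ is open.

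Next I would check that the closed points of $U^{\mathrm{Kirr}}$ are precisely the Kummer-irreducible $x\in P_1(R^{\ps}/\varpi)$. Since $\Xbar^{\ps,\irr}$ is Jacobson (it is a locally closed subscheme of $\Xbar^{\ps}\setminus\{\mm_{R^{\ps}}\}$, which is Jacobson by Lemma~\ref{jacobson}), the closed points of $\Xbar^{\ps,\irr}$ are exactly the points in $P_1(R^{\ps}/\varpi)$ whose associated pseudo-character is absolutely irreducible. For such an $x$, by definition $x$ is Kummer-reducible iff $D_x|_{G_{F'}}$ is reducible for some degree $p$ Galois extension $F'/F(\zeta_p)$, i.e.\ iff $x$ is a closed point lying in some $X^{\Kred}_{F'}$; here I use that $X^{\Kred}_{F'}$ was defined as the closure of the set of such closed points, and by the Jacobson property (as in Remark~\ref{closed_condition}) its closed points are exactly those. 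Hence $x$ is Kummer-irreducible iff it is a closed point of $U^{\mathrm{Kirr}}$.

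The codimension bound is then immediate from Lemma~\ref{Xprime}: we have
\[
\dim\Xbar^{\ps,\irr}-\dim(\Xbar^{\ps,\irr}\setminus U^{\mathrm{Kirr}})
=\dim\Xbar^{\ps,\irr}-\dim\Big(\bigcup\nolimits_{F'}X^{\Kred}_{F'}\Big)\ge d[F:\Qp]\ge 2,
\]
using in the last step that $p\mid d$ forces $d\ge p\ge 2$ (this is the regime where $\zeta_p\notin F$ is genuinely relevant; when $\zeta_p\in F$ the statement reduces to \cite[Lemma~5.1.3]{bj} combined with \cite[Theorem~5.3.1]{bj}, and one could alternatively just invoke that directly). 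Finally, density of $U^{\mathrm{Kirr}}$ follows because $\Xbar^{\ps,\irr}$ is equidimensional — or more precisely, because $\Xbar^{\ps}$ has a well-understood irreducible decomposition from \cite{bj} and $\Xbar^{\ps,\irr}$ is its open absolutely irreducible stratum, so that removing a closed subset of positive codimension cannot kill a whole component.

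I expect the only real subtlety to be the bookkeeping in the previous lemmas (finiteness of the restriction maps $R^{\ps}_{F'}\to R^{\ps}$ and the dimension estimates transported along them via \cite[Tag 01WG]{stacks-project}), which is already handled in Lemmas~\ref{lem_PDring-Restr}, \ref{zeta_p} and \ref{Xprime}; the present proposition is essentially a clean packaging of those results, so the main work has been done. The one point to be careful about is making sure the two descriptions of $X^{\Kred}_{F'}$ (as a closure, and as the honest reducibility locus $Z_{F'}$ of Remark~\ref{closed_condition}) agree, so that "open dense" and "its closed points are exactly the Kummer-irreducible points of $P_1$" are simultaneously true; this is exactly what the Jacobson property buys us.
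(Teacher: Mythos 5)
Your proposal is correct and follows essentially the same route as the paper: define $U^{\mathrm{Kirr}}$ as the complement of the finite union $\bigcup_{F'} X^{\Kred}_{F'}$, identify its closed points via Remark~\ref{closed_condition} and the Jacobson property, get the codimension bound from Lemma~\ref{Xprime}, and deduce density from equidimensionality of $\Xbar^{\ps,\irr}$. The extra remarks (finiteness of the set of degree $p$ extensions, agreement of $X^{\Kred}_{F'}$ with $Z_{F'}$) are exactly the points the paper relies on, so there is nothing to add.
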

\begin{proof} Let $U^{\mathrm{Kirr}}$ be the complement of $U^{\Kred}$ in $U^{\irr}$. 
Since $U^{\Kred}$ is closed in $U^{\irr}$, $U^{\mathrm{Kirr}}$ is open in $U^{\irr}$. If $y\in U^{\irr}$ then $y$ lies in $U^{\Kred}$ if and only 
if there exists a degree $p$ Galois extension $F'$ of $F(\zeta_p)$ such that 
$D_y|_{G_{F'}}$ is reducible. If $y\in \Xbar^{\ps}$ is not the closed point and 
$D_y |_{G_{F'}}$ is irreducible for all such $F'$, then $D_y$ is irreducible and 
hence $y\in U^{\Kirr}$. It follows from Lemma \ref{jacobson} (4) that
$U^{\Kirr}\cap P_1(R^{\ps}/\varpi)$ is the set of closed points in $U^{\Kirr}$. 
The bound for the difference of dimensions follows from Lemma \ref{Xprime}. 
Since $U^{\irr}$ is equi-dimensional by \cite[Theorem 5.5.1]{bj} this implies density.
\end{proof}

\bibliographystyle{plain}
\bibliography{Ref}

\begin{thebibliography}{10}

\bibitem{alper}
Jarod Alper.
\newblock Adequate moduli spaces and geometrically reductive group schemes.
\newblock {\em Algebr. Geom.}, 1(4):489--531, 2014.

\bibitem{unique_top}
V.~I. Arnautov and M.~I. Ursul.
\newblock On the uniqueness of topologies for some constructions of rings and
  modules.
\newblock {\em Sibirsk. Mat. Zh.}, 36(4):735--751, i, 1995.

\bibitem{babnik}
Maurice Babnik.
\newblock Irreduzible {K}omponenten von 2-adischen {D}eformationsr\"{a}umen.
\newblock {\em J. Number Theory}, 203:118--138, 2019.
\newblock Text in English and German.

\bibitem{blght}
Tom Barnet-Lamb, David Geraghty, Michael Harris, and Richard Taylor.
\newblock A family of {C}alabi-{Y}au varieties and potential automorphy {II}.
\newblock {\em Publ. Res. Inst. Math. Sci.}, 47(1):29--98, 2011.

\bibitem{bel_che}
Jo\"{e}l Bella\"{\i}che and Ga\"{e}tan Chenevier.
\newblock Families of {G}alois representations and {S}elmer groups.
\newblock {\em Ast\'{e}risque}, (324):xii+314, 2009.

\bibitem{boeckle}
Gebhard B\"{o}ckle.
\newblock Deformation rings for some mod 3 {G}alois representations of the
  absolute {G}alois group of {$\mathbf{Q}_3$}.
\newblock {\em Ast\'{e}risque}, (330):529--542, 2010.

\bibitem{BIP2}
Gebhard B\"ockle, Ashwin Iyengar, and Vytautas Pa\v{s}k\={u}nas.
\newblock Zariski density of crystalline points.
\newblock {\em Proc. Natl. Acad. Sci. USA}, 120(13):Paper No. e2221042120, 7
  pp, 2023.

\bibitem{bjpp}
Gebhard B\"{o}ckle and Ann-Kristin Juschka.
\newblock Irreducibility of versal deformation rings in the {$(p,p)$}-case for
  2-dimensional representations.
\newblock {\em J. Algebra}, 444:81--123, 2015.

\bibitem{bj}
Gebhard B\"ockle and Ann-Kristin Juschka.
\newblock Equidimensionality of universal pseudodeformation rings in
  characteristic $p$ for absolute {G}alois groups of $p$-adic fields.
\newblock 2023.
\newblock
  \url{https://typo.iwr.uni-heidelberg.de/fileadmin/groups/arithgeo/templates/data/Gebhard_Boeckle/Boeckle-Juschka-Pseudo-20230701.pdf}.

\bibitem{Bourbaki_ac8et9}
N.~Bourbaki.
\newblock {\em \'{E}l\'{e}ments de math\'{e}matique. {A}lg\`ebre commutative.
  {C}hapitres 8 et 9}.
\newblock Springer, Berlin, 2006.
\newblock Reprint of the 1983 original.

\bibitem{BH}
Winfried Bruns and J\"{u}rgen Herzog.
\newblock {\em Cohen--{M}acaulay rings}, volume~39 of {\em Cambridge Studies in
  Advanced Mathematics}.
\newblock Cambridge University Press, Cambridge, 1993.

\bibitem{DDR_Cai}
Yichang Cai.
\newblock Derived deformation rings allowing congruences.
\newblock 2021.
\newblock \url{https://arxiv.org/pdf/2108.13135.pdf}.

\bibitem{R3_factorial}
Frederick Call and Gennady Lyubeznik.
\newblock A simple proof of {G}rothendieck's theorem on the parafactoriality of
  local rings.
\newblock In {\em Commutative algebra: syzygies, multiplicities, and birational
  algebra ({S}outh {H}adley, {MA}, 1992)}, volume 159 of {\em Contemp. Math.},
  pages 15--18. Amer. Math. Soc., Providence, RI, 1994.

\bibitem{6auth}
Ana Caraiani, Matthew Emerton, Toby Gee, David Geraghty, Vytautas
  Pa\v{s}k\={u}nas, and Sug~Woo Shin.
\newblock Patching and the {$p$}-adic local {L}anglands correspondence.
\newblock {\em Camb. J. Math.}, 4(2):197--287, 2016.

\bibitem{CE}
Henri Cartan and Samuel Eilenberg.
\newblock {\em Homological algebra}.
\newblock Princeton Landmarks in Mathematics. Princeton University Press,
  Princeton, NJ, 1999.
\newblock With an appendix by David A. Buchsbaum, Reprint of the 1956 original.

\bibitem{che_unpublished}
Ga\"{e}tan Chenevier.
\newblock Sur la vari\'et\'e des caract\`eres $p$-adique du groupe de galois
  absolu de $\mathbb{Q}_p$.
\newblock
  \url{http://gaetan.chenevier.perso.math.cnrs.fr/articles/lieugalois.pdf},
  2010.

\bibitem{che_inf_fern}
Ga\"{e}tan Chenevier.
\newblock Sur la densit\'{e} des repr\'{e}sentations cristallines de
  {$\text{Gal}(\overline{\mathbb Q}_p/\mathbb Q_p)$}.
\newblock {\em Math. Ann.}, 355(4):1469--1525, 2013.

\bibitem{che_durham}
Ga\"{e}tan Chenevier.
\newblock The {$p$}-adic analytic space of pseudocharacters of a profinite
  group and pseudorepresentations over arbitrary rings.
\newblock In {\em Automorphic forms and {G}alois representations. {V}ol. 1},
  volume 414 of {\em London Math. Soc. Lecture Note Ser.}, pages 221--285.
  Cambridge Univ. Press, Cambridge, 2014.

\bibitem{colmez_tri}
Pierre Colmez.
\newblock Repr\'{e}sentations triangulines de dimension 2.
\newblock Number 319, pages 213--258. 2008.
\newblock Repr\'{e}sentations $p$-adiques de groupes $p$-adiques. I.
  Repr\'{e}sentations galoisiennes et $(\phi,\Gamma)$-modules.

\bibitem{CDP2}
Pierre Colmez, Gabriel Dospinescu, and Vytautas Pa\v{s}k\={u}nas.
\newblock Irreducible components of deformation spaces: wild $2$-adic
  exercises.
\newblock {\em Int. Math. Res. Not. IMRN}, (14):5333--5356, 2015.

\bibitem{mod_lift}
Brian Conrad.
\newblock Modularity lifting seminar webpage, 2010.
\newblock \url{http://virtualmath1.stanford.edu/~conrad/modseminar/}.

\bibitem{conrad_lift}
Brian Conrad.
\newblock Lifting global {G}alois representations with local properties.
\newblock 2011.
\newblock \url{http://math.stanford.edu/~conrad/papers/locchar.pdf}.

\bibitem{DPS}
Gabriel {Dospinescu}, Vytautas {Pa{\v{s}}k{\={u}}nas}, and Benjamin {Schraen}.
\newblock {Infinitesimal characters in arithmetic families}.
\newblock {\em arXiv e-prints}, page arXiv:2012.01041, December 2020.

\bibitem{EG_stack}
Matthew Emerton and Toby Gee.
\newblock {\em Moduli stacks of \'{e}tale ({$\varphi, \Gamma$})-modules and the
  existence of crystalline lifts}, volume 215 of {\em Annals of Mathematics
  Studies}.
\newblock Princeton University Press, Princeton, NJ, [2023] \copyright 2023.

\bibitem{EP}
Matthew Emerton and Vytautas Pa\v{s}k\={u}nas.
\newblock On the density of supercuspidal points of fixed regular weight in
  local deformation rings and global {H}ecke algebras.
\newblock {\em J. \'{E}c. polytech. Math.}, 7:337--371, 2020.

\bibitem{DDR_GV}
S.~Galatius and A.~Venkatesh.
\newblock Derived {G}alois deformation rings.
\newblock {\em Adv. Math.}, 327:470--623, 2018.

\bibitem{Gouvea}
Fernando~Q. Gouv\^{e}a.
\newblock Deformations of {G}alois representations.
\newblock In {\em Arithmetic algebraic geometry ({P}ark {C}ity, {UT}, 1999)},
  volume~9 of {\em IAS/Park City Math. Ser.}, pages 233--406. Amer. Math. Soc.,
  Providence, RI, 2001.
\newblock Appendix 1 by Mark Dickinson, Appendix 2 by Tom Weston and Appendix 3
  by Matthew Emerton.

\bibitem{HS}
Eugen Hellmann and Benjamin Schraen.
\newblock Density of potentially crystalline representations of fixed weight.
\newblock {\em Compos. Math.}, 152(8):1609--1647, 2016.

\bibitem{HP}
Yongquan Hu and Vytautas Pa\v{s}k\={u}nas.
\newblock On crystabelline deformation rings of {${\rm Gal}(\overline{\mathbb
  Q}_p/\mathbb Q_p)$}.
\newblock {\em Math. Ann.}, 373(1-2):421--487, 2019.
\newblock With an appendix by Jack Shotton.

\bibitem{Iy}
Ashwin Iyengar.
\newblock Deformation theory of the trivial mod {$p$} {G}alois representation
  for {${\rm GL}_n$}.
\newblock {\em Int. Math. Res. Not. IMRN}, (22):8896--8935, 2020.

\bibitem{kisin_over}
Mark Kisin.
\newblock Overconvergent modular forms and the {F}ontaine-{M}azur conjecture.
\newblock {\em Invent. Math.}, 153(2):373--454, 2003.

\bibitem{KisinCurrentDevelopments}
Mark Kisin.
\newblock Modularity of 2-dimensional {G}alois representations.
\newblock In {\em Current developments in mathematics, 2005}, pages 191--230.
  Int. Press, Somerville, MA, 2007.

\bibitem{kisin_pst}
Mark Kisin.
\newblock Potentially semi-stable deformation rings.
\newblock {\em J. Amer. Math. Soc.}, 21(2):513--546, 2008.

\bibitem{kisin_GQp_GL2Qp}
Mark Kisin.
\newblock Deformations of {$G_{\mathbb Q_p}$} and {${\rm GL}_2(\mathbb Q_p)$}
  representations.
\newblock {\em Ast\'{e}risque}, (330):511--528, 2010.

\bibitem{matsumura_alg}
Hideyuki Matsumura.
\newblock {\em Commutative algebra}, volume~56 of {\em Mathematics Lecture Note
  Series}.
\newblock Benjamin/Cummings Publishing Co., Inc., Reading, Mass., second
  edition, 1980.

\bibitem{matsumura}
Hideyuki Matsumura.
\newblock {\em Commutative ring theory}, volume~8 of {\em Cambridge Studies in
  Advanced Mathematics}.
\newblock Cambridge University Press, Cambridge, second edition, 1989.
\newblock Translated from the Japanese by M. Reid.

\bibitem{Mazur_GQ}
B.~Mazur.
\newblock Deforming {G}alois representations.
\newblock In {\em Galois groups over {${\bf Q}$} ({B}erkeley, {CA}, 1987)},
  volume~16 of {\em Math. Sci. Res. Inst. Publ.}, pages 385--437. Springer, New
  York, 1989.

\bibitem{Mazur}
Barry Mazur.
\newblock An introduction to the deformation theory of {G}alois
  representations.
\newblock In {\em Modular forms and {F}ermat's last theorem ({B}oston, {MA},
  1995)}, pages 243--311. Springer, New York, 1997.

\bibitem{nakamura1}
Kentaro Nakamura.
\newblock Deformations of trianguline {$B$}-pairs and {Z}ariski density of two
  dimensional crystalline representations.
\newblock {\em J. Math. Sci. Univ. Tokyo}, 20(4):461--568, 2013.

\bibitem{nakamura}
Kentaro Nakamura.
\newblock Zariski density of crystalline representations for any {$p$}-adic
  field.
\newblock {\em J. Math. Sci. Univ. Tokyo}, 21(1):79--127, 2014.

\bibitem{nekovar}
Jan Nekov\'{a}\v{r}.
\newblock Selmer complexes.
\newblock {\em Ast\'{e}risque}, (310):viii+559, 2006.

\bibitem{image}
Vytautas Pa{\v{s}}k\=unas.
\newblock The image of {C}olmez's {M}ontreal functor.
\newblock {\em Publ. Math. Inst. Hautes \'Etudes Sci.}, 118:1--191, 2013.

\bibitem{finite}
Vytautas Pa\v{s}k\={u}nas and Shen-Ning Tung.
\newblock Finiteness properties of the category of {${\rm mod}\,p$}
  representations of {${\rm GL}_2 (\Bbb Q_p)$}.
\newblock {\em Forum Math. Sigma}, 9:Paper No. e80, 39, 2021.

\bibitem{Pro87}
Claudio Procesi.
\newblock A formal inverse to the {C}ayley--{H}amilton theorem.
\newblock {\em J. Algebra}, 107(1):63--74, 1987.

\bibitem{RZ}
Luis Ribes and Pavel Zalesskii.
\newblock {\em Profinite groups}, volume~40 of {\em Ergebnisse der Mathematik
  und ihrer Grenzgebiete. 3. Folge. A Series of Modern Surveys in Mathematics
  [Results in Mathematics and Related Areas. 3rd Series. A Series of Modern
  Surveys in Mathematics]}.
\newblock Springer-Verlag, Berlin, second edition, 2010.

\bibitem{st_iw}
P.~Schneider and J.~Teitelbaum.
\newblock {B}anach space representations and {I}wasawa theory.
\newblock {\em Israel J. Math.}, 127:359--380, 2002.

\bibitem{seshadri}
C.~S. Seshadri.
\newblock Geometric reductivity over arbitrary base.
\newblock {\em Advances in Math.}, 26(3):225--274, 1977.

\bibitem{stacks-project}
The {Stacks Project Authors}.
\newblock \textit{Stacks Project}.
\newblock \url{https://stacks.math.columbia.edu}, 2018.

\bibitem{WE_thesis}
Carl Wang-Erickson.
\newblock Moduli of {G}alois {R}epresentations.
\newblock 2013.
\newblock ProQuest LLC, Ann Arbor, MI, Thesis (Ph.D.)--Harvard University.

\bibitem{WE_alg}
Carl Wang-Erickson.
\newblock Algebraic families of {G}alois representations and potentially
  semi-stable pseudodeformation rings.
\newblock {\em Math. Ann.}, 371(3-4):1615--1681, 2018.

\end{thebibliography}

\end{document}